\theoremstyle{plain}
\newtheorem{theorem}{Theorem}[section]
\newtheorem{proposition}[theorem]{Proposition}
\newtheorem{lemma}[theorem]{Lemma}
\newtheorem{corollary}[theorem]{Corollary}
\theoremstyle{definition}
\newtheorem{definition}[theorem]{Definition}
\newtheorem{example}[theorem]{Example}
\newtheorem{remark}[theorem]{Remark}
\newcommand{\nc}{\newcommand}
\nc{\on}{\operatorname}
\nc{\sch}{{\on{sch}}}
\nc{\Q}{\mathbb{Q}}
\nc{\Z}{\mathbb{Z}}
\nc{\cl}{\mathrm{cl}}
\nc{\fraka}{{\mathfrak a}} \nc{\bba}{{\mathbf a}}
\nc{\frakb}{{\mathfrak b}}
\nc{\frakc}{{\mathfrak c}}
\nc{\frakd}{{\mathfrak d}}
\nc{\frake}{{\mathfrak e}}
\nc{\frakf}{{\mathfrak f}}
\nc{\frakg}{{\mathfrak g}}
\nc{\frakh}{{\mathfrak h}}
\nc{\fraki}{{\mathfrak i}}
\nc{\frakj}{{\mathfrak j}}
\nc{\frakk}{{\mathfrak k}}
\nc{\frakl}{{\mathfrak l}}
\nc{\frakm}{{\mathfrak m}}
\nc{\frakn}{{\mathfrak n}}
\nc{\frako}{{\mathfrak o}}
\nc{\frakp}{{\mathfrak p}}
\nc{\frakq}{{\mathfrak q}}
\nc{\frakr}{{\mathfrak r}}
\nc{\fraks}{{\mathfrak s}}
\nc{\frakt}{{\mathfrak t}}
\nc{\fraku}{{\mathfrak u}}
\nc{\frakv}{{\mathfrak v}}
\nc{\frakw}{{\mathfrak w}}
\nc{\frakx}{{\mathfrak x}}
\nc{\fraky}{{\mathfrak y}}
\nc{\frakz}{{\mathfrak z}}
\nc{\frakA}{{\mathfrak A}}
\nc{\frakB}{{\mathfrak B}}
\nc{\frakC}{{\mathfrak C}}
\nc{\frakD}{{\mathfrak D}}
\nc{\frakE}{{\mathfrak E}}
\nc{\frakF}{{\mathfrak F}}
\nc{\frakG}{{\mathfrak G}}
\nc{\frakH}{{\mathfrak H}}
\nc{\frakI}{{\mathfrak I}}
\nc{\frakJ}{{\mathfrak J}}
\nc{\frakK}{{\mathfrak K}}
\nc{\frakL}{{\mathfrak L}}
\nc{\frakM}{{\mathfrak M}}
\nc{\frakN}{{\mathfrak N}}
\nc{\frakO}{{\mathfrak O}}
\nc{\frakP}{{\mathfrak P}}
\nc{\frakQ}{{\mathfrak Q}}
\nc{\frakR}{{\mathfrak R}}
\nc{\frakS}{{\mathfrak S}}
\nc{\frakT}{{\mathfrak T}}
\nc{\frakU}{{\mathfrak U}}
\nc{\frakV}{{\mathfrak V}}
\nc{\frakW}{{\mathfrak W}}
\nc{\frakX}{{\mathfrak X}}
\nc{\frakY}{{\mathfrak Y}}
\nc{\frakZ}{{\mathfrak Z}}
\nc{\bbA}{{\mathbb A}}
\nc{\bbB}{{\mathbb B}}
\nc{\bbC}{{\mathbb C}}
\nc{\bbD}{{\mathbb D}}
\nc{\bbE}{{\mathbb E}}
\nc{\bbF}{{\mathbb F}} \nc{\bbf}{{\mathbf f}}
\nc{\bbG}{{\mathbb G}}
\nc{\bbH}{{\mathbb H}}
\nc{\bbI}{{\mathbb I}}
\nc{\bbJ}{{\mathbb J}}
\nc{\bbK}{{\mathbb K}}
\nc{\bbL}{{\mathbb L}}
\nc{\bbM}{{\mathbb M}}
\nc{\bbN}{{\mathbb N}}
\nc{\bbO}{{\mathbb O}}
\nc{\bbP}{{\mathbb P}}
\nc{\bbQ}{{\mathbb Q}}
\nc{\bbR}{{\mathbb R}}
\nc{\bbS}{{\mathbb S}}
\nc{\bbT}{{\mathbb T}}
\nc{\bbU}{{\mathbb U}}
\nc{\bbV}{{\mathbb V}}
\nc{\bbW}{{\mathbb W}}
\nc{\bbX}{{\mathbb X}}
\nc{\bbY}{{\mathbb Y}}
\nc{\bbZ}{{\mathbb Z}}
\nc{\calA}{{\mathcal A}}
\nc{\calB}{{\mathcal B}}
\nc{\calC}{{\mathcal C}}
\nc{\calD}{{\mathcal D}}
\nc{\calE}{{\mathcal E}}
\nc{\calF}{{\mathcal F}}
\nc{\calG}{{\mathcal G}}
\nc{\calH}{{\mathcal H}}
\nc{\calI}{{\mathcal I}}
\nc{\calJ}{{\mathcal J}}
\nc{\calK}{{\mathcal K}}
\nc{\calL}{{\mathcal L}}
\nc{\calM}{{\mathcal M}}
\nc{\calN}{{\mathcal N}}
\nc{\calO}{{\mathcal O}}
\nc{\calP}{{\mathcal P}}
\nc{\calQ}{{\mathcal Q}}
\nc{\calR}{{\mathcal R}}
\nc{\calS}{{\mathcal S}}
\nc{\calT}{{\mathcal T}}
\nc{\calU}{{\mathcal U}}
\nc{\calV}{{\mathcal V}}
\nc{\calW}{{\mathcal W}}
\nc{\calX}{{\mathcal X}}
\nc{\calY}{{\mathcal Y}}
\nc{\calZ}{{\mathcal Z}}
\newcommand{\solid}{{\scalebox{0.5}{$\square$}}}
\nc{\scrA}{{\mathscr A}}
\nc{\scrB}{{\mathscr B}}
\nc{\scrC}{{\mathscr C}}
\nc{\scrD}{{\mathscr D}}
\nc{\scrE}{{\mathscr E}}
\nc{\scrF}{{\mathscr F}}
\nc{\scrG}{{\mathscr G}}
\nc{\scrH}{{\mathscr H}}
\nc{\scrI}{{\mathscr J}}
\nc{\scrJ}{{\mathscr I}}
\nc{\scrK}{{\mathscr K}}
\nc{\scrL}{{\mathscr L}}
\nc{\scrM}{{\mathscr M}}
\nc{\scrN}{{\mathscr N}}
\nc{\scrO}{{\mathscr O}}
\nc{\scrP}{{\mathscr P}}
\nc{\scrQ}{{\mathscr Q}}
\nc{\scrR}{{\mathscr R}}
\nc{\D}{{\on{D}}}
\nc{\Div}{{\on{Div}}}
\nc{\Perv}{{\on{Perv}}}
\nc{\bnu}{{\bar{ \nu}}}
\nc{\olO}{\bar{\calO}}
\nc{\al}{{\alpha}} 
\nc{\be}{{\beta}}
\nc{\ga}{{\gamma}} \nc{\Ga}{{\Gamma}}
\nc{\hGa}{\hat{\Gamma}}
\nc{\ve}{{\varepsilon}} 
\nc{\la}{{\lambda}} \nc{\La}{{\Lambda}}
\nc{\om}{\omega} \nc{\Om}{\Omega} 
\nc{\sig}{{\sigma}} \nc{\Sig}{{\Sigma}}
\nc{\dR}{{\mathrm{dR}}}
\nc{\Perf}{\mathrm{Perf}^{\on{aff}}}
\nc{\Adic}{\mathrm{Adic}}
\nc{\Perff}{\widetilde{\on{Perf}}}
\nc{\PSch}{\mathrm{PSch}^{\on{aff}}}
\nc{\PSchf}{\widetilde{\on{PSch}}}
\nc{\Gm}{{\mathbb{G}_m}}
\nc{\colim}{{\on{colim}}}
\nc{\et}{\mathrm{\acute{e}t}}
\nc{\mer}{{\on{mer}}}
\nc{\frml}{{\on{frml}}}
\nc{\Red}{{\on{Red}}}
\nc{\An}{{\on{An}}}
\nc{\FF}{{\on{FF}}}
\nc{\sht}{{\on{sht}}}
\nc{\sss}{{\on{ss}}}
\nc{\Sets}{{\on{Sets}}}
\nc{\Grps}{{\on{Grps}}}
\nc{\Ani}{{\on{Ani}}}
\newcommand*\cocolon{%
        \nobreak
        \mskip6mu plus1mu
        \mathpunct{}%
        \nonscript
        \mkern-\thinmuskip
        {:}%
        \mskip2mu
        \relax
}
\nc{\vvb}{{\on{Vect}^{\calO^\sharp}_{\on{v}}}}
\nc{\Vect}{{\on{Vect}}}
\nc{\onv}{{\on{v}}}
\nc{\calvwp}{{\calV_\bbW[\frac{1}{\pi}]}}
\nc{\calvwps}{{\calV^\sch_\bbW[\frac{1}{\pi}]}}
\nc{\calvyp}{{\calV_\calY[\frac{1}{\pi}]}}
\nc{\perfc}{\frakP\mathrm{erf}}
\nc{\infcat}{\mathrm{Cat}_{\infty}}
\nc{\DM}{{\Sht^\sch_\bbW}}
\nc{\DMan}{{\Sht_\bbW}}
\nc{\DMG}{{\Sht^\sch_{\calG}}}
\nc{\IC}{{\mathfrak{B}}}
\nc{\ICG}{{\frakB(G)}}
\nc{\SHT}{{\Sht_\calY}}
\nc{\VEC}{{\on{Bun}}_{\on{FF}}^{\on{mer}}}
\nc{\VECT}{{\on{Bun}}_{\on{FF}}}
\nc{\Catex}{{\on{Cat}^{\otimes, \on{ex}}_{1}}}
\nc{\Cat}{{\on{Cat}^{\otimes}_{1}}}
\nc{\CatexOE}{{\on{Cat}^{\otimes, \on{ex}}_{1,O_E}}}
\nc{\CatOE}{{\on{Cat}^{\otimes}_{1,O_E}}}
\nc{\CatexE}{{\on{Cat}^{\otimes, \on{ex}}_{1,E}}}
\nc{\CatE}{{\on{Cat}^{\otimes}_{1,E}}}
\nc{\LinCat}{{\on{LinCat}^{\otimes, \on{ex}}_{O_E}}}
\nc{\LinCatE}{{\on{LinCat}^{\otimes, \on{ex}}_{E}}}
\nc{\Fil}{\calF{il}}
\nc{\DmG}{{\Sht_{\bbW,\calG}}}
\nc{\Sht}{\on{Sht}}
\nc{\Isoc}{\on{Isoc}}
\nc{\BC}{\calB\calC}
\nc{\PFS}{\bbP_{{\on{FS}}}}
\def\preceqdot{\mathrel{\preceq\kern-.5em\raise.22ex\hbox{$\cdot$}}}
\DeclareMathAlphabet{\rhomalpha}{LS1}{stixscr}{m}{n}
\nc{\Spa}{\on{{Spa}}}
\nc{\Spd}{\on{{Spd}}}
\nc{\tnb}{\psi_{\rm tame}}
\nc{\oM}{\overline{{M}}}
\nc{\op}{{\on{op}}}
\nc{\ad}{{\on{ad}}}
\nc{\alg}{{\on{alg}}}
\nc{\Ad}{{\on{Ad}}}
\nc{\Adm}{{\on{Adm}}} \nc{\aff}{{\on{af}}}
\nc{\Aut}{{\on{Aut}}}
\nc{\Bun}{{\on{Bun}}}
\nc{\cha}{{\on{char}}}
\nc{\der}{{\on{der}}}
\nc{\Der}{{\on{Der}}}
\nc{\diag}{{\on{diag}}}
\nc{\End}{{\on{End}}}
\nc{\Fl}{{\calF\!\ell}}
\nc{\Tr}{{\on{Transp}}}
\nc{\TR}{{\calT\!\calR}}
\nc{\Gal}{{\on{Gal}}}
\nc{\Gr}{{\on{Gr}}}
\nc{\Hk}{{\on{Hk}}}
\nc{\rH}{{\on{H}}}
\nc{\Hom}{{\on{Hom}}}
\nc{\id}{{\on{id}}}
\nc{\Id}{{\on{Id}}}
\nc{\ind}{{\on{ind}}}
\nc{\Ind}{{\on{Ind}}}
\nc{\Lie}{{\on{Lie}}}
\nc{\Pic}{{\on{Pic}}}
\nc{\pr}{{\on{pr}}}
\nc{\Res}{{\on{Res}}}
\nc{\res}{{\on{res}}} \nc{\Sat}{{\on{Sat}}}
\nc{\spc}{{\on{sc}}}
\nc{\drv}{{\on{der}}}
\nc{\sgn}{{\on{sgn}}}
\nc{\Spec}{{\on{Spec}}\,}
\nc{\Spf}{\on{Spf}} 
\nc{\Sph}{\on{Sph}}
\nc{\St}{{\on{St}}}
\nc{\tr}{{\on{tr}}}
\nc{\Mod}{{\mathrm{-Mod}}}
\nc{\Hilb}{{\on{Hilb}}} 
\nc{\Ext}{{\on{Ext}}} 
\nc{\vs}{{\on{Vec}}}
\nc{\ev}{{\on{ev}}}
\nc{\nO}{{\breve{\calO}}}
\nc{\tS}{{\tilde{S}}}
\nc{\spe}{{\on{sp}}}
\nc{\loc}{{\on{loc}}}
\nc{\dimt}{{\on{dim.trg}}}
\renewcommand{\right}{\rightarrow}
\nc{\co}{\colon}
\nc{\dia}{{\lozenge}}
\nc{\nscrR}{{\mathscr{R}^{\on{nr}}}}
\nc{\GL}{{\on{GL}}}
\nc{\Gl}{\on{Gl}} 
\nc{\GSp}{{\on{GSp}}}
\nc{\gl}{{\frakg\frakl}}
\nc{\SL}{{\on{SL}}} 
\nc{\SU}{{\on{SU}}} 
\nc{\SO}{{\on{SO}}}
\nc{\PGL}{{\on{PGL}}}
\nc{\Conv}{{\on{Conv}}}
\nc{\Rep}{{\on{Rep}}}
\nc{\Dom}{{\on{Dom}}}
\nc{\red}{{\on{red}}}
\nc{\sh}{{\on{sh}}}
\nc{\disc}{{\on{disc}}}
\nc{\pre}{{\on{pre}}}
\nc{\act}{{\on{act}}}
\nc{\nr}{{\on{nr}}}
\nc{\ctf}{{\on{ctf}}}
\nc{\str}{{\on{-}}} 
\nc{\os}{{\bar{s}}}
\nc{\oeta}{{\bar{\eta}}}
\nc{\hookto}{\hookrightarrow}
\nc{\longto}{\longrightarrow}
\nc{\leftto}{\leftarrow}
\nc{\onto}{\twoheadrightarrow}
\nc{\lonto}{\twoheadleftarrow}
\nc{\pot}[1]{ [\hspace{-0,5mm}[ {#1} ]\hspace{-0,5mm}] }
\nc{\rpot}[1]{ (\hspace{-0,7mm}( {#1} )\hspace{-0,7mm}) }
\nc{\smallpot}{{ <\hspace{-1,0mm}<}}
\numberwithin{equation}{section}
\newcommand{\rar}{\rightarrow}
\begin{document}
	
\title{Meromorphic vector bundles on the Fargues--Fontaine curve}
	
\author[I. Gleason, A. Ivanov, F. Zillinger]{Ian Gleason, Alexander B. Ivanov, Felix Zillinger}

	\address{Mathematisches Institut der Universit\"at Bonn, Endenicher Allee 60, Bonn, Germany}
	\email{igleason@uni-bonn.de}
	
	\address{Fakult\"at f\"ur Mathematik, Ruhr-Universit\"at Bochum, Universit\"atsstrasse 150, D-44780 Bochum, Germany}
	\email{a.ivanov@rub.de}

	\address{Fakult\"at f\"ur Mathematik, Ruhr-Universit\"at Bochum, Universit\"atsstrasse 150, D-44780 Bochum, Germany}
	\email{felix.zillinger@rub.de}
	
	\begin{abstract}
	We introduce and study the stack of \textit{meromorphic} $G$-bundles on the Fargues--Fontaine curve.
	This object defines a correspondence between the Kottwitz stack $\ICG$ and $\Bun_G$. 
	We expect it to play a crucial role in defining and studying an analytification functor that compares the scheme-theoretic and analytic versions of the geometric local Langlands categories. 
	Our first main result is the identification of the generic Newton strata of $\Bun_G^\mer$ with the Fargues--Scholze charts $\calM$. 
	Our second main result is a generalization of Fargues' theorem in families.  
	We call this the \textit{meromorphic comparison theorem}. 
	We expect it to play a key role in proving that the analytification functor is fully-faithful.
	Along the way, we give new proofs to what we call the \textit{topological and scheme-theoretic comparison theorems}.
These say that the topologies of $\Bun_G$ and $\ICG$ are reversed and that the two stacks take the same values when evaluated on schemes. 
	\end{abstract}

	\maketitle
	\tableofcontents
	
	\section{Introduction}
	Let $p$ be a prime number, and let $E$ be a non-Archimedean local field with residue field of characteristic $p$. 
	Let $\ell$ be a prime with $\ell\neq p$, and $\Lambda=\overline{\bbQ}_\ell$. 
	Let $G$ be a connected reductive group over $E$.
	Let $W_E$ be the Weil group and let $^LG=\hat{G}\rtimes W_E$ be the $L$-group. 
	For this introduction, we will further assume that $G$ is quasi-split, but we drop this assumption in the body of the text.
	\subsection{Motivation and context}
Let $\Pi_G$ be the set of isomorphism classes of smooth irreducible representations of the locally profinite group $G(E)$ with values in $\Lambda$, and let $\Phi_G$ be the set of $\hat{G}$-conjugacy classes of $L$-parameters.
The basic form of the local Langlands correspondence gives a map 
\[\on{LLC}_G\colon \Pi_G\to \Phi_G\]
satisfying various desiderata \cite[Conjecture A]{Kal16}, \cite{SZ18}. 
For $\on{GL}_n$, the map $\on{LLC}_{\on{GL}_n}$ is bijective \cite{HT01,Henniart_LLC_GLn,MR1228127}, but this does not hold more generally. 
Nevertheless, $\on{LLC}_{G}$ has finite fibers that are called $L$-packets, which can be understood in terms of the centralizer $S_\phi$ of the parameter $\phi\in \Phi_G$ inside $\hat{G}(\Lambda)$. 
More precisely, for split groups after fixing a Whittaker datum $\frakw$ one can put the elements of an $L$-packet in bijection with the set of isomorphisms classes of irreducible representations of $S_\phi$ whose restriction to the center $Z(\hat{G})$ is trivial, at least when $\phi$ is a discrete parameter (see \cite[Conjecture B]{Kal16} for a version valid for quasi-split groups and tempered parameters).
When $G$ is not quasi-split, Whittaker data do not exist.
Vogan realized that to work with general $G$, it is advantageous to consider its quasi-split inner form $G^\ast$ and parametrize simultaneously the representations of all the pure inner twists of $G^\ast$ \cite{ABV92,Vog93}. 

Motivated by the study of special fibers of Shimura varieties, Kottwitz introduced the set $B(G)$ of isocrystals with $G$-structure \cite{Kottwitz, KottwitzII}. 
The set of basic elements $B(G)_{\on{bas}}\subseteq B(G)$ gives rise to the so-called extended pure inner forms $G_b$ of $G$.
Kottwitz formulated a refined version of the local Langlands correspondence for non-Archimedean local fields using these inner forms that arise from $B(G)_{\on{bas}}$ \cite[Conjecture F]{Kal16}, \cite{SZ18}.

The set $B(G)$ with its natural partial ordering can be used to describe the underlying topological space of two geometric objects.
One object is of analytic nature, $\Bun_G$ (the stack of $G$-bundles on the Fargues--Fontaine curve), and a second object is of scheme-theoretic nature, $\ICG$ (the Kottwitz stack parametrizing isocrystals with $G$-structure).
For every element $b\in B(G)$ one can define locally closed strata $i_b\colon \ICG_b\to \ICG$ and $j_b\colon \Bun_G^b\to \Bun_G$. 
% Interestingly, whenever $b\in B(G)_{\on{bas}}$ both $\ICG_b$ and $\Bun_G^b$ agree with the classifying stack $[\ast\!/G_b(E)]$ for the extended pure inner form of $G$ defined by $b$.
Both $\ICG_b$ and $\Bun_G^b$ are classifying stacks for a group, and sheaves on these classifying stacks can be described in terms of smooth representations of $G_b(E)$, where $G_b$ is an inner form of a Levi subgroup of $G$.  
% \footnote{Even when $b$ is not basic, the category of \'etale sheaves on $\Bun_G^b$ and $\ICG_b$ can be understood in terms of the representation category of a pure inner form of a Levi subgroup.}
This leads to the hope that the refined local Langlands correspondence of Kottwitz has a categorical refinement that one can access by studying the geometry of the stacks $\Bun_G$ and/or $\ICG$.

	Recent breakthroughs in $p$-adic and perfect geometry \cite{SW20,FS24,zhu_affine_grassmannians_and_the_geometric_satake_in_mixed_characteristic,XiaoZhu,bhatt_scholze_projectivity_of_the_witt_vector_affine_grassmannian,Zhu20} together with the introduction and study of the stack of $L$-parameters \cite{DHKM20,Zhu20,FS24} have led experts to formulate precise conjectures that capture this hope. 
	These efforts promote the refined local Langlands correspondence mentioned above to a categorical statement \cite{FS24,Zhu20,hellmann2021derived,benzvi2022coherent} in a precise way.

	There is widespread agreement on what to consider on the Galois side, namely a version of the derived category of coherent sheaves $\calD^{b,\on{qc}}_{\on{coh}}(\calX_{\hat{G},\Lambda})$ of the stack $\calX_{\hat{G},\Lambda}$ parametrizing $L$-parameters over $\Lambda$ (see \cite[Conjecture I.10.2]{FS24}, \cite{GA15}). 
	On the automorphic side there are at least two reasonable constructions of the local Langlands category. 
	The essential difference between them arises from the fact that $B(G)$ has two geometric incarnations.  

	Let $G$ be quasi-split and let $W_\frakw$ be the Whittaker representation associated to $\frakw$.
	On the analytic side, Fargues--Scholze construct the category of lisse sheaves $D_{\on{lis}}(\Bun_G,\Lambda)$ \cite[$\mathsection$ VII.7]{FS24} and prove it is compactly generated. In what follows, we use the superscript $(-)^{\omega}$ to denote the full subcategory of compact objects.
	Moreover, they endow this category with the so-called spectral action by the category of perfect complexes $\on{Perf}(\calX_{\hat{G},\Lambda})$.
	They conjecture that there is a unique $\on{Perf}(\calX_{\hat{G},\Lambda})$-linear equivalence of $\infty$-categories
	\[\bbL^{\on{an}}_G\colon \calD_{\on{lis}}(\Bun_G,\Lambda)^\omega \stackrel{\simeq}{\longrightarrow} \calD^{b,\on{qc}}_{\on{coh}}(\calX_{\hat{G},\Lambda})\]
	which, by taking ind-completions, induces an equivalence $\calD_{\on{lis}}(\Bun_G,\Lambda) \stackrel{\simeq}{\longrightarrow} {\rm Ind}(\calD^{b,\on{qc}}_{\on{coh}}(\calX_{\hat{G},\Lambda}))$ that sends the analytic Whittaker sheaf $\calW^{\on{an}}_\frakw=j_{1,!}W_\frakw$ to the structure sheaf $\calO_{\calX_{\hat{G},\Lambda}}$.

	On the scheme-theoretic side, Xiao--Zhu consider the moduli stack of local shtukas $\on{Sht}^{\on{loc}}_k$ in the context of characteristic $p$ perfect geometry. 
	They attach their own candidates for the local Langlands category, namely they construct a triangulated category of cohomological correspondences $\on{P}^{\on{Corr}}(\on{Sht}^{\on{loc}}_k)$, cf.\ \cite[$\mathsection$ 5.4]{XiaoZhu} and \cite{Zhu20}.  
	This approach is pushed further in the work of Zhu \cite{zhu2025tame}, where he constructs an $\infty$-category ${\on{Shv}}(\ICG,\Lambda)$ whose homotopy category agrees with $\on{P}^{\on{Corr}}(\on{Sht}^{\on{loc}}_k)$. 
	Zhu conjectures that there is an equivalence 
	\[\bbL^{\on{sch}}_G\colon\on{Shv}(\ICG,\Lambda) \stackrel{\simeq}{\longrightarrow} \on{Ind}(\calD^{b,\on{qc}}_{\on{coh}}(\calX_{\hat{G},\Lambda}))\,, \]
	sending $\calO_{\calX_{\hat{G},\Lambda}}$ to the scheme-theoretic Whittaker sheaf $\calW^{\on{sch}}_\frakw=i_{1,*}W_\frakw$, see \cite[Conjecture 4.6.4]{Zhu20}.
	In  his work, Zhu already proved the unipotent and tame parts of this equivalence in case $G$ is unramified, cf.\ \cite[Theorem 4.6.11]{Zhu20}. 
	Let us clarify.
	When $\Lambda$ is of characteristic $0$, the stack of $L$-parameters has an open and closed substack $\calX^{\on{unip}}_{\hat{G},\Lambda}\subseteq \calX_{\hat{G},\Lambda}$ defining a full subcategory 
	\[\on{Ind}(\calD^{b,\on{qc}}_{\on{coh}}(\calX^{\on{unip}}_{\hat{G},\Lambda}))\subseteq   \on{Ind}(\calD^{b,\on{qc}}_{\on{coh}}(\calX_{\hat{G},\Lambda}))\,.\]
	One can also define a full subcategory $\on{Shv}^{\on{unip}}(\ICG,\Lambda)\subseteq \on{Shv}(\ICG,\Lambda)$ defined by the property that for all $b\in B(G)$, the restriction to $\ICG_b$ is given by a complex of $G_b$-representations that are unipotent in the sense of Lusztig \cite{Lus95}. 
	Using Bezrukavnikov's equivalence \cite{Bez16}, Zhu proves that there is an equivalence of $\infty$-categories
	\[\bbL^{\on{sch}}_G\colon\on{Shv}^{\on{unip}}(\ICG,\Lambda) \stackrel{\simeq}{\longrightarrow} \on{Ind}(\calD^{b,\on{qc}}_{\on{coh}}(\calX^{\on{unip}}_{\hat{G},\Lambda}))\,. \]

	It is natural to expect that there exists an equivalence 
	\[\Psi\colon \on{Shv}(\ICG,\Lambda)\to \calD_{\on{lis}}(\Bun_G,\Lambda)\,,\] 
	satisfying $\Psi(\calW^{\on{sch}}_\frakw)=\calW^{\on{an}}_\frakw$.
	Indeed, the two local Langlands categories are conjectured to be equivalent to $\on{Ind}(\calD^{b,\on{qc}}_{\on{coh}}(\calX_{\hat{G},\Lambda}))$, and if the two conjectures are true one can simply define $\Psi=\bbL^{\on{an},-1}_{G}\circ \bbL^{\on{sch}}_{G}$.

	A reasonable question the reader can ask is: why do we need two local Langlands categories?
	We believe that it is profitable to construct $\Psi$ directly in order to better understand $\bbL^{\on{sch}}_G$ and $\bbL^{\on{an}}_G$.
	At a technical level, a direct construction of $\Psi$ allows one to transfer Zhu's results on tame categorical local Langlands correspondence to the Fargues--Scholze setup and conversely, endow $\on{Shv}(\ICG,\Lambda)$ with a spectral action.
	It would also allow us to formulate rigorously the eigensheaf property for the Deligne--Lusztig sheaves considered in \cite[Conjecture 9.6]{CI23}.
	More philosophically, the scheme-theoretic perspective and the analytic perspective understand different phenomena. 
	For example, the scheme-theoretic perspective cannot witness the spectral action because "the paw" is fixed. 
	On the other hand, $\on{Shv}(\ICG,\Lambda)$ is directly related to Bezrukavnikov's equivalence and its Frobenius-twisted categorical trace \cite[$\mathsection 3$]{zhu2018geometric} since, in contrast with $\Bun_G$, both $\ICG$ and the Hecke stack are constructed in terms of Witt vector loop groups.  

	At the heart of the equivalence $\Psi$ there should be a geometric explanation. 
	Namely, that the stacks $\IC(G)$ and $\Bun_G$ are incarnations of the same geometric object.
	In this paper, we reveal these geometric relations which we formulate in terms of three comparison theorems (see $\mathsection \ref{section-3-compar}$). 

	One of the achievements of this article is the construction of a third incarnation $\Bun_G^\mer$ that mediates between $\IC(G)$ and $\Bun_G$. Roughly speaking, $\Bun_G^\mer$ is given by the same moduli problem as $\Bun_G$, but we require a meromorphicity condition on the action of Frobenius (see \Cref{meromorphic-vector-bundles}, \Cref{defi-groupoids-of-interest}). 
	This object defines a correspondence 
	\begin{center}
		\begin{tikzcd}
			\Bun_G^\mer \ar{r}{\sigma}\ar{d}{\gamma} & \Bun_G  \\		
		\ICG^\lozenge	
		\end{tikzcd}
	\end{center}
	in the category of $v$-stacks on perfectoid spaces in characteristic $p$, where $(-)^{\lozenge}$ denotes a suitable analytification of $\ICG$, cf.\ \Cref{analytifications}.
	We call the map $\gamma$ the \textit{generic polygon map} and $\sigma$ the \textit{special polygon map} inspired by \cite[Definition 7.4.1]{kedlaya_liu_relative_p_adic_hodge_theory_foundations}. 
	Morally, $\Psi$ should be given by $\sigma_!\circ \gamma^*\circ c^*$, where
	\[c^*\colon \on{Shv}(\ICG,\Lambda)\to \calD(\ICG^\lozenge,\Lambda)\]
	is an analytification functor \cite[$\mathsection 27$]{Sch17}, but we will not pursue the construction of this functor here. 
 	%Nevertheless, to orient the reader, we still provide some informal indication of the cohomological relevance that our geometric theorems should have.
	%Unfortunately, one can not define $\sigma_!$ naively since $\Bun_G^\mer$ is not an Artin v-stack (see $\mathsection 9.1$). Indeed, since $\Bun_G$ is an Artin v-stack, $\sigma\colon \Bun_G^\mer\to \Bun_G$ cannot be representable by locally spatial diamonds by \cite[Propostion IV.1.8(iii)]{FS24}. 
	%In particular, the usual $6$-functor formalisms considered in the analytic perspective \cite{Sch17,GHW22,Man22} do not suffice to construct $\sigma_!$. 
	%For this reason, although we are convinced that our geometric considerations are the key to the construction and study of $\Psi$, we do not try to compare the local Langlands categories themselves in this work and we leave this comparison for a second article in which we justify the existence of $\sigma_!$ on an appropriate cohomological theory. 
% 	Nevertheless, to orient the reader, we still provide some informal indication of the cohomological relevance that our main theorems have.
	\subsection{Main results}
	For $b\in B(G)$ we let $\ICG_b\subseteq \ICG$ denote the locally closed substack determined by $b$. 
	Then $\ICG_b^\lozenge\subseteq \ICG^\lozenge$ is also a locally closed substack and we have an identification
	\[\ICG_b^\lozenge=[\ast\!/\underline{G_b(E)}]\,,\]
	where $\underline{G_b(E)}$ denotes the constant group v-sheaf associated to the locally profinite group $G_b(E)$.
	Recall the moduli stack $\calM$ of Fargues--Scholze \cite[Definition V.3.2]{FS24} that is used to define the smooth charts of $\Bun_G$.
	It comes endowed with a map 
	\[q\colon \calM\to \coprod_{b\in B(G)} [\ast\!/\underline{G_b(E)}]\cong \coprod_{b\in B(G)} \ICG_b^\lozenge\,.\]
	The following \Cref{intro:FScharts} is a relative and Tannakian version of Kedlaya's work on the slope filtration \cite[Section 5.4]{Kedlaya_slope_revisited}, and our first main result.

\begin{theorem}[\Cref{maintheorem}]
	\label{intro:FScharts}
	We have a commutative diagram with Cartesian square
	\begin{center}
	\begin{tikzcd}
		\calM \arrow{r} \arrow{d}{q} \arrow[bend left]{rr}{\pi} & \Bun_G^\mer \arrow{d}{\gamma} \ar{r}{\sigma} & \Bun_G\,.  \\
	 \coprod_{b\in B(G)} \ICG_b^\lozenge \arrow{r} & \ICG^\lozenge
	\end{tikzcd}
	\end{center}
\end{theorem}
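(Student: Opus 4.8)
The plan is to prove all three assertions at once by exhibiting, for each $b\in B(G)$, a natural equivalence $\calM_b\xrightarrow{\sim}\Bun_G^\mer\times_{\ICG^\lozenge}\ICG_b^\lozenge$ compatible with the maps across to $\Bun_G$ and down to $\ICG_b^\lozenge$. Taking the disjoint union over $b\in B(G)$, and using that fibre products commute with coproducts in the base, then produces the commutative diagram, shows the left-hand square is Cartesian, and identifies $\pi$ with $\sigma$ precomposed with $\calM\to\Bun_G^\mer$. Conceptually this exhibits each Fargues--Scholze chart $\calM_b$ as the $b$-th Newton stratum of the generic polygon map $\gamma$, and that identification is where the content lies.

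First I would unwind the two moduli problems over a perfectoid space $S$ of characteristic $p$. By \Cref{defi-groupoids-of-interest}, a point of $\Bun_G^\mer$ over $S$ is a $G$-bundle on the relative Fargues--Fontaine curve $X_S$ together with a meromorphicity datum; $\gamma$ extracts the underlying isocrystal-with-$G$-structure and $\sigma$ extracts the underlying $G$-bundle on $X_S$. By \cite[Definition V.3.2]{FS24}, a point of $\calM_b$ over $S$ is a $G$-bundle $\calE$ on $X_S$ together with a meromorphic modification of the standard bundle $\calE_b$ at the untilt producing $\calE$, taken up to the action of $\underline{\Aut(\calD_b)}=\underline{G_b(E)}$, where $\calD_b$ is the standard isocrystal-with-$G$-structure of class $b$ and $\calE_b$ the bundle it induces; here $\pi$ forgets the modification and $q$ remembers the residual $\underline{G_b(E)}$-torsor, which is meaningful via $\ICG_b^\lozenge=[\ast/\underline{G_b(E)}]$.

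Next I would build the equivalence on moduli. The pair $(\calE_b,\calD_b)$ carries a canonical meromorphic comparison at the untilt -- the usual crystalline-to-de Rham comparison on the relevant period rings -- so it underlies a distinguished point of $\Bun_G^\mer$ with $\sigma$-value $\calE_b$ and $\gamma$-value $\calD_b$. Given a point of $\calM_b$ over $S$, composing its modification with this standard comparison turns it into a meromorphic comparison of $\calE$ with the $\underline{G_b(E)}$-twist of $\calD_b$ carried by the $\calM_b$-point, i.e.\ into a point of $\Bun_G^\mer\times_{\ICG^\lozenge}\ICG_b^\lozenge$; as $\underline{\Aut(\calD_b)}=\underline{G_b(E)}$ acts compatibly on everything in sight, this descends through the $\underline{G_b(E)}$-quotient and intertwines $\pi$ with $\sigma$ and $q$ with the projection to $\ICG_b^\lozenge$. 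For the inverse: a meromorphic $G$-bundle over $S$ whose $\gamma$-value is identified with the image of a $\underline{G_b(E)}$-torsor $P$ becomes, after untwisting by $P$ (harmless, since $P$ acts through $\underline{\Aut(\calD_b)}$), one with $\gamma$-value $\calD_b$, and then its $\sigma$-value $\calE$ together with the modification of $\calE_b$ at the untilt obtained by composing the two comparisons with $\calD_b$ is a point of $\calM_b$. One checks these assignments are mutually inverse and compatible with the structure maps by tracing through the definitions.

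The main obstacle is that, for this moduli comparison to be correct, the meromorphicity condition of \Cref{defi-groupoids-of-interest} must be matched with the boundedness built into $\calM_b$ -- the union over dominant cocharacters $\mu$ of loci where the modification is bounded by $\mu$ -- and the resulting map must have enough representability for the fibre products to behave as expected. For $G=\GL_n$ this matching is exactly the statement that a $\phi$-bundle over the relative Robba ring with meromorphic Frobenius, over an arbitrary perfectoid base, is classified by its slope data together with a modification of the associated bundle on the Fargues--Fontaine curve; this is Kedlaya's slope filtration theorem \cite{Kedlaya_slope_revisited} in families over perfectoid spaces in characteristic $p$, in the spirit of \cite{kedlaya_liu_relative_p_adic_hodge_theory_foundations}, which is the relative and Tannakian strengthening advertised in the introduction. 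The passage to general $G$ is then a Tannakian bootstrap, using that meromorphic $G$-bundles are exact tensor functors out of $\Rep_E G$ and that $\calE_b$, $G_b$ and $\ICG_b^\lozenge=[\ast/\underline{G_b(E)}]$ all behave well under this formalism; I expect the slope-filtration-in-families input, and the exactness of this reduction, to absorb essentially all of the difficulty, the diagram chase then being formal.
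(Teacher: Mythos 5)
Your overall strategy---identify $\calM_b$ with $\gamma^{-1}(\ICG_b^\lozenge)$ by a direct comparison of moduli problems and then bootstrap Tannakianly---is the right one, but there are two genuine gaps. First, your description of $\calM_b$ is not the Fargues--Scholze definition. A point of $\calM_b$ over $S$ is \emph{not} a meromorphic modification of $\calE_b$ at an untilt modulo $\underline{G_b(E)}$ (no untilt is part of the data of $\calM_b$, and "the untilt producing $\calE$" does not exist); it is a $G$-bundle on $X_S$ with a $\bbQ$-filtration whose graded pieces are semistable of prescribed slopes. The paper's proof of \Cref{maintheorem} starts precisely from this: $\calM = \on{Fun}^{\otimes}_{\on{ex}}(\Rep_G,\Fil^\sigma_{\sss})$. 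Relatedly, the meromorphy in $\Bun_G^\mer$ is meromorphy of the Frobenius along $V(\pi)\subseteq\calY_S$ (\Cref{defishtukas}, \Cref{meromorphic-vector-bundles}), not a crystalline-to-de Rham comparison at an untilt, so "composing the modification with the standard comparison" does not produce a point of $\Bun_G^\mer\times_{\ICG^\lozenge}\ICG_b^\lozenge$. Your matching of the boundedness-by-$\mu$ condition against meromorphy is likewise aimed at the wrong object (that is the stratification of $\on{Gr}_G$, not of $\calM_b$).

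Second, the step you defer to "Kedlaya's slope filtration theorem in families over perfectoid bases" is exactly the content being proved, not an available input. Kedlaya and Kedlaya--Liu give the pointwise classification and the semicontinuity of $\gamma_\calE\geq\sigma_\calE$; what the theorem needs is (i) that a semistable filtration of an honest bundle on $X_S$ lifts \emph{uniquely} to a filtration by meromorphic sub-bundles (the paper's \Cref{two-types-of-semistable-fils}, whose essential surjectivity rests on the decomposition $B_{[1,1]}^R=B_{[0,1]}^R[\frac{1}{\pi}]+[\varpi]B_{[1,\infty]}^R$ and the $\varphi$-conjugation \Cref{lm:conjugation_to_meromorphic_upper_triangular}), and (ii) that a meromorphic bundle with locally constant generic polygon canonically remembers its semistable filtration (\Cref{meromorphicbundles-remember-fils}, via the anti-effective reduction \Cref{Lifting-to-antieffective} and the norm estimate of \Cref{key-lemma-filtration-extends} showing that $\varphi$-invariant sections over $\bbW(R)[\frac{1}{\pi}]$ automatically converge on a small annulus). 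Neither of these is formal, and neither follows from the cited sources over an arbitrary perfectoid $S$. You do correctly flag the exactness issue for the Tannakian step (handled in the paper by \Cref{DM1p-is-well-bheaved-subcategory-old}), but the two items above are where the proof actually lives.
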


In other words, the restriction of $\sigma\colon\Bun_G^{\rm mer} \rar \Bun_G$ to $\gamma^{-1}([\ast\!/\underline{G_b(E)}])$ coincides with the Fargues--Scholze chart $\pi_b \colon \calM_b \right \Bun_G$ \cite[V.3]{FS24}.

\begin{remark}
	The proof of \Cref{intro:FScharts} is carried out by discussing the case $G=\GL_n$ (i.e.\ the case of vector bundles) in length and appealing to Tannakian formalism to prove the statement for general groups $G$. 
To apply Tannakian formalism one has to take subtle care of the exact structure. 
We do this by justifying that a sequence of meromorphic vector bundles is exact if and only if it is exact at every geometric point (see \Cref{DM1p-is-well-bheaved-subcategory-old}). 
\end{remark}

\begin{remark}
	Z. Wu also proved a version of \Cref{maintheorem} independently (see \Cref{rmrk-Zw}).
% 	made similar considerations using $\sigma_{\natural}$ instead of $\sigma_!$ (see \Cref{rmrk-Zw}). 
\end{remark}
\Cref{maintheorem} should be closely related to the essential surjectivity of $\Psi$.
Our second main result, which we now explain, should be related to fully-faithfulness. 
Recall the analytification functor $X\mapsto X^\dagger$, obtained from sheafifying the formula
\[(R,R^+)\mapsto X(\Spec R^\circ)\,,\]
see \Cref{analytifications}. 
For any small v-stack $X$ we have fully-faithful maps
\[\calD_{\acute{e}t}(X,\bbF_\ell)\xrightarrow{c_X^*} \calD_{\acute{e}t}(X^\lozenge, \bbF_\ell) \xrightarrow{b_X^*}  \calD_{\acute{e}t}(X^\dagger, \bbF_\ell)\,.\]
When $X$ is an affine scheme, fully-faithfulness of these functors is shown in \cite[Lemma 4.1]{GL22}.
The passage to small v-stacks follows formally from this case.

\begin{theorem}[\Cref{meromorphic-comparison-theorem}]
	\label{intro:meromorphiccomparison}	
	We have the following identification of small v-stacks
	\[\Bun_G^\mer \cong \ICG^\dagger\] 	
	and an identification of maps $b^*_{\ICG}=\gamma^*$.
A similar statement holds for the stack of local $\calG$-shtukas for $\calG$ a parahoric model of $G$. 
\end{theorem}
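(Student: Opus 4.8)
The plan is to unwind both sides as moduli functors on perfectoid spaces $\Spa(R,R^+)$ of characteristic $p$ and exhibit a natural bijection. Recall that $\ICG^\dagger$ is the sheafification of $(R,R^+)\mapsto \ICG(\Spec R^\circ)$, so by definition a point of $\ICG^\dagger$ over $\Spa(R,R^+)$ is (v-locally) a $G$-isocrystal over the reduced ring $R^\circ$, i.e. a $\sigma$-linear $G$-bundle on $W_{\calO_E}(R^\circ)[\tfrac1\pi]$ in the usual Witt-vector sense. On the other side, $\Bun_G^\mer(R,R^+)$ is, by \Cref{defi-groupoids-of-interest}, a $G$-bundle on the relative Fargues--Fontaine curve $X_{FF}$ together with a Frobenius structure meromorphic along the divisor. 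First I would treat the $G=\GL_n$ (vector bundle) case; the general case then follows by the Tannakian formalism already invoked in the proof of \Cref{maintheorem}, using the exactness criterion \Cref{DM1p-is-well-bheaved-subcategory-old} to match exact structures on both sides. So the crux is the identification $\Bun_n^\mer \cong (\IC_n)^\dagger$ of v-stacks, where $\IC_n$ classifies isocrystals of rank $n$.

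The key geometric input is the functor from isocrystals over $R^\circ$ to meromorphic bundles on $X_{FF,R}$: this is exactly the classical Fargues--Fontaine construction, sending $(M,\varphi)$ over a perfect ring to the descent of $M\otimes_{W_{\calO_E}(R^\circ)} \calO_{Y_{(0,\infty)}}$ along $\varphi$. One checks this lands in the meromorphic locus — the Frobenius structure is meromorphic along the divisor at $\infty$ precisely because $\varphi$ is only a $\sigma$-semilinear automorphism after inverting $\pi$ — and that the construction is v-local in $\Spa(R,R^+)$, hence descends to a map of v-stacks $(\IC_n)^\dagger \to \Bun_n^\mer$. To invert it, I would produce the inverse on the level of $v$-sheaves by Beauville--Laszlo / descent: a meromorphic bundle on $X_{FF,R}$, pulled back to $Y_{(0,\infty)}$ and equipped with its meromorphic Frobenius structure, glues to a $\varphi$-module over $W_{\calO_E}(R^\circ)[\tfrac1\pi]$; one must check this module is finite projective of the right rank and that the Frobenius becomes an isomorphism after inverting $\pi$, i.e. that one genuinely lands in $\IC_n(\Spec R^\circ)$. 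The two constructions are mutually inverse by a classical computation on $Y$ (essentially the equivalence underlying Kedlaya's slope theory), and everything is functorial and v-local, giving the equivalence of v-stacks. Finally, the identification $b^*_{\ICG}=\gamma^*$ is a compatibility statement: $b^*_{\ICG}$ is by construction induced by the analytification map $\ICG^\dagger \to \ICG^\lozenge$, while $\gamma^*$ is pullback along the generic polygon map $\gamma\colon \Bun_G^\mer \to \ICG^\lozenge$; under the identification just constructed, both are pullback along the same underlying morphism of v-stacks, so this follows by tracing through the definitions of $\gamma$ and of the analytification comparison functors.

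For the parahoric variant one repeats the argument with $\calG$-bundles in place of $G$-bundles: the stack of local $\calG$-shtukas $\Sht_{\calG}$ (respectively its schematic incarnation $\DMG$) is built from the same Witt-vector loop group data, and the meromorphic $\calG$-bundle stack is defined so that the Frobenius structure is a meromorphic modification bounded by the parahoric. The identification of $\Sht_{\calG}^{\mer}$ with $(\DMG)^\dagger$ then proceeds as above, now using that $\calG$-torsors over $W_{\calO_E}(R^\circ)$ correspond to $\calG$-torsors on the integral model of the curve, together with the Tannakian reduction for the exactness structure; I would import the quasi-split case and then descend along an inner twist, exactly as in the body of the text.

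I expect the main obstacle to be the finiteness/flatness bookkeeping on the non-perfectoid ring $R^\circ$: $R$ is perfectoid but $R^\circ$ need not be, so one cannot blindly quote the perfect-ring equivalence, and care is needed to show that a meromorphic $\varphi$-bundle really descends to a \emph{finite projective} module over $W_{\calO_E}(R^\circ)[\tfrac1\pi]$ rather than merely a coherent or locally-finitely-presented one. Controlling this — presumably by reduction to geometric points via the exactness criterion \Cref{DM1p-is-well-bheaved-subcategory-old} and a v-descent argument — is where the real work lies; the rest is formal manipulation of moduli functors and the already-established comparison functors $c_X^*, b_X^*$.
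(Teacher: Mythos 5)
Your forward construction and the claim that it is fully faithful are broadly in line with the paper (though even full faithfulness is not a ``classical computation'': it rests on Kedlaya's GAGA for $\bbA_{\on{inf}}$ and on Ansch\"utz's extension theorem for $\calG$-torsors, packaged in \Cref{fully-faithful-from-inftytobeyond}). The genuine gap is in your inverse. A meromorphic $G$-bundle is, by definition, given only on $Y_{\Spd(R_{\rm disc},R^+_{\rm disc})}=\Spa\,\bbW(R^+)[\frac{1}{\pi}]\setminus V([\varpi])$, whereas an object of $\ICG^{\dagger}$ lives on all of $\Spec\,\bbW(R^\circ)[\frac{1}{\pi}]$, including the locus $[\varpi]=0$. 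There is no second chart near that locus, so there is nothing to glue by Beauville--Laszlo: one must genuinely \emph{extend} the bundle across $\{[\varpi]=0\}$, and this is false at the level of presheaves --- it holds only v-locally. This is precisely \Cref{intro:keylemmaextendinfty} (``every $\calG$-bundle on the Fargues--Fontaine curve extends v-locally at $\infty$''), which the introduction singles out as the main ingredient of the theorem; your proposal treats it as descent formalism plus ``finiteness/flatness bookkeeping,'' which misidentifies where the difficulty sits (projectivity over $\bbW(R^\circ)[\frac1\pi]$ is handled by reduction to product combs and \cite[Theorem 6.1]{Ivanov_arc_descent}; the existence of the extension is the hard part).

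Concretely, the paper's argument for the extension reduces to a product of points $S$, uses that the Beauville--Laszlo uniformization $\Gr_{G,E}\to\Sht_{\calG,E}\to\Bun_G$ is a pro-\'etale cover and that $S$ is $w$-contractible to lift a bundle to a shtuka with leg at an untilt, extends it factor-by-factor over each $\bbA_{\on{inf}}(O_{C_i})$ via Ansch\"utz, assembles these into a single BKF-$\calG$-shtuka using the \emph{uniform} bound on the poles and zeroes of Frobenius, and then --- the delicate step --- shows that the v-sheaf of isomorphisms between the original shtuka and this product shtuka is a closed subsheaf of $\underline{\calG(O_E)}$, hence proper quasi-pro-\'etale and surjective over $S$, so it admits a section. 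None of these ideas appear in your outline, and without them the essential surjectivity does not go through. (Two smaller points: $R^\circ$ is the ring of power-bounded elements, not a reduced quotient; and the parahoric case is handled directly by the Tannakian formalism for arbitrary $\calG$ --- no quasi-split reduction or inner twisting is used or needed.)
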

\begin{remark}
	This statement can be regarded as a version of Fargues' theorem in families (see \Cref{remark-Faruges-theorem}).
\end{remark}

\begin{corollary}
	We have a fully-faithful comparison map
	\[	\gamma^*\circ c^*\colon \calD_{\acute{e}t}(\ICG,\bbF_\ell)\to \calD_{\acute{e}t}(\Bun_G^\mer,\bbF_\ell)\,.\]
\end{corollary}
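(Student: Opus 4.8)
The plan is to combine the meromorphic comparison theorem (\Cref{intro:meromorphiccomparison}) with the general fact about fully-faithfulness of analytification functors on small $v$-stacks recalled in the excerpt. Concretely, I would argue as follows.

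First I would invoke \Cref{intro:meromorphiccomparison}, which gives an identification of small $v$-stacks $\Bun_G^\mer \cong \ICG^\dagger$ together with the matching of maps $b^\ast_{\ICG} = \gamma^\ast$. Under this identification, the composite $\gamma^\ast \circ c^\ast_{\ICG}$ becomes the composite
\[
\calD_{\acute{e}t}(\ICG,\bbF_\ell) \xrightarrow{c^\ast_{\ICG}} \calD_{\acute{e}t}(\ICG^\lozenge,\bbF_\ell) \xrightarrow{b^\ast_{\ICG}} \calD_{\acute{e}t}(\ICG^\dagger,\bbF_\ell) \cong \calD_{\acute{e}t}(\Bun_G^\mer,\bbF_\ell)\,.
\]
So the corollary reduces to knowing that the composite $b^\ast_{\ICG}\circ c^\ast_{\ICG}$ is fully-faithful. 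Since a composite of fully-faithful functors is fully-faithful, it suffices to know that $c^\ast_{\ICG}$ and $b^\ast_{\ICG}$ are each fully-faithful; and this is exactly the displayed chain of fully-faithful maps $\calD_{\acute{e}t}(X,\bbF_\ell)\xrightarrow{c_X^\ast}\calD_{\acute{e}t}(X^\lozenge,\bbF_\ell)\xrightarrow{b_X^\ast}\calD_{\acute{e}t}(X^\dagger,\bbF_\ell)$ valid for any small $v$-stack $X$, instantiated at $X = \ICG$ (or rather at a suitable presentation of $\ICG$ as a small $v$-stack via the analytification conventions of \Cref{analytifications}). The passage from the affine scheme case of \cite[Lemma 4.1]{GL22} to small $v$-stacks is, as the excerpt notes, formal: one writes $\ICG$ as a quotient of (a disjoint union of) affine schemes by descent, checks the functors commute with the relevant pullbacks and colimits, and concludes fully-faithfulness by a standard hypercover/\v{C}ech argument.

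The one point that requires a little care — and the place I would expect the main (though modest) obstacle to lie — is bookkeeping with the three flavours of "analytification" ($(-)^\lozenge$, $(-)^\dagger$, and the intermediate diamondification) and checking that the identification $b^\ast_{\ICG} = \gamma^\ast$ supplied by \Cref{intro:meromorphiccomparison} is compatible with the factorization through $\calD_{\acute{e}t}(\ICG^\lozenge,\bbF_\ell)$ in the way asserted, i.e.\ that $\gamma$ genuinely factors as $\Bun_G^\mer \cong \ICG^\dagger \xrightarrow{b_{\ICG}} \ICG^\lozenge$ and that $\gamma^\ast \circ c^\ast_{\ICG} = b^\ast_{\ICG}\circ c^\ast_{\ICG}$ under this factorization. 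Once the conventions of \Cref{analytifications} are unwound this is immediate from the statement of \Cref{intro:meromorphiccomparison}. There is no genuine new content: the corollary is a formal consequence of the meromorphic comparison theorem plus the cited fully-faithfulness, so the "proof" is really just the assembly of these two inputs, and I would present it in two or three sentences.

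Thus my proof would read: By \Cref{intro:meromorphiccomparison} we have $\Bun_G^\mer\cong\ICG^\dagger$ compatibly with $\gamma^\ast = b^\ast_{\ICG}$, so $\gamma^\ast\circ c^\ast_{\ICG}$ is identified with $b^\ast_{\ICG}\circ c^\ast_{\ICG}$; the latter is a composite of the fully-faithful functors $c^\ast_{\ICG}$ and $b^\ast_{\ICG}$ from the displayed chain (valid for any small $v$-stack, reduced to \cite[Lemma 4.1]{GL22} by descent), hence fully-faithful. $\qed$
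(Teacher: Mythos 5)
Your proposal is correct and is exactly the argument the paper intends: the corollary is stated as an immediate formal consequence of \Cref{intro:meromorphiccomparison} together with the displayed chain of fully-faithful functors $c_X^*$ and $b_X^*$ for small v-stacks. The paper gives no separate proof, and your assembly of the two inputs (including the identification $\gamma^*=b^*_{\ICG}$ under $\Bun_G^\mer\cong\ICG^\dagger$) matches it.
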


\Cref{intro:meromorphiccomparison} provides an approach to prove that $\Psi$ is fully-faithful.
Indeed, it suffices to prove that $\sigma_!$ is fully-faithful when restricted to those objects in the essential image of $\gamma^*\circ c^*$. 
The advantage being that the geometry of $\Bun_G^\mer$ is much closer to that of $\Bun_G$ than that of $\ICG$. 
 
\begin{remark}
	We warn the reader that it is unknown to the authors whether the category $\calD_{\acute{e}t}(\ICG,\bbF_\ell)$ is equivalent to $\on{Shv}(\ICG,\bbF_\ell)$ or not.
	The definition of $\on{Shv}(\ICG,\bbF_\ell)$ is through a Kan extension procedure with respect to $!$-pullback, and the functor $\on{Shv}(-,\bbF_\ell)$ neither satisfies v-descent, nor is it left-complete when evaluated on a general stack $X$, unlike $\calD_{\acute{e}t}(-,\bbF_\ell)$. 
	On the other hand, the definition of $\calD_{\acute{e}t}(\ICG,\bbF_\ell)$ is through v-descent along $*$-pullback, and by construction the value of $\calD_{\acute{e}t}(X,\Lambda)$ is a left-complete category for all stacks $X$. 
For these reasons, it is not even clear from the definitions how to construct a map from one category to the other.
More crucially, it is not clear to the authors whether one should expect $\calD_{\acute{e}t}(\ICG,\bbF_\ell)$ to be compactly generated or not.
\end{remark}

One of the main ingredient in our proof of \Cref{intro:meromorphiccomparison} is the following statement. 

\begin{proposition}[\Cref{lemma-extending-at-infty}]
	\label{intro:keylemmaextendinfty}
	Every $G$-bundle on the Fargues--Fontaine curve extends v-locally at $\infty$.
\end{proposition}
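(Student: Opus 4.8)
The plan is to reduce the $\calG$-bundle case to the case of vector bundles via an embedding $\calG \hookrightarrow \GL_n$, then handle the vector bundle case by hand using the slope decomposition. Here $\infty$ denotes the distinguished point of the Fargues--Fontaine curve cut out by the untilt, and the meromorphic structure amounts to trivializing the bundle away from $\infty$; ``extending v-locally at $\infty$'' should mean that, after passing to a v-cover of the base perfectoid space, the $\calG$-bundle over the punctured formal (or analytic) neighborhood of $\infty$ extends to an honest $\calG$-bundle over the full formal neighborhood of $\infty$ (equivalently, its value over $B_{\dR}^+$ descends from its value over $B_{\dR}$). First I would fix a faithful representation $\calG \hookrightarrow \GL_n$ over $O_E$, so that a $\calG$-bundle gives rise to a rank $n$ vector bundle together with the reduction of structure group; since $\GL_n/\calG$ is a quasi-affine (indeed affine, if $\calG$ is reductive; in general one works parahoric by parahoric) $O_E$-scheme, a reduction of the $\GL_n$-bundle to $\calG$ over the punctured neighborhood extends to a reduction over the full neighborhood as soon as the underlying vector bundle extends and the section of the associated $(\GL_n/\calG)$-bundle extends — and the latter extension is automatic over $B_{\dR}^+$ by the valuative criterion once we know the ambient bundle extends, using that $B_{\dR}^+$ is a complete DVR in the point case and a ``family of DVRs'' in general (this is where the parahoric hypothesis enters, to control the special fiber).

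The heart of the matter is therefore the vector bundle case: given a vector bundle on the punctured neighborhood of $\infty$ (i.e.\ a $B_{\dR}$-local system, or more precisely a $\calB_{\dR}$-module locally free of finite rank over the relative punctured disc), show it extends v-locally to a $B_{\dR}^+$-lattice. Over a geometric point this is classical: every finitely generated $B_{\dR}$-module is free and admits a $B_{\dR}^+$-lattice since $B_{\dR}^+$ is a DVR. The content is the v-local statement in families, and here I would invoke exactly the machinery developed earlier in the paper for the slope filtration and the identification of the generic Newton strata with the Fargues--Scholze charts $\calM$ (\Cref{intro:FScharts}): a meromorphic vector bundle is, v-locally on the base, pulled back from one of the charts $\calM_b$, and on $\calM_b$ one has a universal modification and hence a canonical lattice at $\infty$. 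Concretely, the plan is: (1) check the claim is v-local on the base and stable under the relevant limits, so we may assume the base is a geometric point or a product of a geometric point with a strictly totally disconnected space; (2) use the existence of the slope filtration (Kedlaya, in the relative form recalled in \S\ref{section-3-compar}) to reduce to the isoclinic case; (3) in the isoclinic case, the bundle becomes, after a v-cover, a direct sum of copies of $\calO(\lambda)$ twisted by a $\GL_n(E)$-torsor, for which a lattice at $\infty$ manifestly exists.

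The main obstacle I expect is step (2)/(3) \emph{in families} — i.e.\ making the slope filtration argument work v-locally rather than pointwise, since the slopes can jump and the filtration need not extend over the whole base. This is precisely the phenomenon that forces the passage to a v-cover: one stratifies the base by Newton polygon, and on each (locally closed) stratum the slope filtration exists, but to glue we must thicken and pass to the charts $\calM_b$, whose very construction in \cite{FS24} is engineered to resolve this. So the real work is bookkeeping: verifying that the lattice produced on each chart is compatible under the transition maps of the correspondence $\Bun_G^\mer$, which is where \Cref{intro:FScharts} is used as a black box. A secondary subtlety, already flagged above, is the non-reductive (parahoric) case of the group-theoretic reduction, where $\GL_n/\calG$ is only quasi-affine and one must argue that the extension of the reduction still exists over $B_{\dR}^+$ — I would handle this by working with a parahoric $\calG$ so that the special fiber is connected and the relevant partial flag variety is proper, making the valuative criterion applicable fiberwise over the ``family of DVRs'' $B_{\dR}^+$.
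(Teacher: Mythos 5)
There is a genuine gap, beginning with the meaning of the statement. In this paper ``$\infty$'' is \emph{not} the untilt/de Rham point of the Fargues--Fontaine curve: it is the boundary point $\kappa_\varpi=\infty$, i.e.\ the locus $V([\varpi])\cap\{\pi\neq 0\}$ of $\Spa\,\bbA_{\on{inf}}(R^+)$ (over a geometric point, the point $\Spa\,\bbW(k)[\frac{1}{\pi}]$ with $k$ the residue field of $O_C$). ``Extending at $\infty$'' means extending the $\varphi$-equivariant $\calG$-bundle from $Y_{(0,\infty)}$ to $Y_{(0,\infty]}$ (\Cref{def-extend-G-atinfty}), not producing a $B_{\dR}^+$-lattice at the leg --- the latter is automatic, since the bundle is already defined there. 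Consequently your ``family of DVRs'' $B_{\dR}^+$ and the valuative criterion for $\GL_n/\calG$ address the wrong divisor. The actual extension problem, after Kedlaya's GAGA, is extending a $\calG$-torsor over $\Spec \bbW(R^\circ)\setminus V(\pi,[\varpi])$ across a codimension-two locus; this is Ansch\"utz's theorem \cite{Ans18} (applicable over products of points), a substantial input for parahoric $\calG$ that is not a formal consequence of the $\GL_n$ case via quasi-affineness of $\GL_n/\calG$.

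Second, the reduction to the isoclinic case does not work v-locally. The Newton strata are only locally closed, and passing to a v-cover does not convert a locally closed stratification into a disjoint open one; the hard content of the statement is precisely the behaviour of families whose polygon jumps, which is also why \Cref{maintheorem} (which you propose to use as a black box) only describes the locus where the generic polygon is locally constant. The paper instead (i) reduces to a product of points $S=\Spa(R,R^+)$ with $R^+=\prod_{i\in I}O_{C_i}$; (ii) uses the Beauville--Laszlo uniformization $\on{Gr}_{G,E}\to \Bun_G$, a pro-\'etale surjection that splits over the $w$-contractible $S$, to lift the bundle to a shtuka with leg at an untilt; (iii) extends pointwise by Fargues' theorem/\cite{Ans18} and forms the product BKF-module using uniform bounds on the poles and zeros of the Frobenius; and (iv) identifies this product with the original shtuka by showing the sheaf of isomorphisms is a closed subsheaf of $\underline{\calG(O_E)}$, hence proper and quasi-pro-\'etale and therefore admits a section over $S$ (\Cref{extend-at-infty-direct}, \Cref{lemma-extending-at-infty}). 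Your chart-based route via the $\calM_b$ was in fact the approach of an earlier version of this article; the authors abandoned it because it was more complicated, imposed restrictions on the characteristic of $E$, and implicitly relied on the topological comparison for $\GL_n$, a circularity that the Beauville--Laszlo argument avoids.
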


This together with the main theorem of \cite{Ans18} (which is itself an ingredient in the proof of \Cref{intro:keylemmaextendinfty}) has as a consequence the classification of \Cref{intro:classification} below. 
We fix some notation. 
Let $S=\Spa (R,R^+)$ be a product of points with $R^\circ=\prod_{i\in I}\! O_{C_i}$ and a family of pseudo-uniformizers $\varpi_\infty=(\varpi_i)_{i\in I}$ such that $\varpi_\infty$ defines the topology on $R^\circ$. 
We moreover fix an untilt $S^\sharp$ given by a non-zero divisor $\xi_\infty=(\xi_i)_{i\in I}$. 
This induces for all $i\in I$ an untilt $C^\sharp_i$.

\begin{corollary}
	\label{intro:classification}
	Assume that $S$ is a product of points. Then the following categories are equivalent:
	\begin{enumerate}
		\item The category of local shtukas over $S$ with paw at $S^\sharp$. 
		\item The category of Breuil--Kisin--Fargues modules over $\bbA_{\on{inf}}(R^{\circ,\sharp})$.
		\item The category of $I$-indexed families $\{(M_i,\Phi_i)\}_{i\in I}$ of Breuil--Kisin--Fargues modules over $\bbA_{\on{inf}}(O_{C^\sharp_i})$ with uniformly bounded poles and zeroes at $\xi_\infty$.
	\end{enumerate}
	Furthermore, a similar statement with $\calG$-structure holds.
\end{corollary}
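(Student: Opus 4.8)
The plan is to assemble the equivalence from \Cref{intro:keylemmaextendinfty} (every $\calG$-bundle extends $v$-locally at $\infty$), the main theorem of \cite{Ans18}, and a descent argument along the pro-\'etale cover $S = \Spa(R,R^+)$ by a product of points. First I would recall that on a product of points the Fargues--Fontaine curve $X_S$ has the pleasant feature that closed Cartier divisors are controlled componentwise: the untilt $S^\sharp$ cut out by $\xi_\infty = (\xi_i)_{i\in I}$ decomposes, on the completed local rings, into the individual untilts $C_i^\sharp$. A local shtuka over $S$ with paw at $S^\sharp$ is by definition a vector bundle $\calE$ on $Y_S = \Spa(\bbA_{\inf}(R^{\circ,\sharp})) \setminus V(\pi)$ (equivalently a $\varphi$-module over the analytic locus) together with a Frobenius isomorphism $\varphi^\ast \calE|_{Y_S \setminus S^\sharp} \xrightarrow{\sim} \calE|_{Y_S\setminus S^\sharp}$, meromorphic along $S^\sharp$. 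The content of (1)$\Leftrightarrow$(2) is then exactly that such a $\varphi$-module, a priori defined only away from $V(\pi)$, extends uniquely to all of $\Spec \bbA_{\inf}(R^{\circ,\sharp})$ — i.e.\ over the whole two-dimensional base — which is where \Cref{intro:keylemmaextendinfty} enters: extending $v$-locally at $\infty$ is precisely the statement that allows the bundle to be glued across the punctured point $V(\pi)$, and then descent along the $v$-cover gives the extension over $S$ itself. I would cite \cite{Ans18} for the fact that $\bbA_{\inf}$ is coherent and that vector bundles over $\Spec\bbA_{\inf}\setminus\{\mathfrak m\}$ extend (Anschütz's Beauville--Laszlo-type theorem), reducing the two-dimensional extension problem to a point-deletion statement that his paper settles.

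For (2)$\Leftrightarrow$(3) the strategy is a base-change-to-components argument. A Breuil--Kisin--Fargues module over $\bbA_{\inf}(R^{\circ,\sharp})$ restricts, along $R^{\circ,\sharp} \to O_{C_i^\sharp}$, to a BKF module $(M_i,\Phi_i)$ over each $\bbA_{\inf}(O_{C_i^\sharp})$; conversely, given a compatible family, one reconstructs a module over $\prod_i \bbA_{\inf}(O_{C_i^\sharp})$ and must check that the uniform boundedness of poles and zeroes at $\xi_\infty$ is exactly the condition needed for the product module to descend to a finite projective $\bbA_{\inf}(R^{\circ,\sharp})$-module with Frobenius structure of the correct (uniformly bounded) meromorphy type. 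Here the key technical points are: (i) $\bbA_{\inf}(R^{\circ,\sharp})$ sits inside $\prod_i \bbA_{\inf}(O_{C_i^\sharp})$ as the subring of elements with uniformly bounded "denominators" in the relevant topology, so a finite free module over the product with uniformly bounded elementary divisors at $\xi_\infty$ is induced from the smaller ring; (ii) the Frobenius-linearity is checked componentwise and patches because $\varphi$ acts on the product compatibly. I would phrase (i) carefully using the Smith-normal-form / elementary-divisor description of the cokernel of $1\otimes\Phi$, which makes "uniformly bounded poles and zeroes" literally a statement about a single integer $N$ bounding all the $\xi_i$-adic valuations of the elementary divisors.

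For the $\calG$-structure (parahoric case) one proceeds by the usual Tannakian bootstrap: a $\calG$-shtuka is an exact tensor functor from $\Rep_{O_E}\calG$ to the category of local shtukas, and since each of the three categories above is exact tensor and the established equivalences are exact tensor equivalences, the equivalence passes to $\calG$-torsors; one must additionally invoke \Cref{intro:keylemmaextendinfty} in its $\calG$-bundle form (as stated) so that the $\calG$-torsor over $Y_S$ itself, not merely its associated vector bundles, extends at $\infty$. The main obstacle I anticipate is the descent step in (1)$\Leftrightarrow$(2): going from "$v$-locally at $\infty$ the bundle extends" to "it extends over $S$" requires that the two $v$-local extensions agree on overlaps, which is a uniqueness statement for the extension — this should follow from the fact that a vector bundle on $Y_S$ extending to $\Spec\bbA_{\inf}$ does so uniquely (again \cite{Ans18}, reflexivity of the pushforward across codimension $\geq 2$), but making the Frobenius isomorphism descend simultaneously with the bundle, with its meromorphy bounds preserved uniformly over the index set $I$, is the delicate point and is precisely where the "product of points" hypothesis is being used to its full strength.
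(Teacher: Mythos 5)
Your plan gets the outer architecture right and two of the three pieces are essentially the paper's own: the equivalence (2)$\Leftrightarrow$(3) is exactly \Cref{descriptionDMdaggeronproductofpoints}, resting on the ring identification $\bbA_{\on{inf}}(R^{\circ,\sharp})\cong\prod_{i\in I}\bbA_{\on{inf}}(O_{C_i^\sharp})$ (cf.\ \Cref{two-versions-of-product-of-combs}) together with the observation that an element of $(\prod_i\bbA_{\on{inf}}(O_{C_i^\sharp}))[\frac{1}{\xi_\infty}]$ is precisely a family with uniformly bounded denominators; and full faithfulness of (2)$\to$(1) is, as you say, Kedlaya's GAGA over $\bbA_{\on{inf}}$ plus the extension theorem of \cite{Ans18} (\Cref{fully-faithful-from-inftytobeyond}, \Cref{Cartesian-grps-untilt}). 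The Tannakian remark for $\calG$-structure is also fine.

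The gap is in essential surjectivity of (2)$\to$(1), which you propose to obtain by extending ``$v$-locally at $\infty$'' and then ``descending along the $v$-cover.'' This cannot work as stated: category (2) is an honest category of modules over a fixed ring, not (known to be) the value of a $v$-stack on $S$, so there is no descent to perform; and uniqueness of extensions (your reflexivity argument) only compares two extensions of the same bundle over a common base — it does not let you glue extensions over the terms of a \v{C}ech nerve, whose members are in general no longer products of points. (A secondary confusion: a shtuka lives on $\calY_S$, the locus $[\varpi]\neq 0$, and extending ``at $\infty$'' means crossing $V([\varpi])$, not $V(\pi)$.) The paper's actual mechanism (\Cref{extend-at-infty-direct}) is the real content of the corollary: restrict the shtuka to each geometric point $S_i$ and apply the pointwise theorem of Fargues/Ansch\"utz to get $(M_i,\Phi_i)$; note that a single embedding $\calG\hookrightarrow\GL_n$ forces uniform bounds on poles and zeroes, so the family assembles into a product BKF-module (this is where your (2)$\Leftrightarrow$(3) enters); and then one must still prove that the shtuka attached to this product module is isomorphic to the original one \emph{over $S$}, not merely fiberwise. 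That identification is done by rewriting both shtukas as modifications attached to $\underline{\calG(O_E)}$-torsors (\Cref{Second-description}), showing the Isom-sheaf is a closed subsheaf of $\underline{\calG(O_E)}$, hence proper, quasi-pro-\'etale and surjective over $S$, and splitting it using the $w$-contractibility of a product of points (\Cref{quasi-pro-etale-admitsection}). You correctly flag this as the delicate point, but the proposal contains no argument for it, and the reflexivity/uniqueness statement you offer in its place does not address it.
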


\begin{remark}
	That vector bundles extend v-locally at $\infty$ was also shown independently by Zhang in her proof of Scholze's fiber product conjecture \cite[Proof of Proposition 8.14]{Zha23}. 
	The previous version of this article discussed a proof of \Cref{intro:keylemmaextendinfty} that was substantially more complicated and had restrictions on the characteristic of $E$. 
	It used the $\calM_b$-charts of Fargues--Scholze to uniformize $\Bun_G$ and extend at $\infty$. 
	The current approach uses the Beauville--Laszlo uniformization of $\Bun_G$ to extend at $\infty$.
	The argument provided in the current version of the article is quite close to the one of Zhang. 
	We are very grateful to her for several conversations related to this.
\end{remark}

\subsection{New proofs of two established results}
As a consequence of our considerations we found new approaches to previously proven theorems relating the geometry of $\ICG$ and $\Bun_G$.
\subsubsection{The scheme-theoretic comparison}
Recall the reduction functor introduced by the first author in \cite[\S 3]{Gle22}. 
Roughly, it is an analogue in the context of $v$-sheaves of the functor that takes a formal scheme to its reduced special fiber; the reduction $X^\red$ of a $v$-stack $X$ on perfectoid spaces over $\bbF_p$ is a $v$-stack on perfect schemes over $\bbF_p$ (see also $\mathsection 2$).
The following theorem is a reformulation of a result of Pappas--Rapoport \cite[Theorem 2.3.8]{PR21}, which in turn genealizes a result of Ansch\"utz \cite[Theorem 1.1]{Anschuetz_absFF}. We take a new approach.
\begin{theorem}[\Cref{comparison-reduction}]
	\label{intro:schematiccomparison}
	We have an identification of scheme-theoretic v-stacks
	\[(\Bun_G)^\red \cong \ICG\,.\]
	If $\calG$ is a parahoric model of $G$, then a similar statement holds for the stack of $\calG$-shtukas.
\end{theorem}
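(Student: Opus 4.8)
The plan is to deduce this from the meromorphic comparison theorem \Cref{intro:meromorphiccomparison} together with the interaction of Gleason's reduction functor $(-)^{\red}$ with the two analytification functors $(-)^{\lozenge}$ and $(-)^{\dagger}$. Concretely I would establish
\[
(\Bun_G)^{\red}\;\cong\;(\Bun_G^{\mer})^{\red}\;\cong\;(\ICG^{\dagger})^{\red}\;\cong\;\ICG ,
\]
where the first isomorphism is (the inverse of) $\sigma^{\red}$ for the special polygon map $\sigma\colon\Bun_G^{\mer}\to\Bun_G$, the second is \Cref{intro:meromorphiccomparison}, and the third is the general fact that the reduction of the $\dagger$-analytification of a (sufficiently nice) scheme-theoretic $v$-stack recovers the original. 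Because the asserted identification is of scheme-theoretic $v$-stacks and $(-)^{\red}$ is compatible with finite limits and with $v$-descent \cite{Gle22}, each step could, if convenient, be checked after a Tannakian reduction to the vector bundle case $G=\GL_n$ — taking the same care with the exact structure as for \Cref{intro:FScharts} — but the inputs listed above hold for general $G$, and likewise for a parahoric $\calG$, so no such reduction is strictly necessary.

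For the third isomorphism I would first recall from \cite{Gle22} that $(-)^{\red}$ inverts the $\lozenge$-analytification of a scheme-theoretic $v$-stack that is $v$-locally a quotient of perfections of (formal) schemes; this applies to $\ICG$, which is glued from such presentations and whose strata $\ICG_b$ are classifying stacks of affine group schemes built from Witt vectors. It then remains to see that the canonical comparison $\ICG^{\dagger}\to\ICG^{\lozenge}$ — the one underlying $b^{*}_{\ICG}$ — becomes an isomorphism after $(-)^{\red}$. This holds because the reduction of an adic input depends only on its ``reduced special fibre'', which does not distinguish a ring of integral elements $R^{+}$ from the full ring of power-bounded elements $R^{\circ}$ (they have the same residue fields); hence the two analytifications of any scheme-theoretic $v$-stack have canonically identified reductions, and in particular $(\ICG^{\dagger})^{\red}\cong(\ICG^{\lozenge})^{\red}\cong\ICG$.

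The heart of the matter is the first isomorphism, i.e.\ that $\sigma$ induces an equivalence on reductions. Essential surjectivity is exactly \Cref{intro:keylemmaextendinfty}: every $G$-bundle on the Fargues--Fontaine curve extends $v$-locally at $\infty$, hence $v$-locally lifts to a meromorphic $G$-bundle, and since $(-)^{\red}$ is a $v$-sheafification this survives to the reduction. For fullness and faithfulness one must show that two meromorphic structures on a fixed bundle differ, $v$-locally, by an automorphism that becomes trivial under $(-)^{\red}$: the ambiguity is a section of a group of automorphisms concentrated in a formal neighbourhood of $\infty$ — a ``positive'' group of Banach--Colmez type — and such groups have trivial reduction, by the same computations that identify $(\Bun_G^{b})^{\red}$ with $\ICG_b$ stratumwise (here one also invokes the topological comparison theorem so that the $B(G)$-stratifications match up with reversed topology). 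The main obstacle is precisely this last step — making rigorous, uniformly in families, the slogan that $(-)^{\red}$ cannot see the meromorphicity condition — together with checking that the stratumwise equivalences glue; everything else is formal manipulation with the reduction functor or an appeal to \cite{Ans18,Gle22} and to the earlier sections of the paper. The parahoric case follows by rerunning the argument with the $\calG$-versions of \Cref{intro:meromorphiccomparison} and \Cref{intro:keylemmaextendinfty} and with parahoric Witt-vector loop groups in place of the automorphism group schemes of $\ICG_b$ and $\Bun_G^{b}$.
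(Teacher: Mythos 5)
Your route is genuinely different from the paper's, and while it is not circular (the meromorphic comparison \Cref{meromorphic-comparison-theorem} and \Cref{lemma-extending-at-infty} are proved without \Cref{comparison-reduction}), it is much heavier than what is needed and, more importantly, leaves the real content unproved at two links of your chain. For comparison, the paper's proof for $\Bun_G$ is a four-line computation: $\ICG(X)=\on{Fun}^\otimes_{\on{ex}}(\Rep_G,\IC(X))\cong\on{Fun}^\otimes_{\on{ex}}(\Rep_G,\Bun_{\on{FF}}(X^\diamond))=(\Bun_G)^\red(X)$, where the middle equivalence is \Cref{isomorphism-isocrystals-bung}, itself a formal consequence of \Cref{theorem-evaluating-opensubs} (v-descent of vector bundles on the $E_\infty$-sousperfectoid space $Y_{\calS^\diamond}$). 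The whole point of that argument is that classicality has nothing to do with the Frobenius structure, the Newton stratification, or extension at $\infty$ — all of which your argument invokes.

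The first gap is the assertion $(\ICG^\dagger)^\red\cong\ICG$. This is not a formality and is nowhere established in the paper. Evaluating $\ICG^\dagger$ on $\Spd(A,A)$ means computing descent along a perfectoid cover $\Spa(R,R^+)\to\Spd(A,A)$, where the value on the cover is $\ICG(\Spec R^\circ)$; but the \v Cech nerve of $\Spa(R,R^+)\to\Spd(A,A)$ in $\Perf$ does not compute the schematic \v Cech nerve of $\Spec R^\circ\to\Spec A$, so "same residue fields" does not yield the claim. The paper's own \Cref{calV-is-a-vsheaf} only \emph{expects} the analogous identity $(\calV_\bbW)^\red\simeq\calV^\sch_\bbW$ in general (it is a theorem of G\"uthge in mixed characteristic), which shows that statements of the shape "reduction of an analytification recovers the original" require genuine proof. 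The second gap is full faithfulness of $\sigma^\red\colon(\Bun_G^\mer)^\red\to(\Bun_G)^\red$. What must be shown is that for a perfect scheme $X$ and two objects of $\Bun_G^\mer(X^\diamond)$ with isomorphic images in $\Bun_G(X^\diamond)$, the isomorphism lifts uniquely; concretely, a trivialization of a family of shtukas defined over $Y_{X^\diamond}$ without any meromorphicity hypothesis must automatically be meromorphic along $V(\pi)$ and descend to $\Spec\bbW(A)[\frac{1}{\pi}]$. This is exactly the content of the paper's \Cref{reductionoffamilyofshtukas} (following \cite[Proposition 2.30]{Gle21}), and it is the hard point. Your replacement — "automorphism groups concentrated near $\infty$ are of positive Banach--Colmez type and have trivial reduction, stratumwise, then glue" — is a slogan: the fibres of $\sigma$ over a general perfect test scheme are torsors in families, not products of groups over geometric points, the vanishing $\calB\calC_{\lambda,i}^\red=0$ is a statement about absolute Banach--Colmez spaces over $\Spd\overline{\bbF}_q$, and gluing stratumwise equivalences across closures of Newton strata is precisely the kind of step that cannot be waved through. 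Until these two links are supplied, the proposal is not a proof.
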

\begin{remark}
	We regard \Cref{intro:schematiccomparison} as a classicality statement.
	Ansch\"utz proves the equivalence of categories $(\Bun_G)^\red(k) \cong \ICG(k)$ for an algebraically closed field $k/\bbF_p$ using the classification of vector bundles over the Fargues--Fontaine curve \cite[Theorem 3.4]{Anschuetz_absFF}.
	Pappas--Rapoport \cite[Theorem 2.3.8]{PR21} prove this more generally using the result of Ansch\"utz and in particular rely on the $\varphi$-structure (i.e.\ descent datum along the map $Y_{(0,\infty)}\to Y_{(0,\infty)}/\varphi$).
	We give a uniform proof and work directly with the category of v-vector bundles over $Y_{(0,\infty)}$, showing that classicality is unrelated to the $\varphi$-structure. 
	G\"uthge also realized this independently \cite{Gut23} (see \Cref{calV-is-a-vsheaf}). 
\end{remark}
\subsubsection{The topological comparison}
Recall that $B(G)$ comes endowed with a topology induced by its partial order.
We can also consider $B(G)^{\on{op}}$ endowed with the topology induced by the opposite partial order. 
For either a scheme-theoretic or an analytic $v$-stack $X$, let $|X|$ denote its underlying topological space.
Viehmann \cite[Theorem 1.1]{Viehamann_Newton_BdR} proves that $|\Bun_G|^{\on{op}}\cong B(G)$.
Rapoport--Richartz \cite{RR_96} and He \cite[Theorem 2.12]{He_hecke_padic} prove $|\ICG|\cong B(G)$.

We give an alternative proof of the following theorem.
\begin{theorem}[\Cref{topologicalcomparisonmainthext}]
\label{intro:topologicalcomparison}
	The natural maps are homeomorphisms
	\[|\Bun_G|^{\on{op}}\cong |\ICG|\cong |\ICG^\lozenge|\,.\]
\end{theorem}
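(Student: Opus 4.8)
The plan is to reduce everything to the known description of the two topological spaces as $B(G)$, and to check that the comparison maps realize the tautological identifications. Concretely, there are three spaces in play: $|\Bun_G|$, $|\ICG|$, and $|\ICG^\lozenge|$, all of which are known to be in (set-theoretic) bijection with $B(G)$ by \cite{Viehamann_Newton_BdR}, \cite{RR_96}, \cite{He_hecke_padic}. What must be proven is that (a) the natural map $|\ICG^\lozenge| \to |\ICG|$ is a homeomorphism, and (b) the composite $|\Bun_G|^{\on{op}} \to |\ICG^\lozenge| \to |\ICG|$ (or the relevant span through $\Bun_G^\mer$) is a homeomorphism, i.e.\ it is continuous, open, and bijective. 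The bijectivity on the level of $B(G)$ is already in the literature, so the content is entirely topological: continuity plus openness, after which bijective + continuous + open $\Rightarrow$ homeomorphism.

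First I would handle $|\ICG^\lozenge| \cong |\ICG|$. Here one uses that the $\lozenge$-analytification is built by a sheafification of $(R,R^+)\mapsto \ICG(\Spec R^\circ)$, together with the schematic comparison theorem (\Cref{intro:schematiccomparison}): $(\Bun_G)^\red \cong \ICG$, and the general fact (from the theory of the reduction functor \cite[\S3]{Gle22}) that $|X^\red| \cong |X|$ and that $|X^\lozenge|$ likewise has the same underlying topological space as $X$ itself when $X$ is already a nice $v$-stack. In fact, a clean route is: $|\ICG^\lozenge|$ and $|\ICG|$ both carry the topology of $B(G)$ with its natural (order) topology, and the analytification map is a continuous bijection which, because $\ICG$ is covered by a stratification into classifying stacks $[\ast/\underline{G_b(E)}]$ whose closures are understood on both sides compatibly, is also open. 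So this step is essentially bookkeeping once one knows the strata and their closure relations match — which is exactly what \cite{RR_96}, \cite{He_hecke_padic} give.

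For the main comparison $|\Bun_G|^{\on{op}} \cong |\ICG|$, I would work through the correspondence $\Bun_G \xleftarrow{\sigma} \Bun_G^\mer \xrightarrow{\gamma} \ICG^\lozenge$ and use \Cref{intro:FScharts}: the generic Newton strata of $\Bun_G^\mer$ are the Fargues--Scholze charts $\calM$, which surject onto $\Bun_G$ (so $\sigma$ is surjective on points and, being built from the $\calM_b$, is open and continuous by the Fargues--Scholze theory of these charts), while $\gamma$ realizes the special polygon. The key point — and this is the topological heart of Viehmann's theorem as well — is that for a $G$-bundle on the Fargues--Fontaine curve in a family, the Newton polygon of the generic fiber lies \emph{above} (or, with the right sign convention, \emph{below}) that of a specialization, whereas the Kottwitz/Newton stratification on $\ICG$ specializes in the opposite direction because there the relevant semicontinuity is for the Harder--Narasimhan polygon of an isocrystal; concretely this is the statement that the poset maps $B(G) \to B(G)$ induced by $\gamma$ and $\sigma$ are the identity and the order-reversing identity respectively. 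Granting the set-theoretic bijections, one then checks: $\sigma$ is open (from the $\calM_b$-chart description), $\gamma$ is open (again via $\calM$ and \Cref{intro:FScharts}), hence the induced map on topological spaces is continuous and open; bijectivity and the order-reversal are the cited results of Viehmann and Rapoport--Richartz. Continuous + open + bijective gives the homeomorphism.

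The main obstacle I expect is the \emph{openness} of the comparison maps — equivalently, matching the two stratifications not just set-theoretically but as topological stratifications, i.e.\ showing that the closure of a stratum on one side maps onto (the closure of) the corresponding stratum on the other. The bijection $|\Bun_G| \cong B(G) \cong |\ICG|$ is a black box we may invoke, but the fact that it \emph{reverses} the specialization order, realized geometrically through $\Bun_G^\mer$, requires the semicontinuity input: on $\Bun_G$ the Newton point jumps up under specialization (Kedlaya--Liu / Fargues--Scholze semicontinuity of the Harder--Narasimhan polygon of $\calE$ over the curve), while on $\ICG$ it jumps down (Rapoport--Richartz / Grothendieck specialization for isocrystals). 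Reconciling these through the correspondence, and verifying that no stratum is "lost" (surjectivity of closures), is where the real work lies; everything else is formal manipulation of continuous open bijections and the already-established pointwise dictionaries.
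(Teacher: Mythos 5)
Your proposal does not follow the paper's route and, more importantly, has a real gap. If you invoke Viehmann's $|\Bun_G|^{\on{op}}\cong B(G)$ and Rapoport--Richartz/He's $|\ICG|\cong B(G)$ as \emph{homeomorphisms}, the theorem follows immediately with nothing left to check; the whole point of the paper's argument is to avoid doing this and instead compare the specialization orders $\preceq_{\ICG}$, $\preceq_{\Isoc_G}$, $\preceq_{\Bun_G^{\on{op}}}$ directly, thereby showing the two instances of Grothendieck's conjecture are \emph{equivalent}. If instead you grant only the set-theoretic bijections together with the semicontinuity (Grothendieck specialization) theorems, then the remaining content is not ``openness of $\sigma$ and $\gamma$'' but the \emph{transfer of specializations} between the schematic and analytic worlds, and your proposal never explains how to achieve that. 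Semicontinuity gives only one implication ($b_1\preceq b_2\Rightarrow b_1\leq_{B(G)}b_2$); the substance is producing, from a specialization witnessed on one side, a geometric family witnessing the corresponding specialization on the other. \Cref{intro:FScharts} only identifies the fibers of $\gamma$ over the individual strata $\ICG_b^\lozenge$ and carries no information about closures of strata, so it cannot supply the openness you appeal to.

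The paper's actual mechanism is as follows. By \Cref{alltopologieshavequotientopology} one reduces to covering relations $b_1\preceqdot b_2$. From a witnessing family on the schematic side one extracts a rank-one valuation ring $V$ and a map $\Spec V\to\ICG$ sending the closed point into $\ICG_{b_1}$ and the generic point into $\ICG_{b_2}$; applying $\diamond$ and using \Cref{theorem-evaluating-opensubs} together with the semicontinuity of Harder--Narasimhan polygons for $\varphi$-modules over $Y_{(0,\infty]}$ (\cite[Proposition 2.1.3]{PR21}, \cite[Theorem II.2.14]{FS24}) yields the corresponding specialization in $\Bun_G$ with the order reversed. Conversely, a specialization in $|\Bun_G|$ realized over a product of points $\Spa(R,R^+)$ with $R^+=R^\circ$ is converted, via the meromorphic comparison theorem (equivalently \Cref{intro:keylemmaextendinfty}, that bundles extend v-locally at $\infty$), into a map $\Spec R^\circ\to\ICG$, from which a valuation ring witnessing the specialization in $\ICG$ is extracted. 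This extension-at-infinity input is exactly what your proposal is missing; the remark following \Cref{intro:topologicalcomparison} states explicitly that the proof relies on it.
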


%\begin{remark}
%	The proof of \Cref{intro:topologicalcomparison} relies on \Cref{intro:keylemmaextendinfty}. 
%	In a previous version of this article, our approach to proving \Cref{intro:keylemmaextendinfty} implicitly relied on a version of \Cref{intro:topologicalcomparison} for $\on{GL}_n$. 
%	The current approach avoids this circularity.
%\end{remark}

%\begin{remark}
	%Our proof of \Cref{intro:topologicalcomparison} relies on certain known inputs of the geometry of $|\Bun_{\on{GL}_n}|$. 	
	%More precisely, our proof depends on \Cref{intro:keylemmaextendinfty} which itself depends of the following two inputs. 
%\begin{enumerate}
	%\item We use that the Fargues--Scholze chart maps $\calM_b\to \Bun_{\on{GL}_n}$ are cohomologically smooth. 
	%\item We use that as $b\in B(\on{GL}_n)$ ranges over strongly regular elements (which we denote $B(G)_{T,\on{sr}}$) the map $\coprod_{b\in B(G)_{T,\on{sr}}}\calM_b\to \Bun_{\on{GL}_n}$ is surjective (see \Cref{line-bundles-cover-all}). 
%\end{enumerate}
		%The second input is directly implied by \cite{MR4386821}, and we rely on that work. 
		%%but as we show in \Cref{line-bundles-cover-all} the weaker statement that we use can be deduced without too much effort from the classification of vector bundles on the Fargues--Fontaine curve. 
%\end{remark}

	\subsection{Acknowledgements}
	We would like to thank Peter Scholze for many illuminating conversations. 
	We thank Alexander Bertoloni-Meli and Linus Hamann for explaining several things about the refined local Langlands correspondence. 
	We thank Ko Aoki for explaining to us several things related to the $\infty$-categorical background necessary to carry this project. 
	The third author wants to thank the first author for the excellent supervision of his master thesis.
	He moreover wants to thank the first and second author for their constant help and encouragement and for inviting him to join this project.
	The third author also wants to thank Johannes Ansch\"utz for several helpful conversations about extending \Cref{theorem-evaluating-opensubs} to perfect complexes.
	The first and second author would like to thank the third author whose master thesis work reshaped their perspective on how this article should be written.
	We also thank David Hansen, Georgios Pappas, Jared Weinstein for helpful comments on a preliminary version of the article.
	We also thank Johannes Ansch\"utz, Anton G\"uthge, Ben Heuer, Jo\~ao Louren\c{c}o, Sug Woo Shin, Eva Viehmann, Pol van Hoften, Mingjia Zhang, and Xinwen Zhu for stimulating conversations. 
	Finally, we are very grateful to the anonymous referee for carefully reading the article and providing very helpful feedback.
	
	This paper was written during stays at Max-Planck-Institut für Mathematik and Universität Bonn, we are thankful for the hospitality of these institutions. The first author has received funding by DFG via the Leibniz-Preis of Peter Scholze.

    \section{Notation, terminology and generalities}\label{sec:notation}
   We fix the following notation throughout the text.
    Given a Huber pair $(A,A^+)$, we let $\Spa(A,A^+)$ denote the preadic space of \cite[\S 3.4]{SW20}.
    We will mostly restrict to stably uniform analytic Huber pairs $(A,A^+)$, so that $\Spa(A,A^+)$ is an adic space.
    Whenever $A^+=A^\circ$ we simply write $\Spa A$.
    Analogously, given a Huber pair over $\bbZ_p$, we let $\Spd(A,A^+)$ denote the small v-sheaf of \cite[Lemma 15.1]{Sch17}. 
    Whenever $A^+=A^\circ$ we simply write $\Spd A$.
    We let $\Adic$ denote the category of uniform analytic adic spaces over $\bbZ_p$ \cite[Definitions 3.2.1 and 5.3.1]{SW20}.
    For an adic space $X$, by a geometric point of $X$, denoted by $\overline{x}\to X$, we mean a map of adic spaces $\overline{x}\colon \Spa(C,C^+)\to X$, where $C$ is a complete non-Archimedean algebraically closed field and $C^+\subseteq C$ is an open and bounded valuation subring.
	For such a geometric point $\bar{x}\to X$, we denote the associated field by $k(\bar{x})$ or only $k(x)$ if $x\to X$ is a general point (we will use this analogously in the scheme case).
    Whenever $C^+=O_C$ we say that $\overline{x}\to X$ is geometric rank $1$ point. 
    If $\overline{x}$ is an isomorphism we will say that $X$ (the space itself) is a geometric point.

    As in the introduction, $E$ denotes a non-Archimedean local field, we let $O_E\subseteq E$ denote the ring of integers, we let $\pi\in O_E$ denote a choice of uniformizer, we assume that $\bbF_q=O_E/\pi$, we denote by $\bbC$ a fixed completed algebraic closure of $E$.

    We let $\PSch$ denote the category of perfect affine schemes over $\bbF_q$.
    If $\calS=\Spec(A)\in \PSch$ with associated v-sheaf $\calS^{\diamond}=\Spd(A,A)$, we denote by $\bbW A$ the topological ring of $O_E$-Witt vectors which has the multiplicative Teichmüller section $[-]\colon A \to \bbW A$.
    More precisely, if $E$ is of characteristic $0$ then $\bbW A\coloneqq\calW(A)\otimes_{\bbZ_p}O_E$, where $\calW(A)$ denotes the $p$-typical Witt vectors, and if $E$ is of characteristic $p$ then $\bbW A=A\hat{\otimes}_{\bbF_q} O_E\cong A[[\pi]]$.
    We let $Y_{\calS^{\diamond}}\!\coloneqq\Spa \bbW A[\frac{1}{\pi}]$, this is an analytic sous-perfectoid adic space (see \Cref{ffschematic}). 
    Note that $A \mapsto \bbW A$ is a functor, and hence $\calS \mapsto Y_{\calS^{\diamond}}$ also is. 
    In particular, we have a functorial lift $\varphi\colon Y_{\calS^{\diamond}}\to Y_{\calS^{\diamond}}$ of the absolute Frobenius on $\calS$.

    \smallskip
    
    We let $\Perf$ denote the category of affinoid perfectoid spaces over $\bbF_q$.
    Fix $S\in \Perf$. 
    If $S=\Spa(R,R^+)$, we let $\bbA_{\on{inf}}(R^+)$ denote $\bbW(R^+)$ endowed with the $(\pi,[\varpi])$-adic topology, where $\varpi$ denotes a(ny) uniformizer of $R^+$.
    We let $\calY_S$ be the locus in $\Spa \bbA_{\on{inf}}(R^+)$ where $[\varpi]\neq 0$ for some pseudo-uniformizer $\varpi\in R^+$, and $Y_S$ be the locus in $\Spa \bbA_{\on{inf}}(R^+)$ where $\pi\cdot [\varpi]\neq 0$. 
    These are sous-perfectoid adic spaces \cite[Proposition II.1.1]{FS24}.
    Recall \cite[\S 12.2]{SW20} that after fixing a pseudo-uniformizer $\varpi\in R^+$ we have a continuous function
    \[\kappa_{\varpi}\colon |\calY_S|\to [0,\infty]\,.\]
    For every interval $I\subseteq [0,\infty]$ we let $\calY_{S,I}\subseteq \calY_S$ denote the interior of $\kappa^{-1}_{\varpi}(I)$.
    We use the notation $B_{S,I}$ to denote $\on{H}^0(\calY_{S,I},\calO)$ and $B^+_{S,I}$ to denote $\on{H}^0(\calY_{S,I},\calO^+)$.
    If $S$ is understood from the context we may drop it from the notation.

	\begin{remark}
		We want to point out that in this article we cite several references that discuss the geometry and the vector bundle theory of $\calY$ and related geometric objects. 
		Occasionally, these sources have the working assumption that $E= \Q_p$ or that $E$ is a characteristic $0$ field. 
		Nevertheless, the proofs often generalize to all non-Archimedean local fields $E$.
		For convenience of the reader, we have compiled a list of statements that we cite which although stated under restrictions on $E$ hold more generally: The specified references are 
		\cite[Theorem 3.8]{Kedlaya_16_ringAinf}, \cite[Proposition 2.1.3]{PR21}, \cite[Theorem 3.6\,(ii)]{RR_96} and \cite[Proposition 11.16]{Zha23}.
	\end{remark}

    \subsection{Grothendieck topologies}
    We endow $\Adic$ with the analytic topology.
    We endow $\Perf$ with the v-topology \cite[Definition 8.1]{Sch17}.
    We will consider several topologies on $\PSch$, mainly the scheme-theoretic v-topology \cite[Definition 3.2]{bhatt_scholze_projectivity_of_the_witt_vector_affine_grassmannian}, the arc-topology \cite{BhattMathew_21} and the pro\'etale topology \cite{BS15}.

    For the convenience of the reader we recall how these topologies are defined. 
    Since we will work with sheaves of categories, we prefer to use the language of covering sieves \cite[\S 6.2.2]{lurie2009htt}. 
    All the topologies that we consider below are finitary. 
    This means that if $X\in \Perf$ (or $X\in \PSch$) and $\calU\subseteq \Perf_{\!/X}$ is a covering sieve, then there is a finite set of objects $\{Y_i\to X\}_{i=1}^n\!\subseteq \calU$ such that the sieve generated by the $Y_i$ is a covering sieve of $X$ which refines $\calU$. 
    Since both $\Perf$ and $\PSch$ admit finite disjoint unions, every covering can be refined by the covering sieve generated by one map $Y\to X$.
    Therefore, it suffices to specify which maps of affinoid perfectoid spaces $\Spa(A,A^+)\to \Spa(B,B^+)$ (or of affine schemes $\Spec A\to \Spec B$) are covers.

    The v-topology on $\Perf$ declares every map $\Spa(A,A^+)\to \Spa(B,B^+)$ to be a cover as long as $|\Spa(A,A^+)|\to |\Spa(B,B^+)|$ is surjective.

    A map of affine schemes $\Spec A\to \Spec B$ is:
    \begin{enumerate}
	    \item A pro\'etale cover if it is weakly \'etale in the sense of \cite[Definition 1.2]{BS15}. 
	    \item A v-cover if for every map $B\to V$ with $V$ a valuation ring there is an extension of valuation rings $V\subseteq W$ and a commutative diagram 
		    \begin{center}
		    \begin{tikzcd}
		     \Spec W \arrow{r} \arrow{d}  & \Spec A \arrow{d} \\
		     \Spec V \arrow{r} & \Spec B\,.
		    \end{tikzcd}
		    \end{center}
	    \item An arc-cover if for every map $B\to V$ with $V$ a rank 1 valuation ring there is an extension of rank $1$ valuation rings $V\subseteq W$ and a commutative diagram as above.
    \end{enumerate}

   We let $\Sets$ denote the category of sets, we let $\Grps$ denote the $(2,0)$-category of groupoids. 
   We let $\on{Cat}_1$ be the $(2,1)$-category of small categories.
   We let $\Cat$ (resp.\ $\CatOE$, $\CatE$) denote the $(2,1)$-category of rigid symmetric monoidal small additive categories
   (resp.\ $O_E$-linear, $E$-linear symmetric monoidal small additive categories). 
   Moreover, we let $\Catex$, $\CatexOE$, $\CatexE$ denote the versions of $\Cat$, $\CatOE$, $\CatE$ equipped with exact structure.
   We clarify in \Cref{Appendix-sect} the precise meaning of these categories, and we show that they are presentable and compactly generated.
   The main purpose \Cref{Appendix-sect} is to justify that sheafification of presheaves with values in the above categories behaves as expected.
   The reader is encouraged to ignore the appendix and follow their intuition of what sheafification does and means. \\

   In the body of the text we will be interested in extending the domain of definition of functors that originally are only defined over $\Perf$ (or $\PSch$).  
   This is the case because many interesting spaces (like diamonds) are not members of $\Perf$, but can be regarded as functors over $\Perf$. 
   For the convenience of the reader we recall how this can be done.
   We will need the following additional notation.
   We let $\Ani$ denote the $(\infty,0)$-category of anima and $\widehat{\on{Cat}}_\infty$ the $(\infty,1)$-category of large categories. 
   We identify the category of $(2,1)$-categories with the full subcategory of $2$-truncated objects of $\widehat{\on{Cat}}_\infty$, thus regarding $\Cat$ and all of its versions above as objects in $\widehat{\on{Cat}}_\infty$. In the following discussion category means $\infty$-category.
%    We will think of $\Cat$, $\CatOE$ and $\CatE$ as objects in $\widehat{\on{Cat}}_\infty$.
   Fix $\calT\in \{\Perf\!, \PSch\}$.

    \begin{definition}
	    \label{convention-sheaves}
	    Given a complete category $\calC$ and a functor $\calF\colon \calT^\op\to \calC$, we will say that $\calF$ is a $\calC$-valued v-sheaf (analogously $\calC$-valued pro\'etale sheaf, scheme-theoretic v-sheaf, and arc-sheaf) if for any $X\in \Perf$ and any covering sieve $\calS\subseteq \Perf_{\!/X}$ the limit map 
	    \[\calF(X)\to\!\!\!\!\!\varprojlim_{\{Y\to X\}\in \calS}\!\!\!\!\calF(Y)\] 
	    is an equivalence in $\calC$.
    \end{definition}

    For a category $\calC$, we let $\calP(\calT,\calC)$ denote the category of $\calC$-valued presheaves, and if $\calC$ is complete, we let $\calS(\calT,\calC)\subseteq \calP(\Perf\!,\calC)$ be the full subcategory of $\calC$-valued v-sheaves as in \Cref{convention-sheaves}.
    If $\calC=\Ani$, we simply write $\calP(\calT)$ and $\calS(\calT)$.
    If $\calC=\Grps$, we write $\Perff$ (resp.\ $\PSchf$) instead of $\calS(\Perf\!,\Grps)$ (resp.\ $\calS(\PSch\!,\Grps)$). 
	Every geometric object we consider lies either in $\Perff$ or in $\PSchf$, and we refer to them as small v-stacks or small scheme-theoretic v-stacks.

    \begin{remark}
	    \label{presheaves-sheaves}
	    Recall that the Yoneda embedding $y\colon \calT\to \calP(\calT)$ realizes $\calP(\calT)$ as the $(\infty,1)$-category freely generated by $\calT$ under small colimits. 
	    In particular, if $\calC$ is a cocomplete $(\infty,1)$-category, then any functor $\calF\colon \calT\to \calC$ can uniquely be extended to a cocontinuous functor $\calG\colon \calP(\calT)\to \calC$. 
	    By abuse of notation, we will still denote $\calG$ by $\calF$.
    \end{remark}

    \begin{remark}
	    In the context of \Cref{presheaves-sheaves}, if $\calC$ is presentable \cite[\S 5.5.1]{lurie2009htt}, the inclusion $\calS(\calT,\calC)\subseteq \calP(\calT,\calC)$ admits a left-adjoint \cite [Lemma A.4.15.]{heyer20246} 
	    \[(-)^\sh\colon \calP(\calT,\calC)\to \calS(\calT,\calC)\]
	    which we can call \textit{sheafification}. 
		Furthermore, if $\calF\in \calS(\calT,\calC)$ the natural cocontinuous map 
	    \[\calF\colon \calP(\calT)\to \calC\]
	    described above factors as a composition $\calP(\calT)\xrightarrow{\sh}\calS(\calT)\to \calC$.
    \end{remark}

    Since $\Perff\subseteq \calP(\Perf)$ is a full subcategory, given any functor $\calF\colon \Perf\to \calC$ with $\calC$ a cocomplete category, one can contemplate the value $\calF(X)\in \calC$ for any $X\in \Perff$ by \Cref{presheaves-sheaves}. 
    An analogous statement holds for $X\in \PSchf$.
    This allows us to make several interesting constructions.
   
\begin{definition}
	\label{analytifications}
	Let $\calF\in\calP(\PSch\!,\calC)$.
	We define 
	\[\calF^{\diamond_\pre},\calF^{\dagger_\pre},\calF^{\Diamond_\pre}\in \calP(\Perf\!,\calC)\]
	with formula
	\begin{enumerate}
		\item $\calF^{\diamond_\pre}(R,R^+)\coloneqq\calF(\Spec R^+)$.
		\item $\calF^{\dagger_\pre}(R,R^+)\coloneqq\calF(\Spec R^\circ)$.
		\item $\calF^{\Diamond_\pre}(R,R^+)\coloneqq\calF(\Spec R)$.
	\end{enumerate}
	If $\calC$ is presentable, we define 
	\[\calF^{\diamond},\calF^{\dagger},\calF^{\Diamond}\in \calS(\Perf\!,\calC)\]
	by applying sheafification to the functors above.
We call these constructions \textit{the small diamond}, \textit{the dagger} and \textit{the big diamond} constructions, respectively.
\end{definition}

\begin{remark}
Since for every $\Spa(R,R^+)$ we have ring inclusions $R^+\subseteq R^\circ \subseteq R$, we obtain morphisms in $\calS(\Perf\!,\calC)$
\[\calF^\diamond\to \calF^\dagger \to \calF^\Diamond\,.\]
\end{remark}

\begin{example}
Given $\calS=\Spec(A)\in \PSch$, one can think of $\calS$ via the Yoneda embedding as $\calS\in \PSchf$.
Then one can verify that 
\begin{align*}
	\calS^{\diamond_\pre}(R,R^+)&=\{A\to R^+\!\mid \text{ring maps}\}\,, \\
	\calS^{\dagger_\pre}(R,R^+)&=\{A\to R^\circ\!\mid \text{ring maps}\}\,, \\
	\calS^{\Diamond_\pre}(R,R^+)&=\{A\to R \mid \text{ring maps}\}\,.
\end{align*}
Moreover, by \cite[Theorem 8.7]{Sch17} these functors satisfy v-descent. Thus for all $?\in \{\diamond, \dagger, \Diamond\}$ one has $S^?=S^{?_\pre}$.
Furthermore, one can see concretely that $\calS^\diamond$ is represented by $\Spd A$ when $A$ is endowed with the discrete topology, and that $\calS^\Diamond$ is represented by $\Spd(A,\bbF_q^{\on{min}})$, where $\bbF_q^{\on{min}}\subseteq A$ is the integral closure of $\bbF_q$ in $A$.
On the other hand, $\calS^\dagger$ is not representable, but it can be identified with a closed subsheaf of $\calS^\Diamond$ (the bounded locus, see \cite[Proposition 2.25, Definition 2.2]{Gle22}).
\end{example}

\begin{definition}
	Let $\calC$ be a complete category and let $\calF\in\calP(\Perf\!,\calC)$.
	We define 
	\[\calF^{(\red_\pre)}\in \calP(\PSch\!,\calC)\]
	by setting
	$\calF^{(\red_\pre)}(\Spec R)\coloneqq\calF(\Spec R^\diamond)$.
	Here we use \Cref{presheaves-sheaves} to make sense of the right-side term.
	If $\calC$ is presentable, we define 
	\[\calF^\red \in \calS(\PSch\!,\calC)\,,\]
	as the v-sheafification of $\calF^{(\red_\pre)}$. 
\end{definition}

\begin{remark}\label{smdiamredadjunction}
    As it turns out, the functor 
    \[\diamond\colon \PSch\to \widetilde{\Perf}\]
    turns scheme-theoretic v-covers into surjective maps of v-sheaves by \cite[Proposition 3.7]{Gle22}.
    For this reason, if $\calF\in \calS(\Perf\!,\calC)$ then $\calF^{(\red_\pre)}\in \calS(\PSch\!,\calC)$, and 
    $\calF^\red(\Spec A)=\calF(\Spec A^\diamond)$.
    This shows that $(-)^\red\colon \Perff \to \PSchf$ is a right-adjoint to $\diamond\colon \PSchf\to \Perff$.
\end{remark}

We consider a final family of constructions. 
Given a topological ring $R$ we let $R_{\disc}$ denote the ring $R$ endowed with the discrete topology.

\begin{definition}
	\label{loci-functors}
	Let $\calC$ be a complete category and let $\calF\in\calP(\Perf\!,\calC)$.
	We define, using \Cref{presheaves-sheaves},
	\[ \calF^{(\mer_\pre)}, \calF^{(\An_\pre)}  \in \calP(\Perf\!,\calC)\]
	with formulas
	\begin{enumerate}
		\item $\calF^{(\mer_\pre)}(R,R^+)\coloneqq\calF(\Spd(R_\disc, R^+_\disc))$\,,
		\item $\calF^{(\An_\pre)}(R,R^+)\coloneqq\calF(\Spd(R_\disc, R_\disc))$\,.
	\end{enumerate}
	If $\calC$ is presentable we define
	\[ \calF^\mer, \calF^\An  \in \calS(\Perf\!,\calC)\]
	as the v-sheafification of the functors defined above.
	When $\calC=\Grps$ we call
	\[ \calF^\mer, \calF^\An\colon\Perff\to \Perff\]
	the \textit{meromorphic} and the \textit{unbounded} locus functors of $\calF$.
\end{definition}

\begin{remark}
	\label{general-functoriality}
	For any affinoid perfectoid $\Spa(R,R^+)\in \Perf$ we have maps $(R,R^+) \leftarrow (R_{\rm disc},R_{\rm disc}^+) \to (R_{\rm disc},R_{\rm disc})$ of Huber pairs. 
	Thus, if $\calC$ is presentable, we obtain maps \begin{center}
\begin{tikzcd}
	\calF^\mer \arrow[r] \arrow[d] & \calF \\
	\calF^\An. 
\end{tikzcd}
\end{center} 
	One can easily show that $\calF^\An$ coincides with $(\calF^\red)^\Diamond$.
\end{remark}

One of the main goals of this article is to better understand the correspondence 

\begin{center}
\begin{tikzcd}
	(\Bun_G)^\mer \arrow[r] \arrow[d] & \Bun_G \\
	(\Bun_G)^\An 
\end{tikzcd}
\end{center}
that one obtains from applying the considerations of \Cref{general-functoriality} to $\Bun_G\in \Perff$.

% \begin{remark}
% Although we will not use $\calF^\Red$ or $\calF^\frml$ in this article, these functors play a role in the theory of kimberlites.
% \end{remark}

% \begin{remark}
% 	\label{An-vs-bigD}
% 	One can easily show that $\calF^\Red$ and $\calF^\An$ coincide with $(\calF^\red)^\diamond$ and $(\calF^\red)^\Diamond$, respectively.
% \end{remark}

    \subsection{Combs and product of points}
    The big advantage of working with the v-topology and the scheme-theoretic v-topology is that plenty of questions can be reduced to studying valuation rings and their ultra-products. 

    \begin{definition}\label{defcontractible}
	    \begin{enumerate}
	\item
	We say that an affine scheme $\calS=\Spec A$ is a \textit{comb} if for all $x\in \pi_0(\calS)$ the closed subscheme attached to $x$ is of the form $\Spec V_x$, where $V_x$ is a valuation ring with algebraically closed fraction field.
%     \item We say that $\calS$ is a \textit{strict comb} if the fraction field of all such $V_x$ is algebraically closed.
    \item 
	    We say that a comb is an \textit{extremally disconnected comb} if it is w-contractible in the sense of \cite[Definition 2.4.1]{BS15}. % local and $\pi_0(\calS)$ is an extremally disconnected compact Hausdorff space.
    \item 
	    If $A=\prod_{i\in I}\! V_i$, where each $V_i$ is a valuation ring with algebraically closed fraction field, then we say that $\calS$ is a \textit{product comb}.
	\item \cite[Definition 1.1]{MW23}
		We say that a perfectoid space $X \in \Perf$ is w-contractible if it is a w-strictly local perfectoid space and $\pi_0(X)$ is extremally disconnected.
	    \end{enumerate}
    \end{definition}

\begin{remark} \label{rem:scheme_v_covers} 
	By \cite[Theorem 1.8]{BS15}, extremally disconnected combs can be characterized as those combs $S$ such that: (1) $S$ is w-local, (2) $\pi_0(S)$ is extremally disconnected. 
	Indeed, every local ring of $S$ is a valuation ring with algebraically closed fraction field, and these are strictly Henselian local rings.
	
	We observe that product combs are extremally disconnected combs.
	Indeed, the proof of \cite[Lemma 6.2]{bhatt_scholze_projectivity_of_the_witt_vector_affine_grassmannian} precisely shows this.
	Moreover, \cite[Lemma 6.2]{bhatt_scholze_projectivity_of_the_witt_vector_affine_grassmannian} also shows any qcqs scheme admits a v-cover by a product comb.
	In particular, product combs are a basis for the scheme-theoretic v-topology.
%	Moreover, by \cite[Theorem 1.8]{BS15} any extremally disconnected comb is $w$-contractible in the pro-\'etale topology, that is any pro-\'etale covering splits (this follows from the proof of \cite[Lemma 6.2]{bhatt_scholze_projectivity_of_the_witt_vector_affine_grassmannian} and \cite[Lemma 2.4.8]{BS15}).
%	The situation is similar to v-covers over $\Perf$, see \Cref{remark-product-of-points}. 
\end{remark}

    \begin{definition}
	    Suppose that $\calS = \Spec A \in \PSch$ is a product comb and $\varpi\in A$ is a non-zero divisor. 
		Let $R^+=\widehat{A}_{\varpi}$ be the $\varpi$-adic completion of $A$ and let $R=R^+[\frac{1}{\varpi}]$. 
		Then $\Spa (R,R^+)$ is a strictly totally disconnected space, and we call any space obtained this way a \textit{product of points}.
% 		If $\calS$ is in addition strict, then $\Spa (R,R^+)$ is strictly totally disconnected, and we call it a \textit{strict product of points}.
    \end{definition}

    \begin{remark}
	    \label{remark-product-of-points}
	    It follows from \cite[Example 1.1]{Gle22} that products of points form a basis for the v-topology on $\Perf$.
		Moreover, if $S \in \Perf$ is a product of points, it is also w-contractible (in the sense of \Cref{defcontractible} (4)).
		If $S=\Spa(R,R^+)$, and $\varpi\in R^+$ is a pseudo-uniformizer, then the two properties follow from the fact that the specialization map 
		\[\Spa(R,R^+)\to \Spec R^+/\varpi\]
		is a homeomorphism (see \cite[Proposition 4.3]{Gle22}).
		Indeed, by \Cref{two-versions-of-product-of-combs} $\Spec R^+$ is a product comb, and hence by \Cref{rem:scheme_v_covers} it is w-local and $\pi_0(\Spec R^+)$ is extremally disconnected.
		But the two properties pass to the closed subset $\Spec R^+/\varpi\subseteq \Spec R^+$, which allows us to conclude.
    \end{remark}

    \begin{lemma}
	\label{quasi-pro-etale-admitsection}
Let $S$ be a product of points and $X \to S$ a pro-étale cover. Then, this morphism has a section $S \to X$.
\end{lemma}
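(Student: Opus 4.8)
The plan is to reduce the statement to the $w$-contractibility of $S$ together with the reinterpretation of a pro-\'etale cover of perfectoid spaces as a pro-\'etale cover of schemes. First I would recall from \Cref{remark-product-of-points} that a product of points $S = \Spa(R,R^+)$ with $R^+ = \widehat{A}_\varpi$ the $\varpi$-adic completion of a product comb $\calS = \Spec A$ sits inside $\Spec R^+$ (or directly relates to $\Spec A$) via an open embedding that is a bijection on closed points and a homeomorphism on connected components; in particular $|S| \cong \pi_0(\calS)$ is extremally disconnected. The key input is that extremally disconnected combs, and hence products of points, are $w$-contractible: every pro-\'etale cover splits. This is exactly the content cited in \Cref{rem:scheme_v_covers} combined with the comparison of pro-\'etale covers over $\Perf$ with those over $\PSch$.

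The main work is therefore to transport a pro-\'etale cover $X \to S$ in the perfectoid world into a pro-\'etale (equivalently, weakly \'etale) cover of the scheme $\Spec R^+$, apply $w$-contractibility there to obtain a section, and then pull back. Concretely, I would argue as follows. By definition a pro-\'etale cover of perfectoid spaces is (v-locally, or by the structure theory) built from \'etale covers; since $S$ is strictly totally disconnected, every \'etale cover of an affinoid open of $S$ splits, and more generally one knows (cf. Scholze's theory of the pro-\'etale site of a perfectoid space) that strictly totally disconnected spaces are $w$-local and $w$-contractible in the pro-\'etale topology. So in fact one can argue entirely inside the perfectoid site: a strictly totally disconnected space has the property that every pro-\'etale surjection onto it admits a section, because its \'etale site is equivalent to the category of sheaves on the profinite set $\pi_0$, which is extremally disconnected, hence projective in the category of profinite sets, so every continuous surjection of the relevant profinite sets splits and this splitting lifts.

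So the clean route is: (1) recall $S$ strictly totally disconnected $\Rightarrow$ $|S|$ profinite and extremally disconnected (from \Cref{remark-product-of-points}); (2) recall that for a strictly totally disconnected perfectoid space, the pro-\'etale site is equivalent to the site of the profinite set $\pi_0(S) = |S|$ with covers the continuous surjections (or invoke directly that such $S$ is $w$-contractible, as asserted in \Cref{remark-product-of-points}); (3) given the pro-\'etale cover $X \to S$, the induced map on underlying topological spaces $|X| \to |S|$, after replacing $X$ by a quasi-compact open, is a surjection of the relevant (pro)finite-type objects, and extremal disconnectedness of $|S|$ gives a continuous section on the level of topological spaces; (4) a continuous section $|S| \to |X|$ of a pro-\'etale map lifts uniquely to a morphism of perfectoid spaces $S \to X$, since pro-\'etale maps are formally \'etale and $S$ is reduced/perfectoid — more precisely, the section corresponds to a connected component choice and pro-\'etale morphisms are local isomorphisms on a cover, so the topological section is automatically a morphism of adic spaces. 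The main obstacle, and the only place requiring care, is step (4): making precise that a set-theoretic/topological section of a pro-\'etale cover of a strictly totally disconnected space is automatically a morphism in $\Perf$. I expect this to follow formally from the description of the pro-\'etale site of a strictly totally disconnected space (every such space is $w$-contractible, so $\mathrm{Hom}(S, X) \to \mathrm{Hom}(S,S) = \{\id\}$ is surjective onto the identity by the very definition of $w$-contractibility), so really the lemma is a direct restatement of the $w$-contractibility recorded in \Cref{remark-product-of-points}, and the proof is short.
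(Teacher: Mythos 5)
Your proposal is correct and, in its final ``clean route,'' is exactly the paper's argument: the paper simply notes that a product of points is $w$-contractible (as recorded in \Cref{remark-product-of-points}) and then invokes \cite[Lemma 1.2]{MW23} to split the pro-\'etale cover. The scheme-theoretic detour and the topological-section discussion in your steps (3)--(4) are unnecessary once $w$-contractibility is in hand, since splitting of pro-\'etale covers is essentially the definition of that property.
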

\begin{proof}
	By \Cref{remark-product-of-points}, $S$ is $w$-contractible.
	Then the result follows from  \cite[Lemma 1.2]{MW23}.
\end{proof}

\begin{proposition}
	\label{product-combs-bezout}
	Product combs are B\'ezout rings (i.e.\ every finitely generated ideal is generated by a single element).	
\end{proposition}
\begin{proof}
	Let $R=\prod_{i\in I} V_i$ with each $V_i$ a valuation ring.
	Let $a,b\in R$ be two elements. 
	Let $I_a\subseteq I$ (resp.\ $I_b\subseteq I$) denote the subset of those $i\in I$ such that $a_i\mid  b_i$ (resp.\ $b_i\mid a_i$).
	Since each $V_i$ is a valuation ring, we have $I=I_a\cup I_b$. 
	For a subset $S\subseteq I$, we let $e_S$ denote the idempotent with $(e_S)_i=1$ if and only if $i\in S$.
	Let $c:=e_{I_a\setminus I_b}\cdot a+e_{I_b}\cdot b$. 
	We observe that $c$ is in the ideal generated by $a$ and $b$, that $c\mid a$ and that $c\mid b$. 
\end{proof}

    \begin{proposition}
	    \label{two-versions-of-product-of-combs}
	    Let $\calS=\Spec A$ be a product comb with $A=\prod_{i\in I}\!V_i$ and $\varpi\in A$ a non-zero divisor. Let $R^+=\widehat{A}_\varpi$ be the $\varpi$-adic completion. Let $\varpi_i$ be the image of $\varpi$ in $V_i$ which is also a non-zero divisor. Let $K_i^+=\widehat{V}_{i,\varpi_i}$ be the $\varpi_i$-adic completion. 
		Then, the family of projection maps $R^+\to K_i^+$ induces a ring isomorphism $R^+\!=\prod_{i\in I}\! K_i^+$.
    \end{proposition}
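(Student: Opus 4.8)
The plan is to compute the $\varpi$-adic completion $R^+=\widehat{A}_\varpi$ one truncation at a time. The key observation is that, since $A=\prod_{i\in I}V_i$ and $\varpi=(\varpi_i)_{i\in I}$ acts componentwise, for every $n\geq 1$ the principal ideal $\varpi^nA$ coincides with the product ideal $\prod_{i\in I}\varpi_i^nV_i$: indeed $\varpi^n\cdot(b_i)_{i\in I}=(\varpi_i^nb_i)_{i\in I}$, and as $(b_i)_{i\in I}$ ranges over $A$ each $b_i$ ranges independently over $V_i$. Using the elementary fact that a quotient of a product of rings by a product of ideals is the product of the quotients, one gets ring isomorphisms
\[
A/\varpi^nA \;\cong\; \prod_{i\in I} V_i/\varpi_i^nV_i
\]
compatible with the transition maps in $n$ and with the projections $A\to V_i$.

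Next I would pass to the inverse limit over $n$. Since inverse limits commute with arbitrary products (both being limits), this yields
\[
R^+ \;=\; \widehat{A}_\varpi \;=\; \varprojlim_n A/\varpi^nA \;\cong\; \varprojlim_n \prod_{i\in I} V_i/\varpi_i^nV_i \;\cong\; \prod_{i\in I}\,\varprojlim_n V_i/\varpi_i^nV_i \;=\; \prod_{i\in I} K_i^+ .
\]
Tracing through the construction, the composite of this isomorphism with the $i$-th projection $\prod_{j\in I}K_j^+\to K_i^+$ is exactly the map $R^+\to K_i^+$ induced by the ring projection $A\to V_i$ (which is continuous for the respective adic topologies, as $A\to V_i$ carries $\varpi^nA$ into $\varpi_i^nV_i$); hence the map in the statement is identified with the isomorphism just produced.

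There is no serious obstacle here. The only step that is not a formal manipulation of limits is the identity $\varpi^nA=\prod_{i\in I}\varpi_i^nV_i$, which expresses that the $\varpi$-adic filtration on $A=\prod_{i\in I}V_i$ is the product of the $\varpi_i$-adic filtrations on the $V_i$; granting this, the statement follows because inverse limits commute with products. The hypothesis that $\varpi$, equivalently each $\varpi_i$, is a non-zero divisor plays no role in the isomorphism itself; it is recorded for later use, e.g.\ to ensure that $K_i=K_i^+[\frac{1}{\varpi_i}]$ and the adic spaces $\calY$ behave well. If a more hands-on argument is preferred, one verifies directly that the product of the projection maps $R^+\to\prod_{i\in I}K_i^+$ is bijective --- injectivity because a compatible system dying in every $K_i^+$ already dies in every $A/\varpi^nA$, surjectivity by lifting componentwise --- which is the same computation unwound.
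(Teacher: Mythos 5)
Your proposal is correct and is essentially the paper's argument made explicit: the paper simply observes that $R^+$ and $\prod_{i\in I}K_i^+$ are two ways of computing the limit of the diagram $(i,n)\mapsto V_i/\varpi_i^n$, which is exactly your interchange of $\varprojlim_n$ with $\prod_{i\in I}$ after identifying $A/\varpi^nA\cong\prod_{i\in I}V_i/\varpi_i^nV_i$. Your spelling out of the key identity $\varpi^nA=\prod_{i\in I}\varpi_i^nV_i$ is the one step the paper leaves implicit, and it is verified correctly.
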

    \begin{proof}
	    Let $I\times \bbN$ be the partial order with $(i_1,n_1)\leq (i_2,n_2)$ if $i_1=i_2$ and $n_1\leq n_2$. 
	    We have a functor from $I\times \bbN$ to the category of rings sending $(i,n)$ to $V_i/\varpi_i^n$. 
	    The constructions of $R^+$ and $\prod_{i\in I}\! K_i^+$ correspond to two different ways of computing the limit of this diagram.
    \end{proof}

    \begin{proposition}
	    \label{combs_and-products}
	    If $\Spa (R,R^+)$ is a product of points, then $\Spec R$ is a comb.
    \end{proposition}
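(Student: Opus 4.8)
The plan is to determine the connected components of $\Spec R$ explicitly from the product description of $R^+$ and to recognise them, via {\L}o\'s's theorem, as valuation rings with algebraically closed fraction field.

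I would begin from \Cref{two-versions-of-product-of-combs}, which lets me write $R^+ = \prod_{i\in I} K_i^+$ with $K_i^+$ the $\varpi$-adic completion of the valuation ring $V_i$, and then recall — this is exactly the content of $\Spa(R,R^+)$ being strictly totally disconnected, cf.\ \cite{Gle22} — that each $K_i^+$ is again a valuation ring and that $K_i := K_i^+[\tfrac1\varpi]$ is an algebraically closed field, so in particular $\on{Frac}(K_i^+) = K_i$. The first task is then to pin down $\pi_0(\Spec R)$. A short computation shows that every idempotent of $R$ already lies in $R^+$: if $e^2 = e$ in $R$ and $f := \varpi^N e\in R^+$ for a suitable $N$, then $f^2 = \varpi^N f$, so writing $f = (f_i)_i$ one gets $f_i(f_i - \varpi_i^N) = 0$ in the domain $K_i^+$, forcing $f_i\in\{0,\varpi_i^N\}$ and hence $e\in\prod_i\{0,1\}\subseteq R^+$. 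Since each $K_i^+$ is connected, the Boolean algebra of idempotents of $R$ is $\calP(I)$, so $\pi_0(\Spec R)$ is the Stone space $\beta I$ of ultrafilters on $I$.

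Next, for an ultrafilter $\calU$ on $I$ — i.e.\ a point $x$ of $\pi_0(\Spec R)$ — I would identify the corresponding connected component. It is $\Spec R_\calU$ with $R_\calU = \varinjlim_{A\in\calU} e_A R$, where $e_A\in\calP(I)$ is the indicator of $A$; since $e_A R = (\prod_{i\in A}K_i^+)[\tfrac1\varpi]$ and localisation commutes with filtered colimits, $R_\calU \cong V_\calU[\tfrac1\varpi]$, where $V_\calU := \prod_{i\in I}K_i^+/\calU$ is the ultraproduct of the $K_i^+$ along $\calU$. Now I would invoke {\L}o\'s's theorem: ``being a domain'' and ``being a valuation ring'' are first-order, and for a valuation ring the property ``$\on{Frac}$ is algebraically closed'' is equivalent to the first-order sentences (one for each degree $n$) ``every monic polynomial of degree $n$ with coefficients in the ring has a root in the ring'' — here one uses that a fraction-field root of a monic polynomial over a valuation ring is automatically integral, hence already in the ring, together with clearing denominators to reduce to coefficients in the ring. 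Thus $V_\calU$ is a valuation ring whose fraction field $\prod_i K_i/\calU$ is algebraically closed, and since $\varpi$ is a nonzero element of the domain $V_\calU$, the overring $V_\calU[\tfrac1\varpi]\subseteq\on{Frac}(V_\calU)$ is again a valuation ring with the same (algebraically closed) fraction field. Hence every connected component of $\Spec R$ is the spectrum of a valuation ring with algebraically closed fraction field, i.e.\ $\Spec R$ is a comb.

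There is no deep obstacle here; the two points that need attention are, first, that $R = R^+[\tfrac1\varpi]$ is \emph{not} the full product $\prod_i K_i$ but only a ``uniformly bounded'' subring, which is why the component ring must be computed as $(\varinjlim_{A\in\calU}\prod_{i\in A}K_i^+)[\tfrac1\varpi]$ rather than naively as $\prod_i K_i/\calU$ (though these do agree after inverting $\varpi$); and second, casting ``valuation ring with algebraically closed fraction field'' in a purely first-order form so that {\L}o\'s applies. If one prefers to avoid the ultraproduct bookkeeping, an alternative is to note that $R$ is complete, so $\pi_0(\Spec R)\cong\pi_0(\Spa(R,R^+))$ via the common Boolean algebra of idempotents of $R = \rH^0(\Spa(R,R^+),\calO)$, and that the component ring $R_x = \varinjlim_{e} eR$ (colimit over idempotents $e$ with $x\in D(e)$) computes $\rH^0$ of the corresponding connected component $\Spa(C_x,C_x^+)$ of the strictly totally disconnected space $\Spa(R,R^+)$ — which is the algebraically closed field $C_x$.
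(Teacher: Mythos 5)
Your proof is correct and follows essentially the same route as the paper's: both reduce to the fact that an ultraproduct of valuation rings with algebraically closed fraction field is again one (the paper asserts this outright where you spell out the \L{}o\'s argument and the first-order encoding of algebraic closedness). The only cosmetic difference is where the inversion of $\varpi$ is handled — the paper first shows $\Spec R^+$ is a comb and then invokes that affine Zariski localizations of combs are combs, while you localize each component ring $V_{\calU}$ directly and use that overrings of valuation rings in their fraction field are valuation rings; both are fine.
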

    \begin{proof}
	    By \Cref{two-versions-of-product-of-combs}, $R^+\!=\prod_{i\in I}\! C_i^+$, where $C^+_i$ are valuation rings with algebraically closed fraction fields.
		Since ultraproducts of valuation rings with algebraically closed fraction field are again valuation rings with algebraically closed fraction fields, $\Spec R^+$ is a comb.
		Now, since affine Zariski localizations of combs are combs again, $\Spec R$ is a comb.
    \end{proof}

We will use the following proposition implicitly throughout the article.

    \begin{proposition}
	    \label{quotient-by-groups}
	    Let $H$ be a locally profinite group. Then
% 	    Let $\underline{G}$ be a locally profinite group then:
		\[[\ast\!/\underline{H}]^\lozenge=[\ast\!/\underline{H}]^\diamond=[\ast\!/\underline{H}]\,,\]
		where $\underline{H}(S) = C^0(|S|,H)$ parametrizes continuous maps from $|S|$ to $H$ for either $S \in \PSch$ (for the first two terms) or $S \in \Perf$ (for the last term).
    \end{proposition}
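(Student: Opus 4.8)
The plan is to verify the two displayed equalities separately, using the general machinery of $(-)^\diamond$ and $(-)^\lozenge$ together with the description $\underline{H}(S)=C^0(|S|,H)$. Both quotients are formed in the relevant categories of small v-stacks, so it suffices to produce compatible identifications after evaluating on a v-cover; by \Cref{remark-product-of-points} it is enough to test on products of points $S=\Spa(R,R^+)$, or even just to compare the presheaf-level formulas before sheafification and then check that sheafification does not change anything.

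\textbf{Step 1: the identification $[\ast/\underline{H}]^\diamond=[\ast/\underline{H}]$.} Here $[\ast/\underline{H}]$ denotes the classifying stack of $\underline{H}$ on $\Perf$, where $\underline{H}(S)=C^0(|S|,H)$. One checks first that $\underline{H}$ itself, as a v-sheaf of groups on $\Perf$, is the ``small diamond'' of the constant pro-(finite \'etale) group scheme $\underline{H}$ on $\PSch$; concretely, for $\calS=\Spec A$ one has $\underline{H}(\calS)=C^0(|\Spec A|,H)$ and for $S=\Spa(R,R^+)$ the homeomorphism $|S|\cong|\Spec R^+|$ (cf.\ \Cref{remark-product-of-points}) gives $\underline{H}^{\diamond}(S)=\underline{H}(\Spec R^+)=C^0(|\Spec R^+|,H)=\underline{H}(S)$. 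Since the formation of $(-)^\diamond$ is a colimit-preserving functor (it is the left Kan extension / left adjoint described in \Cref{presheaves-sheaves} and the remark following \Cref{analytifications}) and a quotient stack $[\ast/\underline{H}]$ is the colimit of the \v{C}ech nerve of $\ast\to [\ast/\underline{H}]$, it commutes with the formation of classifying stacks. Hence $[\ast/\underline{H}]^\diamond=[\ast^\diamond/\underline{H}^\diamond]=[\ast/\underline{H}]$, using $\ast^\diamond=\ast$.

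\textbf{Step 2: the identification $[\ast/\underline{H}]^\lozenge=[\ast/\underline{H}]^\diamond$.} By \Cref{general-functoriality}, $(-)^\lozenge=((-)^\red)^\Diamond$, and one has the tautological maps $\calF^\diamond\to\calF^\dagger\to\calF^\Diamond$; so it suffices to check that for $\calF=[\ast/\underline{H}]$ these maps are equivalences, equivalently that the presheaf $(R,R^+)\mapsto [\ast/\underline{H}](\Spec R^+)$ already agrees with $(R,R^+)\mapsto[\ast/\underline{H}](\Spec R)$ after v-sheafification. Again it is enough to compare on a product of points $S=\Spa(R,R^+)$: then $\Spec R$ is a comb by \Cref{combs_and-products}, and the open immersion $\Spa(R,R^+)\subseteq\Spec R$ induces a bijection on closed points and a homeomorphism on $\pi_0$ (as recalled in \Cref{remark-product-of-points}), so $C^0(|\Spec R^+|,H)=C^0(|\Spec R|,H)$ and likewise $H$-torsors for $\underline H$ over $\Spec R^+$, $\Spec R^\circ$, $\Spec R$ all coincide because $\underline H$-torsors are classified by $\check{\rm H}^1$ of the pro-\'etale (here: finite \'etale) site and these three schemes have the same topological space / same finite \'etale site in the relevant sense — or more cheaply, a product of points is $w$-contractible (\Cref{remark-product-of-points}), so every pro-\'etale, and a fortiori every $\underline H$-, torsor is trivial over $S$, reducing the comparison of groupoids to the comparison of the groups $\underline H(S)$, which was handled above.

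\textbf{Main obstacle.} The only genuinely delicate point is being careful about \emph{where} the quotient is formed and \emph{when} sheafification is applied: $[\ast/\underline H]^{\diamond_\pre}$ is a priori a presheaf quotient of the presheaf $S\mapsto\{\ast\}/\underline H(\Spec R^+)$, and one must argue that v-sheafifying the presheaf quotient gives the quotient v-stack, i.e.\ that $(-)^\diamond$ really does commute with the colimit defining $[\ast/\underline H]$ \emph{after} sheafification. This is exactly the content of the sheafification formalism of \Cref{presheaves-sheaves} and the surrounding remarks (in particular that $(-)^\diamond$, being a left adjoint, is cocontinuous and that sheafification is cocontinuous), together with the fact, noted after \Cref{analytifications}, that the three constructions $\diamond,\dagger,\Diamond$ agree on representable schemes because $\Spec(-)^{\diamond}$ etc.\ satisfy v-descent. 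Granting these, one simply assembles the two steps: $[\ast/\underline H]^\lozenge=[\ast/\underline H]^\diamond$ by Step 2 and $[\ast/\underline H]^\diamond=[\ast/\underline H]$ by Step 1, which is the claim. $\qed$
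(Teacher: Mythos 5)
Your overall strategy (identify the group sheaves, commute the quotient past the analytification functors, and reduce everything to products of points and combs) is the same as the paper's, but there is a genuine gap at the one step that carries the real content. Both of your routes to the conclusion ultimately require the following: a $\underline{H}$-torsor for the \emph{schematic v-topology} on $\Spec R$ (resp.\ $\Spec R^+$), for $\Spa(R,R^+)$ a product of points, is trivial. Your justification is that these schemes are $w$-contractible, ``so every pro-\'etale, and a fortiori every $\underline{H}$-, torsor is trivial.'' The ``a fortiori'' is backwards: $w$-contractibility only splits pro-\'etale covers, whereas a v-torsor is a priori only v-locally trivial, and there is no formal reason it should be pro-\'etale-locally trivial. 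Bridging exactly this gap is the non-formal input of the paper's proof, namely Gabber's theorem \cite[Theorem 1.5]{HS21} that every $\underline{H}$-torsor is pro-\'etale-locally trivial; only after invoking it can one use that $\Spec R$ and $\Spec R^+$ are extremally disconnected combs (\Cref{combs_and-products} and Remark \ref{rem:scheme_v_covers} — note it is the \emph{schemes}, not the perfectoid space $S$ of \Cref{remark-product-of-points}, whose $w$-contractibility is relevant) to conclude triviality. Your alternative phrasing via ``$\check{\rm H}^1$ of the pro-\'etale (here: finite \'etale) site'' assumes the same thing, and the claim that $\Spec R^+$, $\Spec R^\circ$, $\Spec R$ ``have the same finite \'etale site in the relevant sense'' is not substantiated (these are genuinely different schemes).

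Two smaller points. First, there is no homeomorphism $|S|\cong|\Spec R^+|$; \Cref{remark-product-of-points} gives an open embedding $\Spa(R,R^+)\subseteq\Spec R$ inducing a bijection on closed points and a homeomorphism on $\pi_0$, which suffices for comparing $C^0(|{-}|,H)$ but is not what you wrote. Second, your Step 1 leans on cocontinuity of $(-)^\diamond$ and $(-)^\lozenge$ to commute them past the bar construction; for $(-)^\diamond$ this is defensible via the adjunction with $(-)^\red$, but no such adjunction is available for $(-)^\lozenge=(-)^\Diamond$. The paper's argument sidesteps this entirely: since $(-)^\diamond$ and $(-)^\lozenge$ commute with \emph{limits}, the \v{C}ech nerve of $\ast\to[\ast/\underline{H}]^?$ is still $\underline{H}^{\times\bullet}$, so it suffices to prove that $\ast\to[\ast/\underline{H}]^?$ is surjective — which is again exactly the torsor-triviality statement above. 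So the whole proposition really reduces to the step your proof elides.
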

    \begin{proof}
	    Let $\Spa(R,R^+)\in \Perf$. 
	    Observe that $\underline{\smash{H}}^\diamond=\underline{H}^\lozenge=\underline{H}$.
	    Since $\lozenge$ (resp.\ $\diamond$) commutes with limits, it suffices to prove that the map $\ast\to [\ast\!/\underline{H}]^\lozenge$ (resp.\ $\ast\to [\ast\!/\underline{H}]^\diamond$) is surjective.
	    This amounts to showing that if $\calF$ is a $\underline{H}$-torsor for the scheme-theoretic v-topology over $\Spec R$ (resp.\ $\Spec R^+)$, then there is an analytic v-cover of $\Spa (R',R'^+)\to \Spa(R,R^+)$ such that $\calF$ restricted to $\Spec R'$ is trivial.
	    We can take $\Spa (R',R'^{+})$ to be a product of points.
	    It follows from a theorem of Gabber \cite[Theorem 1.5]{HS21} that every $\underline{H}$-torsor is pro-\'etale locally trivial. 
	    Since $\Spec R'^+$ is an extremally disconnected comb by \Cref{rem:scheme_v_covers}, every pro-\'etale cover splits, which settles the identity $[\ast\!/\underline{H}]^\diamond=[\ast\!/\underline{H}]$.
For the case $[\ast\!/\underline{H}]^\lozenge=[\ast\!/\underline{H}]$ we have to be more subtle since $\Spec R'$ is not an extremally disconnected comb, in fact it is not w-local. 

We still claim that every $\underline{H}$-torsor over $\Spec R'$ is trivial.
Fix $\calF$ an $\underline{H}$-torsor over $\Spec R'$. 
Let $K\subseteq H$ be an open subgroup that is profinite, and write $K=\varprojlim_{n\in \bbN} K/K_n$ with $K_n$ a decreasing sequence of normal open subgroups that converge to $0$.
We may write $\calF=\varprojlim_{n\in \bbN} \calF/\underline{K_n}$ which is a limit of spaces over $\Spec R'$ with surjective finite \'etale transition maps. 
We note that by \cite[Theorem 1.5]{HS21} each map $\calF/\underline{K_n}\to \Spec R'$ is an \'etale cover.
By \Cref{combs_and-products} $\Spec R'$ is a comb, and in particular it splits every \'etale cover. 
Thus we may find a section $\Spec R'\to \calF/\underline{K}$. 
We may construct compatible sections $\Spec R'\to \calF/\underline{K_n}$ since each transition map is finite \'etale and surjective.
Overall, this gives a section $\Spec R'\to \calF$, which shows that $\calF$ is isomorphic to the trivial $\underline{H}$-torsor. 
    \end{proof}

\section{Categories of vector bundles}
Our starting point is the functor 
\[\calV\colon (\Adic)^\op\to \Catex\]
which takes uniform analytic adic spaces (as in \cite[Definition 5.3.1]{SW20}) to its category of vector bundles (i.e.\ locally finite free $\calO_X$-modules). 
This is well-behaved, satisfies analytic descent, and satisfies that $\calV(\Spa(R,R^+))\simeq \on{Proj}(R)$ for any stably uniform Huber pair $(R,R^+)$, where the latter denotes the symmetric monoidal additive category of finite dimensional projective $R$-modules \cite[Theorem 5.2.8]{SW20}, \cite[Theorem 2.7.7]{kedlaya_liu_relative_p_adic_hodge_theory_foundations}.
We note that the exact structure on $\on{Proj}(R)$ is induced by its additive structure (this implies by analytic descent that a sequence of vector bundles on an adic space $\Sigma\coloneqq[\calE_1\to \calE_2\to \calE_3]$ is exact if and only if it is analytic locally split exact).

We wish to study the functor $\calV$, and our main technique will be to reduce general statements about $\calV(X)$ to statements about $\calV(\bar{x})$ as $\bar{x}\to X$ ranges over the geometric points of $X$. 
We record some observations that are useful for this purpose.

\begin{lemma}\label{geompts_inj}
	Let $X$ be either a perfect scheme or an uniform analytic adic space. 
	Then, the map 
	$$\calO_X(X) \to \prod_{\bar{x}\to X} k(\bar{x})$$
	is injective, where the $\bar{x}\to X$ run over all geometric points (in either the scheme-theoretic or analytic sense) of $X$.
\end{lemma}
\begin{proof}
This reduces to $X$ being either affine or affinoid. 
In the scheme-theoretic case, this follows since perfect rings are reduced (and hence the intersection of all prime ideals is the zero ideal).
In the perfectoid case, this is implied by \cite[Theorem 5.2.1]{SW20}.
\end{proof}

At this point we note that for a geometric point $\Spa(C,C^+)$ with canonical inclusion $\Spa(C,O_C)\subseteq \Spa(C,C^+)$ we obtain equivalences in $\Catex$
\[\calV(\Spa(C,C^+))\simeq \calV(\Spa(C,O_C))\simeq \on{Proj}(C)\,.\]
In particular, for most of the statements that we discuss below one can replace the usage of geometric point by that of rank $1$ geometric point. 
\begin{lemma}
	\label{analytic-vector-bundles-exact-geometric}
	Let $X$ be a uniform analytic adic space and let $\Sigma\coloneqq[\calE_1\to \calE_2\to \calE_3]$ be a sequence of vector bundles over $X$. 
	The sequence $\Sigma$ is exact if and only if for every geometric point $\overline{x}\to X$ the restricted sequence $\overline{x}^*\Sigma$ is exact.
\end{lemma}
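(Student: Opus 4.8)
The plan is to reduce the statement to a local, pointwise question about modules over the rings $B = \rH^0(U,\calO_X)$ for affinoid opens $U = \Spa(B,B^+) \subseteq X$, and then bootstrap from the known behavior at geometric points using the hypothesis that $X$ is stably uniform. First I would note that exactness of $\Sigma$ can be checked analytic-locally on $X$, so we may assume $X = \Spa(B,B^+)$ is a stably uniform affinoid adic space; by \cite[Theorem 2.7.7]{kedlaya_liu_relative_p_adic_hodge_theory_foundations} (invoked in the passage right before the lemma) we have $\calV(X) \simeq \on{Proj}(B)$, so $\Sigma$ corresponds to a sequence $[P_1 \to P_2 \to P_3]$ of finite projective $B$-modules. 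The ``only if'' direction is immediate: pulling back along $\overline{x}^*$ is (the sheafification of) a base change $-\otimes_B C^+$ along the stalk, and since the $\calE_i$ are locally free, exactness of an honest short exact sequence of vector bundles is preserved by any $*$-pullback. So the content is the ``if'' direction.

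For the ``if'' direction, the key point is to upgrade pointwise exactness at all geometric points to exactness of the complex of $B$-modules. I would argue as follows. Exactness of $[P_1 \to P_2 \to P_3]$ as a sequence of sheaves is equivalent to: (a) the map $P_1 \to P_2$ has image a direct summand (equivalently, $\operatorname{coker}(P_1 \to P_2)$ is locally free), and (b) the induced $\operatorname{coker}(P_1 \to P_2) \to P_3$ is injective with locally free cokernel — in other words, the homology sheaf $\calH = \ker(\calE_2 \to \calE_3)/\operatorname{im}(\calE_1 \to \calE_2)$ vanishes. The hypothesis says $\overline{x}^*\calH = 0$ for every geometric point, i.e.\ the stalk of $\calH$ at every point of $X$ (after passing to the algebraically closed residue field and a valuation subring) is zero. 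Since $X$ is stably uniform, every point has a maximal generization which is a rank-$1$ point with algebraically closed residue field available after a suitable base change, and the observation recalled in the excerpt — that $\calV(\Spa(C,C^+)) \simeq \calV(\Spa(C,O_C)) \simeq \on{Proj}(C)$ — lets us work with rank-$1$ geometric points throughout. A coherent (indeed, finitely presented, being a subquotient of vector bundles) sheaf on a stably uniform affinoid that vanishes at every rank-$1$ point vanishes: this is where stable uniformity does the work, guaranteeing that $B$ and all its rational localizations are uniform so that the theory of coherent sheaves behaves well and vanishing of all fibers forces the module to be zero (one checks the support, a closed subset of $|X|$, is empty because it contains no rank-$1$ points, hence no points at all).

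Concretely, I would phrase the final step as: let $Q = \operatorname{coker}(P_1 \to P_2)$, a finitely presented $B$-module. For each rank-$1$ geometric point $\overline{x}$, exactness of $\overline{x}^*\Sigma$ forces $\overline{x}^*Q$ to be a vector bundle on $\overline{x}$ and $\overline{x}^*Q \to \overline{x}^*P_3$ to be a split injection. First apply the pointwise criterion to conclude $Q$ is a vector bundle: a finitely presented module over a (stably) uniform affinoid whose fiber rank is locally constant and whose fibers have the ``expected'' dimension is finite projective — here one uses that $\operatorname{Tor}_1^B(Q, \kappa(\overline{x})) = 0$ for all $\overline{x}$ (this is exactly the splitting of $P_1 \to P_2$ at $\overline{x}$) together with finite presentation to get projectivity by the local criterion for flatness, semicontinuity of fiber dimension, and uniformity. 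Then, knowing $Q$ is a vector bundle, the map $Q \to P_3$ is a morphism of vector bundles that is injective on all fibers, hence a subbundle inclusion, hence $[P_1 \to P_2 \to P_3]$ is exact. The main obstacle I anticipate is making the ``vanishing on fibers implies vanishing of the module'' and ``fiberwise-split implies globally split'' steps rigorous in the non-Noetherian, non-classical setting of stably uniform adic spaces: one must be careful that the relevant $\operatorname{Tor}$ and fiber-dimension arguments, which are routine over Noetherian rings, go through using uniformity (e.g.\ via \cite[Theorem 2.7.7]{kedlaya_liu_relative_p_adic_hodge_theory_foundations} and the sheafiness/coherence results of Kedlaya--Liu), rather than citing classical commutative algebra directly. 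If a clean direct argument proves elusive, a fallback is to reduce further by a v-cover to a product of points using \Cref{combs_and-products} and \Cref{two-versions-of-product-of-combs}, where $B$ becomes a product of fields-with-valuation and the statement becomes essentially pointwise by construction.
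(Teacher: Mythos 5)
Your reduction to an affinoid $X=\Spa(B,B^+)$ with $\calV(X)\simeq \on{Proj}(B)$ and the forward direction are fine, but the converse as you have set it up has two genuine gaps. First, you never verify that $\Sigma$ is a complex: forming the homology $\calH=\ker/\on{im}$ presupposes that the composite $\calE_1\to\calE_3$ vanishes, and this already needs an argument. The paper handles it by observing that a uniform affinoid is reduced, so that $B$ injects into the product of the (completed, algebraically closed) residue fields at its points, whence a map of free modules vanishing at every geometric point is zero; your write-up skips this entirely. Second, and more seriously, the central step --- passing from fiberwise exactness at points of $|X|$ to exactness over $B$ --- is exactly the part you defer to ``routine'' commutative algebra (Tor vanishing, the local criterion for flatness, semicontinuity of fiber rank). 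Those tools do not apply as stated: $B$ is non-Noetherian, and the fibers you control are only those at primes of $B$ that support a continuous valuation, not at all primes of $\Spec B$. Your ``the support is a closed subset of $|X|$ containing no rank-$1$ points, hence empty'' argument conflates $\Spec B$ with $|\Spa(B,B^+)|$: the support of a finitely generated $B$-module lives in $\Spec B$, and vanishing of fibers at all adic points does not formally force it to be empty.

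The missing ingredient is the fact that every maximal ideal of a complete Tate (uniform) ring supports a continuous valuation and hence underlies a geometric point of $X$. The paper's proof isolates exactly this: after checking pointwise that $\Sigma$ is a complex and that $\calE_1\to\calE_2$ is injective, it reduces surjectivity of $\calE_2\to\calE_3$ (from which exactness in the middle follows) via determinants to the claim that the ideal $I\subseteq B$ generated by the image is the unit ideal; if $I$ were proper it would lie in some maximal ideal $\frakm$, and the geometric point supported at $\frakm$ would contradict the hypothesis. You should either adopt this route or supply the maximal-ideal-to-geometric-point step explicitly; without it your argument does not close. Finally, your fallback of passing to a product of points via a v-cover (\Cref{combs_and-products}) is not available here: the lemma concerns arbitrary stably uniform analytic adic spaces, not perfectoid spaces, so there is no v-topology or product-of-points cover to invoke.
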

\begin{proof}
	Since we can verify exactness locally (by analytic descent), we may assume that $X=\Spa(R,R^+)$ with $(R,R^+)$ a stably uniform Huber pair and that each $\calE_i$ is free. 
	Then, if $\Sigma$ is an exact sequence, it is clear that the restricted sequences $\bar{x}^*\Sigma$ are exact, since all the $\calE_i$ are free (and hence tensoring with any module preserves exactness).
	We now assume that all the restricted sequences $\bar{x}^*\Sigma$ are exact and want to show that $\Sigma$ itself is an exact sequence.
	We may verify that $\Sigma$ is a complex on geometric points by \Cref{geompts_inj}. 
	Injectivity on the left can be done similarly. 
	It suffices to prove surjectivity, since exactness in the middle follows from this.
	We can reinterpret $\Sigma$ as a sequence over $\on{Spec} R$. 
	Passing to determinant bundles, we may assume that $\calE_3=R$ and that the image of $\calE_2$ is an ideal $I\subseteq R$.
	We now assume that the map is not surjective.
	Then, $I$ is proper, and hence contained in a maximal ideal $I\subseteq \frakm\subseteq R$.
	We note that $k(\frakm)$ with the induced quotient topology is a complete Tate Huber ring, since $\frakm \subset R$ is closed.
	Let us denote by $\bar{k}(\frakm)^{\widehat{}}$ the completion of the algebraic closure of the residue field $k(\frakm)$, which induces a geometric point $\bar{x}\colon\! \Spa(\bar{k}(\frakm)^{\widehat{}}, \bar{k}(\frakm)^{\widehat{}\ ,+}) \to \Spa(R,R^+)$.
	Then, $\bar{x}^{*}(\calE_2 \to \calE_3)$ factors as
	\[\calE_2\otimes_R \bar{k}(\frakm)^{\widehat{}}\to I\otimes_R \bar{k}(\frakm)^{\widehat{}} = 0 \to \bar{k}(\frakm)^{\widehat{}}\]
	and hence $x^*\Sigma$ is not an exact sequence. 
	We have constructed a geometric point $\overline{x}\to \Spa (R,R^+)$ for which $\overline{x}^*\Sigma$ is not exact, contradicting our assumption.
\end{proof}

When we restrict the functor $\calV$ to $\Perf$ one has stronger descent results.
Indeed, by \cite[Lemma 17.1.8]{SW20}, $\calV\colon (\Perf)^\op\to \Catex$ is a v-sheaf.
Moreover, by \cite[Proposition 6.3.4]{SW20}, sous-perfectoid Huber pairs are stably uniform, so if $X=\Spa(R,R^+)$ for $(R,R^+)$ a sous-perfectoid Huber pair, then $X \in \Adic$ and $\calV(X)=\on{Proj}(R)$.
We consider functors
\[\calV_\calY\colon (\Perf)^\op\to \CatexOE \quad \text{ and } \quad \calV_Y\colon (\Perf)^\op\to \CatexE\]
with 
\[\calV_\calY(S)\coloneqq\{\text{Vector bundles over } \calY_S\} \quad
	\text{ and } \quad
\calV_Y(S)\coloneqq\{\text{Vector bundles over } Y_S\}\,.\]
It follows from \cite[Proposition 19.5.3]{SW20} that both $\calV_\calY$ and $\calV_Y$ are v-sheaves.
In this case, the exact structures on $\calV_\calY(S)$ and $\calV_Y(S)$ can be tested on geometric points of $S$ by \Cref{analytic-vector-bundles-exact-geometric}, so v-descent of the exact structure is immediate to verify. 

The category of vector bundles on the Fargues--Fontaine curve \cite[\S II.2]{FS24} can be defined by the pull-back square
\begin{equation}\label{Diag_BunFF}
\begin{tikzcd}
	\Bun_{\FF}\arrow{r} \arrow{d}  & \calV_Y \arrow{d}{\Delta} \\
	\calV_Y \arrow{r}{(\on{id},\varphi^*)} & \calV_Y\times \calV_Y\,. 
\end{tikzcd}
\end{equation}
In order to understand the functors $(\Bun_\FF)^\mer$ and $(\Bun_\FF)^\An$ giving rise to the correspondence
\begin{center}
\begin{tikzcd}
 (\Bun_\FF)^\mer \arrow{r} \arrow{d}  & \Bun_\FF\,   \\
 (\Bun_\FF)^\An & 
\end{tikzcd}
\end{center}
we will have to study the outcome of evaluating $\calV_Y$ on v-sheaves of the form $\Spd(R_\disc,R_\disc)$ and $\Spd(R_\disc,R^+_\disc)$.

\subsection{$E_\infty$-sous-perfectoid spaces}
We let $\mathrm{OpenSch}\subseteq \Perff$ denote the full subcategory of v-sheaves which open locally admit an open immersion into a space of the form $\calS^\diamond$ for $\calS\in \PSch$.
We note that by \Cref{Yfexamples}, this full subcategory contains $\Perf$.
The purpose of this section is, for $\calF\in \on{OpenSch}$, to construct the space $Y_{\calF}$ and to prove \Cref{theorem-evaluating-opensubs} below, which characterizes $\calV_Y(\calF)$.

The main observation is that the proof of \cite[Proposition 19.5.3]{SW20} can be generalized to a broad class of analytic adic spaces.
\begin{definition}
	A Huber ring $R$ over $E$ is called $E_{\infty}$-sousperfectoid if the completed tensor product $R \widehat{\otimes}_E E_{\infty}$ is perfectoid. 
	If we have an adic space $X=\Spa(A,A^+)$ over $\Spa(E)$ such that $A$ is $E_{\infty}$-sousperfectoid, we call $X$ \emph{affinoid $E_{\infty}$-sousperfectoid}.
\end{definition}

\begin{remark}
Note that $E_{\infty}$ is topologically countably generated over $E$ and hence topologically free by \cite[§2.7, Theorem 4]{BGR84}. 
This shows that being $E_{\infty}$-sousperfectoid implies being sousperfectoid \cite[Definition 6.3.1]{SW20}. 
Moreover, the proof of \cite[Proposition 6.3.3.(1)]{SW20} shows that being $E_{\infty}$-sousperfectoid is stable under rational localization.
\end{remark}

\begin{definition}
	We define the category $E_{\infty}\textup{-SPerfd}$ as the category of analytic adic spaces $X$ over $\Spa(E)$ such that $X$ can be covered by affinoid $E_{\infty}$-sousperfectoid spaces.
\end{definition}

\begin{definition}
	A collection of morphisms $\{f_i \colon Y_i \to X\}_{i \in I}$ between $E_{\infty}$-sousperfectoid spaces is a v-cover if for each quasicompact open subset $U \subseteq X$, there exists a finite subset $J \subseteq I$ and quasicompact open subsets $V_i \subseteq Y_i$ for $i \in J$, such that $U = \bigcup_{i \in J} f_i(V_i)$.
\end{definition}

\begin{proposition}
	The category $E_{\infty}\textup{-SPerfd}$ together with v-covers is a site.
\end{proposition}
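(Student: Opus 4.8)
The plan is to verify the axioms of a site (in the sense of Grothendieck pretopologies) for the collection of v-covers between $E_\infty$-sousperfectoid spaces, mimicking the structure of the analogous verification for perfectoid spaces in \cite[\S 8]{Sch17}. The three conditions to check are: stability under base change, stability under composition, and that isomorphisms are covers. The last is immediate. For composition, if $\{f_i\colon Y_i\to X\}$ and $\{g_{ij}\colon Z_{ij}\to Y_i\}$ are v-covers, then given a quasicompact open $U\subseteq X$ we first find a finite $J$ and quasicompact opens $V_i\subseteq Y_i$ with $U=\bigcup_{i\in J}f_i(V_i)$; each $V_i$ is again quasicompact, so we may cover it by finitely many $g_{ij}(W_{ij})$, and the images $f_i(g_{ij}(W_{ij}))$ then cover $U$. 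This is purely formal once one knows quasicompact opens of $E_\infty$-sousperfectoid spaces are again $E_\infty$-sousperfectoid, which follows from stability of the $E_\infty$-sousperfectoid condition under rational localization (noted in the remark above) together with the fact that such spaces, being sousperfectoid hence analytic adic spaces, have quasicompact opens built from finite unions of rational subsets.

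The main point, and the only one requiring actual content, is stability of v-covers under base change: given a v-cover $\{f_i\colon Y_i\to X\}_{i\in I}$ and a morphism $X'\to X$ of $E_\infty$-sousperfectoid spaces, I want $\{Y_i\times_X X'\to X'\}_{i\in I}$ to be a v-cover. First I would need to know that fiber products exist in $E_\infty\textup{-SPerfd}$: reducing to the affinoid case, one must show that if $A\to B$ and $A\to C$ are maps of $E_\infty$-sousperfectoid Huber rings, then the completed tensor product $B\widehat{\otimes}_A C$ is again $E_\infty$-sousperfectoid. This is the crux. The strategy, following \cite[Proposition 6.3.3]{SW20} for sousperfectoid rings, is to base change to $E_\infty$: one has $(B\widehat{\otimes}_A C)\widehat{\otimes}_E E_\infty$, and using that $B\widehat{\otimes}_E E_\infty$, $C\widehat{\otimes}_E E_\infty$, and $A\widehat{\otimes}_E E_\infty$ are perfectoid, one identifies this with a completed tensor product of perfectoid rings over a perfectoid ring, which is perfectoid by \cite[Proposition 6.3.3]{SW20} (or the tilting equivalence plus the corresponding statement in characteristic $p$, where completed tensor products of perfect rings are perfect after completion). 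Some care is needed to ensure the relevant topological tensor products over $E$ and over $E_\infty$ match up, but since $E_\infty$ is topologically free over $E$ this bookkeeping goes through exactly as in \emph{loc.\ cit.}

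Once fiber products exist, stability under base change is checked on quasicompact opens: given quasicompact open $U'\subseteq X'$, its image in $X$ lies in some quasicompact open $U$, over which we have $U=\bigcup_{i\in J}f_i(V_i)$ for finite $J$; pulling back the $V_i$ along $U'\to U$ gives quasicompact opens of $Y_i\times_X X'$ whose images cover $U'$, using surjectivity of $|\Spa(B\widehat{\otimes}_A C)|\to |\Spa B|\times_{|\Spa A|}|\Spa C|$ onto the relevant locus — again this is the same argument as for perfectoid spaces. I expect the fiber-product existence step (perfectoidness of the completed tensor product) to be the main obstacle, or rather the only non-formal ingredient; everything else is a transcription of the perfectoid case with "$E_\infty$-sousperfectoid" in place of "perfectoid", legitimized by the remark that the $E_\infty$-sousperfectoid condition is rational-localization stable and implies sousperfectoidness.

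\begin{proof}
We verify the axioms of a Grothendieck pretopology. Isomorphisms are clearly v-covers, and stability under composition is formal: given v-covers $\{f_i\colon Y_i\to X\}_{i\in I}$ and $\{g_{ij}\colon Z_{ij}\to Y_i\}_{j\in J_i}$ and a quasicompact open $U\subseteq X$, choose finite $J\subseteq I$ and quasicompact opens $V_i\subseteq Y_i$ ($i\in J$) with $U=\bigcup_{i\in J}f_i(V_i)$; as each $V_i$ is a quasicompact open of an $E_\infty$-sousperfectoid space, hence itself $E_\infty$-sousperfectoid by stability under rational localization, we may choose finite $J_i'\subseteq J_i$ and quasicompact opens $W_{ij}\subseteq Z_{ij}$ with $V_i=\bigcup_{j\in J_i'}g_{ij}(W_{ij})$, and then $U=\bigcup_{i\in J,\,j\in J_i'}f_i(g_{ij}(W_{ij}))$.

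It remains to check stability under base change. We first note that fiber products exist in $E_\infty\textup{-SPerfd}$. By gluing it suffices to treat the affinoid case: if $B\leftarrow A\rightarrow C$ are maps of $E_\infty$-sousperfectoid Huber rings, then $B\widehat{\otimes}_A C$ is again $E_\infty$-sousperfectoid. Indeed, since $E_\infty$ is topologically free over $E$ by \cite[§2.7, Theorem 4]{BGR84}, one has a natural identification
\[(B\widehat{\otimes}_A C)\widehat{\otimes}_E E_\infty \cong (B\widehat{\otimes}_E E_\infty)\widehat{\otimes}_{A\widehat{\otimes}_E E_\infty}(C\widehat{\otimes}_E E_\infty)\,,\]
and the right-hand side is a completed tensor product of perfectoid rings over a perfectoid ring, hence perfectoid by \cite[Proposition 6.3.3]{SW20}. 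Therefore $\Spa(B\widehat{\otimes}_A C)$, equipped with the fiber product of the integral structure sheaves, represents the fiber product $\Spa(B)\times_{\Spa(A)}\Spa(C)$ in the category of analytic adic spaces over $\Spa(E)$, and it lies in $E_\infty\textup{-SPerfd}$.

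Now let $\{f_i\colon Y_i\to X\}_{i\in I}$ be a v-cover and $h\colon X'\to X$ a morphism in $E_\infty\textup{-SPerfd}$; set $Y_i'=Y_i\times_X X'$. Let $U'\subseteq X'$ be quasicompact open. Its image $h(U')$ is contained in some quasicompact open $U\subseteq X$, and by hypothesis there are a finite $J\subseteq I$ and quasicompact opens $V_i\subseteq Y_i$ for $i\in J$ with $U=\bigcup_{i\in J}f_i(V_i)$. Set $V_i'=(V_i\times_X X')\cap U'$, a quasicompact open of $Y_i'$. For a point $x'\in U'$ with image $x=h(x')\in U$, pick $i\in J$ and $y\in V_i$ with $f_i(y)=x$; since the map on topological spaces $|\Spa(B\widehat{\otimes}_A C)|\to |\Spa(B)|\times_{|\Spa(A)|}|\Spa(C)|$ is surjective onto its image for affinoid adic spaces (see \cite[Proposition 6.3.3]{SW20} and the discussion of fiber products in \cite[§6.3]{SW20}), there is a point of $Y_i\times_X X'$ lying over $(y,x')$, which then lies in $V_i'$ and maps to $x'$. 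Hence $U'=\bigcup_{i\in J}f_i'(V_i')$ where $f_i'\colon Y_i'\to X'$ is the projection, so $\{Y_i'\to X'\}_{i\in I}$ is a v-cover. This completes the verification that $E_\infty\textup{-SPerfd}$ with v-covers is a site.
\end{proof}
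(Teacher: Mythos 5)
Your proposal is correct and its essential content — existence of fiber products via the identification $(B\widehat{\otimes}_A C)\widehat{\otimes}_E E_\infty \cong (B\widehat{\otimes}_E E_\infty)\widehat{\otimes}_{A\widehat{\otimes}_E E_\infty}(C\widehat{\otimes}_E E_\infty)$ and the fact that completed tensor products of perfectoid Tate rings are perfectoid — is exactly the paper's argument. The remaining pretopology axioms you verify (composition, base change of covers, isomorphisms) are formal and the paper simply omits them, so there is no substantive difference in approach.
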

\begin{proof}
	It suffices to show that fiber products exist, so let $Y \to X \leftarrow Z$ be a map of $E_{\infty}$-sousperfectoid spaces. 
	We can assume that $X= \Spa(A,A^+), Y=\Spa(B,B^+),Z=\Spa(C,C^+)$, where $A,B,C$ are Tate and their completed tensor product with $E_{\infty}$ over $E$ is perfectoid. 
	But then 
    $$(B \widehat{\otimes}_A C) \widehat{\otimes}_E E_{\infty} \cong (B \widehat{\otimes}_E E_{\infty}) \widehat{\otimes}_{A \widehat{\otimes}_E E_{\infty}} (C \widehat{\otimes}_E E_{\infty})$$
    and we know that the term on the right hand side is perfectoid since completed tensor products of perfectoid Tate Huber rings are perfectoid. 
	This shows in particular that $B \widehat{\otimes}_A C$ is again $E_{\infty}$-sousperfectoid and hence sheafy.
\end{proof}

\begin{example}
	As it turns out, $\Spa(E_{\infty})$ together with its natural map to $\Spa(E)$ is not an example of an $E_{\infty}$-sousperfectoid space.
	Indeed, when $\mathrm{char}(E)=p$, this is evident since $\mathbb{F}_q(\!(t^{1/p^{\infty}})\!) \widehat{\otimes}_{\mathbb{F}_q(\!(t)\!)} \mathbb{F}_q(\!(t^{1/p^{\infty}})\!)$ is not perfect.
	When $\mathrm{char}(E)=0$, we let $C \coloneqq \widehat{\bar{E}}$ and $B \coloneqq C \widehat{\otimes}_E C$ with ring of definition $B_0 \coloneqq \mathcal{O}_C \widehat{\otimes}_{\mathcal{O}_E} \mathcal{O}_C$.
	Using that completed tensor products of perfectoid rings are again perfectoid twice, it suffices to show that $B$ is not perfectoid by contradiction.
	For $n \in \mathbb{N}$, we fix a primitive $p^n$-th root of unity $\zeta_{p^n} \in C$. We now construct the elements
	$$a_n \coloneqq \sum_{i=0}^{p^n -1} \zeta_{p^n}^i\otimes \zeta_{p^n}^{-i} \in B.$$
	We can now define elements $b_n \coloneqq \frac{a_n}{p^{{n}/{2}}} \in B$ which are idempotent and hence power-bounded. 
	However, $p^{(n-1)/2} \cdot b_n \notin B_0$ for all $n$ such that $\zeta_{p^n} \notin E$ and hence for $n\gg0$. 
	This shows that $B$ is not uniform, and consequently it is not perfectoid.
	The same proof shows that $\mathbb{C}_p$ is not $E_{\infty}$-sousperfectoid.
\end{example}

We can generalize \cite[Proposition 19.5.3]{SW20} to perfect complexes. 
For an analytic adic space $X$, we denote by $\perfc(X)$ the $\infty$-category of perfect complexes on $X$ and by $\perfc^{[a,b]}(X)$ its subcategory of complexes of tor-amplitude $[a,b]$. 
We note that the presheaves
\begin{align*}
	\perfc \colon \Adic &\to \infcat \\
	X &\mapsto \perfc(X)
\end{align*}
and
\begin{align*}
	\perfc^{[a,b]} \colon \Adic &\to \infcat \\
	X &\mapsto \perfc^{[a,b]}(X)
\end{align*}
satisfy analytic descent by \cite[Theorem 1.4]{And21}.

\begin{theorem}\label{sousstackcomplex}
    The presheaves
    \begin{align*}
        \perfc \colon E_{\infty}\textup{-SPerfd} &\to \infcat \\
        X &\mapsto \perfc(X)
    \end{align*}
	and
	\begin{align*}
        \perfc^{[a,b]} \colon E_{\infty}\textup{-SPerfd} &\to \infcat \\
        X &\mapsto \perfc^{[a,b]}(X)
    \end{align*}
    are sheaves for the v-topology.
\end{theorem}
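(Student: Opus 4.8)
The plan is to adapt the proof of \cite[Proposition~19.5.3]{SW20}, which establishes v-descent for vector bundles on sousperfectoid spaces over $\Q_p$, to the setting of perfect complexes and to an arbitrary non-Archimedean local field $E$. Analytic descent for $\perfc$ and $\perfc^{[a,b]}$ is available by \cite[Theorem~1.4]{And21}, so one may assume $X$ is affinoid; and any v-cover of an affinoid $E_\infty$-sousperfectoid space is refined by a single surjective map of affinoids (apply the definition of a v-cover with $U$ the whole space and pass to a finite affinoid subcover of the resulting quasicompact opens). Hence it suffices to verify descent along one v-cover $f\colon Y=\Spa(B,B^+)\to X=\Spa(A,A^+)$ between affinoid $E_\infty$-sousperfectoid spaces; by the preceding proposition the terms $Y^{(\bullet)/X}$ of the \v{C}ech nerve of $f$ are again affinoid $E_\infty$-sousperfectoid, so the totalisation $\varprojlim_{\Delta}\perfc(Y^{(\bullet)/X})$ is defined. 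I would treat $\perfc^{[a,b]}$ first; the statement for $\perfc$ then follows because tor-amplitude can be tested v-locally (for instance on geometric points), so that a descent datum of objects of tor-amplitude in $[a,b]$ descends to an object of tor-amplitude in $[a,b]$, while every perfect complex has bounded tor-amplitude on a quasicompact space.

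The central device is the perfectoidisation $X_\infty\coloneqq X\times_{\Spa(E)}\Spa(E_\infty)$, i.e.\ $A_\infty\coloneqq A\widehat{\otimes}_E E_\infty$, which is perfectoid by hypothesis; likewise $Y^{(n)/X}\widehat{\otimes}_E E_\infty$ is perfectoid, and base change along $X_\infty\to X$ carries $f$ to a v-cover of affinoid perfectoid spaces. Over honest perfectoid spaces $\perfc^{[a,b]}$ is a v-sheaf: on an affinoid perfectoid space $\mathcal{O}$ is a sheaf with vanishing higher cohomology and $\mathcal{O}^+/\varpi$ is an arc-sheaf of almost modules, from which v-descent of bounded perfect complexes follows as in \cite[Lemma~17.1.8, Theorem~19.5.3]{SW20}. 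Hence $\perfc^{[a,b]}(X_\infty)\xrightarrow{\ \sim\ }\varprojlim_{\Delta}\perfc^{[a,b]}\!\big(Y^{(\bullet)/X}\widehat{\otimes}_E E_\infty\big)$, and the same holds after base changing the entire \v{C}ech diagram of $f$ along any term of the \v{C}ech nerve of $X_\infty\to X$.

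It remains to descend along $X_\infty\to X$. Here one uses that $E_\infty$ is topologically free over $E$ by \cite[\S2.7, Theorem~4]{BGR84}, so that $R\to R\widehat{\otimes}_E E_\infty$ is a split injection and faithfully flat for every $E_\infty$-sousperfectoid $R$. When $\mathrm{char}(E)=0$ one may moreover take $E_\infty$ to be pro-(finite \'etale) over $E$; then $X_\infty\to X$ is a pro-\'etale torsor under a profinite group $\Gamma$, one has $E_\infty\widehat{\otimes}_E E_\infty\cong C(\Gamma,E_\infty)$ and hence $A_\infty\widehat{\otimes}_A A_\infty\cong C(\Gamma,A_\infty)$, so the \v{C}ech nerve of $X_\infty\to X$ is levelwise perfectoid; combining pro-finite-\'etale descent of perfect complexes with the perfectoid v-descent of the previous paragraph and interchanging the two totalisations gives the result. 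When $\mathrm{char}(E)=p$ the extension $E_\infty/E$ is instead radicial, and one argues by faithfully flat descent along $X_\infty\to X$, using that each term of its \v{C}ech nerve is a nilpotent thickening of $X_\infty$. I expect this last step to be the main obstacle: the \v{C}ech terms $A_\infty\widehat{\otimes}_A A_\infty,\dots$ over the non-perfectoid base $A$ are not perfectoid — indeed the Example above shows that already $E_\infty\widehat{\otimes}_E E_\infty$ fails to be perfectoid — and in residue characteristic $p$ there is no pro-\'etale structure to fall back on, so one must phrase the descent through (algebraic) perfect complexes of modules over possibly non-sheafy Huber rings and then reconcile this with the sheaf-theoretic $\perfc$ on the perfectoid and sousperfectoid loci.
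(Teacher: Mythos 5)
Your skeleton is the right one and is exactly the method the paper intends (its own proof is a one-line appeal to the method of \cite[Proposition 2.4]{AB21}, itself the argument of \cite[Proposition 19.5.3]{SW20} that you start from): reduce by analytic descent to a single cover of affinoids, base change to $E_{\infty}$ to land in the perfectoid world where v-descent of $\perfc^{[a,b]}$ is known, and return to $X$; the reduction of $\perfc$ to $\perfc^{[a,b]}$ is also fine. The genuine gap is the return trip, which you yourself flag as the main obstacle and do not close, and neither of your two proposed mechanisms works. In characteristic $0$ you want the \v{C}ech nerve of $X_{\infty}\to X$ to be levelwise perfectoid via $E_{\infty}\widehat{\otimes}_E E_{\infty}\cong C(\Gamma,E_{\infty})$; but the Example in this very section shows that $E_{\infty}\widehat{\otimes}_E E_{\infty}$ is \emph{not} perfectoid (it is not even uniform): $E_{\infty}/E$ is deeply ramified, so $\calO_{E_{\infty}}\widehat{\otimes}_{\calO_E}\calO_{E_{\infty}}$ is nowhere near $C(\Gamma,\calO_{E_{\infty}})$, which is precisely what the power-bounded but non-integral elements $b_n$ of that Example detect (and the paper's $E_{\infty}$ need not even be Galois over $E$). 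So there is no pro-\'etale torsor structure on the adic-space level to descend along. In characteristic $p$ you offer no argument: the kernel of $A_{\infty}\widehat{\otimes}_A A_{\infty}\to A_{\infty}$ is an ideal all of whose elements are nilpotent but which is not a nilpotent ideal, and there is no fpqc-descent formalism for completed tensor products of general Huber rings to fall back on.

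The missing idea is that one should never descend along $X_{\infty}\to X$ at all. Topological freeness of $E_{\infty}$ over $E$ (\cite[\S 2.7, Theorem 4]{BGR84}), with $1$ as part of a topological basis, gives a splitting $A_{\infty}\cong A\oplus M$ of topological $A$-modules which is functorial in $A$ and hence compatible with the entire \v{C}ech nerve of $Y\to X$. You mention this splitting but then abandon it for flatness. Because the objects have tor-amplitude in $[a,b]$, the totalizations computing mapping spaces and descended objects are finite limits, which $-\widehat{\otimes}_E E_{\infty}$ preserves; the splitting then exhibits the descent diagram for $\perfc^{[a,b]}$ over $A$ as a direct summand of the corresponding diagram over the perfectoid nerve of $Y_{\infty}\to X_{\infty}$, and the comparison functor over $A$ is an equivalence because it is a compatible retract of the known equivalence over $A_{\infty}$. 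This retract argument is uniform in the characteristic of $E$ and is what the cited "proof method of \cite[Proposition 2.4]{AB21}" consists of. (A minor further point: the faithful flatness of $A\to A_{\infty}$ that you assert is neither obvious for completed tensor products nor needed once the splitting is used.)
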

\begin{proof}
    The same proof method as in \cite[Proposition 2.3]{AB21} applies to this context.
\end{proof}

The category of vector bundles identifies with the full subcategory of perfect complexes which have tor-amplitude $[0,0]$. 
Moreover, exactness, an $E$-linear structure and a $\otimes$-structure can be checked after passing to a v-cover so we get the following:

\begin{corollary}
	\label{descent-of-vectors-Einfty-sp}
The association
\begin{align*}
E_{\infty}\textup{-SPerfd} &\to \CatexE \\
X &\mapsto \calV(X)
\end{align*}
is a sheaf for the v-topology.
\end{corollary}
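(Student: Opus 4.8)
The plan is to deduce this from \Cref{sousstackcomplex} by identifying vector bundles inside perfect complexes and then checking that the extra structure (exactness, $E$-linearity, tensor product) descends. First I would recall that on any analytic adic space $X$ the category $\calV(X)$ of vector bundles is canonically equivalent to the full subcategory $\perfc^{[0,0]}(X) \subseteq \perfc(X)$ of perfect complexes of tor-amplitude concentrated in degree $0$: a perfect complex is a vector bundle precisely when it is locally free of finite rank, and this is exactly the tor-amplitude $[0,0]$ condition. This equivalence is natural in $X$, so restricting to $E_\infty\textup{-SPerfd}$ we get a natural transformation of presheaves $\calV \simeq \perfc^{[0,0]}$, and by \Cref{sousstackcomplex} (the case $[a,b]=[0,0]$) the presheaf $X \mapsto \perfc^{[0,0]}(X)$ is a v-sheaf valued in $\infcat$. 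Hence $X \mapsto \calV(X)$ is a v-sheaf valued in $\on{Cat}_\infty$.

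The remaining point is to upgrade this to a sheaf valued in $\CatexE$, i.e.\ to check that the symmetric monoidal $E$-linear additive structure with its exact structure is also recovered by the same limit. Given a v-cover $\{Y_i \to X\}$ in $E_\infty\textup{-SPerfd}$ with Čech nerve $Y_\bullet$, we already know $\calV(X) \xrightarrow{\sim} \varprojlim_\bullet \calV(Y_\bullet)$ as plain categories; I must verify this equivalence is compatible with the $\CatexE$-structure. The $E$-linear and symmetric monoidal structures transport along this equivalence automatically, because $\otimes$ of vector bundles and the $E$-action are defined by pullback-compatible formulas and hence assemble into the corresponding structures on the limit — concretely, one invokes the fact (set up in \Cref{Appendix-sect}) that the forgetful functor $\CatexE \to \on{Cat}_\infty$ preserves and reflects limits, so it suffices to know the underlying equivalence of categories, which we have. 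For the exact structure: a sequence $\calE_1 \to \calE_2 \to \calE_3$ of vector bundles over $X$ is declared exact iff it is a short exact sequence of $\calO_X$-modules, equivalently a fiber sequence in $\perfc(X)$ with all terms in $\perfc^{[0,0]}$; this is manifestly pullback-stable and v-local, since a cofiber computed in $\perfc(Y_i)$ lands back in $\perfc^{[0,0]}$ iff it does so after a v-cover, and $\perfc$ satisfies v-descent. So an abstract isomorphism class of sequences in the limit $\varprojlim_\bullet \calV(Y_\bullet)$ is exact iff its restriction to each $Y_i$ is, which is exactly the exact structure on $\calV(X)$.

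The main obstacle, and the step requiring genuine care, is precisely this bookkeeping of the exact and monoidal structures through the limit — i.e.\ justifying that the limit in $\CatexE$ is computed on underlying categories and that "exactness is v-local" in the strong sense needed. This is not hard conceptually, but it is where one has to lean on the appendix: one needs that $\CatexE$ is presentable with limits created by the forgetful functor to $\on{Cat}_\infty$ (stated in \Cref{Appendix-sect}), and one needs the analogue of \Cref{analytic-vector-bundles-exact-geometric} — namely that for the spaces in play exactness of a sequence of vector bundles can be tested after a v-cover — which follows because $E_\infty$-sousperfectoid spaces are stably uniform (being sousperfectoid) and \Cref{analytic-vector-bundles-exact-geometric} applies. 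Once these are in hand, the corollary is a formal consequence: the forgetful functor detects the equivalence, each piece of structure is separately v-local, hence the structured presheaf $X \mapsto \calV(X)$ is a $\CatexE$-valued v-sheaf.
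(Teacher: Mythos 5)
Your proposal is correct and follows exactly the paper's route: the paper deduces the corollary from Theorem \ref{sousstackcomplex} by identifying $\calV$ with the tor-amplitude $[0,0]$ part of $\perfc$ and noting that exactness, the $E$-linear structure, and the $\otimes$-structure can be checked after passing to a v-cover. Your write-up merely makes explicit the bookkeeping (limits in $\CatexE$ created by the forgetful functor, exactness tested via \Cref{analytic-vector-bundles-exact-geometric}) that the paper leaves implicit.
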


\begin{corollary}
	The association
	\begin{align*}
		E_{\infty}\textup{-SPerfd} &\to \Grps\\
		X &\mapsto \{G\textup{-torsors over X}\}
	\end{align*}
	valued in Groupoids is a sheaf for the v-topology.
\end{corollary}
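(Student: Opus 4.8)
The statement is that the presheaf of $G$-torsors on $E_\infty\textup{-SPerfd}$ satisfies v-descent. The plan is to bootstrap this from the already-established descent for vector bundles (\Cref{descent-of-vectors-Einfty-sp}) using the Tannakian dictionary, exactly as in the perfectoid case \cite[Proposition 19.5.3]{SW20}. First I would recall that for a (sous-)perfectoid, or more generally any analytic adic, space $X$ over $E$, a $G$-torsor on $X$ is the same datum as an exact $\otimes$-functor $\Rep_E G \to \calV(X)$; this is the Tannakian description of $G$-bundles and is available because $G$ is a reductive group over the field $E$, so $\Rep_E G$ is a neutral Tannakian category with fiber functor the forgetful functor. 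Thus the presheaf $X \mapsto \{G\text{-torsors over }X\}$ is identified with $X \mapsto \Fun^{\otimes,\on{ex}}(\Rep_E G, \calV(X))$, i.e. with the presheaf $\Hom_{\CatexE}(\Rep_E G, \calV(-))$ corepresented by the fixed object $\Rep_E G$.

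The second step is purely formal: limits commute with $\Hom$ out of a fixed object. More precisely, for any v-cover $\{Y_i \to X\}$ in $E_\infty\textup{-SPerfd}$ with Čech nerve $Y_\bullet$, \Cref{descent-of-vectors-Einfty-sp} gives $\calV(X) \simeq \lim_{\Delta} \calV(Y_\bullet)$ in $\CatexE$, and therefore
\[
\Hom_{\CatexE}(\Rep_E G, \calV(X)) \simeq \Hom_{\CatexE}\bigl(\Rep_E G, \textstyle\lim_{\Delta}\calV(Y_\bullet)\bigr) \simeq \lim_{\Delta}\Hom_{\CatexE}(\Rep_E G, \calV(Y_\bullet)),
\]
since the $\Hom$-anima functor out of a fixed object preserves limits. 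Unwinding the Tannakian identification on each term, this says exactly that $\{G\text{-torsors over }X\}\simeq \lim_\Delta \{G\text{-torsors over }Y_\bullet\}$, which is the desired descent. Since this argument only uses finite disjoint unions (to reduce a general v-cover to a single surjective map) and the v-sheaf property of $\calV$, it applies verbatim on the site $E_\infty\textup{-SPerfd}$.

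The main point requiring care — and the only real obstacle — is justifying the Tannakian equivalence between $G$-torsors on a general $E_\infty$-sousperfectoid space $X$ and exact tensor functors $\Rep_E G \to \calV(X)$ with the correct (exact, $E$-linear, symmetric monoidal) structure, including the comparison of the notions of exactness used in the definition of $\CatexE$ versus the geometric notion of exact sequence of vector bundles on $X$. For affinoid $X = \Spa(A,A^+)$ with $A$ Tate, $\calV(X)\simeq \on{Proj}(A)$ and this is the classical statement over a ring; the subtlety is only that $X$ need not be affinoid. But both sides are v-sheaves — the left by the content just proven once the affinoid case is known, the right by \Cref{descent-of-vectors-Einfty-sp} combined with the formal argument — and they agree on an affinoid v-cover, so they agree in general. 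One should also check that $A$ being $E_\infty$-sousperfectoid guarantees $X$ is sheafy (which it is, by the remark that $E_\infty$-sousperfectoid rings are sousperfectoid hence sheafy), so that $\calV(X)$ is well-behaved. With these points in place the corollary is immediate.
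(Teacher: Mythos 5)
Your proposal is correct and is essentially the argument the paper intends: the corollary is stated without proof immediately after \Cref{descent-of-vectors-Einfty-sp} precisely because it follows by the Tannakian identification of $G$-torsors with exact tensor functors $\Rep_E G \to \calV(X)$ (as in \cite[Theorem 19.5.2]{SW20}, valid here since $E_\infty$-sousperfectoid implies sousperfectoid) together with the fact that mapping anima out of the fixed object $\Rep_E G$ in $\CatexE$ commute with the limits furnished by v-descent of $\calV$. Your care about extending the Tannakian equivalence from affinoids to general $X$ by a sheaf-theoretic gluing is a reasonable way to close the one gap the paper leaves implicit.
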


\begin{corollary}
The site of $E_{\infty}$-sousperfectoid spaces with the v-topology is subcanonical.
\end{corollary}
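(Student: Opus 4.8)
\emph{Proof proposal.} Recall that the site is subcanonical precisely when, for every $Z \in E_{\infty}\textup{-SPerfd}$, the representable presheaf $\underline{Z} = \Hom(-,Z)$ is a v-sheaf. The plan is to reduce this to v-descent for the two structure presheaves $\mathcal{O}$ and $\mathcal{O}^+$ on $E_{\infty}\textup{-SPerfd}$, the first of which is already available from \Cref{sousstackcomplex}, and the second of which I would obtain from the perfectoid case via the sous-perfectoid splitting. First I would perform the standard reductions: a morphism of adic spaces is determined by, and a compatible family of morphisms on an open cover of source and target glues to, a morphism; combining this with the facts that v-covers in $E_{\infty}\textup{-SPerfd}$ are finitary and that the category has finite coproducts, it suffices to check the sheaf condition for a single surjective v-cover $f\colon Y \to X$ of affinoid $E_{\infty}$-sousperfectoid spaces $X = \Spa(A,A^+)$, $Y = \Spa(B,B^+)$ (whose \v{C}ech nerve $Y^{\bullet/X}$ has all terms affinoid $E_{\infty}$-sousperfectoid by the fibre-product computation in the proof that $E_{\infty}\textup{-SPerfd}$ is a site), and for affinoid target $Z = \Spa(D,D^+)$. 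Since $Z$ is affinoid, $\Hom(X,Z)$ is the set of continuous ring maps $D \to A$ carrying $D^+$ into $A^+$, and similarly at every level of the nerve, so the sheaf condition becomes the assertion that $A = \operatorname{eq}\bigl(B \rightrightarrows B\,\widehat{\otimes}_A B\bigr)$ as a topological ring together with $A^+ = \operatorname{eq}\bigl(B^+ \rightrightarrows (B\,\widehat{\otimes}_A B)^+\bigr)$.

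Next I would establish $\mathcal{O}$-descent. By \Cref{sousstackcomplex}, $\perfc(X) \simeq \varprojlim \perfc(Y^{\bullet/X})$; taking the endomorphism object of the monoidal unit $\mathcal{O}_X$ (which is sent to the unit at every level, and mapping spectra in a limit of $\infty$-categories are the limit of the mapping spectra) gives $\Gamma(X,\mathcal{O}_X) \simeq \varprojlim \Gamma(Y^{\bullet/X},\mathcal{O})$. Because affinoid $E_{\infty}$-sousperfectoid spaces are $\mathcal{O}$-acyclic, each of these mapping spectra is discrete, so the limit collapses to the equalizer and yields $A = \operatorname{eq}(B \rightrightarrows B\,\widehat{\otimes}_A B)$ as rings. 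Since $A$ is uniform, $A^+$ is a ring of definition and the topology of $A$ has the basis $\{\varpi^n A^+\}_n$ for a uniformizer $\varpi$ of $E$; hence controlling $A^+$ recovers the topological ring, and it remains only to treat $\mathcal{O}^+$.

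For $\mathcal{O}^+$-descent I would pass to the perfectoid cover: writing $\widetilde{(-)} = (-)\times_{\Spa E}\Spa E_{\infty}$, the space $\widetilde{X}$ is affinoid perfectoid and $\widetilde{Y}\to\widetilde{X}$ is a v-cover of perfectoid spaces with $\widetilde{Y}^{\bullet/X} = \widetilde{Y}^{\bullet/\widetilde{X}}$. On $\Perf$ the presheaf $\mathcal{O}^+$ is a v-sheaf (\cite[Theorem 8.7]{Sch17}), so $\widetilde{A}^+ = \varprojlim \mathcal{O}^+(\widetilde{Y}^{\bullet/\widetilde{X}})$. Now I would use that $X$ is $E_{\infty}$-sousperfectoid: the inclusion $A \hookrightarrow \widetilde{A}$ is split $A$-linearly and continuously (as in the proof of \cite[Proposition 6.3.3]{SW20}), surjectivity of $|\widetilde{X}|\to |X|$ identifies $A^+$ with $A \cap \widetilde{A}^+$, and these identifications are compatible at every level of the nerve; combining this with the perfectoid $\mathcal{O}^+$-descent and the $\mathcal{O}$-descent of the previous step gives $A^+ = \varprojlim \mathcal{O}^+(Y^{\bullet/X})$, hence the equalizer presentation of $A^+$. (Alternatively, one can rerun the argument of \cite[Proposition 19.5.3]{SW20}, which underlies \Cref{sousstackcomplex}, directly for the presheaf $\mathcal{O}^+$.) This proves the sheaf condition for affinoid $Z$; the case of general $Z\in E_{\infty}\textup{-SPerfd}$ follows by covering $Z$ by affinoid $E_{\infty}$-sousperfectoid opens and gluing, using that open immersions are monomorphisms of v-sheaves and that v-covers induce quotient maps of the underlying quasicompact topological spaces. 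Therefore every $\underline{Z}$ is a v-sheaf and the site is subcanonical.

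The main obstacle is the $\mathcal{O}^+$-descent in the third step: unlike $\mathcal{O}$, the integral subring is not visible from the perfect-complex sheaf property, so one must genuinely exploit the sous-perfectoid splitting to transfer the perfectoid statement down to $E_{\infty}\textup{-SPerfd}$, taking care that the splitting respects the integral structures and is compatible across the whole \v{C}ech nerve.
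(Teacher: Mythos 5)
Your argument is correct and is essentially the paper's proof, which is a one-line pointer to \Cref{sousstackcomplex} together with the argument of the second part of \cite[Corollary 8.6]{Sch17}: reduce to an affinoid v-cover $Y=\Spa(B,B^+)\to X=\Spa(A,A^+)$ and an affinoid target, and extract descent of the structure sheaf from descent of perfect complexes by taking endomorphisms of the unit (using Tate acyclicity on sheafy affinoids to identify the totalization with the equalizer). One comment on the step you single out as the main obstacle: the full equalizer presentation of $A^+$ (and hence the base change to $E_\infty$ and the splitting) is not actually needed, since to descend a morphism to $Z=\Spa(D,D^+)$ it suffices that the image of $D^+$ under the descended ring map $D\to A$ lie in $A^+$, and this follows from Huber's identity $A^+=\{f\in A \mid |f(x)|\le 1 \text{ for all } x\in X\}$ together with the surjectivity of $|Y|\to |X|$ (the same two facts also give continuity of $D\to A$, using uniformity of $A$).
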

\begin{proof}
We can use Theorem \ref{sousstackcomplex} and proceed analogously as in the second part of \cite[Corollary 8.6]{Sch17}.
\end{proof}

The main observation about $E_{\infty}$-sousperfectoid spaces is that we can check a lot of properties of morphisms after base-changing to $E_{\infty}$:

\begin{proposition}\label{sperfconservative}
	The functor
    \begin{align*}
        E_{\infty}\textup{-SPerfd} &\to \mathrm{Perfd}/\!\Spa(E_{\infty}) \\
        X &\mapsto X \times_{\Spa(E)} \Spa(E_{\infty})
    \end{align*}
    is conservative and faithful.
\end{proposition}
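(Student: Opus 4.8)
The plan is to deduce both assertions from the single fact that $\Spa(E_\infty)\to\Spa(E)$ is a quasi-compact, surjective v-cover (indeed a pro-\'etale cover), together with the subcanonicity of the site $E_{\infty}\textup{-SPerfd}$ established above. First I would record two formal observations. For any $X\in E_{\infty}\textup{-SPerfd}$ the space $X':=X\times_{\Spa(E)}\Spa(E_\infty)$ is perfectoid by the very definition of $E_{\infty}$-sousperfectoidness, hence it again lies in $E_{\infty}\textup{-SPerfd}$ as well as in $\mathrm{Perfd}/\Spa(E_\infty)$; moreover the projection $\pi_X\colon X'\to X$ is a v-cover, being the base change of the quasi-compact v-cover $\Spa(E_\infty)\to\Spa(E)$ along $X\to\Spa(E)$. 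Second, for a morphism $f\colon X\to Y$ the square with vertical maps $\pi_X,\pi_Y$ and horizontal maps $f$ and $f\times\on{id}_{\Spa(E_\infty)}$ is Cartesian, since $X\times_{\Spa(E)}\Spa(E_\infty)=X\times_Y\bigl(Y\times_{\Spa(E)}\Spa(E_\infty)\bigr)$; in other words $f\times\on{id}$ is precisely the base change of $f$ along the v-cover $\pi_Y$.

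For faithfulness, suppose $f,g\colon X\to Y$ satisfy $f\times\on{id}=g\times\on{id}$. Composing with the projection $\pi_Y\colon Y'\to Y$ and using that $\pi_Y\circ(f\times\on{id})=f\circ\pi_X$, one gets $f\circ\pi_X=g\circ\pi_X$; since $\pi_X$ is a v-cover and $Y$ is a v-sheaf (subcanonicity), this forces $f=g$.

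For conservativity, suppose $f\times\on{id}$ is an isomorphism, so that the base change of $f$ along the v-cover $\pi_Y$ is an isomorphism. I would then show that $f$ is simultaneously an epimorphism and a monomorphism in the v-topos. It is an epimorphism because $\pi_Y\circ(f\times\on{id})\colon X'\to Y$, being a composite of two v-covers, generates a covering sieve of $Y$, and this sieve is contained in the one generated by $f$, so $f$ itself generates a covering sieve. It is a monomorphism because the two projections $X\times_Y X\rightrightarrows X$ agree after pulling back along the v-cover $(X\times_Y X)\times_Y Y'\to X\times_Y X$: that pullback is canonically identified with $X'\times_{Y'}X'\rightrightarrows X'$, whose two maps coincide since $f\times\on{id}\colon X'\xrightarrow{\sim}Y'$ is in particular a monomorphism; by the sheaf property of $X$ the two projections on $X\times_Y X$ coincide. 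A morphism in a topos that is both a monomorphism and an epimorphism is an isomorphism, and since the subcanonical site $E_{\infty}\textup{-SPerfd}$ embeds fully faithfully into its topos, $f$ is an isomorphism in $E_{\infty}\textup{-SPerfd}$. (One may equivalently phrase the whole argument as: pullback of v-sheaves along the v-cover $\Spa(E_\infty)\to\Spa(E)$ is faithful and conservative by descent, and then restrict along the fully faithful inclusion of representables.)

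The only points requiring a little care — and the closest thing to an obstacle — are foundational rather than deep: one must know that $X\times_{\Spa(E)}\Spa(E_\infty)$ genuinely lands back in $E_{\infty}\textup{-SPerfd}$ (this is exactly the sousperfectoid hypothesis), that $\Spa(E_\infty)\to\Spa(E)$ and all the base changes used above qualify as v-covers in the sense of the site defined here (which follows from quasi-compactness and surjectivity), and that the v-topos of $E_{\infty}\textup{-SPerfd}$ behaves like a standard topos so that the "monomorphism plus epimorphism equals isomorphism" and the descent arguments apply verbatim. All of these are supplied by the preceding subsection: existence of fibre products, the v-descent of $\perfc$ and of representable sheaves, and the stability of $E_{\infty}$-sousperfectoidness under the operations used.
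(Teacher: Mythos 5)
There is a genuine gap, and it sits in your very first ``formal observation.'' You claim that $X' \coloneqq X \times_{\Spa(E)} \Spa(E_\infty)$ again lies in $E_{\infty}\textup{-SPerfd}$, asserting that ``this is exactly the sousperfectoid hypothesis.'' It is not: the hypothesis on $X$ says that $X'$ is \emph{perfectoid}, whereas membership in the site would require $X' \times_{\Spa(E)} \Spa(E_\infty)$ to be perfectoid as well, i.e.\ that $R \widehat{\otimes}_E (E_\infty \widehat{\otimes}_E E_\infty)$ be perfectoid. The Example immediately preceding the proposition shows this fails already for $X = \Spa(E)$: its base change is $\Spa(E_\infty)$, which is explicitly shown \emph{not} to be $E_\infty$-sousperfectoid, because $E_\infty \widehat{\otimes}_E E_\infty$ is not even uniform. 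So $X'$ is in general not an object of the site, $\pi_X \colon X' \to X$ is not a v-cover \emph{in the sense of the site}, $\on{Hom}(X',Y)$ is not something the subcanonicity corollary controls, and the whole descent argument (both the faithfulness step and the epi/mono argument in the v-topos of $E_{\infty}\textup{-SPerfd}$) collapses. Nor can you repair this by choosing some other perfectoid cover over $\Spa(E_\infty)$ or over an algebraically closed field: those spaces are excluded from the site for the same reason.

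The paper's proof is built precisely to get around this. It passes to a site where v-descent \emph{is} available --- the characteristic-$p$ perfectoid site, via $X \mapsto (X \times_{\Spa(E)}\Spa(E_\infty))^\flat$ --- and concludes there that $f^\lozenge$ is an isomorphism of v-sheaves. The price is that one must then climb back from diamonds to adic spaces by hand: \cite[Lemma 15.6]{Sch17} gives that $f$ is a homeomorphism of topological spaces, and the isomorphism on structure sheaves is extracted from the \emph{split} injections $\calO_X \hookrightarrow g_*\calO_{X'}$ (split because $E_\infty$ is topologically free over $E$). A soft, purely descent-theoretic proof along your lines would require as input the full faithfulness of the diamond functor on sousperfectoid spaces, which is exactly the nontrivial content that these two steps supply.
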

\begin{proof}
%	We have the cartesian diagram
%    \begin{center}
%        \begin{tikzcd}
%            Y \times_{\Spa(E)} \Spa(E_{\infty}) \ar[r, "f_{\infty}"] \ar[d, "g^{\prime}"'] & X \ar[d] \times_{\Spa(E)} \ar[d, "g"] \Spa(E_{\infty}) \\
%            Y \ar[r, "f"'] & X.
%        \end{tikzcd}
%    \end{center}
%    Assume that $f_{\infty}$ is an isomorphism.
	It suffices to show that the composition
    $$E_{\infty}\textup{-SPerfd} \to \mathrm{Perfd}/\!\Spa(E_{\infty}) \xrightarrow{(-)^{\flat}} \mathrm{Perf}/\!\Spa(E_{\infty})^{\flat}$$
    reflects isomorphisms. 
	Let $f\colon Y \to X$ be a morphism in $E_{\infty}-\mathrm{SPerfd}$ such that 
	$$f_{\infty}\colon (Y \times_{\Spa(E)} \Spa(E_{\infty}))^{\flat} \to (X \times_{\Spa(E)} \Spa(E_{\infty}))^{\flat}$$
    is an isomorphism. 
	Since taking diamonds preserves fiber products, we have a Cartesian square of v-sheaves
	\begin{center}
	    \begin{tikzcd}
	        Y \times_{\Spa(E)} \Spa(E_{\infty}) \ar[r, "f_{\infty}", "\cong"'] \ar[d] & X \ar[d] \times_{\Spa(E)} \ar[d] \Spa(E_{\infty}) \\
	       	Y^{\lozenge} \ar[r, "f^{\lozenge}"'] & X^{\lozenge}.
	   \end{tikzcd}
	\end{center}
	Then $f^{\lozenge}\colon Y^{\lozenge} \to X^{\lozenge}$ is an isomorphism, as this can be checked after pulling back via a v-surjective map of v-sheaves. 
	Then, \cite[Lemma 15.6]{Sch17} implies that $|f|\colon |Y| \to |X|$ is a homeomorphism. 
	Observing that there are naturally split inclusions of sheaves $\calO_X \hookrightarrow g_*\calO_{X \times_{\Spa(E)} \Spa(E_{\infty})}$ (and analogously for $\calO_Y$), $f$ is also an isomorphism on structure sheaves, which concludes the proof. 
	We can proceed analogously to show that the functor is faithful.
\end{proof}

We now construct the spaces $Y_{\calF}$.

\begin{lemma}\label{ffschematic}
	There is, up to unique isomorphism, a pair $(Y_{(-)},\cong)$, where $Y_{(-)}$ is a functor
    \begin{align*}
        Y_{(-)} \colon \mathrm{OpenSch} &\to E_{\infty}\textup{-SPerfd} \\
        \calF &\mapsto Y_{\calF}
    \end{align*}
	such that:
	\begin{enumerate}
		\item $Y_{\Spec(A)^{\diamond}} \coloneqq \Spa(\bbW(A)[\frac{1}{\pi}], \bbW(A))$,
		\item $Y_{(-)}$ preserves open immersions,
		\item and $\cong$ is a natural equivalence of functors in $\on{Fun}(\mathrm{OpenSch},\Perff)$
	\[(Y_{(-)})^\lozenge\cong (-)\times \Spd(E)\,.\]
	\end{enumerate}
	Furthermore, this construction commutes with products and more generally taking \v Cech nerves.
\end{lemma}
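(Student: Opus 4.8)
The plan is to build the functor $Y_{(-)}$ by first defining it on the full subcategory $\PSch^\diamond$ of representable objects via formula (1), then extending it over the open immersions that glue an arbitrary $\calF\in\mathrm{OpenSch}$, and finally checking that the extension is independent of choices by appealing to descent. More precisely, for $\calS=\Spec(A)\in\PSch$ set $Y_{\calS^\diamond}\coloneqq\Spa(\bbW A[\tfrac1\pi],\bbW A)$. Here I would first record that this space is $E_\infty$-sousperfectoid: for $E$ of characteristic $0$ the ring $\bbW A[\tfrac1\pi]\widehat\otimes_E E_\infty$ is the $O_E$-Witt vectors of a perfect $\bbF_q$-algebra base-changed along $E\to E_\infty$, which is perfectoid by the standard tilting description, and for $E$ of characteristic $p$ it is $A\rpot{\pi}\widehat\otimes_{\bbF_q}\bbF_q(\!(t^{1/p^\infty})\!)$, again perfectoid; in both cases one cites that $(Y_{\calS^\diamond})^\lozenge\cong\calS^\diamond\times\Spd(E)$, which is exactly the known identification of the adic Fargues--Fontaine-type space attached to a perfect scheme (e.g.\ via \cite[Proposition 3.7]{Gle22} and the compatibility $\Spd\bbW A[\tfrac1\pi]\cong\Spd(A,A)\times\Spd E$ from \cite{SW20}). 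Functoriality in $\calS$ is immediate because $A\mapsto\bbW A$ is a functor and inverting $\pi$ is natural, and open immersions $\calU\hookrightarrow\calS^\diamond$ with $\calU$ again in $\mathrm{OpenSch}$ are sent to open immersions since $Y_{(-)}$ on representables is compatible with rational localization (the $\bbW(-)[\tfrac1\pi]$-construction commutes with Zariski-localization on $\PSch$, using that the latter is covered by the complement-of-a-point loci).

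Next I would glue. Given $\calF\in\mathrm{OpenSch}$, choose an open cover $\{\calU_i\hookrightarrow\calF\}$ with each $\calU_i\hookrightarrow\calS_i^\diamond$ an open immersion into a representable; then $\calU_i$ inherits a space $Y_{\calU_i}$ as an open subspace of $Y_{\calS_i^\diamond}$, and the overlaps $\calU_{ij}=\calU_i\times_\calF\calU_j$ are again in $\mathrm{OpenSch}$, so the two induced open subspaces of $Y_{\calU_i}$ and $Y_{\calU_j}$ agree by the representable case applied to a common refinement. One then glues the $Y_{\calU_i}$ along these identifications inside the category $E_\infty\textup{-SPerfd}$; the gluing makes sense because open immersions of $E_\infty$-sousperfectoid spaces glue (the property of being $E_\infty$-sousperfectoid is local and stable under rational localization), and the result carries the open-immersion-preservation property (2) and the Huber-pair formula (1) by construction. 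Independence of the cover, hence well-definedness up to \emph{unique} isomorphism, follows because any two covers have a common refinement and the transition data on a common refinement are forced; this is where property (3) does real work, since $(-)^\lozenge$ is conservative enough to pin down morphisms — combined with \Cref{sperfconservative}, a map of $E_\infty$-sousperfectoid spaces is determined by its diamondification, so the natural transformation $(Y_{(-)})^\lozenge\cong(-)\times\Spd(E)$ both exists (glue the representable identifications) and rigidifies the whole package.

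Finally, for the ``commutes with products and \v{C}ech nerves'' clause: products in $\mathrm{OpenSch}$ of objects over a fixed base are computed by gluing fiber products of the representable charts, and on representables $\bbW(A\otimes_{A_0}B)$ relates to $\bbW A\widehat\otimes_{\bbW A_0}\bbW B$ compatibly after inverting $\pi$ — more cleanly, one checks the statement after applying $(-)^\lozenge$, where it becomes the assertion that $\bigl((-)\times\Spd E\bigr)$ preserves products and \v{C}ech nerves, which is clear since $(-)\times\Spd E$ is a right adjoint's worth of tameness (product with a fixed object), and then one upgrades back to $E_\infty\textup{-SPerfd}$ using \Cref{sperfconservative} together with the fact that fiber products in that category exist and are again $E_\infty$-sousperfectoid (the proposition preceding \Cref{sousstackcomplex}).

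I expect the main obstacle to be the gluing step in $E_\infty\textup{-SPerfd}$: one must be careful that an arbitrary (possibly infinite) open cover of $\calF$ produces a genuine adic space $Y_\calF$ — i.e.\ that the glued object is again analytic and admits a cover by \emph{affinoid} $E_\infty$-sousperfectoid spaces — and that the identifications on triple overlaps satisfy the cocycle condition strictly and not merely up to non-canonical isomorphism. The remedy is exactly property (3): rather than checking cocycle compatibility by hand in $E_\infty\textup{-SPerfd}$, transport everything to $\Perff$ via $(-)^\lozenge$, where $(Y_{(-)})^\lozenge$ is literally the functor $\calF\mapsto\calF\times\Spd E$ and all compatibilities are automatic, and then use \Cref{sperfconservative} (conservativity and faithfulness of diamondification on $E_\infty$-sousperfectoid spaces) to descend the gluing data back. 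This is also what forces the uniqueness in the statement.
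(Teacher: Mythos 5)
Your proposal is correct and follows essentially the same route as the paper: define $Y_{(-)}$ on representables, verify the $E_{\infty}$-sousperfectoid property, compute $(Y_{\calS^\diamond})^\lozenge\cong\calS^\diamond\times\Spd E$ by the universal property of Witt vectors, and then produce $Y_\calF$ for general $\calF$ by exploiting that open subspaces are detected after diamondification (the paper invokes \cite[Lemma 15.6]{Sch17} to get the unique open subspace directly, rather than gluing cocycle data, but this is the same mechanism), with \Cref{sperfconservative} handling products and \v{C}ech nerves. The only place you elide real content is the perfectoidness of $\bbW A[\frac{1}{\pi}]\widehat\otimes_E E_\infty$ in mixed characteristic, which the paper verifies explicitly via the Frobenius criterion of Bhatt--Morrow--Scholze applied to $\bbW(A)\widehat\otimes_{O_E}O_{E_\infty}$.
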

\begin{proof}
    Assume that $\calS =\Spec(A)$ is an affine perfect scheme. 
	We can use the same proof as in \cite[Proposition II.1.1]{FS24} to show that 
	$$Y_{\calS^{\diamond}}=\Spa(\bbW(A)[\frac{1}{\pi}],\bbW(A))$$ 
    is affinoid $E_{\infty}$-sousperfectoid, where $\bbW(A)$ carries the $\pi$-adic topology (see also \cite[Remark 13.1.2]{SW20}).
	Indeed, it suffices to show that 
    $$R=\bbW(A)[\frac{1}{\pi}] \widehat{\otimes}_{E} E_{\infty} \cong \bbW (A)[\frac{1}{\pi}] \widehat{\otimes}_{O_E} O_{E_{\infty}}$$ 
    is perfectoid. 
	Taking the open subring
    $$R_0=\bbW(A) \widehat{\otimes}_{O_E} O_{E_{\infty}}\,,$$
    we observe that $R_0/\pi^{1/p}\cong A \otimes_{\mathbb{F}_q} \mathbb{F}_q[t^{1/p^{\infty}}]/t^{1/p} \cong A[t^{1/p^{\infty}}]/t^{1/p}$ and $R_0/\pi \cong A \otimes_{\mathbb{F}_q} \mathbb{F}_q[t^{1/p^{\infty}}]/t \cong A[t^{1/p^{\infty}}]/t$.
	This implies that
    $$\Phi\colon R_0/\pi^{1/p} \xrightarrow{\cong} R_0/\pi$$
    is an isomorphism. 
	Using \cite[Lemma 3.10 (ii)]{BMS18}, we see that $R_0$ is integral perfectoid and \cite[Lemma 3.21]{BMS18} shows that $R_0[1/\pi] \cong R$ is perfectoid.
    
    We now want to show that $(Y_{\calS^{\diamond}})^{\lozenge} \cong \calS^{\diamond} \times \Spd(E)$. 
	Namely, we want to see that for any perfectoid space $T$ over $\Spa(E)$, giving a map $T \to Y_{\calS^\diamond}$ is equivalent to giving a map $T^{\flat} \to \calS^{\diamond}=\Spa(A,A)$. 
	We can assume that $T=\Spa(B,B^+)$ is affinoid. 
	In this case, giving a map $T \to Y_{S^{\diamond}}$ is the same as giving a map $\bbW(A) \to B^+$ (the image of $\pi$ in $B$ is always invertible since $B$ lives over $E$). 
	By the universal property of $\bbW(A)$, this is equivalent to giving a map $A\to B^+\!/\pi$ since $B^+$ is $\pi$-complete. 
	Since $A$ is perfect, this is in turn equivalent to giving a map $A \to (B^+)^{\flat}$, which is precisely a map $T^{\flat}\to \calS^{\diamond}=\Spa(A,A)$.
    
	We can now glue this to construct our desired space for general perfect schemes. 
	Note that the small diamond functor $\diamond \colon \widetilde{\PSch} \to \widetilde{\Perf}$ preserves open subsheaves (see \cite[Definition 2.10]{anschütz2022padictheorylocalmodels} and the comments right after), which allows us to glue by using Proposition \ref{sperfconservative}.
	For $\calF\in \mathrm{Open}$ with an open immersion $\calF\subseteq \calS^\diamond$, we get an open immersion of v-sheaves $\calF \times \Spd(E)\subseteq (Y_{\calS^{\diamond}})^{\lozenge}$.
	It follows from \cite[Lemma 15.6]{Sch17} that there is a unique (necessarily $E_\infty$-sousperfectoid) open subspace $Y_\calF\subseteq Y_{\calS^{\diamond}}$  inducing the open immersion $\calF\times \Spd E\subseteq (Y_{\calS^{\diamond}})^{\lozenge}$.
    Commutation with \v Cech nerves follows from \Cref{sperfconservative}. 
\end{proof}

\begin{remark}\label{Yfexamples}
    It is helpful to describe the spaces $Y_{\calF}$ from \Cref{ffschematic} explicitly for different v-sheaves $\calF$.
    Suppose that $X=\Spa(R,R^+)\in \Perf$ and that $\calS=\Spec R^+\in \PSch$.
    Fix $\varpi\in R^+$ a pseudo-uniformizer.
    Recall our notational convention that $\bbW(R^+)$ denotes the Witt vectors as a topological ring endowed with the $\pi$-adic topology while $\bbA_{\on{inf}}(R^+)$ also denotes the Witt vectors, but endowed with the $(\pi,[\varpi])$-adic topology.
    \begin{enumerate}
\item If $\calF=\Spd(R_\disc^+,R_\disc^+)=\calS^\diamond$ where $R_\disc^+=R^+$ carries the discrete topology we get the $E_{\infty}$-sousperfectoid space 
	$$Y_{\calF}=\Spa(\bbW(R^+)[\frac{1}{\pi}],\bbW(R^+)).$$ 
	\item 
		If $\calF=\Spd(R^+,R^+)$, where $R^+$ carries the $\varpi$-adic topology, then we have an open immersion $\calF\subseteq \calS^\diamond$ corresponding to the locus where $\varpi$ is topologically nilpotent (see \cite[Lemma 2.24]{Gle22}).
		Moreover, we get the $E_{\infty}$-sousperfectoid space
		$$Y_{\calF}=Y_{X,{(0,\infty]}}=\Spa(\bbA_{\on{inf}}(R^+),\bbA_{\on{inf}}(R^+)) \setminus V(\pi)\,.$$ 
        In other words, $Y_{\calF}$ is the curve with $\infty$ included.
	\item 
		If $\calF=\Spd(R_{\disc},R_\disc^+)$, where both rings carry the discrete topology, we have an open immersion $\calF\subseteq \calS^\diamond$ corresponding to the locus where $\varpi\neq 0$.
		Moreover, we get the $E_{\infty}$-sousperfectoid space
		$$Y_{\calF}=\Spa(\bbW(R^+)[\frac{1}{\pi}],\bbW(R^+))\setminus V([\varpi])\,.$$
        \item Finally, assume that $\calF = X$. Then we have an open immersion $X\subseteq \calS^\diamond$ corresponding to the intersection of the loci where $\varpi\neq 0$ and where $\varpi$ is topologically nilpotent.
		We get the $E_{\infty}$-sousperfectoid space
		$$Y_\calF=Y_{X,{(0,\infty)}}=\Spa(\bbA_{\on{inf}}(R^+),\bbA_{\on{inf}}(R^+)) \setminus V(\pi [\varpi])$$ 
		from \cite[Definition II.1.15]{FS24}, which satisfies that $Y_X^{\lozenge} \cong X \times \Spd(E)$. 
		Using \Cref{sperfconservative}, we can construct the spaces $Y_X$ for general perfectoid spaces $X$ (over $\mathbb{F}_q$).
    \end{enumerate}
\end{remark}

\begin{theorem}
	\label{theorem-evaluating-opensubs}
	For any $\calF\in \on{OpenSch}$ there is a functorial identification 
	\[\calV_Y(\calF)\simeq \calV(Y_\calF)\,.\]
\end{theorem}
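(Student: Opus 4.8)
The plan is to bootstrap from the tautological identification $\calV_Y(S)\simeq\calV(Y_S)$ for $S\in\Perf$ to all of $\mathrm{OpenSch}$ by v-descent; the crucial input will be that $Y_{(-)}$ carries v-covers of v-sheaves in $\mathrm{OpenSch}$ to v-covers of $E_{\infty}$-sousperfectoid spaces. First I would record the starting point: for $S=\Spa(R,R^+)\in\Perf$ one has, by the very definition of $\calV_Y$ on $\Perf$, an equality $\calV_Y(S)=\calV(Y_S)$, where $Y_S$ is the space $Y_X$ for $X=S$ of \Cref{Yfexamples}(3) and agrees with the restriction to $\Perf$ of $Y_{(-)}$ from \Cref{ffschematic}. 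Since open covers are v-covers, since $Y_{(-)}$ preserves open immersions and open covers (\Cref{ffschematic}(2), using $(Y_{(-)})^\lozenge=(-)\times\Spd(E)$), and since both $\calV$ and $\calV_Y$ satisfy analytic descent, this equality will propagate to all perfectoid spaces $X$, giving $\calV_Y(X)=\calV(Y_X)$. For general $\calF\in\mathrm{OpenSch}$, recalling from \Cref{presheaves-sheaves} that $\calV_Y(\calF)=\varprojlim_{(S\to\calF)\in\Perf_{/\calF}}\calV_Y(S)$, I would precompose the pullback maps $\calV(Y_\calF)\to\calV(Y_S)=\calV_Y(S)$ along $Y_S\to Y_\calF$ to obtain a comparison morphism $\calV(Y_\calF)\to\calV_Y(\calF)$, natural in $\calF$ and an equivalence for $\calF\in\Perf$; the content is to upgrade it to an equivalence in general.

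Next, fix $\calF\in\mathrm{OpenSch}$. I would choose a v-cover $T\to\calF$ by a disjoint union of affinoid perfectoid spaces, built explicitly by covering each $\Spec A\in\PSch$ with $\calF\subseteq(\Spec A)^\diamond$ open by a product comb $\Spec B$ (cf.\ \Cref{rem:scheme_v_covers}) and then using the punctured perfectoid polydisk $\Spa(B(\!(t^{1/p^\infty})\!),B[[t^{1/p^\infty}]])\to(\Spec B)^\diamond$, sending $t$ to a pseudo-uniformizer; one checks that $T$, and the terms $T_\bullet$ of its Čech nerve, again lie in $\mathrm{OpenSch}$. By \Cref{ffschematic}, $Y_{(-)}$ commutes with Čech nerves, so $Y_{T_\bullet}$ is the Čech nerve of $Y_T\to Y_\calF$. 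The key claim is that $Y_T\to Y_\calF$ is a v-cover in $E_{\infty}\textup{-SPerfd}$: applying $(-)^\lozenge$ and \Cref{ffschematic}(3) turns it into the surjection $T\times\Spd(E)\to\calF\times\Spd(E)$, so by \cite[Lemma 15.6]{Sch17}, exactly as in the proof of \Cref{sperfconservative}, the map $|Y_T|\to|Y_\calF|$ is surjective; since every quasicompact open of $Y_\calF$ is then a finite union of images of quasicompact opens of $Y_T$, the single morphism $Y_T\to Y_\calF$ is a v-cover. Granting this, $Y_\calF=|Y_{T_\bullet}|$, so \Cref{descent-of-vectors-Einfty-sp} yields $\calV(Y_\calF)\simeq\varprojlim_\Delta\calV(Y_{T_\bullet})$; combining with $\calV(Y_{T_n})\simeq\calV_Y(T_n)$ for each $n$ (the perfectoid case above, applied after re-covering the a priori non-perfectoid $T_n$ by perfectoid spaces and using v-descent once more) and with v-descent of $\calV_Y$, which gives $\varprojlim_\Delta\calV_Y(T_\bullet)\simeq\calV_Y(\calF)$, concatenation identifies $\calV(Y_\calF)$ with $\calV_Y(\calF)$, compatibly with the comparison morphism.

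The hard part will be the key claim: controlling the underlying topological space of $Y_\calF$ — which is not a perfectoid space — through its diamond, and, relatedly, taking care that $T$ and the terms of its Čech nerve genuinely lie in $\mathrm{OpenSch}$ (so that $Y_{(-)}$ applies to them and commutes with the nerve) and are computed, after re-covering, by honest perfectoid spaces. Everything else is formal manipulation of the two v-sites, given the v-descent of $\calV$ on $E_{\infty}\textup{-SPerfd}$ from \Cref{descent-of-vectors-Einfty-sp}, the Čech-nerve compatibility of $Y_{(-)}$ from \Cref{ffschematic}, and the cocontinuous extension of \Cref{presheaves-sheaves}.
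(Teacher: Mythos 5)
Your proposal is correct and follows essentially the same route as the paper: extend the tautological identification on $\Perf$ by v-descent, using that $Y_{(-)}$ carries v-covers to v-covers of $E_\infty$-sousperfectoid spaces, that $\calV$ satisfies v-descent there (\Cref{descent-of-vectors-Einfty-sp}), and that $\Perf_{\!/\calF}$ is a covering sieve. The only difference is presentational — the paper phrases the descent as a limit over the covering sieve $\Perf_{\!/\calF}$ and simply asserts that $Y_{(-)}$ preserves v-covers, whereas you work with an explicit Čech nerve and supply the verification of that assertion via $(Y_{(-)})^\lozenge\cong(-)\times\Spd(E)$ and \cite[Lemma 15.6]{Sch17}, which is a useful detail to have spelled out.
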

\begin{proof}
	By definition, $\calV_Y(\calF)=\varprojlim \calV(Y_X)$ as $X$ varies in $\Perf\!/\calF$.
	By \Cref{ffschematic}, we have a functor
	\[Y_{(-)}\colon\on{OpenSch}\to E_{\infty}\textup{-SPerfd}\]
	that preserves v-covers.
	By \Cref{descent-of-vectors-Einfty-sp}, $\calV(Y_{(-)})$ satisfies v-descent. 
	Moreover, $\Perf\subseteq \on{OpenSch}$ is a basis for the v-topology, so $\Perf/\calF$ is a covering sieve.
	Thus we obtain
	\[\calV_Y(\calF)=\varprojlim \calV(Y_X)\cong \calV(Y_{(\varprojlim X)})=\calV(Y_\calF)\,.\]
\end{proof}
\begin{remark}
	The above proof has already found an application in \cite{anschuetz20246functorformalismsolidquasicoherent} in which a $6$-functor formalism $D_{(0,\infty)}(-)$ for solid quasi-coherent sheaves on the spaces $Y_{(-)}$ is constructed.
	Adjusting the above proof to this setting gives an analagous result $D_{(0,\infty)}(\Spd \bbF_p) \cong D_{\solid}(\mathrm{AnSpec}\,\Q_p)$, see \cite[Theorem 6.3.1]{anschuetz20246functorformalismsolidquasicoherent}.
\end{remark}

Let $X_S\coloneqq Y_S/\varphi^{\Z}$ denote the relative Fargues--Fontaine curve with respect to a perfectoid space $S$ from \cite[Definition II.1.15]{FS24}. 
We define the presheaves of $\infty$-categories

\begin{align*}
\perfc_Y \colon (\Perf)^{\op} &\to \on{Cat}_\infty \\
S &\mapsto \perfc(Y_S)
\end{align*}
and
\begin{align*}
	\perfc_{\mathrm{FF}} \colon (\Perf)^{\op} &\to \on{Cat}_\infty \\
	S &\mapsto \perfc(X_S)
	\end{align*}
which are sheaves by \Cref{sousstackcomplex}.
Now note that the proof of \Cref{theorem-evaluating-opensubs} generalizes to perfect complexes.

\begin{theorem}
	\label{theorem-evaluating-opensubs-perf}
	For any $\calF\in \on{OpenSch}$ there is a functorial identification of $\infty$-categories
	\[\perfc_Y(\calF)\simeq \perfc(Y_\calF)\,.\]
\end{theorem}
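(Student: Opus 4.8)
The plan is to transport the proof of \Cref{theorem-evaluating-opensubs} essentially word for word, replacing the v-sheaf of vector bundles by the v-sheaf of perfect complexes. By the convention of \Cref{presheaves-sheaves}, evaluating the presheaf $\perfc_Y$ on $\calF\in\on{OpenSch}\subseteq\Perff$ amounts to forming the limit
\[
\perfc_Y(\calF)\simeq \varprojlim_{X\in\Perf/\calF}\perfc(Y_X)
\]
as $X$ ranges over $\Perf/\calF$, so the task is to recognize the right-hand side as $\perfc(Y_\calF)$.

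First I would invoke \Cref{ffschematic}: the functor $Y_{(-)}\colon\on{OpenSch}\to E_{\infty}\textup{-SPerfd}$ preserves open immersions, carries v-covers to v-covers, and --- crucially for an $\infty$-categorical statement --- commutes with \v Cech nerves, that is, with the entire simplicial diagram attached to a cover and not merely its truncation. Next I would invoke \Cref{sousstackcomplex}, which says precisely that $X\mapsto\perfc(X)$ (and likewise each $X\mapsto\perfc^{[a,b]}(X)$) is a v-sheaf on $E_{\infty}\textup{-SPerfd}$. Composing, $\perfc(Y_{(-)})\colon\on{OpenSch}^{\op}\to\infcat$ is a v-sheaf. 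Since $\Perf$ is a basis for the v-topology on $\on{OpenSch}$, the sieve $\Perf/\calF$ is a covering sieve of $\calF$, and v-descent along it --- equivalently, along a product-of-points cover of $\calF$, whose image under $Y_{(-)}$ is again a v-cover since $Y_{(-)}$ preserves v-covers --- identifies
\[
\varprojlim_{X\in\Perf/\calF}\perfc(Y_X)\simeq\perfc\bigl(Y_{\varprojlim X}\bigr)=\perfc(Y_\calF).
\]
Functoriality in $\calF$ is automatic because every step is functorial, and the identical argument with $\perfc^{[a,b]}$ in place of $\perfc$ gives the analogous statement at tor-amplitude $[a,b]$.

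I do not expect a genuine obstacle: all of the analytic content has already been isolated in \Cref{sousstackcomplex}, whose proof follows the method of \cite{AB21} and is the real work. The one point deserving a moment's care --- and the only place where the passage from vector bundles to perfect complexes is not completely formal --- is that v-descent for $\infty$-categories requires the full simplicial \v Cech object, so one must use the ``commutes with \v Cech nerves'' assertion of \Cref{ffschematic} rather than merely the ``preserves v-covers'' assertion; this is exactly the extra input that assertion was designed to provide.
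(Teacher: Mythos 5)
Your proposal is correct and is exactly the argument the paper intends: the paper simply asserts that the proof of \Cref{theorem-evaluating-opensubs} generalizes to perfect complexes, and your write-up spells out that generalization, substituting \Cref{sousstackcomplex} for \Cref{descent-of-vectors-Einfty-sp}. Your remark that $\infty$-categorical v-descent needs the full \v Cech nerve, supplied by the last assertion of \Cref{ffschematic}, is precisely the one non-formal point in the passage from vector bundles to perfect complexes.
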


In particular, this positively answers \cite[Conjecture 1.2]{Anschuetz_absFF}. 
The third author wants to thank Johannes Anschütz for related discussions.

\begin{definition}
    Let $\calS \in \PSch$. 
	We define the category $\perfc_{\mathfrak{B}}(\calS)$ of perfect complexes of isocrystals over $\calS$ as the equalizer of $\infty$-categories
    \begin{center}
        \begin{tikzcd}
			\perfc_{\mathfrak{B}}(\calS) \ar[r] & \perfc(Y_{\calS^{\diamond}}) \ar[r,shift left=.75ex,"\mathrm{id}"]
  \ar[r,shift right=.75ex,swap,"\varphi^*"] & \perfc(Y_{\calS^{\diamond}})\,.
        \end{tikzcd}
    \end{center}
    We call a pair $(K, \alpha_K \colon K \cong \varphi^*K)$ in $\perfc_{\mathfrak{B}}(\calS)$ strictly perfect if it can be written as a finite limit of objects $(\mathcal{E}, \alpha_\mathcal{E} \colon \calE \cong \varphi^*\mathcal{E})$, where $\calE$ is a vector bundle on $Y_{\calS^\diamond}$.
	This is equivalent to $(K, \alpha_K)$ representing an honest bounded complex of isocrystals on $Y_{\calS^\diamond}$ (i.e.\ a sequence of maps between isocrystals, which form a bounded complex on the underlying vector bundles).
\end{definition}

\begin{theorem}
Let $\calS \in \PSch$.
The natural functor of $\infty$-categories
$$\perfc_{\mathfrak{B}}(\calS) \to \perfc_{\mathrm{FF}}(\calS^{\diamond})$$
is an equivalence. 
Moreover, the category $\perfc_{\mathfrak{B}}(\calS)$ is equivalent to the category of bounded complexes of isocrystals.
\end{theorem}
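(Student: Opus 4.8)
The plan is to deduce the first assertion from \Cref{theorem-evaluating-opensubs-perf} by a formal manipulation of limits, and to deduce the second from the fact that $\calS$ is a perfect scheme, so that Frobenius is an honest automorphism of all the rings involved.

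For the first assertion: for $S\in\Perf$ the quotient map $Y_S\to X_S=Y_S/\varphi^{\Z}$ is a pro-\'etale torsor under the constant group $\underline{\Z}$, and since $\Z$ has cohomological dimension one the higher terms of its \v Cech cobar complex vanish; hence v-descent for perfect complexes identifies $\perfc_{\mathrm{FF}}(S)=\perfc(X_S)$ with the equalizer $\on{eq}(\mathrm{id},\varphi^*\colon \perfc(Y_S)\rightrightarrows\perfc(Y_S))$, exactly as $\Bun_{\FF}$ was built from $\calV_Y$ above. I would then compute, using that an equalizer is a finite limit and hence commutes with the cofiltered limit over $\Perf_{/\calS^{\diamond}}$,
$$\perfc_{\mathrm{FF}}(\calS^{\diamond})=\varprojlim_{S\in\Perf_{/\calS^{\diamond}}}\perfc(X_S)=\on{eq}\Bigl(\mathrm{id},\varphi^*\colon \varprojlim_{S}\perfc(Y_S)\rightrightarrows\varprojlim_{S}\perfc(Y_S)\Bigr)=\on{eq}\bigl(\mathrm{id},\varphi^*\colon \perfc_Y(\calS^{\diamond})\rightrightarrows\perfc_Y(\calS^{\diamond})\bigr).$$
By \Cref{theorem-evaluating-opensubs-perf} the map $\perfc(Y_{\calS^{\diamond}})\to\perfc_Y(\calS^{\diamond})$ is an equivalence, and it is $\varphi$-equivariant because the Witt-vector Frobenius on $Y_{\calS^{\diamond}}$ restricts to the Frobenius on each $Y_S$ by functoriality of the construction of \Cref{ffschematic}. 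Passing to equalizers identifies the last term with $\on{eq}(\mathrm{id},\varphi^*\colon \perfc(Y_{\calS^{\diamond}})\rightrightarrows\perfc(Y_{\calS^{\diamond}}))$, which is the definition of $\perfc_{\mathfrak{B}}(\calS)$; one checks that the natural comparison functor of the theorem realizes this chain of equivalences.

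For the second assertion, the crucial observation is that $A$ is perfect, so the absolute Frobenius is an automorphism of $A$, hence of $\bbW(A)$ and of $R:=\bbW(A)[\tfrac{1}{\pi}]$; thus $\varphi^*$ is an exact auto-equivalence of the category of vector bundles on $Y_{\calS^{\diamond}}=\Spa(R,\bbW(A))$. Since this affinoid is sousperfectoid, its structure sheaf is acyclic, so $\calV(Y_{\calS^{\diamond}})$ is the exact category $\on{Proj}(R)$ of finite projective $R$-modules and $\perfc(Y_{\calS^{\diamond}})$ is its bounded derived $\infty$-category $\calD^b(\on{Proj}(R))=K^b(\on{Proj}(R))$ (every perfect complex over a ring is a strict bounded complex of finite projectives, and conflations in $\on{Proj}(R)$ split). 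Under these identifications an isocrystal over $\calS$ is an object of the idempotent-complete exact category $\on{Proj}(R)^{\varphi^*}$ of pairs $(P,\psi\colon P\xrightarrow{\ \sim\ }\varphi^*P)$, and a bounded complex of isocrystals is a bounded complex over this exact category; forming total complexes (with the strict, hence coherent, $\varphi$-structure) yields a functor $\calD^b(\on{Isoc}(\calS))\to\perfc_{\mathfrak{B}}(\calS)$ which I claim is an equivalence. Full faithfulness will be a direct computation of mapping anima, which on both sides are the fibre of $\mathrm{id}-c$ acting on the mapping anima of the underlying $R$-perfect complexes, where $c$ is conjugation by the two $\varphi$-structures.

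Essential surjectivity amounts to showing that every $(K,\alpha_K)\in\perfc_{\mathfrak{B}}(\calS)$ is \emph{strictly perfect} in the sense of the definition preceding the theorem, i.e.\ can be rectified to an honest bounded complex of isocrystals. The plan is to exploit that $\perfc(Y_{\calS^{\diamond}})=K^b(\on{Proj}(R))$ is presented by the relative category of bounded complexes of finite projective $R$-modules and quasi-isomorphisms, on which $\varphi^*$ acts by a strict, invertible, exact endofunctor; then the $\infty$-categorical equalizer $\on{eq}(\mathrm{id},\varphi^*)$ is presented by the relative category of such complexes carrying a strict $\varphi^*$-semilinear automorphism — that is, by $\on{Ch}^b(\on{Isoc}(\calS))$ localized at quasi-isomorphisms, which is $\calD^b(\on{Isoc}(\calS))$. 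I expect this rectification step — the strictification of a $\underline{\Z}$-shaped homotopy limit, legitimate because $\varphi$ genuinely acts on the model and $\Z$ is free — to be the main obstacle: one must check that the passage from homotopy-coherent to strict $\varphi$-fixed data can be carried out without leaving the bounded, finite-projective setting. Once it is in place, it simultaneously gives essential surjectivity and identifies $\perfc_{\mathfrak{B}}(\calS)$ with the category of bounded complexes of isocrystals.
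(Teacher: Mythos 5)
Your treatment of the first assertion is correct and is essentially the paper's own argument: both write $\perfc_{\mathrm{FF}}(S)$ as the equalizer of $\mathrm{id}$ and $\varphi^*$ on $\perfc(Y_S)$, commute this finite limit past the limit over $\Perf_{\!/\calS^{\diamond}}$, and conclude by \Cref{theorem-evaluating-opensubs-perf} together with the ($\varphi$-equivariant) identification $\perfc(Y_{\calS^{\diamond}})\simeq \perfc_Y(\calS^{\diamond})$.

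The second assertion is where your proposal has a genuine gap, and you flag it yourself. Everything reduces to showing that the $\infty$-categorical equalizer $\on{eq}(\mathrm{id},\varphi^*)$ on $\perfc(Y_{\calS^{\diamond}})\simeq \perfc(\bbW(A)[\frac{1}{\pi}])$ is computed by the (localization of the) category of bounded complexes of \emph{strict} $\varphi$-modules, i.e.\ that every pair $(K,\alpha_K)$ is strictly perfect. Your justification --- ``$\varphi$ genuinely acts on the model and $\Z$ is free'' --- is not an argument: an object of the equalizer carries only an equivalence $\alpha_K\colon K\simeq \varphi^*K$ in the derived $\infty$-category (a zig-zag of quasi-isomorphisms), and rectifying it to a chain-level isomorphism of a bounded complex of finite projectives, without leaving the bounded finite-projective setting, is exactly the content of the assertion. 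Strict fixed points do not compute homotopy fixed points for free, even for $\Z$-actions on a strict model. The paper does not prove this by hand either: it invokes \cite[Proposition 2.7]{AB21} applied to the equalizer diagram, which is precisely the strictification statement you are missing (the equalizer of the identity and an exact autoequivalence induced by a strict exact autoequivalence of an exact category is the derived category of the fixed-point category). So your reduction and your diagnosis of the obstacle are accurate, but to complete the proof you must either cite that result or actually carry out the rectification, e.g.\ by a telescope/mapping-cylinder construction replacing $K$ by a quasi-isomorphic complex on which $\alpha_K$ becomes a strict isomorphism of bounded complexes of finite projective modules.
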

\begin{proof}
We note that for all $S \in \Perf$ there is an equalizer diagram of $\infty$-categories
\begin{equation}
	\begin{tikzcd}
		\perfc_{\mathrm{FF}}(S) \ar[r] & \perfc(Y_S) \ar[r,shift left=.75ex,"\mathrm{id}"]
\ar[r,shift right=.75ex,swap,"\varphi^*"] & \perfc(Y_S)\,.
	\end{tikzcd}
\end{equation}
The equivalence of categories then follows from Theorem \ref{theorem-evaluating-opensubs-perf} applied to $\calF = \calS^{\diamond}$. 
The second statement follows from \cite[Proposition 2.6]{AB21} applied to the equalizer diagram of $\infty$-categories
\begin{center}
	\begin{tikzcd}
		\perfc_{\mathfrak{B}}(\calS) \ar[r] & \perfc(Y_{\calS^{\diamond}}) \ar[r,shift left=.75ex,"\mathrm{id}"]
\ar[r,shift right=.75ex,swap,"\varphi^*"] & \perfc(Y_{\calS^{\diamond}})\,. 
	\end{tikzcd}
\end{center}
\end{proof}

As a consequence, we get:

\begin{corollary}
	Let $\lambda \in \Q^{\times}$ with associated absolute Banach--Colmez spaces $\mathcal{BC}_{\lambda,i}\!\coloneqq\mathcal{BC}(\calO(\lambda)[i])$ for $i\in\{0,1\}$\footnote{Here, we use the notation from \cite[Section II.2]{FS24}}. 
	Then $\mathcal{BC}_{\lambda,i}^{\on{red}}=0$, and the counit map $(\mathcal{BC}_{\lambda,i}^{\on{red}})^{\diamond} \to \mathcal{BC}_{\lambda,i}$ (given by \Cref{smdiamredadjunction}) corresponds to the zero section.\footnote{The identity $\mathcal{BC}_{\lambda,i}^{\on{red}}=0$ should be interpreted in the category of sheaves on $\PSch_{\bar{\bbF}_q}$ with values on $E$-vector spaces. More precisely, the $0$-section $\Spec {\bar{\bbF}}_q\xrightarrow{0} \mathcal{BC}_{\lambda,i}^{\on{red}}$ is an isomorphism.}
\end{corollary}
\begin{proof}
Let $\calS=\Spec(A) \in \PSch_{\bar{\bbF}_q}$. 
By \Cref{theorem-evaluating-opensubs-perf}, 
$$\mathcal{BC}_{\lambda,i}(S^{\diamond})=H^{i}(R\Gamma_{\perfc_{\mathfrak{B}}(\Spec(A))}(\calO(\lambda)))=0\,,$$ where $\calO(\lambda)$ denotes the simple standard isocrystal\footnote{Here, we use the notation from \Cref{signconvention}} over $\Spec(A)$ of slope $\lambda$.
A direct computation shows that 
$$R\Gamma_{\perfc_{\mathfrak{B}}(\Spec(A))}(\calO(\lambda)) = [\calO(\lambda) \xrightarrow{\varphi_{\calO(\lambda)}-\on{Id}} \calO(\lambda)]\overset{\lambda \neq 0}{\simeq}0\,,$$
where $\varphi_{\calO(\lambda)} \colon \calO(\lambda) \to \calO(\lambda)$ denotes the $\varphi$-linear automorphism of $\calO(\lambda)$.
\end{proof}

\subsection{On $\calV_Y^\mer$ and $\calV_Y^\An$}
In this subsection we analyze $\calV_Y^\mer$ and $\calV_Y^{\An}$, the emphasis will be on clarifying their structure as objects in $\calS(\Perf\!,\CatE)$.  
The main point is that both of these objects can be approximated by separated presheaves \Cref{separated-presheaves} and that these are easier to understand. 

\begin{definition}
Consider the functors 
\[\calV_\bbW\in \calP(\Perf\!,\CatexOE) \quad
	\text{ and } \quad
	\calV^\sch_\bbW\in \calP(\PSch\!,\CatexOE)\]
with 
\[\calV_\bbW(\Spa(R,R^+))\coloneqq\{\text{Finite projective modules over } \bbW R\}\]
and analogously
\[\calV^\sch_\bbW(\Spec R)\coloneqq\{\text{Finite projective modules over } \bbW R\}\,.\]
\end{definition}
From \cite[Corollary 17.1.9]{SW20} (applied to the case where $n=\infty$ and $R^\sharp=R$) it follows that $\calV_\bbW$ is a v-sheaf.
From \cite[Theorem 4.1.(ii)]{bhatt_scholze_projectivity_of_the_witt_vector_affine_grassmannian} it follows that $\calV^\sch_\bbW$ is a scheme-theoretic v-sheaf.

\begin{remark}
	\label{calV-is-a-vsheaf}
	It is also clear that $\calV_\bbW\simeq (\calV^\sch_\bbW)^{(\Diamond_\pre)}\simeq (\calV^\sch_\bbW)^{\Diamond}$. 
	We expect that the identity $\calV^\sch_\bbW\simeq (\calV_\bbW)^\red$ also holds. 
	When $E$ is of mixed-characteristic this latter identity is a result of G\"uthge \cite[$\mathsection$ 3]{Gut23}.
\end{remark}

\begin{definition}
	Consider functors 
	\[\calV_\bbW[\frac{1}{\pi}],\calV_\calY[\frac{1}{\pi}]\in \calP(\Perf\!, \CatE)	\quad
	\text{ and } \quad
	\calV^\sch_\bbW[\frac{1}{\pi}]\in\calP(\PSch\!,\CatE)\]	
	given by the formulas
	\[\calvwp\coloneqq\calV_\bbW\otimes_{O_E}E\text{, }  \calvyp\coloneqq\calV_\calY\otimes_{O_E}E \text{ and } \calvwps\coloneqq\calV^\sch_\bbW\otimes_{O_E}E\,.\]
\end{definition}

\begin{remark}
	\label{diamond-identification-Witt}
At the moment, we do not endow $\calvwp$, $\calvyp$ or $\calvwps$ with exact structure, but later we will identify these categories with other categories that carry a natural exact structure. 	
\end{remark}

\begin{lemma}
	\label{general-separated-nonsense}
	Let $\calT \in \{\Perf\!, \PSch\}$. Given $\calC\in \calS(\calT,\CatOE)$ be a v-sheaf of $O_E$-linear categories, let $\calD\coloneqq\calC\otimes_{O_E} E$. 
	Then $\calD\in \calP(\calT,\CatE)$ is a separated presheaf.
\end{lemma}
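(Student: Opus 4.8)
The plan is to verify the separatedness condition directly: for every $X\in\calT$ and every covering sieve $\calS\subseteq\calT_{/X}$, the comparison functor $\calD(X)\to\varprojlim_{(Y\to X)\in\calS}\calD(Y)$ is fully faithful. Recall that $\calD(Y)=\calC(Y)\otimes_{O_E}E$ has the same objects as $\calC(Y)$ and satisfies $\Hom_{\calD(Y)}(M,N)=\Hom_{\calC(Y)}(M,N)\otimes_{O_E}E$; hence a pair $M,N$ of objects of $\calD(X)=\calC(X)$ is carried to the compatible families $(M|_Y)_Y,(N|_Y)_Y$, and since $\Hom$ in a limit of categories is the limit of the $\Hom$'s, full faithfulness amounts to showing that
\[\Hom_{\calC(X)}(M,N)\otimes_{O_E}E\;\longrightarrow\;\varprojlim_{(Y\to X)\in\calS}\big(\Hom_{\calC(Y)}(M|_Y,N|_Y)\otimes_{O_E}E\big)\]
is an isomorphism of $E$-modules.

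Here I would use that the topologies on $\calT$ are finitary and that $\calT$ admits finite disjoint unions: after refining, $\calS$ is generated by a single cover $u\colon Y_0\to X$, and for any presheaf of $O_E$-modules $P$ on $\calT_{/X}$ the limit over $\calS$ is computed by the first two terms of the \v Cech nerve of $u$ as the equalizer $\varprojlim_{\calS}P=\mathrm{eq}\big(P(Y_0)\rightrightarrows P(Y_0\times_XY_0)\big)$, a finite limit. Applying this to the presheaf $(Y\to X)\mapsto\Hom_{\calC(Y)}(M|_Y,N|_Y)$, which is a sheaf of $O_E$-modules because $\calC$ is a v-sheaf of categories, the equalizer recovers $\Hom_{\calC(X)}(M,N)$; applying it to $(Y\to X)\mapsto\Hom_{\calD(Y)}(M|_Y,N|_Y)=\Hom_{\calC(Y)}(M|_Y,N|_Y)\otimes_{O_E}E$ computes the right-hand side of the display above. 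So it remains to observe that $-\otimes_{O_E}E$ commutes with the finite limit $\mathrm{eq}(-\rightrightarrows-)$: this holds because $E$ is flat over $O_E$, or equivalently because $-\otimes_{O_E}E$ is a filtered colimit ($E=\varinjlim(O_E\xrightarrow{\pi}O_E\xrightarrow{\pi}\cdots)$) and filtered colimits of $O_E$-modules commute with finite limits. Tracing through the identifications shows the displayed map is precisely this isomorphism, so $\calD(X)\to\varprojlim_\calS\calD$ is fully faithful.

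The only genuinely non-formal point — and the reason $\calD$ is merely separated rather than a v-sheaf — is the interaction of $-\otimes_{O_E}E$ with limits: full faithfulness is governed solely by the \emph{finite} equalizer extracted from the first two terms of the \v Cech nerve, which the flat base change preserves, whereas descent for objects (essential surjectivity) would require the full limit over the \v Cech nerve, a countable limit with which a filtered colimit need not commute. Everything else is bookkeeping: that $\calC$ is an actual sheaf of categories is what licenses both the equalizer presentation of $\Hom_{\calC(X)}(M,N)$ and the identity ``$\Hom$ in a limit $=$ limit of $\Hom$'s'', and the reduction of a general covering sieve to the \v Cech equalizer of a single cover is standard once one knows the topology is finitary.
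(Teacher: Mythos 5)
Your argument is essentially the paper's: both reduce separatedness to showing that the presheaves of morphisms in $\calD$ are sheaves, both compute the sieve-limit by the finite \v Cech equalizer of a single cover, and both conclude by writing $-\otimes_{O_E}E$ as the sequential colimit along multiplication by $\pi$ and invoking commutation of filtered colimits with finite limits in $O_E$-modules.

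The one step you treat as a "recall" is exactly where the paper does its only real work. Since $\calD=\calC\otimes_{O_E}E$ is defined abstractly as a tensor product $\calC\otimes_{\on{Proj}(O_E)}\on{Proj}(E)$ in $\CatOE$ (see the appendix), the identity $\Hom_{\calD(Y)}(M,N)=\Hom_{\calC(Y)}(M,N)\otimes_{O_E}E$ is not available by definition for arbitrary $M,N$; what the construction gives directly is the formula for morphisms out of the $\otimes$-unit. The paper bridges this by using that every object of $\calC(Y)$ is dualizable (the categories in $\CatOE$ are rigid), so that $\Hom(M,N)\simeq\Hom(\mathbb{1},M^\vee\otimes N)$ in both $\calC(Y)$ and $\calD(Y)$, reducing the general Hom-presheaf to one of the form $\Hom_{\calC}(\mathbb{1},V)\otimes_{O_E}E$. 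With that supplement inserted before your displayed comparison map, your proof is complete and coincides with the paper's.
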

\begin{proof}
	By \Cref{pass-to-anima} and \Cref{check-on-categories}, it suffices to show that for every v-cover $[U\to X]\in \Perf$ the map
	\[\calD(X)\to \on{Desc.}(\calD,X\!/U)\]
	is fully faithful, or equivalently that for any two objects the presheaf of morphisms is a v-sheaf.
	By construction, objects in $\calD(X)$ agree with objects in $\calC(X)$. 
	Since objects in $\calC(X)$ are dualizable, the diagramatic characterization of dualizable objects implies that objects in $\calD(X)$ are also dualizable. 
	In particular, we may compute morphisms in terms of the $\otimes$-unit and internal Hom-objects.
	Indeed, for $V, W\in \calD(X)$ we have $\on{Hom}(V,W)\simeq \on{Hom}(\mathbb{1}\otimes V, W)\simeq \on{Hom}(\mathbb{1},V^\vee\otimes W)$.
	It remains to show that for all objects $V\in \calC(X)$ the presheaf of $E$-vector spaces 
	\[\calH\coloneqq\on{Hom}_{\calC\otimes_{O_E} E}(\mathbb{1},V)= \on{Hom}_{\calC}(\mathbb{1},V)\otimes_{O_E} E\]
	is a sheaf.
	Nevertheless, $\calH$ can be written as a sequential colimit of the form 
	\[\calH=\varinjlim \ [\on{Hom}_{\calC}(\mathbb{1},V)\xrightarrow{\cdot \pi} \on{Hom}_{\calC}(\mathbb{1},V)\xrightarrow{\cdot \pi} \dots ]\,.\]
	Since in the category of $O_E$-modules finite limits commute with filtered colimits, and $\on{Hom}_{\calC}(\mathbb{1},V)$ is a sheaf of $O_E$-modules, we can conclude that $\calH$ is also a sheaf of $E$-vector spaces. 
\end{proof}

\begin{corollary}
	\label{separatedness-of-isogeny}
	$\calV_\bbW[\frac{1}{\pi}]$, $\calV_\calY[\frac{1}{\pi}]$ and $\calV^\sch_\bbW[\frac{1}{\pi}]$ are separated presheaves.
\end{corollary}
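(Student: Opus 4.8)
The plan is to deduce \Cref{separatedness-of-isogeny} as an immediate consequence of \Cref{general-separated-nonsense}, so the only work is to exhibit, in each of the three cases, a v-sheaf of $O_E$-linear categories whose isogeny category (i.e.\ its $\otimes_{O_E}E$) is the presheaf in question. The three inputs required are exactly the three v-descent statements recorded just before the definitions: that $\calV_\bbW$ is a v-sheaf of $O_E$-linear categories (from \cite[Corollary 17.1.9]{SW20}), that $\calV_\calY$ is a v-sheaf of $O_E$-linear categories (from \cite[Proposition 19.5.3]{SW20}, as noted in the earlier discussion of $\calV_\calY$), and that $\calV^\sch_\bbW$ is a scheme-theoretic v-sheaf of $O_E$-linear categories (from \cite[Theorem 4.1.(ii)]{bhatt_scholze_projectivity_of_the_witt_vector_affine_grassmannian}).

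So the proof I would write is essentially one sentence per case. First I would apply \Cref{general-separated-nonsense} with $\calT = \Perf$ and $\calC = \calV_\bbW \in \calS(\Perf,\CatexOE) \subseteq \calS(\Perf,\CatOE)$; since $\calV_\bbW[\tfrac1\pi] = \calV_\bbW \otimes_{O_E} E$ by definition, the lemma yields that it is a separated presheaf. Then I would do the same with $\calC = \calV_\calY$ to handle $\calV_\calY[\tfrac1\pi]$, and with $\calT = \PSch$ and $\calC = \calV^\sch_\bbW \in \calS(\PSch,\CatOE)$ to handle $\calV^\sch_\bbW[\tfrac1\pi]$. One minor point to address is that \Cref{general-separated-nonsense} is stated for objects of $\calS(\calT,\CatOE)$ whereas the three sheaves naturally live in $\calS(\calT,\CatexOE)$; this is harmless because forgetting the exact structure gives a functor $\CatexOE \to \CatOE$ that preserves limits (exactness being a property tested on v-covers, as already observed in the excerpt), so our sheaves are in particular $\CatOE$-valued v-sheaves and the lemma applies verbatim.

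There is essentially no obstacle here — the statement is a formal corollary — so the main thing to get right is just the bookkeeping of which descent reference feeds which case, and the remark that \Cref{diamond-identification-Witt} applies: we are not (yet) claiming any exact structure on the isogeny categories, only that the underlying presheaves of symmetric monoidal $E$-linear categories are separated in the sense of \Cref{separated-presheaves}. Concretely:

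\begin{proof}
    Recall that $\calV_\bbW$ and $\calV_\calY$ are v-sheaves valued in $\CatexOE$ (by \cite[Corollary 17.1.9]{SW20} and \cite[Proposition 19.5.3]{SW20} respectively) and that $\calV^\sch_\bbW$ is a scheme-theoretic v-sheaf valued in $\CatexOE$ (by \cite[Theorem 4.1.(ii)]{bhatt_scholze_projectivity_of_the_witt_vector_affine_grassmannian}). Since the forgetful functor $\CatexOE \to \CatOE$ preserves limits, these are in particular v-sheaves (respectively a scheme-theoretic v-sheaf) valued in $\CatOE$. Applying \Cref{general-separated-nonsense} with $\calT = \Perf$ and $\calC = \calV_\bbW$ shows that $\calV_\bbW[\frac{1}{\pi}] = \calV_\bbW \otimes_{O_E} E$ is a separated presheaf; with $\calT = \Perf$ and $\calC = \calV_\calY$ that $\calV_\calY[\frac{1}{\pi}] = \calV_\calY \otimes_{O_E} E$ is a separated presheaf; and with $\calT = \PSch$ and $\calC = \calV^\sch_\bbW$ that $\calV^\sch_\bbW[\frac{1}{\pi}] = \calV^\sch_\bbW \otimes_{O_E} E$ is a separated presheaf.
\end{proof}
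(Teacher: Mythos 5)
Your proposal is correct and is exactly the paper's (implicit) argument: the corollary is stated immediately after \Cref{general-separated-nonsense} precisely because it follows by applying that lemma to the three v-sheaves $\calV_\bbW$, $\calV_\calY$, $\calV^\sch_\bbW$ together with the defining identities $\calV_\bbW[\frac{1}{\pi}]=\calV_\bbW\otimes_{O_E}E$, etc. Your bookkeeping of the descent references and the remark about forgetting the exact structure are both accurate.
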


\begin{definition}
Consider the functor
\[\calV^\sch_Y \in \calP(\PSch\!,\CatexE)
\text{  with }\]
\[\calV^\sch_Y(\Spec R)\coloneqq\{\text{Finite projective modules over } \bbW R[\frac{1}{\pi}]\}\,.\]
\end{definition}
\begin{proposition}
	\label{properties-schematic-V}
	The following statements hold:
	\begin{enumerate}
		\item $\calV^\sch_Y\simeq (\calV_Y)^\red$.
		\item $\calV^\sch_Y$ is an arc-sheaf. 
		\item If $S=\Spec R$ is a comb then $\calV^\sch_Y(S)\simeq \calvwps(S)$ in $\CatE$\,.
		\item The v-sheafification of $\calvwps$ is equivalent to $\calV^\sch_Y$ in $\calS(\PSch\!,\CatE)$\,. 
	\end{enumerate}
\end{proposition}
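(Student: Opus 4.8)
The plan is to establish the four assertions essentially in the listed order, obtaining (4) as a formal consequence of (1) and (3). For (1): since $\calV_Y$ is a v-sheaf on $\Perf$, the remark on $(-)^\red$ preceding this subsection gives $(\calV_Y)^\red(\Spec R)=\calV_Y(\Spec R^\diamond)$; as $\Spec R^\diamond\in\on{OpenSch}$, \Cref{theorem-evaluating-opensubs} identifies this with $\calV(Y_{\Spec R^\diamond})$, and \Cref{ffschematic}(1) gives $Y_{\Spec R^\diamond}=\Spa(\bbW R[\frac{1}{\pi}],\bbW R)$. Since this is an affinoid (Tate) sous-perfectoid space, its vector bundles are $\on{Proj}(\bbW R[\frac{1}{\pi}])=\calV^\sch_Y(\Spec R)$, and one checks that these identifications are functorial and compatible with the $E$-linear, monoidal, and exact structures (the latter being ``exact on underlying modules'').

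For (2): by (1) the presheaf $\calV^\sch_Y$ is already a scheme-theoretic v-sheaf, so by the description of the arc-topology in \cite{BhattMathew_21} it suffices to verify in addition descent along the valuative covers $\Spec(V/\frakp)\sqcup\Spec V_\frakp\to\Spec V$ for valuation rings $V$ and primes $\frakp$. For such a cover $V=V_\frakp\times_{\kappa(\frakp)}V/\frakp$ is a Milnor square with $V_\frakp\twoheadrightarrow\kappa(\frakp)$ surjective. The functor $\bbW$ preserves limits of rings (each truncation $\bbW_n$ is $A\mapsto A^n$ on underlying sets, with ring structure given by universal polynomials, hence limit-preserving) and surjections, and inverting $\pi$ is exact and preserves surjections; so $\bbW V[\frac{1}{\pi}]=\bbW V_\frakp[\frac{1}{\pi}]\times_{\bbW\kappa(\frakp)[\frac{1}{\pi}]}\bbW(V/\frakp)[\frac{1}{\pi}]$ is again a Milnor square, and Milnor patching for finite projective modules yields exactly the required descent.

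For (3): the base-change functor $M\mapsto M[\frac{1}{\pi}]$ realizes $\calvwps(\Spec R)\to\calV^\sch_Y(\Spec R)$, and it is fully faithful for every $R$ since $\on{Hom}_{\bbW R}(M,N)\otimes_{O_E}E=\on{Hom}_{\bbW R}(M,N)[\frac{1}{\pi}]=\on{Hom}_{\bbW R[\frac{1}{\pi}]}(M[\frac{1}{\pi}],N[\frac{1}{\pi}])$ for $M,N$ finite projective. The content is essential surjectivity; I would take $R=\prod_{x\in I}V_x$ a product comb, base-change a finite projective $\bbW R[\frac{1}{\pi}]$-module $M'$ to each $\bbW V_x[\frac{1}{\pi}]$, and use that these modules are free because $\bbW V_x[\frac{1}{\pi}]$ is Bézout for $V_x$ a valuation ring with algebraically closed fraction field (\cite[Theorem 3.8]{Kedlaya_16_ringAinf}, valid for general $E$ as noted in \Cref{sec:notation}); after decomposing $I$ into clopen pieces where the resulting rank is constant, one glues the evident $\bbW V_x$-lattices, using v-descent of $\calV^\sch_Y$ and of $\calV^\sch_\bbW$, to produce a finite projective $\bbW R$-lattice $M\subseteq M'$ with $M[\frac{1}{\pi}]=M'$. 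I expect this globalization of the lattice over the (extremally disconnected) index set $I$ — rather than a mere factorwise argument — to be the main obstacle.

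For (4): there is a tautological map of presheaves $\calvwps\to\calV^\sch_Y$ with sheaf target (by (1)), and by \Cref{rem:scheme_v_covers} product combs form a basis of the scheme-theoretic v-topology on $\PSch$; by (3) this map is an equivalence on every product comb, hence a local equivalence, so it exhibits $\calV^\sch_Y$ as the v-sheafification of $\calvwps$.
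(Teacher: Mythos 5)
Your treatment of (1) and (4) is exactly the paper's: (1) is immediate from \Cref{theorem-evaluating-opensubs} together with \Cref{ffschematic}, and (4) follows formally from (1) and (3) because combs form a basis for the schematic v-topology. The divergence — and the problems — are in (2) and (3), which the paper does not prove from scratch but outsources to \cite[Proposition 5.9]{Ivanov_arc_descent} and \cite[Theorem 6.1]{Ivanov_arc_descent} respectively.

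For (2), the reduction of arc-descent to ``v-descent plus excision along $\Spec(V/\frakp)\sqcup\Spec V_\frakp\to\Spec V$'' is the Bhatt--Mathew criterion, but that criterion carries a \emph{finitary} hypothesis (the functor must commute with filtered colimits of rings), which is what lets one refine an arbitrary arc-cover by v-covers and valuative splittings. The functor $R\mapsto\on{Proj}(\bbW R[\frac{1}{\pi}])$ is not finitary: $\bbW$ involves a $\pi$-adic completion, so $\bbW(\on{colim}R_i)[\frac{1}{\pi}]\neq\on{colim}\,\bbW(R_i)[\frac{1}{\pi}]$. Your Milnor-square computation for a single valuation ring is fine, but it only verifies the excision squares; without finitariness (or a substitute) this does not yield descent along general arc-covers. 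Handling exactly this is the content of Ivanov's Proposition 5.9.

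For (3), the step you flag as ``the main obstacle'' is in fact the entire mathematical content of the claim, and your proposed fix does not close it. Producing factorwise free lattices $M_x\subseteq M'_x$ and taking $\prod_x M_x$ fails because the trivializations over the factors are not canonical: without a \emph{uniform} bound on the powers of $\pi$ appearing in the transition data, $\prod_x M_x$ need not be a finite projective $\bbW R$-module and $(\prod_x M_x)[\frac{1}{\pi}]$ need not recover $M'$; v-descent of $\calV^\sch_\bbW$ does not manufacture this uniformity. The statement that every finite projective $\bbW R[\frac{1}{\pi}]$-module over a (product) comb is free — hence admits a lattice — is precisely \cite[Theorem 6.1]{Ivanov_arc_descent}, whose proof requires a genuinely global argument over the index set. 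Note also that (3) is asserted for arbitrary combs, not only product combs, so even granting your sketch you would still need the reduction from combs to product combs.
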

\begin{proof}
	The first statement follows from \Cref{theorem-evaluating-opensubs} and the definitions.
	By \cite[Proposition 5.9]{Ivanov_arc_descent} $\calV^\sch_Y$ is an arc-sheaf. 
	By \cite[Theorem 6.1]{Ivanov_arc_descent} the values of $\calvwps$ and $\calV^\sch_Y$ agree on combs. 
	Since combs form a basis for the scheme-theoretic v-topology (see \Cref{rem:scheme_v_covers}), the fourth claim follows.
\end{proof}

We see that $\calV_\bbW^\sch[\frac{1}{\pi}]$ has a natural immersion into $\calV^\sch_Y$ and acquires the structure of a separated presheaf with values in $\CatexE$ since it inherits an exact structure.

\begin{proposition}
	\label{Big-diamond-properties}
	We have the following identities:
	\begin{enumerate}
		\item $(\calV^\sch_Y)^{\Diamond_\pre}\simeq (\calV_Y)^{(\An_\pre)}$ and $\calV^\sch_\bbW[\frac{1}{\pi}]^{\Diamond_\pre}\simeq \calV_\bbW[\frac{1}{\pi}]$ in $\calP(\Perf\!,\CatexE)$\,. 
	%	\item $(\calV_\bbW^\sch[\frac{1}{\pi}])^{\Diamond_\pre}\simeq \calV_\bbW[\frac{1}{\pi}]$ in $\calP(\Perf\!,\CatE)$. 
		\item $\calV_\bbW[\frac{1}{\pi}]^{\sh}\simeq \calV_\bbW^\sch[\frac{1}{\pi}]^\Diamond\simeq (\calV^\sch_Y)^{\Diamond} \simeq \calV_Y^{\An}$ in $\calS(\Perf\!,\CatE)$\,. 
		
	\end{enumerate}
\end{proposition}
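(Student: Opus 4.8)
The plan is to deduce the whole proposition formally from the structural results already in place, by reducing everything to comparisons over combs and products of points and then sheafifying.

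For part (1) I would first unwind the two presheaves. On $\Spa(R,R^+)$ the left-hand side $(\calV^\sch_Y)^{\Diamond_\pre}$ is $\calV^\sch_Y(\Spec R)$, the category of finite projective $\bbW R[\frac{1}{\pi}]$-modules, while $(\calV_Y)^{(\An_\pre)}$ is $\calV_Y(\Spd(R_\disc,R_\disc)) = \calV_Y((\Spec R)^\diamond)$. By \Cref{theorem-evaluating-opensubs} the latter equals $\calV(Y_{(\Spec R)^\diamond})$, and by \Cref{ffschematic}(1) this is $\calV(\Spa(\bbW(R)[\frac{1}{\pi}],\bbW(R)))$, whose vector bundles are precisely the finite projective $\bbW R[\frac{1}{\pi}]$-modules since this affinoid adic space is sous-perfectoid. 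These identifications are manifestly functorial and $E$-linear, and the agreement of the two exact structures follows from \Cref{analytic-vector-bundles-exact-geometric} together with flatness of finite projective modules. For the second identity I would use that $(-)^{\Diamond_\pre}$ is a precomposition functor and hence commutes with the passage to isogeny categories $(-)\otimes_{O_E}E$; the claim then reduces to $(\calV^\sch_\bbW)^{\Diamond_\pre}\simeq \calV_\bbW$, which holds because both send $\Spa(R,R^+)$ to the category of finite projective $\bbW R$-modules (\Cref{calV-is-a-vsheaf}).

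For part (2) I would concatenate three equivalences. First, sheafifying the second identity of part (1) and recalling that $(-)^\Diamond$ is \emph{defined} as the v-sheafification of $(-)^{\Diamond_\pre}$ (\Cref{analytifications}) gives $\calV_\bbW[\frac{1}{\pi}]^{\sh}\simeq \calV^\sch_\bbW[\frac{1}{\pi}]^\Diamond$; passing from $\CatexE$ to $\CatE$ is harmless, the forgetful functor preserving limits and hence sheaves and sheafification. Second, I would consider the natural functor $\calV^\sch_\bbW[\frac{1}{\pi}]\to \calV^\sch_Y$, $M\mapsto M[\frac{1}{\pi}]$, which on Hom-modules is the canonical isomorphism $\Hom_{\bbW R}(M,N)\otimes_{O_E}E\xrightarrow{\ \sim\ }\Hom_{\bbW R[\frac{1}{\pi}]}(M[\tfrac1\pi],N[\tfrac1\pi])$; by \Cref{properties-schematic-V}(3) it is an equivalence whenever $\Spec R$ is a comb. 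Since $\Spec R$ is a comb for every product of points $\Spa(R,R^+)$ (\Cref{combs_and-products}) and products of points form a basis for the v-topology on $\Perf$ (\Cref{remark-product-of-points}), the induced map $\calV^\sch_\bbW[\frac{1}{\pi}]^{\Diamond_\pre}\to (\calV^\sch_Y)^{\Diamond_\pre}$ is an equivalence on a basis, hence an equivalence after v-sheafification, yielding $\calV^\sch_\bbW[\frac{1}{\pi}]^\Diamond\simeq (\calV^\sch_Y)^\Diamond$. Third, \Cref{general-functoriality} gives $\calV_Y^\An\simeq (\calV_Y^\red)^\Diamond$, and \Cref{properties-schematic-V}(1) identifies $\calV_Y^\red\simeq \calV^\sch_Y$, so $(\calV^\sch_Y)^\Diamond\simeq \calV_Y^\An$. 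Chaining the three gives part (2).

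I do not expect a genuine obstacle: every piece of geometric input — arc-descent for $\calV^\sch_Y$, the computation of $Y_{\calS^\diamond}$, and the agreement of $\calV^\sch_\bbW[\frac{1}{\pi}]$ with $\calV^\sch_Y$ on combs — is already established. The only delicate points are formal: that a map of $\CatE$- or $\CatexE$-valued presheaves which is an equivalence on a basis becomes an equivalence after v-sheafification, and the compatibility of sheafification with the forgetful functor $\CatexE\to\CatE$ and with $(-)\otimes_{O_E}E$. For these I would simply invoke the presentability and compact-generation machinery of \Cref{Appendix-sect} rather than re-derive it, and I consider this the part most likely to need careful phrasing.
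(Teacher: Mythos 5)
Your proof is correct and follows essentially the same route as the paper, whose own argument is just the two citations you expand upon: part (1) is unwound via \Cref{theorem-evaluating-opensubs}, \Cref{ffschematic} and the definitions, and part (2) is reduced to the comparison on combs via \Cref{properties-schematic-V} and \Cref{combs_and-products}, then sheafified. The formal points you flag (equivalence on a basis implies equivalence after sheafification, compatibility with $\otimes_{O_E}E$ and the forgetful functor) are handled exactly as you propose, by the machinery of \Cref{Appendix-sect}.
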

\begin{proof}
	The first claim follows directly from \Cref{theorem-evaluating-opensubs} and the definitions. 
	The second claim follows from \Cref{properties-schematic-V} and \Cref{combs_and-products}.
\end{proof}

\begin{corollary}
	\label{separated-approx-VW}
	The presheaf $\calV_\bbW[\frac{1}{\pi}]$ (with the exact structure inherited from $(\calV^\sch_Y)^{\Diamond_\pre}$) is separated and sheafifies to $\calV_Y^{\An}$.
\end{corollary}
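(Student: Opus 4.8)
The corollary is a bookkeeping consequence of \Cref{separatedness-of-isogeny} and \Cref{Big-diamond-properties}, and the plan is to assemble it from these while paying attention to the exact structure. Concretely, there are three points to address: that $\calvwp$ carries the exact structure described in the statement, that it is separated as a presheaf valued in $\CatexE$ (and not merely in $\CatE$), and that its v-sheafification equals $\calV_Y^{\An}$ as a sheaf of exact categories.

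For the exact structure I would start from the natural immersion $\calvwps \hookrightarrow \calV^\sch_Y$ recorded just before the statement, given on objects by $M \mapsto M[\frac{1}{\pi}]$; since the exact structure of $\calV^\sch_Y$ is detected on geometric points, $\calvwps$ inherits from it a genuine exact structure with values in $\CatexE$. Applying $(-)^{\Diamond_\pre}$ and invoking \Cref{Big-diamond-properties}(1), which identifies $\calvwps^{\Diamond_\pre} \simeq \calvwp$ and $(\calV^\sch_Y)^{\Diamond_\pre} \simeq (\calV_Y)^{(\An_\pre)}$ in $\calP(\Perf\!,\CatexE)$, one obtains precisely the immersion $\calvwp \hookrightarrow (\calV^\sch_Y)^{\Diamond_\pre}$ along which the statement pulls back its exact structure. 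Hence the exact structure produced this way is the one asserted, and in particular $\calvwp$ is closed under the relevant operations inside $(\calV^\sch_Y)^{\Diamond_\pre}$.

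For separatedness, \Cref{separatedness-of-isogeny} already gives that $\calvwp$ is a separated presheaf valued in $\CatE$, and passing to $\CatexE$ does not change this: separatedness asks that the map from $\calvwp(X)$ to the descent limit over every covering sieve be fully faithful, and the forgetful functor $\CatexE \to \CatE$ alters neither the morphism sets nor the descent limit, so full faithfulness is unaffected (alternatively one could rerun the dualizability argument of \Cref{general-separated-nonsense}, but there is no need). For the sheafification, \Cref{Big-diamond-properties}(2) supplies $\calvwp^{\sh} \simeq \calV_Y^{\An}$ in $\calS(\Perf\!,\CatE)$; to promote this to $\calS(\Perf\!,\CatexE)$ I would use that the immersion $\calvwp \hookrightarrow (\calV_Y)^{(\An_\pre)}$ from the previous step is compatible with sheafification, so $\calvwp^{\sh}$ sits as a full subsheaf of $\calV_Y^{\An}$ with the induced exact structure, and that on both sides exactness is detected on geometric points via \Cref{theorem-evaluating-opensubs} and \Cref{analytic-vector-bundles-exact-geometric}, hence is determined by the already-matched underlying $\CatE$-valued sheaves. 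The only delicate point, therefore, is this last compatibility — keeping the exact structure straight through $(-)^{\Diamond_\pre}$ and sheafification so that ``separated'' and ``sheafifies to'' are read in $\CatexE$ rather than $\CatE$ — and the geometric-point characterization of exactness together with the compatibility of all constructions with the embedding into $(\calV_Y)^{(\An_\pre)}$ is what makes it routine.
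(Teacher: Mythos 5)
Your argument is correct and follows the same route as the paper: the paper's entire proof is a citation of \Cref{check-on-categories}, whose content (that separatedness and the descent/full-faithfulness condition are insensitive to the forgetful functor from $\CatexE$ to $\CatE$) is exactly what you re-derive by hand, combined with the inputs \Cref{separatedness-of-isogeny} and \Cref{Big-diamond-properties} that the corollary's placement already makes available. You could shorten the middle paragraph by citing \Cref{check-on-categories} directly, but nothing is missing.
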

\begin{proof}
This follows from \Cref{check-on-categories}.	
\end{proof}

We now move on to rewrite $(\calV_Y)^\mer$.

\begin{proposition}
	\label{Lattice-to-Vyp}
	We have a Cartesian square in $\calP(\Perf\!,\CatE)$
	\begin{center}
	\begin{tikzcd}
		\calV_\calY \ar{r} \ar{d} &		\calV_\calY[\frac{1}{\pi}] \ar{d} \\
		\calV_\bbW \ar{r} & \calV_\bbW[\frac{1}{\pi}]\,.
	\end{tikzcd}
	\end{center}
\end{proposition}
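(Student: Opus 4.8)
The plan is to check the square objectwise in the presheaf category: for each $S=\Spa(R,R^+)\in\Perf$ — write $R=R^+[\frac{1}{\varpi}]$ for a pseudo-uniformizer $\varpi$, so $R$ and $R^+$ are perfect $\bbF_q$-algebras — one must show that the natural functor
\[
\calV_\calY(S)\ \longrightarrow\ \calV_\bbW(S)\times_{\calV_\bbW[\frac1\pi](S)}\calV_\calY[\tfrac1\pi](S)
\]
is an equivalence in $\CatE$ (the fibre product taken on underlying categories as in $\on{Cat}_1$, with the evident $E$-linear symmetric monoidal structure). The statement is a form of Beauville--Laszlo glueing along the effective Cartier divisor $V(\pi)\subseteq\calY_S$: its support is the closed subspace $S\hookrightarrow\calY_S$ (the ``point at $\infty$''), and its open complement is $Y_S=\calY_S\setminus V(\pi)$.

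First I would identify the bottom row geometrically. For $0<r<\infty$ the $\pi$-adic completion of $\calO(\calY_{S,[r,\infty]})$ is canonically $\bbW R$: it is $\pi$-torsion free, $\pi$-adically complete and reduces mod $\pi$ to the perfect $\bbF_q$-algebra $R$, hence it is the unique flat $\pi$-complete lift of $R$, namely $\bbW R$ (concretely $\bbW R\cong\bbA_{\on{inf}}(R^+)[\frac{1}{[\varpi]}]^{\wedge}_\pi$). Thus $\bbW R$, resp.\ $\bbW R[\frac1\pi]$, is the ring of functions on the formal completion of $\calY_S$ along $V(\pi)$, resp.\ on the punctured formal disc $\widehat{\calY_S}\times_{\calY_S}Y_S$; under these identifications the left vertical $\calV_\calY(S)\to\calV_\bbW(S)$ is $\calE\mapsto\varprojlim_n\calE/\pi^n\calE$ and $\calV_\calY[\frac1\pi]\to\calV_\bbW[\frac1\pi]$ is its $E$-linearisation.

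Next I would invoke the ring-theoretic Beauville--Laszlo lemma for the nonzerodivisor $\pi$: covering $\calY_S$ by $Y_S$ and the affinoid neighbourhood $\calY_{S,[r,\infty]}$ of $V(\pi)$, one obtains an equivalence, compatible with the $E$-linear monoidal structure,
\[
\calV_\calY(S)\ \xrightarrow{\ \sim\ }\ \calV_Y(S)\times_{\calV^\sch_Y(\Spec R)}\calV_\bbW(S),
\]
where $\calV^\sch_Y(\Spec R)$ is the category of finite projective $\bbW R[\frac1\pi]$-modules (the value $\calV_\bbW(\Spec R^\diamond)$ of functions on the punctured disc). It remains to identify this with the target fibre product. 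One uses that the natural functors $\calV_\bbW[\frac1\pi](S)\to\calV^\sch_Y(\Spec R)$ and $\calV_\calY[\frac1\pi](S)\to\calV_Y(S)$ (restriction to $Y_S$) are fully faithful: indeed $\Hom_{\bbW R[\frac1\pi]}(M[\frac1\pi],M'[\frac1\pi])=\Hom_{\bbW R}(M,M')\otimes_{O_E}E$ because any such map clears a bounded power of $\pi$, and the analogous identity $\Hom_{Y_S}(\calE|_{Y_S},\calE'|_{Y_S})=\Hom_{\calY_S}(\calE,\calE')\otimes_{O_E}E$ then follows by checking it near $V(\pi)$ (where it reduces to the previous identity) and away from $V(\pi)$ (where it is automatic, $\calO_{\calY_S}$ being $\pi$-torsion free) and glueing. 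Moreover the displayed fibre product only ever sees objects in the essential images of these two functors — on the $\calV_\bbW(S)$-leg the middle category receives only modules of the form $M[\frac1\pi]$, while a vector bundle on $Y_S$ whose restriction to the punctured disc carries a $\bbW R$-lattice extends to $\calY_S$ by Beauville--Laszlo, hence lies in the image of the restriction functor. Therefore the Beauville--Laszlo equivalence is identified with the comparison functor of the statement, which concludes the proof.

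The step I expect to be the main obstacle is this last one: honestly matching the ``isogeny'' corners $\calV_\bbW[\frac1\pi]$ and $\calV_\calY[\frac1\pi]$ — whose objects are still honest modules and bundles, only with the morphisms localised — against genuine finite projective $\bbW R[\frac1\pi]$-modules and vector bundles on $Y_S$, all while keeping track of the $E$-linear and exact structures. By contrast, the identification of $\bbW R$ as the completion of $\calY_S$ along $V(\pi)$ and the Beauville--Laszlo glueing itself are standard.
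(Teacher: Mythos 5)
Your overall strategy --- Beauville--Laszlo glueing along $V(\pi)\subseteq\calY_S$, identifying $\bbW R$ as the $\pi$-adic completion of the functions on an affinoid neighbourhood of $V(\pi)$ --- is the same as the paper's, and your observation that one must match the isogeny corners against honestly localized categories is exactly where the content lies. (A minor point first: with the paper's conventions $V(\pi)=\kappa_\varpi^{-1}(0)$, so the affinoid neighbourhood is $\calY_{S,[0,r]}$, not $\calY_{S,[r,\infty]}$.)

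The genuine gap is in your glueing statement and in the full-faithfulness claim that is supposed to repair it. The fibre product $\calV_Y(S)\times_{\calV^{\sch}_Y(\Spec R)}\calV_\bbW(S)$ is not well-posed: there is no restriction functor $\calV_Y(S)\to \on{Proj}(\bbW R[\frac{1}{\pi}])$, because the punctured formal disc does not map to the punctured analytic neighbourhood. Concretely, for $s>0$ the rings $B_{[s,r]}$ contain Teichm\"uller series $\sum_{n}[a_n]\pi^{n}$ with infinitely many negative exponents (essential singularities along $V(\pi)$), and such elements have no image in $\bbW R[\frac{1}{\pi}]$; so sections of a bundle on $Y_S$ near $V(\pi)$ do not produce $\bbW R[\frac{1}{\pi}]$-modules. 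For the same reason your claimed identity $\Hom_{Y_S}(\calE|_{Y_S},\calE'|_{Y_S})=\Hom_{\calY_S}(\calE,\calE')\otimes_{O_E}E$ is false: already over a geometric point and for $\calE=\calE'=\calO$, the left-hand side is the full ring $\calO(Y_S)$ of the curve, which strictly contains $\calO(\calY_S)[\frac{1}{\pi}]$. This is precisely the meromorphic-versus-analytic distinction that the paper's functors $(-)^{\mer}$ and $(-)^{\An}$ are built to track; the corner you want is $(\calV_Y)^{(\mer_{\pre})}(S)=\calV(Y_T)$ with $T=\Spd(R_{\disc},R^+_{\disc})$, not $\calV(Y_S)$, and that statement is the paper's \Cref{Cartesian-YVp}, which is proved \emph{using} the present proposition. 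The paper's proof avoids the punctured formal disc altogether: it applies analytic descent to the cover $\{Y_{[0,1],S},Y_{[1,\infty),S}\}$ of $\calY_S$, notes that $\pi$ is already invertible on $Y_{[1,\infty),S}$ and on the affinoid overlap $Y_{[1,1],S}$ so that the $E$-linearization only changes the corner $\on{Proj}B_{[0,1],S}$, and then pastes in the Beauville--Laszlo square for the pair $(B_{[0,1],S},\pi)$ with completion $\bbW R$ along the edge $\on{Proj}B_{[0,1],S}\to(\on{Proj}B_{[0,1],S})\otimes_{O_E}E$. You should redo your argument along those lines.
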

\begin{proof} 
	The argument is a standard application of Beauville--Laszlo descent \cite[Lemma 5.2.9]{SW20}.
	We provide the details for the convenience of the reader.
	Fix $S\in \Perf$. By analytic descent, there is a Cartesian diagram
	\begin{center}
	\begin{tikzcd}
		\calV_\calY(S) \arrow{r} \arrow{d}  & \calV(Y_{[1,\infty),S})  \arrow{d} \\
	\calV(Y_{[0,1],S})	\arrow{r} & \calV(Y_{[1,1],S})\,.
	\end{tikzcd}
	\end{center}
	These spaces correspond to the loci $Y_{[0,1],S}=\{|\pi|\leq |[\varpi]|\neq 0\}$ and $Y_{[1,\infty),S}=\{|[\varpi]|\leq |\pi|\neq 0\}$. 
	Moreover, $\calV(Y_{[0,1],S})$ is the category of finite projective modules over $B_{[0,1],S}$. 
	Since $\pi$ is already invertible in $Y_{[1,\infty),S}$ and $Y_{[1,1],S}$,
	this formally leads to the following commutative diagram with Cartesian squares

	\begin{center}
	\begin{tikzcd}
		\calV_\calY \ar{r} \ar{d}	& \calV_\calY[\frac{1}{\pi}](S)\arrow{r} \arrow{d}  & \calV(Y_{[1,\infty),S}) \arrow{d} \\
		(\on{Proj} B_{[0,1],S}) \ar{r}	  & (\on{Proj} B_{[0,1],S})\otimes_{O_E} E	\arrow{r} & \calV(Y_{[1,1],S})\,.
	\end{tikzcd}
	\end{center}

	Furthermore, $\calV_\bbW(S)$ is equivalent to $\on{Proj}(\bbW R)$. 
	Since we have the identity of rings $\bbW R=\widehat{(B_{[0,1]})}_\pi$, we have the following Cartesian diagram by \cite[Lemma 5.2.9]{SW20}.

	\begin{center}
	\begin{tikzcd}
		\on{Proj} B_{[0,1],S}	\arrow{r} \arrow{d}  &  \arrow{d} \calV_\bbW(S) \\
	 (\on{Proj} B_{[0,1],S})\otimes_{O_E} E	\arrow{r} & \calV_\bbW[\frac{1}{\pi}](S)\,.
	\end{tikzcd}
	\end{center}
	This implies that the commutative diagram below is also Cartesian 

	\begin{center}
	\begin{tikzcd}
		\calV_\calY(S) \ar{r}\ar{d} & \on{Proj} B_{[0,1],S}	\arrow{r} \arrow{d}  &  \arrow{d} \calV_\bbW(S) \\
		\calV_\calY[\frac{1}{\pi}](S) \ar{r} &  (\on{Proj} B_{[0,1],S})\otimes_{O_E} E	\arrow{r} & \calV_\bbW[\frac{1}{\pi}](S)\,.
	\end{tikzcd}
	\end{center}
\end{proof}

\begin{proposition}
	\label{Cartesian-YVp}
	The diagrams 
	\begin{center}
	\begin{tikzcd}
		\calV_\calY \ar{r} \ar{d} &		\calV_\calY[\frac{1}{\pi}]	\arrow{r} \arrow{d}  & (\calV_Y)^{\mer_\pre} \arrow{d} \\
		\calV_\bbW \ar{r} & \calV_\bbW[\frac{1}{\pi}] \arrow{r} & (\calV_Y)^{(\An_\pre)}
	\end{tikzcd}
	\end{center}
	are Cartesian in $\calP(\Perf\!,\CatOE)$.
	Moreover, the right square is Cartesian in $\calP(\Perf\!,\CatE)$ and the outer square is Cartesian in $\calP(\Perf\!,\CatexOE)$.
\end{proposition}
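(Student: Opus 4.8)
\emph{Plan of proof.} The left-hand square is precisely the Cartesian square of \Cref{Lattice-to-Vyp}, so by the pasting law for pullback squares it is enough to prove that the right-hand square is Cartesian in $\calP(\Perf,\CatOE)$; the outer square, being the horizontal composite of two Cartesian squares, is then automatically Cartesian as well. The refinements — that the right square is Cartesian in $\calP(\Perf,\CatE)$ and the outer square in $\calP(\Perf,\CatexOE)$ — will be handled afterwards. I would fix $S=\Spa(R,R^+)\in\Perf$ with a pseudo-uniformizer $\varpi\in R^+$ and first identify the right-hand vertices: by \Cref{theorem-evaluating-opensubs} applied to $\Spd(R_\disc,R^+_\disc),\Spd(R_\disc,R_\disc)\in\on{OpenSch}$, together with \Cref{Yfexamples}, one has
\[(\calV_Y)^{\mer_\pre}(S)\simeq\calV\!\bigl(Y_{\Spd(R_\disc,R^+_\disc)}\bigr)\quad\text{and}\quad(\calV_Y)^{(\An_\pre)}(S)\simeq\calV\!\bigl(Y_{\Spd(R_\disc,R_\disc)}\bigr)=\on{Proj}\!\bigl(\bbW R[\tfrac1\pi]\bigr),\]
where $Y_{\Spd(R_\disc,R^+_\disc)}=\Spa(\bbW R^+[\tfrac1\pi],\bbW R^+)\setminus V([\varpi])$ and $Y_{\Spd(R_\disc,R_\disc)}=\Spa(\bbW R[\tfrac1\pi],\bbW R)$ is its open subspace where $[\varpi]$ is invertible; under these identifications the right vertical arrow is restriction along this open immersion and the bottom horizontal arrow is $M\mapsto M[\tfrac1\pi]$.

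The heart of the matter is to run, over $Y_{\Spd(R_\disc,R^+_\disc)}$, the same Beauville--Laszlo argument already used in the proof of \Cref{Lattice-to-Vyp} over $\calY_S$. The space $Y_{\Spd(R_\disc,R^+_\disc)}$ carries its own ``divisor at infinity'': the $\pi$-adic completion of its ring of functions is $\bbW R$, and the complement of that divisor has, after inverting $\pi$, the category of vector bundles $\calV_\calY[\tfrac1\pi](S)$ — by the same reasoning as in \Cref{Lattice-to-Vyp}, using that $Y_{\Spd(R_\disc,R^+_\disc)}$ compares naturally with $\calY_S$ and that $\pi$ is already invertible. Covering $Y_{\Spd(R_\disc,R^+_\disc)}$ analytically by the formal neighbourhood of its divisor at infinity and by the complement of that divisor, and gluing along $\pi$ via \cite[Lemma 5.2.9]{SW20} exactly as in \Cref{Lattice-to-Vyp}, produces the Cartesian square
\[\calV_\calY[\tfrac1\pi](S)\;\simeq\;\calV_\bbW[\tfrac1\pi](S)\times_{(\calV_Y)^{(\An_\pre)}(S)}(\calV_Y)^{\mer_\pre}(S)\]
in $\CatOE$, functorially in $S$; this is the right-hand square.

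It remains to upgrade. All four vertices of the right square are $E$-linear — the two isogeny categories by construction, and $(\calV_Y)^{\mer_\pre}(S),(\calV_Y)^{(\An_\pre)}(S)$ because they are categories of vector bundles on adic spaces over $\Spa(E)$ — and since $\CatE\subseteq\CatOE$ is the full subcategory of those categories on which $\pi$ acts invertibly, it is closed under limits, so the $\CatOE$-pullback computed above is already the $\CatE$-pullback. For the outer square in $\CatexOE$, its underlying square is Cartesian in $\CatOE$ by the above and pasting, so it only remains to check that the exact structure on $\calV_\calY$ coincides with the one induced by the fibre product: that a complex $\Sigma=[\calE_1\to\calE_2\to\calE_3]$ of vector bundles on $\calY_S$ is exact if and only if its image in $\calV_\bbW(S)$ and its pullback to $Y_{\Spd(R_\disc,R^+_\disc)}$ are exact. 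The forward direction is exactness of the two structure functors; for the converse one tests exactness of $\Sigma$ on geometric points of $\calY_S$ via \Cref{analytic-vector-bundles-exact-geometric}, noting that each such geometric point either has $\pi$ invertible — so exactness at it is detected after pullback to $Y_{\Spd(R_\disc,R^+_\disc)}$ — or factors through the formal completion of $\calY_S$ along its divisor at infinity, i.e. through $\Spec\bbW R$, where exactness is detected by $\calV_\bbW$. The main obstacle is the middle paragraph: making the analytic cover of $Y_{\Spd(R_\disc,R^+_\disc)}$ and the rings of functions occurring there precise — these are in general non-uniform Tate rings — and verifying that the Beauville--Laszlo bookkeeping delivers the isogeny categories $\calV_\calY[\tfrac1\pi](S)$ and $\calV_\bbW[\tfrac1\pi](S)$ exactly, rather than their v-sheafifications or larger categories of honest vector bundles.
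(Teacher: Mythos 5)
Your overall architecture coincides with the paper's: the left square is \Cref{Lattice-to-Vyp}, the outer square then follows by pasting, and the vertices of the right square are identified via \Cref{theorem-evaluating-opensubs} and \Cref{Yfexamples}. The problem is that the middle paragraph --- which you yourself flag as the main obstacle --- is where the entire content of the proposition sits, and it is left open; moreover, as phrased it starts from a wrong picture. The element $\pi$ is invertible on $Y_T$ for $T=\Spd(R_\disc,R_\disc^+)$, since $Y_T\subseteq \Spa(\bbW(R^+)[\frac{1}{\pi}],\bbW(R^+))$, so $Y_T$ contains no divisor at infinity and no formal neighbourhood of one to include in a cover. The correct cover is by the two rational loci $\{|[\varpi]|\le|\pi|\ne0\}=Y_{[1,\infty),S}$ and $\{|\pi|\le|[\varpi]|\ne0\}$, glued over $Y_{[1,1],S}$; the second locus is affinoid with ring of functions $B^{\disc}_{[0,1],S}[\frac{1}{\pi}]$. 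Since this cover shares its $Y_{[1,\infty),S}$-over-$Y_{[1,1],S}$ leg with the presentation of $\calV_\calY[\frac{1}{\pi}](S)$ obtained in \Cref{Lattice-to-Vyp}, the right square being Cartesian reduces, by commuting limits, to the single identity
\[(\on{Proj} B_{[0,1],S})\otimes_{O_E}E\;\simeq\;\bigl((\on{Proj}\bbW(R))\otimes_{O_E}E\bigr)\times_{\on{Proj}(\bbW(R)[\frac{1}{\pi}])}\on{Proj}\bigl(B^{\disc}_{[0,1],S}[\tfrac{1}{\pi}]\bigr)\,.\]
The missing idea is how to prove this, and it is exactly the point behind your (justified) worry that the bookkeeping might deliver a category of honest modules rather than the isogeny category: one needs that the natural map $B^{\disc}_{[0,1],S}\to B_{[0,1],S}$ is an isomorphism of rings --- continuous but not a homeomorphism, \cite[Lemma 14.3.1]{SW20} --- so that $\on{Proj}B^{\disc}_{[0,1],S}\simeq\on{Proj}B_{[0,1],S}$ in $\CatOE$. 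Granting this, the displayed identity is the pasting of the genuine Beauville--Laszlo square $\on{Proj}B_{[0,1],S}\simeq\on{Proj}\bbW(R)\times_{\on{Proj}(\bbW(R)[\frac{1}{\pi}])}\on{Proj}(B_{[0,1],S}[\frac{1}{\pi}])$ against the square of \Cref{Lattice-to-Vyp}, and the argument closes; without it, the comparison of the two fibre products does not.

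The two upgrades are fine in spirit. The $\CatE$ statement for the right square is immediate as you say. For the exact structure on the outer square (which the paper leaves implicit), your geometric-points test is reasonable, but note that the $\pi$-invertible geometric points of $\calY_S$ are the points of $Y_S$, and these are seen by $Y_T$ because a valuation continuous for the $(\pi,[\varpi])$-adic topology on $\bbW(R^+)$ is continuous for the finer $\pi$-adic one; the points on $V(\pi)$ are points of $\Spec R=\Spec\bbW(R)/\pi$ and are detected by the image in $\calV_\bbW$.
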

\begin{proof}
	Let $S=\Spa(R,R^+)\in \Perf$ and let $T=\Spd(R_\disc,R^+_\disc)$.
	Pick a pseudo-uniformizer $\varpi\in R^+$. 
	As defined in \Cref{sec:notation}, we denote the two different topologies on the ring of $O_E$-Witt vectors by $\bbW(R^+)$ and $\bbA_{\on{inf}}(R^+)$.   

	The category $\calV_\calY(S)$ is the category of vector bundles over $\Spa(\bbA_{\on{inf}}(R^+))_{\{[\varpi]\neq 0\}}$. 
	Arguing as in \Cref{Lattice-to-Vyp}, we get the following Cartesian diagram  

	\begin{center}
	\begin{tikzcd}
		\calV_\calY[\frac{1}{\pi}](S)\arrow{r} \arrow{d}  & (\on{Proj} B_{[0,1],S})\otimes_{O_E} E \arrow{d} \\
		\calV(Y_{[1,\infty),S})\arrow{r} & \calV(Y_{[1,1],S})\,.
	\end{tikzcd}
	\end{center}

	On the other hand, $(\calV_Y)^{(\mer_\pre)}(S)\simeq \calV(Y_T)$. 
	This is the category of vector bundles on $\Spa \bbW(R^+)_{\{\pi\cdot [\varpi]\neq 0\}}$. 
	This space is the union of the loci $\{|[\varpi]|\leq |\pi|\neq 0\}$ and $\{|\pi|\leq |[\varpi]|\neq 0\}$. 
	The former agrees with $Y_{[1,\infty),S}$ while the latter is affinoid of the form $\Spa (B^{\disc}_{[0,1]}[\frac{1}{\pi}],B^{\disc,+}_{[0,1]}[\frac{1}{\pi}])$.
	By analytic descent, we have a pullback diagram
	\begin{center}
	\begin{tikzcd}
		\calV(Y_T)\arrow{r} \arrow{d}  & (\on{Proj} B^\disc_{[0,1],S}[\frac{1}{\pi}]) \arrow{d} \\
		\calV(Y_{[1,\infty),S})\arrow{r} & \calV(Y_{[1,1],S})\,.
	\end{tikzcd}
	\end{center}

	Note that the natural map $B^{\on{disc}}_{[0,1]}\to B_{[0,1]}$ is a continuous isomorphism of rings (which is not a homeomorphism!), see \cite[Lemma 14.3.1]{SW20}. 
	Hence, we have an identification $(\on{Proj} B^\disc_{[0,1],S})\simeq (\on{Proj} B_{[0,1],S})$ in $\CatOE$.
	Together with \Cref{Lattice-to-Vyp} and Beauville--Laszlo descent (see \cite[Lemma 5.2.9]{SW20}), this leads to the following commutative diagram with Cartesian squares

	\begin{center}
	\begin{tikzcd}
		\calV_\calY(S)\arrow{r} \arrow{d} & 	\calV_\calY[\frac{1}{\pi}](S) \arrow{r} \arrow{d} & \calV(Y_T)\arrow{r} \arrow{d}  &  \calV(Y_{[1,\infty),S}) \arrow{d} \\
		(\on{Proj} B_{[0,1],S}) \ar{r} \ar{d} &  (\on{Proj} B_{[0,1],S})\otimes_{O_E} E \arrow{r}\ar{d}	& (\on{Proj} B^\disc_{[0,1],S}[\frac{1}{\pi}])	\arrow{r} \ar{d} & \calV(Y_{[1,1],S}) \\
		(\on{Proj} \bbW(R)) \ar{r} & (\on{Proj} \bbW(R))\otimes_{O_E} E \ar{r} & (\on{Proj} \bbW(R)[\frac{1}{\pi}])\,.   
	\end{tikzcd}
	\end{center}
	It follows from the definitions that $(\on{Proj} \bbW(R))\simeq \calV_\bbW(S)$, $(\on{Proj} \bbW(R))\otimes_{O_E} E\simeq   \calV_\bbW(S)[\frac{1}{\pi}]$, and $(\on{Proj} \bbW(R)[\frac{1}{\pi}])\simeq (\calV_Y)^{(\An_\pre)}$ which allow us to conclude.
\end{proof}

\begin{corollary}
	\label{corollary-add-a-lattice}
	The square 
	\begin{center}
	\begin{tikzcd}
		\calV_\calY \ar{r} \ar{d} & (\calV_Y)^\mer \arrow{d} \\
		\calV_\bbW \ar{r}  & (\calV_Y)^\An
	\end{tikzcd}
	\end{center}
	is Cartesian in $\calS(\Perf\!,\CatexOE)$ .
\end{corollary}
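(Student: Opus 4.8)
The plan is to obtain \Cref{corollary-add-a-lattice} from \Cref{Cartesian-YVp} purely formally, by applying v-sheafification; no further geometric input is needed, only some care about the exact structure and about the interaction of sheafification with finite limits.

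First I would recall from \Cref{Cartesian-YVp} that the outer square
\[
\begin{tikzcd}
	\calV_\calY \ar{r} \ar{d} & (\calV_Y)^{\mer_\pre} \ar{d} \\
	\calV_\bbW \ar{r} & (\calV_Y)^{\An_\pre}
\end{tikzcd}
\]
is already Cartesian in $\calP(\Perf,\CatexOE)$ --- i.e.\ in presheaves valued in \emph{exact} $O_E$-linear rigid symmetric monoidal categories, not merely in $\calP(\Perf,\CatOE)$. Next I would note that v-sheafification $(-)^\sh\colon \calP(\Perf,\CatexOE)\to \calS(\Perf,\CatexOE)$ sends this square to the square in the statement: $\calV_\calY$ is a v-sheaf by \cite[Proposition 19.5.3]{SW20} (the exact structure being testable on geometric points, cf.\ \Cref{analytic-vector-bundles-exact-geometric}), $\calV_\bbW$ is a v-sheaf by \cite[Corollary 17.1.9]{SW20}, so both are unchanged by $(-)^\sh$; and $(\calV_Y)^\mer$, $(\calV_Y)^\An$ are by \Cref{loci-functors} the v-sheafifications of $(\calV_Y)^{\mer_\pre}$, $(\calV_Y)^{\An_\pre}$.

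It then suffices to know that $(-)^\sh$ preserves finite limits, since applying a left exact functor to a Cartesian square produces a Cartesian square. For this I would appeal to \Cref{Appendix-sect}: there $\CatexOE$ is shown to be presentable and compactly generated, so that filtered colimits in $\CatexOE$ commute with finite limits; consequently the $(-)^+$-construction is left exact, and $(-)^\sh$, being the transfinite iteration of $(-)^+$, is an accessible left exact localization --- exactly as in the familiar case of $\Ani$-valued sheaves. The step I would be most careful about is precisely this bookkeeping of the exact structure: ensuring that $\calV_\calY$ and $\calV_\bbW$ are v-sheaves \emph{as objects of $\CatexOE$} (which is why one must feed in the $\CatexOE$-part of \Cref{Cartesian-YVp} rather than just the $\CatOE$-part), and that the $\CatexOE$-valued sheafification used to define $(\calV_Y)^\mer$ and $(\calV_Y)^\An$ is literally the functor applied here. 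Granting that, the corollary is immediate.
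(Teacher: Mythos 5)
Your argument is correct and is precisely the intended one: the paper states this as a proofless corollary of \Cref{Cartesian-YVp}, the evident justification being exactly your sheafification of the outer square (using that $\calV_\calY$ and $\calV_\bbW$ are already v-sheaves, that $(\calV_Y)^\mer$ and $(\calV_Y)^\An$ are by definition the sheafifications of their presheaf versions, and that $\CatexOE$-valued sheafification is left exact since filtered colimits commute with finite limits in a compactly generated presentable category). Your insistence on feeding in the $\CatexOE$-part of \Cref{Cartesian-YVp} rather than merely the $\CatOE$-part is exactly the right bookkeeping of the exact structure.
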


\begin{corollary}
	\label{VYp-is-separated-presheaf}
	The following statements hold:
	\begin{enumerate}
		\item The natural map $\calV_\calY[\frac{1}{\pi}]\to (\calV_Y)^{(\mer_\pre)}$ is fully-faithful.
		\item The v-sheafification of $\calV_\calY[\frac{1}{\pi}]$ is $(\calV_Y)^\mer$ .
		%\item The natural map $\calV_\calY[\frac{1}{\pi}]\to (\calV_Y)^{\mer}$ is fully-faithful.
	\end{enumerate}
\end{corollary}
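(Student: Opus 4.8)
The plan is to read both assertions off the Cartesian squares of \Cref{Cartesian-YVp}, together with the sheafification computations of \Cref{Big-diamond-properties} and \Cref{separated-approx-VW}. The one new input needed is that the lower horizontal map of the right-hand square of \Cref{Cartesian-YVp}, namely $\calV_\bbW[\tfrac1\pi]\to(\calV_Y)^{(\An_\pre)}$, is objectwise fully faithful. Indeed, by \Cref{Big-diamond-properties}(1) (and the explicit computation carried out in the proof of \Cref{Cartesian-YVp}) its value at $S=\Spa(R,R^+)$ is the base-change functor $\on{Proj}(\bbW R)\otimes_{O_E}E\to\on{Proj}(\bbW R[\tfrac1\pi])$, $M\mapsto M\otimes_{\bbW R}\bbW R[\tfrac1\pi]$. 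Since $\bbW R[\tfrac1\pi]=\bbW R\otimes_{O_E}E$ is flat over $\bbW R$ and every object of $\on{Proj}(\bbW R)$ is finitely presented, the canonical map
\[\on{Hom}_{\bbW R}(M,N)\otimes_{O_E}E\longrightarrow\on{Hom}_{\bbW R[\frac1\pi]}\big(M\otimes_{\bbW R}\bbW R[\tfrac1\pi],\,N\otimes_{\bbW R}\bbW R[\tfrac1\pi]\big)\]
is an isomorphism, so this functor is fully faithful.

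For part (1), I would then note that the right-hand square of \Cref{Cartesian-YVp} is Cartesian in $\calP(\Perf,\CatE)$, hence Cartesian in $\CatE$ after evaluating at any $S$. Since the class of fully faithful functors is stable under pullback (a base change of a fully faithful functor is fully faithful), the upper horizontal map $\calV_\calY[\tfrac1\pi](S)\to(\calV_Y)^{(\mer_\pre)}(S)$ is fully faithful for every $S$, which is exactly fully-faithfulness of the map of presheaves. As a byproduct $\calV_\calY[\tfrac1\pi]$ inherits an exact structure from $(\calV_Y)^{(\mer_\pre)}$, agreeing with the one coming from the outer Cartesian square of \Cref{Cartesian-YVp}.

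For part (2), I would apply the sheafification functor $(-)^{\sh}\colon\calP(\Perf,\CatE)\to\calS(\Perf,\CatE)$ to the same right-hand square. Sheafification is a left exact localization (this is part of the formalism developed in \Cref{Appendix-sect}, and is what underlies the passage from \Cref{Cartesian-YVp} to \Cref{corollary-add-a-lattice}), so the sheafified square
\begin{center}
\begin{tikzcd}
	(\calV_\calY[\tfrac1\pi])^{\sh}\ar{r}\ar{d} & (\calV_Y)^{\mer}\ar{d}\\
	(\calV_\bbW[\tfrac1\pi])^{\sh}\ar{r} & (\calV_Y)^{\An}
\end{tikzcd}
\end{center}
is Cartesian in $\calS(\Perf,\CatE)$, using that $\big((\calV_Y)^{(\mer_\pre)}\big)^{\sh}=(\calV_Y)^{\mer}$ and $\big((\calV_Y)^{(\An_\pre)}\big)^{\sh}=(\calV_Y)^{\An}$ by definition. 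By \Cref{separated-approx-VW} (equivalently \Cref{Big-diamond-properties}(2)) the map $\calV_\bbW[\tfrac1\pi]\to(\calV_Y)^{(\An_\pre)}$ exhibits $(\calV_Y)^{\An}$ as the sheafification of its source as well, so the bottom arrow of this square is an equivalence; since a base change of an equivalence is an equivalence, the top arrow $(\calV_\calY[\tfrac1\pi])^{\sh}\xrightarrow{\ \sim\ }(\calV_Y)^{\mer}$ is an equivalence. Combined with part (1), this equivalence is compatible with the exact structures, so it also holds in $\calS(\Perf,\CatexE)$.

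The step requiring the most care — more a matter of bookkeeping than a genuine obstacle — is checking in part (2) that the equivalence $(\calV_\bbW[\tfrac1\pi])^{\sh}\simeq(\calV_Y)^{\An}$ of \Cref{separated-approx-VW} is the one induced by the transition map appearing in \Cref{Cartesian-YVp}, rather than merely an abstract identification of the two sheafifications. This holds because, under the identifications of \Cref{Big-diamond-properties}(1), that transition map is the big-diamond $(-)^\Diamond$ of the natural inclusion $\calV^\sch_\bbW[\tfrac1\pi]\hookrightarrow\calV^\sch_Y$, which is a v-local equivalence of presheaves on $\PSch$ by \Cref{properties-schematic-V}(4); applying $(-)^\Diamond$ and sheafifying recovers precisely the chain of equivalences of \Cref{Big-diamond-properties}(2). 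The remaining care is just tracking which of $\CatE$, $\CatOE$, $\CatexOE$, $\CatexE$ each square lives in, which is settled by \Cref{Cartesian-YVp} together with part (1).
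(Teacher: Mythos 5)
Your proposal is correct. Part (1) is essentially the paper's argument: the paper also deduces it from the Cartesian right-hand square of \Cref{Cartesian-YVp} together with fully-faithfulness of $(\on{Proj}\bbW(R))\otimes_{O_E}E\to\on{Proj}(\bbW(R)[\frac{1}{\pi}])$; you merely supply the (routine) flat base-change justification of that fully-faithfulness, and the stability of fully faithful functors under pullback.

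For part (2) your route is genuinely different from the paper's, though both rest on the same ultimate input. The paper argues pointwise on a basis: by \Cref{remark-product-of-points} it suffices to show $\calV_\calY[\frac{1}{\pi}](S)\simeq(\calV_Y)^{(\mer_\pre)}(S)$ when $S$ is a product of points, and there the bottom map of the square is already an equivalence by Ivanov's arc-descent theorem, so the Cartesian square forces the top map to be one too. You instead sheafify the entire Cartesian square, invoke left-exactness of sheafification (which the paper itself uses in \Cref{separated-presheaves-finitelimits} and in passing from \Cref{Cartesian-YVp} to \Cref{corollary-add-a-lattice}, so this is legitimate within the paper's framework), and then use that the bottom map sheafifies to an equivalence by \Cref{separated-approx-VW}. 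Since \Cref{separated-approx-VW} and \Cref{Big-diamond-properties} are themselves proved via \Cref{properties-schematic-V} and Ivanov's theorem on combs, the two arguments use the same key fact, just packaged at different levels: the paper's is more hands-on and avoids relying on exactness of sheafification for category-valued presheaves, while yours is more formal and makes the compatibility of the two sheafification identifications (your ``bookkeeping'' point, which you correctly flag and resolve) the only nontrivial check.
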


\begin{proof}
	The first statement follows from the proof of \Cref{Cartesian-YVp} and from the fact that 
	\[(\on{Proj} \bbW(R))\otimes_{O_E} E \to (\on{Proj} \bbW(R)[\frac{1}{\pi}])\]
	is fully-faithful.

	To show $\calV_\calY[\frac{1}{\pi}]^\sh\simeq \calV_Y^\mer$ it suffices by \Cref{remark-product-of-points} to show that $\calV_\calY[\frac{1}{\pi}](S)\simeq (\calV_Y)^{(\mer_\pre)}(S)$ when $S$ is a product of points.
	If $\Spa(R,R^+)$ is a product of points, $(\on{Proj} \bbW(R))\otimes_{O_E} E \simeq (\on{Proj} \bbW(R)[\frac{1}{\pi}])$ by \cite[Theorem 6.1]{Ivanov_arc_descent}.
	Using \Cref{Cartesian-YVp}, we can conclude.
	%Since $\calV_\calY$ is a v-sheaf, it follows from \Cref{general-separated-nonsense} that $\calV_\calY[\frac{1}{\pi}]$ is a separated presheaf i.e.\ the map to its sheafification is fully-faithful. 
	%Since we proved above that the v-sheafification $\calV_\calY[\frac{1}{\pi}]$ is $(\calV_Y)^\mer$, the third claim follows.
\end{proof}

\begin{corollary}
	\label{separated-aprrox-Vy}
	The presheaf $\calV_\calY[\frac{1}{\pi}]$ (with the exact structure inherited from $(\calV_Y)^{\mer_\pre}$) is separated and sheafifies to $\calV_Y^{\An}$.
\end{corollary}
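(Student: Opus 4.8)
The plan is to separate the exact‑category bookkeeping from the genuine content and to treat the two halves of the statement (``separated'' and ``sheafifies to $\calV_Y^\An$'') at the level of the underlying $\CatE$‑valued presheaf, then re‑attach the exact structure via the appendix formalism. First, by \Cref{VYp-is-separated-presheaf}(1) the canonical map $\calV_\calY[\frac{1}{\pi}]\hookrightarrow(\calV_Y)^{\mer_\pre}$ is fully faithful, so the exact structure of the $\CatexOE$‑valued presheaf $(\calV_Y)^{\mer_\pre}$ restricts to $\calV_\calY[\frac{1}{\pi}]$; and by \Cref{analytic-vector-bundles-exact-geometric} a sequence of vector bundles is exact precisely when it is so at every geometric point, so ``being an admissible short exact sequence'' is a v‑local --- in particular separated --- condition. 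Via \Cref{check-on-categories} this means $\calV_\calY[\frac{1}{\pi}]$ is separated as a $\CatexE$‑valued presheaf as soon as its underlying $\CatE$‑valued presheaf is, and that its $\CatexE$‑sheafification is obtained by sheafifying the underlying $\CatE$‑valued presheaf and re‑attaching the geometric‑pointwise exact structure. Separatedness of that underlying additive presheaf is exactly \Cref{separatedness-of-isogeny}.

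It therefore remains to compute the v‑sheafification of $\calV_\calY[\frac{1}{\pi}]$ in $\calS(\Perf,\CatE)$ and to identify it with $\calV_Y^\An$. Since products of points form a basis of the v‑topology (\Cref{remark-product-of-points}), it suffices to exhibit, naturally in a product of points $S=\Spa(R,R^+)$, an equivalence $\calV_\calY[\frac{1}{\pi}](S)\simeq(\calV_Y)^{(\An_\pre)}(S)=\on{Proj}(\bbW(R)[\frac{1}{\pi}])$. Over such an $S$ the ring $\bbW(R)$ is the $\pi$‑adic completion of $B_{[0,1],S}=B^{\on{disc}}_{[0,1],S}$, and the Beauville--Laszlo squares of \Cref{Lattice-to-Vyp} and \Cref{Cartesian-YVp} present both sides as glueings of $\calV(Y_{[1,\infty),S})$ with a suitable isogeny category of lattices along $\calV(Y_{[1,1],S})$; Ivanov's arc‑descent theorem \cite[Theorem 6.1]{Ivanov_arc_descent} collapses the relevant isogeny category of $\bbW$‑lattices over this strictly totally disconnected base to $\on{Proj}(\bbW(R)[\frac{1}{\pi}])$. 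Assembling these identifications, the presheaf map $\calV_\calY[\frac{1}{\pi}]\to(\calV_Y)^{(\An_\pre)}$ restricts to an equivalence on products of points, hence induces an equivalence of v‑sheafifications $\calV_\calY[\frac{1}{\pi}]^{\sh}\simeq\calV_Y^\An$. Combining this with the first paragraph yields the full statement, the exact structure on $\calV_Y^\An$ being the one inherited geometric‑pointwise from $(\calV_Y)^{\mer_\pre}$.

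The part I expect to be delicate is the \emph{naturality and exactness} of the comparison on the basis: one must verify that the several Beauville--Laszlo and arc‑descent equivalences invoked above are compatible with all restriction maps among products of points, so that one really obtains a transformation of presheaves rather than a mere objectwise equivalence, and that this transformation carries admissible exact sequences to admissible exact sequences --- which, by \Cref{analytic-vector-bundles-exact-geometric}, again reduces to the geometric points of $S$. Once that is in place, the remaining steps are routine manipulations of the cited descent results together with the appendix formalism of \Cref{check-on-categories}.
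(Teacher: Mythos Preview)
The statement as printed almost certainly contains a typo: the sheafification of $\calV_\calY[\frac{1}{\pi}]$ is $\calV_Y^{\mer}$, not $\calV_Y^{\An}$. This is precisely \Cref{VYp-is-separated-presheaf}(2), it is parallel to \Cref{separated-approx-VW} (where $\calV_\bbW[\frac{1}{\pi}]$ sheafifies to $\calV_Y^{\An}$), and it is how \Cref{abstract-categories} uses the result. The paper's one-line proof simply invokes \Cref{check-on-categories} to upgrade separatedness from $\CatE$ (where it is \Cref{separatedness-of-isogeny}) to $\CatexE$; the sheafification claim is not reproved but imported from \Cref{VYp-is-separated-presheaf}(2).

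Your first paragraph is correct and matches the paper exactly: \Cref{check-on-categories} plus \Cref{separatedness-of-isogeny} gives separatedness in $\CatexE$. But in your second paragraph you have taken the typo at face value and are proving the wrong claim. On a product of points $S=\Spa(R,R^+)$ the category $\calV_\calY[\frac{1}{\pi}](S)$ is \emph{not} equivalent to $\on{Proj}(\bbW(R)[\frac{1}{\pi}])=(\calV_Y)^{(\An_\pre)}(S)$. The presentation from the proof of \Cref{Lattice-to-Vyp} expresses $\calV_\calY[\frac{1}{\pi}](S)$ as a fiber product of $(\on{Proj}\,B_{[0,1],S})\otimes_{O_E}E$ with $\calV(Y_{[1,\infty),S})$ over $\calV(Y_{[1,1],S})$; the factor $\calV(Y_{[1,\infty),S})$ carries genuine adic information that $\on{Proj}(\bbW(R)[\frac{1}{\pi}])$ does not see. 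Your assertion that ``both sides'' admit such a gluing description is false for the $\An$ side --- there is no $Y_{[1,\infty)}$ in $\on{Proj}(\bbW(R)[\frac{1}{\pi}])$. What \cite[Theorem~6.1]{Ivanov_arc_descent} actually buys you on a product of points is $(\on{Proj}\,B_{[0,1],S})\otimes_{O_E}E\simeq\on{Proj}(B_{[0,1],S}[\frac{1}{\pi}])$, and plugging this into the diagrams of \Cref{Cartesian-YVp} yields $\calV_\calY[\frac{1}{\pi}](S)\simeq(\calV_Y)^{(\mer_\pre)}(S)$, exactly as in the proof of \Cref{VYp-is-separated-presheaf}. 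So the correct target is $\calV_Y^{\mer}$, and once you fix that, your argument collapses to the paper's.
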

\begin{proof}
This follows from \Cref{check-on-categories}.	
\end{proof}

\begin{remark}
	\label{abstract-categories}
	Although for $S\in \Perf$ the categories $\calV_Y^\An(S)\in \CatexE$ and $\calV_Y^\mer(S)\in \CatexE$ together with their exact structure are fairly abstract, one can still have some formal control over them by applying \Cref{Describing-separated-presheaves} to $\calV_\bbW[\frac{1}{\pi}]$ and $\calV_\calY[\frac{1}{\pi}]$. Indeed, these presheaves are separated by \Cref{separated-aprrox-Vy} and \Cref{separated-approx-VW}. 
\end{remark}

\section{Meromorphic vector bundles on the Fargues--Fontaine curve}
In this section we study $\Bun_\FF^\mer$ and $\Bun_\FF^\An$. 
We note that their structure as objects in $\calS(\Perf\!,\CatE)$ is easy to deduce from our study of $\calV^\mer_\calY$ and $\calV_\calY^\An$ simply by adding Frobenius structure.
Nevertheless, as we will see, the Frobenius structure allows us to further understand their behavior as objects in $\calS(\Perf\!,\CatexE)$.
\subsection{Dieudonn\'e modules and Isocrystals}
For this subsection we fix a test object $\calS=\Spec A \in \PSch$.
    \begin{definition}
	    \label{def:DM_isocrystals}
	    A \textit{Dieudonn\'e module} over $\calS$ is a pair $(\calE,\Phi_\calE)$, where $\calE$ is a finite projective module over $\bbW(A)$ and $\Phi_\calE$ is an isomorphism 
		\[\Phi_\calE\colon \varphi^*\calE_{Y_{\calS^\diamond}}\rightarrow  \calE_{Y_{\calS^\diamond}}\,.\]
	    An \textit{isocrystal} over $\calS$ is a pair $(\calF,\Phi_\calF)$, where $\calF$ is a vector bundle over $Y_{\calS^\diamond}$ and $\Phi_\calF$ is an isomorphism 
		\[\Phi_\calF\colon \varphi^*\calF \rightarrow  \calF\,.\]
	    Morphisms of these data are $\varphi$-equivariant maps.
	    We declare a sequence of morphisms to be exact if the underlying sequence of projective modules (resp.\ vector bundles) is exact.
	    We denote these categories by $\DM(\calS)\in \CatexOE$ and $\IC(\calS)\in \CatexE$.
	    These rules organize into functors $\DM\in \calP(\PSch\!,\CatexOE)$ and $\IC\in \calP(\PSch\!,\CatexE)$.
    \end{definition}
    \begin{remark}
% 	    The category we consider in \Cref{def:DM_isocrystals} is
		Dieudonn\'e modules are also considered by Pappas--Rapoport under the name \textit{meromorphic Frobenius crystals} \cite[Definition 2.3.6]{PR21}.
    \end{remark}

    \begin{proposition}
	    \label{Cartesian-expression-for-DM-and-IC}
	    We have Cartesian diagrams 
	\begin{center}
	\begin{tikzcd}
		\DM \ar{r} \ar{d} & \calV^\sch_\bbW[\frac{1}{\pi}] \ar{d}{\Delta} \ar{r} & \calV^\sch_Y \arrow{d}{\Delta} & 
		\IC \ar{r} \ar{d} & \calV^\sch_Y \arrow{d}{\Delta} \\
		\calV^\sch_\bbW\ar{r}{(\on{id},\varphi^*)}  & \calV^\sch_\bbW[\frac{1}{\pi}] \times \calV^\sch_\bbW[\frac{1}{\pi}] \ar{r} & \calV^\sch_Y\times \calV^\sch_Y &
		\calV^\sch_Y\ar{r}{(\on{id},\varphi^*)}  & \calV^\sch_Y\times \calV^\sch_Y
	\end{tikzcd}
	\end{center}
	in $\calP(\PSch\!, \CatexOE)$ resp.\ $\calP(\PSch\!, \CatexE)$.
    \end{proposition}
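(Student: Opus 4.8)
The plan is to verify every depicted square objectwise — fibre products in $\calP(\PSch,-)$ are computed pointwise — and to reduce all of them to a single fact: the natural functor $\calvwps\to\calV^\sch_Y$, sending a finite projective $\bbW A$-module $\calE$ to $\calE[1/\pi]\coloneqq\calE\otimes_{\bbW A}\bbW A[1/\pi]$, is fully faithful. Fix $\calS=\Spec A\in\PSch$. Since $Y_{\calS^\diamond}=\Spa\bbW A[1/\pi]$ is sous-perfectoid, hence sheafy, one has $\calV(Y_{\calS^\diamond})\simeq\on{Proj}(\bbW A[1/\pi])=\calV^\sch_Y(\calS)$, and under this identification the restriction of $\calE$ to $Y_{\calS^\diamond}$ is $\calE[1/\pi]$ while $\varphi^*\calE_{Y_{\calS^\diamond}}=(\varphi^*\calE)[1/\pi]$. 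In particular the underlying-bundle part of $\IC(\calS)$ is literally $\calV^\sch_Y(\calS)$, and the Frobenius datum $\Phi_\calE$ of a Dieudonn\'e module is an isomorphism over $Y_{\calS^\diamond}$, i.e.\ in $\calV^\sch_Y(\calS)$.

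The first step is the key lemma: for finite projective $\bbW A$-modules $\calE,\calF$ the canonical map
\[
\Hom_{\bbW A}(\calE,\calF)\otimes_{O_E}E\;\longrightarrow\;\Hom_{\bbW A[1/\pi]}\bigl(\calE[1/\pi],\calF[1/\pi]\bigr)
\]
is an isomorphism, since $\Hom$ out of a finitely presented module commutes with the flat localization $\bbW A\to\bbW A[1/\pi]=\bbW A\otimes_{O_E}E$; this is the full-faithfulness already invoked in the proof of \Cref{VYp-is-separated-presheaf}. Two consequences: an isomorphism $\varphi^*\calE_{Y_{\calS^\diamond}}\cong\calE_{Y_{\calS^\diamond}}$ is the same datum as an isomorphism $\varphi^*\calE\cong\calE$ in the isogeny category $\calvwps(\calS)$; and an isomorphism between the images in $\calV^\sch_Y(\calS)$ of two objects of $\calvwps(\calS)$ lifts uniquely to $\calvwps(\calS)$.

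Next I would unwind the fibre products, using that the pullback of a cospan $X\xrightarrow{(f_1,f_2)}C\times C\xleftarrow{\Delta}C$ in the relevant $(2,1)$-category of categories is equivalent to the category of pairs $(x,\theta\colon f_1(x)\cong f_2(x))$. Applied to $\calV^\sch_Y$ with $(f_1,f_2)=(\id,\varphi^*)$ this immediately gives that the $\IC$-square is Cartesian: its pullback is $\{(\calF,\varphi^*\calF\cong\calF)\}=\IC(\calS)$, naturally in $\calS$. For the left $\DM$-square — whose lower arrow $\calV^\sch_\bbW\to\calvwps\times\calvwps$ is $(\id,\varphi^*)$ composed with the localization functor — the pullback is $\{(\calE,\psi\colon\varphi^*\calE\cong\calE\text{ in }\calvwps(\calS))\}$, which by the key lemma is exactly $\{(\calE,\Phi_\calE\colon\varphi^*\calE_{Y_{\calS^\diamond}}\cong\calE_{Y_{\calS^\diamond}})\}=\DM(\calS)$. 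For the right $\DM$-square the same description identifies the pullback with pairs $(\calF,\gamma)$ where $\gamma$ is a gluing isomorphism in $\calV^\sch_Y(\calS)$; by unique liftability this is equivalent to $\calvwps(\calS)$ via $\calF\mapsto(\calF,\id)$. Pasting the left and right squares gives the outer $\DM$-square, and all equivalences are manifestly natural in $\calS$.

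It remains to match the extra structure. The symmetric monoidal and $O_E$- (resp.\ $E$-) linear structures are preserved by every functor in sight, so the squares are Cartesian in $\CatOE$, $\CatE$ as appropriate. For the exact structure, recall it is defined on underlying modules/bundles, and that exactness in a fibre product means exactness in each factor; the point is that a sequence in $\DM(\calS)$ which is exact over $\bbW A$ is a split exact sequence of projectives, hence stays exact after inverting $\pi$ and after applying the additive functor $\varphi^*$ — and conversely exactness in $\calV^\sch_\bbW$ is part of the pullback datum — so the left and outer $\DM$-squares are Cartesian in $\CatexOE$; the analogous remark for split exact sequences of vector bundles on $Y_{\calS^\diamond}$ yields the $\IC$-square in $\CatexE$. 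The only genuinely non-formal ingredient is the key lemma; that, together with checking that $\calvwps$ really inherits its exact structure along $\calvwps\hookrightarrow\calV^\sch_Y$ (as recorded just above the statement), is where I expect to have to be careful — the rest is bookkeeping with fibre products of categories.
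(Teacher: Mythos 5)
Your proof is correct and is essentially the expanded version of the paper's own (one-line) proof, which simply declares all four squares to be formal reinterpretations of \Cref{def:DM_isocrystals}; the one substantive input you isolate — that $\Hom_{\bbW A}(\calE,\calF)\otimes_{O_E}E\cong\Hom_{\bbW A[\frac{1}{\pi}]}(\calE[\frac{1}{\pi}],\calF[\frac{1}{\pi}])$ by flat localization, so that $\calvwps\to\calV^\sch_Y$ is fully faithful — is exactly the fact the paper records just before the statement. (Minor point: the correct reference for that full-faithfulness in the schematic setting is the remark following \Cref{properties-schematic-V}, not \Cref{VYp-is-separated-presheaf}, which is the analytic analogue; your direct localization argument makes this immaterial.)
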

    \begin{proof}
	    These are formal reinterpretations of \Cref{def:DM_isocrystals}. 
    \end{proof}

    \begin{definition}
	    \label{latticed-isocrystals}
	    We define $\DM[\frac{1}{\pi}]\in \calP(\PSch\!,\CatexE)$ as the fiber product
	\begin{center}
	\begin{tikzcd}
		\DM[\frac{1}{\pi}] \ar{r} \ar{d} & \calV^\sch_\bbW[\frac{1}{\pi}] \ar{d}{\Delta} \\
		\calV^\sch_\bbW[\frac{1}{\pi}]\ar{r}{(\on{id},\varphi^*)}  & \calV^\sch_\bbW[\frac{1}{\pi}] \times \calV^\sch_\bbW[\frac{1}{\pi}]\,. 
	\end{tikzcd}
	\end{center}
    \end{definition}

    One can think of $\DM[\frac{1}{\pi}](\calS)$ as the category of isocrystals over $\calS$ that admit a lattice.

    \begin{proposition}
	    \label{formal-properties-IC-DM}
The following statements hold.
\begin{enumerate}
	\item $\IC\in \calS(\PSch\!,\CatexE)$ and $\DM\in \calS(\PSch\!,\CatexOE)$.
	\item $\DM[\frac{1}{\pi}]\in \calP(\PSch\!, \CatexOE)$ is v-separated and $\DM[\frac{1}{\pi}]^\sh\simeq\IC$.
	\item The following diagram in $\calP(\PSch\!, \CatexOE)$ has Cartesian squares
	\begin{center}
	\begin{tikzcd}
		\DM \ar{r} \ar{d} & \DM[\frac{1}{\pi}] \ar{r}\ar{d} &  \IC  \arrow{d} \\
		\calV_\bbW^\sch \ar{r} & \calV_\bbW^\sch[\frac{1}{\pi}] \ar{r}  & \calV^\sch_Y\,.
	\end{tikzcd}
	\end{center}
\end{enumerate}
    \end{proposition}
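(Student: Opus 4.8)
\emph{Overall strategy.} The plan is to deduce all three assertions formally from the Cartesian descriptions of \Cref{Cartesian-expression-for-DM-and-IC} and \Cref{latticed-isocrystals}, feeding in only three facts about the building blocks: $\calV^\sch_\bbW$ is a scheme-theoretic v-sheaf, $\calV^\sch_Y$ is an arc-sheaf (hence a v-sheaf) valued in $\CatexE$ by \Cref{properties-schematic-V}, and $\calV^\sch_\bbW[\frac{1}{\pi}]$ is a separated presheaf by \Cref{separatedness-of-isogeny} whose v-sheafification is $\calV^\sch_Y$ (\Cref{properties-schematic-V}). Beyond this I would only use the formal facts about the presentable $2$-categories $\CatexE$, $\CatexOE$, $\CatOE$ collected in \Cref{Appendix-sect}: v-sheaves are stable under finite limits, separated presheaves are stable under finite limits, and v-sheafification is left exact. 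On every object below, exactness of a sequence means exactness of the underlying sequence of modules, and this property is both preserved by the limit constructions and tested after descent, so the exact structures never require separate attention.

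\emph{Part (1).} Here I would argue that $\IC$ and $\DM$ are finite limits of v-sheaves. Indeed, by \Cref{Cartesian-expression-for-DM-and-IC}, $\IC$ is the equalizer of $(\id,\varphi^*)\colon \calV^\sch_Y \rightrightarrows \calV^\sch_Y$; pasting the two Cartesian squares in that proposition displays $\DM$ as $\calV^\sch_\bbW \times_{(\calV^\sch_Y)^2} \calV^\sch_Y$. Since $\calV^\sch_\bbW \in \calS(\PSch,\CatexOE)$ and $\calV^\sch_Y \in \calS(\PSch,\CatexE)$ — and the latter is still a v-sheaf after applying the limit-preserving forgetful functor $\CatexE \to \CatexOE$ — closure of v-sheaves under finite limits yields $\IC \in \calS(\PSch,\CatexE)$ and $\DM \in \calS(\PSch,\CatexOE)$.

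\emph{Part (2).} By \Cref{latticed-isocrystals}, $\DM[\frac{1}{\pi}]$ is the equalizer of $(\id,\varphi^*)\colon \calV^\sch_\bbW[\frac{1}{\pi}] \rightrightarrows \calV^\sch_\bbW[\frac{1}{\pi}]$. The descent comparison map of an equalizer is the equalizer of the descent comparison maps, and fully faithful functors are stable under limits, so separatedness of $\calV^\sch_\bbW[\frac{1}{\pi}]$ (\Cref{separatedness-of-isogeny}) propagates to $\DM[\frac{1}{\pi}]$. For the identification $\DM[\frac{1}{\pi}]^\sh \simeq \IC$ I would apply v-sheafification to this equalizer; using left-exactness of sheafification, the identification $\calV^\sch_\bbW[\frac{1}{\pi}]^\sh \simeq \calV^\sch_Y$ from \Cref{properties-schematic-V}, and the fact that $\varphi^*$ sheafifies to the Frobenius pullback on $\calV^\sch_Y$ (both come from $\varphi\colon Y_{\calS^\diamond}\to Y_{\calS^\diamond}$), this equalizer becomes $\on{eq}(\id,\varphi^*\colon \calV^\sch_Y\rightrightarrows\calV^\sch_Y) \simeq \IC$. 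Alternatively, one can avoid left-exactness and instead check that the two defining pullbacks agree on combs — where $\calV^\sch_\bbW[\frac{1}{\pi}] \simeq \calV^\sch_Y$ by \Cref{properties-schematic-V} — and invoke that combs form a basis.

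\emph{Part (3) and the main difficulty.} Unravelling \Cref{def:DM_isocrystals}, a Dieudonn\'e module over $\calS$ is exactly a finite projective $\bbW R$-module $\calE$ together with an isocrystal structure on $\calE_{Y_{\calS^\diamond}}$, which says precisely that the outer rectangle $\DM = \calV^\sch_\bbW \times_{\calV^\sch_Y} \IC$ is Cartesian (equivalently, paste the outer square of \Cref{Cartesian-expression-for-DM-and-IC} with the square defining $\IC$). The right-hand square $\DM[\frac{1}{\pi}] = \calV^\sch_\bbW[\frac{1}{\pi}] \times_{\calV^\sch_Y} \IC$ is Cartesian because $\calV^\sch_\bbW[\frac{1}{\pi}] \to \calV^\sch_Y$ is fully faithful — the comparison $\Hom_{\bbW R}(M,N)\otimes_{O_E} E \to \Hom_{\bbW R[\frac{1}{\pi}]}(M[\tfrac{1}{\pi}],N[\tfrac{1}{\pi}])$ is an isomorphism for $M$ finitely presented — so an isocrystal whose underlying bundle lies in $\calV^\sch_\bbW[\frac{1}{\pi}]$ has a unique compatible $\varphi$-structure there; the left-hand square is then Cartesian by the pasting law. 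I do not expect any serious obstacle: the content is entirely formal once \Cref{Cartesian-expression-for-DM-and-IC}, \Cref{latticed-isocrystals} and \Cref{properties-schematic-V} are available. What needs care is the $2$-categorical bookkeeping — that v-sheafification of $\CatOE$-valued presheaves is left exact and transforms the Frobenius structure maps correctly, and that limits in $\CatexE$ and $\CatexOE$ are computed compatibly with the underlying-module exact structures — and this is precisely what \Cref{Appendix-sect} is designed to supply.
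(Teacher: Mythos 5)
Your proposal is correct and follows essentially the same route as the paper: the paper likewise deduces all three claims formally from the Cartesian descriptions in \Cref{Cartesian-expression-for-DM-and-IC} and \Cref{latticed-isocrystals}, the stability of sheaves and separated presheaves under finite limits (\Cref{separated-presheaves-finitelimits}), and the properties of $\calV^\sch_\bbW[\frac{1}{\pi}]$ and $\calV^\sch_Y$ from \Cref{properties-schematic-V}. You simply spell out the formal steps (left-exactness of sheafification, fully-faithfulness of $\calV^\sch_\bbW[\frac{1}{\pi}]\to\calV^\sch_Y$, and the pasting law) that the paper leaves implicit.
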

    \begin{proof}
	    We note that the property of being a separated presheaf (resp.\ a sheaf) is stable under finite limits of presheaves by \Cref{separated-presheaves-finitelimits}. 
		Then, the first two claims are implied by \Cref{properties-schematic-V} together with the Cartesian diagram \ref{Diag_BunFF} and \Cref{Cartesian-expression-for-DM-and-IC}. 
		The last claim follows formally from \Cref{latticed-isocrystals}, \Cref{Cartesian-expression-for-DM-and-IC} and the fact that $\calV^\sch_\bbW[\frac{1}{\pi}]$ is separated with $\calV^\sch[\frac{1}{\pi}]^\sh\simeq \calV^\sch_Y$
    \end{proof}

    \begin{proposition}
	    \label{exactness-of-Dieudonne-mods-at-points}
	    Let $\Sigma\coloneqq[\calE_1\to \calE_2\to \calE_3]$ be a sequence in $\DM(\calS)$. 
	    Then $\Sigma$ is exact if and only if for every geometric point $\bar{x}\to \calS$, the sequence $\Sigma_{\bar{x}}$ is exact.
	    If $\Sigma$ is already a complex then it suffices to check exactness on geometric points with closed image in $\calS$.
    \end{proposition}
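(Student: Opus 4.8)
The plan is to reduce the statement to the special fibre via a Nakayama argument, after which it becomes a routine fact about complexes of finite projective modules on the scheme $\calS = \Spec A$; the Frobenius data will play no role.

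Write $\Sigma = [\,0 \to \calE_1 \xrightarrow{\alpha} \calE_2 \xrightarrow{\beta} \calE_3 \to 0\,]$, so each $\calE_j$ is finite projective over $\bbW(A)$ and, by \Cref{def:DM_isocrystals}, ``$\Sigma$ exact'' means the underlying $\bbW(A)$-module sequence is short exact. The first step I would carry out is: since $\bbW(A)$ is $\pi$-adically complete (so $\pi$ lies in its Jacobson radical) with $\bbW(A)/\pi \cong A$, and since the $\calE_j$ are finite projective, $\Sigma$ is exact over $\bbW(A)$ \emph{if and only if} its reduction $\overline{\Sigma} := \Sigma \otimes_{\bbW(A)} A$ is exact over $A$. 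The direction ``$\Rightarrow$'' is immediate, as a short exact sequence of finite projective modules splits; for ``$\Leftarrow$'' one shows in turn, each time by Nakayama for finitely generated $\bbW(A)$-modules, that $\beta$ is surjective (its cokernel vanishes mod $\pi$), that $\ker\beta$ is finite projective (being a direct summand of $\calE_2$), and that $\calE_1 \to \ker\beta$ is an isomorphism (it is one mod $\pi$, both modules being finite projective). Running the same argument over $\bbW(V)$ for a valuation ring $V$ shows that for a geometric point $\bar x \colon \Spec V \to \calS$, the sequence $\Sigma_{\bar x}$ is exact over $\bbW(V)$ iff $\overline{\Sigma} \otimes_A V$ is exact over $V$.

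It then remains to prove the purely algebraic statement: a complex $\overline{\Sigma}$ of finite projective $A$-modules is exact iff $\overline{\Sigma} \otimes_A V$ is exact for every valuation ring $V$ with algebraically closed fraction field over $A$, and --- when $\overline{\Sigma}$ is a complex --- it suffices to use those $V$ whose center on $A$ is a maximal ideal. Again ``only if'' is clear by splitness. For ``if'', recall that $\overline{\Sigma}$ is exact over $A$ iff $\overline{\Sigma} \otimes_A \kappa(\frakp)$ is exact for every $\frakp \in \Spec A$ (over the local ring $A_\frakp$ a complex of finite projective modules is exact iff it is so after $\otimes \kappa(\frakp)$, by Nakayama). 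Given $\frakp$, choose a maximal ideal $\overline{\frakq}$ of the domain $A/\frakp$ and a valuation ring $W$ with algebraically closed fraction field dominating $(A/\frakp)_{\overline{\frakq}}$; then $A \to A/\frakp \hookrightarrow W$ is a geometric point of $\calS$ whose center on $A$ is the maximal ideal lying over $\overline{\frakq}$ --- hence it has closed image --- and since $W$ is local and $\kappa(\frakp) \hookrightarrow \on{Frac}(W)$, exactness of $\overline{\Sigma} \otimes_A W$ over $W$ forces that of $\overline{\Sigma}$ over $\on{Frac}(W)$, hence over $\kappa(\frakp)$. This gives both the unconditional equivalence and the refinement for complexes.

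The one point needing a little care is this last reduction to geometric points with closed image: one cannot simply pull back along a v-cover of $\calS$, since a closed point of such a cover need not map to a closed point of $\calS$; passing through the quotient $A/\frakp$ as above circumvents this. Otherwise the argument is routine, and I anticipate no real obstacle.
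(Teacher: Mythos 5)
Your reduction to the special fibre contains a genuine gap: the assertion that $\Sigma$ is exact over $\bbW(A)$ \emph{if} $\overline{\Sigma}=\Sigma\otimes_{\bbW(A)}A$ is exact over $A$ is false, because the complex condition $\beta\circ\alpha=0$ cannot be detected modulo $\pi$. Concretely, take $\calE_1=\calE_3=\bbW(A)$, $\calE_2=\bbW(A)^{\oplus 2}$, $\alpha=(1,0)^{T}$ and $\beta=(\pi,1)$: then $\overline{\Sigma}$ is (split) exact, but $\beta\alpha=\pi\neq 0$, so $\Sigma$ is not even a complex. Your Nakayama chain silently assumes that $\alpha$ factors through $\ker\beta$ when you assert "$\calE_1\to\ker\beta$ is an isomorphism," and nothing in the argument recovers $\beta\alpha=0$ from the hypothesis. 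The hypothesis does contain this information --- exactness of every $\Sigma_{\bar x}$ gives $(\beta\alpha)\otimes\bbW(C_{\bar x})=0$, and since $A$ is perfect (hence reduced) the map $\bbW(A)=A^{\bbN}\to\prod_{\bar x}C_{\bar x}^{\bbN}$ is injective, so $\beta\alpha=0$ --- but this step must be carried out \emph{before} any reduction mod $\pi$, and it is precisely the step your proof skips. This is also how the paper proceeds: first verify the complex condition via the injection into the product over geometric points, and only then test exactness pointwise. Once $\beta\alpha=0$ is known, your Nakayama reduction to $\overline{\Sigma}$ is correct and is a clean way to eliminate $\bbW$ from the problem.

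There is a second, smaller problem in the refinement to points with closed image. You take geometric points to be maps $\Spec W\to\calS$ for valuation rings $W$ with algebraically closed fraction field, and your argument needs exactness of $\overline{\Sigma}\otimes_A W$ for a $W$ dominating $(A/\frakp)_{\overline{\frakq}}$. Under the convention actually used in the paper a geometric point of $\calS$ is $\Spec C\to\calS$ with $C$ an algebraically closed field, and "closed image" means the image point is closed; the hypothesis therefore only controls the fibres $\overline{\Sigma}\otimes\kappa(\frakm)$ at maximal ideals $\frakm$, and does not give you exactness over your $W$ (whose image in $\Spec A$ is a chain of primes and is in any case not closed). The correct bridge from closed points to arbitrary $\frakp$ is that, for a complex of finite projective modules, the non-exact locus is closed (it is cut out by rank conditions on minors of $\alpha$ and $\beta$), so if it is non-empty it contains a closed point. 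With that substitution, and with the complex condition established as above, the rest of your argument goes through.
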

    \begin{proof}
	    Since $\calV_\bbW^\sch$ is a Zariski sheaf and since a basis over $\Spec A$ always deforms to a basis over $\Spec \bbW A$ (since $\bbW A$ is $\pi$-adically complete, an ideal generated by elements $a_1,...,a_n$ in $A=\bbW(A)/\pi$ is the unit ideal iff the ideal generated by $[a_1],...,[a_n]$ in $\bbW A$ is the unit ideal), we may assume that each $\calE_i$ is free and of constant finite rank.
	    The maps $\calE_i\to \calE_j$ are now given by matrices with values in $\bbW A=A^\bbN$ (as sets).
    	The map $A\to \prod_{\bar{x}\to \calS}\! K(\bar x)$ is injective by \Cref{geompts_inj}, so one can check on geometric points that the sequence is a complex.
	    %Let $\calS=\Spec A$.
	    %By definition, a sequence is exact if and only if the underlying projective modules over $\bbW A$ is exact.
	    %Moreover any basis defined over $\Spec A$ deforms to a basis over $\Spec(\bbW A)$, so that after a Zariksi localization in $A$ we may assume that all projective modules are free of finite rank which we can assume to be constant. 
	    
    Once we know the sequence is a complex, exactness can be checked on closed points of $\Spec \bbW A$.
    As we vary over geometric points $\overline{x}\to \calS$ with closed image, the induced family of maps $\bbW(\overline{x})\colon \Spec \bbW C\to \Spec \bbW A$ covers all closed points of $\Spec \bbW A$. This allows us to conclude. \qedhere
    %The family of maps of the form $\Spec \bbW C\to \Spec \bbW A$ with $\Spec C\to \Spec A$ a geometric point have all closed points the closed points of $\Spec \bbW A$ in their image which allow us to conclude.  \qedhere
    %Note that every geometric point of that form can be lifted to a geometric point of $\Spec (\bbW C)$ where $C$ ranges over geometric points $\bar{x}\colon \Spec C\to \calS$ whose image is closed in $\calS$. 
    %By hypothesis, over $\Spec (\bbW C)$ the sequence is exact.
    \end{proof}

    To prove an analogue of \Cref{exactness-of-Dieudonne-mods-at-points} for $\frakB$, we first give a reinterpretation.

    \begin{proposition}
	    \label{isomorphism-isocrystals-bung}
	    The following statements hold.
	    \begin{enumerate}
		    \item For all $\calS\in \PSch$, $\on{Bun}_{\on{FF}}(\calS^\diamond)\simeq \IC(\calS)$ in $\CatexE$\,.
		    \item $\IC\simeq (\on{Bun}_{\on{FF}})^\red$ in $\calS(\PSch\!,\CatexE)$.
	    \end{enumerate}
    \end{proposition}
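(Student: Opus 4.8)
The plan is to deduce both statements from the work already done on meromorphic/analytic vector bundles, simply by adding Frobenius structure, and then to combine this with the identification $\perfc_{\mathfrak{B}}$ or, more directly, with the Cartesian descriptions of $\Bun_{\FF}$, $\IC$ and $\DM$ already available. For part (1), recall that $\Bun_{\FF}$ is defined by the pull-back square with legs $(\on{id},\varphi^*)\colon \calV_Y \to \calV_Y \times \calV_Y$ and $\Delta$, while $\IC$ is defined (see \Cref{Cartesian-expression-for-DM-and-IC}) by the analogous square with $\calV_Y^\sch$ in place of $\calV_Y$ and with legs $(\on{id},\varphi^*)$ and $\Delta$. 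Hence, once we know $(\calV_Y)^{\An}$, or rather the value $(\calV_Y)(\calS^\diamond)$ of $\calV_Y$ on the big-diamond $\calS^\diamond$, agrees with $\calV_Y^\sch(\calS)$ in a way compatible with $\varphi^*$ and with $\Delta$, the two pull-back squares agree and we obtain $\Bun_{\FF}(\calS^\diamond) \simeq \IC(\calS)$. First I would record that the Frobenius lift $\varphi$ on $Y_{(-)}$ constructed in \Cref{ffschematic} is compatible under $(-)^\diamond$ with the absolute Frobenius on $\PSch$ — this is functoriality of $\calS \mapsto \bbW A$ and hence of $\calS \mapsto Y_{\calS^\diamond}$, already noted in \Cref{sec:notation}. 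Then the identification $(\calV_Y^\sch)^{\Diamond_\pre} \simeq (\calV_Y)^{(\An_\pre)}$ from \Cref{Big-diamond-properties}(1), evaluated at $\calS^\diamond$ (which by the example in \Cref{sec:notation} satisfies $\calS^\diamond = \calS^{\diamond_\pre}$, i.e. $\calV_Y(\calS^\diamond) = \calV_Y^{(\An_\pre)}(\Spec A)$ whenever $A$ is a product comb, and in general after sheafification), upgrades to an equivalence of the Frobenius-equivariant pull-back squares. Applying $\varprojlim$ over the equalizer then yields (1).

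For part (2), the statement $\IC \simeq (\Bun_{\FF})^\red$ in $\calS(\PSch\!,\CatexE)$ should follow formally from (1) together with the adjunction between $\diamond$ and $(-)^\red$. Concretely: $(\Bun_{\FF})^\red$ is by definition the v-sheafification of the presheaf $\Spec A \mapsto \Bun_{\FF}(\Spec A^\diamond)$, and by (1) this presheaf is already $\IC$, which is a scheme-theoretic v-sheaf by \Cref{formal-properties-IC-DM}(1); since a sheaf is its own sheafification, we conclude. The one subtlety to check is that part (1) is genuinely an equivalence of presheaves on $\PSch$ — i.e. that the identifications are functorial in $\calS$ — which is immediate from the functoriality of all constructions involved ($\bbW(-)$, $Y_{(-)}$, $\varphi$, and the pull-back squares). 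Alternatively, one can argue via \Cref{theorem-evaluating-opensubs} applied to $\calF = \calS^\diamond$, which gives $\calV_Y(\calS^\diamond) \simeq \calV(Y_{\calS^\diamond})$, and $Y_{\calS^\diamond} = \Spa(\bbW(A)[\tfrac 1\pi], \bbW(A))$, so that $\calV(Y_{\calS^\diamond}) \simeq \on{Proj}(\bbW A[\tfrac 1\pi]) = \calV_Y^\sch(\calS)$; adding $\varphi$-structure recovers both $\IC$ and $\Bun_{\FF}(\calS^\diamond)$ as the same equalizer.

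The main obstacle I expect is not any single deep input but rather the bookkeeping needed to make everything $\varphi$-equivariant at the level of $\infty$-categories: one must verify that the equivalence $(\calV_Y^\sch)^{\Diamond_\pre} \simeq (\calV_Y)^{(\An_\pre)}$ of \Cref{Big-diamond-properties} intertwines the two Frobenius pullback functors (and their coherence data), so that the induced equivalence of equalizer diagrams is honest. This is a compatibility of natural transformations rather than a new theorem, and follows from the uniqueness clause in \Cref{ffschematic} (the pair $(Y_{(-)},\cong)$ is unique up to unique isomorphism, and $\varphi$ is constructed by functoriality), but it is the place where care is required. A secondary, purely formal point is that \Cref{Big-diamond-properties}(1) is stated at the level of presheaves $\calP(\Perf\!,\CatexE)$ on $\Perf$, whereas here we want to evaluate on objects $\calS^\diamond$ coming from $\PSch$; this is handled by \Cref{presheaves-sheaves}, using that $\calS^\diamond$ is a filtered colimit (in fact admits v-covers) by product of points, for which $\Spec R$ is a comb by \Cref{combs_and-products} and the relevant identifications are equivalences on the nose by \cite[Theorem 6.1]{Ivanov_arc_descent}.
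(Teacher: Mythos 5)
Your proposal is correct and follows essentially the same route as the paper: the paper's proof is a one-line appeal to \Cref{properties-schematic-V} (in particular $\calV^\sch_Y\simeq(\calV_Y)^\red$, which is itself proved via \Cref{theorem-evaluating-opensubs} exactly as in your ``alternative'' paragraph, giving $\calV_Y(\calS^\diamond)\simeq\calV(Y_{\calS^\diamond})\simeq\on{Proj}(\bbW A[\tfrac1\pi])$), after which adding the $\varphi$-structure identifies the two equalizers and part (2) follows since $\IC$ is already a scheme-theoretic v-sheaf. Your primary route through \Cref{Big-diamond-properties} is a mild detour, but the substance and the key functoriality/Frobenius-compatibility checks you flag are the same.
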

    \begin{proof}
	    Both claims follow by applying \Cref{properties-schematic-V} and \Cref{Cartesian-expression-for-DM-and-IC} in combination with the Cartesian diagram \ref{Diag_BunFF}. \qedhere
	    %The second claim follows from the first one from the definition of $(-)^\red$.
	    %The first claim follows from \Cref{properties-schematic-V}, the definitions of $\IC$ and $\on{Bun}_{\on{FF}}$, and from the fact that the functor $(-)^\red$ commutes with finite limits.
    \end{proof}

    \begin{remark}
	    The result of \Cref{isomorphism-isocrystals-bung} is implicitly proved during the proof of \cite[Theorem 2.3.8]{PR21} when $E$ is of mixed characteristic. 
	    Their approach relied on Sen theory which is a tool that is only available in mixed characteristic.
	    A previous version of this article also relied on Sen theory and for this reason we also had to restrict to the mixed characteristic setup.
    \end{remark}

    \begin{proposition}
	    \label{check-exact-isoc-points}
	    Let $\calS=\Spec R$ and $\Sigma\coloneqq[\calE_1\to \calE_2\to \calE_3] \in \IC(\calS)$ be a sequence such that the underlying projective modules $\calE_i$ over $\bbW R[\frac{1}{\pi}]$ have constant rank $\on{rk.}(\calE_i)=r_i$ and $r_1+r_3=r_2$. The following statements hold.
	    \begin{enumerate}
		    \item The sequence is exact if and only if for every geometric point $\overline{x}\to \calS$ the sequence $\Sigma_{\bar{x}} \in \IC(\overline{x})$ is exact.
		    \item Moreover, if the sequence is already assumed to be a complex, then exactness can be checked on geometric points $\overline{x}\to \calS$ whose image is a closed point.
	    	
	    \end{enumerate}
    \end{proposition}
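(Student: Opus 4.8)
The plan is to mimic the proofs of \Cref{analytic-vector-bundles-exact-geometric} and \Cref{exactness-of-Dieudonne-mods-at-points}. By \Cref{isomorphism-isocrystals-bung} (or directly from \Cref{def:DM_isocrystals}), a sequence in $\IC(\calS)$ is exact precisely when the underlying sequence of vector bundles on $Y_{\calS^\diamond}=\Spa(\bbW R[\tfrac1\pi],\bbW R)$ is exact. By the proof of \Cref{ffschematic} this is an affinoid $E_{\infty}$-sousperfectoid, hence stably uniform, space, so \Cref{analytic-vector-bundles-exact-geometric} applies and $\Sigma$ is exact if and only if $\overline y^{\,*}\Sigma$ is exact for every (rank $1$) geometric point $\overline y\to Y_{\calS^\diamond}$. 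The whole problem is thus to relate the geometric points of the adic space $Y_{\calS^\diamond}$ to the geometric points of the scheme $\calS$, and the Frobenius structure is what makes this possible.

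First I would reduce statement (1) to statement (2), i.e.\ show that "$\Sigma$ is a complex" can be checked on geometric points of $\calS$. Passing to a v-cover of $\calS$ and using $\IC\simeq\DM[\tfrac1\pi]^{\sh}$ with $\DM[\tfrac1\pi]$ separated (\Cref{formal-properties-IC-DM}), I may assume $\Sigma$ lifts to a sequence in $\DM(\calS)$; after Zariski-localizing $\calS$ and rescaling the lattices the morphisms become honest matrices over $\bbW R$. Since $R$ is perfect, $\bbW R$ is $\pi$-torsion free and identifies set-theoretically with $R^{\bbN}$ via Witt coordinates, while $R$ injects into $\prod_{\overline x\to\calS}C_{\overline x}$; hence the composite matrix vanishes in $\bbW R[\tfrac1\pi]$ iff it vanishes after every geometric pullback, which holds because each $\Sigma_{\overline x}$ is exact, in particular a complex. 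As exactness implies being a complex, (1) follows from (2). This part is a verbatim adaptation of \Cref{exactness-of-Dieudonne-mods-at-points}.

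It remains to prove (2), which is the heart of the matter. Assume $\Sigma$ is a complex; it is then a short complex $0\to\calE_1\to\calE_2\to\calE_3\to 0$ of finite projective modules with matching ranks, whose exactness is equivalent to the two ideals of maximal minors (of $\calE_1\to\calE_2$ and of $\calE_2\to\calE_3$) being the unit ideal, a condition that is detected at the closed points of the relevant spectrum. The morphisms are $\varphi$-equivariant, so these minor ideals are stable under the Frobenius; combining this with \Cref{isomorphism-isocrystals-bung}, I would realize $\Sigma$ over a product-of-points v-cover $S_0\to\calS^\diamond$ as a $\varphi$-equivariant complex of vector bundles on the relative curve $Y_{S_0}$, so that exactness is checked on geometric points of $Y_{S_0}$. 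Each such point factors through the fibre $Y_{\overline s}\hookrightarrow Y_{S_0}$ over a geometric point $\overline s$ of $S_0$; transporting $\overline s$ to $\calS$ via the canonical map $S_0\to\calS^\diamond=\Spd(R,R)$ yields a map $\Spec O_{C}\to\calS$, whose generic point — and, in the complex case, also the closed point, which has closed image exactly as closed points of $\Spec\bbW A$ match those of $\Spec A$ in \Cref{exactness-of-Dieudonne-mods-at-points} — are geometric points $\overline x\to\calS$ on which $\Sigma$ is exact by hypothesis; using the Dieudonné–Manin description of isocrystals over a geometric point to see that exactness (slope data) is detected pointwise, this forces exactness of $\Sigma$ restricted to $Y_{\overline s}$, hence at the chosen geometric point of $Y_{S_0}$, hence of $\Sigma$.

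The main obstacle is precisely this last matching in (2): organising the geometric points of $Y_{\calS^\diamond}$ — equivalently, of a relative curve $Y_{S_0}$ — so that, after exploiting $\varphi$-equivariance and the complex/rank hypothesis, they are controlled by the $\overline x\to\calS$, and in particular by those with closed image. In the Dieudonné-module setting this bookkeeping is transparent because $\bbW A\twoheadrightarrow A$ identifies closed points; here one must thread it through the closed immersion of curve fibres and the Frobenius translation, and getting the "closed image" refinement (as opposed to merely "all geometric points") correct is the delicate point.
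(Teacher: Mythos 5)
Your reduction of (1) to (2) — testing the ``complex'' condition on geometric points via freeness of the lattices and the injection $R\hookrightarrow\prod_{\overline x}C_{\overline x}$ — is fine and is exactly what the paper does. The problem is that for part (2) you have only named the difficulty, not resolved it. Knowing that the two maximal-minor ideals are $\varphi$-stable and that exactness is detected at closed points of $\Spec\bbW R[\tfrac1\pi]$ does not by itself say anything about geometric points of $\calS$: the closed points of $\Spec\bbW R[\tfrac1\pi]$ (equivalently, the points of $\Spa\bbW(R)[\tfrac1\pi]$ with $R$ discrete) are not parametrized by points of $\Spec R$, and a priori the non-exactness locus $Z$ could live entirely over ``non-algebraic'' points. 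The paper's proof consists precisely of closing this gap: it uses $\varphi(Z)=Z$ to descend $Z$ to a closed subset $Z'$ of $|\Spd\bbW(R)[\tfrac1\pi]/\varphi|\cong|\Spd R\times\on{Div}^1_E|$, invokes properness of the projection to $|\Spd R|$ (\cite[Proposition II.1.21]{FS24}) to get that $f(Z)$ is closed, and then uses the classification of points of the olivine spectrum ($d$-analytic versus discrete, existence of formal specializations) to produce an \emph{algebraic} point $y\in\Spec R$ with $f^{-1}(f(y))=\Spd\bbW(k(y))[\tfrac1\pi]$ a single point over a field, whence all minors vanish on $k(y)^{\bbZ}$ and hence at a closed point of $\Spec R$. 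None of this appears in your sketch.

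Your alternative route — pulling back to a product-of-points v-cover $S_0\to\calS^\diamond$ and checking exactness on geometric points of the relative curve $Y_{S_0}$ fibrewise via Dieudonn\'e--Manin — has an additional problem: comparing exactness of a $\varphi$-equivariant sequence on $Y_{\Spa(C,O_C)}$ (the $\bbA_{\on{inf}}$-side curve) with exactness of the corresponding isocrystal over the discrete field $C$ is exactly the content of the later results \Cref{DM1p-is-well-bheaved-subcategory} and \Cref{exactness-bungmer}, which are themselves deduced from the present proposition; so that comparison cannot be assumed here without circularity. To repair the proof you must supply the $\varphi$-equivariance/properness/olivine-spectrum argument (or an equivalent device) that forces any failure of exactness to be witnessed over a closed point of $\Spec R$.
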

    \begin{proof}
	    The forward implication of part (1) is evident.
	    We reduce the backward implication of part (1) to the statement in part (2) as follows.
	    Assume that for every geometric point of $\calS$, the sequence is exact.
	    By \Cref{formal-properties-IC-DM} we may test exactness v-locally. 
	    Thus, we may assume that $\calS=\Spec R$ is a comb and by \cite[Theorem 6.1]{Ivanov_arc_descent} that all the underlying projective modules are free. 
	    We write $M_1\in \on{M}_{r_2\times r_1}(\bbW(R)[\frac{1}{\pi}])$ and $M_2\in \on{M}_{r_3\times r_2}(\bbW(R)[\frac{1}{\pi}])$ the matrices representing the maps $\calE_1\to \calE_2$ and $\calE_2\to \calE_3$, respectively. 
	    The induced map $\calE_1\to \calE_3$ is the $0$ map if and only if the matrix $M_2\cdot M_1=0$. 
	    This can be tested on geometric points since $R$ is perfect and in particular reduced. 
	    We have shown that $\Sigma$ is a complex whenever each $\Sigma_{\overline{x}}$ is exact, so it suffices to prove (2).
	    To show (2), we keep the same setup as above.
	    With this setup, exactness can be expressed in terms of the rank of $M_1$ and $M_2$ at the different points of $\Spa \bbW(R)[\frac{1}{\pi}]$. 

	    The locus where $M_1$ has rank strictly smaller to $r_1$ is a Zariski closed subset (cut out by the minors of $M_1$) $Z\subseteq \Spa \bbW(R)[\frac{1}{\pi}]$. 
	    Moreover, since the map $\calE_1\to \calE_2$ is $\varphi$-equivariant, we have $\varphi(Z)=Z$. 
	    Indeed, the rank of $M_1$ equals the rank of $\varphi(M_1)$.

	    Suppose $Z\neq \emptyset$ and let $z\in Z$. 
	    Endow $R$ with the discrete topology and consider the projection map $f\colon \Spd \bbW(R)[\frac{1}{\pi}]\to \Spd (R)$.
	    By the classification of points in the olivine spectrum \cite[Defintion 2.1]{Gle22} and \cite[Proposition 2.14]{Gle22} either $f(z)$ is $d$-analytic or it is discrete, see \cite[Definition 2.2]{Gle22}. 
	    In the following, we will argue that if $Z\neq \emptyset$ then there is $z\in Z$ such that $f(z)$ is algebraic in the sense of \cite[Definition 2.2.(1)]{Gle22}.
	    Consider the quotient $g\colon \Spd \bbW(R)[\frac{1}{\pi}]\to \Spd \bbW(R)[\frac{1}{\pi}]/\varphi$. By $\varphi$-invariance, $Z$ has the form $g^{-1}(Z')$ for a closed subset $Z'\subseteq |\Spd \bbW(R)[\frac{1}{\pi}]/\varphi|$. 
	    Recall from \cite[Definition II.1.19]{FS24} the diamond $\on{Div}^1_E$, which parametrizes degree $1$ divisors on the Fargues--Fontaine curve.
	    We have an identification at the level of topological spaces 
	    \[\alpha\colon |\Spd \bbW(R)[\frac{1}{\pi}]/\varphi|\cong| \Spd(R)\times (\on{Div}^1_E)|\] which fits in the following commutative diagram. 
	    \begin{center}
	    \begin{tikzcd}
		    \mid \Spd R\times \Spd E \mid \arrow{r} \arrow{d} \arrow[bend right=75, "\mid f \mid"']{dd}  & \mid (\Spd R\times \Spd E)/\on{Frob}_R\times \on{id}_E \mid  \arrow{dd} \ar{ld}{\alpha}\\
		    \mid \Spd R\times \on{Div}^1_E \mid  \arrow{d}  &  \\
		    \mid \Spd R \mid \arrow{r}{\simeq} & \mid \Spd R/{\on{Frob}_R}\mid\,. 
	    \end{tikzcd}
	    \end{center}

	    This shows that $f(Z)$ is the image of $Z'$ under the projection map $|\Spd R\times \on{Div}^1_E|\to |\Spd R|$.

	    By \cite[Proposition II.1.21]{FS24}, this image is a closed subset of $\Spd R$.
	    Furthermore, since $Z$ is stable under vertical generization, the same is true about $f(Z)$.
	    If $z\in f(Z)$ and it is discrete, then its largest vertical generization is already algebraic.
	    Suppose instead that $f(z)$ is a $d$-analytic point.  
	    Recall that every $d$-analytic point in $\Spd(R,R)$ is formal in the sense of \cite[Definition 2.2.(4)]{Gle22}, and that formal points have a unique formal specialization \cite[Proposition 2.9.(2)]{Gle22}. 
	    Since $f(Z)\subseteq \Spd(R,R)$ is closed, it must contain the formal specialization of $f(z)$. 
	    In particular, we see that $f(Z)$ has a discrete point and by the above an algebraic one as we wanted to show. \\

	    We showed above that $f(Z)$ contains an algebraic point.
	    We fix $x\in f(Z)$ that is algebraic, in particular there is a unique $y\in \Spec(R)$ inducing $x$. 
	    If $k(y)$ is the residue field of $y$ we obtain a map $\Spd k(y)\to \Spd R$, and the only point of $|\Spd R|$ in the image is $x$.
	    In this case $f^{-1}(f(x))=\Spd k(y)\times \Spd E=\Spd \bbW(k(y))[\frac{1}{\pi}]$, which consists of one point.
	    In particular, $\Spd \bbW(k(y))[\frac{1}{\pi}]\cap Z\neq \emptyset$ implies $\Spa \bbW(k(y))[\frac{1}{\pi}]\subseteq Z$.  
	    Since $\bbW(k(y))[\frac{1}{\pi}]$ is a field, this shows that every $r_1$-minor in $M_1$ thought of as an element in $(R^\bbZ)^{r_2\cdot r_1}\supseteq \on{M}_{r_2\times r_1}( \bbW(R)[\frac{1}{\pi}])$ vanishes identically when restricted to $k(y)^\bbZ$. 
	    The same must be true for every point in the Zariski closure of $\Spec(k(y))\subseteq \Spec(R)$.
	    In particular, we have found a closed point $\overline{x}\to \Spec(R)$ for which $\calE_{1,\overline{x}}\to \calE_{2,\overline{x}}$ is not injective. 
	    This contradicts our assumption, so $Z=\emptyset$.
	    A similar argument proves that $\calE_2\to \calE_3$ is surjective and by rank considerations the sequence is also exact in the middle.
    \end{proof}

    We use the $\Diamond$-functor to consider the categories of Dieudonn\'e modules and of isocrystals as analytic objects.

    \begin{definition}
	    If $S=\Spa(R,R^+)$ we let $\DMan(S)\coloneqq(\DM)^{\lozenge_{\on{pre}}}(S)=\DM(\Spec R)$. 
	    This rule defines a functor $\DMan\in \calP(\Perf\!,\CatexOE)$.
    \end{definition}
    \begin{proposition}
	    \label{Passing_DM_to_diamondworld}
	    The following statements hold.
	    \begin{enumerate}
		    \item $\DMan\in \calS(\Perf\!,\CatexOE)$.
		    \item We have a commutative diagram in $\calP(\Perf\!, \CatOE)$
	\begin{center}
	\begin{tikzcd}
		\DMan \ar{r} \ar{d} & \calV_\bbW[\frac{1}{\pi}] \arrow{d}{\Delta} \ar{r} & (\calV_Y)^\An \arrow{d}{\Delta} \ar{d}   \\
		\calV_\bbW\ar{r}{(\on{id},\varphi^*)}  & \calV_\bbW[\frac{1}{\pi}] \ar{r} \times  \calV_\bbW[\frac{1}{\pi}]  & (\calV_Y)^\An\times (\calV_Y)^\An
	\end{tikzcd}
	\end{center}
	with Cartesian squares.
	    \end{enumerate}
    \end{proposition}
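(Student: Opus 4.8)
The plan is to transport the Cartesian description of $\DM$ from \Cref{Cartesian-expression-for-DM-and-IC} along the big-diamond functor, then paste in the comparison with $(\calV_Y)^\An$; the sheaf property (1) will drop out of the resulting outer Cartesian square, so I would prove (2) first and deduce (1).

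First I would record that $(-)^{\Diamond_\pre}\colon \calP(\PSch,\CatexOE)\to\calP(\Perf,\CatexOE)$ is given by precomposition along $\Spa(R,R^+)\mapsto\Spec R$, hence preserves all limits. Applying it to the Cartesian square $\DM\simeq \calV^\sch_\bbW\times_{\calV^\sch_\bbW[\frac{1}{\pi}]\times\calV^\sch_\bbW[\frac{1}{\pi}]}\calV^\sch_\bbW[\frac{1}{\pi}]$ of \Cref{Cartesian-expression-for-DM-and-IC} (with bottom map $(\on{id},\varphi^*)$ and right map $\Delta$) and invoking the identifications $(\calV^\sch_\bbW)^{\Diamond_\pre}\simeq\calV_\bbW$ (\Cref{calV-is-a-vsheaf}) and $(\calV^\sch_\bbW[\frac{1}{\pi}])^{\Diamond_\pre}\simeq\calV_\bbW[\frac{1}{\pi}]$ (\Cref{Big-diamond-properties}), one obtains that the left square of the asserted diagram is Cartesian in $\calP(\Perf,\CatexOE)$; concretely $\DMan\simeq \calV_\bbW\times_{\calV_\bbW[\frac{1}{\pi}]\times\calV_\bbW[\frac{1}{\pi}]}\calV_\bbW[\frac{1}{\pi}]$, where the exact structure on $\DMan$ is the one pulled back along the forgetful functor $\DMan\to\calV_\bbW$, matching \Cref{def:DM_isocrystals}.

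Next I would prove the right square is Cartesian. The key input is that $\calV_\bbW[\frac{1}{\pi}]\to(\calV_Y)^\An$ is fully faithful: it factors as the fully faithful embedding $\calV_\bbW[\frac{1}{\pi}]\hookrightarrow(\calV^\sch_Y)^{\Diamond_\pre}$ -- which over $S=\Spa(R,R^+)$ is the functor $\on{Proj}(\bbW R)\otimes_{O_E}E\to\on{Proj}(\bbW R[\frac{1}{\pi}])$, fully faithful because $\Hom$-modules between finite projective $\bbW R$-modules are unchanged upon inverting $\pi$ -- followed by the unit of the separated presheaf $\calV_\bbW[\frac{1}{\pi}]$ into its v-sheafification $(\calV_Y)^\An$, which is fully faithful by \Cref{separated-approx-VW} (and the general description of separated presheaves, cf.\ \Cref{Describing-separated-presheaves}). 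Since pullbacks of presheaves are computed objectwise and the diagonal of any category into its square is fully faithful, the objectwise pullback $(\calV_Y)^\An\times_{(\calV_Y)^\An\times(\calV_Y)^\An}(\calV_\bbW[\frac{1}{\pi}]\times\calV_\bbW[\frac{1}{\pi}])$ is the category of pairs $(y_1,y_2)$ of objects of $\calV_\bbW[\frac{1}{\pi}](S)$ together with an isomorphism $y_1\simeq y_2$ in $(\calV_Y)^\An(S)$; full faithfulness turns these into isomorphisms in $\calV_\bbW[\frac{1}{\pi}](S)$, so the pullback is equivalent to $\calV_\bbW[\frac{1}{\pi}](S)$ via the diagonal. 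This establishes the right square, completing (2), and by the pasting law for pullbacks the outer rectangle $\DMan\simeq \calV_\bbW\times_{(\calV_Y)^\An\times(\calV_Y)^\An}(\calV_Y)^\An$ is Cartesian as well.

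Finally, for (1): $\calV_\bbW$ is a v-sheaf by \cite[Corollary 17.1.9]{SW20} and $(\calV_Y)^\An$ is a v-sheaf by construction (cf.\ \Cref{Big-diamond-properties}), and v-sheaves valued in a presentable category are closed under finite limits in the ambient presheaf category (\Cref{separated-presheaves-finitelimits}, as already used in \Cref{formal-properties-IC-DM}). Hence the outer Cartesian square exhibits $\DMan$ as a finite limit of v-sheaves, so $\DMan\in\calS(\Perf,\CatexOE)$. The only step that is more than bookkeeping -- and hence the main obstacle -- is the full faithfulness of $\calV_\bbW[\frac{1}{\pi}]\to(\calV_Y)^\An$ together with checking that the exact structure transported to $\DMan$ from \Cref{Cartesian-expression-for-DM-and-IC} agrees with the one coming from the outer square (immediate, since both are pulled back along $\DMan\to\calV_\bbW$); everything else is formal manipulation of limits and of the big-diamond and sheafification functors.
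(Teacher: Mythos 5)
Your proof is correct and follows essentially the same route as the paper, whose own proof is just the remark that the statement ``formally follows from'' \Cref{Cartesian-expression-for-DM-and-IC}, \Cref{Big-diamond-properties}, \Cref{separatedness-of-isogeny} and \Cref{calV-is-a-vsheaf} --- precisely the ingredients you deploy (applying the limit-preserving functor $(-)^{\Diamond_\pre}$ to the schematic Cartesian square for the left square, and using separatedness of $\calV_\bbW[\frac{1}{\pi}]$, i.e.\ full faithfulness into its sheafification $(\calV_Y)^\An$, for the right square). Your additional remarks on the exact structures and on deducing (1) from the outer Cartesian square supply exactly the bookkeeping the paper leaves implicit.
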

    \begin{proof}
	    This formally follows from \Cref{Cartesian-expression-for-DM-and-IC}, \Cref{Big-diamond-properties}, \Cref{separatedness-of-isogeny} and \Cref{calV-is-a-vsheaf}.
	     %Let us start with the second statement, we first show the squares are Cartesian in $\calS(\Perf\!,\CatOE)$. 
	     %This follows from \Cref{Big-diamond-properties}.
	     %Let us clarify.
	     %It follows directly from the definitions that the left square is Cartesian, since $\Diamond_\pre$ commutes with finite limits. 
	     %That the right hand square is Cartesian follows from the fact that $\calV_\bbW[\frac{1}{\pi}]$ is a separated presheaf whose sheafification is $(\calV_Y)^\An$. 
	     %By definition, a sequence is exact in $\DMan(S)$ if and only if it is exact in $\calV_\bbW(S)$.
	     %To show that the outer diagram is Cartesian even in $\calS(\Perf\!,\CatexOE)$ it suffices to observe that a sequence is exact on $(\calV_Y)^\An$ if and only if its image on $(\calV_Y)^\An\times  (\calV_Y)^\An$ is exact. 
 %
	     %The first statement follows from the second since $\calV_\bbW$ is already a v-sheaf by \Cref{calV-is-a-vsheaf}.
    \end{proof}

    \begin{definition}
	    Let $S=\Spa(R,R^+)\in \Perf$.
	    We let $\DMan[\frac{1}{\pi}](S)\coloneqq\DM[\frac{1}{\pi}](\Spec R)$ and $\IC^{\lozenge_{\on{pre}}}(S)=\IC(\Spec R)$.
	    These rules organize into objects $\DMan[\frac{1}{\pi}]\in \calP(\Perf\!,\CatE)$ and $\IC^{\Diamond_\pre}\in \calP(\Perf\!,\CatexE)$.
 We let $\IC^\lozenge$ be the $v$-sheafification of $\IC^{\lozenge_{\on{pre}}}$.
	    We call objects of $\IC^\lozenge(S)$ \textit{analytic isocrystals} over $S$.
    \end{definition}

    \begin{remark}
	    It follows from \Cref{general-functoriality} and \Cref{isomorphism-isocrystals-bung} that $(\Bun_{\on{FF}})^\An\simeq \IC^\Diamond$.
    \end{remark}

    \begin{proposition}
	    \label{DM1p-is-well-bheaved-subcategory}
	    Let $S=\Spa (R,R^+) \in \Perf$, the following hold:
	    \begin{enumerate}
		    \item $\DMan[\frac{1}{\pi}]$ is a v-separated presheaf in $\calP(\Perf\!,\CatE)$.
		    \item The v-sheafification of $\DMan[\frac{1}{\pi}]$ is equivalent to $\IC^{\lozenge}$ in $\calS(\Perf\!,\CatE)$.
		    \item A sequence in $\DMan[\frac{1}{\pi}]$ is exact in $\IC^\Diamond$ if and only if it is exact in $\IC^{\Diamond_\pre}$. 
	    \end{enumerate}
    \end{proposition}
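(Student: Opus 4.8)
The plan is to get (1) and (2) formally out of the known structure of $\calV_\bbW[\frac{1}{\pi}]$, and to put the real work into (3). For (1): the operation $(-)^{\Diamond_\pre}$ is precomposition with $(R,R^+)\mapsto\Spec R$, so it preserves limits; applying it to the Cartesian square of \Cref{latticed-isocrystals} exhibits $\DMan[\frac{1}{\pi}]=(\DM[\frac{1}{\pi}])^{\Diamond_\pre}$ as a finite limit, in $\calP(\Perf\!,\CatE)$, of copies of $(\calV^\sch_\bbW[\frac{1}{\pi}])^{\Diamond_\pre}\simeq\calV_\bbW[\frac{1}{\pi}]$ (\Cref{Big-diamond-properties}), the diagram being $\Delta$ against $(\on{id},\varphi^*)$. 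Since $\calV_\bbW[\frac{1}{\pi}]$ is separated (\Cref{separatedness-of-isogeny}) and separated presheaves are stable under finite limits (\Cref{separated-presheaves-finitelimits}), (1) follows. For (2): sheafification is left exact, so it carries this square to a Cartesian square whose corners other than $\DMan[\frac{1}{\pi}]^\sh$ are $\calV_\bbW[\frac{1}{\pi}]^\sh\simeq\calV_Y^\An$ (\Cref{separated-approx-VW}, \Cref{Big-diamond-properties}); performing the same operations on the Cartesian square defining $\IC$ (\Cref{Cartesian-expression-for-DM-and-IC}) — and noting that $\IC^{\lozenge_\pre}=\IC^{\Diamond_\pre}$, so $\IC^\lozenge=(\IC^{\Diamond_\pre})^\sh$, while $(\calV^\sch_Y)^\Diamond\simeq\calV_Y^\An$ (\Cref{Big-diamond-properties}) — presents $\IC^\lozenge$ as the pullback of the same diagram, with the same maps $\Delta$ and $(\on{id},\varphi^*)$. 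Hence $\DMan[\frac{1}{\pi}]^\sh\simeq\IC^\lozenge$.

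For (3) I fix $S=\Spa(R,R^+)\in\Perf$ and a sequence $\Sigma=[\calE_1\to\calE_2\to\calE_3]$ whose terms lie in $\DMan[\frac{1}{\pi}](S)=\DM[\frac{1}{\pi}](\Spec R)\subseteq\IC(\Spec R)=\IC^{\Diamond_\pre}(S)$; by \Cref{def:DM_isocrystals} exactness there is tested on the underlying finite projective $\bbW R[\frac{1}{\pi}]$-modules, so an exact such sequence is automatically split. The implication ``exact in $\IC^{\Diamond_\pre}(S)$ $\Rightarrow$ exact in $\IC^\Diamond(S)$'' is then immediate: a split exact sequence stays split exact after arbitrary pullback, so it is in particular exact $v$-locally, which is by construction exactness in the sheafification $\IC^\Diamond=\IC^\lozenge$.

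For the converse, assume $\Sigma$ is exact in $\IC^\Diamond(S)$. First I would choose a $v$-cover $S'=\Spa(R',R'^+)\to S$ by a product of points (\Cref{remark-product-of-points}); over a product of points $\DMan[\frac{1}{\pi}]$ already coincides with $\IC^\Diamond$ together with its exact structure, because there $\on{Proj}(\bbW R')\otimes_{O_E}E\xrightarrow{\sim}\on{Proj}(\bbW R'[\frac{1}{\pi}])$ by \cite[Theorem 6.1]{Ivanov_arc_descent} (as in the proof of \Cref{VYp-is-separated-presheaf}), so we may assume $\Sigma|_{S'}$ is split exact over $\bbW R'[\frac{1}{\pi}]$. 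Because $S'\to S$ is a $v$-cover, $R\hookrightarrow R'$ by $v$-descent for $\calO$, hence $\bbW R[\frac{1}{\pi}]\hookrightarrow\bbW R'[\frac{1}{\pi}]$; this forces $\Sigma$ to be a complex over $\bbW R[\frac{1}{\pi}]$, and after splitting $S$ into clopen pieces we may assume the $\calE_i$ have constant ranks, necessarily with $\on{rk}(\calE_1)+\on{rk}(\calE_3)=\on{rk}(\calE_2)$. Now \Cref{check-exact-isoc-points} reduces exactness of $\Sigma$ in $\IC(\Spec R)$ to exactness of $\Sigma_{\bar x}$ at geometric points $\bar x=\Spec C\to\Spec R$ with closed image a maximal ideal $\frakm\subseteq R$. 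Here the geometric input is that $R/\frakm$ is a perfectoid field: it is complete and perfect, and the image of a pseudo-uniformizer of $R$ is nonzero and topologically nilpotent. So $\frakm$ produces a genuine geometric rank-$1$ point $\Spa(C_0,O_{C_0})\to S$ with $C_0=\widehat{\overline{R/\frakm}}$, which I pull back along $S'\to S$ to a geometric point $\Spa(C',O_{C'})\to S'$; the resulting $\bar x'=\Spec C'\to\Spec R$ factors through $\Spec R'$, so $\Sigma_{\bar x'}$ is a base change of the split exact $\Sigma|_{S'}$, hence exact. Since exactness of sequences of finite projective modules is preserved by base change and descends along faithfully flat maps, this yields exactness of $\Sigma_{\bar x}$ at every geometric point over $\frakm$, and therefore $\Sigma$ is exact in $\IC^{\Diamond_\pre}(S)$.

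The step I expect to be the main obstacle is precisely this converse in (3): matching the scheme-theoretic geometric points of $\Spec R$ used in \Cref{check-exact-isoc-points} with honest perfectoid geometric points of $S$ — in particular checking that closed points of $\Spec R$ have perfectoid-field residue fields and that the lift along $S'\to S$ is compatible with every structure map — together with the elementary bookkeeping that $\bbW(-)$ and inverting $\pi$ preserve injectivity and faithful flatness. The preliminary reductions, to products of points and to constant ranks, are routine.
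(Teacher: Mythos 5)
Your proof is correct and follows essentially the same route as the paper: (1) and (2) are deduced formally from the separatedness and sheafification results for $\calV_\bbW[\frac{1}{\pi}]$, and for (3) the converse is handled by reducing to a complex, invoking the closed-point criterion of \Cref{check-exact-isoc-points}, and noting that closed points of $\Spec R$ are covered by $\Spec R'$ for any v-cover $S'\to S$. Your extra care with the constant-rank hypothesis of \Cref{check-exact-isoc-points} (via a clopen decomposition of $S$) is a worthwhile detail the paper's proof passes over silently.
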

    \begin{proof}
	    The first two claims follow formally from \Cref{separatedness-of-isogeny} and \Cref{Big-diamond-properties}.
	    %The first claim follows from \Cref{separatedness-of-isogeny} and the definition of $\DMan[\frac{1}{\pi}]$, since separated presheaves are stable under finite limits.
	    %The second claim follows from \Cref{Big-diamond-properties}. 

	    For the third claim let $\Sigma\coloneqq[\calE_1\to \calE_2\to \calE_3]\in \DMan[\frac{1}{\pi}](S)$ be a sequence i.e.\ $\Sigma$ is a sequence in $\DM[\frac{1}{\pi}](\Spec R)$.
	    Since $\IC^{\Diamond_\pre}(S)\to \IC^\Diamond(S)$ is a map in $\CatexE$, it is clear that if $\Sigma$ is exact in $\IC^{\Diamond_\pre}$ then it is also exact in $\IC^\Diamond$.
	    Assume that $\Sigma$ is exact in $\IC^\lozenge(S)$.
	    By definition, this means that $\Sigma$ is exact in $\IC(\Spec R')$ for a v-cover $\Spa(R',R'^+)\to \Spa(R,R^+)$.
	    Since $\Sht_\bbW[\frac{1}{\pi}]\to \IC^\Diamond$ is fully-faithful on $(R,R^+)$-points, we deduce that the sequence is a complex.
	    By the second part of \Cref{check-exact-isoc-points}, we can check exactness on closed points of $\Spec R$. 
	    Since $\Spa(R',R'^+)\to \Spa(R,R^+)$ is a v-cover, every closed point of $\Spec R$ is in the image of $\Spec R'\to \Spec R$.
	    Indeed, closed points of $\Spec R$ support continuous valuations each of which will lift to a continuous valuation of $R'$. 
	    We have shown that if a sequence becomes exact in $\IC(\Spec R')$, then it was already exact in $\IC(\Spec R)$.
    \end{proof}

    \begin{proposition}
	    \label{check-exact-isoc-points_2}
	    Let $S=\Spa(R,R^+)$.
	    Let $\Sigma=[\calE_1\to \calE_2\to \calE_3]$ be a sequence with $\calE_i\in \IC^\lozenge(S)$, and each of constant rank $\on{rk.}(\calE_i)=r_i$ such that $r_1+r_3=r_2$.
	    The sequence is exact if and only if for every geometric point $\overline{x}\to S$ the sequence $\overline{x}^*\Sigma$ with $\overline{x}^*\calE_i \in \IC^\lozenge(\overline{x})$ is exact.
    \end{proposition}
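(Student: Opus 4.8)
The plan is to reduce this to the already--established scheme-theoretic analogue \Cref{check-exact-isoc-points}; the only real work will be to match up geometric points of the adic space $S=\Spa(R,R^+)$ with geometric points of the schemes that appear after a v-cover. The forward implication is immediate, since for any geometric point $\overline{x}\to S$ the restriction $\overline{x}^*\colon \IC^\lozenge(S)\to \IC^\lozenge(\overline{x})$ is a functor of exact categories and hence preserves exactness. For the converse, I would first pass to a v-cover: by \Cref{DM1p-is-well-bheaved-subcategory} the functor $\DMan[\frac{1}{\pi}]\to\IC^\lozenge$ is fully faithful and has v-sheafification $\IC^\lozenge$, so after replacing $S$ by a product of points $S'=\Spa(R',R'^+)$ one may assume $\Sigma$ is represented by a sequence $\Sigma'=[\calE_1'\to\calE_2'\to\calE_3']$ lying in $\DMan[\frac{1}{\pi}](S')=\DM[\frac{1}{\pi}](\Spec R')\subseteq\IC(\Spec R')$, still of constant rank with $r_1+r_3=r_2$. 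Because exactness in $\IC^\lozenge$ is a v-local condition and every geometric point of $S'$ composes to a geometric point of $S$, it then suffices to show: if $\overline{x}'^*\Sigma'$ is exact for all geometric points $\overline{x}'\to S'$, then $\Sigma'$ is exact in $\IC(\Spec R')$. Granting this, \Cref{DM1p-is-well-bheaved-subcategory}(3) (applicable as $\Sigma'$ is a sequence in $\DMan[\frac{1}{\pi}]$) promotes exactness over $\Spec R'$ to exactness in $\IC^\lozenge(S')$, and v-descent of the exact structure gives exactness of $\Sigma$ over $S$.

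For the remaining reduction I would apply \Cref{check-exact-isoc-points} in two steps. First, I would check that $\Sigma'$ is a \emph{complex} over $\Spec R'$. Localizing on $\Spec R'$ we may take the $\calE_i'$ to be free, so the composite $\calE_1'\to\calE_3'$ is a matrix $M_2M_1$ over $\bbW(R')[\frac{1}{\pi}]$; after clearing $\pi$ its entries lie in $\bbW(R')$, and $\Sigma'$ is a complex precisely when all of them vanish. Writing Witt vectors in their natural coordinates --- on which the Witt-vector functor acts entrywise --- the restriction along a map $(R',R'^+)\to(C,C^+)$ applies the ring map $R'\to C$ coordinatewise; by hypothesis each coordinate maps to $0$ in every such $C$, hence in every completed residue field of $S'$, and since $R'$ is uniform (being the function ring of a product of points) this vanishing can be detected on geometric points by \cite[Theorem 5.2.1]{SW20}, so the entries vanish and $\Sigma'$ is a complex. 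Second, once $\Sigma'$ is a complex, \Cref{check-exact-isoc-points} reduces exactness over $\Spec R'$ to exactness at geometric points of $\Spec R'$ with closed image; arguing as in the proof of \Cref{DM1p-is-well-bheaved-subcategory}, every such closed point is the support of a continuous valuation and therefore extends to a geometric point of $S'$, at which $\Sigma'$ is exact by assumption. That completes the argument.

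I expect the main obstacle to be the bookkeeping in the "complex" step: detecting the vanishing of a matrix over $\bbW(R')[\frac{1}{\pi}]$ on the geometric points of the adic space $S'$, rather than on all scheme-theoretic points of $\Spec R'$, relies on uniformity of the function ring of a product of points together with the entrywise behaviour of $\bbW(-)$ on ring maps. Everything else is an assembly of \Cref{check-exact-isoc-points} and \Cref{DM1p-is-well-bheaved-subcategory}; a secondary point to double-check is that closed points of $\Spec R'$ really do lift to continuous valuations, which is the same mechanism already used in the proof of \Cref{DM1p-is-well-bheaved-subcategory}.
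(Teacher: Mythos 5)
Your argument is correct and follows essentially the same route as the paper's proof: reduce v-locally to a sequence in $\DMan[\frac{1}{\pi}]$, verify the complex condition via injectivity of $R'\to\prod_x C_x$ over geometric points, invoke \Cref{check-exact-isoc-points} to test exactness on closed points of $\Spec R'$, and lift those closed points back to geometric points of the adic space via continuous valuations. The only difference is presentational — you spell out the v-descent bookkeeping and the appeal to \Cref{DM1p-is-well-bheaved-subcategory}(3) that the paper leaves implicit.
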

    \begin{proof}
	    The forward implication is evident.
	    Assume that for every geometric point of $S$ the sequence is exact.
	    By the definition of the exact structure on $\IC^\lozenge(S)$ via sheafification, we may test exactness v-locally. 
	    More precisely, it suffices to find a v-cover $f\colon S'\to S$ with $S'=\Spa(R',R'^+)$ such that each $\calE_i\in \DMan[\frac{1}{\pi}](S')$ and such that $g^*\Sigma$ is exact in $\IC^{\Diamond_\pre}(S')$.
	    Without loss of generality, $S=S'$ and $\calE_i\in \IC^{\Diamond_\pre}(S)$.
	    Since the map $R\to \prod_{x\in \Spa(R,R^+)}\! k(x)$ is injective by \Cref{geompts_inj}, we can test on geometric points if the map is a complex.  
	    Once we know it is a complex, by \Cref{check-exact-isoc-points} we can test exactness on closed points of $\Spec R$.
	    As every closed point of $\Spec R$ supports a geometric point of $\Spa(R,R^+)$, the proof is complete.
    \end{proof}

\subsection{Shtukas, isoshtukas and meromorphic vector bundles}

    \begin{definition}
	    \label{defishtukas}
	Let $S=\Spa(R,R^+)\in \Perf$.
	A \textit{crystalline shtuka} over $S$ is a pair $(\calE,\Phi_\calE)$, where $\calE$ is a vector bundle over $\calY_S$ and $\Phi_\calE$ is an isomorphism \[\Phi_\calE\colon (\varphi^*\calE)_{Y_S}\rightarrow  \calE_{Y_S}\]
	that is meromorphic (cf. \cite[Definition 5.3.5]{SW20}) along $\pi=0$. 
	    Morphisms of these data are $\varphi$-equivariant maps.
	    We declare a sequence of morphisms to be exact if the underlying maps of vector bundles form an exact sequence.
	    We denote this category by $\SHT(S)\in \CatexOE$.
	    This induces a functor $\SHT\in \calP(\PSch\!,\CatexOE)$. 
    \end{definition}

     %\begin{definition}
	     %A morphism of crystalline shtukas is a $\varphi$-equivariant map. We say that a sequence of maps $\Sigma\coloneqq[(\calE_1,\varphi_1)\to(\calE_2,\varphi_2)\to (\calE_3,\varphi_3)]$ is \textit{exact} if it is exact at the level of underlying vector bundles over $\calY_S$.
     %\end{definition}
 %
     %\begin{definition}
     %We let $\SHT\colon \Perf\to \CatexOE$ denote the presheaf that attaches to any perfectoid space $S$ the closed exact symmetric monoidal category of crystalline shtukas over $S$.
     %\end{definition}
    \begin{proposition}
	    \label{Berkeley-notes-shtukas-is-v-sheaf}
	    The following statements hold.
	    \begin{enumerate}
		    \item $\SHT\in \calS(\Perf\!,\CatexOE)$.
		    \item We have the following commutative diagram in $\calP(\Perf\!, \CatOE)$ with Cartesian squares:
	\begin{center}
	\begin{tikzcd}
		\SHT \ar{r} \ar{d} & \calV_\calY[\frac{1}{\pi}] \arrow{d}{\Delta} \ar{r} & (\calV_Y)^\mer \arrow{d}{\Delta} \ar{d}   \\
		\calV_\calY\ar{r}{(\on{id},\varphi^*)}  & \calV_\calY[\frac{1}{\pi}] \ar{r} \times  \calV_\calY[\frac{1}{\pi}]  & (\calV_Y)^\mer\times (\calV_Y)^\mer
	\end{tikzcd}
	\end{center}
	    \end{enumerate}
    \end{proposition}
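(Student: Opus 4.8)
The plan is to deduce both claims formally from the explicit description of $\SHT$ in \Cref{defishtukas} together with the properties of $\calV_\calY$, $\calV_\calY[\frac{1}{\pi}]$ and $(\calV_Y)^\mer$ collected in \Cref{VYp-is-separated-presheaf} and \Cref{separated-aprrox-Vy}, in direct analogy with the proof of \Cref{Passing_DM_to_diamondworld}.

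First I would check that the left square is Cartesian by unwinding definitions. Fix $S=\Spa(R,R^+)\in\Perf$. An object of the $2$-fibre product of $\calV_\calY(S)\xrightarrow{(\on{id},\varphi^*)}\calV_\calY[\frac{1}{\pi}](S)\times\calV_\calY[\frac{1}{\pi}](S)$ along the diagonal of $\calV_\calY[\frac{1}{\pi}](S)$ is, up to equivalence, a vector bundle $\calE$ on $\calY_S$ together with an isomorphism $\varphi^*\calE\xrightarrow{\sim}\calE$ in the isogeny category $\calV_\calY[\frac{1}{\pi}](S)=\calV_\calY(S)\otimes_{O_E}E$. By definition of $\otimes_{O_E}E$ on morphism groups, such an isomorphism is precisely a morphism $(\varphi^*\calE)_{Y_S}\to\calE_{Y_S}$ of vector bundles over $Y_S$ which, together with its inverse, extends to a morphism over $\calY_S$ after multiplication by a power of $\pi$ — that is, an isomorphism that is meromorphic along $\pi=0$ in the sense of \cite[Definition 5.3.5]{SW20}. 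So this $2$-fibre product is identified with $\SHT(S)$; on morphisms both sides consist of $\varphi$-equivariant maps, and on both sides a sequence is exact precisely when its underlying sequence of vector bundles on $\calY_S$ is exact, so the identification is an equivalence in $\CatexOE$, functorially in $S$.

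For the right square, the natural functor $j\colon\calV_\calY[\frac{1}{\pi}]\to(\calV_Y)^\mer$ is fully faithful: it factors through the fully faithful map $\calV_\calY[\frac{1}{\pi}]\to(\calV_Y)^{\mer_\pre}$ of \Cref{VYp-is-separated-presheaf}(1), and since $\calV_\calY[\frac{1}{\pi}]$ is separated with v-sheafification $(\calV_Y)^\mer$ by \Cref{VYp-is-separated-presheaf}(2) and \Cref{separated-aprrox-Vy}, the further map into the sheafification remains fully faithful. For any fully faithful exact functor $j\colon\calA\to\calB$, the square with vertical diagonals $\calA\to\calA\times\calA$, $\calB\to\calB\times\calB$ and horizontal maps $j$, $j\times j$ is Cartesian, since an object of the fibre product is a pair of objects of $\calA$ glued to a single object of $\calB$ along two isomorphisms, which by full faithfulness is just a single object of $\calA$. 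Applying this pointwise gives the right square, and pasting the two Cartesian squares yields $\SHT\simeq\calV_\calY\times_{(\calV_Y)^\mer\times(\calV_Y)^\mer}(\calV_Y)^\mer$ in $\calP(\Perf,\CatexOE)$.

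Claim (1) then follows: $\calV_\calY\in\calS(\Perf,\CatexOE)$ by \cite[Proposition 19.5.3]{SW20} (with v-descent of the exact structure as in \Cref{analytic-vector-bundles-exact-geometric}), and $(\calV_Y)^\mer\in\calS(\Perf,\CatexOE)$ by construction; since products of v-sheaves are v-sheaves and the inclusion $\calS(\Perf,\CatexOE)\hookrightarrow\calP(\Perf,\CatexOE)$ preserves limits, the finite limit $\SHT$ is a v-sheaf. The only step that is not purely formal is the translation in the first paragraph between the meromorphicity condition defining $\SHT$ and isomorphisms in the isogeny category $\calV_\calY[\frac{1}{\pi}]$; once that is granted, the proposition is pasting of Cartesian squares together with stability of the v-sheaf condition under finite limits.
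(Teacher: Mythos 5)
Your proposal is correct and follows essentially the same route as the paper: the left square is identified with the definition of a crystalline shtuka by translating "meromorphic along $\pi=0$" into "isomorphism in the isogeny category $\calV_\calY[\frac{1}{\pi}]$", and the right square is obtained formally from the full faithfulness of $\calV_\calY[\frac{1}{\pi}]\to(\calV_Y)^\mer$ coming from \Cref{VYp-is-separated-presheaf} and \Cref{separatedness-of-isogeny}. Your extra step of deducing claim (1) from the outer Cartesian square (whose corners are genuine v-sheaves, unlike $\calV_\calY[\frac{1}{\pi}]$) is the intended argument and is carried out correctly.
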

    \begin{proof}
	    That the left-hand square is Cartesian is a reinterpretation of \Cref{defishtukas}. 
	    That the right-hand square is Cartesian follows formally from \Cref{VYp-is-separated-presheaf} and \Cref{separatedness-of-isogeny}.
	    %The proof is formal and follows very closely the proof of \Cref{Passing_DM_to_diamondworld}. The key statement is , which proves that $\calV_\calY[\frac{1}{\pi}]$ is a separated presheaf whose sheafification is $(\calV_Y)^\mer$. 
    \end{proof}

    \begin{definition}
	    We define $\SHT[\frac{1}{\pi}]\in \calP(\Perf\!,\CatE)$ as the fiber product
	\begin{center}
	\begin{tikzcd}
		\SHT[\frac{1}{\pi}] \ar{r} \ar{d} & \calV_\calY[\frac{1}{\pi}] \ar{d}{\Delta} \\
		\calV_\calY[\frac{1}{\pi}]\ar{r}{(\on{id},\varphi^*)}  & \calV_\calY[\frac{1}{\pi}] \times \calV_\calY[\frac{1}{\pi}]\,. 
	\end{tikzcd}
	\end{center}
	in $\calP(\Perf\!,\CatE)$.
	We call objects of $\SHT[\frac{1}{\pi}](S)$ \textit{isoshtukas} over $S$. 
    \end{definition}

    \begin{definition}
	    \label{meromorphic-vector-bundles}
	    We let $\Bun_\FF^\mer\coloneqq(\Bun_\FF)^\mer$ as an object in $\calS(\Perf\!,\CatexE)$. 
	    For $S\in \Perf$ we call $\Bun_\FF^\mer(S)$ the stack of \textit{meromorphic vector bundles on the relative Fargues--Fontaine curve} over $S$. 
    \end{definition}
    It follows from \Cref{general-functoriality} and \Cref{isomorphism-isocrystals-bung} that we have a correspondence
	\begin{equation}
		\label{fundamental-correspondence}
	\begin{tikzcd}
		\Bun_\FF^\mer \ar{r}{\sigma} \ar{d}{\gamma} & \Bun_\FF\,. \\
		\IC^\Diamond  & 
	\end{tikzcd}
	\end{equation}

	We give names to these maps.

\begin{definition}
	\begin{enumerate}
		\item We call the map $\sigma\colon \VEC\to \Bun_{\on{FF}}$ constructed in (\ref{fundamental-correspondence}) the \textit{special polygon map}.
		\item We call the map $\gamma\colon \VEC\to \IC^\lozenge$ constructed in (\ref{fundamental-correspondence}) the \textit{generic polygon map}.
% 		\item We denote by $\sigma\colon \VEC\to \Bun_{\on{FF}}$ the map constructed in (\ref{fundamental-correspondence}). We call this map the \textit{special polygon map}.
% 		\item We denote by $\gamma\colon \VEC\to \IC^\lozenge$ the map constructed in (\ref{fundamental-correspondence}). We call this map the \textit{generic polygon map}.
	\end{enumerate}
\end{definition}

    We now study basic properties of $\VEC$. Let $S = \Spa(R,R^+) \in \Perf$.

    \begin{proposition}
	    \label{DM1p-is-well-bheaved-subcategory-old}
	    The following statements hold.
	    \begin{enumerate}
		    \item $\SHT[\frac{1}{\pi}]$ is a v-separated presheaf in $\calP(\Perf\!,\CatE)$.
		    \item The v-sheafification of $\SHT[\frac{1}{\pi}]$ is equivalent to $\Bun^\mer_{\on{FF}}$ in $\calS(\Perf\!,\CatE)$.
		    \item A sequence in $\SHT[\frac{1}{\pi}]$ is exact in $\Bun^\mer_{\on{FF}}$ if and only if it is exact in $\Bun^{(\mer_\pre)}_{\on{FF}}$. 
		    \item Exactness in $\Bun^\mer_\FF$ can be verified on geometric points.
	    \end{enumerate}
    \end{proposition}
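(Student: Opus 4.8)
The argument runs parallel to that of \Cref{DM1p-is-well-bheaved-subcategory}, with $\calV_\calY[\frac{1}{\pi}]$ in the role of $\calV_\bbW[\frac{1}{\pi}]$; the only genuinely new input is a ``geometric points'' exactness criterion for meromorphic bundles, whose proof reuses the $\varphi$-equivariance technique of \Cref{check-exact-isoc-points}.

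For (1) and (2): by \Cref{separatedness-of-isogeny} the presheaf $\calV_\calY[\frac{1}{\pi}]$ is separated, and $\SHT[\frac{1}{\pi}]$ is by its definition a finite limit of copies of $\calV_\calY[\frac{1}{\pi}]$, hence separated by \Cref{separated-presheaves-finitelimits}. For (2), the fully-faithful natural transformation $\calV_\calY[\frac{1}{\pi}]\to (\calV_Y)^{(\mer_\pre)}$ of \Cref{VYp-is-separated-presheaf}(1) is $\varphi$-equivariant and commutes with the diagonal, and since $(-)^{(\mer_\pre)}$ is evaluation at $\Spd(R_\disc,R^+_\disc)$ and hence commutes with finite limits of presheaves, it induces a map $\SHT[\frac{1}{\pi}]\to (\Bun_\FF)^{(\mer_\pre)}$. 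On a product of points $S$ this map is an equivalence, because there $\calV_\calY[\frac{1}{\pi}](S)\simeq(\calV_Y)^{(\mer_\pre)}(S)$ by the proof of \Cref{VYp-is-separated-presheaf}(2) (using \cite[Theorem 6.1]{Ivanov_arc_descent}). As products of points form a basis for the v-topology (\Cref{remark-product-of-points}), the sheafifications agree: $\SHT[\frac{1}{\pi}]^{\sh}\simeq((\Bun_\FF)^{(\mer_\pre)})^{\sh}=\Bun^\mer_\FF$.

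The key claim is: \emph{for $S$ a product of points and $\Sigma=[\calE_1\to\calE_2\to\calE_3]$ a sequence in $\SHT[\frac{1}{\pi}](S)$, the sequence $\Sigma$ is exact if and only if $\overline{x}^{\ast}\Sigma$ is exact for every geometric point $\overline{x}\to S$.} The forward implication is clear since pullback is exact. For the converse, one reduces to a connected component so that the $\calE_i$ have constant rank and (since $\Sigma$ is fibrewise a complex) the ranks add up; recalling from \Cref{separated-aprrox-Vy} that exactness of $\Sigma$ means exactness of the underlying sequence of vector bundles on the stably uniform sous-perfectoid adic space of \Cref{Yfexamples}, \Cref{analytic-vector-bundles-exact-geometric} reduces us to checking at geometric points of that space. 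One then runs the proof of \Cref{check-exact-isoc-points} essentially verbatim: the locus $Z$ where the matrix of $\calE_1\to\calE_2$ drops rank is a $\varphi$-stable Zariski-closed subset, so it is pulled back from a closed subset of the quotient by $\varphi^{\Z}$; via the identification of the latter's topological space with $|\Spd R\times \on{Div}^1_E|$ and \cite[Proposition II.1.21, Definition II.1.19]{FS24}, this projects to a closed, vertically generizing subset of $|\Spd R|$, and the classification of points of the olivine spectrum \cite[Definition 2.2]{Gle22} forces $Z$, if nonempty, to meet the fibre over an algebraic point, producing a closed point of $\Spec R$ at which $\calE_1\to\calE_2$ fails to be injective — contradicting exactness there. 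Applying the same to $\calE_2\to\calE_3$ and counting ranks gives exactness. \textbf{This $\varphi$-equivariance reduction is the main obstacle}; the rest is formal.

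Granting the claim, (3) and (4) follow. For (4): given any sequence $\Sigma$ of objects of $\Bun^\mer_\FF(S)$, exactness is v-local by (2) and geometric points lift along v-covers, so we may assume $S$ is a product of points and $\Sigma$ lies in $\SHT[\frac{1}{\pi}](S)$, where the claim applies. For (3): the implication ``exact in $\Bun^{(\mer_\pre)}_\FF$ $\Rightarrow$ exact in $\Bun^\mer_\FF$'' holds because $\Bun^{(\mer_\pre)}_\FF(S)\to\Bun^\mer_\FF(S)$ is a morphism in $\CatexE$. Conversely, if $\Sigma\in\SHT[\frac{1}{\pi}](S)$ is exact in $\Bun^\mer_\FF(S)$, then by (2) exactness in the sheaf is v-local exactness in the presheaf, so $\Sigma$ restricts to an exact sequence in $\SHT[\frac{1}{\pi}](S')$ for some v-cover $S'\to S$; in particular $\Sigma|_{S'}$ is a complex, hence so is $\Sigma$ by the separatedness of (1); the claim (applied after passing v-locally to a product of points) then shows that exactness of $\Sigma$ in $\Bun^{(\mer_\pre)}_\FF(S)$ can be tested on geometric points of $S$, and every such point lifts to $S'$ where $\Sigma$ is already exact.
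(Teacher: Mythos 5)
Your treatment of (1) and (2) is correct and matches the paper's (separatedness of $\calV_\calY[\frac{1}{\pi}]$, stability of separatedness under finite limits, and comparison on products of points). The problem is in your ``key claim''. You reduce, via \Cref{analytic-vector-bundles-exact-geometric}, to the degeneracy locus $Z$ inside the \emph{meromorphic} space $Y_T$ with $T=\Spd(R_\disc,R^+_\disc)$, and then invoke the identification $|{-}/\varphi^{\bbZ}|\cong|\Spd R\times \on{Div}^1_E|$ together with the point classification of \cite[Definition 2.2]{Gle22}. Both of these are specific to the \emph{unbounded} space $\Spa\,\bbW(R)[\frac{1}{\pi}]=Y_{\Spd(R_\disc,R_\disc)}$ appearing in \Cref{check-exact-isoc-points}; they do not apply to $Y_T$, which also contains the relative curve $\calY_{(0,1],S}$-type locus over the perfectoid open subsheaf $S\subseteq T$. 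On that locus the $\varphi$-quotient is (a piece of) the Fargues--Fontaine curve, not $|\Spd R\times\on{Div}^1_E|$, and the dichotomy ``$Z\neq\emptyset$ implies $f(Z)$ contains an algebraic point'' fails: $Z$ could sit entirely over analytic points of $\Spa(R,R^+)$. The repair is exactly the decomposition the paper uses: write $|T|$ as the union of the open locus $S$ (where $\varpi$ is topologically nilpotent) and the closure of $U=\Spd(R_\disc,R_\disc)$; over $S$ exactness follows from the hypothesis at geometric points of $S$ via \Cref{analytic-vector-bundles-exact-geometric}, the passage from $\overline{U}$ to $U$ costs nothing because the extra points are vertical specializations, and over $U$ one is literally in the situation of \Cref{check-exact-isoc-points}, whose closed points of $\Spec R$ lift to geometric points of $\Spa(R,R^+)$. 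No rerun of the $\varphi$-equivariance argument is needed.

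There is a second, logical, gap in your deduction of (3): after obtaining exactness of $\Sigma|_{S'}$ for a v-cover $S'\to S$, you invoke the key claim ``after passing v-locally to a product of points''. But exactness in the presheaf $\Bun_\FF^{(\mer_\pre)}$ is precisely \emph{not} a v-local condition --- that is what (3) is asserting --- so you cannot replace $S$ by a product of points at that stage. You either need the geometric-point criterion for $\Bun_\FF^{(\mer_\pre)}(S)$ for \emph{arbitrary} affinoid perfectoid $S$ (which is true, by the same two-locus decomposition, since maximal ideals of $R$ always support continuous valuations), or you argue as the paper does: check exactness directly on the two loci of $T$, using that $S'\to S$ is surjective on the perfectoid locus and that $\Spec R'\to\Spec R$ hits all closed points for the unbounded locus.
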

    \begin{proof}
	    The first two claims follow formally from \Cref{separatedness-of-isogeny} and \Cref{VYp-is-separated-presheaf}. 
%	    The first claim follows from \Cref{separatedness-of-isogeny}, since separated presheaves are stable under finite limits.
%	    The second claim follows from \Cref{VYp-is-separated-presheaf} and the fact that $(-)^\mer$ commutes with finite limits. 

	    Let $S=\Spa(R,R^+) \in \Perf$.
	    Fix $\Sigma\coloneqq[\calE_1\to \calE_2\to \calE_3]$ a sequence with $\calE_i\in \SHT[\frac{1}{\pi}](S)$.
	    It is clear that if $\Sigma$ is exact in $(\Bun_\FF)^{(\mer_\pre)}(S)$ then it is also exact in $\Bun_\FF^\mer(S)$.
	    Assume that $\Sigma$ is exact in $\Bun^\mer_\FF(S)$.
	    By definition, this means that $\Sigma$ is exact in $\Bun_\FF(\Spd (R'_\disc,R'^+_\disc))$ for a v-cover $S'\coloneqq\Spa(R',R'^+)\to \Spa(R,R^+)$, and we need to show that $\Sigma$ was already exact in $\Bun_\FF(\Spd (R_\disc,R^+_\disc))$.

	    Since the sheafification map $\Sht_\calY[\frac{1}{\pi}]\to \Bun^\mer_\FF$ is fully-faithful on $(R,R^+)$-points, we deduce that the sequence is a complex.
	    Let $T'$ and $T$ denote $\Spd (R'_\disc,R'^+_\disc)$ and $\Spd (R_\disc,R^+_\disc)$.
	    We can verify exactness of $\Sigma$ on geometric points of $T$.
	    We warn the reader that although the map $S'\to S$ is a v-cover the map $T'\to T$ might no longer be surjective even at the level of topological spaces.
	    Nevertheless, it is surjective on the loci where $\varpi$ is topologically nilpotent for a pseudo-uniformizer $\varpi\in R^+$.
	    Indeed these loci agree with $S'$ and $S$, respectively.
	    So it suffices to prove exactness of $\Sigma$ on the complement of $S$ in $T$.

	    Let $U=\Spd (R_\disc,R_\disc)$, this is the locus in $T$ where $|\varpi|\geq 1$. 
	    The complement of $S$ in $T$, is the locus in which $\varpi$ is not topologically nilpotent. 
	    If $x\in T\setminus S$ then there is a vertical generization $y$ of $x$ for which $|\varpi|_y=1$.
	    Indeed, $x$ is represented by a geometric point $\Spa(C,C^+)\to \Spd(R_{\on{disc}},R_{\on{disc}}^+)$ and $y$ is represented by the induced map $\Spa(C,O_C)\to \Spd(R_{\on{disc}},R_{\on{disc}}^+)$, so we see that $\varpi$ maps to $O_C$.
	    If $\varpi$ lands in the maximal ideal of $O_C$ then $y$ (and consequently $x$) are in the locus in which $\varpi$ is topologically nilpotent. 
	    Otherwise, the value of $|\varpi|_y=1$.
	    This shows that $T\setminus S\subseteq \overline{U}$ (where $\overline{U}$ denotes the closure), and since $S\subseteq T$ is an open subset, we must have $T\setminus S=\overline{U}$.
	    Moreover, as we argued above $\overline{U}\setminus U$ consists of vertical specializations of elements in $U$, and the same holds for $\overline{U} \times \Spd E$ and $U\times \Spd E$. 
	    We can now conclude that $\Sigma$ is exact over $\overline{U}$ if and only if it is exact over $U$.
	    Indeed, for any affinoid perfectoid $(A,A^+)$ the restriction functor 
	    \[\calV({Y_{\Spa(A,A^+)}})\to \calV({{Y_{\Spa(A,A^\circ)}}})\]
	    is an exact equivalence, so exactness can be verified on rank $1$ points and in particular it is insensitive to passing to vertical generizations.

	    By hypothesis, $\Sigma$ is exact when restricted to $\Spd (R',R')$.
	    By \Cref{isomorphism-isocrystals-bung}, we may interpret $\Sigma$ restricted to $U$ as a sequence in $\IC(\Spec R)$ that becomes exact over $\IC(\Spec R')$.
	    By \Cref{check-exact-isoc-points}, we can finish verifying exactness on closed points of $\Spec R$.
		But the map $\Spec R'\to \Spec R$ covers all closed points, since every maximal ideal of $R$ supports a valuation that is continuous for the $\varpi$-adic topology. 
		The kernel of any lift of such a valuation to $R'$ maps to this maximal ideal.
 
		For the final claim, we wish to prove that a sequence $\Sigma\coloneqq[\calE_1\to \calE_2\to \calE_3]$ is exact in $\VEC(S)$ if and only if for every geometric point $\overline{x}\to S$ the sequence $\Sigma_{\overline{x}}$ is exact. 
		By definition, exactness can be verified v-locally. 
		Hence, we may assume that $S=\Spa(R,R^+)$ is a product of points with $R^+=\prod_{i\in I}\!C_i^+$ and that each $\calE_j\in \SHT[\frac{1}{\pi}]$ for $j\in \{1,2,3\}$.

		Since the map $R\to \prod_{i\in I}\! C_i$ is injective, we can deduce that $\Sigma$ is a complex. 
		We can argue as above to show that $\Sigma$ is exact when interpreted as a sequence in $\Bun_\FF(\Spd(R_\disc,R^+_\disc))$.
		Namely, we show that $\Sigma$ is exact on all points of $\Spd(R_\disc,R^+_\disc)$. 
		This is clear on the locus where $\varpi$ is topologically nilpotent by our assumption.
		To verify exactness on $\Spd(R_\disc,R_\disc)$ we interpret this as an object in $\IC(\Spec R)$ and we may check exactness on closed points. 
		For any closed point, the map induced by the residue field $\Spec C\to \Spec R$ can be promoted to a geometric point $\Spa C\to \Spa(R,R^+)$ and the induced sequence in $\IC(\Spec C)$ is induced from the corresponding one in $\SHT[\frac{1}{\pi}](C,O_C)$, which is exact by assumption.
    \end{proof}

	\begin{remark}
		We want to point out that part (2) of \Cref{DM1p-is-well-bheaved-subcategory-old} gives us a moduli-theoretic interpretation of the a priori very abstract v-stack $\VEC$. Indeed, although a sheafification procedure is involved which makes thing not very explicit, at least the presheaf $\SHT[\frac{1}{\pi}]$ has an explicit moduli-theoretic interpretation as ``shtukas up-to-isogeny''.
	\end{remark}

The following statement will be key for our purposes.

\begin{corollary}
	\label{exactness-bungmer}
	A sequence $\Sigma\colon [\calE_1\to \calE_2\to \calE_3]$ in $\VEC(S)$ is exact if and only if its image in $\Bun_{\on{FF}}(S)$ is exact.
\end{corollary}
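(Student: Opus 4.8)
The plan is to treat the two implications separately. The forward one is immediate: $\sigma\colon\VEC\to\Bun_{\on{FF}}$ is a morphism in $\calS(\Perf,\CatexE)$, hence an exact functor, so it carries exact sequences to exact sequences.

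For the converse I would first reduce to a geometric point. By \Cref{DM1p-is-well-bheaved-subcategory-old}(4), a sequence $\Sigma$ in $\VEC(S)$ is exact if and only if its pullback $\overline{x}^*\Sigma$ is exact in $\VEC(\overline{x})$ for every geometric point $\overline{x}\to S$. On the other side, if the image of $\Sigma$ in $\Bun_{\on{FF}}(S)$ is exact then, a short exact sequence of vector bundles being Zariski-locally split, its pullback along $X_{\overline{x}}\to X_S$ stays exact; by naturality of $\sigma$ this pullback is $\sigma(\overline{x}^*\Sigma)$. Thus it suffices to treat $S=\overline{x}=\Spa(C,C^+)$, and after replacing $C^+$ by $O_C$ — which changes neither side, by the exact equivalences between vector bundle categories over $Y_{\Spa(A,A^+)}$ and $Y_{\Spa(A,A^\circ)}$ already used in the proof of \Cref{DM1p-is-well-bheaved-subcategory-old}(4) — I may assume $C^+=O_C$.

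It then remains to prove that $\sigma_{\overline{x}}\colon\VEC(\overline{x})\to\Bun_{\on{FF}}(\overline{x})$ reflects exact sequences. By the Fargues--Fontaine classification, $\Bun_{\on{FF}}(\overline{x})=\Bun(X_C)$ is equivalent, as an exact category, to the category of isocrystals over the residue field $k$ of $\overline{x}$. On the other side, \Cref{DM1p-is-well-bheaved-subcategory-old}(2),(3) together with \Cref{VYp-is-separated-presheaf} identify $\VEC(\overline{x})$ with its exact structure with $\SHT[\frac{1}{\pi}](\overline{x})$, the isogeny category of $\varphi$-modules on $\calY_C$ whose Frobenius is meromorphic at $\infty$; Kedlaya's slope filtration theorem \cite{Kedlaya_slope_revisited} provides a functorial splitting of any such module into standard isoclinic pieces, and thereby identifies $\VEC(\overline{x})$, again as an exact category, with isocrystals over $k$. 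Since $\sigma_{\overline{x}}$ is restriction of a $\varphi$-module along a $\varphi$-equivariant open immersion it preserves Harder--Narasimhan slopes, so under these two identifications it becomes the standard exact equivalence between isocrystals over $k$ and vector bundles on the curve; in particular $\sigma_{\overline{x}}$ is an exact equivalence and reflects exactness, which completes the proof.

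The only genuinely non-formal input is this last geometric-point statement; everything else is bookkeeping with the descriptions already assembled in \Cref{DM1p-is-well-bheaved-subcategory-old}. In the write-up I would emphasize that the meromorphicity of the Frobenius at $\infty$ is exactly what rigidifies the big-curve category — without the $\varphi$-structure the restriction $\calV_Y^\mer\to\calV_Y$ does not reflect exactness — and that the slope conventions of Kedlaya and of Fargues--Fontaine are being matched up compatibly.
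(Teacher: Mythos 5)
Your forward direction and the reduction to a geometric point $S=\Spa(C,O_C)$ are sound and parallel the paper's own proof, which likewise reduces to geometric rank~$1$ points. The gap is in your final step. It is not true that $\Bun_{\on{FF}}(\overline{x})=\Bun(X_C)$ is equivalent, as an exact (or even plain) category, to the category of isocrystals over the residue field $k$: the Fargues--Fontaine classification only gives a bijection on \emph{isomorphism classes}. The category $\Bun(X_C)$ is far from semisimple; for instance $\on{Hom}_{\Bun_{\on{FF}}}(\calO,\calO(1))=H^0(X_C,\calO(1))$ is an infinite-dimensional Banach--Colmez space and $\on{Ext}^1(\calO(1),\calO)\neq 0$. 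By contrast $\VEC(\overline{x})$ really is semisimple (it is identified with $\Fil^\sigma_{\on{ss}}(\overline{x})$, equivalently with isocrystals over $k$, so e.g. $\on{Hom}_{\VEC}(\calO,\calO(1))=0$ since a map must respect the filtrations). Hence $\sigma_{\overline{x}}$ is faithful and essentially surjective but \emph{not full}, so it is not an equivalence, and your argument that it reflects exactness because it is an equivalence collapses at exactly the point you flagged as the "only genuinely non-formal input".

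The geometric-point statement therefore still needs a real argument. The paper's proof compares the two underlying spaces directly: the locus $\{|[\varpi]|\leq|\pi|\}$ is literally the same in both models, and on $\{|\pi|\leq|[\varpi]|\}$ one compares vector bundles over $\Spec B_{[0,1],S}[\frac{1}{\pi}]$ with vector bundles over $\calY_{(0,1],S}$, using that $B_{[0,1],S}$ is a principal ideal domain, that the map $\calY_{(0,1],S}\to \Spec B_{[0,1],S}[\frac{1}{\pi}]$ hits every maximal ideal, and that $B_{[0,1],S}[\frac{1}{\pi}]\to \on{H}^0(\calY_{(0,1],S},\calO)$ is injective; restriction along such a map reflects exactness. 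Your approach could in principle be repaired by combining semisimplicity of $\VEC(\overline{x})$ with \emph{faithfulness} of $\sigma_{\overline{x}}$ (which itself requires the injectivity just quoted) and a rank count, but that is a different argument from the one you wrote, and the claimed equivalence should be removed.
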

\begin{proof}
	Since both statements are v-local and can be verified at the level of geometric points, we may assume $S=\Spa (C,C^+)$.
	We observe that if $T\subseteq S$ is the rank $1$ point then $\VEC(T)\simeq \VEC(S)$ and $\Bun_{\on{FF}}(T)\simeq \Bun_{\on{FF}}(S)$.
	Indeed, every moduli problem involved in the construction of these categories is insensitive to the ring of integral elements since $[\varpi]$ is inverted.
	We assume that $S=\Spa (C,O_C)$ and that $\calE_i\in \SHT[\frac{1}{\pi}](S)$. 
	Fix a pseudo-uniformizer $\varpi\in O_C$. 
	In this case, exactness on $\VEC(S)$ is equivalent to exactness of underlying vector bundles over $\Spa \bbW(O_C)_{\{\pi\cdot [\varpi]\neq 0\}}$, while exactness on $\Bun_\FF(S)$ is equivalent to exactness of underlying vector bundles over $Y_{(0,\infty),S}=\Spa \bbA_{\on{inf}}(O_C)_{\{\pi\cdot [\varpi]\neq 0 \}}$.
	We can analyze the behavior on the loci $\{|[\varpi]|\leq |\pi|\}$ and $\{|\pi|\leq |[\varpi]|\}$.
	On the former locus, the two spaces agree so their categories of vector bundles have the same exact structure. 
	On the latter locus, we are comparing vector bundles over $\Spec B_{[0,1],S}[\frac{1}{\pi}]$ against vector bundles over $\calY_{(0,1],S}$.

	Recall from \cite[Theorem II.0.1, Corollary II.1.12]{FS24} that $B_{[0,1],S}$ is a principal ideal domain, and that the closed ideals give rise to untilts of $C$.
	The claim now follows from the fact that
	the map of locally ringed topological spaces 
	\[f\colon \calY_{(0,1],S}\to \Spec B_{[0,1],S}[\frac{1}{\pi}]\]
	covers every maximal ideal of the target and that $B_{[0,1],S}[\frac{1}{\pi}] \to \on{H}^0(\calY_{(0,1],S},\calO)$ is injective. 
	This implies that $f^*$ reflects exactness which is what we needed to show.
\end{proof}

\begin{proposition}
	\label{Beauville-Laszlo-presheaves}
	\label{all-squares-are-cartesian}
	The following diagrams are Cartesian in $\calS(\Perf\!,\CatexOE)$ and $\calP(\Perf\!,\CatE)$ respectively:
\begin{equation}
\begin{tikzcd}
	\SHT \arrow{r} \arrow{d}  & \Bun^\mer_\FF  \arrow{d}  &
\SHT[\frac{1}{\pi}] \arrow{r} \arrow{d}  & \Bun^\mer_\FF  \arrow{d}  \\
\DMan \arrow{r} & \IC^\Diamond & 
\DMan[\frac{1}{\pi}] \arrow{r} & \IC^\Diamond
\end{tikzcd}
\end{equation}
Here, the horizontal arrows in the right-hand square are the ones induced by sheafification.
\end{proposition}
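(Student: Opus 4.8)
The plan is to deduce both squares formally, by repeated pasting of pullback squares together with the fact that ``limits commute with limits'', from the elementary comparisons $\calV_\calY \simeq \calV_\bbW \times_{(\calV_Y)^\An} (\calV_Y)^\mer$ of \Cref{corollary-add-a-lattice} and its isogeny analogue \Cref{Cartesian-YVp} (plus Beauville--Laszlo, \Cref{Lattice-to-Vyp}). The one non-formal input used throughout is that $v$-sheafification of presheaves valued in $\CatexOE$, $\CatexE$, $\CatE$ is left exact, so that it commutes with the finite limits defining $\Bun_\FF$, $\IC$, $\DM[\tfrac1\pi]$ and hence with the formation of $\Bun_\FF^\mer$, $\IC^\Diamond$; this is exactly the content of \Cref{Appendix-sect}. (One also uses that $\calF \mapsto \calF(X)$ for a fixed small $v$-sheaf $X$ preserves limits, which makes $(-)^{\mer_\pre}$ and $(-)^{\An_\pre}$ limit-preserving.)

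\textbf{The left square.} Pasting the two Cartesian squares of \Cref{Berkeley-notes-shtukas-is-v-sheaf} exhibits $\SHT$ as the pullback of $\calV_\calY \xrightarrow{(\iota,\varphi^*\iota)} (\calV_Y)^\mer \times (\calV_Y)^\mer \xleftarrow{\Delta} (\calV_Y)^\mer$ for the canonical map $\iota\colon \calV_\calY \to (\calV_Y)^\mer$. Since $(-)^\mer$ preserves finite limits and $\Bun_\FF = \calV_Y \times_{\calV_Y \times \calV_Y} \calV_Y$, one has $\Bun_\FF^\mer = (\calV_Y)^\mer \times_{((\calV_Y)^\mer)^{\times 2}} (\calV_Y)^\mer$, and one further pasting gives $\SHT \simeq \calV_\calY \times_{(\calV_Y)^\mer} \Bun_\FF^\mer$. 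Applying the same manipulation to \Cref{Passing_DM_to_diamondworld} (and using $(\Bun_\FF)^\An \simeq \IC^\Diamond$) yields $\DMan \simeq \calV_\bbW \times_{(\calV_Y)^\An} \IC^\Diamond$. Substituting $\calV_\calY \simeq \calV_\bbW \times_{(\calV_Y)^\An} (\calV_Y)^\mer$ from \Cref{corollary-add-a-lattice} then gives
\[
\DMan \times_{\IC^\Diamond} \Bun_\FF^\mer \,\simeq\, \calV_\bbW \times_{(\calV_Y)^\An} \Bun_\FF^\mer \,\simeq\, \calV_\calY \times_{(\calV_Y)^\mer} \Bun_\FF^\mer \,\simeq\, \SHT,
\]
where the first equivalence cancels $\IC^\Diamond$ and the second cancels $(\calV_Y)^\mer$. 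One checks along the way that the twisted legs $(\id,\varphi^*)$, the diagonals, and the maps $\Bun_\FF^\mer \to (\calV_Y)^\mer \to (\calV_Y)^\An$ and $\Bun_\FF^\mer \xrightarrow{\gamma} \IC^\Diamond$ are compatibly identified, which is routine.

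\textbf{The right square.} Now carry out the same computation in $\calP(\Perf,\CatE)$. Here $\SHT[\tfrac1\pi]$ and $\DMan[\tfrac1\pi]$ are pullbacks of the asserted shape by their very definitions, while $\Bun_\FF^\mer = \SHT[\tfrac1\pi]^\sh$ and $\IC^\Diamond = \DMan[\tfrac1\pi]^\sh$ (\Cref{DM1p-is-well-bheaved-subcategory-old}, \Cref{DM1p-is-well-bheaved-subcategory}); by left exactness of sheafification and $(\calV_\calY[\tfrac1\pi])^\sh \simeq (\calV_Y)^\mer$, $(\calV_\bbW[\tfrac1\pi])^\sh \simeq (\calV_Y)^\An$ (\Cref{VYp-is-separated-presheaf}, \Cref{Big-diamond-properties}) they are the corresponding pullbacks over $(\calV_Y)^\mer$, $(\calV_Y)^\An$. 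Since limits commute with limits, $\Bun_\FF^\mer \times_{\IC^\Diamond} \DMan[\tfrac1\pi]$ is the $\varphi$-pullback of the single presheaf $(\calV_Y)^\mer \times_{(\calV_Y)^\An} \calV_\bbW[\tfrac1\pi]$, so the square is Cartesian once we establish the presheaf identity
\[
(\calV_Y)^\mer \times_{(\calV_Y)^\An} \calV_\bbW[\tfrac1\pi] \,\simeq\, \calV_\calY[\tfrac1\pi] \qquad\text{in } \calP(\Perf,\CatE).
\]
\Cref{Cartesian-YVp} gives the analogue with the unsheafified $(\calV_Y)^{\mer_\pre}$, $(\calV_Y)^{\An_\pre}$; the content is that passing to $v$-sheafifications does not change the fibre product over $\calV_\bbW[\tfrac1\pi]$. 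The comparison map is a local equivalence of separated presheaves, and I would show it is an equivalence by a Beauville--Laszlo argument: a section $\calE$ of $(\calV_Y)^\mer$ over $S=\Spa(R,R^+)$ whose image in $(\calV_Y)^\An(S)$ lies in the fully faithful essential image of $\calV_\bbW[\tfrac1\pi](S)$ must be represented by an isogeny class of a genuine vector bundle on $\calY_S$; writing $\calY_S = Y_{[0,1],S}\cup Y_{[1,\infty),S}$, the datum over $Y_{[0,1],S}$ is representable because $\calV$ there satisfies $v$-descent, the datum over $Y_{[1,\infty),S}$ has $\pi$ invertible and the hypothesis on the $\An$-part supplies the remaining gluing datum via $\bbW(R) = \widehat{(B_{[0,1]})}_\pi$ and \Cref{Lattice-to-Vyp}, and \Cref{Cartesian-YVp} identifies the glued object with one of $\calV_\calY[\tfrac1\pi](S)$.

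\textbf{Main obstacle.} I expect essentially all the difficulty to lie in this last presheaf identity (everything else being bookkeeping with pullback squares), together with the appendix-level verification that $v$-sheafification of $\Cat$-valued presheaves is left exact; the Beauville--Laszlo step must be done carefully because the distinction between $\mer$- and $\An$-data is concentrated near $\infty$ and is lattice-theoretic in nature.
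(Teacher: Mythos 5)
Your treatment of the left-hand square is the paper's own argument: present $\SHT$, $\DMan$, $\Bun_\FF^\mer$, $\IC^\Diamond$ as $\varphi$-equalizers of $\calV_\calY$, $\calV_\bbW$, $(\calV_Y)^\mer$, $(\calV_Y)^\An$, commute the limits, and feed in \Cref{corollary-add-a-lattice}; the paper organizes this as a ``square of Cartesian squares'' but the diagram chase is the same. Where you genuinely diverge is the right-hand square. The paper dismisses it with ``the two arguments are identical,'' but, as you correctly observe, the literal analogue of \Cref{corollary-add-a-lattice} that the chase requires is the identity $\calV_\calY[\frac1\pi]\simeq \calV_\bbW[\frac1\pi]\times_{(\calV_Y)^{\An}}(\calV_Y)^{\mer}$ with the \emph{sheafified} corners, whereas \Cref{Cartesian-YVp} only provides the version with $(\calV_Y)^{\mer_\pre}$ and $(\calV_Y)^{(\An_\pre)}$; since a separated presheaf is only fully faithful in its sheafification, this is an essential-surjectivity statement (``a meromorphic bundle whose generic isocrystal admits a $\bbW R$-lattice itself admits a $\calY_S$-lattice'') and does require an argument. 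Your Beauville--Laszlo strategy for it is the right one and does buy something the paper's proof does not spell out.

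One adjustment to that step: as written, ``the datum over $Y_{[0,1],S}$ is representable because $\calV$ there satisfies $v$-descent'' is not quite the right order of operations, since what descent hands you over a cover $S'\to S$ is only an isogeny class of $B_{[0,1],S'}$-modules, and $\on{Proj}B_{[0,1],-}\otimes_{O_E}E$ is merely separated, not a sheaf. You should first use the $\An$-datum: choose an honest $\bbW R$-lattice $M_0$ inside the given object of $\calV_\bbW[\frac1\pi](S)$, glue $M_0|_{S'}$ with the $[0,1]$-part of $\calE|_{S'}$ via \cite[Lemma 5.2.9]{SW20} (as in \Cref{Lattice-to-Vyp}) to produce genuine finite projective $B_{[0,1],S'}$-modules whose descent data are then honest module isomorphisms (by the Cartesianness of the Beauville--Laszlo square on $\on{Hom}$'s), descend these using $v$-descent of $\calV(Y_{[0,1],-})$, and only then invert $\pi$. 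With that reordering your argument closes the gap, and the rest of your right-square computation (commuting the $\varphi$-equalizer past the fibre product) is correct.
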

\begin{proof} 
	The argument is a diagram chase whose key ingredients are \Cref{corollary-add-a-lattice} and \Cref{Cartesian-YVp}.  
	Since the two arguments are identical, we only provide the details for the first diagram. 
	From \Cref{Berkeley-notes-shtukas-is-v-sheaf} and \Cref{Passing_DM_to_diamondworld} we have the following Cartesian diagrams 
\begin{equation}
\label{diag:shtukas_as_pullback_2}
\begin{tikzcd}
	\SHT \arrow{r} \arrow{d}  & (\calV_Y)^\mer \arrow{d}{\Delta}  &
	\DMan \arrow{r} \arrow{d}  & (\calV_Y)^\An \arrow{d}{\Delta} \\
	\calV_\calY \arrow{r} & (\calV_Y)^\mer\times (\calV_Y)^\mer &
\calV_\bbW \arrow{r} & (\calV_Y)^\An\times (\calV_Y)^\An\,.
\end{tikzcd}
\end{equation}
Similarly, we obtain Cartesian diagrams
\begin{equation}\label{diag:isoshtukas_as_pullback}
\begin{tikzcd}
	\VEC \arrow{r} \arrow{d}  & (\calV_Y)^\mer \ar{d} & 
	\IC^\Diamond \arrow{r} \arrow{d}  & (\calV_Y)^\An  \arrow{d} \\
	(\calV_Y)^\mer \arrow{r} & (\calV_Y)^\mer\times (\calV_Y)^\mer &
	(\calV_Y)^\An \arrow{r} & (\calV_Y)^\An\times (\calV_Y)^\An\,.
\end{tikzcd}
\end{equation}
Moreover, these four Cartesian diagrams can be organized in a commutative square of Cartesian diagrams. 
For any fixed $i \in \{left,right\}$ and $j \in \{upper, lower\}$, their $(i,j)$th corners form a commutative diagram, which we denote $C_{i,j}$. 
For example, $C_{left,upper}$ is the diagram that we wish to prove is Cartesian. 
Note that $C_{left,lower}$ is Cartesian by \Cref{corollary-add-a-lattice} and that for any $j\in \{upper, lower\}$ the square $C_{right,j}$ is automatically Cartesian, since the horizontal maps in it are isomorphisms. 
From this and the fact that taking limits commutes with each other, it formally follows that $C_{upper,left}$ is also Cartesian. \qedhere
\end{proof}

\section{Semi-stable filtrations} \label{sec:semistable_filtrations}

As we have justified in \Cref{DM1p-is-well-bheaved-subcategory-old}  (resp.\ \Cref{DM1p-is-well-bheaved-subcategory}), given $S\in \Perf$, the category $\SHT[\frac{1}{\pi}](S)$ (resp.\ $\DMan[\frac{1}{\pi}](S)$) is a full subcategory of $\VEC(S)$ (resp.\ $\IC^\Diamond(S)$) and it unambiguously inherits an exact structure.
From this point on we will treat $\SHT[\frac{1}{\pi}]$ and $\DMan[\frac{1}{\pi}]$ as objects in $\calP(\Perf\!,\CatexE)$. 
For fixed $S\in \Perf$, we may think of $\DMan[\frac{1}{\pi}](S)$ as the full subcategory of those analytic isocrystals over $S$ that admit a lattice. Similarly, we think of $\SHT[\frac{1}{\pi}](S)$ as the full subcategory of those meromorphic vector bundles over $S$ that admit a lattice.

\begin{definition}
	\label{signconvention}
	Fix $S\in \Perf$ with $S=\Spa(R,R^+)$.	
	Given $\lambda\in \bbQ$ with $\lambda=\frac{m}{n}$ and $(m,n)=1$, we let $\calO(\lambda)\in \DMan(S)[\frac{1}{\pi}]$ be given by the pair $(\bbW(R)[\frac{1}{\pi}]^n,M)$, where $M$ is the matrix operator with $M\cdot e_i=e_{i+1}$ for $1\leq i\leq n-1$ and $M\cdot e_{n}=\pi^{-m}e_1$.
	We call $\calO(\lambda)$ the \textit{simple standard analytic isocrystal of slope} $\lambda$. 

	We say that an object in $\calF\in \IC^\Diamond(S)$ is \emph{standard} if it is isomorphic to one of the form
	\[\bigoplus_{\lambda \in \bbQ}\calO(\lambda)^{m_\lambda}\,,\]
	where $m\colon \bbQ\to \bbN$ is a multiplicity function with finite support.
\end{definition}

\begin{remark}
	%Note that simple standard analytic isocrystals admit a lattice. It follows from \Cref{DM1p-is-well-bheaved-subcategory} that the category of standard analytic isocrystals is a full subcategory of the category of analytic isocrystals.   
We warn the reader that our parametrization of standard analytic isocrystals reverses the signs of the parametrization of ``usual'' isocrystals used in most classical conventions.
%We choose this convention so that if $\calO(\lambda)\in \IC(\Spec R)\simeq \Bun_\FF(\Spd R)$ is the \textit{simple standard analytic isocrystal of slope} $\lambda$ and $\overline{x}\to \Spd(R,R)$ is a geometric point, then $\calO(\lambda)_{\overline{x}}\in \Bun_\FF(\overline{x})$ is the unique simple vector bundle of slope $\lambda$.
\end{remark}

For us a \emph{Newton polygon} is a tuple $p=(\lambda_{1}, \lambda_{2},\dots, \lambda_{n_p}) \in \bbQ^{n_p}$, such that $\lambda_1 \geq \lambda_{2} \geq \dots \geq \lambda_{n_p}$, where $n_p \in \bbN_{>0}$.
We denote by $\calN$ the set of all Newton polygons. 
Then $\calN$ is endowed with the partial order $(\lambda_1, \lambda_2,\dots,\lambda_{n_1}) \leq (\mu_1, \mu_2,\dots, \mu_{n_2})$ if and only if $n_1 = n_2$,
\[\sum_{i=1}^{n_1} \lambda_i = \sum_{i=1}^{n_2} \mu_i \]
and for all $j = 1,\dots,n_1$ one has 
\[\sum_{i=1}^{j} \lambda_i \geq \sum_{i=1}^{j} \mu_i\,.\] 
To a standard analytic isocrystal $\bigoplus_{i=0}^n \calO(\lambda_i)^{m_i}$ (with $\lambda_1 > \lambda_2 > \dots > \lambda_n$ and $\lambda_i = \frac{p_i}{q_i}, \mathrm{gcd}(p_i,q_i)=1$), we can assign the Newton polygon 
$$(\underbrace{\lambda_1, \dots, \lambda_1}_{m_1q_1}, \underbrace{\lambda_2, \dots, \lambda_2}_{m_2q_2}, \dots, \underbrace{\lambda_{n},\dots, \lambda_n}_{m_nq_n}).$$
We say a Newton polygon $f$ is \emph{semi-stable} if $\lambda_i$ is constant for all $i$.
We let $\calN^{\on{ss}}\subseteq \calN$ denote the subset of semi-stable polygons, these are the minimal elements in $\calN$.

We wish to use $\calN$ to stratify $\VEC$. Before we do this, we make the following sanity check.
It says that over geometric points analytic isocrystals and meromorphic vector bundles admit a lattice. 
Alternatively, it says that sheafification does not change the naive value on geometric points. 

    \begin{lemma}
	    \label{lemma-sanity-check}
	    Let $S=\Spa(C,C^+)$ be a geometric point. We have the following equivalences in $\CatexE$:
	    \begin{enumerate}
		    \item $\IC^\Diamond(S)\simeq \Sht_\bbW[\frac{1}{\pi}](S)\simeq \IC(\Spec \overline{\bbF}_q)$.
		    \item $\VEC(S)\simeq \SHT[\frac{1}{\pi}](S)$.
	    \end{enumerate}
    \end{lemma}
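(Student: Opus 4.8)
The plan is to reduce both statements to known facts about shtukas, isocrystals and vector bundles over a geometric point, using the Cartesian descriptions established earlier. Since $S = \Spa(C,C^+)$ is a geometric point, the first observation (already used repeatedly, e.g.\ in the proof of \Cref{exactness-bungmer}) is that passing to the rank $1$ point $\Spa(C,O_C)$ changes nothing, because every space $Y_T$, $\calY_T$, $Y_{T^\diamond}$ occurring in the constructions only sees the locus where $[\varpi]$ is inverted, hence is insensitive to the choice of $C^+$. So I may assume $C^+ = O_C$ throughout.

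For part (1): by \Cref{Passing_DM_to_diamondworld}, $\DMan[\frac{1}{\pi}] = \Sht_\bbW[\frac{1}{\pi}]$ fits in a Cartesian square with $\calV_\bbW$, $\calV_\bbW[\frac{1}{\pi}]$ and $(\calV_Y)^\An$, and by \Cref{DM1p-is-well-bheaved-subcategory} the v-sheafification of $\DMan[\frac{1}{\pi}]$ is $\IC^\lozenge$. So the content is that $\DMan[\frac{1}{\pi}](S) \to \IC^\lozenge(S)$ is an equivalence, i.e.\ that every analytic isocrystal over $S$ admits a lattice, and that this category is $\IC(\Spec \overline{\bbF}_q)$. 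For the latter: $\IC(\Spec \overline{\bbF}_q)$ by definition is the category of vector bundles on $Y_{\Spec \overline{\bbF}_q^\diamond} = \Spa(\breve E)$ with $\varphi$-structure, which is the classical Dieudonné--Manin category of isocrystals over $\overline{\bbF}_q$; and $\DM[\frac{1}{\pi}](\Spec \overline{\bbF}_q) \simeq \IC(\Spec \overline{\bbF}_q)$ since over a perfect field every isocrystal admits a lattice. To connect $\DMan[\frac{1}{\pi}](S)$ with $\DM[\frac{1}{\pi}](\Spec \overline{\bbF}_q)$: by definition $\DMan[\frac{1}{\pi}](S) = \DM[\frac{1}{\pi}](\Spec C)$, and since $C$ is an algebraically closed (perfect) field, $\bbW(C)$ and $\bbW(C)[\frac 1\pi]$ behave like the Witt vectors of $\overline{\bbF}_q$ from the point of view of $\varphi$-modules — more precisely I would invoke the fact (Dieudonné--Manin / Kottwitz, or the argument in \cite[Theorem 3.4]{Anschuetz_absFF}) that the slope decomposition over $\bbW(C)[\frac 1\pi]$ produces a standard decomposition $\bigoplus_\lambda \calO(\lambda)^{m_\lambda}$, all of whose summands manifestly admit lattices. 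This simultaneously gives the equivalence with $\IC(\Spec \overline{\bbF}_q)$ and the fact that the natural functor $\DMan[\frac{1}{\pi}](S) \to \IC^\lozenge(S)$ is essentially surjective; full faithfulness is \Cref{DM1p-is-well-bheaved-subcategory}(1) (it is a v-separated presheaf, hence injects into its sheafification).

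For part (2): by \Cref{DM1p-is-well-bheaved-subcategory-old}, $\SHT[\frac{1}{\pi}]$ is v-separated with sheafification $\VEC = \Bun_\FF^\mer$, so again $\SHT[\frac{1}{\pi}](S) \hookrightarrow \VEC(S)$ is fully faithful and the only point is essential surjectivity over the geometric point $S$. By \Cref{Berkeley-notes-shtukas-is-v-sheaf} and \Cref{all-squares-are-cartesian} there is a Cartesian square relating $\SHT[\frac 1\pi]$, $\DMan[\frac 1\pi]$, $\VEC$ and $\IC^\lozenge$; combining with part (1) it suffices to show that any object of $\VEC(S)$, after applying $\gamma$ to land in $\IC^\lozenge(S) = \DMan[\frac 1\pi](S)$, can be upgraded to an isoshtuka, i.e.\ its associated analytic isocrystal (which we now know is standard, $\bigoplus_\lambda \calO(\lambda)^{m_\lambda}$) admits a $\varphi$-stable lattice already on $\calY_S$ rather than only on $Y_{S,\mathrm{disc}}$. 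But $\calO(\lambda)$ as an object of $\DMan(S)$ (before inverting $\pi$) is defined over $\calY_S$ by the explicit matrix in \Cref{signconvention}, so each standard object lifts canonically; chasing through the Cartesian square of \Cref{all-squares-are-cartesian} then promotes the $\VEC(S)$-object to an object of $\SHT[\frac 1\pi](S)$ mapping to it.

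\textbf{Main obstacle.} The delicate point is the classification input: one must be sure that \emph{every} object of $\IC^\lozenge(S)$ over a geometric point $S=\Spa(C,C^+)$ — a priori defined only after a v-cover — is actually standard, i.e.\ defined over $\bbW(C)[\frac 1\pi]$ and isomorphic to $\bigoplus_\lambda \calO(\lambda)^{m_\lambda}$. This is really the statement that the slope filtration of an isocrystal over the (non-discretely-valued!) ring $\bbW(C)[\frac 1\pi]$ splits, which is where \Cref{check-exact-isoc-points} and the Dieudonné--Manin type argument of \cite[Theorem 3.4]{Anschuetz_absFF} (applied with the perfectoid field $C$ in place of $\overline{\bbF}_q$) do the work; everything else is formal bookkeeping with the Cartesian diagrams.
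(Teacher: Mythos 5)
Your overall architecture matches the paper's: part (2) is deduced from part (1) via the Cartesian square of \Cref{all-squares-are-cartesian} (though note that square directly gives $\SHT[\frac{1}{\pi}](S)\simeq \VEC(S)\times_{\IC^\Diamond(S)}\DMan[\frac{1}{\pi}](S)$, so once (1) is known there is nothing to ``upgrade'' — no lattice over $\calY_S$ is needed, since the statement concerns isoshtukas, not crystalline shtukas), full faithfulness of $\DMan[\frac{1}{\pi}](S)\to\IC^\Diamond(S)$ comes from v-separatedness, and the identification $\Sht_\bbW[\frac{1}{\pi}](S)\simeq\IC(\Spec\overline{\bbF}_q)$ is Dieudonn\'e--Manin over the complete discretely valued field $\bbW(C)[\frac{1}{\pi}]$.

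However, there is a genuine gap in your essential surjectivity argument for part (1), and you have misdiagnosed it in your ``main obstacle'' paragraph. An object $\calE\in\IC^\Diamond(S)$ is by construction a descent datum: an object of $\DMan[\frac{1}{\pi}](S')$ for some v-cover $S'\to S$ (which one may take to be a geometric point $\Spa(C',C'^+)$) together with a gluing isomorphism over $S'\times_S S'$ satisfying the cocycle condition. Dieudonn\'e--Manin tells you that $f^*\calE\cong\bigoplus_\lambda\calO(\lambda)^{m_\lambda}$ over $S'$, but this does \emph{not} yet show that $\calE$ itself lies in the essential image of $\DMan[\frac{1}{\pi}](S)$: you must still show that the descent datum — an automorphism of $\bigoplus_\lambda\calO(\lambda)^{m_\lambda}$ over $\Spec(C'\hat{\otimes}_C C')$ — is effective, and in fact trivial. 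The paper's proof does exactly this: $\Spec(C'\hat{\otimes}_C C')$ is connected (\cite[Lemma 14.6]{Sch17}), so the automorphism is a constant element of $\prod_\lambda\on{Aut}(\calO(\lambda)^{m_\lambda})$, and the cocycle condition over the triple product forces it to be the identity; hence $\calE$ is already standard over $S$. The tools you point to instead (the splitting of the slope filtration over $\bbW(C)[\frac{1}{\pi}]$ and \Cref{check-exact-isoc-points}) concern objects already defined over $\bbW(C)[\frac{1}{\pi}]$ and do not address the descent step, so your proof as written does not close.
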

    \begin{proof}
	    The second claim follows from the first one and \Cref{all-squares-are-cartesian}. 
	    By inspection, $\Sht_\bbW[\frac{1}{\pi}](S)\simeq \IC(\Spec \bar{\bbF}_q)$ and these categories sit fully-faithfully inside $\IC^\Diamond(S)$.
	    It suffices to show that $\Sht_\bbW[\frac{1}{\pi}](S)\subseteq \IC^\Diamond(S)$ is also essentially surjective. 
	    Fix $\calE\in \IC^\Diamond(S)$. 
		Then there is a v-cover $f\colon S'\to S$ such that $f^*\calE\in \Sht_\bbW[\frac{1}{\pi}](S')$.
	    We may assume that $S'=\Spa(C',C'^+)$ is a geometric point.
	    In this case, $f^*\calE$ is of the form $\bigoplus_{\lambda \in \bbQ}\calO(\lambda)^{m_\lambda}$ by the Dieudonné--Manin classification. 
	    Moreover, $\calE\in \on{Desc.}(\Sht_\bbW[\frac{1}{\pi}],S^{\prime}\!/S)$.
	    The descent datum can be recorded by an automorphism of $\bigoplus_{\lambda \in \bbQ}\calO(\lambda)^{m_\lambda}$ over $\Spec (C'\hat{\otimes}_C C')$.
	    Now, this is a connected affine scheme by \cite[Lemma 14.6]{Sch17}.
	    The descent datum is necessarily given by a constant function in $\prod_{\lambda \in \bbQ}\!\on{Aut}(\calO(\lambda)^{m_\lambda})$ and since it has to satisfy the cocycle condition upon pullback to $\Spec (C'\hat{\otimes}_C C'\hat{\otimes}_C C')$, this function is necessarily the identity. 
	    Consequently, $\calE$ is isomorphic to $\bigoplus_{\lambda \in \bbQ}\!\calO(\lambda)^{m_\lambda}$ already over $S$.
    \end{proof}

If $S$ is a geometric point, then isomorphism classes of objects in $\Bun_{\on{FF}}(S)$ and $\IC^\lozenge(S)$ are both in natural bijection with $\calN$. 
Indeed, for analytic isocrystals this is \Cref{lemma-sanity-check}, and for vector bundles on the Fargues--Fontaine curve this is proven in \cite{fargues_g_torseurs_en_theorie_de_hodge_p_adique} and \cite[Theorem 3.11]{Ans19}.
In other words, we have canonical bijections
\[\nu\colon \IC^\Diamond(S)\xrightarrow{\simeq} \calN \xleftarrow{\simeq} \Bun_\FF(S)\cocolon \nu\,.\]

\begin{definition}
	Given $S\in \Perf$ and $\calE \in \VEC(S)$ we define two functions $\gamma_\calE,\sigma_\calE\colon |S|\to \calN$ which we call the \textit{generic polygon} and \textit{special polygon}, respectively. 
	For $x\in|S|$ we choose a geometric point $\overline{x}\to S$ over $x$ and we let $\gamma_\calE(x)\coloneqq\nu(\gamma(\calE_{\overline{x}}))$. We define $\sigma_\calE$ similarly. 
\end{definition}
\begin{remark}
	Using a different language, Kedlaya proves that for any $\calE\in \VEC$ we have $\gamma_\calE\geq \sigma_\calE$, see \cite[Prop. 5.5.1]{Kedlaya_slope_revisited}. 
	This a key step in Kedlaya--Liu's proof of the semicontinuity theorem  \cite[Theorem 7.4.5]{kedlaya_liu_relative_p_adic_hodge_theory_foundations}.
\end{remark}

\begin{definition}
	Let $\calE\in \VEC(S)$ with constant rank and image $\calF\in \IC^\lozenge(S)$ under the map $\gamma\colon \VEC(S)\to \IC^\lozenge(S)$. 
	\begin{enumerate}
		\item We say that $\calF$ is \textit{locally standard} if its Newton polygon is locally constant.
		%\item We say that it is \textit{semi-stable} if it is locally standard and each of its Newton polygons has only one slope. 
		\item We say $\calE$ is \textit{generically locally standard} if $\calF$ is locally standard, equivalently if $\gamma_\calE$ is locally constant.
		%\item We say $\calE$ is \textit{semi-stable} if $\calF$ is semi-stable. 
	\end{enumerate}
	We let $(\VEC)^{\on{loc}}(S)$ and $(\IC^\Diamond)^{\on{loc}}(S)$ denote the full subcategories described above.
\end{definition}

\begin{remark}
	The functors 
	\[(\VEC)^{\on{loc}},\,(\IC^\Diamond)^\loc \in \calP(\Perf\!,\CatexE)\]
	are still v-sheaves since the condition defining them can be verified v-locally.  
	Indeed, for a v-cover $f\colon \Spa(R_1,R_1^+)\to \Spa(R_2,R_2^+)$ and an open and closed decomposition $\Spa(R_1,R_1^+)=\coprod_{\gamma \in \calN} U_\gamma$ we must have $U_\gamma=f^{-1}(f(U_\gamma))$ since the generic Newton polygon is an invariant of the geometric points of $\Spa(R_2,R_2^+)$.  
	Since $|f|$ is a quotient map, $f(U_\gamma)$ is also closed and open in $|\Spa(R_2,R_2^+)|$ so the Newton polygon on $\Spa(R_2,R_2^+)$ is locally constant.
\end{remark}

\begin{definition}
	Let $S=\Spa(R,R^+)$.
	We say that an object $(\calF,\Phi)\in \DMan(S)$ is \textit{anti-effective} if the isomorphism $\Phi^{-1}\colon \calF\to \varphi^*\calF$ over $\Spec\bbW(R)[\frac{1}{\pi}]$ extends to a map $\Psi\colon \calF\to \varphi^*\calF$ defined over $\Spec \bbW(R)$. 
	An object in $\calE\in \SHT(S)$ is \textit{anti-effective} if its image in $\DMan(S)$ is anti-effective.
\end{definition}
\begin{proposition}
	\label{Lifting-to-antieffective}
	Let $\calE\in \VEC(S)$ such that the function $\gamma_\calE$ is constant and such that its smallest slope is $0$. 
	Then it lifts v-locally to an anti-effective crystalline shtuka. 
\end{proposition}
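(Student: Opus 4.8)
The plan is to reduce the problem to a choice of lattice on the analytic isocrystal $\gamma(\calE)$, and then to produce that lattice after putting $\gamma(\calE)$ in standard form over a $v$-cover. First I would work $v$-locally and assume $S=\Spa(R,R^+)$ is a product of points. By \Cref{all-squares-are-cartesian}, the Cartesian square $\SHT\simeq\Bun_{\on{FF}}^\mer\times_{\IC^\Diamond}\DMan$ holds after evaluation at $S$, so lifting $\calE\in\VEC(S)$ to a crystalline shtuka amounts to choosing an object $D\in\DMan(S)$ together with an isomorphism $\gamma(\calE)\cong\iota(D)$ in $\IC^\Diamond(S)$, where $\iota\colon\DMan\to\IC^\Diamond$ is the natural functor; moreover the resulting crystalline shtuka is anti-effective exactly when $D$ is. Thus it suffices to produce, after a further $v$-cover, an anti-effective Dieudonn\'e module $D$ over $\Spec R$ with $\iota(D)\cong\gamma(\calE)$.

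Next I would exploit the hypotheses through $\gamma(\calE)\in\IC^\Diamond(S)$. Since $S$ is a product of points, $\Spec R$ is a comb (\Cref{combs_and-products}), hence $\IC^\Diamond(S)\simeq\DMan[\frac{1}{\pi}](S)$ by \Cref{DM1p-is-well-bheaved-subcategory} and \Cref{properties-schematic-V}, so $\gamma(\calE)$ is represented by a $\varphi$-module $(\calF_0,\Phi)$ over $\bbW(R)[\frac{1}{\pi}]$. Because $\gamma_\calE$ is constant, this $\varphi$-module has constant Newton polygon, and because its smallest slope is $0$, all its slopes are $\geq 0$. Applying Kedlaya's relative slope filtration theorem (\cite{Kedlaya_slope_revisited}, \cite{kedlaya_liu_relative_p_adic_hodge_theory_foundations}), the slope filtration exists and its graded pieces are pure; each pure piece of slope $\lambda$ becomes, after twisting by $\calO(-\lambda)$, unit-root, hence trivializable on a $v$-cover (using that products of points are $w$-contractible, \Cref{remark-product-of-points}, \Cref{quasi-pro-etale-admitsection}). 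After such a $v$-cover $S'\to S$ we may therefore assume $\gamma(\calE)\cong\bigoplus_{\lambda}\calO(\lambda)^{m_\lambda}$ in $\DMan[\frac{1}{\pi}](S')$ with all $\lambda\geq 0$.

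It remains to observe that for $\lambda=m/n\geq 0$ the Dieudonn\'e module $(\bbW(R')^{n},M)$ underlying $\calO(\lambda)$ as in \Cref{signconvention} is anti-effective: the inverse of the Frobenius matrix sends $e_{i+1}\mapsto e_i$ and $e_1\mapsto\pi^{m}e_n$, which has entries in $\bbW(R')$ since $m\geq 0$. Hence $D:=\bigoplus_\lambda(\bbW(R')^{n_\lambda},M_\lambda)^{m_\lambda}$ is an anti-effective Dieudonn\'e module over $\Spec R'$ with $\iota(D)\cong\gamma(\calE)|_{S'}$; feeding $(\calE|_{S'},D)$ into the Cartesian square of \Cref{all-squares-are-cartesian} gives the required anti-effective crystalline shtuka lifting $\calE$ over the $v$-cover $S'\to S$.

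The main obstacle is the reduction to standard form in the second paragraph: one must ensure that constancy of the Newton polygon really does allow one to split $\gamma(\calE)$ $v$-locally, i.e.\ that Kedlaya's relative slope filtration applies to $\varphi$-modules over $\bbW(R)[\frac{1}{\pi}]$ for a product of points and that the pure (after twist, unit-root) pieces can be trivialized on a $v$-cover. Alternatively, one can bypass standardization entirely: start from any lattice $\calF_0$ over $\bbW(R)$ in $\gamma(\calE)$, form the saturation $\calF_1:=\sum_{n\geq 0}V^n(\calF_0)$ under the $\varphi^{-1}$-semilinear inverse Frobenius $V:=(\varphi^{-1})^*\circ\Phi^{-1}$, which is manifestly stable under $V$ and hence gives an anti-effective pair $(\calF_1,\Phi)$, and then check that $\calF_1$ is again a lattice. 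The latter reduces to a uniform bound $\calF_1\subseteq\pi^{-M}\calF_0$, which one establishes pointwise (using that $\Phi^{\pm1}$ have globally bounded $\pi$-denominators and that all geometric slopes are $\geq 0$, so that the unit-root part is handled after a finite unramified extension and the positive-slope part is eventually contracted) together with the fact that $\bbW(R)\hookrightarrow\prod_{x}\bbW(C_x)$ detects integrality of matrices over $\bbW(R)[\frac{1}{\pi}]$; the uniformity of $M$ over the family is the crux in this second approach.
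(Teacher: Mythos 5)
Your overall strategy is the same as the paper's: reduce via the Cartesian square of \Cref{Beauville-Laszlo-presheaves} to lifting the analytic isocrystal $\gamma(\calE)$ to an anti-effective Dieudonn\'e module, put $\gamma(\calE)$ into standard form $\bigoplus_\lambda \calO(\lambda)^{m_\lambda}$ after a v-cover, and observe from \Cref{signconvention} that the standard models with $\lambda\geq 0$ are already anti-effective. The one place where you diverge is also the one place where your justification does not hold up as written: to standardize $\gamma(\calE)$ you invoke Kedlaya's and Kedlaya--Liu's relative slope filtration theorems, but that machinery lives over the (relative) Robba rings, i.e.\ on the side of the curve governing the \emph{special} polygon $\sigma_\calE$; it does not directly apply to $\varphi$-modules over $\bbW(R)[\frac{1}{\pi}]$, which is what controls the \emph{generic} polygon $\gamma_\calE$. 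The statement you actually need --- that an isocrystal over a perfect base with constant Newton polygon is v-locally isomorphic to the standard one --- is true, but it is a theorem in the schematic Witt-vector setting (the paper cites \cite[Theorem 2.11]{HamacherKim_22}; see also Rapoport--Richartz), and your sketch via slope filtration plus unit-root trivialization additionally glosses over splitting the filtration and over the fact that twisting by $\calO(-\lambda)$ only produces a unit-root object after first passing to an unramified extension killing the denominator of $\lambda$. With the correct citation substituted, your argument is the paper's argument. Your alternative saturation construction $\calF_1=\sum_{n\geq 0}V^n(\calF_0)$ is a genuinely different and appealing idea, but as you acknowledge, the uniform bound $\calF_1\subseteq \pi^{-M}\calF_0$ across the family is exactly the hard point, and pointwise estimates do not automatically assemble into a uniform one over a product of points, so that route remains incomplete.
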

\begin{proof}
	By \Cref{Beauville-Laszlo-presheaves} it suffices to prove that locally standard analytic isocrystals of smallest slope $0$ lift v-locally to an anti-effective Dieudonn\'e module. 
	Working v-locally, we may assume $\gamma(\calE)\in \DMan[\frac{1}{\pi}](S)$, and since $\gamma(\calE)$ is locally standard, we may by \cite[Theorem 2.11]{HamacherKim_22} even assume $\gamma(\calE)\cong \oplus_{i=1}^n \calO(\lambda_i)^{m_i}$. By assumption, $\lambda_i\geq 0$ for $i$.
	The standard models of $\calO(\lambda_i)$ already define an anti-effective crystalline shtuka by inspection of \Cref{signconvention}. 
\end{proof}

\begin{lemma}
	\label{key-lemma-filtration-extends}	
	Suppose that $S=\Spa(R,R^+)$ is a product of points. 
	Let $(\calE,\Phi)\in \SHT(S)$ be anti-effective, then 
	\[\on{Hom}_{\VEC}(\calO,\calE)=\on{Hom}_{\IC^\lozenge}(\calO,\gamma(\calE))\,.\]
	Moreover, if $f\in \on{Hom}_{\IC^\lozenge}(\calO,\gamma(\calE))$ defines a sub-isocrystal $\calO\subseteq \calE$, then the corresponding lift also defines a sub-bundle $\calO\subseteq \calE$ in $\VEC$.     
\end{lemma}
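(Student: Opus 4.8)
\textit{Plan of proof.}

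\textbf{Step 1: reduction to a comparison of $\varphi$-invariant sections.} I would first replace both Hom-groups by their ``latticed'' models. Since $\calE\in\SHT(S)\subseteq\SHT[\tfrac1\pi](S)$ and, by \Cref{DM1p-is-well-bheaved-subcategory-old} and \Cref{DM1p-is-well-bheaved-subcategory}, the functors $\SHT[\tfrac1\pi]\to\VEC$ and $\DMan[\tfrac1\pi]\to\IC^\lozenge$ are fully faithful, one has $\Hom_{\VEC(S)}(\calO,\calE)=\Hom_{\SHT[\tfrac1\pi](S)}(\calO,\calE)$ and $\Hom_{\IC^\lozenge(S)}(\calO,\gamma(\calE))=\Hom_{\DMan[\tfrac1\pi](S)}(\calO,\gamma(\calE))$. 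Applying $\Hom_{(-)}(\calO,-)$ to the Cartesian square of \Cref{all-squares-are-cartesian} relating $\SHT[\tfrac1\pi]$, $\DMan[\tfrac1\pi]$, $\VEC$ and $\IC^\lozenge$ produces a Cartesian square of $E$-vector spaces in which the left vertical arrow is the base change of the right vertical arrow ($=\gamma$) along the lower horizontal arrow; since the latter is an isomorphism by the identifications above, it suffices to prove that the left vertical arrow, i.e.\ the restriction map
\[\rho\colon\bigl\{s\in \rH^0(\calY_S,\calE_0)[\tfrac1\pi]\ :\ \Phi(\varphi^*s)=s\text{ on }Y_S\bigr\}\longrightarrow\bigl\{t\in M[\tfrac1\pi]\ :\ \Phi(\varphi^*t)=t\text{ on }Y_{\calS^{\diamond}}\bigr\},\]
is bijective, where $\calS=\Spec R$, $\calE_0$ is the vector bundle on $\calY_S$ underlying $\calE$, $M$ its Dieudonn\'e module over $\bbW(R)$, and $\rho$ restricts a section to the formal completion of $\calY_S$ along the divisor $\pi=0$, so that $Y_{\calS^{\diamond}}=\Spa(\bbW(R)[\tfrac1\pi],\bbW(R))$.

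\textbf{Step 2: injectivity of $\rho$.} Here anti-effectivity is not needed. If $\rho(s)=0$ then $s$ vanishes on the formal completion of $\calY_S$ along $\{\pi=0\}$; since $\calE_0$ restricted to the annulus $\calY_{S,[1,\infty)}$ adjacent to that divisor is finite projective over $B_{S,[1,\infty)}$, which is $\pi$-adically separated (the locus $\{\pi=0\}$ is nowhere dense in the reduced adic space $\calY_{S,[1,\infty)}$ and $\pi$ has positive valuation off it), $s$ vanishes on $\calY_{S,[1,\infty)}$. The vanishing locus of $s$ is closed and, as $s$ is $\Phi$-equivariant, $\varphi$-stable; a $\varphi$-stable closed subset of $\calY_S$ containing $\calY_{S,[1,\infty)}$ is all of $\calY_S$, so $s=0$. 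In particular $\rho$ is injective for every perfectoid $S$; combined with v-descent of both sides this means that for surjectivity we may replace $S$ by any v-cover.

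\textbf{Step 3: surjectivity of $\rho$ — the main obstacle.} Passing to a v-cover (still a product of points) and invoking \cite[Theorem 2.11]{HamacherKim_22}, I would arrange $\gamma(\calE)\cong\bigoplus_i\calO(\lambda_i)^{m_i}$, where $\lambda_i\ge 0$ by anti-effectivity. The slope-$0$ part $D_0\cong\calO^{\oplus m_0}$ is a sub-isocrystal and $\gamma(\calE)/D_0$ has strictly positive slopes, so $\Hom_{\IC^\lozenge(S)}(\calO,\gamma(\calE)/D_0)=0$: indeed $R$ is perfect and reduced, hence embeds into the product of the residue fields of its geometric points, $\varphi$ acts on $\bbW(R)$ through an automorphism, and over a geometric point a positive-slope isocrystal has no $\varphi$-invariants because $\varphi$ strictly lowers $\pi$-adic order there. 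Thus $\Hom_{\IC^\lozenge(S)}(\calO,\gamma(\calE))=\Hom_{\IC^\lozenge(S)}(\calO,D_0)$, and likewise the germ at $\{\pi=0\}$ of any $\varphi$-invariant section of $\calE_0$ lands in $D_0$. It then remains to lift the \'etale sub-isocrystal $D_0\hookrightarrow\gamma(\calE)$ to a sub-object $\calD_0\hookrightarrow\calE$ in $\VEC(S)$ with $\gamma(\calD_0)=D_0$; granting this, $\calD_0$ is an \'etale crystalline shtuka over the product of points $S$, hence trivial (Gabber's theorem together with the $w$-contractibility of products of points, as in the proof of \Cref{quotient-by-groups}), so $\calD_0\cong\calO^{\oplus m_0}$, every $\varphi$-invariant section of $\calE_0$ factors through $\calD_0$, and both Hom-groups become $(\bbW(R)[\tfrac1\pi]^{\varphi=1})^{m_0}$ compatibly with $\rho$. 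The existence of the lift $\calD_0$ is a relative, lattice-level avatar of Kedlaya's slope filtration theorem \cite[\S 5.4]{Kedlaya_slope_revisited}: one extracts from $M$ the saturated $\varphi$-stable sublattice spanned by the slope-$0$ part, observes it is an \'etale $\varphi$-module over $\bbW(R)$, descends it along the $\pi$-adic completion $B_{S,[1,\infty)}\to\bbW(R)$ by Beauville--Laszlo gluing \cite[Lemma 5.2.9]{SW20}, and then propagates it over $Y_S$ using $\Phi$, \'etaleness allowing it to extend across $\{\pi=0\}$. This descent-and-propagation step is precisely where anti-effectivity and the product-of-points hypothesis are used, and it is the part I expect to require the most care.

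\textbf{Step 4: a sub-isocrystal lift is a sub-bundle.} Finally, let $f\in\Hom_{\VEC(S)}(\calO,\calE)$ and suppose its image $\tilde f=\gamma(f)$ exhibits $\calO$ as a sub-isocrystal of $\gamma(\calE)$. Whether $f$ is a sub-bundle inclusion, i.e.\ whether $\operatorname{coker}(f)$ is again a meromorphic vector bundle, may be checked on geometric points by \Cref{exactness-bungmer} and \Cref{DM1p-is-well-bheaved-subcategory-old}, so assume $S=\Spa(C,C^+)$. The locus $Z\subseteq\calY_C$ where $f$ fails to be fibrewise injective is Zariski-closed and, $f$ being $\Phi$-equivariant, $\varphi$-stable; if $Z$ met $Y_C$ it would contain a full $\varphi$-orbit closure and hence a point of $\{\pi=0\}$, while if $Z\subseteq\{\pi=0\}$ it would force the germ $\tilde f$ to vanish along $\{\pi=0\}$ — both impossible, since a sub-isocrystal inclusion is fibrewise injective along $\{\pi=0\}$. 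Therefore $Z=\varnothing$, $\operatorname{coker}(f)$ is locally free, and $\calO\subseteq\calE$ is a sub-bundle.
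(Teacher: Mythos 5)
Your Steps 1, 2 and 4 are essentially sound and parallel the paper's strategy (the paper also gets injectivity from an inclusion of rings, and handles the sub-bundle claim at a geometric point via $\varphi$-stability of the degeneracy locus in the PID $B_{[0,1]}^C$). The problem is Step 3, which is where the actual content of the lemma lives, and which you do not prove. After splitting off the slope-zero part $D_0$ of $\gamma(\calE)$, you reduce surjectivity to the existence of a lift of $D_0$ to a sub-object $\calD_0\subseteq\calE$ in $\VEC(S)$. But producing such a lift is essentially equivalent to the statement you are trying to prove — indeed, in the paper this lemma is exactly the tool used (in \Cref{meromorphicbundles-remember-fils} and \Cref{semistable-gives-equivalences}) to manufacture sub-bundles $\calO^k\subseteq\calE$ lifting the slope-zero sub-isocrystal. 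Your sketch of how to get $\calD_0$ (``extract the saturated $\varphi$-stable slope-$0$ sublattice of $M$, descend by Beauville--Laszlo, propagate by $\Phi$, étaleness allowing extension across $\{\pi=0\}$'') does not close the gap: Beauville--Laszlo gluing along $B_{[0,1],S}\to\bbW R$ requires a matching module on the complementary (analytic) locus, and no such module is available until one already knows that the formal $\varphi$-invariant sections converge on an honest annulus. That convergence is the whole point.

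Concretely, the paper proves surjectivity by a direct estimate: writing a class in $\on{Hom}_{\IC^\lozenge}(\calO,\gamma(\calE))$ as a vector $v\in\bbW(R)[\frac1\pi]^n$ with $M\varphi v=v$, where anti-effectivity gives $M^{-1}\in M_{n\times n}(\bbW R)$, one introduces truncated Gauss norms $|\cdot|_k$ on $\bbW R$, derives $|v|_k^{q-1}\le|M^{-1}|_k$ from $\varphi v=M^{-1}v$, and combines this with the decay of the coefficients of $M^{-1}\in M_{n\times n}(B_{[0,r]}^R)$ (via \Cref{maximum-of-a-sequence-drops}) to conclude $v\in(B^R_{[0,\frac{1}{N(q-1)}]})^n$, i.e.\ $v$ converges near $\pi=0$ and then extends to all of $Y_S$ by $\varphi$-equivariance. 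This is the step your proposal flags as ``requiring the most care'' but leaves unproven; without it (or a genuine substitute), the argument is circular. Note also that anti-effectivity enters only through the integrality of $M^{-1}$ in this estimate — your plan uses it only to constrain slopes, which is not where its force is needed.
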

\begin{proof}
	By \Cref{DM1p-is-well-bheaved-subcategory-old}, we may compute $\on{Hom}_{\VEC}(\calO,\calE)$ in $\SHT[\frac{1}{\pi}]$.
	Since $B_{[0,r],S}\subseteq \bbW R$, the map  
	\[\on{Hom}_{\VEC}(\calO,\calE)\to \on{Hom}_{\IC^\lozenge}(\calO,\gamma(\calE))\]
	is injective.
	To prove surjectivity, we fix a basis of $\beta\colon \calO^n\to \calE$ over $\calY_{[0,\frac{q}{N}]}$ for some $N\in \bbN$.
	This induces a basis $\varphi^*\beta\colon \calO^n\to \varphi^*\calE$ over $\calY_{[0,\frac{1}{N}]}$.
	Now let $r=\frac{1}{N}$. 
	Since $(\calE,\Phi)$ is anti-effective, we can think of $(\calE,\Phi)$ through $\beta$ and $\varphi^*\beta$ as a matrix $M\in \on{GL}_n(B^R_{[0,r]})$ such that 
	\[M^{-1}\in \on{GL}_n(B^R_{[0,r]}[\frac{1}{\pi}])\cap M_{n\times n}(\bbW R)\,.\] 
	A map $f\in \on{Hom}_{\IC^\lozenge}(\calO,\gamma(\calE))$ can then be thought of as a vector $v\in \bbW(R)[\frac{1}{\pi}]^n$ satisfying the equation
	\[M\varphi v=v\,.\]
	On the other hand, $v\in \on{Hom}_{\VEC}(\calO,\calE)$ if and only if $v\in B_{[0,s]}[\frac{1}{\pi}]$ for some $s>0$. 
	Indeed, we can use $\varphi$-equivariance to extend this map along $\calY_{(\frac{s}{2},\infty)}$. 
	Replacing $v$ by $\pi^N\cdot v$, we may assume $v\in \bbW(R)^n$. 

	We fix a norm of $|\cdot|\colon R\to \bbR$ inducing the topology of $R$ with $|\varpi|=\frac{1}{q}$ and define a function $|\cdot|_k\colon \bbW R\to \bbR$ by the formula
	\[\sum_{i=0}^\infty [a_i]\pi^i\mapsto \on{sup}_{0\leq i\leq k} |a_i|.\]
	This definition extends to $M_{n\times n}(\bbW R)$ and $(\bbW R)^n$ by taking supremum over the entries.
	By the strong triangle inequality, and because $M^{-1}\in M_{n\times n}(\bbW R)$, we see that for every $k\in \bbN$ the inequality $|M^{-1}\cdot v|_k\leq |M^{-1}|_k\cdot |v|_k$ holds and by inspection $|\varphi v|_k=|v|_k^q$.
	From this we deduce that $|v|^{{q-1}}_k\leq |M^{-1}|_k$. 
	Let $m_{ij}\in B^R_{[0,r]}$ denote the $(i,j)$ entry of $M^{-1}$ and write $m_{ij}=\sum_{l=0}^\infty [m_{ijl}] \pi^l$. 
	The sequences $m_{ijl}$ all satisfy that $\lim_{l\mapsto \infty} |m_{ijl}|\cdot (\frac{1}{q})^{N\cdot l}=0$.
	Now, \Cref{maximum-of-a-sequence-drops} shows that $\lim_{l\mapsto \infty} |M^{-1}|_l\cdot (\frac{1}{q})^{N\cdot l}=0$ and in particular that $\lim_{l\mapsto \infty} |v|_l\cdot (\frac{1}{q})^{N\cdot(q-1)\cdot l}=0$, which implies that $v\in (B^R_{[0,\frac{1}{N\cdot(q-1)}]})^n$ as we needed to show. 

	%If referee asks for more details:
	% Finally, given an element of $\alpha\in \on{Hom}_{\VEC}(\calO,\calE)$ we may interpret it as a $\varphi$-equivariant map over $B^{\on{disc}}_{R,[0,n]}[\frac{1}{\pi}]$. It defines a strict sub-bundle if and only if the dual map $\alpha^\dual\in \on{Hom}_{\VEC}(\calE^\vee,\calO)$ is surjective when interpreted as a map of finite projective $B^{\on{disc}}_{R,[0,n]}[\frac{1}{\pi}]$-modules. We can use again the classification of points in $\Spd(R_{\on{disc}}, R^+_{\on{disc}})$.
	By \Cref{DM1p-is-well-bheaved-subcategory-old}, the last claim can be verified at the level of geometric points. 
	Consider the ideal $I$ in $B^C_{[0,1]}$ generated by the entries of $v$.
	By \cite[Corollary II.1.12]{FS24}, $B^C_{[0,1]}$ is a principal ideal domain, so the zero locus of $I$ consists of finitely many closed points in $\Spec B^C_{[0,1]}$. 
	Moreover, the zero locus is $\varphi$-equivariant so it is at worst the ideal cut out by $\pi$, but then it avoids $\Spec B^C_{[0,1]}[\frac{1}{\pi}]$.  
\end{proof}

    \begin{lemma}
	\label{maximum-of-a-sequence-drops}
	    Let $I$ be a finite set and $\rho$ a number with $0<\rho<1$. 
		For each $i \in I$, let $(b_{i,j})_{j\geq 0}$ be a sequence in $\mathbb{R}_{\geq 0}$ such that $\lim_{j\to \infty}b_{i,j}\cdot \rho^j=0$. 
		For each $j\geq 0$, let $B_j = {\rm max}_{i \in I, j'\leq j}\{b_{i,j'}\}$. 
		Then $\lim_{j\to \infty}B_j\cdot \rho^j=0$.
    \end{lemma}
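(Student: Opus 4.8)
The plan is to reduce immediately to the case of a single sequence and then run a routine $\varepsilon$–argument. Since $I$ is finite and $\rho^j\ge 0$, one has
\[
B_j\,\rho^j=\max_{i\in I}\Bigl(\rho^j\max_{j'\le j}b_{i,j'}\Bigr),
\]
and a maximum over a \emph{finite} family of null sequences is again a null sequence; so it is enough to prove the statement when $I$ is a singleton. Thus I may assume I am given a single sequence $(b_j)_{j\ge 0}$ in $\mathbb{R}_{\ge 0}$ with $b_j\rho^j\to 0$, and must show that $c_j\rho^j\to 0$, where $c_j\coloneqq\max_{j'\le j}b_{j'}$.

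For the single–sequence case I would fix $\varepsilon>0$, pick $N$ with $b_j\rho^j<\varepsilon$ for all $j\ge N$, and set $M\coloneqq\max_{j'\le N}b_{j'}<\infty$. For $j\ge N$ split $c_j=\max\bigl(M,\ \max_{N<j'\le j}b_{j'}\bigr)$. The key point is that for $N<j'\le j$ one has $b_{j'}<\varepsilon\rho^{-j'}\le\varepsilon\rho^{-j}$, because $0<\rho<1$ makes $\rho^{-(\,\cdot\,)}$ nondecreasing; hence $\rho^j\max_{N<j'\le j}b_{j'}\le\varepsilon$. Separately, $M$ is a fixed constant, so $M\rho^j\to 0$ and there is $N'\ge N$ with $M\rho^j<\varepsilon$ for all $j\ge N'$. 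Combining,
\[
c_j\,\rho^j=\max\bigl(M\rho^j,\ \rho^j\!\max_{N<j'\le j}b_{j'}\bigr)\le\varepsilon\qquad(j\ge N'),
\]
and since $\varepsilon$ was arbitrary this gives $c_j\rho^j\to 0$, which together with the first paragraph proves the lemma.

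I do not expect a genuine obstacle; the only care needed is bookkeeping. The two hypotheses are each used exactly once: finiteness of $I$ to pass a limit through a maximum, and $0<\rho<1$ both to make the tail of the running maximum be controlled by the geometric factor $\rho^{\,j-j'}\le 1$ and to make the head — a fixed finite constant $M$ — have contribution $M\rho^j$ vanishing on its own. If one prefers to avoid the explicit reduction to a singleton, one can instead note $B_j=\max_{i\in I}c_{i,j}$ with $c_{i,j}$ the running maxima of the $i$-th sequence, and apply the single–sequence statement termwise before taking the (finite) maximum.
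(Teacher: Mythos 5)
Your proof is correct and follows essentially the same route as the paper's: reduce to a single sequence by finiteness of $I$, split the running maximum at the threshold index $N$ beyond which $b_{j'}\rho^{j'}<\varepsilon$, bound the tail via $b_{j'}\rho^{j}=b_{j'}\rho^{j'}\cdot\rho^{j-j'}\le\varepsilon$, and let the fixed head constant be absorbed by the vanishing geometric factor. The only cosmetic difference is that you take the head constant to be $\max_{j'\le N}b_{j'}$ while the paper takes $\max_{j'<j_{\varepsilon,0}}b_{j'}\rho^{j'}$; both work identically.
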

    \begin{proof}
	    This easily reduces to the case $I=\{1\}$. Fix $\varepsilon>0$. 
		By assumption, there is some $j_{\varepsilon,0}>0$ such that for all $j \geq j_{\varepsilon,0}$, we have $b_j \rho^j < \varepsilon$. 
		Put 
    	\[\lambda = {\rm max}_{j'<j_{\varepsilon,0}} b_{j'}\rho^{j'}\,.\]
    	We now pick a big enough $j_{\varepsilon}$, such that $\rho^{j_{\varepsilon} - j_{\varepsilon,0}} \lambda < \varepsilon$. Then for any $j\geq j_{\varepsilon}$ we have
    	\begin{align*}
    		B_j \rho^j &= {\rm max}_{j'\leq j} \{b_{j'}\rho^j\} \\
    		&= {\rm max}\{ {\rm max}_{j'<j_{\varepsilon,0}}\{b_{j'}\rho^{j'}\rho^{j-j'}\}, {\rm max}_{j_{\varepsilon,0}\leq j' \leq j}\{b_{j'}\rho^{j'}\rho^{j-j'}\} \} < \varepsilon\,.
    	\end{align*}
    	Indeed, if $j'<j_{\varepsilon,0}$, then $b_{j'}\rho^{j'}\rho^{j-j'} \leq \lambda \rho^{j-j'} \leq \lambda \rho^{j_\varepsilon - j_{\varepsilon,0}} < \varepsilon$ (as $\rho < 1$ and $j-j' \geq j_\varepsilon - j_{\varepsilon,0}$); and if $j' > j_{\varepsilon,0}$, then $b_{j'}\rho^{j'} < \varepsilon$ and $\rho^{j-j'} < 1$. 
    \end{proof}

\begin{definition}
	\label{definition-semistable-of-slope}
	Let $S\in \Perf$. 
	For a fixed $\lambda\in \bbQ$, we let $(\VEC)_{\lambda}(S)\subseteq (\VEC)^{\on{loc}}(S)$, (resp.\ $(\IC^\Diamond)_{\lambda}(S)\subseteq (\IC^\Diamond)^{\on{loc}}(S)$, resp.\ $\Bun_\FF(S)_\lambda\subseteq \Bun_\FF$) denote the full subcategories of objects whose generic Newton polygon function $\gamma_{(-)}$ (resp.\ Newton polygon function $\nu$) attaches to each geometric point of $S$ a constant polygon of slope $\lambda$. 
	We call objects in these subcategories semi-stable of slope $\lambda$.
	%of generically locally standard meromorphic vector bundles over $S$. 
	%We denote by $(\VEC)^{\sss}(S)$ and $(\IC^\Diamond)^\sss(S)$ the categories of semi-stable meromorphic vector bundles and analytic isocrystals over $S$, respectively.  
\end{definition}

\begin{proposition}
	\label{semistable-gives-equivalences}
	For all $\lambda \in \bbQ$ the maps $(\IC^\lozenge)_\lambda \xleftarrow{\gamma} (\VEC)_\lambda \xrightarrow{\sigma} (\Bun_{\on{FF}})_\lambda$ are exact equivalences of sheaves of $E$-linear exact categories.
\end{proposition}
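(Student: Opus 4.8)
The plan is to verify both equivalences v-locally, reducing to the case where $S=\Spa(R,R^+)$ is a product of points, and then to play the three ``forgetful'' incarnations of a semistable object against one another using the key lemmas already at our disposal. First I would record that $(\VEC)_\lambda$, $(\IC^\lozenge)_\lambda$ and $(\Bun_{\on{FF}})_\lambda$, together with $\gamma$ and $\sigma$, are v-sheaves of $E$-linear exact categories (the slope condition is v-local, as in the remark following the definition of $(\VEC)^{\on{loc}}$), so being an equivalence and preserving/reflecting exactness may be checked after passing to a v-cover; I take $S$ a product of points. The exactness assertions are then cheap: for $\sigma$ it is precisely $\Cref{exactness-bungmer}$, which says a sequence in $\VEC(S)$ is exact if and only if its image in $\Bun_{\on{FF}}(S)$ is; for $\gamma$, exactness in $\VEC$ is detected on geometric points by $\Cref{DM1p-is-well-bheaved-subcategory-old}$, exactness in $\IC^\lozenge$ of a rank-balanced sequence (and a short exact sequence in $(-)_\lambda$ is rank-balanced, all terms having slope $\lambda$) is detected on geometric points by $\Cref{check-exact-isoc-points_2}$, and over a geometric point both categories are $\IC(\Spec\bar\bbF_q)$ via $\Cref{lemma-sanity-check}$, with $\gamma$ a forgetful self-equivalence. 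So it remains to show $\gamma$ and $\sigma$ are equivalences of the underlying v-sheaves of $E$-linear categories.

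For essential surjectivity I would argue that over a product of points every object of $(\IC^\lozenge)_\lambda(S)$ is, after shrinking $S$, a standard object $\calO(\lambda)^k$: this is $\Cref{lemma-sanity-check}$ combined with \cite[Theorem 2.11]{HamacherKim_22}, exactly as used in the proof of $\Cref{Lifting-to-antieffective}$; likewise every object of $(\Bun_{\on{FF}})_\lambda(S)$ is v-locally standard by the corresponding classification for the relative Fargues--Fontaine curve. Each such $\calO(\lambda)^k$ is the image under $\gamma$ (resp.\ $\sigma$) of the standard-model crystalline shtuka, which lies in $\SHT(S)\subseteq\VEC(S)$ and whose generic polygon is the constant polygon of slope $\lambda$, hence lies in $(\VEC)_\lambda(S)$. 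Thus both $\gamma$ and $\sigma$ are essentially surjective.

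Full faithfulness of $\gamma$ I would route through the $\otimes$-unit: since objects of $\VEC(S)$ are dualizable and $\gamma$ is symmetric monoidal, $\on{Hom}_{\VEC}(\calE_1,\calE_2)\simeq\on{Hom}_{\VEC}(\calO,\calE_1^\vee\otimes\calE_2)$ with $\calE_1^\vee\otimes\calE_2\in(\VEC)_0(S)$. By $\Cref{Lifting-to-antieffective}$ this bundle lifts, v-locally, to an anti-effective crystalline shtuka, so $\Cref{key-lemma-filtration-extends}$ gives $\on{Hom}_{\VEC}(\calO,\calE_1^\vee\otimes\calE_2)=\on{Hom}_{\IC^\lozenge}(\calO,\gamma(\calE_1^\vee\otimes\calE_2))=\on{Hom}_{\IC^\lozenge}(\gamma\calE_1,\gamma\calE_2)$. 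Hence $\gamma\colon(\VEC)_\lambda\to(\IC^\lozenge)_\lambda$ is fully faithful, and with the previous paragraph an equivalence; in particular every object of $(\VEC)_\lambda$ is v-locally standard.

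For $\sigma$ the same dualizability/monoidality reduction shows it suffices to prove $\on{Hom}_{\VEC}(\calO,\calF)=\on{Hom}_{\Bun_{\on{FF}}}(\calO,\sigma\calF)$ for $\calF\in(\VEC)_0(S)$ over a product of points; since both sides are v-sheaves and $\calF$ is v-locally standard, this reduces to $\calF=\calO$, i.e.\ to identifying $H^0(X_S,\calO_{X_S})$ with $\underline E(S)$ on the $\Bun_{\on{FF}}$ side and matching it against the computation $\on{Hom}_{\VEC}(\calO,\calO)=\on{Hom}_{\IC^\lozenge}(\calO,\calO)=\bigl(\bbW(R)[\tfrac{1}{\pi}]\bigr)^{\varphi=1}=\underline E(S)$ already afforded by the $\gamma$-case. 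Alternatively, one reads off directly from the proof of $\Cref{key-lemma-filtration-extends}$ that the $\varphi$-equivariant section it produces extends over $\infty$, so that for anti-effective objects the $\sigma$- and $\gamma$-Hom computations literally coincide. This identification --- essentially the semistable, relative case of Fargues' theorem, i.e.\ that the meromorphic extension at $\infty$ of a semistable bundle is unique in families --- is the main obstacle; granting it, $\sigma\colon(\VEC)_\lambda\to(\Bun_{\on{FF}})_\lambda$ is fully faithful, hence an equivalence, and the proposition follows.
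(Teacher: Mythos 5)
Your argument follows the paper's proof quite closely in its main lines: full-faithfulness of $\gamma$ via dualizability, internal $\underline{\on{Hom}}$'s, \Cref{Lifting-to-antieffective} and \Cref{key-lemma-filtration-extends}; essential surjectivity from v-local standardness of semistable objects; and exactness checked on geometric points, where all three categories become the semisimple category $\IC(\Spec\overline{\bbF}_q)$. The one place you genuinely diverge is the $\sigma$-direction. The paper sidesteps any direct computation by a two-out-of-three argument: both $(\Bun_{\on{FF}})_0$ and $(\IC^\lozenge)_0$ are identified with pro-\'etale $\underline{E}$-local systems (\cite{FS24}, Theorem I.3.4, and \cite{CS17}, Proposition 4.3.13), and the already-established equivalence $\gamma$ then forces $\sigma$ to be one as well. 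Your route (a) — reduce to $\calF=\calO$ and match $H^0(X_S,\calO_{X_S})=\underline{E}(S)$ against $(\bbW(R)[\frac{1}{\pi}])^{\varphi=1}=\underline{E}(S)$ — uses the same Fargues--Scholze input in unpacked form and is fine, provided you check the two identifications are intertwined by the restriction map (they are: both send a locally constant $E$-valued function to the corresponding constant section).

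Two caveats. First, local essential surjectivity alone does not yield essential surjectivity for a map of stacks of categories; you need full-faithfulness to glue the local preimages (this is exactly \Cref{lm:ess_surjectivity_local}), so your essential-surjectivity paragraph must logically come after, not before, the full-faithfulness paragraphs — the paper opens its proof with precisely this reduction. Second, your ``alternative'' route (b) for $\sigma$ does not work as stated: \Cref{key-lemma-filtration-extends} takes a $\varphi$-invariant section defined over $\Spec\bbW(R)[\frac{1}{\pi}]$ (the $\gamma$-side) and shows it converges on some $\calY_{[0,s]}$, hence spreads to a meromorphic section; it compares $\on{Hom}_{\VEC}$ with $\on{Hom}_{\IC^\lozenge}$ only. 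The $\sigma$-comparison requires the opposite extension — from a section defined only over $Y_{(0,\infty),S}$ out to the $\pi$-adic boundary — and nothing in that lemma addresses it. So route (a) (equivalently, the paper's appeal to \cite{FS24}) is not optional; it is the essential input for the $\sigma$-direction.
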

\begin{remark}
We note that the categories $(\IC^\lozenge)_\lambda,\, (\VEC)_\lambda,\,  (\Bun_{\on{FF}})_\lambda$ are not stable under $\otimes$-products.
\end{remark}
\begin{proof} 
To prove that $\gamma$ and $\sigma$ are equivalences, it suffices to show that they are fully-faithful.
Indeed, by \cite[Proposition 4.3.13]{CS17} (resp.\ \cite[Theorem I.3.4]{FS24}) every object of $(\IC^\lozenge)_\lambda$ (resp.\ $(\Bun_\FF)_\lambda$) is pro-\'etale locally isomorphic to $\calO(\lambda)^m$ which is already in $\VEC(\Spd \overline{\bbF}_q)$.
Then, \Cref{lm:ess_surjectivity_local} allows us to conclude. 

To show that $\gamma$ and $\sigma$ are fully-faithful, it suffices to deal with the case in which $\lambda=0$.
Indeed, for general $\lambda$ we may pass to internal $\underline{\on{Hom}}$-objects and take global sections which reduces us to prove that the maps 
\[ \on{Hom}_{\IC^\Diamond}(\calO,\gamma \calE)\xleftarrow{\gamma} \on{Hom}_{\VEC}(\calO,\calE)  \xrightarrow{\sigma} \on{Hom}_{\VEC}(\calO,\sigma \calE) \] 
are isomorphisms for the internal $\underline{\on{Hom}}$-bundle $\calE\in \VEC(S)_0$.

Let us show $\gamma$ is fully-faithful in the case $\lambda=0$.  
Since both 
\[S'\mapsto \on{Hom}_{\IC^\Diamond}(\calO,\gamma (\calE_{S'})) \text{ and } S'\mapsto \on{Hom}_{\VEC}(\calO,\calE_{S'}) \]
are v-sheaves on $\Perf/S$, and by \Cref{Lifting-to-antieffective}, we can assume that $\calE$ is the image of an anti-effective crystalline shtuka. In this case, we conclude via \Cref{key-lemma-filtration-extends}. 

Let $\underline{E}-\on{Loc}$ denote the category of pro-\'etale $\underline{E}$-local systems. 
By \cite[Proposition 4.3.13]{CS17}, \cite[Theorem I.3.4]{FS24} and the argument above we get to a commutative diagram in $\CatE$ of the form 
\begin{center}
\begin{tikzcd}
	\underline{E}-\on{Loc}(S) \ar[bend left]{rrd}{\simeq} \ar[bend right]{ddr}{\simeq} \ar{dr} &  &  \\
										  & \VEC(S)_0  \arrow{r} \arrow{d}{\simeq}  & \Bun_\FF(S)_0 \,. \\
				 & \IC^\Diamond(S)_0  & 
\end{tikzcd}
\end{center}
This is enough to conclude that the remaining two arrows are also equivalences.
So far we have shown that the maps $(\IC^\lozenge)_\lambda \xleftarrow{\gamma} (\VEC)_\lambda \xrightarrow{\sigma} (\Bun_{\on{FF}})_\lambda$ are equivalences in $\CatE$, and it is left to argue that these equivalences are exact.

By \Cref{analytic-vector-bundles-exact-geometric}, \Cref{check-exact-isoc-points_2} and \Cref{exactness-bungmer}, exactness of the equivalences can be checked on geometric points. 
By \Cref{lemma-sanity-check}, the three categories are equivalent to a semi-simple category over a geometric point, so the exact structure is the one inherited from the additive structure and any equivalence preserves it. 
\end{proof}

 %\begin{lemma}
	 %\label{lemma_no_maps}
	 %Suppose $\calF_1,\calF_2$ are two semistable isocrystals over the same connected base $\Spec A \in \PSch$. If $\Hom(\calF_1,\calF_2) \neq 0$ then $\calF_{1,\overline{x}} \cong \calF_{2,\overline{x}}$ for every geometric point $\overline{x}\to \Spec A$.
	 %\Ian{This lemma might need some work!}
 %\end{lemma}
 %\begin{proof} 
	 %Since we assumed that $\Spec A$ is connected and that $\calF_i$ for $i\in \{1,2\}$ are semi-stable, their Newton polygon at geometric points is constant. 
	 %Moreover, if $\Hom(\calF_1,\calF_2) \neq 0$ then there is at least one geometric point $\overline{y}\to \Spec A$ for which 
	 %$\Hom(\calF_{1,\overline{y}},\calF_{2,\overline{y}}) \neq 0$. 
 %It follows from the Dieudonn\'e-Manin classification that $\calF_{1,\overline{y}}\cong \calF_{2,\overline{y}}$.
 %Since both Newton polygons are constant, we must have $\calF_{1,\overline{x}}\cong \calF_{2,\overline{x}}$ for all geometric points.
 %\end{proof}

We consider $\bbQ$-filtered meromorphic vector bundles (resp.\ vector bundles, resp.\ analytic isocrystals). 
That is, we consider sequences of the form $\{\calE_{\leq r}\}_{r\in \bbQ}\in \VEC(S)$ (resp.\ $\{\calE_{\leq r}\}_{r\in \bbQ}\in \Bun_{\on{FF}}(S)$, resp.\ $\{\calE_{\leq r}\}_{r\in \bbQ}\in \IC^\lozenge(S)$) with $\calE_{\leq r}\subseteq \calE_{\leq s}$ when $r<s$ such that the inclusion fits in an exact sequence 
\[0\to \calE_{\leq r}\to \calE_{\leq s}\to \calE_{\leq s}/\calE_{\leq r}\to 0\] 
where $\calE_{\leq s}/\calE_{\leq r}$ denotes an object in $\VEC(S)$ (resp.\ $\Bun_\FF(S)$, resp.\ $\IC^\Diamond(S)$) determined (up to unique isomorphism) by the inclusion map $\calE_{\leq r}\hookrightarrow \calE_{\leq s}$ whenever it exists.\footnote{Note that since $\VEC(S)$ (resp.\ $\Bun_{\on{FF}}(S)$, resp.\ $\IC^\lozenge(S)$) is only an exact category and not an abelian category, it is not automatic that the object $\calE_{\leq s}/\calE_{\leq r}$ completing the exact triangle exists.}. 
We also ask that $\calE_{\leq r}/\calE_{<r}=0$ for all but finitely many $r\in \bbQ$. 
By hypothesis, there is a $N \gg 0$ such that $\calE_{\leq s}=\calE_{\leq N}$ for every $s>N$, and we call $\calE_{\leq N}$ \textit{the underlying vector bundle} of $\{\calE_{\leq r}\}_{r\in \bbQ}$.

\begin{definition}
	We say that a $\bbQ$-filtered meromorphic vector bundle (resp.\ a vector bundle, resp.\ analytic isocrystal) is a \textit{semi-stable filtration} if $\calE_{\leq r}/\calE_{<r}$ is semi-stable of slope $r$ in the sense of \Cref{definition-semistable-of-slope}.
	We let $\Fil_{\on{ss}}^\mer(S)$ (resp.\ $\Fil^\sigma_{\on{ss}}(S)$, resp.\ $\Fil_{\on{ss}}^\gamma(S))$ denote the categories whose objects are semi-stable filtrations and whose morphisms are maps in $\VEC(S)$ (resp.\ $\Bun_{\on{FF}}(S)$, resp.\ $\IC^\lozenge(S)$) that respect the filtration.  
	We endow these categories with the exact structure inherited from their underlying vector bundle.
	Moreover, the tensor product of underlying vector bundles inherits a filtration with 
	\[(\calE\otimes \calF)_{\leq r}=\sum_{r_\calE+r_\calF=r} \calE_{\leq r_\calE}\otimes \calF_{\leq r_\calF}\]
	such that 
	\[(\calE\otimes \calF)_{\leq r}/(\calE\otimes \calF)_{<r}=\bigoplus_{r_\calE+r_\calF=r} \calE_{\leq r_\calE}/\calE_{<r_\calE}\otimes \calF_{\leq r_\calF}/\calF_{<r_\calF}\,.\]
\end{definition}

\begin{proposition}
	\label{two-types-of-semistable-fils}
	The natural map $\Fil_{\on{ss}}^\mer\to \Fil^\sigma_{\on{ss}}$ is a $\otimes$-exact equivalence of v-stacks.  	
\end{proposition}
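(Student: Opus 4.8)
\emph{Plan.} The functor in question is the one induced by the special polygon map $\sigma\colon\VEC\to\Bun_{\on{FF}}$. By \Cref{semistable-gives-equivalences} it carries a semi-stable object of slope $r$ to a semi-stable object of slope $r$, hence a semi-stable filtration to a semi-stable filtration; since $\sigma$ is symmetric monoidal and the tensor-product filtration is given by the identical formula on the two sides, $F\coloneqq\sigma\colon\Fil^\mer_{\on{ss}}\to\Fil^\sigma_{\on{ss}}$ is a $\otimes$-functor; and it is exact because by \Cref{exactness-bungmer} $\sigma$ is exact and in fact reflects exactness, while both $\Fil$-categories carry the exact structure of their underlying bundle. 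So it remains to prove that $F$ is an equivalence of v-stacks. As the conditions defining $\Fil^\mer_{\on{ss}}$ and $\Fil^\sigma_{\on{ss}}$ are v-local, both are v-sheaves; fully faithfulness of $F$ can be checked after a v-cover because internal $\underline{\on{Hom}}$ is a v-sheaf, and, granting fully faithfulness, essential surjectivity is also v-local (a descent datum on an object of the essential image transports through $F$ and glues, $\Fil^\mer_{\on{ss}}$ being a sheaf). By \Cref{remark-product-of-points} we may therefore take $S=\Spa(R,R^+)$ a product of points, where moreover $\VEC(S)=\SHT[\tfrac1\pi](S)$, so that we work with honest isoshtukas.

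We induct on the number $n$ of slopes of the filtration. The case $n\le1$ is \Cref{semistable-gives-equivalences}. For the inductive step, a semi-stable filtration $\calE_\bullet$ with slopes $r_1<\dots<r_n$ is an extension $0\to\calE_{<r_n}\to\calE_N\to\calE_N/\calE_{<r_n}\to0$ of its top graded piece $\calA\coloneqq\calE_N/\calE_{<r_n}$, which is semi-stable of slope $r_n$, by $\calE_{<r_n}$, which carries a semi-stable filtration with slopes $r_1<\dots<r_{n-1}$; conversely such data together with the extension class reconstruct the filtration, since the new sub-object and the graded pieces are forced and, by \Cref{exactness-bungmer}, the sequence is automatically exact. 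Using the inductive hypothesis and \Cref{semistable-gives-equivalences} for $\calA$, the usual obstruction-theoretic description of lifting a pair of maps along an extension, together with the long exact $\Ext$-sequences, shows that fully faithfulness and essential surjectivity of $F$ on length-$n$ filtrations follow once one controls the map $\sigma$ induces on $R\on{Hom}_{\VEC(S)}(\calA,\calF_\bullet)$ for $\calF_\bullet$ any semi-stable filtration all of whose slopes are $<r_n$. Passing to internal $\underline{\on{Hom}}$ (the semi-stable pieces are dualizable and $\sigma$ is monoidal) and dévissaging in $\calF_\bullet$ — legitimate because the relevant $R\Gamma$'s, on the relative Fargues--Fontaine curve and on $\calY_S$, are concentrated in degrees $[0,1]$ here, and because a semi-stable object of negative slope has no global sections on either side (by \Cref{lemma-sanity-check} and the sheaf property, using that $\sigma$ is faithful by \Cref{DM1p-is-well-bheaved-subcategory-old}) — this reduces everything to understanding $R\on{Hom}_{\VEC(S)}(\calO,\calM)\to R\on{Hom}_{\Bun_{\on{FF}}(S)}(\calO,\sigma\calM)$ in degrees $0$ and $1$ for $\calM$ semi-stable of slope $\mu<0$; degree $0$ is the vanishing just invoked.

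For the degree-$1$ statement one uses the Beauville--Laszlo description of $\VEC$ (\Cref{Beauville-Laszlo-presheaves}, \Cref{all-squares-are-cartesian}): an extension in $\VEC(S)$ amounts to an extension of the underlying objects in $\Bun_{\on{FF}}(S)$ glued to a lattice at $\infty$, equivalently a compatible extension in $\IC^\lozenge(S)$. Because $\calM$ is semi-stable of slope $\mu<0$ in the paper's sign convention — that is, its associated analytic isocrystal $\gamma\calM$ is isoclinic of \emph{positive} classical slope — the Frobenius-linear operator $\Phi\circ\varphi-\id$ is bijective on the germ of $\gamma\calM$ at $\infty$, which is exactly the convergence estimate of \Cref{maximum-of-a-sequence-drops} exploited in \Cref{key-lemma-filtration-extends}, applied now in both its ``injective'' and its ``surjective'' form; hence $R\on{Hom}_{\IC^\lozenge(S)}(\calO,\gamma\calM)=0$, so the lattice-at-$\infty$ datum is rigid, while \Cref{key-lemma-filtration-extends}, applied to $\calM^\vee$ (which, after the v-local reduction of \Cref{Lifting-to-antieffective}, is anti-effective) supplies the required global lattice and its compatibility. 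Unwinding the Beauville--Laszlo square with this rigidity yields the claim, completing the induction.

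The main obstacle is precisely this last point. On the $\IC^\lozenge$-side the slope filtration is canonical (Kedlaya) and the $\Ext$-groups between graded pieces of increasing slope vanish, so $\Fil^\gamma_{\on{ss}}$ is rigid; but on the $\VEC$-side the extension data sees both the Fargues--Fontaine curve and the germ at $\infty$, and neither $\sigma\colon\VEC(S)\to\Bun_{\on{FF}}(S)$ nor $\gamma\colon\VEC(S)\to\IC^\lozenge(S)$ is fully faithful, so one has to reconcile these descriptions by hand — this is where the extension-at-$\infty$ estimates enter and where the careful choice of exact structure (\Cref{exactness-bungmer}) is indispensable. I expect the bookkeeping of the dévissage (keeping the exact structure and the internal-$\underline{\on{Hom}}$ reductions consistent through the induction) to be the second main difficulty.
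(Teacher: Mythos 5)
Your reduction to products of points, your $\otimes$-exactness discussion, and your full-faithfulness argument (internal $\underline{\on{Hom}}$, vanishing of $\on{Hom}(\calO,-)$ in negative slopes, then \Cref{semistable-gives-equivalences} for the slope-$0$ graded piece) all match the paper's proof. The divergence, and the problem, is in essential surjectivity.

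There you replace the paper's argument by a dévissage on the number of slopes, reducing everything to the claim that $\sigma$ induces a bijection $\Ext^1_{\VEC(S)}(\calO,\calM)\to\Ext^1_{\Bun_{\on{FF}}(S)}(\calO,\sigma\calM)$ for $\calM$ semi-stable of negative slope. That is indeed the crux, and your heuristic for it (the isocrystal-side $\varphi$-cohomology of $\gamma\calM$ vanishes in all degrees, so the ``data at $\infty$'' is rigid and the two $\Ext^1$'s should agree) is morally the right picture. But the step ``unwinding the Beauville--Laszlo square with this rigidity yields the claim'' is where the actual mathematical content lives, and nothing you cite delivers it. \Cref{key-lemma-filtration-extends} and \Cref{maximum-of-a-sequence-drops} are $H^0$-statements: they compare $\on{Hom}_{\VEC}$ with $\on{Hom}_{\IC^\lozenge}$ for \emph{anti-effective} objects, and there is no ``surjective form'' of them computing an $\Ext^1$. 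Moreover $\VEC$ is not a fibre product of $\Bun_{\on{FF}}$ and $\IC^\lozenge$ over anything (the Cartesian squares in \Cref{all-squares-are-cartesian} express $\SHT$, not $\VEC$, as a fibre product), so there is no ready-made Mayer--Vietoris sequence relating the three $\Ext^1$'s; one has to compare, over the overlap $Y_{[1,1],S}$, a cocycle for $\Bun_{\on{FF}}$ (valued in $\calV(\calY_{(0,1],S})$-type data) with one for $\VEC$ (valued in $B^R_{[0,1]}[\frac{1}{\pi}]$-type data), and show the discrepancy is a coboundary coming from $B^R_{[1,\infty]}$. That is exactly the content of the paper's Lemma \ref{lm:decomposition_into_mero_and_varpi_part} (the additive decomposition $B^R_{[1,1]}=B^R_{[0,1]}[\frac{1}{\pi}]+[\varpi]B^R_{[1,\infty]}$) together with Lemma \ref{lm:Frob_minus_res_surjective} (surjectivity of $a\mapsto\pi^{-k}a-\varphi(a)$ onto $[\varpi]B^R_{[1,\infty]}$), which the paper then packages as the explicit unipotent conjugation of Lemma \ref{lm:conjugation_to_meromorphic_upper_triangular}. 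These estimates are the heart of the proof and are absent from your proposal; without them the $\Ext^1$-surjectivity is asserted, not proved. (A secondary issue: the paper also first descends along unramified extensions $E_s/E$ to make all slopes integral before trivializing; some such reduction is needed in your dévissage as well, since the graded pieces $\calO(\lambda)$ for non-integral $\lambda$ are only standard after pullback.)
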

\begin{proof}
	\textit{Full-faithfulness:} Let $\{\calE_{\leq r}\}_{r\in\bbQ}$ and $\{\calF_{\leq r}\}_{r\in\bbQ}$ be in $\Fil_{\on{ss}}^\mer(S)$,	
	with underlying meromorphic vector bundles $\calE$ and $\calF$.
	The internal $\underline{\on{Hom}}$-bundle 
	\[\calH\coloneqq\underline{\on{Hom}}(\calE,\calF)=\calE^\vee\otimes \calF\]
	is naturally endowed with a $\bbQ$-filtration $\{\calH_{\leq r}\}_{r\in \bbQ}$.
	It is not hard to verify that $\{\calH_{\leq r}\}_{r\in \bbQ}$ is a semi-stable filtration.  
	Moreover, we have an identification
	\[\on{Hom}_{\Fil_{\on{ss}}^\mer}(\{\calE_{\leq r}\}_{r\in\bbQ},\{\calF_{\leq r}\}_{r\in\bbQ})=\on{Hom}_{\VEC}(\calO,\calH_{\leq 0})\,.\] 
	Analogously,
	\[\on{Hom}_{\Fil_{\on{ss}}}(\{\calE_{\leq r}\}_{r\in\bbQ},\{\calF_{\leq r}\}_{r\in\bbQ})=\on{Hom}_{\Bun_{\on{FF}}}(\calO,\calH_{\leq 0})\,.\] 
	Since $\{\calH_{\leq r}\}_{r\in \bbQ}$ is semistable, by applying induction with respect to $\{r\in \bbQ\mid \calH_{\leq r}/\calH_{<r}\neq 0\}$ one can show that 
	\[\on{Hom}_{\VEC}(\calO,\calH_{\leq r})=0=\on{Hom}_{\Bun_{\on{FF}}}(\calO,\calH_{\leq r})\] 
	for all $r<0$. 
	To prove fully-faithfulness it suffices to show that
	\[\on{Hom}_{\VEC}(\calO,\calH_{\leq 0}/\calH_{<0})\cong \on{Hom}_{\Bun_{\on{FF}}}(\calO,\calH_{\leq 0}/\calH_{<0})\,.\]
	But since $\calH_{\leq 0}/\calH_{<0}$ is semi-stable of slope $0$, the result follows directly from \Cref{semistable-gives-equivalences}. 
	
\textit{Essential surjectivity:}
Let $\{\calE_{\leq r}\}\in \Fil_{\on{ss}}^\sigma$ with underlying vector bundle $\calE$ of rank $n$. 
	If $E_s$ is the degree $s$ unramified extension of $E$, then objects in $\Bun_{\on{FF}}$ can be constructed by descent from objects in $\Bun_{\on{FF},E_s}$, and by fully-faithfulness a descent datum in $\Fil_{\on{ss}}^\sigma$ agrees with a descent datum in $\Fil^\mer_{\on{ss}}$.
	This reduces us to show that for a suitable $s$, the pullback of the filtered vector bundle $\{\calE_{\leq r}\}$ to the Fargues--Fontaine curve associated with $E_s$ lies in the essential image of $\Fil_{\on{ss}}^\mer\to \Fil^\sigma_{\on{ss}}$. 
	This pullback has the effect of multiplication by $s$ on the slopes (see \Cref{lemma_unramifieds}). 
	Picking $s$ to be the product of all denominators appearing on the slopes of $\{\calE_{\leq r}\}$, we may thus assume that the support of the filtration is contained in $\bbZ$.
	Since essential surjectivity can now be proved v-locally by Lemma \ref{lm:ess_surjectivity_local}, we may think of every bundle $\calE_{\leq r}$ as a free module $M_r$ over $B^R_{[1,q]}$ with $\varphi$-glueing data over $B^R_{[1,1]}$.
	We may even assume that the graded pieces $\calE_{\leq N}/\calE_{<N}$ are isomorphic to $\calO(N)^{m_N}$.
	We may choose bases for the $M_r$ over $B^R_{[1,q]}$ compatible with the filtration and in such a way that after transferring the Frobenius structure to $\calO^n$, the induced $N$-graded pieces are given by diagonal matrices of the form $\pi^{-N}$. 
	Thus $\{\calE_{\leq r}\}$ is represented by an upper block-diagonal matrix $A \in M_{n\times n}(B^R_{[1,1]})$, with diagonal blocks of the form $\pi^{-N}\cdot \on{Id}_{m_N,m_N}$, describing the Frobenius structure. 
	Let $P \subseteq \GL_n$ denote the parabolic subgroup (containing the upper triangular matrices) corresponding to the block-diagonal shape of $A$.

	Now it suffices to prove the claim that there is a matrix $A_\infty \in P(B^R_{[0,1]}[\frac{1}{\pi}])$ and a matrix $U\in P(B^R_{[1,q]})$ with
	$U^{-1}A_\infty\varphi(U)=A$. Indeed, such $U$ defines a (not necessarily meromorphic) isomorphism of the filtered vector bundle $\{\calE_{\leq r}\}$ with the filtered vector bundle represented by $A_\infty$; now, $\pi$ is only meromorphically inverted in $B_{[0,1]}^R[\frac{1}{\pi}]$, hence $A_\infty$ in fact defines a meromorphic filtered vector bundle.
    Now our claim follows from Lemma \ref{lm:conjugation_to_meromorphic_upper_triangular} below.
\end{proof}

Before proving the remaining Lemma \ref{lm:conjugation_to_meromorphic_upper_triangular}, we need some preparations.

\begin{lemma}
	\label{lemma_unramifieds}
	Let $E_s$ is the degree $s$ unramified extension of $E$, for $C$ a fixed algebraically closed non-Archimedean field consider the map of Fargues--Fontaine curves attached to $C$, $\pi_s:X_{C,E_s}\to X_{C,E}$, then for all $m$ and $n$ with $(m,n)=1$ the slope of $\pi_s^*\calO(\frac{m}{n})$ is $\frac{s\cdot m}{n}$. 
\end{lemma}
\begin{proof}
	The standard presentation of $\calO(\frac{m}{n})$ is through the matrix $n\times n$ matrix with $\varphi(e_i)=e_{i+1}$ for $i\in \{1,\dots n-1\}$ and $\varphi(e_n)=\pi^m e_1$. 
	We may write $s=k\cdot s'$ and $n=k\cdot n'$ with $(s',n')=1$.
	Then $\pi^*_k\calO(\frac{m}{n})$ has matrix $\varphi^k(e_i)=e_{i+k}$ for $i\in \{1,\dots, n-k\}$ and $\varphi^k(e_i)=\pi^m e_{i+k-n}$ for $i\in \{n-k+1,\dots, n\}$.
	We can now rewrite $\pi^*_k\calO(\frac{m}{n})$ as $\calO(\frac{m}{n'})^{k}$. 
	Finally, if $s'=j\cdot n'+r$ with $(r,n')=1$ and $r<n'$ then $\pi^*_{s'}\calO(\frac{m}{n'})$ has matrix $\varphi^{s'}(e_i)=\pi^{m\cdot j}e_{i+r}$ if $i\in \{1,\dots n'-r\}$ and $\varphi^{s'}(e_i)=\pi^{m\cdot (j+1)}e_{i+r-n'}$ if $i\in \{n'-r+1,\dots n'\}$.
	Relabeling the basis $e'_1=e_1$ and $e'_{i+1}=\varphi^s(e_i)$ for $i\in \{1,\dots, n'\}$ we can compute that $\varphi^s(e'_{n'})=\pi^{m\cdot (j\cdot n'+r)}e_1$, which is precisely the standard matrix for $\calO(\frac{m\cdot s'}{n'})$.
\end{proof}

\begin{lemma}\label{lm:decomposition_into_mero_and_varpi_part} 
We have $B_{[1,1]}^R = B_{[0,1]}^R[\frac{1}{\pi}] + [\varpi]B_{[1,\infty]}^R$\,.
\end{lemma}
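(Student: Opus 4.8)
The plan is to unwind the definitions of the three rings as global sections of structure sheaves on the annuli $\calY_{[1,1]}$, $\calY_{[0,1]}$, $\calY_{[1,\infty]}$ (all relative to $S = \Spa(R,R^+)$ and the pseudo-uniformizer $\varpi$), and then argue by a Laurent/power-series expansion in $\pi$. Recall that for a product of points $S$ (to which we may reduce, since everything in sight is a v-sheaf and both sides can be computed after a v-cover, cf.\ \Cref{remark-product-of-points}), an element of $B_{[1,1]}^R$ can be written in terms of the Witt-vector coordinates as a doubly infinite series $\sum_{i \in \bbZ} [a_i]\pi^i$ with $a_i \in R$, convergent on the circle $\kappa_\varpi = 1$, i.e.\ $|a_i|\cdot q^{-i} \to 0$ as $i \to +\infty$ and $|a_i|\cdot q^{-i}\to 0$ as $i\to -\infty$ in the appropriate sense (using a norm $|\cdot|$ on $R$ with $|\varpi| = 1/q$, as in the proof of \Cref{key-lemma-filtration-extends}). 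The key point is that $B_{[0,1]}^R[\tfrac1\pi]$ consists (after inverting $\pi$) of those series $\sum_{i\gg -\infty}[a_i]\pi^i$ that converge on the disc $\kappa_\varpi \in [1,\infty]$ up to a bounded pole order in $\pi$, while $[\varpi]B_{[1,\infty]}^R$ consists of series $\sum_{i \in \bbZ}[b_i]\pi^i$ with $b_i$ divisible by $\varpi$ in a way that forces convergence on $\kappa_\varpi \in [1,\infty]$ (i.e.\ where $|[\varpi]| \le |\pi|$).

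First I would split a given $f = \sum_{i\in\bbZ}[a_i]\pi^i \in B_{[1,1]}^R$ as $f = f_- + f_+$ where $f_- = \sum_{i < 0}[a_i]\pi^i$ and $f_+ = \sum_{i \ge 0}[a_i]\pi^i$. The negative part $f_-$ is a finite-tailed-in-one-direction series; convergence on the circle plus having only a "pole toward $\pi = 0$" means $f_-\in B_{[0,1]}^R[\tfrac1\pi]$ after checking the growth condition $|a_i|q^{-i}\to 0$ as $i\to-\infty$ ensures convergence on $\{\kappa_\varpi \ge 1\}$; since only finitely many pole orders occur is \emph{not} automatic, so really one writes $f_- = \pi^{-N}g$ with $g = \sum_{i\ge -N}[a_i]\pi^{i+N}$ and checks $g \in B_{[0,\infty]}^R \subseteq B_{[0,1]}^R$, which holds once the circle-convergence is upgraded using that the coefficients decay. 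The positive part $f_+ = \sum_{i\ge 0}[a_i]\pi^i$ does \emph{not} obviously lie in $B_{[1,\infty]}^R$ — it converges on the circle but may blow up as $\kappa_\varpi \to \infty$ — so the trick is: on the circle $\kappa_\varpi = 1$ one has $|[\varpi]| = |\pi|$, hence $[\varpi]B_{[1,1]}^R = \pi B_{[1,1]}^R$; iterating, for each $i$ write $[a_i]\pi^i = [\varpi]^i \cdot (\text{unit-like factor})$ only when $\varpi \mid a_i$. The honest move is: subtract off from $f$ a suitable element of $B_{[0,1]}^R[\tfrac1\pi]$ to kill all terms except those where $a_i$ is "small", i.e.\ reduce to the case where each $a_i$ with $i \ge 0$ lies in the ideal generated by $\varpi$ (using that $R$ is a perfectoid Tate ring so $R = R^\circ[\tfrac1\varpi]$ and approximating), and then factor out $[\varpi]$: this is where $[\varpi]B_{[1,\infty]}^R$ enters.

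The main obstacle — and the step I'd spend the most care on — is showing that the "positive $\pi$-part with $\varpi$-divisible coefficients" genuinely lands in $[\varpi]B_{[1,\infty]}^R$ and not merely in $[\varpi]B_{[1,1]}^R$; that is, upgrading convergence from the circle $\kappa_\varpi = 1$ to the whole half-open annulus $\kappa_\varpi \in [1,\infty]$. This should follow from the maximum-modulus / convexity of $\kappa_\varpi \mapsto$ (sup norm of $f$ on that circle) for sections of $\calO$ on $\calY$, together with boundedness at $\kappa_\varpi = \infty$ coming from the extra factor of $[\varpi]$ — concretely, a series $\sum_{i\ge 0}[\varpi b_i]\pi^i$ has $\kappa_\varpi$-sup-norm controlled because on $\{\kappa_\varpi \ge 1\}$ we have $|[\varpi]| \le |\pi|$ so each term is dominated by $\|b_i\|\,|\pi|^{i+1}$, which sums. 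I would also need the elementary decomposition $R^\circ = \varpi R^\circ + (\text{stuff that lifts into }B_{[0,1]}^R[\tfrac1\pi])$, i.e.\ the fact that any $a \in R$ can be written $a = \varpi a' + a''$ with $|a''|$ as small as we like and $a'\in R$, to run the approximation; this is just that $\varpi$ is topologically nilpotent. Assembling: $f = f_- + (f_+ - h) + h$ with $f_- + h \in B_{[0,1]}^R[\tfrac1\pi]$ chosen so that $f_+ - h$ has $\varpi$-divisible Teichmüller coefficients, hence $f_+ - h \in [\varpi]B_{[1,\infty]}^R$, giving the claimed sum decomposition. The reverse inclusion ($B_{[0,1]}^R[\tfrac1\pi] \subseteq B_{[1,1]}^R$ and $[\varpi]B_{[1,\infty]}^R \subseteq B_{[1,1]}^R$) is immediate from the containments of annuli $\calY_{[1,1]} \subseteq \calY_{[0,1]}$ and $\calY_{[1,1]}\subseteq \calY_{[1,\infty]}$ of the structure-sheaf sections together with $\pi$ being invertible on $\calY_{[1,1]}$.
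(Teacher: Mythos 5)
There is a genuine error at the heart of your plan: you have the two target rings attached to the wrong annuli. With the paper's conventions, $\calY_{[0,1]}=\{|\pi|\leq|[\varpi]|\neq 0\}$ and $\calY_{[1,\infty]}=\{|[\varpi]|\leq|\pi|\neq 0\}$, so $B^R_{[0,1]}$ is the ring of functions holomorphic at $\pi=0$ (only nonnegative powers of $\pi$, with coefficients allowed to blow up like $\varpi^{-k}$ since $\pi/[\varpi]$ is the power-bounded coordinate), while $B^R_{[1,\infty]}$ is built from $[\varpi]/\pi$, i.e.\ negative powers of $\pi$ paired with correspondingly $\varpi$-divisible coefficients. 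In your decomposition $f=f_-+f_+$ by the sign of the exponent of $\pi$, it is therefore $f_+$ that belongs to $B^R_{[0,1]}$ (no inversion of $\pi$ even needed) and $f_-$ that belongs to $[\varpi]B^R_{[1,\infty]}$ --- the opposite of what you assert. Your claim that $f_-\in B^R_{[0,1]}[\frac{1}{\pi}]$ fails outright: $f_-$ generically has infinitely many negative powers of $\pi$, whereas $B^R_{[0,1]}[\frac{1}{\pi}]$ permits only a finite pole order; your own parenthetical worry about this is correct and is not repaired by writing $f_-=\pi^{-N}g$. Consequently the elaborate approximation scheme you design to push $f_+$ into $[\varpi]B^R_{[1,\infty]}$ (making Teichm\"uller coefficients $\varpi$-divisible) is aimed at the wrong target and cannot succeed, since the coefficients of $f_+$ may grow like $\varpi^{-k}$. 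Once the roles are assigned correctly, no such step is needed, and your splitting by sign of the $\pi$-exponent is essentially a coordinate version of the paper's argument, which splits the monomials $[a_{ij}]\pi^i(\frac{\pi}{[\varpi]})^j$ of $A_{12}=\bbW(R^+)[\frac{\pi}{[\varpi]},\frac{[\varpi]}{\pi}]$ according to the sign of $j$, sending $j\geq 0$ to $(A_1)^\wedge_{[\varpi]}$ and $j<0$ to $\frac{[\varpi]}{\pi}(A_2)^\wedge_\pi$.

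Two secondary gaps. First, you take for granted that a general element of $B^R_{[1,1]}$ admits a two-sided Teichm\"uller expansion $\sum_{k\in\bbZ}[a_k]\pi^k$ characterized by a convergence criterion on the circle; this is standard for perfectoid fields but requires an argument for general $R$, and the bookkeeping across the three different completions ($\pi$-adic versus $[\varpi]$-adic) is exactly the delicate point. The paper avoids it by only proving \emph{surjectivity} of the coordinate maps $S_n\to A_{12}/\pi^nA_{12}$ at each finite level and passing to the limit via \Cref{seth-theretic-lemma}, never needing a canonical coefficient extraction. Second, the reduction to products of points via v-descent is not legitimate here: surjectivity of a map of v-sheaves of modules after a v-cover does not give surjectivity on sections over the original $S$, and the lemma is needed for the specific $R$ at hand. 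The paper's proof works for arbitrary affinoid perfectoid $S$ with no such reduction.
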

\begin{proof}
Let $A_1 = \bbW(R^+)[\frac{\pi}{[\varpi]}]$, $A_2 = \bbW(R^+)[\frac{[\varpi]}{\pi}]$ and $A_{12} = \bbW(R^+)[\frac{\pi}{[\varpi]},\frac{[\varpi]}{\pi}]$. 
We have $B_{[1,1]}^R = (A_{12})_\pi^\wedge[\frac{1}{\pi}]$, $B_{[0,1]}^R = (A_1)_{[\varpi]}^\wedge[\frac{1}{[\varpi]}]$ and $B_{[1,\infty]}^R = (A_2)_\pi^\wedge[\frac{1}{\pi}]$. 
After multiplication with a big enough power of $\pi$, it suffices to show that any element of $(A_{12})_\pi^\wedge$ can be written as a sum of an element of $(A_1)^\wedge_{[\varpi]}$ and an element of $\frac{[\varpi]}{\pi} \cdot (A_2)_\pi^\wedge$. 

For any $n \geq 1$, let $I_n = \{(i,j) \in \bbZ^2 \colon 0\leq i < n \}$ and let 
\[
S_n \subseteq \!\prod_{(i,j) \in I_n}\! R^+
\] 
be the subset of all sequences $a = (a_{ij})_{ij}$ for which $a_{ij} = 0$ except for finitely many $(i,j) \in I_n$. 
Let also $S_n^+ \subseteq S_n$ (resp. $S_n^- \subseteq S_n$) be the subset of all sequences for which $a_{ij} = 0$ unless $j\geq 0$ (resp. $a_{ij} = 0$ unless $j < 0$). 
There is a commutative diagram $D_n$ of sets
\begin{center}
\begin{tikzcd}
S_n^+ \ar{r} \ar{d} & S_n \ar{d} & S_n^- \ar{l} \ar{d} \\
A_1/[\varpi]^n A_1 \ar{r} & A_{12}/\pi^n A_{12} & \frac{[\varpi]}{\pi}\cdot (A_2/\pi^n A_2) \ar{l}
\end{tikzcd}
\end{center}
(note that $A_{12}/\pi^n A_{12} = A_{12}/[\varpi]^nA_{12}$), where the upper horizontal maps are the defining inclusions, the lower horizontal maps are induced by the natural ring maps $A_1 \rightarrow A_{12} \leftarrow A_2$ (and the inclusion of the ideal $\frac{[\varpi]}{\pi}A_2 \subseteq A_2$) and the vertical maps are given by sending $(a_{ij})_{ij}$ to $\sum_{ij} [a_{ij}]\pi^i\cdot (\frac{\pi}{[\varpi]})^j$. 

We make three observations, which immediately follow from the explicit definition of the vertical maps: first, the middle vertical map is surjective. 
Second, there is an obvious map $D_{n+1} \rightarrow D_n$ of commutative diagrams and the resulting diagram is commutative. 
Third, when we define the map $+ \colon S_n^+ \times S_n^- \rightarrow S_n$ by $(a+b)_{ij} = a_{ij}$ if $j\geq 0$ and $(a+b)_{ij} = b_{ij}$ if $j<0$, then the resulting diagram
\begin{center}
\begin{tikzcd}
S_n^+ \times S_n^- \ar{r}{+} \ar{d} & S_n \ar{d} \\
A_1/[\varpi]^n A_1 \times \frac{[\varpi]}{\pi}\cdot(A_2/\pi^n A_2) \ar{r}{+} & A_{12}/\pi^n A_{12}
\end{tikzcd}
\end{center}
is commutative.

Let now $S = \lim_n S_n$ and $S^{\pm} = \lim_n S_n^{\pm}$. 
Explicitly, $S \subseteq \prod_{(i,j) \in \bbZ_{\geq 0} \times \bbZ}\! R^+$ is the subset of all sequences $(a_{ij})_{ij}$ satisfying the following condition: for each $i$ there is some $j(i)\geq 0$ such that $a_{ij} = 0$ unless $|j|<j(i)$ and $S^+$ and $S^-$ are corresponding subsets of $S$. 
Passing to the limit over all $n > 0$, we obtain a commutative diagram 
\begin{center}
\begin{tikzcd}
S^+ \ar{r} \ar{d} & S \ar{d} & S^- \ar{l} \ar{d} \\
(A_1)_{[\varpi]}^\wedge \ar{r} & (A_{12})^\wedge_\pi & \frac{[\varpi]}{\pi}\cdot (A_2)^\wedge_\pi \ar{l}\,.
\end{tikzcd}
\end{center}
Moreover, we also get the commutative diagram
\begin{center}
\begin{tikzcd}
S^+ \times S^- \ar{r}{+} \ar{d} & S \ar{d} \\
(A_1)_{[\varpi]}^\wedge \times \frac{[\varpi]}{\pi}\cdot (A_2)_\pi^\wedge \ar{r}{+} & (A_{12})_\pi^\wedge\,,
\end{tikzcd}
\end{center}
where the lower horizontal map is the restriction of the addition map $B_{[0,1]}\times \frac{[\varpi]}{\pi}\cdot B_{[1,\infty]}\to B_{[1,1]}$ and the upper horizontal map is defined in the same way as $S_n^+ \times S_n^- \rightarrow S_n$. 
Now, one can concretely verify that $S^+ \times S^- \rightarrow S$ and $S \rightarrow (A_{12})_\pi^\wedge$ are surjective using \Cref{seth-theretic-lemma}. 
This implies that the lower horizontal map in the diagram is surjective as well, which is precisely what we had to show.\qedhere
\end{proof}

\begin{lemma}
	\label{seth-theretic-lemma}
	Treat $\bbN$ together with is partial order as a category. 
	Let $A_{(-)},B_{(-)}\colon \bbN^\op\to \Sets$ denote two functors and denote by $g^A_{n}\colon A_n\to A_{n-1}$ and $g^B_{n}\colon B_n\to B_{n-1}$ the image of the unique morphism $(n-1)\to (n)$.
Let $f\colon A\to B$ be a natural transformation that is pointwise surjective.
Suppose that for all triples $(n,a_n,b_{n+1})$ with $n\in \bbN$, $a_n\in A_n$, $b_{n+1}\in B_{n+1}$ and $f_n(a_n)=g^B_{n+1}(b_{n+1})$ there exists $a_{n+1}\in A_{n+1}$ with $f_{n+1}(a_{n+1})=b_{n+1}$ and $g^A_{n+1}(a_{n+1})=a_n$.
Then $f\colon \varprojlim A_n\to \varprojlim B_n$ is surjective. 
\end{lemma}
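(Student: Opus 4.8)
The plan is to prove surjectivity of $f\colon \varprojlim A_n \to \varprojlim B_n$ by an explicit inductive lifting of a given compatible sequence, i.e.\ a routine diagram chase of the type used to prove surjectivity of inverse limits under a Mittag--Leffler-type condition. Fix an element of $\varprojlim B_n$, which by definition is a family $(b_n)_{n\geq 0}$ with $b_n\in B_n$ and $g^B_{n+1}(b_{n+1})=b_n$ for all $n$. I want to construct a family $(a_n)_{n\geq 0}$ with $a_n\in A_n$, $g^A_{n+1}(a_{n+1})=a_n$ for all $n$, and $f_n(a_n)=b_n$ for all $n$; such a family is precisely an element of $\varprojlim A_n$ mapping to $(b_n)_n$ under $f$.

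First I would treat the base case $n=0$: since $f_0\colon A_0\to B_0$ is surjective (pointwise surjectivity of the natural transformation $f$), choose any $a_0\in A_0$ with $f_0(a_0)=b_0$. For the inductive step, suppose $a_0,\dots,a_n$ have been constructed with the compatibility relations $g^A_{k}(a_{k})=a_{k-1}$ for $1\le k\le n$ and $f_k(a_k)=b_k$ for $0\le k\le n$. Then $f_n(a_n)=b_n=g^B_{n+1}(b_{n+1})$, where the second equality uses compatibility of the family $(b_n)_n$. Hence the triple $(n,a_n,b_{n+1})$ satisfies the hypothesis $f_n(a_n)=g^B_{n+1}(b_{n+1})$, and the assumed lifting property produces $a_{n+1}\in A_{n+1}$ with $f_{n+1}(a_{n+1})=b_{n+1}$ and $g^A_{n+1}(a_{n+1})=a_n$, which completes the induction. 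The resulting family $(a_n)_n$ lies in $\varprojlim A_n$ by construction and satisfies $f(a_n)_n=(b_n)_n$, so $f$ is surjective.

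I do not anticipate any genuine obstacle here; the only subtlety worth a sentence in the write-up is that the full strength of pointwise surjectivity of $f$ is used only to start the induction at level $0$ (all later levels are handled by the lifting hypothesis), and that no choice-coherence issue arises because the $a_{n+1}$ at each stage is produced after $a_n$ is already fixed, so the sequence is built one step at a time with no need for compatibility of independent choices.
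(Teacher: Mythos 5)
Your proof is correct and is exactly the paper's argument: pick $a_0$ by pointwise surjectivity of $f_0$, then use the stated lifting hypothesis to extend the sequence one level at a time, producing a compatible family $(a_n)_n$ in $\varprojlim A_n$ mapping to $(b_n)_n$. Nothing is missing; the remark that full pointwise surjectivity is only needed at level $0$ is a correct (if optional) observation.
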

\begin{proof}
	An element $b\in \varprojlim B_n$ is given by a sequence $(b_n)_{n\in \bbN}$ with $g^B_n(b_n)=b_{n-1}$.	
	The hypothesis of the lemma allows us to inductively define a sequence $(a_n)_{n\in \bbN}$ such that $a_n\in A_n$, $g^A_n(a_n)=a_{n-1}$ and $f_n(a_n)=b_n$. 
	The sequence $(a_n)_{n\in \bbN}$ defines an element of $\varprojlim A_n$ whose image under the limit map is $b_n$. 
\end{proof}

Recall that the restriction of functions defines an inclusion $B_{[\frac{1}{q},\infty]}^R \subseteq B_{[1,\infty]}^R$ and Frobenius induces an isomorphism $\varphi \colon B_{[1,\infty]}^R \stackrel{\sim}{\rightarrow} B_{[\frac{1}{q},\infty]}^R \subseteq B_{[1,\infty]}^R$.

\begin{lemma}\label{lm:Frob_minus_res_surjective}
Let $k \in \bbZ_{\geq 0}$. The image of the map
\[
\psi_k \colon B_{[1,\infty]}^R \rightarrow B_{[1,\infty]}^R, \quad a \mapsto \pi^{-k}a - \varphi(a)
\]
contains $[\varpi]B_{[1,\infty]}^R$. If $k>0$, it contains $B_{[1,\infty]}^R$.
\end{lemma}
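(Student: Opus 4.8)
The plan is to solve $\psi_k(a)=\pi^{-k}a-\varphi(a)=b$ by an explicit contracting-fixed-point (geometric series) argument, carried out inside the $\pi$-adically complete ring of definition. Recall from the proof of \Cref{lm:decomposition_into_mero_and_varpi_part} that $B_{[1,\infty]}^R=(A_2)_\pi^\wedge[\tfrac1\pi]$ with $A_2=\bbW(R^+)[\tfrac{[\varpi]}{\pi}]$, where $(-)_\pi^\wedge$ is $\pi$-adic completion. First I would record three elementary facts. (i) The relative Frobenius $\varphi$ fixes $\pi$ (it is $O_E$-linear) and satisfies $\varphi(A_2)\subseteq A_2$, since $\varphi(\tfrac{[\varpi]}{\pi})=\tfrac{[\varpi]^q}{\pi}=[\varpi]^{q-1}\cdot\tfrac{[\varpi]}{\pi}\in A_2$; hence $\varphi$ restricts to a $\pi$-adically continuous endomorphism of $(A_2)_\pi^\wedge$ and of $B_{[1,\infty]}^R$. (ii) For every $j\ge 0$ one has $[\varpi]^{q^j}=\pi^{q^j}\bigl(\tfrac{[\varpi]}{\pi}\bigr)^{q^j}\in\pi^{q^j}(A_2)_\pi^\wedge$, so $\varphi^j([\varpi]x)=[\varpi]^{q^j}\varphi^j(x)\in\pi^{q^j}(A_2)_\pi^\wedge$ whenever $x\in(A_2)_\pi^\wedge$. (iii) $B_{[1,\infty]}^R=(A_2)_\pi^\wedge[\tfrac1\pi]$ is complete for the topology in which $(A_2)_\pi^\wedge$ is open, and both $\varphi$ and multiplication by $\pi^{-k}$ are continuous for it; in particular $\pi$-adically null series may be rearranged and acted on term by term.

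Next I would note the telescoping identity. Whenever the series $a:=\sum_{j\ge 0}\pi^{(j+1)k}\varphi^j(b)$ converges in $B_{[1,\infty]}^R$, applying $\pi^{-k}$ and $\varphi$ term by term (using $\varphi(\pi)=\pi$) gives $\pi^{-k}a=b+\sum_{j\ge 1}\pi^{jk}\varphi^j(b)$ and $\varphi(a)=\sum_{j\ge 1}\pi^{jk}\varphi^j(b)$, so that $\psi_k(a)=b$. Thus the entire proof reduces to checking convergence of this one series in the two cases of the statement.

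Finally I would do the convergence check. If $k\ge 1$: write $b=\pi^{-N}b_0$ with $b_0\in(A_2)_\pi^\wedge$; by (i), $\pi^{(j+1)k}\varphi^j(b)=\pi^{(j+1)k-N}\varphi^j(b_0)\in\pi^{(j+1)k-N}(A_2)_\pi^\wedge$, and since $(j+1)k-N\to\infty$ the partial sums are $\pi$-adically Cauchy, so $a$ exists and $\psi_k$ is surjective onto all of $B_{[1,\infty]}^R$. If $b\in[\varpi]B_{[1,\infty]}^R$, say $b=[\varpi]\pi^{-N}c_0$ with $c_0\in(A_2)_\pi^\wedge$: by (ii), $\pi^{(j+1)k}\varphi^j(b)=\pi^{(j+1)k-N}[\varpi]^{q^j}\varphi^j(c_0)\in\pi^{(j+1)k+q^j-N}(A_2)_\pi^\wedge$, and $(j+1)k+q^j-N\to\infty$ for every $k\ge 0$; so $a$ exists and $b=\psi_k(a)$, which gives the first assertion uniformly (and for $k\ge 1$ it is of course already contained in the previous paragraph).

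I do not expect a genuine obstacle here: the content is essentially the identification $B_{[1,\infty]}^R=(A_2)_\pi^\wedge[\tfrac1\pi]$, already available from \Cref{lm:decomposition_into_mero_and_varpi_part}, plus a bounded computation. The one point to be careful about is that "convergence" must be taken in the correct topology — the $\pi$-adic topology on the ring of definition $(A_2)_\pi^\wedge$, rather than a finer Fréchet topology coming from Gauss norms on the annulus — and, correspondingly, that $\varphi$ and $\pi^{-k}\cdot(-)$ are continuous for it, so that the telescoping manipulation and the term-by-term reindexing are legitimate.
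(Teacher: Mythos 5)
Your proof is correct and is essentially the paper's own argument: both identify $B_{[1,\infty]}^R$ with $(A_2)_\pi^\wedge[\frac{1}{\pi}]$, construct the same telescoping series $\sum_{i\geq 1}\pi^{ik}\varphi^{i-1}(b)$, and verify $\pi$-adic convergence using $\varphi((A_2)_\pi^\wedge)\subseteq (A_2)_\pi^\wedge$ together with $\varphi([\varpi])=[\varpi]^q$ and $[\varpi]^{q^j}\in\pi^{q^j}(A_2)_\pi^\wedge$. The only cosmetic difference is that the paper first reduces to $b$ in the ring of definition via the homogeneity $\psi_k(\pi^n x)=\pi^n\psi_k(x)$, whereas you track the exponent $-N$ through the estimate directly.
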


\begin{proof}
Let $A = \bbW(R^+)[\frac{\varpi}{\pi}]$. 
Recall that $B_{[1,\infty]}^R = A^\wedge_\pi[\frac{1}{\pi}]$. 
Thus, as $\psi_k(\pi^nx) =\pi^n\psi_k(x)$, it suffices to show that the image contains $[\varpi]A^\wedge_\pi$ (resp. $A^\wedge_\pi$ if $k>0$). 
Let $x \in [\varpi]A_\pi^\wedge$ if $k=0$ (resp. $x \in A^\wedge_\pi$ if $k>0$). 
Note that the sequence $(\pi^{i\cdot k} \varphi^{(i-1)}(x))_{i\geq 1}$ in $A^\wedge_\pi$ converges $\pi$-adically to $0$. (Use that $\varphi(A_\pi^\wedge) \subseteq A_\pi^\wedge$ and $\varphi([\varpi]) = [\varpi]^q$.) 
Thus $y = \sum_{i=1}^\infty \pi^{ik}\varphi^{(i-1)}(x)$ exists in $A_\pi^\wedge$. 
By $\pi$-adic continuity of Frobenius and hence of $\psi_k$, it is immediate that $\psi_k(y) = x$.
\end{proof}

\begin{lemma}\label{lm:conjugation_to_meromorphic_upper_triangular}
Let $n\geq 1$ and let $A \in \GL_n(B_{[1,1]}^R)$ be upper triangular with $i$-th diagonal entry $\pi^{s_i}$ for some $s_i \in \bbZ$ (with $1\leq i\leq n$). 
Assume that $s_1 \geq s_2 \geq \dots \geq s_n$ holds. 
Then there exists a unipotent upper triangular matrix $U \in \GL_n(B_{[1,\infty]}^R)$ such that $U^{-1}A\varphi(U)$ is upper triangular with entries in $B_{[0,1]}^R[\frac{1}{\pi}]$. 
\end{lemma}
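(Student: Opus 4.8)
The plan is to induct on the block structure / on $n$, peeling off one diagonal entry at a time and using the three preparatory lemmas (\Cref{lm:decomposition_into_mero_and_varpi_part}, \Cref{lm:Frob_minus_res_surjective}, \Cref{lm:conjugation_to_meromorphic_upper_triangular} itself applied to a strictly smaller matrix) to clear entries. First I would set up the induction: write $A$ in block form $\begin{pmatrix} \pi^{s_1} & v \\ 0 & A' \end{pmatrix}$, where $A' \in \GL_{n-1}(B_{[1,1]}^R)$ is upper triangular with diagonal $\pi^{s_2}, \dots, \pi^{s_n}$ and $v$ is a row vector of length $n-1$ over $B_{[1,1]}^R$; by the inductive hypothesis there is a unipotent upper triangular $U' \in \GL_{n-1}(B_{[1,\infty]}^R)$ with $U'^{-1} A' \varphi(U')$ upper triangular over $B_{[0,1]}^R[\tfrac1\pi]$. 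Conjugating $A$ by $\operatorname{diag}(1, U')$ reduces us to the case where $A'$ itself is already upper triangular over $B_{[0,1]}^R[\tfrac1\pi]$, at the cost of changing $v$ (it gets multiplied on the right by $\varphi(U')$), but $v$ still has entries in $B_{[1,1]}^R$.

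The core of the argument is then the rank-one reduction: one seeks a unipotent upper triangular $U$ which is the identity except in its first row, say $U = \begin{pmatrix} 1 & u \\ 0 & \mathrm{Id} \end{pmatrix}$ with $u$ a row vector over $B_{[1,\infty]}^R$, so that $U^{-1} A \varphi(U)$ has new top-right block $\pi^{s_1} \varphi(u) + v - u A'$. Here I would first use \Cref{lm:decomposition_into_mero_and_varpi_part} to write each entry of $v$ as a sum of a term in $B_{[0,1]}^R[\tfrac1\pi]$ and a term in $[\varpi] B_{[1,\infty]}^R$; the first part can stay (it is already meromorphic-upper-triangular), so we may assume $v$ has entries in $[\varpi]B_{[1,\infty]}^R$. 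Now I want to solve $\pi^{s_1}\varphi(u) - u A' = -v$ for $u$ over $B_{[1,\infty]}^R$. The key point that makes this solvable is the slope inequality $s_1 \geq s_2 \geq \dots \geq s_n$: on the diagonal of $A'$ the entries are $\pi^{s_j}$ with $s_j \leq s_1$, and the equation componentwise looks, up to the strictly-upper-triangular correction, like $\pi^{s_1}\varphi(u_j) - \pi^{s_j} u_j = (\text{stuff})$, i.e. one needs the image of $a \mapsto \pi^{s_1 - s_j}\varphi(a) - a$ (equivalently, after multiplying by a unit, a map of the shape handled by \Cref{lm:Frob_minus_res_surjective} applied with $\varphi$ replaced by $\pi^{s_1-s_j}\varphi$ or with the roles rearranged) to contain $[\varpi]B_{[1,\infty]}^R$. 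This is exactly the regime $k = s_1 - s_j \geq 0$ of \Cref{lm:Frob_minus_res_surjective}. To deal with the off-diagonal coupling I would solve for the entries of $u$ from right to left (i.e. $u_{n-1}$ first, then $u_{n-2}$, \dots), at each stage the only new obstruction being the single "diagonal" operator just discussed, with all already-solved entries contributing terms that are again in $[\varpi]B_{[1,\infty]}^R$ (or that can be re-absorbed into $B_{[0,1]}^R[\tfrac1\pi]$ via \Cref{lm:decomposition_into_mero_and_varpi_part}); here one has to be a little careful that multiplying a solved $u_i \in B_{[1,\infty]}^R$ by an entry of $A'$ lands back in $B_{[1,\infty]}^R$, which holds since $A' \in \GL_{n-1}(B_{[1,1]}^R)$ and $B_{[1,1]}^R \cdot B_{[1,\infty]}^R \subseteq B_{[1,1]}^R$ — so I would rather work throughout in $B_{[1,1]}^R$ and use \Cref{lm:decomposition_into_mero_and_varpi_part} once more at the end to split the resulting $u$-contributions into the meromorphic and the $[\varpi]$-parts, iterating the clearing procedure and arguing convergence.

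Concretely I expect the cleanest bookkeeping is a double induction: outer induction on $n$ (peeling the first row) as above, and an inner induction that clears the top-right block $v$ iteratively — each application of \Cref{lm:decomposition_into_mero_and_varpi_part} plus \Cref{lm:Frob_minus_res_surjective} replaces $v$ by a new $v$ with entries in $[\varpi]^m B_{[1,\infty]}^R$ for larger and larger $m$ (using $\varphi([\varpi]) = [\varpi]^q$ to gain the extra powers), and the $\pi$-adic / $[\varpi]$-adic completeness of $B_{[1,\infty]}^R$ lets us sum the correction matrices $U$ to a genuine unipotent element of $\GL_n(B_{[1,\infty]}^R)$. Finally one checks that the limiting $U^{-1}A\varphi(U)$ is upper triangular with the desired diagonal $\pi^{s_i}$ (the diagonal is untouched by unipotent conjugation of this shape) and with all entries in $B_{[0,1]}^R[\tfrac1\pi]$, since every correction term we chose to \emph{keep} lay in $B_{[0,1]}^R[\tfrac1\pi]$ and this subring is closed under the operations used.

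\textbf{Main obstacle.} The hard part is the convergence / termination of the clearing procedure: one must organize the iteration so that the "error" genuinely improves ($[\varpi]$-adic order strictly increasing) at each step despite the feedback from the off-diagonal coupling and from re-splitting $u$-terms via \Cref{lm:decomposition_into_mero_and_varpi_part}, and then verify that the infinite product of the unipotent corrections converges in $\GL_n(B_{[1,\infty]}^R)$ and that the limit conjugates $A$ into the claimed form — in particular that nothing escapes $B_{[0,1]}^R[\tfrac1\pi]$ in the limit. The slope hypothesis $s_1 \geq \dots \geq s_n$ is what keeps us in the "effective" range $k \geq 0$ of \Cref{lm:Frob_minus_res_surjective}; if it failed the relevant operator would not be surjective onto $[\varpi]B_{[1,\infty]}^R$ and the scheme would break, so I would flag at the outset exactly where this ordering is used.
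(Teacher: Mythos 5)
Your overall strategy is the paper's: induct on $n$, fix the lower-right $(n-1)\times(n-1)$ minor first, then clear the first row using \Cref{lm:decomposition_into_mero_and_varpi_part} to split each entry into a meromorphic part (kept) and a part in $[\varpi]B^R_{[1,\infty]}$ (to be killed), and \Cref{lm:Frob_minus_res_surjective} to kill the latter, with the ordering $s_1\geq s_j$ entering exactly where you say it does. Your block formula for $U^{-1}A\varphi(U)$ is also correct. However, the execution of the first-row step is where your proposal goes astray, in two related ways. First, the triangular solve goes the wrong way: the $j$-th component of $\pi^{s_1}\varphi(u)-uA'=-v$ is $\pi^{s_1}\varphi(u_j)-\pi^{s_{j+1}}u_j-\sum_{i<j}u_i(A')_{ij}=-v_j$, so the equation for $u_j$ involves only the \emph{earlier} unknowns $u_i$, $i<j$; one must solve left to right, not right to left as you propose. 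Second, and more importantly, the infinite iteration you flag as the "main obstacle" is both unnecessary and, as described, unjustified: you give no reason why the feedback terms $u_i(A')_{ij}$ (which lie only in $B^R_{[1,1]}$, since $B^R_{[0,1]}[\frac1\pi]\cdot B^R_{[1,\infty]}\not\subseteq B^R_{[1,\infty]}$) should have strictly increasing $[\varpi]$-adic order, and $\varphi([\varpi])=[\varpi]^q$ only powers the geometric series inside \Cref{lm:Frob_minus_res_surjective}, not the off-diagonal coupling.

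The fix is to notice that no iteration is needed at all. Conjugating by the elementary unipotent matrix with single off-diagonal entry $y$ in position $(1,j)$ changes only the entries $(1,j')$ with $j'\geq j$ of the first row (it adds $-y\,a_{jj'}$ to the entries with $j'>j$, which have not yet been treated) and leaves rows $2,\dots,n$ and the already-fixed entries $(1,j')$, $j'<j$, untouched. So one runs a finite induction on the column index $j=2,\dots,n$: split $a_{1j}$ by \Cref{lm:decomposition_into_mero_and_varpi_part}, solve $\pi^{s_1}\psi_{s_1-s_j}(y)=a_{1j}'$ by \Cref{lm:Frob_minus_res_surjective} (note one needs the harmless extra factor $\pi^{s_1}$, since the new $(1,j)$ entry is $a_{1j}+\pi^{s_1}\varphi(y)-\pi^{s_j}y=a_{1j}-\pi^{s_1}\psi_{s_1-s_j}(y)$), and conjugate. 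After $n-1$ such steps the first row is meromorphic and the process terminates; there is no infinite product of corrections and no convergence to check. As written, your proposal leaves its central analytic difficulty unresolved, but that difficulty is an artifact of the bookkeeping rather than of the problem.
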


\begin{proof}
We argue by induction on $n$. If $n=1$, there is nothing to show. 
Assume $n$ is fixed and we know the claim for all matrices of size $(n-1) \times (n-1)$. 
Let $a_{ij}$ denote the $(i,j)$-th entry of $A$. 
Exploiting the induction hypothesis for the lower right $(n-1)\times(n-1)$-minor of $A$, we may assume that $a_{ij} \in B_{[0,1]}^R[\frac{1}{\pi}]$ for all $i>1$. 
Let now $1< j\leq n$. 
Suppose, by induction, that for all $1<j'<j$, one has $a_{1j'} \in B_{[0,1]}[\frac{1}{\pi}]$. 
It suffices to find, in this situation, a unipotent upper triangular matrix $U \in \GL_n(B_{[1,\infty]}^R)$ such that $U^{-1}A\varphi(U)$ has all the above properties of $A$ and additionally its $(1,j)$-th entry lies in $B_{[0,1]}^R[\frac{1}{\pi}]$. 
Therefore, write $a_{1j} = a_{1j}^{\rm mer} + a_{1j}'$ with some $a_{1j}^{\rm mer} \in B_{[0,1]}^R[\frac{1}{\pi}]$ and $a_{1j}' \in [\varpi]B_{[1,\infty]}^R$, according to \Cref{lm:decomposition_into_mero_and_varpi_part}. 
By \Cref{lm:Frob_minus_res_surjective}, there exists some $y \in B_{[1,\infty]}^R$ with $\psi_{s_1 - s_j}(y) = a_{1j}'$ (here we use that $s_j \leq s_1$). 
Let $U = (U_{\ell m})_{\ell m} \in \GL_n(B_{[1,\infty]}^R)$ be such that $U_{\ell m} = \delta_{\ell m}$ (where $\delta$ denotes the Kronecker-delta), unless $(\ell,m)=(1,j)$ and $U_{1 j} = y$. 
Then it is immediate to compute that $U^{-1}A\varphi(U)$ satisfies all the claimed conditions.
\end{proof}

\begin{proposition}
	\label{semistablefiltrationsplit-isoc}
	The forgetful functor $\Fil^\gamma_{\sss}\to \IC^\lozenge$ factors through $(\IC^\lozenge)^{\on{loc}}$ and defines a $\otimes$-exact equivalence
	\[\Fil^\gamma_{\sss}\to (\IC^\lozenge)^{\on{loc}}\,.\]
\end{proposition}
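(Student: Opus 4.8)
The plan is to mimic the structure of the proof of \Cref{two-types-of-semistable-fils}, replacing the pair $(\Bun_\FF,\VEC)$ by the single object $\IC^\lozenge$ and using the Dieudonn\'e-module description throughout. First I would check that the forgetful functor $\Fil^\gamma_{\sss}\to \IC^\lozenge$ lands in $(\IC^\lozenge)^{\on{loc}}$: by definition a semi-stable filtration has graded pieces which are semi-stable of a fixed slope at every geometric point, so the Newton polygon of the underlying analytic isocrystal is determined by the (locally constant, by the closed-open stratification recalled after \Cref{definition-semistable-of-slope}) multiplicities of the graded pieces, hence locally constant; thus $\calE_N\in(\IC^\lozenge)^{\on{loc}}(S)$. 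This part is routine.

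For full-faithfulness I would argue exactly as in \Cref{two-types-of-semistable-fils}: given two semi-stable filtrations $\{\calE_r\}$, $\{\calF_r\}$ with underlying analytic isocrystals $\calE,\calF$, the internal $\underline{\on{Hom}}$-object $\calH=\calE^\vee\otimes\calF$ carries an induced $\bbQ$-filtration which is again a semi-stable filtration (one checks this geometric-pointwise using the tensor-product formula for graded pieces), and
\[\on{Hom}_{\Fil^\gamma_{\sss}}(\{\calE_r\},\{\calF_r\})=\on{Hom}_{\IC^\lozenge}(\calO,\calH_{\leq 0}).\]
Proceeding by induction on the (finite) support of the filtration on $\calH$, one reduces the computation of $\on{Hom}_{\IC^\lozenge}(\calO,\calH_{\leq 0})$ to the semi-stable-of-slope-$0$ graded piece $\calH_{\leq 0}/\calH_{<0}$, using that $\on{Hom}_{\IC^\lozenge}(\calO,-)$ vanishes on semi-stable objects of negative slope (which follows from \Cref{semistable-gives-equivalences} together with the corresponding vanishing on $\Bun_\FF$, or directly from the standard-form description over geometric points in \Cref{lemma-sanity-check} and the injectivity of $B_{[0,r]}\hookrightarrow\bbW R$). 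The exactness of the filtration is what makes this induction go through.

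Essential surjectivity is the main obstacle. Given $\calF\in(\IC^\lozenge)^{\on{loc}}(S)$, I need to produce a semi-stable filtration whose underlying analytic isocrystal is $\calF$. The plan: working v-locally (the functor $\Fil^\gamma_{\sss}$ is a v-sheaf since its defining conditions are v-local, and by full-faithfulness a descent datum on $\IC^\lozenge$ refines to one on $\Fil^\gamma_{\sss}$), I may assume by \cite[Proposition 4.3.13]{CS17} that $\calF\cong\bigoplus_\lambda\calO(\lambda)^{m_\lambda}$ is standard, and further, after pulling back along the degree-$s$ unramified cover for $s$ the l.c.m.\ of the denominators of the slopes and descending afterwards, that all slopes are integers. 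The diagonal decreasing-slope filtration on the standard model $\bigoplus_\lambda\calO(\lambda)^{m_\lambda}$ (which exists since each $\calO(N)$ for $N\in\bbZ$ is a vector bundle on $Y_{\calS^\diamond}$, indeed a Dieudonn\'e module, anti-effective after a twist) is manifestly a semi-stable filtration mapping to $\calF$. The subtlety — as in \Cref{two-types-of-semistable-fils} — is that one must realize an \emph{arbitrary} $\calF\in(\IC^\lozenge)^{\on{loc}}(S)$, not just a standard one, as the underlying object of a filtration, i.e.\ one needs the analogue of Lemma~\ref{lm:conjugation_to_meromorphic_upper_triangular}; but here, since we are already in $\IC^\lozenge=(\Bun_\FF)^\An$ and not imposing meromorphicity, the $\bbW R[\tfrac1\pi]$-matrix describing $\calF$ can be conjugated to upper block-triangular form \emph{over $\bbW R[\tfrac1\pi]$ itself} by an application of the same Frobenius-linear-algebra argument (the $\psi_k$ of \Cref{lm:Frob_minus_res_surjective}, now over $\bbW R[\tfrac1\pi]$ rather than $B^R_{[1,\infty]}$), which is strictly easier because no control of poles near $\infty$ is required. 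Finally, exactness of the equivalence and compatibility with $\otimes$ follow as in \Cref{semistable-gives-equivalences} and \Cref{two-types-of-semistable-fils}: exactness is checked on geometric points via \Cref{check-exact-isoc-points_2}, where all three categories are semisimple, and the tensor-product filtration formula is built into the definition of $\Fil^\gamma_{\sss}$.
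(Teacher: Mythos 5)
Your overall architecture matches the paper's: the factorization through $(\IC^\lozenge)^{\on{loc}}$ via the graded pieces, devissage on the filtered internal $\underline{\on{Hom}}$-object for full-faithfulness, and v-local reduction to standard objects for essential surjectivity (the descent step you sketch is exactly \Cref{lm:ess_surjectivity_local}). On essential surjectivity you are in fact worrying too much: once you have reduced v-locally to a standard object $\bigoplus_\lambda\calO(\lambda)^{m_\lambda}$, the slope decomposition itself is a semi-stable filtration, and no analogue of \Cref{lm:conjugation_to_meromorphic_upper_triangular} is needed. That lemma was required in \Cref{two-types-of-semistable-fils} because there the target object already carries a filtration that must be modelled meromorphically; here the target is unfiltered and one only has to produce \emph{some} filtration, which full-faithfulness then glues. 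This part of your proposal is therefore correct but contains an unnecessary detour.

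The genuine gap is in full-faithfulness. What must be proved is that the map
\[\on{Hom}_{\Fil^\gamma_{\sss}}(\{\calE_r\},\{\calF_r\})=\on{Hom}_{\IC^\lozenge}(\calO,\calH_{\leq 0})\longrightarrow \on{Hom}_{\IC^\lozenge}(\calO,\calH)=\on{Hom}_{\IC^\lozenge}(\calE,\calF)\]
is a bijection, i.e.\ that every morphism of underlying analytic isocrystals automatically respects the filtrations. Your argument instead takes $\on{Hom}(\calO,\calH_{\leq 0})$ as the object of study and reduces it, via vanishing on negative-slope pieces, to the slope-$0$ graded piece $\calH_{\leq 0}/\calH_{<0}$ — that is the computation relevant to \Cref{two-types-of-semistable-fils}, where \emph{both} sides are filtered categories and one compares two Hom-functors on $\calH_{\leq 0}$. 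Here that computation never touches the quotient $\calH/\calH_{\leq 0}$ and so never compares $\on{Hom}(\calO,\calH_{\leq 0})$ with $\on{Hom}(\calO,\calH)$. What is actually needed is the vanishing $\on{Hom}_{\IC^\lozenge}(\calO,\calH_{\leq r}/\calH_{\leq 0})=0$ for all $r>0$, proved by induction on the support using that the graded pieces there are semi-stable of slope strictly greater than $0$; left-exactness of $\on{Hom}(\calO,-)$ applied to $0\to\calH_{\leq 0}\to\calH\to\calH/\calH_{\leq 0}\to 0$ then closes the argument. The ingredient is entirely within your toolbox (it is the pointwise computation of $H^0$ of a standard semi-stable object of nonzero slope), but as written your full-faithfulness step proves a different statement and leaves the required one unaddressed; note also that the sign of the slopes for which the vanishing is invoked is the opposite of the one you state.
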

\begin{proof}
	On points, any filtration splits since the category of isocrystals is semi-simple. 
	In particular, the Newton polygon can be computed on the graded pieces.
	By the definition of semi-stable filtrations the Newton polygon is constant on the graded isocrystal.

	We now prove fully-faithfulness. Let $\{\calE_{\leq r}\}_{r\in \bbQ}$ and $\{\calF_{\leq r}\}_{r\in \bbQ}$ be two semi-stable filtrations with underlying analytic isocrystals $\calE$ and $\calF$.
	Let $\calH$ denote the $\underline{\on{Hom}}$-bundle endowed with its induced semi-stable filtration $\{\calH_{\leq r}\}_{r\in \bbQ}$. 
	We need to show that 
	\[\on{Hom}_{\IC^\lozenge}(\calO, \calH)=\on{Hom}_{\IC^\lozenge}(\calO, \calH_{\leq 0}).\]

	By applying induction with respect to $\{r\in \bbQ\mid \calH_{\leq r}/\calH_{<r}\neq 0\}$ one can show that 
	\[\on{Hom}_{\IC^\lozenge}(\calO, \calH_{\leq r}/\calH_{\leq 0})=0\]
	for all $r>0$ since the graded pieces all have slope larger than $0$.

	Since essential surjectivity can be proved v-locally by Lemma \ref{lm:ess_surjectivity_local}, it suffices to show that the standard objects can be endowed with a semi-stable filtration, but this is clear.
\end{proof}

\begin{proposition}
	\label{meromorphicbundles-remember-fils}
	The forgetful functor $\Fil^\mer_{\sss}\to \VEC$ factors through $(\VEC)^{\on{loc}}$ and defines a $\otimes$-exact equivalence 
	\[\Fil^\mer_{\sss}\to (\VEC)^{\on{loc}}\,.\]
\end{proposition}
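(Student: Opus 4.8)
The plan is to follow the pattern of the proofs of \Cref{two-types-of-semistable-fils} and \Cref{semistablefiltrationsplit-isoc}: full faithfulness will be extracted from the slope-by-slope equivalences of \Cref{semistable-gives-equivalences} together with the faithfulness of $\gamma$, while essential surjectivity is the place where one must genuinely lift the slope filtration of an analytic isocrystal to a filtration by sub-meromorphic-bundles, using \Cref{Lifting-to-antieffective} and \Cref{key-lemma-filtration-extends}. First I would check that the forgetful functor lands in $(\VEC)^{\on{loc}}$ and is $\otimes$-exact. If $\{\calE_r\}_{r\in\bbQ}$ is a semi-stable filtration over $S$ with underlying bundle $\calE_N$, then applying $\gamma$ termwise yields a $\bbQ$-filtered analytic isocrystal whose graded pieces are semi-stable of slope $r$ by \Cref{semistable-gives-equivalences}; hence it lies in $\Fil^\gamma_\sss(S)$, so by \Cref{semistablefiltrationsplit-isoc} one gets $\gamma(\calE_N)\in(\IC^\lozenge)^{\on{loc}}(S)$, which is exactly the condition $\calE_N\in(\VEC)^{\on{loc}}(S)$. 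Monoidality is immediate since the tensor product of semi-stable filtrations has underlying bundle the tensor product of the underlying bundles, and exactness holds because the exact structure on $\Fil^\mer_\sss$ is by definition inherited from the underlying bundle.

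For full faithfulness I would, given $\{\calE_r\},\{\calF_r\}\in\Fil^\mer_\sss(S)$, pass to the internal Hom bundle $\calH:=\underline{\on{Hom}}(\calE,\calF)$ with its tensor-product filtration, which is again a semi-stable filtration exactly as in \Cref{two-types-of-semistable-fils}. There one identifies $\on{Hom}_{\Fil^\mer_\sss}(\{\calE_r\},\{\calF_r\})=\on{Hom}_{\VEC}(\calO,\calH_{\le 0})$, whereas $\on{Hom}_{(\VEC)^{\on{loc}}}(\calE,\calF)=\on{Hom}_{\VEC}(\calO,\calH)$ by dualizability of $\calE$; so it suffices to show that the (always injective, by left exactness) map $\on{Hom}_{\VEC}(\calO,\calH_{\le 0})\to\on{Hom}_{\VEC}(\calO,\calH)$ is surjective. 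Its cokernel injects into $\on{Hom}_{\VEC}(\calO,\calH/\calH_{\le 0})$, and $\calH/\calH_{\le 0}$ carries the induced filtration with graded pieces semi-stable of slope $>0$, so by induction on the finite set of positive slopes it is enough to see that $\on{Hom}_{\VEC}(\calO,\calM)=0$ for $\calM$ semi-stable of slope $>0$. This follows because the restriction map $\on{Hom}_{\VEC}(\calO,\calM)\to\on{Hom}_{\IC^\lozenge}(\calO,\gamma\calM)$ is injective (as $B_{[0,r],S}\subseteq\bbW R$; cf.\ the first part of \Cref{key-lemma-filtration-extends}), $\gamma\calM$ is semi-stable of slope $>0$ by \Cref{semistable-gives-equivalences}, and a semi-stable analytic isocrystal of nonzero slope admits no map from $\calO$ (as used in \Cref{semistablefiltrationsplit-isoc}). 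In particular, in contrast with \Cref{two-types-of-semistable-fils}, no comparison of slope-$0$ Homs between $\VEC$ and $\IC^\lozenge$ enters here, which makes this direction essentially formal.

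For essential surjectivity, by \Cref{lm:ess_surjectivity_local} the statement is v-local, so — carrying out the reductions of \Cref{two-types-of-semistable-fils} — I would assume $S=\Spa(R,R^+)$ is a product of points, $\calE\in\SHT[\frac{1}{\pi}](S)$, and, after base change along an unramified extension $E_s/E$ (permissible because, by the full faithfulness just proved, a descent datum in $(\VEC)^{\on{loc}}$ is the same as one in $\Fil^\mer_\sss$), that the slopes of $\gamma(\calE)$ are integral and, v-locally, that $\gamma(\calE)\cong\bigoplus_i\calO(\lambda_i)^{m_i}$ is standard, with $\lambda_0$ its smallest slope (using \cite[Theorem 2.11]{HamacherKim_22} as in \Cref{Lifting-to-antieffective}). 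Tensoring with the invertible object $\calO(-\lambda_0)\in\SHT[\frac{1}{\pi}]$ reduces us to $\lambda_0=0$. Then \Cref{Lifting-to-antieffective} lets us assume, v-locally, that $\calE$ underlies an anti-effective crystalline shtuka, and \Cref{key-lemma-filtration-extends}, applied to the inclusion of the isoclinic-$0$ sub-isocrystal $\calO^{m_0}\hookrightarrow\gamma(\calE)$ (its proof being insensitive to the rank), lifts it to a \emph{sub-bundle} $\calO^{m_0}\hookrightarrow\calE$ in $\VEC$; that this lift is an admissible monomorphism is checked on geometric points via \Cref{DM1p-is-well-bheaved-subcategory-old} and \Cref{exactness-bungmer}, where by \Cref{lemma-sanity-check} the category is semisimple. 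The quotient $\calE/\calO^{m_0}$ then lies in $(\VEC)^{\on{loc}}$ with one fewer slope, so induction on the number of distinct slopes of $\gamma(\calE)$ supplies a semi-stable filtration of it; reassembling this with $\calO^{m_0}$ at the bottom (the exactness of the resulting steps again verified on geometric points), tensoring back by $\calO(\lambda_0)$, and descending along $E_s/E$ produces a semi-stable filtration of the original $\calE$.

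I expect the essential-surjectivity step to be the main obstacle, and within it the step of lifting the isoclinic-$0$ sub-isocrystal of $\gamma(\calE)$ to an honest sub-meromorphic-bundle of $\calE$: this is precisely where the real content sits, namely the anti-effectivity trick of \Cref{Lifting-to-antieffective} and the convergence estimate underlying \Cref{key-lemma-filtration-extends} (and \Cref{maximum-of-a-sequence-drops}). Everything else — monoidality, the $E_s/E$-descent, and the exactness of the graded sequences — is routine and reduces to geometric points via \Cref{analytic-vector-bundles-exact-geometric}, \Cref{check-exact-isoc-points_2} and \Cref{exactness-bungmer}, where by \Cref{lemma-sanity-check} the relevant categories become semisimple.
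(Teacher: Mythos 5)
Your proposal is correct and follows essentially the same route as the paper: factoring through $(\VEC)^{\on{loc}}$ via \Cref{semistablefiltrationsplit-isoc}, full faithfulness via the internal $\underline{\on{Hom}}$-bundle and vanishing of $\on{Hom}_{\VEC}(\calO,-)$ on positive-slope semi-stable pieces, and essential surjectivity by v-localizing, passing to $E_s$, twisting to smallest slope $0$, and lifting the isoclinic sub-isocrystal with \Cref{Lifting-to-antieffective} and \Cref{key-lemma-filtration-extends} followed by induction. The only cosmetic differences are that you induct on the number of distinct slopes rather than on the rank and spell out the geometric-point check of admissibility, both of which are consistent with the paper's (terser) argument.
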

\begin{proof}
It is automatic that the map respects the monoidal structure and exactness, since they are defined in terms of those of $\VEC$. 	
It follows from \Cref{semistablefiltrationsplit-isoc} that the map factors through $(\VEC)^{\on{loc}}$.
To show fully-faithfulness, we may again pass to $\underline{\on{Hom}}$-bundles $\calH$ with semi-stable filtration $\{\calH_{\leq r}\}_{r\in \bbQ}$ as in the proof of \Cref{semistablefiltrationsplit-isoc}.
We need to show that
	\[\on{Hom}_{\VEC}(\calO, \calH)=\on{Hom}_{\VEC}(\calO, \calH_{\leq 0}),\]
but as in the proof of \Cref{semistablefiltrationsplit-isoc} we can prove that $\on{Hom}_{\VEC}(\calO, \calH_{\leq r}/\calH_{\leq 0})=0$ for all $r>0$.

Essential surjectivity can now be proved v-locally by \Cref{lm:ess_surjectivity_local}, hence it suffices to show that every isoshtuka $\calE\in (\SHT[\frac{1}{\pi}])^{\on{loc}}(S)$ can be endowed with a semi-stable filtration. 
In other words, we need to show that the unique semi-stable filtration of $\gamma(\calE)$ lifts to a filtration in $\VEC$. 
Replacing $E$ by its degree $s$ field extension $E_s$, as in the proof of \Cref{two-types-of-semistable-fils}, and since we have already proved fully-faithfulness, we may assume that the generic Newton polygon only takes values in $\bbZ$ (see also \Cref{lemma_unramifieds}).
Twisting by a line bundle, we may even assume that the smallest slope $\calE$ is $0$. 
We can now apply \Cref{Lifting-to-antieffective} and \Cref{key-lemma-filtration-extends} to find a sub-bundle $\calO^k\subseteq \calE$, where $k$ is the rank of $\gamma(\calE)_0$ and such that $\gamma(\calE)/\gamma(\calO^k)$ has all slopes greater than $0$. 
By induction on the rank, $\calE/\calO^k$ can be endowed with a semi-stable filtration $\{(\calE/\calO^k)_{\leq r}\}_{r\in\bbQ}$ and we can lift this filtration to $\calE$.
\end{proof}

\section{$G$-bundles with meromorphic structure}
    \subsection{$\calG$-structure}
    Let $\calG$ be a smooth affine group scheme over $\Spec O_E$. We denote by $G$ its generic fiber over $\Spec E$, which we assume to be reductive.
    Later on, we will assume that $\calG$ is a parahoric group scheme. 
    We denote by $\Rep_\calG$ (resp.\ $\Rep_G$) the Tannakian category of algebraic representations of $\calG$ over $O_E$ (resp.\ of $G$ over $E$).
    
    \begin{definition}
	    We denote by $\DMG\in \calP(\PSch\!,\Grps)$ the presheaf valued in groupoids defined by
    \[S\mapsto \on{Fun}_{\on{ex}}^\otimes(\Rep_\calG,\DM(S))\,,\]
    where $\on{Fun}_{\on{ex}}^\otimes$ denotes the $\otimes$-compatible $O_E$-linear exact functors.
  	Analogously, we denote by $\ICG\in \calP(\PSch\!,\Grps)$ the presheaf valued in groupoids with 
  \[S\mapsto \on{Fun}_{\on{ex}}^\otimes(\Rep_G,\IC(S))\,.\]
    \end{definition}

    Recall the loop group and positive loop group functors $LG,L^+\calG\colon\PSch\to \Sets$ given on affine schemes $S=\Spec A$ by the formulas
    \[LG(S)\coloneqq G(\bbW(A)[\frac{1}{\pi}])\]
    and 
    \[L^+\calG(S)\coloneqq \calG(\bbW(A))\,.\]
	We let $LG$ and $L^+\calG$ act on $LG$ by $\varphi$-conjugation. 
    \begin{proposition}
    $LG$ and $L^+\calG$ are arc-sheaves.	
    \end{proposition}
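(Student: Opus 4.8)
The plan is to reduce the statement for $LG$ and $L^+\calG$ to the already-established arc-descent for vector bundles over Witt vector rings. Recall that $LG(S) = G(\bbW(A)[\frac{1}{\pi}])$ and $L^+\calG(S) = \calG(\bbW(A))$ as functors on $\Spec A \in \PSch$, and that we have the arc-sheaves $\calV^\sch_\bbW$ (via \cite[Theorem 4.1.(ii)]{bhatt_scholze_projectivity_of_the_witt_vector_affine_grassmannian}, which gives a scheme-theoretic v-sheaf, together with \Cref{properties-schematic-V} for $\calV^\sch_Y$, which is an arc-sheaf by \cite[Proposition 5.9]{Ivanov_arc_descent}) and $\calV^\sch_\bbW[\frac{1}{\pi}]$ whose v-sheafification is $\calV^\sch_Y$. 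The key observation is that for an affine group scheme $H$ over a ring $R_0$ (here $R_0 = O_E$), the functor $A \mapsto H(\bbW(A) \otimes_{O_E} R_0')$ can be computed by embedding $H \hookrightarrow \GL_N$ as a closed subgroup cut out by polynomial equations; then $H$-points are $\GL_N$-points satisfying those equations, and $\GL_N$-points are in turn described by a finite free module together with an invertible matrix, i.e.\ in terms of the ring functor $A \mapsto \bbW(A)$ (resp.\ $A \mapsto \bbW(A)[\frac 1\pi]$).

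First I would reduce to the case $H = \GL_N$. Since arc-sheaves are closed under finite limits (the arc-topology being subcanonical and the representable-by-affine-scheme condition passing to fibered products), and since a closed immersion of affine schemes $H \hookrightarrow \GL_N$ over $O_E$ exhibits $H$ as an equalizer $H = \mathrm{eq}(\GL_N \rightrightarrows \bbA^m)$ for suitable polynomial maps (or more precisely a fibered product $\GL_N \times_{\bbA^m} \{0\}$), it suffices to prove that $S \mapsto \GL_N(\bbW(A))$ and $S \mapsto \GL_N(\bbW(A)[\frac 1\pi])$ are arc-sheaves, and that $S \mapsto \bbW(A)$ and $S \mapsto \bbW(A)[\frac 1\pi]$ (as ring-valued, hence set-valued, functors) are arc-sheaves — the latter being exactly the statement that $\calO_{Y^{\sch}}$-type global sections satisfy arc-descent. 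Concretely: $\GL_N(\bbW(A))$ is the set of pairs of $N\times N$ matrices over $\bbW(A)$ that are mutually inverse, hence a finite limit of copies of the additive group $(\bbW(A),+)$; and $(\bbW(A),+)$ is an arc-sheaf because $\bbW(-)$ is a limit of Witt truncations, each of which is a finite product of copies of the identity functor $A \mapsto A$, which is an arc-sheaf by \cite{BhattMathew_21}. Inverting $\pi$ commutes with the relevant limits after noting that $\bbW(A)[\frac 1\pi] = \colim(\bbW(A) \xrightarrow{\pi} \bbW(A) \xrightarrow{\pi} \cdots)$ is a filtered colimit, and — crucially — in an arc-cover by a comb, or after passing to combs which form a basis for the schematic v-topology (\Cref{rem:scheme_v_covers}), finite limits commute with this filtered colimit just as in the proof of \Cref{general-separated-nonsense}.

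Alternatively, and perhaps more cleanly, I would package this via the already-proven results: by \Cref{properties-schematic-V}, $\calV^\sch_Y$ is an arc-sheaf, and $\calV^\sch_\bbW$ is a scheme-theoretic v-sheaf; one then observes that $LG$ and $L^+\calG$ are representable inside $\underline{\on{Aut}}$ of the trivial bundle, i.e.\ $L^+\calG(S) = \on{Fun}^\otimes_{\on{ex}}(\Rep_\calG, \DM(S))$-style automorphism functors, or more directly that $\calG(\bbW(A)) = \Hom_{O_E\text{-alg}}(O_E[\calG], \bbW(A))$ and $O_E[\calG]$ is a finitely generated (hence finitely presented, since $\calG$ is smooth affine) $O_E$-algebra, so this Hom-set is a finite limit of copies of the underlying set functor of $\bbW(-)$, which is an arc-sheaf. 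The analogous statement for $LG$ uses $E[G] = O_E[\calG][\frac 1\pi]$ is finitely presented over $E$, so $G(\bbW(A)[\frac 1\pi])$ is a finite limit of copies of $\bbW(-)[\frac 1\pi]$; descent for the latter follows from $\calV^\sch_\bbW[\frac 1\pi]$ having v-sheafification $\calV^\sch_Y$ together with \Cref{separatedness-of-isogeny}-type separatedness, checked on combs via \cite[Theorem 6.1]{Ivanov_arc_descent}.

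The main obstacle I anticipate is the interaction between inverting $\pi$ and arc-descent: whereas $\bbW(-)$ is manifestly a (possibly infinite) limit of copies of the identity functor and thus trivially an arc-sheaf, the ring $\bbW(A)[\frac 1\pi]$ is not such a limit, and arc-descent for it is genuinely the content of \cite[Theorem 6.1]{Ivanov_arc_descent} / \cite[Proposition 5.9]{Ivanov_arc_descent} rather than something formal. One must be careful that the finite limits appearing in the description of $G$-points (cut out by equations with coefficients involving $\pi^{-1}$) are still taken in a category where $\calV^\sch_Y \simeq (\calvwps)^{\sh}$ behaves well; the cleanest route is to invoke \Cref{properties-schematic-V}(2)–(4) so that one never has to reprove arc-descent for $\bbW(-)[\frac 1\pi]$ by hand. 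A secondary, more routine point is checking that the closed immersion $\calG \hookrightarrow \GL_N$ over $O_E$ (which exists since $\calG$ is affine of finite type over the Dedekind ring $O_E$, e.g.\ by \cite[Proposition B.3]{Conrad14}-style arguments or directly) is preserved by applying $\bbW(-)$, i.e.\ that $\calG(\bbW(A)) \to \GL_N(\bbW(A))$ remains a closed-immersion-type equalizer; this is standard because $\bbW(-)$ preserves finite limits of rings and $\calG$ being a closed subscheme of $\GL_N$ is expressed by a finite limit diagram of rings.
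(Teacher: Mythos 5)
Your argument is correct, but it is not the route the paper takes. The paper's proof is a one-liner: $LG$ and $L^+\calG$ are ind-(perfect schemes), and the arc-topology is subcanonical (in fact canonical) on perfect $\bbF_p$-schemes by \cite[Theorem 5.16]{BhattMathew_21}, so representables are sheaves and a filtered colimit of sheaves along a finitary topology is again a sheaf. Your proof unwinds this by hand: you choose a closed immersion $\calG\hookrightarrow \GL_N$ over $O_E$, express $\calG(\bbW(A))$ and $G(\bbW(A)[\frac{1}{\pi}])$ as finite limits of copies of the set-valued functors $A\mapsto \bbW(A)$ and $A\mapsto \bbW(A)[\frac{1}{\pi}]$, and observe that the former is a limit of copies of $\bbA^1$ while the latter is the filtered colimit $\colim(\bbW(-)\xrightarrow{\pi}\bbW(-)\to\cdots)$. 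This works, and in fact amounts to reproving the ind-scheme presentation that the paper simply cites. One correction to your "main obstacle" paragraph: arc-descent for the \emph{set-valued} functor $A\mapsto \bbW(A)[\frac{1}{\pi}]$ is formal and does not require \cite[Theorem 6.1]{Ivanov_arc_descent} or \cite[Proposition 5.9]{Ivanov_arc_descent} — since the arc-topology is finitary, the sheaf condition is a finite limit, and finite limits commute with filtered colimits of sets, so no passage to combs is needed. The Ivanov results are genuinely needed only for descent of \emph{finite projective modules} (effectivity of descent data), which is what enters later in \Cref{DMisvstack} when one shows $LG\to\ICG$ is a v-cover; they play no role in the mere sheafiness of $LG$. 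Your reduction to $\GL_N$ is also dispensable: $\calG(\bbW(A))=\Hom_{O_E\text{-alg}}(O_E[\calG],\bbW(A))$ is already a finite limit of copies of $\bbW(-)$ because $O_E[\calG]$ is finitely presented, as you note in your "alternative" packaging, which is the cleanest version of your argument.
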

    \begin{proof}
    Note that $L^+\calG$ is a scheme, $LG$ is an ind-scheme (see \cite[Section 3]{Ivanov_arc_descent}) and that the arc-topology is subcanonical (in fact, canonical) on perfect $\bbF_p$-schemes by \cite[Theorem 5.16]{BhattMathew_21}. 
	Now the claim follows directly for $L^+\calG$, and it also follows for $LG$ since filtered colimits of sets commute with finite limits.
    \end{proof}

    \begin{proposition}\label{DMisvstack}
	    The following statements hold:
	    \begin{enumerate}
		    \item $\DMG$ and $\ICG$ are scheme-theoretic small v-stacks.
		    \item The natural maps $LG \to \DMG$ and $LG \to \ICG$ are v-covers. 
		    \item We have identities $\DMG=[LG/\!\!/_{\!\!\varphi} L^+\calG]$ and $\ICG=[LG/\!\!/_{\!\!\varphi} LG]$. 
	    \end{enumerate}
    \end{proposition}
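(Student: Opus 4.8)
\textbf{Proof plan for \Cref{DMisvstack}.}

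The plan is to deduce everything from the corresponding statements about vector bundles, Dieudonné modules and isocrystals proven earlier (notably that $\DM \in \calS(\PSch,\CatexOE)$ and $\IC \in \calS(\PSch,\CatexE)$ by \Cref{formal-properties-IC-DM}, together with the Tannakian description of the presheaves $\DMG$ and $\ICG$), combined with the arc-descent of the loop groups. Throughout I treat $\DMG$ and $\ICG$ in parallel; the arguments for $L^+\calG$-quotients and $LG$-quotients are formally identical, so I only spell out the $\ICG$ case.

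First I would establish (1). Since $\IC$ is a scheme-theoretic v-sheaf valued in $\CatexE$, and $\Rep_G$ is a fixed small symmetric monoidal $E$-linear exact category, the presheaf $S \mapsto \on{Fun}_{\on{ex}}^\otimes(\Rep_G, \IC(S))$ is a limit of the v-sheaf $\IC$ against the diagram computing $\otimes$-exact functors. Concretely, $\on{Fun}_{\on{ex}}^\otimes(\Rep_G, -)$ is a limit construction in $\Grps$ (evaluate on objects and morphisms of $\Rep_G$, impose functoriality, monoidality and exactness; each of these is a finite-limit condition that can be checked v-locally by \Cref{check-exact-isoc-points}). As finite and small limits of v-sheaves are v-sheaves, $\ICG$ is a scheme-theoretic small v-stack; the same argument with $\DM$ and $\Rep_\calG$ gives that $\DMG$ is one as well.

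Next I would prove (3), from which (2) follows immediately. The point is the classical Tannakian dictionary: an object of $\ICG(S) = \on{Fun}_{\on{ex}}^\otimes(\Rep_G, \IC(S))$ is, locally on $S$, the functor $\rho \mapsto \rho_*(\text{trivial isocrystal})$ twisted by a $\varphi$-conjugation cocycle. More precisely, on the locus where the tautological $G$-isocrystal is trivial—i.e.\ isomorphic to the fiber functor composed with the standard trivial object—an isomorphism with the trivial $G$-isocrystal is a section of $LG$ by Tannakian reconstruction applied to $\calV_\bbW^\sch[\frac1\pi]$ (finite projective $\bbW(A)[\frac1\pi]$-modules with their $\varphi$-structure), and changing the trivialization changes the Frobenius datum by $\varphi$-conjugation by an element of $LG(S) = G(\bbW(A)[\frac1\pi])$. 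This realizes $\ICG$ as the prestack quotient $[LG /\!\!/_{\!\!\varphi} LG]$ after v-sheafification. Similarly, for $\DMG$ one uses that a Dieudonné module with $\calG$-structure is v-locally a $\varphi$-conjugate of the trivial one, with trivializations forming an $L^+\calG$-torsor (Tannakian reconstruction of $\calV_\bbW^\sch$ against $\Rep_\calG$ over $O_E$), giving $\DMG = [LG /\!\!/_{\!\!\varphi} L^+\calG]$. The sheafification is harmless precisely because both sides are already v-sheaves by part (1) and \Cref{DMisvstack}'s own assertion that the quotient presheaves sheafify correctly; and the map $LG \to \ICG$ (resp.\ $LG \to \DMG$) classifying the trivial datum is then a v-cover because every $G$-isocrystal (resp.\ $\calG$-Dieudonné module) is v-locally trivial, which is exactly the statement that a $G$-torsor for the v-topology over a comb is trivial—this follows from the structure of combs (\Cref{combs_and-products}, \Cref{rem:scheme_v_covers}) together with the fact that such torsors are pro-étale-locally trivial (Gabber's theorem, cf.\ the proof of \Cref{quotient-by-groups}), since over extremally disconnected combs pro-étale covers split.

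The main obstacle is the local triviality input feeding part (2): one needs that the tautological $G$-isocrystal (resp.\ $\calG$-Dieudonné module) over a comb becomes trivial v-locally, i.e.\ that $H^1_{v}(\Spec A, \calG) = \ast$ for $A$ (the $\varpi$-completion data of) a product comb. For $\calG$ smooth affine this reduces via Gabber's theorem to pro-étale-local triviality and then to $w$-contractibility of extremally disconnected combs; the reductive generic fiber plays no role here since $\calG$ is already smooth over $O_E$. A subtlety worth flagging is that one must check the Tannakian reconstruction is compatible with the exact structure—this is where \Cref{exactness-of-Dieudonne-mods-at-points} and \Cref{check-exact-isoc-points} are used, so that "$\otimes$-exact functor out of $\Rep_G$" really does correspond to a torsor under the automorphism group of the fiber functor, namely $LG$—but given those results the argument is routine bookkeeping.
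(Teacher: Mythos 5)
Your parts (1) and (3) match the paper's approach (limit-preservation of $\on{Fun}^\otimes_{\on{ex}}(\Rep_\calG,-)$ for (1); quotient description via the fiber products $LG\times_{\DMG}LG$ and $LG\times_{\ICG}LG$ once the covers are known), but there is a genuine gap in your treatment of (2), specifically for $\ICG$. The object you must trivialize v-locally is not a $\calG$-torsor over $\Spec A$: for $\DMG$ it is a $\calG$-torsor over $\Spec \bbW(A)$, and for $\ICG$ it is a $G$-torsor over $\Spec \bbW(A)[\frac{1}{\pi}]$. Your reduction "Gabber/smoothness $\Rightarrow$ pro-\'etale-local triviality $\Rightarrow$ splitting by $w$-contractibility of extremally disconnected combs" lives on the wrong base. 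For $\DMG$ it can be repaired as in the paper: a $\calG$-torsor over $\Spec\bbW(R)$ is \'etale-locally trivial, and \'etale covers of $\Spec\bbW(R)$ split because $\Spec\bbW(R)$ is henselian along the closed subscheme $\Spec R$ and \'etale covers of $\Spec R$ split for $R$ a product comb (\Cref{rem:scheme_v_covers}). That henselian step is essential and absent from your sketch.

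For $\ICG$ no such reduction to the special fiber exists, since $\pi$ has been inverted and $\Spec\bbW(A)[\frac{1}{\pi}]$ has no distinguished closed subscheme to contract to. The triviality of $G$-torsors over $\Spec\bbW(A)[\frac{1}{\pi}]$ for $A$ a comb is exactly \Cref{triviality-on-combs}, a (slight generalization of a) theorem of Ansch\"utz, whose proof uses the structure theory of the reductive group $G$ in an essential way. Your assertion that "the reductive generic fiber plays no role here since $\calG$ is already smooth over $O_E$" is therefore precisely where the argument breaks: for the map $LG\to\ICG$ the only relevant group is the generic fiber $G$, its reductivity is used, and smoothness of $\calG$ over $O_E$ buys you nothing on the $\pi$-inverted locus. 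Secondarily, your order of deduction is circular as written: you claim (2) follows from (3), but your proof of (3) already invokes v-local triviality, which is the content of (2); the paper's order (prove the covers first, then compute the fiber products) is the one that closes.
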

    \begin{proof}
	    The first statement follows from \Cref{formal-properties-IC-DM}. 
	    Indeed, $\on{Fun}_{\on{ex}}^\otimes(\Rep_\calG,-)$ (resp.\ $\on{Fun}_{\on{ex}}^\otimes(\Rep_G,-)$) is simply the representable presheaf in the category $\CatexOE$ (resp.\ $\CatexE$) and this preserves the condition of being a v-sheaf.

	    To prove surjectivity of $LG \to \DMG$ in the v-topology, it suffices by \Cref{rem:scheme_v_covers} to show that for a product comb $\Spec R$, any $\calG$-torsor on $\Spec \bbW R$ is trivial. 
		Such a torsor becomes trivial after some \'etale cover of $\Spec \bbW R$, so it suffices to show that any \'etale cover of $\Spec \bbW R$ splits. 
		As $\Spec \bbW R$ is henselian along the closed subscheme $\Spec R$, this follows from the same statement for $\Spec R$, which holds true by Remark \ref{rem:scheme_v_covers}. 
		The surjectivity of $LG \to \ICG$ in the v-topology follows from \Cref{triviality-on-combs} (see \cite[Theorem 6.1]{Ivanov_arc_descent} for the case $G=\GL_n$).
	    The last claim follows directly from the second one by computing the fiber products $LG\times_{\DMG}LG$ and  $LG\times_{\ICG}LG$.
    \end{proof}

    The following slight generalization of \cite[Theorem 11.5]{Ans18} will be useful for our purposes.

    \begin{lemma}
	    \label{triviality-on-combs}
	    If $\Spec(A)$ is a comb, then every $G$-torsor over $\Spec \bbW(A)[\frac{1}{\pi}]$ is trivial.
    \end{lemma}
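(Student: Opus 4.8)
The plan is to reduce the statement to the known case of \cite[Theorem 11.5]{Ans18}, which treats the situation where $A$ is a valuation ring with algebraically closed fraction field, by exploiting the structure of a comb as a product (or more precisely, as $\prod_{x \in \pi_0(\Spec A)} V_x$ on connected components after a suitable normalization). First I would reduce to the case $A = \prod_{i \in I} V_i$ is a product comb: indeed, by \Cref{rem:scheme_v_covers} any qcqs scheme admits a v-cover by a product comb, and a general comb is covered by product combs by restricting to a cover of $\pi_0$; since $\calV^\sch_Y$ (equivalently, the presheaf of $G$-torsors on $\bbW(-)[\frac{1}{\pi}]$) is an arc-sheaf by \Cref{properties-schematic-V}, a $G$-torsor that becomes trivial v-locally was already trivial, so it is enough to trivialize after passing to a product comb. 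Alternatively, one uses that extremally disconnected combs are $w$-contractible, so pro-étale covers split, but the key input must still come from \cite{Ans18}.

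The main step is then the following: for $A = \prod_{i \in I} V_i$ a product comb with each $V_i$ a valuation ring with algebraically closed fraction field, one has $\bbW(A) = \prod_{i\in I} \bbW(V_i)$ (the formation of $O_E$-Witt vectors commutes with products, since $\bbW(-)$ is computed levelwise as a product of copies of $A$), hence $\bbW(A)[\frac{1}{\pi}]$ sits inside $\prod_{i\in I}\bbW(V_i)[\frac{1}{\pi}]$ but is \emph{not} equal to it — it is the subring where $\pi$-denominators are uniformly bounded. By \cite[Theorem 11.5]{Ans18} each $G$-torsor over $\bbW(V_i)[\frac{1}{\pi}]$ is trivial. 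I would then invoke \cite[Theorem 6.1]{Ivanov_arc_descent}, which is exactly cited in the excerpt for the $G = \GL_n$ case and which states that $\calV^\sch_Y(S) \simeq \calvwps(S)$ on combs, i.e.\ that finite projective $\bbW(R)[\frac{1}{\pi}]$-modules over a comb $\Spec R$ are the same as the ``uniformly bounded'' ones, together with the analogous statement for the values of the v-sheafification. The case of general reductive $G$ then follows by a Tannakian argument: a $G$-torsor over $\bbW(A)[\frac{1}{\pi}]$ is an exact $\otimes$-functor $\Rep_G \to \calV^\sch_Y(\Spec A)$; on each factor it is a $G$-torsor over $\bbW(V_i)[\frac{1}{\pi}]$, hence trivial, and the ``uniform boundedness'' encoded in $\calvwps$ versus the product of the $\calvwps(V_i)$ is precisely what \cite[Theorem 6.1]{Ivanov_arc_descent} controls, so the trivializations on the factors glue to a global trivialization. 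Concretely, one picks compatible trivializations on the $V_i$ and observes that the transition data, being a section of $G$ over a product of fields, automatically has uniformly bounded $\pi$-denominators because $G$ is of finite type.

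The hard part will be the bookkeeping of the ``uniform boundedness'' condition: $\bbW(A)[\frac{1}{\pi}]$ is genuinely smaller than $\prod_i \bbW(V_i)[\frac{1}{\pi}]$, so one cannot merely say ``trivialize each factor and take the product'' — one must check that the resulting trivialization lies in $G(\bbW(A)[\frac{1}{\pi}])$ and not just in $\prod_i G(\bbW(V_i)[\frac{1}{\pi}])$. This is where \cite[Theorem 6.1]{Ivanov_arc_descent} (and its extension from $\GL_n$ to general $G$ via the Tannakian formalism, i.e.\ \Cref{properties-schematic-V}(3)) does the real work: it guarantees that the category of $G$-torsors on $\Spec A$ with respect to $\calvwps$ already coincides with the honest one on $\calV^\sch_Y$. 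Once this identification is in hand, the argument is essentially formal. A secondary subtlety, which I would address up front, is that $\calG$ need not be defined, only $G$ — but the statement is about $G$-torsors over $\bbW(A)[\frac{1}{\pi}]$ (the isogeny/``rational'' side), so no integral model is needed and one only ever uses reductivity of $G$ together with the finite-type hypothesis to bound denominators.
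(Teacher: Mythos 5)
Your proposal diverges from the paper's proof, and it has genuine gaps. The paper's proof is a one-liner of a different nature: it instructs the reader to rerun Ansch\"utz's group-theoretic proof of \cite[Proposition 11.5]{Ans18} (the case of a single valuation ring with algebraically closed fraction field), observing that the reduction method of \cite[Section 6.1.1]{Ivanov_arc_descent} applies to arbitrary combs. The content there is the d\'evissage through the structure theory of reductive groups (tori, simply connected semisimple groups, central isogenies), carried out over the ring $\bbW(A)[\frac{1}{\pi}]$ using its special features. Your proposal replaces this with a Tannakian reduction to the $\GL_n$ statement of \cite[Theorem 6.1]{Ivanov_arc_descent}, and that reduction cannot work: knowing that every finite projective $\bbW(A)[\frac{1}{\pi}]$-module is free (equivalently, that every $\GL_n$-torsor is trivial, and that the fiber functor of your $G$-torsor lands in free modules) does not imply the $G$-torsor itself is trivial. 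A field is already a counterexample: all projective modules are free, yet $H^1(k,G)$ is nontrivial for, say, $G=\PGL_n$ or a nonsplit form. So the step ``each $\calE(V)$ is free, hence the trivializations glue'' is missing precisely the part of the argument that Ansch\"utz's proof supplies.

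Two further steps are also unjustified. First, your reduction to product combs argues that since $\calV^\sch_Y$ is an arc-sheaf, ``a $G$-torsor that becomes trivial v-locally was already trivial''; sheafiness gives effectivity of descent data, not vanishing of $H^1$, so a torsor trivialized on a cover is classified by a possibly nontrivial cocycle. Second, in the gluing step you assert that a trivialization obtained factorwise over $\prod_{i}\bbW(V_i)[\frac{1}{\pi}]$ ``automatically has uniformly bounded $\pi$-denominators because $G$ is of finite type.'' The rings $\bbW(V_i)[\frac{1}{\pi}]$ are not fields, and the claim fails already for $G=\Gm$: a tuple of units $(u_i)_i$ with $u_i\in \bbW(V_i)[\frac{1}{\pi}]^{\times}$ need not have bounded $\pi$-adic valuations, so it need not lie in $\bbW(A)[\frac{1}{\pi}]^{\times}$. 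Controlling such denominators uniformly across the factors is exactly the role of the reduction method of \cite[Section 6.1.1]{Ivanov_arc_descent} inside Ansch\"utz's argument; it has to be combined with the group-theoretic d\'evissage rather than invoked as a black box after a formal Tannakian step.
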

    \begin{proof}
	    We can follow the proof of \cite[Proposition 11.5]{Ans18}, by noting that the reduction method in \cite[Section 6.1.1]{Ivanov_arc_descent} (which is also used in \cite[Proposition 11.5]{Ans18}) works for general combs.
    \end{proof}

    \begin{definition}
	    \label{defi-groupoids-of-interest}
	    We define the following four presheaves over $\Perf$ with values in groupoids: 	
	    \begin{enumerate}
		    \item $\Sht_{\calY,\calG} \colon S\mapsto \on{Fun}_{\on{ex}}^\otimes(\Rep_\calG,\SHT(S))$.
		    \item $\Isoc_G  \colon S\mapsto \on{Fun}_{\on{ex}}^\otimes(\Rep_\calG,\IC^\lozenge(S))$.
		    \item $\Bun^\mer_G  \colon S\mapsto \on{Fun}_{\on{ex}}^\otimes(\Rep_\calG,\VEC(S))$.
		    \item $\DmG  \colon S\mapsto \on{Fun}_{\on{ex}}^\otimes(\Rep_\calG,\DMan(S))$.
	    \end{enumerate}
    \end{definition}
    \begin{theorem}
	    \label{theorem-Isoc-is-what-it-is}
    The following statements hold:	
    \begin{enumerate}
	    \item $\Sht_\calG$, $\DmG$, $\Isoc_G$ and $\Bun^\mer_G$ are small v-stacks. 
	    \item We have a Cartesian diagram
		    \begin{center}
		    \begin{tikzcd}
		    \Sht_\calG \arrow{r} \arrow{d}  & \Bun^\mer_G \arrow{d} \\
		     \DmG\arrow{r} & \Isoc_G
		    \end{tikzcd}
		    \end{center}
			in $\calP(\Perf\!, \Grps)$.
	    \item We have identifications 
		    \[ \DmG=(\DMG)^\lozenge=[LG^\lozenge\!/\!\!/_{\!\!\varphi} L^+\calG^\lozenge]\] 
		    and 
		    \[\Isoc_G=(\ICG)^\lozenge=[LG^\lozenge\!/\!\!/_{\!\!\varphi} LG^\lozenge]\,.\] 
	    \item The maps $\DmG\to \Isoc_G$ and $\Sht_\calG\to \Bun^\mer_G$ are v-covers. 
    \end{enumerate}
    \end{theorem}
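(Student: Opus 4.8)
The plan is to deduce each of the four assertions of \Cref{theorem-Isoc-is-what-it-is} from the corresponding facts about the underlying categories of (iso)shtukas and Dieudonné modules that were established in the previous sections, together with the scheme-theoretic statements of \Cref{DMisvstack}. First I would observe that all four presheaves are obtained by applying $\on{Fun}_{\on{ex}}^\otimes(\Rep_\calG,-)$ or $\on{Fun}_{\on{ex}}^\otimes(\Rep_G,-)$ to one of $\SHT$, $\DMan$, $\IC^\lozenge$ or $\VEC$. Since $\on{Fun}_{\on{ex}}^\otimes(\Rep_\calG,-)\colon \CatexOE\to \Grps$ (resp. the $E$-linear version) is corepresented by the object $\Rep_\calG$ and hence preserves limits — in particular v-sheaves — claim (1) follows immediately from the fact, proved in \Cref{Berkeley-notes-shtukas-is-v-sheaf}, \Cref{Passing_DM_to_diamondworld}, \Cref{DM1p-is-well-bheaved-subcategory} and \Cref{meromorphic-vector-bundles} (via \Cref{DM1p-is-well-bheaved-subcategory-old}), that $\SHT$, $\DMan$, $\IC^\lozenge$ and $\VEC=\Bun_\FF^\mer$ are v-sheaves valued in the appropriate exact $\otimes$-categories. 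This reduces everything to bookkeeping with $\on{Fun}_{\on{ex}}^\otimes$.

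For claim (2), I would apply $\on{Fun}_{\on{ex}}^\otimes(\Rep_\calG,-)$ to the Cartesian square of \Cref{all-squares-are-cartesian} (the left-hand square there, relating $\SHT$, $\DMan$, $\VEC$ and $\IC^\Diamond$). Here one has to be a little careful: the square in \Cref{all-squares-are-cartesian} is Cartesian in $\calS(\Perf\!,\CatexOE)$ for the $\SHT$/$\DMan$ corner and we must know the relevant maps are $\otimes$-exact so that the induced square of mapping-groupoids out of $\Rep_\calG$ remains Cartesian. Since $\on{Fun}_{\on{ex}}^\otimes(\Rep_\calG,-)$ is a right adjoint (a corepresentable functor) on the relevant $\infty$-categorical level, it preserves pullbacks, and we conclude that
\begin{center}
\begin{tikzcd}
\Sht_\calG \arrow{r} \arrow{d}  & \Bun^\mer_G \arrow{d} \\
\DmG\arrow{r} & \Isoc_G
\end{tikzcd}
\end{center}
is Cartesian. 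One subtlety: the lower right corner uses $\Rep_G$ rather than $\Rep_\calG$, so I would invoke the compatibility between $\Rep_\calG$ and $\Rep_G$ under $\otimes_{O_E}E$ (i.e. that an exact $\otimes$-functor $\Rep_\calG\to \DMan$ followed by the localization $\DMan\to \DMan[\tfrac1\pi]\hookrightarrow \IC^\lozenge$ factors through $\Rep_G$), which is standard Tannakian formalism for the generic fibre of a flat affine group scheme.

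For claim (3), the identifications $\DmG=(\DMG)^\lozenge$ and $\Isoc_G=(\ICG)^\lozenge$ follow because $\DMan=(\DM)^{\lozenge_\pre}$ by definition, $\on{Fun}_{\on{ex}}^\otimes(\Rep_\calG,-)$ commutes with the presheaf-level $\lozenge$ (it is just precomposition with $\Spec R^+\leadsto \Spec R^+$ applied inside the value), and then sheafification commutes with this corepresentable functor as recorded in the remarks of \Cref{sec:notation}; the analytic isocrystal case is \Cref{DM1p-is-well-bheaved-subcategory}(2) combined with \Cref{isomorphism-isocrystals-bung}. The groupoid presentations $[LG^\lozenge/\!\!/_\varphi L^+\calG^\lozenge]$ and $[LG^\lozenge/\!\!/_\varphi LG^\lozenge]$ then come from applying $(-)^\lozenge$ to the presentations $\DMG=[LG/\!\!/_\varphi L^+\calG]$ and $\ICG=[LG/\!\!/_\varphi LG]$ of \Cref{DMisvstack}(3), using that $\lozenge$ commutes with taking quotient stacks (i.e. with geometric realizations of simplicial diagrams of v-sheaves) and that $(LG)^\lozenge=LG^\lozenge$, $(L^+\calG)^\lozenge=L^+\calG^\lozenge$. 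Finally, claim (4): the map $\DmG\to\Isoc_G$ is the $\lozenge$-ification of the v-cover $\DMG\to\ICG$ of \Cref{DMisvstack}(2), and $\lozenge$ sends surjections of scheme-theoretic v-sheaves to surjections of v-sheaves by \cite[Proposition 3.7]{Gle22} (cited in \Cref{sec:notation}); the map $\Sht_\calG\to\Bun^\mer_G$ is then a v-cover as the base-change of $\DmG\to\Isoc_G$ along claim (2).

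The main obstacle I anticipate is not any single deep input but rather the careful tracking of exact structures and of the $O_E$-versus-$E$ Tannakian bookkeeping: one must make sure that every arrow appearing in the squares we pull back is genuinely $\otimes$-exact (so that $\on{Fun}_{\on{ex}}^\otimes(\Rep_\calG,-)$ can legitimately be applied and preserves the pullback), and that the passage from $\Rep_\calG$-structures on Dieudonné modules to $\Rep_G$-structures on isocrystals is compatible with the localization $\calV_\bbW\to\calV_\bbW[\tfrac1\pi]$. The exactness issues are exactly what \Cref{exactness-bungmer}, \Cref{DM1p-is-well-bheaved-subcategory-old} and \Cref{check-exact-isoc-points_2} were set up to handle, so the proof should go through once one invokes them at the right spots; I would write it mainly as a sequence of "apply the corepresentable functor $\on{Fun}_{\on{ex}}^\otimes(\Rep_\calG,-)$ to diagram X" steps with pointers to those lemmas.
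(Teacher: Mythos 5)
Your treatment of parts (1) and (2) matches the paper: both follow by applying the limit-preserving functor $\on{Fun}_{\on{ex}}^\otimes(\Rep_\calG,-)$ to the sheaf and Cartesian-square statements for $\SHT$, $\DMan$, $\IC^\lozenge$ and $\VEC$ (the paper cites \Cref{Beauville-Laszlo-presheaves} for (2)), and the identification $\DmG=(\DMG)^\lozenge$, $\Isoc_G=(\ICG)^\lozenge$ is obtained exactly as you say, via the commutation of $\on{Fun}_{\on{ex}}^\otimes(\Rep_\calG,-)$ with sheafification (\Cref{funct-RepG}).

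There is, however, a genuine gap in your part (3), which then propagates to (4). You derive the presentations $\DmG=[LG^\lozenge/\!\!/_{\varphi}L^+\calG^\lozenge]$ and $\Isoc_G=[LG^\lozenge/\!\!/_{\varphi}LG^\lozenge]$ from the schematic presentations of \Cref{DMisvstack} ``using that $\lozenge$ commutes with taking quotient stacks.'' This commutation is not formal: $(-)^\lozenge$ is a sheafification of a precomposition, and the schematic quotient stack is itself already a schematic v-sheafification, so the claim amounts to knowing that the Čech nerve of $LG^\lozenge\to\Isoc_G$ computes $\Isoc_G$ — equivalently (given that $\lozenge$ preserves the finite limits computing the nerve) that $LG^\lozenge\to\Isoc_G$ is \emph{v-surjective}. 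That surjectivity is precisely the non-formal content of (3), and assuming the commutation assumes it. (Your appeal in (4) to \cite[Proposition 3.7]{Gle22} does not help: that result concerns the small diamond $\diamond$, i.e.\ evaluation on $R^+$, not the big diamond $\lozenge$ evaluating on $R$; a v-cover $\Spa(R',R'^+)\to\Spa(R,R^+)$ controls only continuous valuations of $R$, so $\Spec R'\to\Spec R$ need not be a schematic v-cover and surjectivity does not transport formally.) The paper proves the surjectivity directly: given $\calF\in\Isoc_G(S)$, reduce v-locally to $S$ a product of points; by \Cref{DM1p-is-well-bheaved-subcategory} one may assume each $\calF(V)$ lies in $\DMan[\frac{1}{\pi}](S)$, i.e.\ admits a lattice, so $\calF$ yields a $G$-torsor over $\Spec\bbW(R)[\frac{1}{\pi}]$; by \Cref{combs_and-products} $\Spec R$ is a comb, and by \Cref{triviality-on-combs} (the extension of Anschütz's theorem) such torsors are trivial, so a trivialization produces the desired point of $LG^\lozenge(S)$. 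Once this is in place, (4) follows by factoring $LG^\lozenge\to\Isoc_G$ through $\DmG$ and base-changing along the square of (2), as you indicate. You should replace the formal commutation claim by this explicit surjectivity argument (or an equivalent one); without it the quotient presentations and the v-cover statements are unproved.
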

    \begin{proof}
	    Since the application $\on{Fun}_{\on{ex}}^\otimes(\Rep_\calG,-)$ (resp.\ $\on{Fun}_{\on{ex}}^\otimes(\Rep_G,-)$ commutes with $2$-limits within $\CatexOE$ (resp.\ $\CatexE$) and all of $\SHT$, $(\DM)^\lozenge$, $\IC^\lozenge$ and $\VEC$ are v-stacks in $\CatexOE$ or $\CatexE$, all of the presheaves of \Cref{defi-groupoids-of-interest} are v-sheaves.
	    For the same reason, the second claim follows directly from \Cref{Beauville-Laszlo-presheaves}.
	    Furthermore, $\on{Fun}_{\on{ex}}^\otimes(\Rep_\calG,-)$ commutes with sheafification by \Cref{funct-RepG}, which implies directly that $\DmG=(\DMG)^\lozenge$ and $\Isoc_G=\ICG^\lozenge$.
	    Since the functor $(-)^\lozenge$ commutes with finite limits, it suffices to prove that the maps $LG^\lozenge\to \DmG$ and $LG^\lozenge\to \Isoc_G$ are surjective to deduce the formulas from the third assertion.
	    Let $\calF\in \Isoc_G(S)$, the argument for $\DmG$ being analogous. 
	    By part (1) of the theorem surjectivity can be shown v-locally, so we may assume $S=\Spa(R,R^+)$ is a product of points.
	    By \Cref{DM1p-is-well-bheaved-subcategory}, we may even assume that for all objects $V\in \Rep_\calG$, the object $\calF(V)\in \IC^\lozenge(S)$ is isomorphic to one in $\DMan[\frac{1}{\pi}](S)$.
	    We obtain a $\otimes$-exact functor from $\Rep_\calG$ to the category of projective $\bbW(R)[\frac{1}{\pi}]$-modules, which we interpret as a $G$-torsor over $\Spec \bbW(R)[\frac{1}{\pi}]$.
	    By \Cref{triviality-on-combs}, such torsors are trivial over combs, and by \Cref{combs_and-products}, $\Spec R$ is a comb.
	    After choosing a trivialization of $\calF$, the $\varphi$-structure corresponds to an element $LG(\Spec R)$ which gives precisely a point $LG^\lozenge(S)$ lifting our original point.
	    The final claim follows by base change from the third claim and the second claim.
    \end{proof}

\subsection{Newton strata on $\Isoc_G$}\label{sec:newton_strata}
We now wish to study the geometry of $\Isoc_G$ and $\Bun_G^\mer$. 
Recall the Kottwitz set $B(G)$, which classifies isocrystals with $G$-structure over algebraically closed fields, see \cite[\S 3]{KottwitzII}. 
Recall that $B(G)$ is naturally endowed with a partial order, see for example \cite{RR_96} or \cite[\S 3]{Viehmann_20}.
For $G = \GL_n$, $\calN(G) = \calN$ with $\calN$ from \Cref{sec:semistable_filtrations}. 

\begin{definition}
	Let $\calS=\Spec A\in \PSch$, and let $b\in B(G)$. 
	We let $\ICG_{\leq b}(\calS)\subseteq \ICG(\calS)$ denote the full subcategory of objects $\calE\in \ICG(\calS)$ whose Newton polygon is bounded by $b$ at geometric points of $\calS$. 
	We let $\ICG_{b}(\calS)\subseteq \ICG_{\leq b}(\calS)$ denote the full subcategory of objects $\calE\in \ICG_{\leq b}(\calS)$ whose Newton polygon is exactly $b$ at geometric points of $\calS$. 
\end{definition}

The following theorem due to work of various authors summarizes what we will need about the geometry of $\ICG$.
\begin{theorem}
	\label{summary-of-properties-BG}
	 For any $b\in B(G)$ the map $\ICG_{\leq b}\to \ICG$ is a perfectly finitely presented closed immersion. Moreover, $\ICG_b=[\ast\!/\underline{G_b(\bbQ_p)}]$ as scheme-theoretic v-stacks.
\end{theorem}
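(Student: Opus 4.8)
The plan is to work on the chart $LG \to \ICG$ of \Cref{DMisvstack}, for which $LG(\Spec A) = G(\bbW(A)[\tfrac1\pi])$ and which is a v-cover, and to reduce both assertions to the Newton stratification of $LG$ under $\varphi$-conjugation; recall that $\varphi$-conjugacy classes over an algebraically closed field are classified by $B(G)$ (Kottwitz, \cite{Kottwitz,KottwitzII}) and that the induced map $|\ICG| \to B(G)$ is a homeomorphism by \cite{RR_96} and \cite[Theorem 2.12]{He_hecke_padic}. Let $\nu\colon |LG| \to B(G)$ be the Newton map and set $LG_{\le b} := \nu^{-1}(\{b' : b' \le b\})$. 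First I would invoke the lower semicontinuity of $\nu$ --- the Witt-vector loop-group form of the Rapoport--Richartz theorem \cite{RR_96}, in the refined versions of \cite{XiaoZhu} --- so that $LG_{\le b}$ is closed; as a closed subset of the perfect ind-scheme $LG$ it carries a canonical reduced, hence perfect, closed-subscheme structure. The structure theory of the Newton stratification in the loop-group setting (\cite{XiaoZhu}, \cite{PR21}) moreover gives that $LG_{\le b} \hookrightarrow LG$ is a perfectly finitely presented closed immersion: concretely, after reducing modulo a deep enough congruence subgroup of $L^+\calG$ the bound $\nu \le b$ is detected by finitely many coefficients of characteristic polynomials of Frobenius-twisted powers. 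Since $LG_{\le b}$ is $\varphi$-conjugation stable it descends along the v-cover $LG \to \ICG$ to a perfectly finitely presented closed immersion $\ICG_{\le b} \hookrightarrow \ICG$; moreover $\{b' : b' \le b\}$ is finite, so $\ICG_b := \ICG_{\le b} \setminus \bigcup_{b'<b}\ICG_{\le b'}$ is a perfectly finitely presented locally closed substack.

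\textbf{Identification of the stratum.} By Kottwitz's classification \cite{KottwitzII}, over any algebraically closed $k/\overline{\bbF}_q$ the groupoid $\ICG_b(k)$ has a single isomorphism class, with automorphism group $G_b(E)$, the $E$-points of the inner form of a Levi subgroup of $G$ attached to $b$. Fix $g_b \in LG(\overline{\bbF}_q)$ representing $b$, giving a point $x_b\colon \ast \to \ICG_b$; I claim $x_b$ is a v-cover. Given $\Spec A \to \ICG_b$, working v-locally (legitimate since $\ICG$ is a v-sheaf, \Cref{DMisvstack}) and using \Cref{combs_and-products} we may assume $\Spec A$ is a product comb, and then \Cref{triviality-on-combs} trivializes the underlying $G$-torsor over $\bbW(A)[\tfrac1\pi]$, so the object is represented by some $g \in LG(\Spec A)$; the transporter $T = \{h \in G(\bbW(A)[\tfrac1\pi]) : h\,g\,\varphi(h)^{-1} = g_b\}$ is a pseudo-torsor under the $\varphi$-centralizer of $g_b$ which is nonempty at every point of $\Spec A$ by the Newton-constancy hypothesis, and (as in \cite{XiaoZhu,PR21}) one argues that it therefore admits a section over the comb, giving a v-local $\varphi$-conjugation $g \sim g_b$. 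Hence $\ICG_b \simeq [\ast/\underline{\Aut}(x_b)]$, where $\underline{\Aut}(x_b)$ is the sheaf $\Spec A \mapsto \{h \in G(\bbW(A)[\tfrac1\pi]) : h\,g_b\,\varphi(h)^{-1} = g_b\}$; since $g_b$ has coefficients in $\overline{\bbF}_q$ and the functor $\Spec A \mapsto \bbW(A)[\tfrac1\pi]^{\varphi=1}$ on $\PSch$ is the constant sheaf $\underline E$, this twisted $\varphi$-centralizer is the constant sheaf $\underline{G_b(E)}$, whence $\ICG_b = [\ast/\underline{G_b(E)}]$.

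\textbf{Main obstacle.} The technical heart is the first part: the lower semicontinuity of $\nu$ together with the perfect finite presentation of $LG_{\le b}$. Rather than reprove these I would cite Rapoport--Richartz \cite{RR_96} with the loop-group refinements of Xiao--Zhu \cite{XiaoZhu} and Pappas--Rapoport \cite{PR21}, after checking --- via the compatibility list at the end of \Cref{sec:notation} --- that they hold for $O_E$-Witt vectors and in equal characteristic $p$. The other delicate point is, in the stratum identification, promoting ``$\varphi$-conjugate at every geometric point'' to ``$\varphi$-conjugate v-locally'': this is exactly where the reductivity of the $\varphi$-centralizer and \Cref{triviality-on-combs} enter, and it deserves to be isolated as a lemma.
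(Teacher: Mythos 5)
Your proposal reconstructs an argument where the paper offers none: the paper's ``proof'' of \Cref{summary-of-properties-BG} is two citations --- \cite[Theorem 3.6\,(ii)]{RR_96} (with \cite{HartlViehmann} for equal characteristic) for the perfectly finitely presented closed immersion, and \cite[Theorem 2.11]{HamacherKim_22} for $\ICG_b=[\ast/\underline{G_b(E)}]$. Your skeleton --- uniformize by the v-cover $LG\to\ICG$, stratify $LG$ by the Newton map, descend the closed immersion, then exhibit $\ast\to\ICG_b$ as a v-cover and compute the twisted $\varphi$-centralizer --- is the standard architecture behind those references, and the first half is sound modulo the literature you cite (which is essentially the same literature the paper cites).

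The weak point is the stratum identification. The step ``the transporter $T$ is a pseudo-torsor under the $\varphi$-centralizer, nonempty at every geometric point, therefore it admits a section over the comb'' is not a formal implication: a pseudo-torsor that is pointwise nonempty need not be a torsor, i.e.\ need not be v-locally nonempty as a \emph{sheaf}. Promoting ``$\varphi$-conjugate to $g_b$ at every geometric point of the Newton stratum'' to ``$\varphi$-conjugate v-locally'' is precisely the content of \cite[Theorem 2.11]{HamacherKim_22} (it uses the slope filtration in families and reduction to the basic case of a Levi), so your sketch at this point is circular unless you cite that theorem directly rather than gesturing at \cite{XiaoZhu,PR21}. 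Two smaller remarks: once $T$ \emph{is} known to be a $\underline{G_b(E)}$-torsor, the existence of a section over an extremally disconnected comb does follow, by Gabber's theorem and $w$-contractibility exactly as in \Cref{quotient-by-groups}, so that part is fine; and your identification of the automorphism sheaf with $\underline{G_b(E)}$ needs slightly more than ``$\bbW(A)[\frac1\pi]^{\varphi=1}=\underline{E}$'' --- for nontrivial $g_b$ one must identify the twisted centralizer with the $E$-points of the algebraic group $J_b$ and check local constancy, which is again part of the Hamacher--Kim computation. (You are right, incidentally, that the group should be $G_b(E)$; the $\bbQ_p$ in the paper's statement is a typo.)
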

\begin{proof}
The first statement follows from \cite[Theorem 3.6\,(ii)]{RR_96}, whose proof carries over the characteristic $p$ setting, see \cite[Theorem 7.3]{HartlViehmann}. 
The last statement follows from \cite[Theorem 2.11]{HamacherKim_22}.
\end{proof}

\begin{proposition}
	The elements of $|\Isoc_G|$ are in bijection with $B(G)$. 
\end{proposition}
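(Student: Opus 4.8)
The plan is to read off $|\Isoc_G|$ from the topology of $\ICG$, via the identification $\Isoc_G = \ICG^\lozenge$ of \Cref{theorem-Isoc-is-what-it-is} and the Newton stratification of $\ICG$. Recall from \Cref{summary-of-properties-BG} and the introduction that $|\ICG| \cong B(G)$ (Rapoport--Richartz \cite{RR_96}, He \cite{He_hecke_padic}), realized by the closed substacks $\ICG_{\le b}$, $b \in B(G)$, whose locally closed strata are $\ICG_b = [\ast/\underline{G_b(E)}]$, and with $\ICG_{b_1} \cap \ICG_{b_2} = \emptyset$ for $b_1 \neq b_2$.

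First I would transport this picture along $(-)^\lozenge$. Since the analytification functor $(-)^\lozenge$ is a presheaf-level evaluation followed by sheafification, it preserves finite limits; hence it sends monomorphisms to monomorphisms, perfectly finitely presented closed immersions of scheme-theoretic v-stacks to closed immersions of small v-stacks, and the empty intersections $\ICG_{b_1} \cap \ICG_{b_2} = \emptyset$ to empty intersections. Consequently the $\ICG_{\le b}^\lozenge \hookrightarrow \ICG^\lozenge$ are closed immersions, the subsheaves $\ICG_b^\lozenge \subseteq \ICG^\lozenge$ are pairwise disjoint, and by \Cref{quotient-by-groups} each stratum $\ICG_b^\lozenge = [\ast/\underline{G_b(E)}]^\lozenge = [\ast/\underline{G_b(E)}]$ has a single-point underlying space.

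Next I would show that these strata exhaust $|\Isoc_G|$, which then yields the bijection $|\Isoc_G| = \bigsqcup_{b \in B(G)} |\ICG_b^\lozenge| \xrightarrow{\ \sim\ } B(G)$. It suffices to prove that $\bigsqcup_{b} \ICG_b^\lozenge \to \ICG^\lozenge$ is surjective, which may be checked v-locally, say over a product of points $S=\Spa(R,R^+)$. For a geometric point $\Spa(C,C^+)$, \Cref{lemma-sanity-check} identifies $\IC^\lozenge(\Spa(C,C^+))$ with $\IC(\Spec\overline{\bbF}_q)$, so by the Tannakian definition of $\Isoc_G$ in \Cref{defi-groupoids-of-interest} one gets $\Isoc_G(\Spa(C,C^+)) \simeq \ICG(\Spec\overline{\bbF}_q)$, whose set of isomorphism classes is $B(G)$. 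Thus every point of $|\Isoc_G|$ acquires a well-defined Newton class $b\in B(G)$, constant in the integral direction, and therefore factors through exactly one stratum $\ICG_b^\lozenge$; combined with the previous paragraph this gives the claimed bijection.

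The step requiring the most care is this exhaustion-and-disjointness: checking that the Newton class is a genuine invariant of points of the v-stack $\ICG^\lozenge$, not merely of chosen geometric points, so that the $\ICG_b^\lozenge$ truly partition $\ICG^\lozenge$. This is precisely where one must use \Cref{lemma-sanity-check} to reduce the computation of $\Isoc_G$ over a product of points to $\ICG$ over $\Spec\overline{\bbF}_q$, and then invoke the partition of $\ICG$ into Newton strata from \Cref{summary-of-properties-BG}. If in addition one wants the homeomorphism $|\Isoc_G| \cong B(G)$ rather than merely the bijection, one needs the closure relation $\overline{\{b\}} = |\ICG_{\le b}^\lozenge|$, equivalently density of the open Newton strata after applying $(-)^\lozenge$; this is the content of \Cref{topologicalcomparisonmainthext} and lies deeper.
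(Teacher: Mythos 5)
Your proof is correct, and its decisive step is exactly the paper's entire proof: by definition $|\Isoc_G|$ is the set of equivalence classes of $\Spa(C,C^+)$-valued points, and \Cref{lemma-sanity-check} identifies $\Isoc_G(\Spa(C,C^+))$ with $\ICG(\Spec\overline{\bbF}_q)$, whose isomorphism classes are $B(G)$ by Kottwitz. Everything else in your argument -- transporting the Newton stratification along $(-)^\lozenge$, checking that the strata are disjoint closed/open subsheaves with one-point underlying spaces, and proving exhaustion v-locally -- is machinery the statement does not require, since the proposition only asserts a bijection of sets and the underlying set of a small v-stack is \emph{defined} via geometric points; the stratification and closure relations belong to \Cref{structure-of-isoc} and \Cref{topologicalcomparisonmainthext}, as you correctly note at the end. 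So there is no gap, but you should recognize that once you have invoked \Cref{lemma-sanity-check} at a geometric point, you are already done.
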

\begin{proof}
	By definition, points in $|\Isoc_G|$ are in bijection with equivalence classes of $\Spa(C,C^+)$-valued points of $\Isoc_G$.
	By \Cref{lemma-sanity-check}, these are the same as isocrystals over $\Spec\bar{\bbF}_q$ with $G$-structure which are classified by $B(G)$, see \cite[\S 3]{KottwitzII}.
\end{proof}

%\begin{remark}
%	It follows a posteriori from \Cref{structure-of-isoc}, that for $C$ an algebraically closed non-Archimedean field the natural map is an equivalence of categories $\ICG(\Spec C)\cong \Isoc_G(\Spa C)$.  
%\end{remark}

\begin{definition}
	Let $S=\Spa(R,R^+) \in \Perf$. We let $\Isoc_G^{\leq b}(S)\subseteq \Isoc_G(S)$ denote the full subcategory of objects $\calE\in \Isoc_G(S)$ whose Newton polygon is bounded by $b$ at geometric points of $S$. 
	We let $\Isoc_G^{b}(S)\subseteq \Isoc_G^{\leq b}(S)$ denote the full subcategory of objects $\calE\in \Isoc_G^{\leq b}(\calS)$ whose Newton polygon is exactly $b$ at geometric points of $S$. 
\end{definition}

\begin{proposition}
	\label{structure-of-isoc}
	For any $b\in B(G)$, the map $\Isoc_G^{\leq b}\to \Isoc_G$ is a closed immersion and agrees with $\ICG_{\leq b}^\lozenge$. The map $\Isoc_G^b\to \Isoc_G^{\leq b}$ is an open immersion. Moreover, $\Isoc_G^b=\ICG_b^\lozenge=[\ast\!/\underline{G_b(\bbQ_p)}]$ as v-stacks. 	
\end{proposition}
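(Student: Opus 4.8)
The plan is to deduce all three assertions from the known structure of the scheme-theoretic stack $\ICG$ by pushing it through the analytification functor $(-)^\lozenge$. Recall from \Cref{theorem-Isoc-is-what-it-is} that $\Isoc_G=\ICG^\lozenge$ and that $(-)^\lozenge$ commutes with finite limits. The first and main step is to identify $\Isoc_G^{\leq b}$ with $\ICG_{\leq b}^\lozenge$ and $\Isoc_G^b$ with $\ICG_b^\lozenge$ as sub-v-stacks of $\Isoc_G$; once this is done, everything else is formal.

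For this identification: since $\ICG_{\leq b}\hookrightarrow\ICG$ is a perfectly finitely presented closed immersion (\Cref{summary-of-properties-BG}) and $(-)^\lozenge$ preserves finite limits, the induced map $\ICG_{\leq b}^\lozenge\to\ICG^\lozenge=\Isoc_G$ is a monomorphism; evaluating the fibre product on a test object $\Spa(R,R^+)$, a schematic closed immersion $\Spec(R/I)\hookrightarrow\Spec R$ becomes under $(-)^\lozenge$ the closed subspace of $\Spa(R,R^+)$ cut out by $I$ (cf.\ \cite{Gle22}), so $\ICG_{\leq b}^\lozenge\to\Isoc_G$ is a closed immersion of small v-stacks. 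It lands in $\Isoc_G^{\leq b}$ because, by \Cref{lemma-sanity-check}, the Newton polygon at a geometric point $\Spa(C,C^+)\to\Spa(R,R^+)$ of an object coming from $\ICG(\Spec R)$ is the one recorded at the induced point of $\Spec R$, hence is $\leq b$. For the reverse inclusion, let $x\colon S\to\Isoc_G$ have Newton polygon $\leq b$ at every geometric point of $S$; I want $x$ to factor through the closed immersion $\ICG_{\leq b}^\lozenge\hookrightarrow\Isoc_G$, equivalently that $S\times_{\Isoc_G}\ICG_{\leq b}^\lozenge\to S$ is an isomorphism. Working v-locally, take $S=\Spa(R,R^+)$ a product of points on which $x$ is realized by a point of $LG(\Spec R)$, hence by some $\calE\in\ICG(\Spec R)$ (as in the proof of \Cref{theorem-Isoc-is-what-it-is}); then $S\times_{\Isoc_G}\ICG_{\leq b}^\lozenge$ is the vanishing locus in $S$ of the ideal $I\subseteq R$ with $\Spec R\times_{\ICG}\ICG_{\leq b}=\Spec(R/I)$, so it suffices to show $\calE\in\ICG_{\leq b}(\Spec R)$, i.e.\ $I=0$. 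Every maximal ideal of $R$ is the centre of a continuous valuation, hence supports a geometric point of $S$ along which the Newton polygon is $\leq b$ by hypothesis; since $R$ is reduced and the locus $\{\text{Newton}\leq b\}$ in $\Spec R$ is closed (being the preimage of the closed substack $\ICG_{\leq b}$), one concludes $I=0$ exactly as in \Cref{check-exact-isoc-points} and \Cref{DM1p-is-well-bheaved-subcategory-old}. The identical argument with "Newton $=b$" gives $\Isoc_G^b\simeq\ICG_b^\lozenge$.

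Granting these identifications, the remaining assertions follow. The first is precisely what was just shown. For the second: $B(G)$ is locally finite for its partial order, so $\{b'\in B(G):b'<b\}$ is finite, and the closed subsheaves $\ICG_{\leq b'}^\lozenge$ for $b'<b$ sit inside $\ICG_{\leq b}^\lozenge=\Isoc_G^{\leq b}$; their union is closed, and by the Newton-polygon descriptions its open complement in $\Isoc_G^{\leq b}$ is exactly $\Isoc_G^b$, so $\Isoc_G^b\to\Isoc_G^{\leq b}$ is an open immersion. For the third: $\ICG_b=[\ast\!/\underline{G_b(\bbQ_p)}]$ by \Cref{summary-of-properties-BG}, hence $\Isoc_G^b\simeq\ICG_b^\lozenge\simeq[\ast\!/\underline{G_b(\bbQ_p)}]^\lozenge\simeq[\ast\!/\underline{G_b(\bbQ_p)}]$ by \Cref{quotient-by-groups}.

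The main obstacle is the reverse inclusion in the identification step: one has to show that the naive locus "Newton polygon $\leq b$ at geometric points of a perfectoid test object" is not enlarged by v-sheafification and really descends to the schematic closed substack $\ICG_{\leq b}$. This is the same descent-to-a-product-of-points argument — with the same care needed about discrete versus $d$-analytic points of the relevant olivine spectra — that underlies \Cref{check-exact-isoc-points} and \Cref{DM1p-is-well-bheaved-subcategory-old}; everything downstream is bookkeeping with $(-)^\lozenge$.
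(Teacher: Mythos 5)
Your proposal is correct and follows essentially the same route as the paper: reduce to a product of points, where every maximal ideal of $\Spec R$ supports a continuous valuation (equivalently, the principal components are dense), so the analytic condition ``Newton polygon $\leq b$ (resp.\ $=b$) at geometric points of $\Spa(R,R^+)$'' coincides with the schematic one on $\Spec R$, and then conclude via preservation of open/closed immersions under $\lozenge$ and \Cref{quotient-by-groups}. One small caveat: ``$R$ reduced'' alone does not force $I=0$ from $I$ lying in every maximal ideal — what one actually uses is that $R\hookrightarrow\prod_i C_i$ via the (dense) principal components, which is exactly the paper's phrasing and the argument you cite from \Cref{check-exact-isoc-points}.
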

\begin{proof}
	Since $\lozenge$ preserves open and closed immersions, it suffices to identify $\Isoc_G^{\leq b}$ and $\Isoc_G^b$ with $\ICG_{\leq b}^\lozenge$ and $\ICG_b^\lozenge$, respectively. 
	Let $S=\Spa(R,R^+)$. 
	By definition, $\Isoc_G^{\leq b}(S)$ is the subcategory of objects $\calE\in \Isoc_G(S)$ whose Newton polygon is pointwise bounded by $b$ at every geometric point $S$.
	On the other hand, $\ICG_{\leq b}^{\lozenge_{\on{pre}}}(S)$ corresponds to $G$-isocrystals over $\Spec R$ whose polygon is bounded by $b$ at every geometric point of $\Spec R$.
	To prove $\ICG_{\leq b}^\lozenge=\Isoc_G^{\leq b}$, it suffices to show that v-locally having a Newton polygon bounded by $b$ for $\Spa(R,R^+)$ or for $\Spec R$ agree.
	Of course, the scheme-theoretic condition is stronger than the analytic one, since on the analytic side a condition is imposed only on those ideals of $\Spec R$ that support a continuous valuation. 
	Now, over product of points the two conditions agree. 
	Indeed, principal connected components of a product of points support a continuous valuation.
	Moreover, these components are dense in $\Spec R$.

	A similar argument shows $\ICG_b^\lozenge=\Isoc_G^b$. Indeed, if $S$ is a product of points, all of the maximal ideals of $\Spec R$ support a continuous valuation and the map $\ICG_b\to \ICG_{\leq b}$ is open.  

The last claim follows directly from \Cref{quotient-by-groups}.
\end{proof}

\subsection{Newton strata on $\Bun_G^\mer$}
Recall the moduli stack $\calM$ of \cite[Definition V.3.2]{FS24}. 
The connected components of $\calM$ are indexed by $b\in B(G)$ and the maps $\calM_b\to \Bun_G$ are cohomologically smooth charts of $\Bun_G$, which uniformize an open substack containing $\Bun_G^b \hookrightarrow \Bun_G$. 
\begin{proposition}
	The v-stack $\calM$ is the moduli stack given by the formula
\[\calM \colon S\mapsto \on{Fun}^{\otimes}_{\on{ex}}(\on{Rep}_G,\Fil^\sigma_\sss(S))\,.\]
\end{proposition}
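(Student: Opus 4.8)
The plan is to identify the Fargues--Scholze moduli stack $\calM$ with the v-stack of $\otimes$-exact functors from $\on{Rep}_G$ to semi-stable filtrations by unwinding both definitions and matching them via the results already established. Recall from \cite[Definition V.3.2]{FS24} that $\calM$ is the v-stack parametrizing, over $S$, a vector bundle $\calE$ on the relative Fargues--Fontaine curve $X_S$ together with a decreasing $\bbQ$-filtration whose graded pieces are semi-stable of the corresponding slope (this is precisely the datum packaged for general $G$ via its Tannakian reformulation). First I would spell out the Tannakian translation: a $G$-bundle on $X_S$ with a compatible semi-stable filtration is the same as a $\otimes$-exact functor $\on{Rep}_G \to \Fil^\sigma_\sss(S)$, using that $\on{Rep}_G$ is a Tannakian category and that $\Fil^\sigma_\sss$ is a symmetric monoidal exact category with the $\otimes$-structure on filtrations defined earlier in \Cref{sec:semistable_filtrations}. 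The matching on graded pieces is exactly the ``semi-stable'' condition imposed slope-by-slope in \Cref{definition-semistable-of-slope}.

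The key technical points to verify are: (1) that the functor $S\mapsto \on{Fun}^{\otimes}_{\on{ex}}(\on{Rep}_G,\Fil^\sigma_\sss(S))$ is a v-stack, which follows from the fact that $\Fil^\sigma_\sss \in \calS(\Perf\!,\CatexE)$ is a v-sheaf of exact symmetric monoidal categories together with the fact that $\on{Fun}^{\otimes}_{\on{ex}}(\on{Rep}_G,-)$ is just the mapping anima in $\CatexE$, which preserves limits (as used in the proof of \Cref{theorem-Isoc-is-what-it-is}); and (2) that the two moduli descriptions agree on test objects $S \in \Perf$. For (2), the content is that the Fargues--Scholze definition of $\calM$ packages a $G$-bundle on $X_S$ with a reduction-of-structure-group datum that, translated through Tannakian formalism, is exactly a $\otimes$-exact functor into the category of $\bbQ$-filtered vector bundles on $X_S$ with semi-stable graded pieces; and by \Cref{two-types-of-semistable-fils} this category is equivalent to $\Fil^\sigma_\sss(S)$. (One should be slightly careful about whether the filtration in \cite{FS24} lives on $X_S$ or on $Y_S$, but the $\varphi$-equivariance makes these equivalent.)

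The main obstacle I anticipate is making the Tannakian translation genuinely precise, in particular checking that the exact structure on $\Fil^\sigma_\sss$ is the ``correct'' one so that $\otimes$-exact functors out of $\on{Rep}_G$ recover exactly the fibre functors valued in filtered bundles that Fargues--Scholze use (rather than some larger or smaller class). This is the same subtlety flagged in the remark after \Cref{intro:FScharts}: one must use that a sequence of (filtered) meromorphic vector bundles is exact if and only if it is exact at every geometric point, so that the exact structure is detected pointwise and hence compatible with the Tannakian reconstruction. Concretely, I would invoke \Cref{exactness-bungmer} and \Cref{meromorphicbundles-remember-fils} (and their analogues on $\Bun_\FF$) to ensure that the exact structure on $\Fil^\sigma_\sss$ is the restriction of the one on $\Bun_\FF$, which is the structure implicitly used in the Fargues--Scholze definition. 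Once this compatibility is in place, the identification is essentially formal: both sides are $\otimes$-exact functor categories out of $\on{Rep}_G$ into v-equivalent exact symmetric monoidal categories, so they agree, and the indexing of connected components by $B(G)$ matches because the Newton polygon of the underlying bundle corresponds to the type of the semi-stable filtration.
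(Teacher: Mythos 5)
Your proposal is correct and follows the same route as the paper, which simply records this proposition as an immediate unwinding of \cite[Definition V.3.2]{FS24} against the definition of $\Fil^\sigma_\sss$ (the paper's entire proof is ``It follows directly from the definition''). Note only that the meromorphic results you invoke (\Cref{exactness-bungmer}, \Cref{two-types-of-semistable-fils}, \Cref{meromorphicbundles-remember-fils}) are not needed here — they enter later in the proof of \Cref{maintheorem} — since $\Fil^\sigma_\sss$ and its exact structure are defined directly in terms of $\Bun_{\on{FF}}$, exactly as in the Fargues--Scholze definition.
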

\begin{proof}
	It follows directly from the definition.	
\end{proof}

\begin{theorem}
	\label{maintheorem}
 The moduli stack $\calM$ fits in the  Cartesian diagram  
\begin{center}
\begin{tikzcd}
	\calM \arrow{r} \arrow{d}  & \Bun_G^\mer \arrow{d}{\gamma} \\
 \coprod_{b\in B(G)} \Isoc^b_G \arrow{r} & \Isoc_G
\end{tikzcd}
\end{center}
of small v-stacks.
\end{theorem}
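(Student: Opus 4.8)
The plan is to deduce the theorem formally by applying the Tannakian functor $\on{Fun}^{\otimes}_{\on{ex}}(\Rep_G,-)$ to the equivalences of categories established in Section~\ref{sec:semistable_filtrations}, and then reading off the fibre product from the local structure of $\Isoc_G$. Concretely, I would start from the description $\calM(S)=\on{Fun}^{\otimes}_{\on{ex}}(\Rep_G,\Fil^\sigma_{\sss}(S))$ and combine \Cref{two-types-of-semistable-fils} ($\Fil^\mer_{\sss}\simeq\Fil^\sigma_{\sss}$) with \Cref{meromorphicbundles-remember-fils} ($\Fil^\mer_{\sss}\simeq(\VEC)^{\on{loc}}$) to get natural equivalences $\calM(S)\simeq\on{Fun}^{\otimes}_{\on{ex}}(\Rep_G,(\VEC)^{\on{loc}}(S))$, intertwining the forgetful map $\calM\to\Bun_G^\mer$ with the map induced by the fully faithful inclusion $(\VEC)^{\on{loc}}(S)\hookrightarrow\VEC(S)$. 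Now $(\VEC)^{\on{loc}}(S)$ is, by its definition, the full subcategory of $\calE\in\VEC(S)$ with $\gamma(\calE)$ locally standard, i.e.\ with $\gamma(\calE)\in(\IC^\lozenge)^{\on{loc}}(S)$; hence a $\otimes$-exact functor $\calE\colon\Rep_G\to\VEC(S)$ factors through $(\VEC)^{\on{loc}}(S)$ exactly when the composite $\gamma\circ\calE\colon\Rep_G\to\IC^\lozenge(S)$ factors through the full subcategory $(\IC^\lozenge)^{\on{loc}}(S)$. Writing $\calN_G\subseteq\Isoc_G$ for the full substack with $\calN_G(S)=\on{Fun}^{\otimes}_{\on{ex}}(\Rep_G,(\IC^\lozenge)^{\on{loc}}(S))$ — a small v-stack by \Cref{theorem-Isoc-is-what-it-is} — this identifies $\calM$ with the full substack $\gamma^{-1}(\calN_G)\subseteq\Bun_G^\mer$. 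Along the way I would invoke \Cref{DM1p-is-well-bheaved-subcategory-old} and \Cref{exactness-bungmer} to know the exact structures on $(\VEC)^{\on{loc}}$ and $(\IC^\lozenge)^{\on{loc}}$ inherited from $\VEC$, $\IC^\lozenge$ are the correct ones and are detected on geometric points, so that the Tannakian formalism applies.

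The second step is to identify $\calN_G$ with $\coprod_{b\in B(G)}\Isoc_G^b$ as substacks of $\Isoc_G$. Here I would use \Cref{structure-of-isoc}: each $\Isoc_G^b\to\Isoc_G$ is a locally closed immersion, hence a monomorphism, and the strata are pairwise disjoint, so $\coprod_b\Isoc_G^b\to\Isoc_G$ is a monomorphism whose $S$-points are precisely the $\calE\in\Isoc_G(S)$ for which the function $|S|\to B(G)$ recording the Newton point at geometric points is locally constant. Fixing a faithful representation $V_0\in\Rep_G$, the Newton point of a $G$-isocrystal over a geometric point is a function of the Newton polygon of its value at $V_0$ and conversely (the standard injectivity of the Newton point in terms of a faithful representation); so $\calE$ lies in $\calN_G(S)$ — i.e.\ $\gamma(\calE(V))$ is locally standard for every $V$ — if and only if the Newton polygon of $\calE(V_0)$ is locally constant, if and only if the Newton point of $\calE$ is locally constant on $|S|$. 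This gives $\calN_G=\coprod_b\Isoc_G^b$. Since this inclusion into $\Isoc_G$ is a monomorphism, $\gamma^{-1}(\coprod_b\Isoc_G^b)$ is literally the fibre product $\Bun_G^\mer\times_{\Isoc_G}\coprod_b\Isoc_G^b$, and combining with the first step yields the Cartesian square, functorially in $S$. Finally I would check that the resulting leg $\calM\to\coprod_b\Isoc_G^b$ is the expected one: it sends a semi-stable filtration to the associated graded of its underlying object, which under \Cref{semistablefiltrationsplit-isoc} is exactly $\gamma$ restricted to $(\VEC)^{\on{loc}}$, and this matches the map $q$ of \cite{FS24} under the identification $\Isoc_G^b=[\ast\!/\underline{G_b(E)}]$ of \Cref{structure-of-isoc}.

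I expect the genuine difficulty to lie not in any new geometric input but in the bookkeeping around exact structures and Tannakian reconstruction, exactly as flagged in the remark following \Cref{intro:FScharts}: one must be sure that $(\VEC)^{\on{loc}}$ and $(\IC^\lozenge)^{\on{loc}}$ are genuine exact subcategories whose exactness is geometric-pointwise, and that $\on{Fun}^{\otimes}_{\on{ex}}(\Rep_G,-)$ both preserves v-sheaves and commutes with passage to these full subcategories and with v-sheafification — all provided by Sections~2--4 but needing to be threaded together carefully. The only other point requiring attention is the last compatibility, namely that the map $\calM\to\coprod_b\Isoc_G^b$ obtained from the fibre product agrees with Fargues--Scholze's $q$; this is a matter of unwinding the construction of the charts $\calM_b$ and the identification $\Isoc_G^b\cong[\ast\!/\underline{G_b(E)}]$, and does not affect the Cartesianness of the square itself.
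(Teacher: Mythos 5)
Your proposal is correct and follows essentially the same route as the paper: the paper likewise identifies $\coprod_{b}\Isoc_G^b(S)$ with $\on{Fun}^{\otimes}_{\on{ex}}(\Rep_G,(\IC^\lozenge)^{\on{loc}}(S))$, uses that $\on{Fun}^{\otimes}_{\on{ex}}(\Rep_G,-)$ commutes with limits to reduce to the category-level Cartesian square with $(\Bun_\FF^\mer)^{\on{loc}}$ in the corner, and then invokes \Cref{two-types-of-semistable-fils} and \Cref{meromorphicbundles-remember-fils} to identify that corner with $\Fil^\sigma_\sss$. Your extra detail on the faithful-representation argument for the stratum identification and on the compatibility with the Fargues--Scholze map $q$ is a sound elaboration of what the paper leaves implicit.
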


\begin{remark}
	\label{rmrk-Zw}
	While this article was in preparation, we learned from a private communication with Z.\ Wu that he had proven independently a version of \Cref{maintheorem} in the language of relative Robba rings.
\end{remark}

\begin{proof}
	Observe that we have the identification
	\[ \coprod_{b\in B(G)}\Isoc^b_G(S) =\on{Fun}^{\otimes}_{\on{ex}}(\on{Rep}_G,(\IC^\lozenge)^{\on{loc}}(S))\,.\]
	Since $\on{Fun}^\otimes_{\on{ex}}(\on{Rep}_G,-)$ commutes with limits, it suffices to show that $\Fil^\sigma_\sss(S)$ fits in the Cartesian diagram
\begin{center}
\begin{tikzcd}
	\Fil^\sigma_\sss(S) \arrow{r} \arrow{d}  & \VEC(S) \arrow{d} \\
	(\IC^\lozenge)^{\on{loc}}(S) \arrow{r} & \IC^\lozenge(S)\,.
\end{tikzcd}
\end{center}
By definition, $(\Bun_{\on{FF}}^\mer)^{\on{loc}}$ fits as the upper-left entry of the above Cartesian diagram. 
By \Cref{two-types-of-semistable-fils} and \Cref{meromorphicbundles-remember-fils}, 
\[(\Bun_{\on{FF}}^\mer)^{\on{loc}}(S)\cong \Fil_{\on{ss}}^\mer \cong \Fil^\sigma_\sss(S)\,.\]
\end{proof}

\begin{corollary}
	Let $S=\Spa(R,R^+)$ and let $\calE\in \Bun_{\on{FF}}(S)$. The following hold:
	\begin{enumerate}
		\item After replacing $S$ by a v-cover, $\calE$ can be lifted to $\VEC(S)$.
		\item After replacing $S$ by a v-cover, $\calE$ can be lifted to $\SHT(S)$.
		\item The map of small v-stacks $\Bun_G^\mer\to \Bun_G$ is surjective.
		\item The map of small v-stacks $\Sht_\calG\to \Bun_G$ is surjective.
	\end{enumerate}
\end{corollary}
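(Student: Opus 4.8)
The plan is to deduce all four assertions formally from \Cref{maintheorem}, \Cref{theorem-Isoc-is-what-it-is}, and the surjectivity of the Fargues--Scholze smooth charts. I would prove (3) first, then (4), and finally recover (1) and (2) by specialising to $\GL_n$.

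\emph{For (3).} By \Cref{maintheorem} (see also \Cref{intro:FScharts}) the composite $\calM \to \Bun_G^\mer \xrightarrow{\sigma} \Bun_G$ is the chart map $\pi$, which on each connected component $\calM_b$, $b\in B(G)$, restricts to the Fargues--Scholze chart $\pi_b\colon\calM_b\to\Bun_G$ \cite[\S V.3]{FS24}. Since the total chart map $\coprod_{b}\calM_b\to\Bun_G$ is a surjection of small v-stacks --- that is, every $G$-bundle on the relative Fargues--Fontaine curve admits, v-locally, a reduction of the type parametrised by some $\calM_b$ --- the map $\pi$ is surjective; as $\pi$ factors through $\sigma$, the map $\sigma\colon\Bun_G^\mer\to\Bun_G$ is surjective. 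I would stress here that $\calM\to\Bun_G^\mer$ itself is \emph{not} surjective, since the generic Newton polygon need not become locally constant after a v-cover; the gain in surjectivity comes precisely from passing to the special polygon.

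\emph{For (4), (1), (2).} By \Cref{theorem-Isoc-is-what-it-is}(4) the map $\Sht_\calG\to\Bun_G^\mer$ is a v-cover, and composing it with the surjection $\sigma$ of (3) yields that $\Sht_\calG\to\Bun_G$ is surjective, which is (4); given $\calE\in\Bun_G(S)$ this produces a v-cover $S'\to S$ over which $\calE$ lifts to $\Sht_\calG(S')$. For (1) and (2) I would observe that the forgetful map $\VEC\to\Bun_\FF$ decomposes, along the locally constant rank function, into the maps $\Bun_{\GL_n}^\mer\to\Bun_{\GL_n}$ under the identification $\calV\mapsto\on{Fun}_{\on{ex}}^\otimes(\Rep_{\GL_{n,O_E}},\calV)$ of the rank-$n$ part of $\VEC$, and that $\SHT\to\VEC$ restricts on the rank-$n$ part to $\Sht_{\GL_{n,O_E}}\to\Bun_{\GL_n}^\mer$; after a clopen decomposition of $S$ reducing to constant rank $n$, (1) becomes (3) for $G=\GL_n$ and (2) becomes (4) for $\calG=\GL_{n,O_E}$. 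Alternatively, (2) can be obtained by lifting $\calE$ v-locally to $\VEC(S)$ using (1), then applying the Cartesian square $\SHT\to\VEC$, $\DMan\to\IC^\lozenge$ of \Cref{Beauville-Laszlo-presheaves} together with the fact that $\DMan[\frac{1}{\pi}]$ v-sheafifies to $\IC^\lozenge$ (\Cref{DM1p-is-well-bheaved-subcategory}), i.e.\ that analytic isocrystals admit lattices v-locally, to lift further to $\SHT(S)$.

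\emph{The main obstacle.} Once \Cref{maintheorem} and \Cref{theorem-Isoc-is-what-it-is} are in hand the argument is entirely formal; the single non-formal ingredient is the surjectivity of the Fargues--Scholze charts $\coprod_b\calM_b\to\Bun_G$, equivalently the v-local existence of Harder--Narasimhan–type semistable reductions of $G$-bundles on the relative Fargues--Fontaine curve. I would import this from \cite{FS24} rather than reprove it, and I would emphasise that it is not visible from the Cartesian square of \Cref{maintheorem} alone --- it is where the genuine geometric input enters.
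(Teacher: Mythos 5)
Your proposal is correct and follows essentially the same route as the paper: deduce (3) from the fact that the Fargues--Scholze chart map $\calM\to\Bun_G$ is a v-surjection and factors through $\sigma$ by \Cref{maintheorem}, deduce (4) by composing with the v-cover $\Sht_\calG\to\Bun_G^\mer$ from \Cref{theorem-Isoc-is-what-it-is}, and obtain (1) and (2) as the $\GL_n$ instances of (3) and (4). Your added observations --- that $\calM\to\Bun_G^\mer$ is not itself surjective and that the genuine geometric input is the covering property of the charts --- are accurate and merely make explicit what the paper's terser argument leaves implicit.
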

\begin{proof}
	The first and second claims are consequences of the third and fourth claim applied in the case $G=\on{GL}_n$.
	For the third claim, the map $\calM\to \Bun_{G}$ is formally and $\ell$-cohomologically smooth and surjects onto its image. 
	In particular, it is a surjection of small v-stacks.
	The result follows since this map factors through $\Bun_G^\mer\to \Bun_G$.
	The fourth claim follows from \Cref{theorem-Isoc-is-what-it-is} (4) and the third claim.
\end{proof}

\begin{definition}
	\label{definition-the-M-space}
	Given two subsets $U_1,U_2\subseteq B(G)$
	we let $\calM^{\sigma\in U_2} _{\gamma\in U_1}$ denote $\gamma^{-1}(\Isoc_G^{U_1})\cap \sigma^{-1}(\Bun_G^{U_2})$.
	Whenever $U_i=B(G)$, we omit the subscript or superscript as an abbreviation.
\end{definition}

We will mostly use \Cref{definition-the-M-space} when $U_1$ or $U_2$ are given by Newton polygon inequalities. 
In this case, we use more intuitive notation: for example, $\calM^{\sigma=b} \coloneqq\sigma^{-1}(\Bun_G^b)$ and $\calM_{\gamma=b}\coloneqq\gamma^{-1}(\Isoc_G^b)=\calM_b$ (where the last equality follows from \Cref{maintheorem}). 

\section{Three comparison theorems}
\label{section-3-compar}
\subsection{The meromorphic comparison}
Let $C$ be a non-Archimedean algebraically closed field and let $S=\Spa(C,O_C)$.
One interesting consequence of the classification theorem of vector bundles on the Fargues--Fontaine curve is that every such vector bundle extends at $\infty$, i.e.\ it is isomorphic to one obtained from a $\varphi$-module over $Y_{(0,\infty],S}$ for such $S$.
In what follows, we will prove that this statement holds in families when one is allowed to work v-locally.
\begin{definition}
	Let $S=\Spa(R,R^+)\in \Perf$ and $T=\Spa(R,R^\circ)$.
	\begin{enumerate}
		\item We let $\Bun^+_{\on{FF}}(S)\in \calP(\Perf\!,\CatexE)$ be given by the rule that attaches to $S$ the category of pairs $(\calE,\Phi)$, where $\calE$ is a vector bundle over $Y_{(0,\infty],T}$ and $\Phi\colon \varphi^*\calE\to \calE$ is an isomorphism.
		\item We say that $\calE\in \Bun_{\on{FF}}(S)$ \textit{extends} at $\infty$ if it is in the essential image of the map $\Bun^+_{\on{FF}}(S)\to \Bun_{\on{FF}}(T)\cong \Bun_{\on{FF}}(S)$.
		\item We denote by $(\DM)^{\dagger_{\on{pre}}}\in \calP(\Perf\!,\CatexOE)$, the presheaf given by the rule  
		\[(R,R^+)\mapsto \DM(\Spec R^\circ).\]
	\item We say that $\calE\in \SHT(S)$ is a \textit{BKF-shtuka} if it is in the essential image of the map $(\DM)^{\dagger_{\on{pre}}}(S)\to \SHT(T)\cong \SHT(S)$.
	\item We denote by $(\DM[\frac{1}{\pi}])^{\dagger_{\on{pre}}}\in \calP(\Perf\!,\CatexE)$, the presheaf given by the rule  
		\[(R,R^+)\mapsto \DM[\frac{1}{\pi}](\Spec R^\circ)\,.\]
	\item We denote by $\IC^{\dagger_{\on{pre}}}\in \calP(\Perf\!, \CatexE)$, the presheaf given by the rule  
		\[(R,R^+)\mapsto \IC(\Spec R^\circ)\,.\]
	\end{enumerate}

	%\[\Spa(R,R^+)\mapsto \IC(\Spec R^\circ)\]

\end{definition}

We can pass to $\calG$-objects for all the above.

\begin{definition}
	\label{def-extend-G-atinfty}
	Let $S=\Spa(R,R^+)\in \Perf$ and $T=\Spa(R,R^\circ)$.
	\begin{enumerate}
		\item We let $\Bun^+_{G}(S)\in \calP(\Perf\!,\Grps)$ be given by $\on{Fun}^\otimes_{\on{ex}}(\on{Rep}_G,\Bun^+_{\on{FF}})$. 
		\item We say that $\calE\in \Bun_{G}(S)$ \textit{extends} at $\infty$ if it is in the essential image of the map $\Bun^+_{G}(S)\to \Bun_{G}(S)$.
		\item We denote $(\DMG)^{\dagger_{\on{pre}}}=\on{Fun}^\otimes_{\on{ex}}(\on{Rep}_\calG,(\DM)^{\dagger_\pre})$.
	\item We say that $\calE\in \Sht_\calG(S)$ is a \textit{BKF-$\calG$-shtuka} if it is in the essential image of the map $(\DMG)^{\dagger_{\on{pre}}}(S)\to \Sht_\calG(S)$.
	\item We let $(\DMG[\frac{1}{\pi}])^{\dagger_{\on{pre}}}\in \calP(\Perf\!,\Grps)$ be given by $\on{Fun}^\otimes_{\on{ex}}(\on{Rep}_G,(\DM[\frac{1}{\pi}])^{\dagger_\pre})$. 
	\item We let $\ICG^{\dagger_{\on{pre}}}\in \calP(\Perf\!, \Grps)$ be given by $\on{Fun}^\otimes_{\on{ex}}(\on{Rep}_G,(\IC)^{\dagger_\pre})$.
	\end{enumerate}

	%\[\Spa(R,R^+)\mapsto \IC(\Spec R^\circ)\]

\end{definition}

\begin{remark}
	\label{remark-Faruges-theorem}
	The purpose of this section is to formulate and prove \Cref{meromorphic-comparison-theorem}.
	This result can be regarded as a version of Fargues' theorem \cite[Theorem 1.12]{Fargues_courbe} in families. 
	Recall that Fargues' theorem states that the category of shtukas over $(C,O_C)$ is equivalent to the category of BKF-modules of $\bbW(O_C)$.
	Although this statement is not true for general families, \Cref{meromorphic-comparison-theorem} below shows that the statement is true v-locally. 
	Indeed, $\Sht_\calG(R,R^+)$ parametrizes $\calG$-shtukas over $\Spa(R,R^+)$ while $(\DMG)^\dagger$ is the sheafification of the functor attaching to $(R,R^+)$ the category of BKF-modules with $\calG$-structure over $\bbW(R^\circ)$.
\end{remark}

\begin{proposition}
	\label{fully-faithful-from-inftytobeyond}
	Let $S=\Spa(R,R^+)\in \Perf$, the following hold.
	\begin{enumerate}
		\item The map $\Bun^+_{\on{FF}}(S)\to \Bun_{\on{FF}}(S)$ is exact and fully-faithful.  
		\item The map $\Bun^+_{G}(S)\to \Bun_{G}(S)$ is fully-faithful.  
		\item If $S$ is a product of points, the diagrams
		\begin{center}
		\begin{tikzcd}
			(\DMG)^{\dagger_{\on{pre}}}(S) \arrow{r} \arrow{d}  & \DMG[\frac{1}{\pi}]^{\dagger_{\on{pre}}}(S) \ar{r} \ar{d} &\Bun^+_{G}(S)  \arrow{d} \\
			\Sht_\calG(S)	\arrow{r} & \Sht_\calG[\frac{1}{\pi}](S) \ar{r} & \Bun_{G}(S) 
		\end{tikzcd}
		\end{center}
		are Cartesian in $\Grps$.
		\item If $S$ is a product of points then $\DMG[\frac{1}{\pi}]^{\dagger_{\on{pre}}}(S)\cong \ICG^{\dagger_{\on{pre}}}(S)$.
		\item The sheafification of $\DMG[\frac{1}{\pi}]^{\dagger_{\on{pre}}}$ is $\ICG^\dagger$.
	\end{enumerate}
\end{proposition}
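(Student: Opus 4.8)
The plan is to reduce the statement to part (4) via the fact that products of points form a basis for the v-topology on $\Perf$, so that a comparison map which is an equivalence on products of points becomes an equivalence after v-sheafification.

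First I would produce the comparison map itself. By \Cref{formal-properties-IC-DM} the presheaf $\DM[\frac{1}{\pi}]$, equipped with the $E$-linear exact structure it inherits as a full subcategory of $\IC$, is v-separated with $\DM[\frac{1}{\pi}]^\sh \simeq \IC$; the unit of this adjunction is a morphism $\DM[\frac{1}{\pi}] \to \IC$ in $\calP(\PSch,\CatexE)$ (it is exact and $\otimes$-compatible by the very choice of exact structure). Applying the $\dagger_{\on{pre}}$-construction (which is precomposition with $\Spec(-)^\circ$, hence functorial) and then $\on{Fun}^\otimes_{\on{ex}}(\on{Rep}_G,-)$ yields a natural transformation
\[ \alpha\colon \DMG[\tfrac{1}{\pi}]^{\dagger_{\on{pre}}} \longrightarrow \ICG^{\dagger_{\on{pre}}} \]
of presheaves on $\Perf$ with values in $\Grps$, where $\ICG^{\dagger_{\on{pre}}}$ is the presheaf $(R,R^+)\mapsto \ICG(\Spec R^\circ)$ of \Cref{analytifications}.

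Next I would observe that part (4) says precisely that $\alpha_S$ is an equivalence whenever $S$ is a product of points; tracing the proof of part (4), the isomorphism there is the one induced by $\alpha$, since it is built from the comb-level identification $\DM[\frac{1}{\pi}](\Spec R^\circ)\simeq \IC(\Spec R^\circ)$, which is itself the restriction of the sheafification unit. Since products of points form a basis for the v-topology on $\Perf$ (\Cref{remark-product-of-points}), a morphism of presheaves that is an equivalence on all products of points induces an equivalence on v-sheafifications — this is the comparison-of-sites principle already used in \Cref{VYp-is-separated-presheaf} and \Cref{Big-diamond-properties}. Applying it to $\alpha$, and using that $\ICG^\dagger=(\ICG^{\dagger_{\on{pre}}})^\sh$ by \Cref{analytifications}, I conclude
\[ \bigl(\DMG[\tfrac{1}{\pi}]^{\dagger_{\on{pre}}}\bigr)^\sh \;\simeq\; \bigl(\ICG^{\dagger_{\on{pre}}}\bigr)^\sh \;=\; \ICG^\dagger\,. \]

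There is no serious obstacle here: all the geometric content has already been absorbed into parts (1)--(4), and (5) is a formal consequence. The only point demanding a line of care is the verification that the isomorphism furnished by part (4) really is the base change of $\alpha$ rather than an a priori non-canonical one, but as noted this follows because it is constructed from the comb-level identification, which is the restriction of the unit $\DM[\frac{1}{\pi}]\to\IC$; if one is fastidious, one can instead argue directly that $\alpha$ is a map of presheaves whose source and target have the same sheafification because their restrictions to the basis of products of points agree, without ever naming the comparison map — the conclusion is the same.
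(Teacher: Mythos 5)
Your argument for part (5) is correct and is exactly the paper's: the paper dispatches this claim in one line as a consequence of part (4) together with the fact that products of points form a basis for the v-topology, and your write-up merely makes the comparison map $\alpha$ and the comparison-of-sites step explicit (both of which are fine, and the identification $(\ICG^{\dagger_{\on{pre}}})^\sh=\ICG^\dagger$ does hold by \Cref{analytifications} since $\on{Fun}^\otimes_{\on{ex}}(\Rep_G,-)$ commutes with the $\dagger_{\on{pre}}$-construction). Note, however, that you have only addressed part (5); parts (1)--(4), which carry essentially all of the geometric content of the proposition (Kedlaya's GAGA over $\bbA_{\on{inf}}$, the extension theorem for $\calG$-torsors on products of points, triviality of torsors over combs), are taken as given in your reduction.
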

\begin{proof}
	The first claim is \cite[Proposition 2.1.3]{PR21}, whose proof generalizes to general $E$ (see \cite[Remark 2.1.10]{PR21}). 
	The second claim follows formally by passing to $\on{Fun}^\otimes_{\on{ex}}(\on{Rep}_G,-)$.
	For the third claim, note that by Kedlaya's GAGA \cite[Theorem 3.8]{Kedlaya_16_ringAinf}, we can identify the category $\Sht_\calG(S) \times_{\Bun_{G}(S)} \Bun^+_{G}(S)$ with the category of $\calG$-bundles over $\Spec\bbW(R^\circ)\setminus (\{\pi =0\} \cap \{[\varpi] = 0\})$ together with $\varphi$-action defined over $\Spec \bbW(R^\circ)[\frac{1}{\pi}]$.
	We note that to carry \cite[Theorem 3.8]{Kedlaya_16_ringAinf} over to the equal characteristic setting, it suffices to generalize \cite[Proposition 3.6]{Kedlaya_16_ringAinf} to ramified Witt vectors, which is straightforward since the proof strategy works in this generality.
	As $S$ is a product of points, by \cite[Theorem 1.1]{Ans18}) (see \cite[Proposition 2.1.17]{Gle22v2}), any such $\calG$-bundle extends uniquely to a $\calG$-bundle over $\Spec \bbW(R^\circ)$. 
	This proves that the outer diagram is Cartesian. 
	%The right square can be written as a filtered colimit of copies of the outer square. As filtered colimits commute with finite products, it follows that the outer square is Cartesian. It then follows that the left square is Cartesian as well. 
	Moreover, the same argument also applies to the right square, so this is Cartesian as well.
	It then follows that the left square is Cartesian.

	For the fourth claim, write $S = \Spa(R,R^+)$. 
	In the case that $\calG=\on{GL}_n$, we need to show that any isocrystal $\calE$ over $\bbW(R^\circ)[\frac{1}{\pi}]$ contains a $\bbW(R^\circ)$-lattice. 
	As $S$ is a product of points, \Cref{combs_and-products} and \cite[Theorem 6.1]{Ivanov_arc_descent} imply that $\calE$ is free as a $\bbW(R^\circ)[\frac{1}{\pi}]$-module, but then an $\bbW(R^\circ)$-lattice obviously exists. 
	For more general $\calG$ this follows from \Cref{triviality-on-combs}.
	The fifth claim follows from the fourth since product combs are a basis for the v-topology.
\end{proof}
\begin{remark}
We warn the reader that the maps $(\DM)^{\dagger_{\on{pre}}}(S) \to \SHT(S)$ and $\Bun^+_{\on{FF}}(S) \to \Bun_{\on{FF}}(S)$ do not reflect exactness. 
For this reason one crucially relies on the in-depth analysis of $\calG$-torsors for parahoric $\calG$, see \cite[Theorem 1.1]{Ans18}.
\end{remark}
The advantage of working with $(\DM)^{\dagger_{\on{pre}}}$ is that its values on product of points are easy to describe.
\begin{proposition}
	\label{descriptionDMdaggeronproductofpoints}
	Let $S=\Spa(R,R^+)$ be a product of points with $R^+\!=R^\circ\!=\prod_{i\in I}\!O_{C_i}$, then the restriction functor
	\[(\DM)^{\dagger_{\on{pre}}}(S)\to \prod_{i\in I}\DM(\Spec O_{C_i})\]
	is fully-faithful, and its essential image is the collection of families of $\{(\calE_i,\Phi_i)\}_{i\in I}$ with uniformly bounded zeros and poles on $\pi$.
\end{proposition}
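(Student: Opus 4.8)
The plan is to analyze $(\DM)^{\dagger_{\on{pre}}}(S)$ concretely by unwinding the definition: it is $\DM(\Spec R^\circ)$, i.e. the category of pairs $(\calE, \Phi_\calE)$ with $\calE$ a finite projective $\bbW(R^\circ)$-module and $\Phi_\calE$ an isomorphism of the associated vector bundles over $Y_{(\Spec R^\circ)^\diamond}$. Since $S$ is a product of points, $R^\circ = \prod_{i\in I} O_{C_i}$ and by \Cref{combs_and-products} $\Spec R^\circ$ is a comb, so by \cite[Theorem 6.1]{Ivanov_arc_descent} every finite projective $\bbW(R^\circ)$-module is free; moreover $\bbW(R^\circ) = \prod_{i\in I} \bbW(O_{C_i})$ (ramified Witt vectors commute with products, as they are computed entrywise). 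First I would establish the fully-faithfulness statement: a morphism $(\calE,\Phi_\calE) \to (\calF, \Phi_\calF)$ over $\Spec R^\circ$ is a $\varphi$-equivariant map of $\bbW(R^\circ)$-modules, hence (after trivializing) a matrix over $\bbW(R^\circ) = \prod_i \bbW(O_{C_i})$, which is the same as a compatible family of matrices over each $\bbW(O_{C_i})$; the $\varphi$-equivariance condition is likewise checked factor-by-factor since $\varphi$ on $\bbW(R^\circ)$ is the product of the $\varphi$'s on the $\bbW(O_{C_i})$. Thus $\Hom$-sets in $(\DM)^{\dagger_{\on{pre}}}(S)$ inject into (and in fact equal) the product of the $\Hom$-sets in $\prod_i \DM(\Spec O_{C_i})$.

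Next I would identify the essential image. Given $(\calE, \Phi_\calE) \in (\DM)^{\dagger_{\on{pre}}}(S)$, trivialize $\calE \cong \bbW(R^\circ)^n$; then $\Phi_\calE$ is a matrix $M \in \GL_n(B^{R^\circ}_{(0,\infty)})$ — the sections of $\calO$ on the relevant $Y$-space — which by definition of the $\bbW(R^\circ)$-module structure means $M$ and $M^{-1}$ each lie in $M_{n\times n}(\bbW(R^\circ)[\tfrac{1}{\pi}]) \cap (\text{meromorphic matrices})$; concretely, the condition that $\calE$ genuinely comes from a projective $\bbW(R^\circ)$-module (not just from $Y$) is precisely that there exists $N$ with $\pi^N M, \pi^N M^{-1} \in M_{n\times n}(\bbW(R^\circ))$. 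Writing $M = (M_i)_{i\in I}$ under $\bbW(R^\circ) = \prod_i \bbW(O_{C_i})$, each $(\bbW(O_{C_i})^n, M_i)$ is an object of $\DM(\Spec O_{C_i})$, and the single integer $N$ bounds the pole order of $M_i$ and $M_i^{-1}$ simultaneously for all $i$ — this is exactly the "uniformly bounded zeros and poles on $\pi$" condition. Conversely, given a family $\{(\calE_i, \Phi_i)\}$ with a uniform bound $N$, each $\calE_i$ is free (as $O_{C_i}$ is a valuation ring), and gluing the trivializations gives $\calE := \bbW(R^\circ)^n$ with $\Phi_\calE := (\Phi_i)_i$; the uniform bound guarantees $\pi^N (\Phi_i)_i$ and $\pi^N (\Phi_i^{-1})_i$ have entries in $\prod_i \bbW(O_{C_i}) = \bbW(R^\circ)$, so $(\calE, \Phi_\calE)$ is a well-defined object of $\DM(\Spec R^\circ) = (\DM)^{\dagger_{\on{pre}}}(S)$ restricting to the given family.

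The main obstacle — really the only subtle point — is making precise that the restriction functor lands in families with \emph{uniformly} bounded zeros and poles, and conversely that a uniform bound is exactly what is needed to assemble a family into a single object over $\bbW(R^\circ)$; this hinges on the compatibility $\bbW(\prod_i O_{C_i}) = \prod_i \bbW(O_{C_i})$ together with the observation that an element of $\bbW(R^\circ)[\tfrac1\pi]$ lies in $\bbW(R^\circ)$ iff all its components avoid negative $\pi$-powers, so membership in $\bbW(R^\circ)[\tfrac1\pi]$ already forces a uniform pole bound. I would also note that freeness of the modules (used to reduce morphisms and objects to matrices) requires $R^\circ$ to be a comb, which holds by \Cref{combs_and-products}, and the identification of the module-theoretic picture with the vector-bundle picture on $Y$ uses Kedlaya's GAGA \cite[Theorem 3.8]{Kedlaya_16_ringAinf} exactly as in the proof of \Cref{fully-faithful-from-inftytobeyond}. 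Everything else is a routine unwinding of definitions.
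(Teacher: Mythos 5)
Your proof is correct and follows essentially the same route as the paper: fully-faithfulness comes from the identification $\bbW(\prod_i O_{C_i}) = \prod_i \bbW(O_{C_i})$, and the essential image is characterized by noting that membership of the Frobenius matrix and its inverse in $\GL_n(\bbW(R^\circ)[\frac{1}{\pi}]) \subseteq \prod_i \GL_n(\bbW(O_{C_i})[\frac{1}{\pi}])$ is exactly a uniform bound on poles and zeros at $\pi$. Your version just spells out the routine supporting details (freeness over the comb $\Spec R^\circ$, the reduction of the bundle-theoretic Frobenius to a matrix) that the paper leaves implicit.
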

\begin{proof}
The fully-faithful functor is induced by the isomorphism $\bbW(\prod \!O_{C_i}) = \prod \bbW(O_{C_i})$. The pole (resp. zero) at each $i \in I$ of any object in the essential image is bounded by the pole (resp. zero) of its preimage. Conversely, if we have a uniform bound, then the Frobenius is represented by a matrix with entries in $\bbW(R^\circ)[\frac{1}{\pi}] = (\prod \bbW(O_{C_i}))[\frac{1}{\pi}] \subseteq \prod (\bbW(O_{C_i})[\frac{1}{\pi}])$, whose inverse also has entries in this subring. 
\end{proof}

Moreover, at the level of geometric points $\SHT$ is also easy to describe. 
Indeed, the following is the $\pi=\xi$ version of Fargues' theorem \cite[Theorem 14.1.1]{SW20}.
\begin{proposition}
	\label{concrete-shtukas-atpoints}	
	Let $C$ be a non-Archimedean field, then the following categories are equivalent: 
	\begin{enumerate}
		\item BKF-modules with $\xi=\pi$. In other words, the category pairs $(M,\Phi)$, where $M$ is a free $\bbW(O_C)$-module and $\Phi\colon M[\frac{1}{\pi}]\to \bbW(O_C) _\varphi\otimes_{\bbW(O_C)} M [\frac{1}{\pi}]$ is an isomorphism.
		\item $(\DM)^{\dagger_{\on{pre}}}(C,C^+)$
		\item $\SHT(C,C^+)$.
	\end{enumerate}
\end{proposition}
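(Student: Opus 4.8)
The plan is to establish the two equivalences (1)$\Leftrightarrow$(2) and (2)$\Leftrightarrow$(3) separately. Before doing either I would make two harmless reductions. First, one may assume $C^+ = O_C$: the category in (2), $(\DM)^{\dagger_{\on{pre}}}(C,C^+) = \DM(\Spec C^\circ) = \DM(\Spec O_C)$, depends only on $C^\circ = O_C$ by construction, and $\SHT(C,C^+)$ is insensitive to the ring of integral elements because $[\varpi]$ is invertible on $\calY_S$, exactly as in the reduction at the start of the proof of \Cref{exactness-bungmer}. Second, one may assume $C$ algebraically closed --- which is what "non-Archimedean field" should be taken to mean here, consistently with Fargues' theorem --- so that $S = \Spa(C,O_C)$ is a product of points in the sense of \Cref{combs_and-products}; the general case then reduces to this one by descent, using the full-faithfulness established below.

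For (1)$\Leftrightarrow$(2): by definition and \Cref{def:DM_isocrystals}, $(\DM)^{\dagger_{\on{pre}}}(C,C^+) = \DM(\Spec O_C)$ is the category of pairs $(\calE,\Phi_\calE)$ with $\calE$ a finite projective $\bbW(O_C)$-module and $\Phi_\calE \colon \varphi^*\calE \to \calE$ an isomorphism of vector bundles over $Y_{(\Spec O_C)^\diamond} = \Spa(\bbW(O_C)[\tfrac1\pi],\bbW(O_C))$. The ring $\bbW(O_C)$ is $\pi$-adically complete with local residue ring $O_C$, hence local, so finite projective modules over it are free; this matches the module $M$ of (1). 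Since $Y_{(\Spec O_C)^\diamond}$ is $E_\infty$-sousperfectoid (\Cref{ffschematic}) and affinoid, its category of vector bundles is $\on{Proj}(\bbW(O_C)[\tfrac1\pi])$ by \cite[Theorem 5.2.8]{SW20}, \cite[Proposition 6.3.4]{SW20}, so $\Phi_\calE$ is precisely an isomorphism $\varphi^*M[\tfrac1\pi]\xrightarrow{\sim}M[\tfrac1\pi]$; reversing its direction (harmless, as it is an isomorphism) yields the datum of (1). This correspondence is manifestly functorial and fully faithful, giving the equivalence.

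For (2)$\Leftrightarrow$(3): apply \Cref{fully-faithful-from-inftytobeyond}(3) with $\calG = \GL_n$ (over all ranks) to the product of points $S = \Spa(C,O_C)$; the outer Cartesian square reads
\[
(\DM)^{\dagger_{\on{pre}}}(S)\;\simeq\;\SHT(S)\times_{\Bun_{\on{FF}}(S)}\Bun^+_{\on{FF}}(S)\,.
\]
Over the geometric point $S$ the functor $\Bun^+_{\on{FF}}(S)\to\Bun_{\on{FF}}(S)$ is fully faithful by \Cref{fully-faithful-from-inftytobeyond}(1), and it is essentially surjective by the classification of vector bundles on the Fargues--Fontaine curve over $C$ (\cite{fargues_g_torseurs_en_theorie_de_hodge_p_adique}, \cite[Theorem 3.11]{Ans19}): every such bundle is a finite direct sum of the $\calO(\lambda)$, each of which visibly extends at $\infty$. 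Hence $\Bun^+_{\on{FF}}(S)\to\Bun_{\on{FF}}(S)$ is an equivalence, the fiber product collapses along it, and we obtain $(\DM)^{\dagger_{\on{pre}}}(C,C^+)\simeq\SHT(C,C^+)$. To upgrade this from an equivalence of groupoid cores to one of exact categories I would note that the comparison functor (restrict a Dieudonné module over $\Spec O_C$ to $\calY_S$, keeping the meromorphic Frobenius) is fully faithful: passing to internal $\underline{\on{Hom}}$-bundles this amounts to the statement that a $\varphi$-invariant section of $M\otimes_{\bbW(O_C)}\calO_{\calY_S}$ over $\calY_S$ automatically lies in the lattice $M$, which is proven by the same contraction estimate as in \Cref{key-lemma-filtration-extends}.

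The only genuine content --- and hence the main obstacle --- is entirely absorbed into the inputs of \Cref{fully-faithful-from-inftytobeyond}(3): Kedlaya's GAGA for $\bbA_{\on{inf}}$ together with Anschütz's theorem \cite[Theorem 1.1]{Ans18} on extension of $\calG$-torsors across $\{\pi = [\varpi] = 0\}$, which is exactly the "extension at $\infty$" phenomenon underlying Fargues' theorem. Granting these, the present statement is bookkeeping; the points requiring (minor) care are the two reductions above, the direction convention for the Frobenius, and the full-faithfulness upgrade just described.
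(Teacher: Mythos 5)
Your proposal is correct in outline but takes a genuinely different route from the paper. The paper's own proof is very short: it observes that (1) and (2) are literally the same category by unwinding \Cref{def:DM_isocrystals}, and for (2)$\simeq$(3) it simply invokes the proof of Fargues' theorem in \cite[\S 12--14]{SW20}, asserting that the same strategy applies with $\xi=\pi$ and for general $E$. You instead stay inside the paper's own machinery: you specialize the outer Cartesian square of \Cref{fully-faithful-from-inftytobeyond}(3) to the geometric point (which is a product of points with singleton index set), reducing everything to the statement that $\Bun^+_{\on{FF}}(S)\to\Bun_{\on{FF}}(S)$ is an equivalence over $S=\Spa(C,O_C)$, which you get from \cite[Proposition 2.1.3]{PR21} plus the classification theorem. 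This is a legitimate and arguably more self-contained argument, since the heavy lifting (Kedlaya's GAGA and Ansch\"utz's extension theorem) has already been packaged into \Cref{fully-faithful-from-inftytobeyond}; the paper's route instead re-runs the Berkeley-notes argument wholesale.

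Two caveats. First, your reduction of a general non-Archimedean (perfectoid) field to the algebraically closed case ``by descent'' is not free: $(\DM)^{\dagger_{\on{pre}}}$ is only a presheaf, so descending an object of $\DM(\Spec O_{\widehat{\overline{C}}})$ with descent datum back to $\DM(\Spec O_C)$ requires an argument (the paper is equally silent here, and in all applications $C$ is algebraically closed, but you should either restrict the statement or supply the descent). Second, your upgrade from the groupoid equivalence to full faithfulness on all morphisms cites \Cref{key-lemma-filtration-extends}, which is the wrong contraction: that lemma propagates a $\varphi$-invariant vector from $\bbW(R)[\frac{1}{\pi}]$ into some $B_{[0,s]}$, whereas here you must extend a $\varphi$-equivariant section from $\calY_S=\{[\varpi]\neq 0\}$ across $V([\varpi])$ (note $H^0(\calY_S,\calO)$ contains $\frac{1}{[\varpi]}$, so the containment is not automatic). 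The clean fixes are either to extract Hom-level full faithfulness from the groupoid statement by embedding $\on{Hom}(\calE,\calF)$ into the unipotent automorphisms of $\calE\oplus\calF$, or to observe that the GAGA-plus-Ansch\"utz identification in the proof of \Cref{fully-faithful-from-inftytobeyond}(3) already matches Hom-sets, not just isomorphisms.
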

\begin{proof}
	By definition $(\DM)^{\dagger_{\on{pre}}}(C,C^+)=\DM(O_C)$, which is precisely the category of BKF-modules with $\xi=\pi$, so the first two categories are the same category.
	The equivalence with the third category is given in \cite[$\S12$-$14$]{SW20} when $\xi\neq \pi$.
	The same proof strategy applies and generalizes to general $E$ (see the proof of \Cref{fully-faithful-from-inftytobeyond}).
\end{proof}

We wish to extend \Cref{concrete-shtukas-atpoints} to the case of products of points.
It will be profitable to work with the stack of shtukas that have their leg away from the trivial untilt.
Let us set some notation.

We let $S\in \Perf$ be of the form $S=\Spa(R,R^+)$.
Recall that an $S$-point $S \to \Spd O_E$ corresponds to an untilt $S^{\sharp}$ of $S$, which in turn corresponds to a degree $1$ Cartier divisor $\infty\colon S^\sharp\hookrightarrow \calY_S$, and that it factors through $\Spd E$ if and only if the Cartier divisor factors through $Y_S$.
In this circumstance, we let $\xi_{S^\sharp}\in \bbW(R^+)$ be a generator of the kernel of $\bbW(R^+)\to R^{\sharp,+}$, cf.\ \cite[Proposition 11.3.1]{SW20} and \cite[Proposition II.1.4]{FS24}.

\begin{definition}
	Let $S\in \Perf$ be of the form $S=\Spa(R,R^+)$.
	\begin{enumerate}
		\item A \emph{$\calG$-shtuka} is a triple $(S^\sharp,\calE,\Phi)$, where $\infty \colon S^\sharp \hookrightarrow \calY_S$ is an untilt of $S$ over $\Spd(O_E)$, $\calE$ is a $\calG$-bundle over $\calY_S$ and $\Phi_\calE$ is an isomorphism \[\Phi_\calE\colon (\varphi^*\calE)_{\calY_S\setminus \infty} \rightarrow  \calE_{\calY_S\setminus \infty}\]
		that is meromorphic (cf. \cite[Definition 5.3.5]{SW20}) along $\infty=\{\xi=0\}$. 
		\item A \emph{BKF-$\calG$-shtuka} with leg at $\infty$ is a triple $(S^\sharp, \calE,\Phi)$, where $\calE$ is a $\calG$-bundle over $\Spec \bbW(R^\circ)$ and $\Phi_\calE$ is an isomorphism 
		\[\Phi_\calE\colon (\varphi^*\calE) \rightarrow  \calE\]
		defined over ${\Spec \bbW(R^\circ)[\frac{1}{\xi_{S^\sharp}}]}$.
		\item We let $\Sht_{\calG,O_E}$ denote the v-stack of $\calG$-shtukas and we let $\Sht^+_{\calG,O_E}$ denote the prestack of BKF-$\calG$-shtukas.
	\end{enumerate}
\end{definition}

Observe that we have a map 
\[\Sht_{\calG,O_E}\to \Bun_G\]
that sends $(S^\sharp, \calE,\Phi)$ to the unique $\calG$-$\varphi$-module over $Y_S$ that agrees with $(\calE,\Phi)$ over $Y_{[r,\infty),S}$ for sufficiently large $r$.

\begin{proposition}
	\label{Cartesian-grps-untilt}
	If $S$ is a product of points we have a Cartesian diagram 
	\begin{center}
	\begin{tikzcd}
		\Sht^+_{\calG,O_E} (S) \arrow{r} \arrow{d}  & \Bun_G^+ (S) \arrow{d} \\
		\Sht_{\calG,O_E}(S) \arrow{r} & \Bun_G(S)
	\end{tikzcd}
	\end{center}
	in $\Grps$.
\end{proposition}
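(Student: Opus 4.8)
The plan is to deduce \Cref{Cartesian-grps-untilt} from \Cref{fully-faithful-from-inftytobeyond} by reducing the ``leg at a general untilt'' situation to the ``leg at $\infty$'' situation via a Frobenius-twisting trick. First I would recall the key geometric input: for $S=\Spa(R,R^+)$ a product of points with $R^+=R^\circ=\prod_{i\in I}O_{C_i}$ and a fixed untilt $S^\sharp$ over $\Spd O_E$ factoring through $\Spd E$, the divisor $\infty\colon S^\sharp\hookrightarrow Y_S$ is $\varphi$-translated away from the trivial untilt; more precisely, the $\varphi$-orbit of the untilt sweeps out the (closed points of the) relative curve, so that $\calY_S\setminus\bigcup_{n\in\bbZ}\varphi^n(\infty)$ is contained in $Y_{[r,\infty),S}$ for $r$ large. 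The point is that a $\calG$-shtuka with leg at $S^\sharp$ can be spread out using $\Phi$ to a $\varphi$-module on all of $Y_S$ away from $\infty$, and hence the category $\Sht_{\calG,O_E}(S)$ with leg at $S^\sharp$ is equivalent to the category of pairs (a $\calG$-bundle on $Y_{[r,\infty),S}$, a meromorphic $\varphi$-descent datum along $\{\xi_{S^\sharp}=0\}$), which after using $\varphi$ to move the leg to the ``standard'' position is precisely $\Sht_\calG(S)$ in the sense of \Cref{defishtukas}/\Cref{def-extend-G-atinfty}. Concretely, $\Sht_{\calG,O_E}(S)\simeq \Sht_\calG(S)$ and $\Sht^+_{\calG,O_E}(S)\simeq (\DMG)^{\dagger_\pre}(S)$ once a uniformizing choice identifies the leg $\xi_{S^\sharp}$ with $\pi$ up to a unit; this is exactly the mechanism already used in \Cref{concrete-shtukas-atpoints}, whose equivalences are the $\xi=\pi$ specializations of Fargues' theorem.

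Granting these identifications, the square in the statement becomes identified with the outer square of the diagram in part (3) of \Cref{fully-faithful-from-inftytobeyond}, namely
\[
\begin{tikzcd}
(\DMG)^{\dagger_\pre}(S) \arrow{r}\arrow{d} & \Bun^+_G(S)\arrow{d} \\
\Sht_\calG(S)\arrow{r} & \Bun_G(S)\,,
\end{tikzcd}
\]
which is Cartesian by that proposition. So the proof reduces to (i) constructing the two horizontal equivalences $\Sht_{\calG,O_E}(S)\simeq \Sht_\calG(S)$ and $\Sht^+_{\calG,O_E}(S)\simeq (\DMG)^{\dagger_\pre}(S)$ compatibly with the maps to $\Bun_G(S)$ and $\Bun_G^+(S)$, and (ii) checking that under these equivalences the right-hand vertical map $\Sht^+_{\calG,O_E}(S)\to \Bun^+_G(S)$ is carried to the map $(\DMG)^{\dagger_\pre}(S)\to \Bun^+_G(S)$ that appears in \Cref{fully-faithful-from-inftytobeyond}(3) — i.e.\ ``restrict the Frobenius structure from $\Spec\bbW(R^\circ)[\frac1{\xi_{S^\sharp}}]$ to $Y_{(0,\infty],S}$ and forget.'' Both (i) and (ii) are essentially bookkeeping once one invokes Kedlaya's GAGA \cite[Theorem 3.8]{Kedlaya_16_ringAinf} (in the ramified-Witt-vector form already used in the proof of \Cref{fully-faithful-from-inftytobeyond}) to pass between $\calG$-bundles on $Y_S$ and on $\Spec\bbW(R^\circ)\setminus(\{\pi=0\}\cap\{[\varpi]=0\})$, together with the main theorem of \cite{Ans18} to extend across $\{[\varpi]=0\}$, which is exactly how the Cartesianness in \Cref{fully-faithful-from-inftytobeyond}(3) is proven.

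The main obstacle I anticipate is (i): carefully setting up the equivalence $\Sht_{\calG,O_E}(S)\simeq \Sht_\calG(S)$ in a way that is genuinely functorial in $S$ among product of points and that respects the map to $\Bun_G$, because the identification of the leg $\xi_{S^\sharp}$ with $\pi$ is only possible after a $\varphi$-twist (the untilt $S^\sharp$ need not be the ``standard'' one), and one must check that the meromorphicity condition along $\{\xi_{S^\sharp}=0\}$ matches the meromorphicity condition along $\pi=0$ after this twist. Since the underlying $\varphi$-module on $Y_S$ produced by spreading out is insensitive to which point of the $\varphi$-orbit one cut out the leg at, this works, but writing it cleanly requires choosing, for the product of points $S$, a coherent family of untilts; one can either argue that all such choices give canonically equivalent categories via $\Phi$, or simply note that the comparison with $\Bun_G(S)$ (which does not see the leg) pins down everything. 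Once this is in place, the rest is formal: the square is a Cartesian square because it is identified with one already shown Cartesian, and I would state it exactly that way.
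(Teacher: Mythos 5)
Your overall strategy---transport the square onto the outer square of \Cref{fully-faithful-from-inftytobeyond}(3) via equivalences of the corners---rests on a claim that is false: there is no identification of $\Sht_{\calG,O_E}(S)$ (leg at a general untilt $S^\sharp$) with $\Sht_\calG(S)$ (leg at $\pi$), and no ``uniformizing choice'' identifying $\xi_{S^\sharp}$ with $\pi$ up to a unit. The Frobenius $\varphi$ permutes untilts among themselves (it sends $\infty$ to the untilt obtained by twisting by the Frobenius of $S$), whereas $V(\pi)\subseteq \Spec\bbW(R^\circ)$ is $\varphi$-stable and is never an untilt over $\Spd E$; so no amount of $\varphi$-twisting moves a characteristic-zero leg to the locus $\pi=0$. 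Concretely, over a product of points the two candidate categories are BKF-$\calG$-modules $(\calE,\Phi)$ with $\Phi$ an isomorphism after inverting $\xi_{S^\sharp}$, versus after inverting $\pi$; already for $\calG=\GL_1$ the object $(\bbW(O_C),\xi\cdot\varphi)$ lies in the first category but not the second, so these are genuinely different groupoids. Your supporting geometric claim that $\calY_S\setminus\bigcup_{n}\varphi^n(\infty)\subseteq Y_{[r,\infty),S}$ is also false (the $\varphi$-orbit of a single untilt accumulates only at the two ends of the interval and omits most of the curve), and \Cref{concrete-shtukas-atpoints} is specifically the $\xi=\pi$ statement---it supplies no mechanism for converting a general leg into the leg at $\pi$.

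The fix is not to reduce to the $\pi$-leg case but to rerun the argument of \Cref{fully-faithful-from-inftytobeyond}(3) verbatim with $\xi_{S^\sharp}$ in place of $\pi$ as the divisor along which $\Phi$ is merely meromorphic: by Kedlaya's GAGA the fiber product $\Sht_{\calG,O_E}(S)\times_{\Bun_G(S)}\Bun_G^+(S)$ is identified with $\calG$-bundles on $\Spec\bbW(R^\circ)\setminus(\{\pi=0\}\cap\{[\varpi]=0\})$ equipped with a $\varphi$-isomorphism defined over $\Spec\bbW(R^\circ)[\frac{1}{\xi_{S^\sharp}}]$, and since $S$ is a product of points Ansch\"utz's extension theorem extends the underlying bundle uniquely over all of $\Spec\bbW(R^\circ)$, which is exactly $\Sht^+_{\calG,O_E}(S)$. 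The ingredients you list in step (ii) are the right ones; the error is inserting the nonexistent equivalences of step (i) in front of them.
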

\begin{proof}
The same proof as in \Cref{fully-faithful-from-inftytobeyond} works in this generality.	
\end{proof}

Recall that the stack of shtukas $\Sht_{\calG,E} \coloneqq \Sht_{\calG, O_E} \times_{\Spd O_E} \Spd E$ admits a different description.

\begin{proposition}
	\label{Second-description}
	Let $S\in \Perf$ and consider the groupoid $\Sht_{\calG,E}(S)$.
	It is equivalent to the groupoid of tuples $(S^\sharp, T, \calF, \alpha)$, where $S^\sharp$ is an untilt of $S$ over $E$, $T$ is a quasi-pro-\'etale $\underline{\calG(O_E)}$-torsor over $S$, $\calF\in \Bun_G(S)$ and $\alpha\colon \calE_T\to \calF$ is a modification over $X_{\FF,S}$, where $\calE_T\in \Bun_G^1(S)$ is the unique $G$-bundle over $X_{\FF,S}$ specified by the identification $\Bun_G^1\simeq [\ast\!/\underline{G(E)}]$.
	Under this equivalence the map $\Sht_{\calG,E}\to \Bun_G$ is given by
	\[(S^\sharp, T, \calF, \alpha)\mapsto \calF\,.\]
	Moreover, the Beauville--Laszlo uniformization map $\on{BL} \colon \on{Gr}_{G,E} \to \on{Bun}_G$ factors as $\on{Gr}_{G,E} \to \on{Sht}_{\calG,E} \to \on{Bun}_G$.
\end{proposition}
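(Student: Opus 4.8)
The plan is to exhibit a pair of mutually inverse functors between $\Sht_{\calG,E}(S)$ and the groupoid of tuples $(S^\sharp,T,\calF,\alpha)$, to observe that all presheaves in sight are $v$-stacks so that mutual inverseness can be checked on geometric points, and finally to deduce the Beauville--Laszlo factorization formally. The only genuinely geometric input is the ``dictionary at $\infty$'' between \'etale $\varphi$-$\calG$-modules near $\{\pi=0\}$ and quasi-pro-\'etale $\underline{\calG(O_E)}$-torsors, which is either classical or already assembled in \Cref{fully-faithful-from-inftytobeyond} and \Cref{triviality-on-combs}.

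First I would construct the functor from shtukas to tuples. Given a $\calG$-shtuka $(S^\sharp,\calE,\Phi_\calE)$ over $S=\Spa(R,R^+)$, the untilt $\infty\colon S^\sharp\hookrightarrow\calY_S$ factors through $\Spd E$, so the leg lies at bounded radius and is disjoint from $\{\pi=0\}$; hence for $r\gg0$ the pair $(\calE,\Phi_\calE)$ restricts to an \emph{\'etale} $\varphi$-$\calG$-module on $\calY_{[r,\infty],S}$. Spreading it out by $\Phi_\calE$ across $Y_S$ produces, exactly as in the definition of $\Sht_{\calG,O_E}\to\Bun_G$ recalled before the Proposition, the bundle $\calF\in\Bun_G(S)$. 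The same \'etale $\varphi$-$\calG$-module near $\{\pi=0\}$ is, by the relative and $\calG$-equivariant form of Fargues' theorem --- i.e.\ Kedlaya--Liu slope theory, \cite[Theorem 3.8]{Kedlaya_16_ringAinf}, and triviality of $\calG$-torsors over the relevant rings (\Cref{triviality-on-combs}) --- equivalent to a quasi-pro-\'etale $\underline{\calG(O_E)}$-torsor $T$ on $S$; and $T$ yields the slope-$0$ bundle $\calE_T\in\Bun_G^1(S)$ via $\Bun_G^1\simeq[\ast\!/\underline{G(E)}]$. Away from the leg, $\calE$ determines the near-$\infty$ germs of both $\calF$ and $\calE_T$ and they agree, while along $\{\xi_{S^\sharp}=0\}$ the meromorphy of $\Phi_\calE$ together with Beauville--Laszlo descent produces a modification $\alpha\colon\calE_T\to\calF$ over $X_{\FF,S}$ supported at the divisor cut out by $S^\sharp$. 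Functoriality is clear, and by construction the composite with the forgetful map is $(S^\sharp,T,\calF,\alpha)\mapsto\calF$.

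Conversely, from a tuple $(S^\sharp,T,\calF,\alpha)$ I would run these steps backwards: $T$ gives an \'etale $\varphi$-$\calG$-module near $\{\pi=0\}$ whose spreading out is $\calE_T$, and $\alpha$ glues $\calE_T$ (retained near $\infty$) to $\calF$ (retained away from $\infty$) into a $\calG$-bundle on $\calY_S$ with a Frobenius that is an isomorphism off $S^\sharp$ and meromorphic along it. To check the two functors are mutually inverse, note that quasi-pro-\'etale $\underline{\calG(O_E)}$-torsors, $\Bun_G$, and modifications all satisfy $v$-descent, so one reduces to geometric points $S=\Spa(C,C^+)$; there $T$ is trivial, the tuple collapses to a modification of a $G$-bundle at $S^\sharp$, and the asserted equivalence is precisely Fargues' theorem as in \Cref{concrete-shtukas-atpoints}, extended to $\calG$-structure by Tannakian formalism ($\on{Fun}_{\on{ex}}^\otimes(\Rep_\calG,-)$ commutes with the relevant limits). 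The hard part is exactly this near-$\infty$ correspondence in families and its compatibility with the modification picture; everything else is bookkeeping.

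Finally, for the factorization: an $S$-point of $\on{Gr}_{G,E}$ is an untilt $S^\sharp$ over $E$ together with a modification $\alpha$ of the trivial $G$-bundle on $X_{\FF,S}$ at $S^\sharp$ (equivalently a $G$-bundle $\calF$ on $X_{\FF,S}$ with a trivialization away from $S^\sharp$), and $\on{BL}$ records $\calF$. Taking $T$ to be the trivial $\underline{\calG(O_E)}$-torsor --- so that $\calE_T$ is the trivial $G$-bundle on $X_{\FF,S}$, which lies in $\Bun_G^1$ --- the tuple $(S^\sharp,T,\calF,\alpha)$ is a point of $\Sht_{\calG,E}(S)$, giving a map $\on{Gr}_{G,E}\to\Sht_{\calG,E}$; composing with $\Sht_{\calG,E}\to\Bun_G$ returns $\calF=\on{BL}$ of the original point, which is the desired factorization.
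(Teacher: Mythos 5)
The paper does not actually prove this proposition: its entire proof is the citation ``This is \cite[Proposition 11.16]{Zha23}, whose proof carries over to general $E$.'' Your sketch reconstructs precisely the argument behind that citation --- split the shtuka into its restriction to a neighborhood of $V(\pi)$, which is an \'etale $\varphi$-$\calG$-module and hence a quasi-pro-\'etale $\underline{\calG(O_E)}$-torsor $T$ by the Kedlaya--Liu/Scholze--Weinstein dictionary in its Tannakian form, and its restriction to $Y_{[r,\infty)}$ for $r\gg 0$, which spreads out to $\calF\in\Bun_G(S)$; the two are canonically identified on $X_{\FF,S}\setminus S^\sharp$ because $\Phi_\calE$ is an isomorphism off the leg, and meromorphy along $\xi_{S^\sharp}$ yields the modification $\alpha$ --- and this is correct in outline, as is the formal deduction of the $\on{BL}$ factorization. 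Two points deserve care. First, your reduction of ``mutually inverse'' to geometric points is legitimate only because the unit and counit of the comparison are constructed globally (restriction of a glued object back to the pieces, and gluing back the restrictions) and one merely checks pointwise that these canonical maps are isomorphisms; geometric points are not a basis for the v-topology, so an equivalence of v-stacks cannot be established by exhibiting it fibrewise. Second, several of your references are off: the lattice/local-system dictionary is \cite[\S 12.3--12.4]{SW20} (or Kedlaya--Liu), not \cite[Theorem 3.8]{Kedlaya_16_ringAinf} (the GAGA statement used elsewhere) nor \Cref{triviality-on-combs} (which concerns $G$-torsors over $\bbW(A)[\frac{1}{\pi}]$ for combs); and the pointwise input is Fargues' theorem for a leg at a characteristic-zero untilt, i.e.\ \cite[Theorem 14.1.1]{SW20}, rather than its $\pi=\xi$ avatar \Cref{concrete-shtukas-atpoints}, whose leg sits at $V(\pi)$ and is therefore the wrong statement here.
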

\begin{proof}
This is \cite[Proposition 11.16]{Zha23}, whose proof carries over to general $E$.
\end{proof}

\begin{lemma}
	\label{extend-at-infty-direct}
	Let $S \in \Perf$ be a product of points.
	Then $\Sht^+_{\calG,E}(S)\to \Sht_{\calG,E}(S)$ is an equivalence.
\end{lemma}
\begin{proof}
Let $S = \Spa(R,R^+)$ with $R^+\!=\prod_{i\in I}\!C_i^+$. 
	By \Cref{Cartesian-grps-untilt}, the map is fully-faithful.
	We wish to show it is essentially surjective. 
	Fix $(S^\sharp,\calE,\Phi)\in \Sht_{\calG,E}(S)$, with $\xi \in \bbW(\prod_{i \in I} O_{C_i})$ a generator of the Fontaine map as in the definition and its projections $\xi_i \in \bbW(O_{C_i})$.
	From now on we will omit the untilt from the notation.
	After fixing an embedding $\calG\to \on{GL}_n$, we see that $\Phi$ has uniformly bounded zeroes and poles along $\infty$.

	Let $S_i=\Spa(C_i,C_i^+)$. 
	Over geometric points, $\Sht^+_{\calG,E}(S_i)\simeq \Sht_{\calG,E}(S_i)$ by \cite[Theorem 1.1]{Ans18}.
	Let $(\calE_i,\Phi_i)$ be the BKF-$\calG$-shtuka obtained from $(\calE,\Phi)$ after restricting to $S_i$.
	We may find a trivialization $\calG\simeq \calE_i$ and by transfer of structure obtain a matrix $M_i\in \calG(\bbW(O_{C_i})[\frac{1}{\xi_i}])$. 
	We consider the product BKF-$\calG$-shtuka $(\calE_\infty,\Phi_\infty)$ obtained from the product matrix $M_\infty=\prod_{i\in I}\! M_i\in \prod_{i\in I}\!\calG(\bbW(O_{C_i})[\frac{1}{\xi_i}])$. 
	Since we have uniform bounds on poles and zeroes of $\xi$, this matrix lies in $\calG([\prod_{i\in I}\!\bbW(O_{C_i})][\frac{1}{\xi}])$.

	We claim that $(\calE_\infty,\Phi_\infty)$ is isomorphic to $(\calE,\Phi)$. 
	Let $\calI$ denote the groupoid of isomorphisms between $(\calE,\Phi)$ and $(\calE_\infty,\Phi_\infty)$, we claim that $\calI$ is proper and quasi-pro-\'etale over $S$. 

	Indeed, we reinterpret both shtukas in terms of \Cref{Second-description}, so that we have tuples $(T, \calF, \alpha)$ and $(T_\infty, \calF_\infty, \alpha_\infty)$.
	An isomorphism of this data consists of a pair $(\Theta_1,\Theta_2)$, where $\Theta_1\colon T\to T_\infty$ is an $\underline{\calG(O_E)}$-equivariant isomorphism while $\Theta_2\colon \calF\to \calF_\infty$ is an isomorphism in $\Bun_G(S)$. This data should fit in the following commutative diagram
	\begin{center}
	\begin{tikzcd}
		\calE_T \arrow{r}{\Theta_1} \arrow[d, dashed, "\alpha"]  & \calE_{T_\infty} \arrow[d, dashed, "\alpha_\infty"]  \\
	\calF \arrow{r}{\Theta_2} & \calF_\infty \,,
	\end{tikzcd}
	\end{center}
	where the solid arrows denote maps over $X_\FF$ while the dashed arrows denote modifications.
	Notice that $\Theta_1$ determines $\Theta_2$ (if they exist) and vice versa. 

	We make two observations. 
	Since $S$ is a product of points, every quasi-pro-\'etale $\underline{\calG(O_E)}$-torsor is trivial by \Cref{quasi-pro-etale-admitsection}.
	This implies that the space of isomorphism between $T$ and $T_\infty$ is isomorphic to $\underline{\calG(O_E)}_S$ and in particular it is proper and quasi-pro-\'etale over $S$ since $\calG(O_E)$ is a pro-finite group.
	The second observation is that any isomorphism $\Theta_1\colon T\to T_\infty$ gives rise to a unique commutative diagram 
	\begin{center}
	\begin{tikzcd}
		\calE_T \arrow{r}{\Theta_1} \arrow[d, dashed, "\alpha"]   & \calE_{T_\infty} \arrow[d, dashed, "\alpha_\infty"]  \\
	\calF \arrow[r, dashed, "\Theta_2"]  & \calF_\infty \,.
	\end{tikzcd}
	\end{center}
	Indeed, $\Theta_2=\alpha_\infty\circ \Theta_1 \circ \alpha^{-1}$. 
	Moreover, after fixing trivializations $\underline{\calG(O_E)}_S\simeq T$ and $\underline{\calG(O_E)}_S\simeq T_\infty$,
	we have a map 
	\[\underline{\calG(O_E)}_S\to \on{Hck}^{\on{loc}}_{\calG,S^\sharp} \quad \text{   with   } \quad  \Theta_1\mapsto [\calF^\loc\xrightarrow{\Theta_2} \calF^\loc_\infty] \]
	to the local Hecke stack of \cite[Definition VI.1.6]{FS24}. 
	$\Theta_1$ defines an isomorphism if and only if the induced map $\calF^\loc\xrightarrow{\Theta_2} \calF^\loc_\infty$ is an isomorphism instead of merely a modification. 
	Since this is a closed condition within the local Hecke stack, the v-sheaf $\calI$ of isomorphisms between $(\calE,\Phi)$ and $(\calE_\infty,\Phi_\infty)$ is a closed subsheaf of $\underline{\calG(O_E)}_S$.
	In particular, $\calI$ is proper and quasi-pro-\'etale over $S$.

	To show that $(\calE_\infty,\Phi_\infty)$ is isomorphic to $(\calE,\Phi)$ it suffices to show that $\calI\to S$ admits a section.
	We note that it is v-surjective since the map is qcqs and $|\calI|\to |S|$ is surjective. 
	Indeed, it is a closed map which by construction maps to every principal component of $\pi_0(S)$ and these are dense in $S$.
	Using \Cref{quasi-pro-etale-admitsection} again, we observe that the map $\calI\to S$ admits a section. 
\end{proof}

\begin{corollary}
	\label{lemma-extending-at-infty}
	Let $S\in \Perf$ be a product of points.
	Then, the following statements hold:
	\begin{enumerate}
		\item The map $\Bun^+_G(S)\to \Bun_G(S)$ is an equivalence. 
		\item The map $\Sht^+_{\calG,O_E}(S)\to \Sht_{\calG,O_E}(S)$ is an equivalence of groupoids.
	\end{enumerate}
\end{corollary}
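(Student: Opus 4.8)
The plan is to reduce both assertions to claim $(1)$ and ultimately to \Cref{extend-at-infty-direct}. First I would observe that for $S$ a product of points the square of \Cref{Cartesian-grps-untilt} (with corners $\Sht^+_{\calG,O_E}(S)$, $\Bun^+_G(S)$, $\Sht_{\calG,O_E}(S)$, $\Bun_G(S)$) is Cartesian; hence claim $(2)$ is the base change of claim $(1)$ along $\Sht_{\calG,O_E}(S)\to \Bun_G(S)$ and follows formally once $(1)$ is known. So it suffices to prove $(1)$. Full-faithfulness of $\Bun^+_G(S)\to \Bun_G(S)$ is \Cref{fully-faithful-from-inftytobeyond}(2), so the only real point is essential surjectivity: every $\calF\in \Bun_G(S)$ extends at $\infty$.

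For essential surjectivity I would first establish it after passing to a v-cover. Given $\calF\in \Bun_G(S)$, the Beauville--Laszlo uniformization map $\on{Gr}_{G,E}\to \Bun_G$ is v-surjective and, by \Cref{Second-description} and \cite{FS24}, factors through $\Sht_{\calG,E}$; hence there is a v-cover $p\colon S'\to S$ — which by \Cref{remark-product-of-points} we may arrange to be a product of points admitting an untilt over $E$ — such that $p^\ast\calF$ lifts to an $E$-shtuka in $\Sht_{\calG,E}(S')$. Restricting the Cartesian square of \Cref{Cartesian-grps-untilt} to the locus of untilts over $E$ gives $\Sht^+_{\calG,E}(S')=\Sht_{\calG,E}(S')\times_{\Bun_G(S')}\Bun^+_G(S')$; combined with the equivalence $\Sht^+_{\calG,E}(S')\xrightarrow{\ \sim\ }\Sht_{\calG,E}(S')$ of \Cref{extend-at-infty-direct}, this shows that everything in the image of $\Sht_{\calG,E}(S')\to \Bun_G(S')$, in particular $p^\ast\calF$, lifts to $\Bun^+_G(S')$. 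Thus $p^\ast\calF$ extends at $\infty$.

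It then remains to descend this lift of $p^\ast\calF$ down to $S$. For this I would check that $\Bun^+_G$ is a v-sheaf on $\Perf$: arguing as in \Cref{theorem-evaluating-opensubs}, the assignment $T\mapsto Y_{(0,\infty],T}$ (cf. \Cref{Yfexamples}(1)) carries v-covers of perfectoid spaces to v-covers of $E_\infty$-sousperfectoid spaces, and $\calV$ satisfies v-descent on those by \Cref{descent-of-vectors-Einfty-sp}, so $\Bun^+_{\on{FF}}$ and hence $\Bun^+_G$ are v-sheaves. The tautological descent datum for $\calF$ along $p$ then lifts — uniquely, by the full-faithfulness of $\Bun^+_G\to \Bun_G$ on products of points (\Cref{fully-faithful-from-inftytobeyond}(2)), applied over a product-of-points refinement of the \v Cech nerve of $p$ — to a descent datum for the lift over $S'$, which glues to the desired $\calF^+\in \Bun^+_G(S)$ mapping to $\calF$. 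This proves $(1)$, and hence $(2)$.

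I expect the genuine obstacle to lie not in the above scaffolding but in the input \Cref{extend-at-infty-direct}, whose proof rests on the analysis of $\calG$-torsors over Witt vector rings from \cite{Ans18}; within the present argument the one point requiring care is the v-descent of $\Bun^+_G$ (equivalently, v-descent for $\varphi$-modules over the family $Y_{(0,\infty],T}$), since the extension at $\infty$ is only produced after passing to a v-cover, together with the routine compatibility of the map $\Sht^+_{\calG,E}\to \Bun^+_G$ with the uniformization map.
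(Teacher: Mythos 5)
Your reduction of (2) to (1) via \Cref{Cartesian-grps-untilt}, and your identification of \Cref{extend-at-infty-direct} as the engine of the proof, both match the paper. The gap is in your essential surjectivity argument: you lift $\calF$ only after passing to a v-cover $S'\to S$, and then need to descend the extension at $\infty$ back to $S$, for which you assert that $\Bun^+_G$ is a v-sheaf. That assertion is not established in the paper and your justification does not go through: for a v-cover $\Spa(R',R'^+)\to\Spa(R,R^+)$ the induced map $Y_{(0,\infty],\Spa(R',R'^\circ)}\to Y_{(0,\infty],\Spa(R,R^\circ)}$ need not be a v-cover, because the added boundary locus $\kappa=\infty$ is governed by $R^\circ$ and its residue ring, which a v-cover of $\Spa(R,R^+)$ does not control (this is exactly the phenomenon that forces the $\dagger$-construction to be sheafified, and is why \Cref{meromorphic-comparison-theorem} has content; compare also the warning about $T'\to T$ failing to be surjective in the proof of \Cref{DM1p-is-well-bheaved-subcategory-old}). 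Only separatedness of the relevant presheaves is available, which suffices to match descent data but not to glue an object.

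The fix is to avoid descent altogether, which is what the paper does: $\on{BL}\colon\on{Gr}_{G,E}\to\Bun_G$ is not merely v-surjective but a pro-\'etale surjection by \cite[Proposition III.3.1]{FS24}, and since $S$ is a product of points it is $w$-contractible, so by \Cref{quasi-pro-etale-admitsection} the given map $S\to\Bun_G$ lifts \emph{over $S$ itself} to $\on{Gr}_{G,E}$ and hence, via \Cref{Second-description}, to $\Sht_{\calG,E}(S)$. Then \Cref{extend-at-infty-direct} applied directly over $S$ produces the object of $\Sht^+_{\calG,E}(S)$, whence the element of $\Bun^+_G(S)$. With this replacement your argument closes; the rest of your scaffolding (full-faithfulness from \Cref{fully-faithful-from-inftytobeyond} and the base-change deduction of (2)) is fine.
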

\begin{proof}
	In light of \Cref{Cartesian-grps-untilt} it suffices to show the first statement.	
	Recall from \cite[\S III.3]{FS24} the Beauville--Laszlo uniformization map
	\[\on{BL}\colon \on{Gr}_{G,E}\to \Bun_{G}\]
\[\on{BL}(\alpha_{\on{dR}}\colon \calE_{\on{dR}}\to G)\coloneqq\alpha_{X_\FF}\colon \calE_{X_\FF}\to G\]
that sends a modification of the trivial $G$-torsor over $B_{\on{dR}}$ to a modification of the trivial $G$-torsor over $X_\FF$.
	This map is a pro-\'etale surjection by \cite[Proposition III.3.1]{FS24}. 

	Recall that under the notation of \Cref{Second-description}, this map factors as 
	\[\on{Gr}_{G,E}\to \Sht_{\calG,E}\to \Bun_G\,.\]
	By \Cref{quasi-pro-etale-admitsection}, any map $S\to \Bun_G$ lifts to a map $S\to \on{Gr}_{G,E}$ which induces $S\to \Sht_{\calG,E}$. 
	By \Cref{extend-at-infty-direct}, this comes from an object in $\Sht^+_{\calG,E}$ which induces an element in $\Bun_G^+$ as we wanted to show. 
\end{proof}

\begin{theorem}
	\label{meromorphic-comparison-theorem}
	The following statements hold:
	\begin{enumerate}
		\item We have an isomorphism of small v-stacks $(\DMG)^{\dagger}\cong \Sht_\calG$.
\item We have an isomorphism of small v-stacks $\ICG^{\dagger}\cong \Bun_G^\mer$.
\item The maps $(\DMG)^\diamond\to \Sht_\calG$ and $\ICG^\diamond\to \Bun_G^\mer$ are v-surjective. 
	\end{enumerate}
\end{theorem}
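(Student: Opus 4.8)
The plan is to deduce all three statements of \Cref{meromorphic-comparison-theorem} from the local analysis assembled in \Cref{fully-faithful-from-inftytobeyond} and \Cref{lemma-extending-at-infty}, together with the descent/sheafification formalism developed in Sections~2--4. The key point is that every comparison is already known on a basis of the v-topology -- namely on products of points -- so it remains only to check that both sides satisfy v-descent and that the comparison maps are the ones induced by sheafification.

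First I would treat part (1). By \Cref{fully-faithful-from-inftytobeyond}(5), the v-sheafification of $(\DMG[\frac{1}{\pi}])^{\dagger_{\on{pre}}}$ is $\ICG^\dagger$; but note that to identify $(\DMG)^\dagger$ with $\Sht_\calG$ we must work at the integral level. Concretely, for a product of points $S$, \Cref{fully-faithful-from-inftytobeyond}(3) gives a Cartesian square exhibiting $\Sht_\calG(S)$ as the fibre product $\Sht_\calG[\frac{1}{\pi}](S)\times_{\Bun_G(S)}\Bun^+_G(S)$, and \Cref{lemma-extending-at-infty}(1) says $\Bun^+_G(S)\to\Bun_G(S)$ is an equivalence over such $S$; hence the vertical left map $(\DMG)^{\dagger_{\on{pre}}}(S)\to\Sht_\calG(S)$ in that diagram is an equivalence on products of points. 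Since $\Sht_\calG$ is a v-stack by \Cref{theorem-Isoc-is-what-it-is}(1), and products of points form a basis for the v-topology (\Cref{remark-product-of-points}), the sheafification of $(\DMG)^{\dagger_{\on{pre}}}$ is $\Sht_\calG$; by definition this sheafification is $(\DMG)^\dagger$, giving part (1).

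Next, part (2): I would run the identical argument one step higher, after isogeny. By \Cref{fully-faithful-from-inftytobeyond}(3) the right-hand square is Cartesian over products of points, and by \Cref{fully-faithful-from-inftytobeyond}(4), $\DMG[\frac{1}{\pi}]^{\dagger_{\on{pre}}}(S)\cong\ICG^{\dagger_{\on{pre}}}(S)$ there; combining with \Cref{lemma-extending-at-infty}(1) again, the map $\ICG^{\dagger_{\on{pre}}}(S)\to\Bun_G^\mer(S)$ is an equivalence for $S$ a product of points. Here one uses \Cref{DM1p-is-well-bheaved-subcategory-old}(2) (that $\SHT[\frac{1}{\pi}]^\sh\simeq\Bun^\mer_{\on{FF}}$) and the fact that $\on{Fun}^\otimes_{\on{ex}}(\on{Rep}_\calG,-)$ commutes with sheafification (\Cref{funct-RepG}), so $\Bun_G^\mer$ is the sheafification of $S\mapsto\on{Fun}^\otimes_{\on{ex}}(\on{Rep}_\calG,\SHT[\frac{1}{\pi}](S))$. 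Since $\Bun_G^\mer$ is a v-stack (\Cref{theorem-Isoc-is-what-it-is}(1)) and the two presheaves agree on the basis of products of points, sheafifying gives $\ICG^\dagger\cong\Bun_G^\mer$. The compatibility $b^*_{\ICG}=\gamma^*$ follows by tracking the construction of $\gamma$ from the correspondence (\ref{fundamental-correspondence}) through these identifications.

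Finally, part (3) is then essentially formal: the maps $(\DMG)^\diamond\to(\DMG)^\dagger\cong\Sht_\calG$ and $\ICG^\diamond\to\ICG^\dagger\cong\Bun_G^\mer$ are the canonical ones from \Cref{analytifications}, and to see v-surjectivity it suffices by definition of the v-topology to check that a point of the target over a product of points $S=\Spa(R,R^+)$ lifts v-locally to a point coming from $\Spec R^+$. Over such $S$ we have $R^+=\prod_{i\in I}C_i^+$ with each $C_i^+$ a valuation ring with algebraically closed fraction field, and by \Cref{triviality-on-combs} together with \Cref{combs_and-products} (and the integral version via the henselian pair argument as in \Cref{DMisvstack}) any $\calG$-structure involved trivializes over $\Spec\bbW(R^+)[\frac{1}{\pi}]$ (resp. $\Spec\bbW(R^+)$), reducing the datum to an element of $LG(\Spec R^+)$, i.e. a point of $(\DMG)^\diamond(S)$ (resp. using \Cref{descriptionDMdaggeronproductofpoints} for the $\dagger$-side).

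\textbf{Main obstacle.} The genuinely delicate input is not in this deduction but in the results it quotes: the heart of the matter is \Cref{lemma-extending-at-infty}, i.e. that $\calG$-bundles extend v-locally at $\infty$, whose proof rests on Anschütz's theorem \cite{Ans18} on $\calG$-torsors over $\bbA_{\on{inf}}$ for parahoric $\calG$ and on the Beauville--Laszlo uniformization of $\Bun_G$. Granting that, the only subtlety in the present argument is bookkeeping: making sure one works at the correct (integral vs. isogeny) level at each stage, that the sheafifications on both sides are taken in the same $2$-category ($\CatexOE$ vs. $\CatexE$ vs. $\Grps$), and that the identification of maps $b^*_{\ICG}=\gamma^*$ is compatible with all the Cartesian squares in \Cref{Beauville-Laszlo-presheaves} and \Cref{fully-faithful-from-inftytobeyond}.
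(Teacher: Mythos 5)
Your treatment of parts (1) and (2) is correct and follows the paper's own route: fully-faithfulness in general from \Cref{fully-faithful-from-inftytobeyond}, equivalence on products of points from \Cref{lemma-extending-at-infty} via the Cartesian squares, and then sheafification over the basis of products of points. The bookkeeping you flag (integral vs.\ isogeny level, which $2$-category the sheafification is taken in) is indeed the only subtlety there, and \Cref{fully-faithful-from-inftytobeyond}(5) together with \Cref{DM1p-is-well-bheaved-subcategory-old} and \Cref{funct-RepG} handles it exactly as you describe.

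Part (3), however, has a genuine gap. After reducing to a product of points $S=\Spa(R,R^+)$ and trivializing the torsor, an object of $\Sht_\calG(S)\cong(\DMG)^{\dagger}(S)$ is a matrix $M\in\calG(\bbW(R^\circ)[\frac{1}{\pi}])$, i.e.\ its Frobenius is defined over the \emph{power-bounded} elements $R^\circ$. A point of $(\DMG)^{\diamond}(S)$ requires the Frobenius to lie in $\calG(\bbW(R^+)[\frac{1}{\pi}])$, and $R^+\subseteq R^\circ$ is in general a proper inclusion (the $C_i^+$ need not equal $O_{C_i}$). Trivializing the underlying $\calG$-torsor via \Cref{triviality-on-combs} does nothing to address this: one must show that $M$ is $\varphi$-conjugate, by some $N\in\calG(\bbW(O_C))$, to a matrix with entries in $\bbW(C^+)[\frac{1}{\pi}]$. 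This is the actual content of the paper's proof of (3): it passes to the residue rings $k=O_C/C^{\circ\circ}$ and $k^+=C^+/C^{\circ\circ}$, invokes the classification of isocrystals over the algebraically closed field $k$ to write $\overline{M}=A^{-1}T\varphi(A)$ with $T$ defined over $\overline{\bbF}_q$, and then uses ind-properness of the Witt vector affine Grassmannian to factor $A=B\cdot C$ with $B\in\calG(\bbW(k^+)[\frac{1}{\pi}])$ and $C\in\calG(\bbW(k))$, so that $\varphi$-conjugating by $C^{-1}$ lands the Frobenius in $\calG(\bbW(k^+)[\frac{1}{\pi}])$ and hence lifts to $\calG(\bbW(C^+)[\frac{1}{\pi}])$. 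None of this appears in your sketch, and without it the surjectivity of $(\DMG)^\diamond\to\Sht_\calG$ (and hence of $\ICG^\diamond\to\Bun_G^\mer$) is unproven.
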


\begin{proof}
	\Cref{fully-faithful-from-inftytobeyond} shows that $(\DMG)^{\dagger_{\on{pre}}}(S) \to \Sht_\calG(S)$ is fully-faithful and \Cref{lemma-extending-at-infty} shows that it is v-locally surjective since it is an equivalence on products of points. 
	This proves the first claim.
	For the second claim, consider the map  
	\[\DMG[\frac{1}{\pi}]^{\dagger_{\on{pre}}}(S)\to \Sht_\calG[\frac{1}{\pi}](S)\,.\]
	Arguing as above, we see that it is fully-faithful in general and an equivalence when $S$ is a product of points.
	In particular, their v-sheafifications agree. 
	Nevertheless, by \Cref{fully-faithful-from-inftytobeyond} (5), the v-sheafification of $\DMG[\frac{1}{\pi}]^{\dagger_{\on{pre}}}$ is $\ICG^\dagger$ while the v-sheafification of $\Sht_\calG[\frac{1}{\pi}]$ is $\Bun_G^\mer$. 

	For the third claim, it suffices to prove that $(\DMG)^\diamond \to \Sht_\calG$ is surjective since $\Sht_\calG\to \Bun_G^\mer$ is surjective and the map $(\DMG)^\diamond\to \Bun_G^\mer$ factors through $\ICG^\diamond$. 
	Since $\Sht_\calG\cong (\DMG)^\dagger$, it suffices to prove that $\calE\in (\DMG)^{\dagger_{\on{pre}}}(S)$ lifts to an object in $(\DMG)^{\diamond_{\on{pre}}}(S')$ for some v-cover $S'\to S$. 
	We can reduce this to the case where $S=\Spa(R,R^+)$ is a product of points with $R^+=\prod_{i\in I}\!C_i^+$, and $\calE$ is given by a matrix $M\in \calG(\bbW(\prod_{i\in I}\! O_{C_i})[\frac{1}{\pi}])$.
	Any $\varphi$-conjugation by a matrix $N\in\calG(\bbW(\prod_{i\in I}\! O_{C_i}))=\prod_{i\in I}\!\calG(\bbW(O_{C_i}))$ defines an isomorphic object in $(\DMG)^{\dagger_{\on{pre}}}(S)$. 
	This allows us to reduce to the case where the set $I$ is a singleton and we must show that $M$ is $\varphi$-conjugate by some $N \in \calG(\bbW(O_C))$ to a matrix $M'\in\calG(\bbW(C^+)[\frac{1}{\pi}])$. 
	Let $k=O_C/C^{\circ \circ}$ and $k^+=C^+\!/C^{\circ \circ}$ be the residue rings. 
	As $\bbW(C^+)[\frac{1}{\pi}]$ is the preimage of $\bbW(k^+)[\frac{1}{\pi}]$ under $\bbW(O_C)[\frac{1}{\pi}] \to \bbW(k)[\frac{1}{\pi}]$, it suffices to prove the above claim after replacing $O_C$ and $C^+$ by $k$ and $k^+$, respectively. 
	Indeed, if $\overline{M'}\in \calG(\bbW(k^+)[\frac{1}{\pi}])$ can be conjugated to $\overline{M}\in \calG(\bbW(k)[\frac{1}{\pi}])$ through a matrix $\overline{N}\in \calG(\bbW(k))$, then any lift of $\overline{N}$ to $N\in \calG(\bbW(O_C))$ would give the desired conjugation.

	The claim is now that for every $\overline{M}\in \calG(\bbW(k)[\frac{1}{\pi}])$ there is some $\overline{N}\in \calG(\bbW(k)[\frac{1}{\pi}])$ such that
	\[\overline{N}^{-1}\overline{M}\varphi(\overline{N})\in \calG(\bbW(k^+)[\frac{1}{\pi}])\,.\]
	Note that $k$ is an algebraically closed field, so that by the classification of isocrystals
	there are $A\in \calG(\bbW(k)[\frac{1}{\pi}])$ and $T\in \calG(\bbW(\overline{\bbF}_q)[\frac{1}{\pi}])$
	such that $A^{-1} T \varphi(A)=\overline{M}$.
	By ind-properness of the Witt vector affine Grassmannian, we have $A=B\cdot C$ for some $B\in \calG(\bbW(k^+)[\frac{1}{\pi}])$ and $C\in \calG(\bbW(k))$. Letting $\overline{N}=C^{-1}$ we get
	\[B^{-1}T\varphi(B)=\overline{N}^{-1} \overline{M} \varphi(\overline{N})\,.\]
	Now $B^{-1}T\varphi(B)\in \calG(\bbW(k^+)[\frac{1}{\pi}])$ as we wanted to show.
\end{proof}

\subsection{The scheme-theoretic comparison}
\begin{theorem}
	\label{comparison-reduction}
	Let $G$ be a reductive group over $E$ and $\calG$ a parahoric $O_E$-model of $G$.
	\begin{enumerate}
		\item The natural map $\ICG\xrightarrow{\cong} (\Bun_G)^\red$ is an isomorphism of scheme-theoretic v-sheaves valued in groupoids.
		\item The natural map $\DMG\xrightarrow{\cong} (\Sht_\calG)^\red$ is an isomorphism of scheme-theoretic v-sheaves valued in groupoids.
	\end{enumerate}
\end{theorem}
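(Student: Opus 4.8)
The plan is to deduce both isomorphisms, via Tannakian formalism, from the corresponding statements for vector bundles, and the latter from the computations of the reduction functor carried out earlier in the text. Since $\ICG$ and $\DMG$ are scheme-theoretic v-stacks by \Cref{DMisvstack}, while $(\Bun_G)^\red$ and $(\Sht_\calG)^\red$ are so by construction, it suffices to check that the natural maps are equivalences on each $\Spec A\in\PSch$. I would use two general principles. First, since $\diamond\colon\PSch\to\Perff$ carries schematic v-covers to surjective maps of v-sheaves \cite[Proposition 3.7]{Gle22}, for any $\calF\in\calS(\Perf\!,\calC)$ the presheaf $\calF^{(\red_\pre)}$ is already a v-sheaf, so that $\calF^\red(\Spec A)=\calF(\Spec A^\diamond)$; in particular this formula computes $(\Bun_G)^\red$ and $(\Sht_\calG)^\red$ on affines. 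Second, $\on{Fun}^\otimes_{\on{ex}}(\Rep_G,-)$ (resp.\ $\on{Fun}^\otimes_{\on{ex}}(\Rep_\calG,-)$), being a mapping anima in $\CatexE$ (resp.\ $\CatexOE$), commutes with all limits, and by \Cref{funct-RepG} also with sheafification; hence it commutes with $(-)^\red$ and with evaluation on v-sheaves. Feeding in the Tannakian descriptions $\Bun_G(X)=\on{Fun}^\otimes_{\on{ex}}(\Rep_G,\Bun_\FF(X))$ and $\Sht_\calG(X)=\on{Fun}^\otimes_{\on{ex}}(\Rep_\calG,\SHT(X))$ (the defining identity for perfectoid $X$, extended to all v-sheaves by the limit formula), the two parts of \Cref{comparison-reduction} reduce — compatibly with the evident comparison functors — to the identifications
\[
(\Bun_\FF)^\red\simeq\IC \ \text{ in }\calS(\PSch\!,\CatexE), \qquad (\SHT)^\red\simeq\DM \ \text{ in }\calS(\PSch\!,\CatexOE).
\]

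The first of these is exactly \Cref{isomorphism-isocrystals-bung}. For the second I would evaluate on $\Spec A$ using the first principle and compare Cartesian presentations: by \Cref{Berkeley-notes-shtukas-is-v-sheaf} the functor $\SHT$ is the fibre product of $\calV_\calY\xrightarrow{(\id,\varphi^*)}(\calV_Y)^\mer\times(\calV_Y)^\mer\xleftarrow{\Delta}(\calV_Y)^\mer$, while by \Cref{Cartesian-expression-for-DM-and-IC} the functor $\DM$ is the analogous fibre product of $\calV^\sch_\bbW\xrightarrow{(\id,\varphi^*)}\calV^\sch_Y\times\calV^\sch_Y\xleftarrow{\Delta}\calV^\sch_Y$. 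So it suffices to match the first diagram, evaluated on the discrete v-sheaf $\Spec A^\diamond$, with the second: concretely $\calV_\calY(\Spec A^\diamond)\simeq\calV^\sch_\bbW(\Spec A)=\on{Proj}(\bbW A)$ and $(\calV_Y)^\mer(\Spec A^\diamond)\simeq\calV^\sch_Y(\Spec A)=\on{Proj}(\bbW A[\tfrac{1}{\pi}])$ — informally, over a scheme-theoretic test object $\bbA_{\on{inf}}$ degenerates to $\bbW$ and both the inclusion of $\infty$ in the curve and the meromorphy of Frobenius become vacuous. I would derive these from \Cref{theorem-evaluating-opensubs}, \Cref{properties-schematic-V}, \Cref{Big-diamond-properties} and \Cref{corollary-add-a-lattice} (the latter presenting $\calV_\calY$ as $\calV_\bbW\times_{(\calV_Y)^\An}(\calV_Y)^\mer$, together with \Cref{calV-is-a-vsheaf} for $(\calV_\bbW)^\red\simeq\calV^\sch_\bbW$). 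That the resulting equivalences are $O_E$-linear, $\otimes$-exact and identify the exact structures is automatic, since on both sides exactness is tested on geometric points by \Cref{analytic-vector-bundles-exact-geometric}, \Cref{exactness-of-Dieudonne-mods-at-points} and \Cref{check-exact-isoc-points}.

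The hard part is precisely this identification of $\calV_\calY$ and $(\calV_Y)^\mer$ on the discrete v-sheaf $\Spec A^\diamond$: it is the shtuka analogue of the content of \Cref{isomorphism-isocrystals-bung} and, unlike the isocrystal case, genuinely involves the lattice $\bbW A$, hence rests on $(\calV_\bbW)^\red\simeq\calV^\sch_\bbW$ — established in mixed characteristic in \cite{Gut23}, and provable directly in equal characteristic where $\bbW A=A[[\pi]]$. A secondary point, to be settled by a diagram chase through the definitions, is that the isomorphisms produced by this chain really are the \emph{natural} comparison maps $\ICG\to(\Bun_G)^\red$ and $\DMG\to(\Sht_\calG)^\red$ of the statement. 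As an alternative to the Cartesian-square argument for part (2), one could instead use \Cref{meromorphic-comparison-theorem}, which gives $\Sht_\calG\simeq(\DMG)^\dagger$, and then deduce $((\DMG)^\dagger)^\red\simeq\DMG$ by v-descent, using that for $\Spa(R,R^+)$ ranging over perfectoid points of $\Spec A^\diamond$ the affine schemes $\Spec R^\circ$ form a v-cover of $\Spec A$ while $\DMG$ is a scheme-theoretic v-sheaf.
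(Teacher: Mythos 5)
Your reduction of part (1) to \Cref{isomorphism-isocrystals-bung} via $\on{Fun}^\otimes_{\on{ex}}(\Rep_G,-)$ is exactly the paper's argument, and the Tannakian reduction of part (2) to $(\SHT)^\red\simeq\DM$ is legitimate. The gap is in how you propose to prove $(\SHT)^\red\simeq\DM$. Your Cartesian-square strategy reduces it to the two $\varphi$-free identifications $(\calV_\calY)^\red\simeq\calV^\sch_\bbW$ and $((\calV_Y)^\mer)^\red\simeq\calV^\sch_Y$, neither of which is available in the paper. The first rests on $(\calV_\bbW)^\red\simeq\calV^\sch_\bbW$, which \Cref{calV-is-a-vsheaf} explicitly states is only \emph{expected} in general and is a theorem of G\"uthge in mixed characteristic; your claim that the equal-characteristic case is ``provable directly'' because $\bbW A=A[[\pi]]$ is unsubstantiated --- it is a genuine v-descent statement for finite projective $R[[\pi]]$-modules, not a formality. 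The second identification is worse: $(\calV_Y)^{(\mer_\pre)}(R,R^+)$ consists of vector bundles on $Y_{\Spd(R_\disc,R^+_\disc)}$, which omits the locus $V([\varpi])$, whereas $\calV^\sch_Y(\Spec R^+)$ sees all of $Y_{(\Spec R^+)^\diamond}$; showing that the limit over all test objects recovers $\on{Proj}(\bbW A[\frac{1}{\pi}])$ is an extension problem of Ansch\"utz type, not a consequence of \Cref{theorem-evaluating-opensubs} (which applies to $\calV_Y$, a different functor). These unproven corner identifications are precisely where the real difficulty of part (2) lives: a compatible family of crystalline shtukas over the spaces $\calY_S$ has Frobenius meromorphic along $\pi=0$ on each $\calY_S$ separately, and one must produce a \emph{uniform} meromorphy bound and a lattice over $\bbW A$.

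The paper resolves this by using the $\varphi$-structure in an essential way, which your main route discards. Full faithfulness is proved by passing through $\Sht_\calG\simeq(\DMG)^\dagger$ (\Cref{meromorphic-comparison-theorem}, itself resting on \cite[Theorem 1.1]{Ans18} and ind-properness of the Witt vector Grassmannian) and the computation $H^0(X^\diamond,\calO^\circ)=A$; essential surjectivity is \Cref{reductionoffamilyofshtukas}, i.e.\ $\DMG\times_{\ICG}X\simeq(\Sht_\calG\times_{\Bun_G}X^\diamond)^\red$, whose hard content is exactly that a compatible system of trivializations $\rho_R$ is automatically meromorphic along $V(\pi)$. Your parenthetical alternative via $((\DMG)^\dagger)^\red\simeq\DMG$ is closer in spirit, but the appeal to ``v-descent'' is not justified: even granting that the schemes $\Spec R^\circ$ v-cover $\Spec A$, the \v{C}ech nerve of $S'\to\Spec A^\diamond$ produces \emph{completed} tensor products whose rings of power-bounded elements do not match the algebraic \v{C}ech nerve of $\Spec R^\circ\to\Spec A$, so descent data on the two sides do not obviously correspond. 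You need an argument of the shape of \Cref{reductionoffamilyofshtukas} (or an independent proof of the meromorphy and lattice-descent statements) to close part (2).
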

\begin{proof}
	Let $X\in \PSch$. For the first claim we write:
	\begin{align*}
		\ICG(X) & = \on{Fun}_{\on{ex}}^\otimes(\Rep_G,\IC(X))\\	
			& \cong  \on{Fun}_{\on{ex}}^\otimes(\Rep_G,\Bun_\FF(X^\diamond))\\
			& =  \Bun_G(X^\diamond)\\
			& = (\Bun_G)^\red(X)\,.
	\end{align*}
Here, the second isomorphism is \Cref{isomorphism-isocrystals-bung}. 	

For the second claim, since $(\Sht_\calG)^\red$ and $\DMG$ are v-sheaves (the latter by \Cref{DMisvstack}), it suffices to prove that $\DMG(X)\xrightarrow{\cong} \Sht_\calG(X^\diamond)$ when $X=\Spec A$ is a comb.
In this case, $\DMG(X)$ is equivalent to the category where the objects are elements $M_\calE\in \calG(\bbW(A)[\frac{1}{\pi}])$, and morphisms between $M_{\calE_1}$ and $M_{\calE_2}$ are elements $N\in \calG(\bbW(A))$ with $N^{-1} M_{\calE_1} \varphi(N)=M_{\calE_2}$.
On the other hand, by \Cref{meromorphic-comparison-theorem} (3), an isomorphism between $M_{\calE_1}$ and $M_{\calE_2}$ in $\Sht_\calG(X^\diamond)$ corresponds to a functorial choice of elements $N_R\in \calG(\bbW(R^\circ))$ with $N_R^{-1} M_{\calE_1} \varphi(N_R)=M_{\calE_2}$ ranging over maps $\Spa(R,R^+)\to X^\diamond$, with $\Spa(R,R^+)$ a product of points.
Recall that the functor 
\[(R,R^+)\mapsto R^\circ\]
is represented by the closed subsheaf $(\bbA^1)^\dagger\subseteq \Spd(\bbF_q[T],\bbF_q)=(\bbA^1_{\bbF_q})^\Diamond$.
In particular, $H^0(X^\diamond,\calO^\circ)\subseteq \{X^\diamond\to \bbA^1_{\bbF_q}\}$.
By \cite[Theorem 2.32]{Gle22}, we can conclude that $H^0(X^\diamond,\calO^\circ)=A$. 
Since $H^0(X^\diamond,\calO^\circ)=A$, such a collection of $N_R$ uniquely comes from an element $N\in \calG(\bbW(A))$. 
This shows that $\DMG(X)\to \Sht_\calG(X^\diamond)$ is fully-faithful.

To prove essential surjectivity, fix $\calE\in \Sht_\calG(X^\diamond)$. This induces elements $\calE_\Bun\in \Bun_G(X^\diamond)$ and $\calE_\IC\in \ICG(X)$ unique up to isomorphism.
Objects in $\DMG(X)$ lifting $\calE_\IC$ correspond to sections of $\DMG\times_{\ICG} X\to X$, whereas objects in $\Sht_\calG(X^\diamond)$ lifting $\calE_\Bun$ correspond to sections $\Sht_\calG\times_{\Bun_G} X^\diamond\to X^\diamond$. 
The result then follows from \Cref{reductionoffamilyofshtukas} below.
\end{proof}

\begin{lemma}
	\label{reductionoffamilyofshtukas}	
	Let $X\in \PSch$ be a comb and $X\to \ICG$ be a map, then 
	the natural map
	\[\DMG\times_{\ICG} X \xrightarrow{\simeq} (\Sht_\calG\times_{\Bun_G}X^\diamond)^\red\]
	induces an isomorphism.
\end{lemma}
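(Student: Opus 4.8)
The statement to be proven, \Cref{reductionoffamilyofshtukas}, asks us to identify the fibre product $\DMG\times_{\ICG}X$ with the reduction of $\Sht_\calG\times_{\Bun_G}X^\diamond$, for $X = \Spec A$ a comb equipped with a map $X\to \ICG$. The plan is to unwind both sides into explicit torsor-theoretic data over Witt vectors, and then invoke the already-established comparison results, chiefly \Cref{meromorphic-comparison-theorem}.(3) and its proof, together with the global-sections computation $H^0(X^\diamond,\calO^\circ) = A$ of \cite[Theorem 2.32]{Gle22}, exactly as in the preceding proof of \Cref{comparison-reduction}.

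First I would set up the two fibre products concretely. Since $X$ is a comb, \Cref{triviality-on-combs} (for the isogeny side) together with the fact that $\Spec \bbW A$ is henselian along $\Spec A$ (so $\calG$-torsors over $\Spec \bbW A$ are trivial) lets us trivialize, so that the map $X\to\ICG$ is represented by some $M\in\calG(\bbW(A)[\tfrac1\pi])$. Then $\DMG\times_{\ICG}X$ is the v-sheaf on $\PSch_{/X}$ whose $Y = \Spec B$-points (for $Y\to X$) are the matrices $N\in\calG(\bbW(B))$ conjugating $M|_Y$ to a lattice-admitting representative — more precisely it is the $L^+\calG$-torsor over $X$ of $\varphi$-conjugators landing $M$ inside $\calG(\bbW(-))$ up to the appropriate equivalence; concretely, using $\DMG = [LG/\!\!/_{\!\varphi}L^+\calG]$ from \Cref{DMisvstack}, the fibre product is the $L^+\calG$-torsor $\{g\in LG : g^{-1}M\varphi(g)\in L^+\calG\}$ pulled back along $X\to\ICG$. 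Symmetrically, using \Cref{theorem-Isoc-is-what-it-is}.(3) and the analogous presentation of $\Sht_\calG$, the space $\Sht_\calG\times_{\Bun_G}X^\diamond$ is the corresponding $L^+\calG^\lozenge$-torsor on $\Perf_{/X^\diamond}$, evaluated on product of points via \Cref{meromorphic-comparison-theorem}.(1) which identifies $\Sht_\calG\cong(\DMG)^\dagger$.

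The heart of the argument, then, is to show that the reduction functor $(-)^\red$ applied to this $L^+\calG^\lozenge$-torsor recovers the $L^+\calG$-torsor. By the description of $(-)^\red$ recalled in \Cref{sec:notation}, for $Y=\Spec B$ a comb mapping to $X$ one has $(\Sht_\calG\times_{\Bun_G}X^\diamond)^\red(Y) = (\Sht_\calG\times_{\Bun_G}X^\diamond)(Y^\diamond)$, and a $Y^\diamond$-point amounts to a functorial choice, over product-of-points $\Spa(R,R^+)\to Y^\diamond$, of elements $N_R\in\calG(\bbW(R^\circ))$ with $N_R^{-1}M\varphi(N_R)$ lattice-valued. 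Here \Cref{meromorphic-comparison-theorem}.(3), that $(\DMG)^\diamond\to\Sht_\calG$ is v-surjective, is what guarantees such data exists v-locally; and the computation $H^0(Y^\diamond,\calO^\circ)=B$ from \cite[Theorem 2.32]{Gle22} — applied entrywise to the matrix coefficients of $N_R$ inside $\bbW(R^\circ)=\prod\bbW(\text{valuation rings})$, and using that $\calG$ is a closed subscheme of some $\on{GL}_n$ — forces this functorial family to descend to a single $N\in\calG(\bbW(B))$. This is exactly the reasoning already carried out in the proof of \Cref{comparison-reduction} for the special case $X\to\ICG$ trivial, so the task is to check it goes through relative to $X$; fully-faithfulness of $\DMG\times_{\ICG}X\to(\Sht_\calG\times_{\Bun_G}X^\diamond)^\red$ follows from the $H^0$-computation as just described, and essential surjectivity follows from \Cref{meromorphic-comparison-theorem}.(3).

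\textbf{Main obstacle.} I expect the principal subtlety to be bookkeeping around the two base points: one must be careful that the identification $\DMG\times_{\ICG}X\to(\Sht_\calG\times_{\Bun_G}X^\diamond)^\red$ is the \emph{natural} map, i.e.\ that the trivialization of $M$ chosen on the schematic side is compatible (after $\diamond$ and sheafification) with the one governing $\calE_\Bun\in\Bun_G(X^\diamond)$; and that reducing to the case where $X$ itself, not just the test objects $Y$, is a comb is legitimate — which it is, since both sides are scheme-theoretic v-sheaves and combs form a basis for the schematic v-topology (\Cref{rem:scheme_v_covers}), so it suffices to evaluate on combs $Y\to X$ with $Y$ a product comb. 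Once the presentations as $L^+\calG$- versus $L^+\calG^\lozenge$-torsors over $X$ respectively $X^\diamond$ are in place, the remaining content is purely the $H^0(X^\diamond,\calO^\circ)=A$ input plus v-surjectivity of $(\DMG)^\diamond\to\Sht_\calG$, and no genuinely new geometric input beyond \Cref{meromorphic-comparison-theorem} should be needed.
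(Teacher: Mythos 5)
Your proposal has the right outer shell (trivialize using the comb hypothesis, unwind the reduction functor, reduce to functorial families over products of points), but it contains two genuine problems, one of which is exactly the point the paper's proof is built around.

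First, you misidentify the schematic side. The fibre of $\DMG=[LG/\!\!/_{\!\varphi}L^+\calG]\to\ICG=[LG/\!\!/_{\!\varphi}LG]$ over the point classified by $M$ is $LG/L^+\calG$, i.e.\ the Witt-vector affine Grassmannian $\on{Gr}_X$: it parametrizes lattices in the isocrystal with \emph{no} condition relating the lattice to the Frobenius, because the Frobenius of a Dieudonn\'e module is only required to be an isomorphism after inverting $\pi$ (\Cref{def:DM_isocrystals}). Your description $\{g\in LG: g^{-1}M\varphi(g)\in L^+\calG\}$ imposes integrality of the conjugated Frobenius, which is a different (typically much smaller) space. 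The paper explicitly identifies $\DMG\times_{\ICG}X\simeq\on{Gr}_X$, and the whole comparison is between two spaces of trivializations $\rho$ with the Frobenius $\Phi$ \emph{determined} by $\rho$, not constrained by it.

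Second, and more seriously, you skip the actual crux. On the analytic side, a point of $\Sht_\calG\times_{\Bun_G}X^\diamond$ over $\Spa(R,R^+)$ consists of a crystalline shtuka together with an isomorphism $\rho$ of its associated Fargues--Fontaine bundle with the one attached to $M$; this $\rho$ lives only over $\calY^{R^+}_{(0,\infty)}$ and carries \emph{no} a priori meromorphicity along $V(\pi)$. The hard step — the one the paper delegates to \cite[Proposition 2.30]{Gle21} — is to show that for a compatible system over all of $\Perf_{/T^\diamond}$ each $\rho_R$ is automatically meromorphic along $\pi=0$ and then algebraizes to a trivialization over $\Spec\bbW(B)[\tfrac1\pi]$. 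Your substitute, namely writing the data as $N_R\in\calG(\bbW(R^\circ))$ and applying $H^0(Y^\diamond,\calO^\circ)=B$ entrywise, presupposes that the trivialization is already integral, which is precisely what must be proven (and is in fact the wrong target: it should land in $\calG(\bbW(B)[\tfrac1\pi])$ modulo $L^+\calG$, i.e.\ a point of $\on{Gr}$). \Cref{meromorphic-comparison-theorem} gives extension at $\infty$ for the shtuka itself (the $\varphi$-module), not for the identification $\rho$ with the fixed isocrystal; the $H^0(\calO^\circ)$ argument in the proof of \Cref{comparison-reduction} is used there for \emph{morphisms of shtukas}, which are integral by \Cref{meromorphic-comparison-theorem}.(1) — a situation that does not apply to $\rho$. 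Likewise, v-surjectivity of $(\DMG)^\diamond\to\Sht_\calG$ does not give essential surjectivity of the map in the lemma, since it says nothing about descending the trivialization. Without the meromorphicity-and-algebraization step your argument does not close.
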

\begin{proof}
	The argument given in \cite[Proposition 2.30]{Gle21} works in this generality.	
	To orient the reader, we give a summary of the proof of \cite[Proposition 2.30]{Gle21} in the more general case we propose here.
	Since $X=\Spec A$ is a comb, every map $X\to \ICG$ is given by a matrix $M\in \calG(\bbW(A)[\frac{1}{\pi}])$, cf.\ \cite[Theorem 6.1]{Ivanov_arc_descent}. 
	The functor 
	\[\DMG\times_{\ICG} X\colon (\PSch)^\op_{\!/X}\to \Sets\]
	parametrizes on $T=\Spec R$ triples $(\calE^\sch,\Phi^\sch,\rho^\sch)$, where $\calE^\sch$ is a $\calG$-torsor over $\Spec \bbW(R)$, $\Phi\colon \varphi^*\calE^\sch\to \calE^\sch$ is an isomorphism defined over $\Spec \bbW(R)[\frac{1}{\pi}]$ and $\rho\colon \calE^\sch\to \calG$ is an isomorphism defined over $\Spec \bbW(R)[\frac{1}{\pi}]$.
	Moreover, these data have to fit in the following commutative diagram
	\begin{center}
	\begin{tikzcd}
		\varphi^*\calE^\sch \arrow{r}{\varphi^*\rho^\sch} \arrow{d}{\Phi^\sch}  & \calG \arrow{d}{M_T} \\
		\calE^\sch \arrow{r}{\rho^\sch} & \calG\,.
	\end{tikzcd}
	\end{center}
	We notice at this point that the data of $\Phi$ are completely determined by $\rho$, and for this reason $\DMG\times_{\ICG} X\simeq \on{Gr}_X$, where the latter is the Witt vector Grassmannian which is an ind-scheme.

	On the other hand, the functor 
	\[\Sht_\calG\times_{\Bun_G}X^\diamond\colon (\Perf)^\op_{\!/X^\diamond}\to \Sets\]
	parametrizes on $S=\Spa(R,R^+)$ triples 
	$(\calE,\Phi,\rho)$, where $\calE$ is a $\calG$-torsor over $\calY_S$, the map $\Phi\colon \varphi^*\calE\to \calE$ is an isomorphism defined over $\calY_{(0,\infty),S}$ such that $\Phi$ is meromorphic along $V(\pi)\subseteq \calY_S$, the map $\rho\colon \calE\to \calG$ is an isomorphism defined over $\calY^{R^+}_{(0,\infty)}$ (not necessarily meromorphic), and the data fit in the commutative diagram
	\begin{center}
	\begin{tikzcd}
		\varphi^*\calE \arrow{r}{\varphi^*\rho} \arrow{d}{\Phi}  & \calG \arrow{d}{M_T} \\
		\calE \arrow{r}{\rho} & \calG\,. 
	\end{tikzcd}
	\end{center}
	Note that in this case, although the data of $\rho$ determine $\Phi$ (since $\Phi=\rho^{-1}\circ M_T\circ \varphi^*\rho$), it is no longer true in this context that every choice of $\rho$ defines a $\Phi$ which is meromorphic. 

	Now, 
	\[(\Sht_\calG\times_{\Bun_G}X^\diamond)^\red\colon (\PSch)^\op\to \Sets\]
	has the formula 
	\[(\Sht_\calG\times_{\Bun_G}X^\diamond)^\red(T)=(\Sht_\calG\times_{\Bun_G}X^\diamond)(T^\diamond)\]
	which is the same as giving a compatible system of maps 
	\[\Spa(R,R^+)\to \Sht_\calG\times_{\Bun_G}X^\diamond\]
	as $\Spa(R,R^+)$ ranges over over $\Perf_{\!/T^\diamond}$.

	Given $(\calE^\sch,\Phi^\sch,\rho^\sch)\in \DMG\times_{\ICG} X (T)$ one can construct a compatible systems of maps
	by pulling back all data along $\calY_{\Spa(R,R^+)}\to \Spa \calY_T$, and what \cite[Proposition 2.30]{Gle21} shows is that every system of compatible data arises uniquely in this way.
	The proof of \cite[Proposition 2.30]{Gle21} treats the case in which $X=\Spec \bar{\bbF}_p$, but this hypothesis is not essential to the argument, the only thing that is really used is that the map $X\to \ICG$ has trivial underlying $\calG$-torsor. 

	The proof of \cite[Proposition 2.30]{Gle21} goes as follows. 
	One first shows that $\DMG\times_{\ICG} X\to (\Sht_\calG\times_{\Bun_G}X^\diamond)^\red$ is injective, or that the triple $(\calE^\sch,\Phi^\sch,\rho^\sch)\in (\DMG\times_{\ICG} X) (T)$ is determined by the compatible system of data that it induces.
	To prove surjectivity of $\DMG\times_{\ICG} X\to (\Sht_\calG\times_{\Bun_G}X^\diamond)^\red$ the harder part of the argument is showing that given a compatible system of data 
	\[(\calE_R,\Phi_R,\rho_R)\in  (\Sht_\calG\times_{\Bun_G}X^\diamond)(R,R^+)\]
	with $\Spa(R,R^+)$ ranging over $\Perf_{\!/T^\diamond}$, each of the $\rho_R$ defined over $Y^{R^+}_{(0,\infty)}$ is meromorphic along $V(\pi)\subseteq Y^{R^+}_{[0,\infty)}$.
	Once one knows that each $\rho_R$ is meromorphic, it is not hard to show it comes from a unique map 
	\[\rho_T\colon \calE^\sch\to \calG\]
	defined over $Y_T$.
\end{proof}

\subsection{The topological comparison}
Recall from \S\ref{sec:newton_strata} that $B(G)$ is a partially ordered set. We equip $B(G)$ with the topology induced by the partial order, i.e.\ $[b] \in \overline{\{[b']\}}$ if and only if $[b] \leq [b']$ in the partial order. Recall that by results of Rapoport--Richartz \cite{RR_96} and He \cite[Theorem 2.12]{He_hecke_padic},
\begin{equation}
	\label{eq;ICGvsB(G)}
	|\ICG|\cong B(G)\,.
\end{equation}
% Here the latter is given the topology induced by the partial order defined by Kottwitz. 
Alternatively, by the results of Fargues--Scholze \cite[Theorem I.4.1.(iii)]{FS24} and of Viehmann \cite[Theorem 1.1]{Viehamann_Newton_BdR}, we also have
\begin{equation}
	\label{eq:B(G)vsBunG}	
|\Bun_G|^{\on{op}}\cong B(G)\,.
\end{equation}
Here $|\Bun_G|^\op$ is the topological space where a subset is open in $|\Bun_G|^\op$ if and only if it is closed in $|\Bun_G|$ (\Cref{alltopologieshavequotientopology}.(4) explains that this rule really defines a topological space).
Combining these two sets of references, we obtain that 
\begin{equation}
	\label{isocvsbunGtopologyeq}
	|\ICG|\cong |\Bun_G|^{\on{op}}\,.
\end{equation}
In this section we give a direct and new proof of the identity \eqref{isocvsbunGtopologyeq}. 
As a consequence, we prove that the identities \eqref{eq;ICGvsB(G)} and \eqref{eq:B(G)vsBunG} are equivalent statements.

Let us clarify. 
The statements of the form 
\begin{equation}
	B(G)\cong |\ICG| \quad \text{ and } \quad B(G)\cong |\Bun_G|^\op
\end{equation}
are naturally broken into two complementary statements. 
The first statement is Grothendieck's specialization theorem (as generalized in \cite{RR_96}), which says that the Newton polygon of a family of isocrystals over a scheme decreases as the family degenerates along a closed subscheme. The analogous statement for $\Bun_G$ is Kedlaya--Liu's semi-continuity theorem \cite[Theorem 22.2.1]{SW20} (as generalized in \cite[Theorem I.4.1.(iii)]{FS24}). 
One way of formulating Grothendieck's theorem (respectively Kedlaya--Liu's theorem) is to say that we have continuous maps  
\begin{equation}
	\label{continuous-equation}
|\ICG|\to B(G) \text{ (respecitvely } |\Bun_G|^\op\to B(G) \text{)}.
\end{equation}
The second statement is Grothendieck's conjecture \cite[\S 3.13]{RR_96}, which asks if every specialization of the combinatorially defined partial order in $B(G)$ can be realized by a geometric family. 
This is what \cite[Theorem 2.12]{He_hecke_padic} and \cite[Theorem 1.1]{Viehamann_Newton_BdR} show in the $\ICG$ case and $\Bun_G$ case, respectively. 
Given \eqref{continuous-equation}, Grothendieck's conjecture becomes equivalent to the statement that we have homeomorprhisms
\begin{equation}
	|\ICG|\xrightarrow{\simeq} B(G) \text{ (respecitvely } |\Bun_G|^\op\xrightarrow{\simeq} B(G) \text{)}.
\end{equation}

Our proof shows that if we assume that Grothendieck's specialization theorem (in its $\ICG$ and $\Bun_G$ form) already holds, then Grothendieck's conjectures in both setups are equivalent. \\

Let us set some notation.
\begin{definition}
	\label{partial-order}
	Let $b_1,b_2\in B(G)$. 
	\begin{enumerate}
		\item We say that $b_1 \preceq_{\ICG} b_2$ if $b_1\in \overline{\{b_2\}}$ in $\ICG$.
		\item We say that $b_1 \preceq_{\Isoc_G} b_2$ if $b_1\in \overline{\{b_2\}}$ in $\Isoc_G$.
		\item We say that $b_1 \preceq_{\Bun^{\on{op}}_G} b_2$ if $b_2\in \overline{\{b_1\}}$ in $\Bun_G$.
	\end{enumerate}
	Moreover, we write $b_1 \preceqdot_{\ICG} b_2$, $b_1 \preceqdot_{\Isoc_G} b_2$ or $b_1 \preceqdot_{\Bun^\op_G} b_2$ whenever $b_2$ covers $b_1$ in the respective partial order (i.e.\ $b_1 \preceq b_2$ and there is no element $b^{\prime} \in B(G) \setminus\{b_1,b_2\}$ such that $b_1 \preceq b^{\prime} \preceq b_2$ in the respective orders).
\end{definition}

The following lemma says that the information of the topological spaces in question is completely determined by the partial order that the closure relations define. In particular, it will suffice to compare the corresponding partial orders.

\begin{lemma}
	\label{alltopologieshavequotientopology}
	Let $U\subseteq B(G)$. For $b\in B(G)$ we let $U_{\leq b}\coloneqq U\cap B(G)_{\leq b}$.
	\begin{enumerate}
		\item $U$ is closed in $\ICG$ $\iff$ $U_{\leq b}$ is closed in $\ICG$ for all $b\in B(G)$. 
		\item $U$ is closed in $\Isoc_G$ $\iff$ $U_{\leq b}$ is closed in $\Isoc_G$ for all $b\in B(G)$. 
		\item $U$ is open in $\Bun_G$ $\iff$ $U_{\leq b}$ is open in $\Bun_G$ for all $b\in B(G)$. 
		\item $|\Bun_G|^\op$ is a topological space. 
		\item The topologies on $\ICG$, $\Isoc_G$ and $\Bun_G$ are determined by the partial order they induced as in \Cref{partial-order}. 
			More precisely a set $U\subset \ICG$ is closed if and only if for all $b\in U$ the sets $\ICG_{\leq b}=\{b'\in \ICG\mid b'\leq b\}$ is a subset of $U$ (analogously for $\Isoc_G$ and $\Bun_G$).
		\item Let $b_1 \preceq_{\ICG} b_2$ (resp.\ $b_1 \preceq_{\Bun^{\on{op}}_G} b_2$ ). 
			Then the set $\{b \in |\ICG|| b_1 \preceq_{\ICG} b \preceq_{\ICG} b_2\}$ (resp.\ $\{b \in |\Bun^{\on{op}}_G|| b_1 \preceq_{\Bun^{\on{op}}_G} b \preceq_{\Bun^{\on{op}}_G} b_2\}$) is finite.
			In particular, whenever $b_1 \preceq_{\ICG} b_2$ (resp.\ $b_1 \preceq_{\Bun^{\on{op}}_G} b_2$) we may find a finite chain of immediate specializations $b_1=b^{1}\preceqdot_{\ICG} b^2\preceqdot_{\ICG} \dots \preceqdot_{\ICG} b^{n-1}\preceqdot_{\ICG} b^n=b_2$ (resp.\ $b_1=b^{1}\preceqdot_{\Bun^{\on{op}}_G} b^2\preceqdot_{\Bun^{\on{op}}_G} \dots \preceqdot_{\Bun^{\on{op}}_G} b^{n-1}\preceqdot_{\Bun^{\on{op}}_G} b^n=b_2$) connecting $b_1$ to $b_2$.
	\end{enumerate}
\end{lemma}
\begin{proof}
	We prove the first claim, the second and third claim being analogous.
	The forward implication is evident since $\ICG_{\leq b}\subseteq \ICG$ is a closed immersion.
	Let $f\colon \Spec R\to \ICG$ be any map. As $\Spec R$ is quasi-compact, there are a finite number of elements $b^f_i\in B(G)$ such that $f$ factors through $\bigcup_{i=1}^n \ICG_{\leq b^f_i}$.
	By assumption $U\cap \bigcup_{i=1}^n \ICG_{\leq b^f_i}$ is closed in $\ICG$.
	Since $f$ factors through the set above, base change of $f$ along $U$ defines a closed immersion.

	The fourth claim follows from the third. 
	Indeed, the only part that needs justification is that an arbitrary union of open subsets in $|\Bun_G|^\op$ is open. 
	This is equivalent to the preservation of open subsets of $|\Bun_G|$ under arbitrary intersections,
    but arbitrary intersections can be expressed as finite intersections when we restrict them to $\Bun_G^{\leq b}$.

	The fifth claim follows from the first three. 
	Indeed, $\ICG$, $\Isoc_G$ and $\Bun_G$ have the strong topology along the inclusion maps from $\coprod_{b\in B(G)} \ICG_{\leq b}$, $\coprod_{b\in B(G)} \Isoc_G^{\leq b}$ and $\coprod_{b\in B(G)} \Bun_G^{\leq b}$. 
	Moreover, since these latter ones are finite topological spaces, they are determined by their closure relations. 
	
	The last claim follows by the respective versions of Grothendieck's specialization theorem and the fact that the analogous claim holds for $B(G)$ with its partial order.\qedhere
\end{proof}

%Now, because $\ICG_{\leq b}\subseteq \ICG$ and $\Isoc_G^{\leq b}\subseteq \Isoc_G$ are closed immersions and $\Bun_G^{\leq b}\subseteq \Bun_G$ is an open immersion we know that.
The following implications are reformulations of Grothendieck's specialization theorem in the respective setup.
\begin{enumerate}
	\item $b_1 \preceq_\ICG b_2 \implies b_1 \leq_{B(G)} b_2$
	\item $b_1 \preceq_{\Isoc_G} b_2 \implies b_1 \leq_{B(G)} b_2$
	\item $b_1 \preceq_{\Bun_G^\op} b_2 \implies b_1 \leq_{B(G)} b_2$
\end{enumerate}

\begin{theorem}
	\label{topologicalcomparisonmainthext}
The partial orders $\preceq_\ICG$, $\preceq_{\Isoc_G}$, $\preceq_{\Bun_G^\op}$ agree. In particular, we have natural homeomorphisms 
\[|\ICG| \cong |\! \Isoc_G\! | \cong|\Bun_G|^\op\,.\]
\end{theorem}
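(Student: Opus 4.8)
The plan is to reduce the claimed homeomorphisms to a comparison of the three partial orders $\preceq_{\ICG}$, $\preceq_{\Isoc_G}$, $\preceq_{\Bun_G^\op}$ on the common underlying set $B(G)$, which is legitimate by \Cref{alltopologieshavequotientopology}: each topology is the strong (colimit) topology along the finite closed substacks indexed by $B(G)$, so it is determined by its specialization order, and a continuous bijection which is a bijection on specialization orders and carries the colimit-defining finite pieces to each other is automatically a homeomorphism. So everything comes down to proving the equality of the three orders.

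First I would dispose of the equivalence $\preceq_{\ICG} \;=\; \preceq_{\Isoc_G}$. This is essentially formal given the tools of the paper: by \Cref{structure-of-isoc} we have $\Isoc_G^{\leq b} = (\ICG_{\leq b})^\lozenge$ and $\Isoc_G^{b} = (\ICG_b)^\lozenge$, and the functor $(-)^\lozenge$ induces a homeomorphism on underlying topological spaces of the stacks in question (here \Cref{summary-of-properties-BG} identifies the strata and their closures, and $\lozenge$ preserves open and closed immersions; cf.\ also $|X|\cong|X^\lozenge|$ for the kind of $v$-stacks we are dealing with). Concretely, $b_1\preceq_{\ICG}b_2$ means $b_1\in\overline{\{b_2\}}$ in $|\ICG|$, equivalently $\ICG_{b_1}\subseteq \overline{\ICG_{b_2}} = \ICG_{\leq b_2}$, equivalently after applying $\lozenge$ (which commutes with closures here because the closed strata are finitely presented closed immersions) $\Isoc_G^{b_1}\subseteq \Isoc_G^{\leq b_2}$, i.e.\ $b_1\preceq_{\Isoc_G}b_2$.

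The substantive step is the equivalence $\preceq_{\Isoc_G} \;=\; \preceq_{\Bun_G^\op}$, and for this the correspondence $\gamma\colon \Bun_G^\mer\to \Isoc_G$, $\sigma\colon \Bun_G^\mer\to \Bun_G$ is exactly the bridge to exploit. By \Cref{maintheorem} and its $\GL_n$/Tannakian formalism, $\gamma^{-1}(\Isoc_G^b) = \calM_b \xrightarrow{\pi_b} \Bun_G$ is the Fargues--Scholze chart, and $\sigma|_{\calM_b}=\pi_b$ is $\ell$-cohomologically smooth and surjects onto $\Bun_G^{b'}$ for every $b'$ with $b'\leq b$ (equivalently $b\preceq_{\Bun_G^\op}b'$ in our convention) in its image. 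Thus one wants: $b_1\preceq_{\Isoc_G}b_2$ iff $b_1\preceq_{\Bun_G^\op}b_2$, and the idea is to realize a covering relation on one side by a one-parameter (or product-of-points) degeneration inside $\Bun_G^\mer$ whose image under $\gamma$ and under $\sigma$ witnesses the corresponding covering relation on the other side. For the direction $\preceq_{\Isoc_G}\Rightarrow\preceq_{\Bun_G^\op}$: a specialization $b_1\preceq_{\Isoc_G}b_2$ means there is $S=\Spa(R,R^+)$ (which we may take to be a product of points) and $\calF\in\Isoc_G(S)$ generically of type $b_2$ and specializing to $b_1$; lifting $\calF$ $v$-locally through $\gamma$ using surjectivity of $\gamma$ (\Cref{theorem-Isoc-is-what-it-is}(4), and the concrete lifting via semi-stable filtrations $\Fil_\sss^\mer$ from \Cref{meromorphicbundles-remember-fils}), and then pushing forward along $\sigma$, produces a family in $\Bun_G(S)$; one must check that $\sigma$ does not collapse the relevant specialization, which is where $\gamma_\calE\geq\sigma_\calE$ (Kedlaya, cited after the definition of generic/special polygon) and a careful analysis at the degenerate point enter. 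The reverse direction $\preceq_{\Bun_G^\op}\Rightarrow\preceq_{\Isoc_G}$ is similar, lifting a degeneration in $\Bun_G$ through the smooth surjection $\calM\to\Bun_G$ and then applying $\gamma$. Combined with Grothendieck's specialization theorems in both settings (stated in the excerpt: $b_1\preceq_{\ICG}b_2\Rightarrow b_1\leq_{B(G)}b_2$, likewise for $\Isoc_G$ and $\Bun_G^\op$) and Grothendieck's conjecture realizing every combinatorial specialization geometrically, all three orders coincide with $\leq_{B(G)}$.

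The main obstacle I expect is controlling $\sigma$ on the degenerating family: $\sigma$ and $\gamma$ agree on semi-stable-slope pieces (\Cref{semistable-gives-equivalences}) but differ in general, and a priori a family that degenerates generically-to-specially on the $\Isoc_G$ side could fail to realize the hoped-for closure relation on the $\Bun_G$ side (or vice versa). Making this precise requires either (i) reducing to a reduction-to-rank-one / product-of-points situation and invoking the explicit structure of semi-stable filtrations and the charts $\calM_b$, or (ii) comparing the two orders only through their covering relations and the known fact that both refine $\leq_{B(G)}$, so that it suffices to show each covering relation of $\leq_{B(G)}$ is actually attained in both topologies — which is exactly the content of the cited theorems of He and of Viehmann, together with the fact (to be checked) that $\gamma$ and $\sigma$ transport a witnessing family on one side to a witnessing family on the other. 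I would structure the write-up along route (ii): show $\preceq_{\Isoc_G}$ and $\preceq_{\Bun_G^\op}$ both lie between the covering-generated order of $\leq_{B(G)}$ and $\leq_{B(G)}$ itself, hence all equal $\leq_{B(G)}$, with the correspondence $(\sigma,\gamma)$ used to cross-fertilize the "every covering relation is realized" inputs.
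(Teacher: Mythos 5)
Your reduction to comparing the three partial orders via \Cref{alltopologieshavequotientopology} matches the paper, but both halves of your order comparison have genuine gaps. First, the equivalence $\preceq_{\ICG}=\preceq_{\Isoc_G}$ is not formal. The easy direction is $\preceq_{\Isoc_G}\Rightarrow\preceq_{\ICG}$ (because $\lozenge$ preserves closed immersions); the converse requires producing an \emph{analytic} witness of a specialization from a schematic one, and your chain ``$\ICG_{b_1}\subseteq\overline{\ICG_{b_2}}=\ICG_{\leq b_2}$, apply $\lozenge$'' both (i) silently invokes $\overline{\ICG_{b_2}}=\ICG_{\leq b_2}$, which is exactly Grothendieck's conjecture (He's theorem) that this section is trying \emph{not} to assume, and (ii) even granting it, only yields $b_1\leq_{B(G)}b_2$, not $b_1\in\overline{\{b_2\}}$ in $|\Isoc_G|$ — for that you would again need the analytic Grothendieck conjecture. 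The paper instead extracts, from an arbitrary product-of-valuation-rings witness of a covering relation $b_1\preceqdot_{\ICG}b_2$, a single perfect rank~$1$ valuation ring $V$ with $\Spec k_V\to\ICG_{b_1}$ and $\Spec K_V\to\ICG_{b_2}$, and then uses that $\Spd(K_V,V)$ is dense in $\Spd(V,V)$. That extraction (using that $\ICG_{\leq b}\hookrightarrow\ICG$ is perfectly finitely presented) is the missing step.

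Second, for $\preceq_{\Isoc_G}$ versus $\preceq_{\Bun_G^{\op}}$, your route (ii) is circular for the purpose at hand: invoking He and Viehmann reduces the theorem to the already-known concatenation $|\ICG|\cong B(G)\cong|\Bun_G|^{\op}$, which the paper explicitly sets aside in order to give a direct proof showing the two Grothendieck conjectures are \emph{equivalent}. Your route (i) names the real difficulty — controlling $\sigma$ on the degenerating family — but does not resolve it, and the tool you propose, $\gamma_\calE\geq\sigma_\calE$, only re-proves semicontinuity (one containment of orders); it cannot show the specialization survives. The paper's actual mechanism is different: starting from the rank~$1$ valuation ring $V$, the induced bundle over $\Spd(\hat V_\pi,\hat V_\pi)$ \emph{extends at $\infty$} (it comes from a Dieudonn\'e module over $V$), so by \cite[Proposition 2.1.3]{PR21} and \cite[Theorem II.2.14]{FS24} its Newton point is constant equal to $b_1$ on that open dense subsheaf, while the discrete generic point still maps to $\Bun_G^{b_2}$; this forces $b_1\preceq_{\Bun_G^{\op}}b_2$. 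Conversely, for $\preceq_{\Bun_G^{\op}}\Rightarrow\preceq_{\ICG}$ the paper uses the meromorphic comparison theorem (\Cref{meromorphic-comparison-theorem}) to descend a witnessing family over a product of points $\Spa(R,R^\circ)$ to a map $\Spec R^\circ\to\ICG$ and then argues on residue fields. Neither of these inputs — extension at $\infty$ and $\ICG^\dagger\cong\Bun_G^\mer$ — appears in your proposal, and without them (or a substitute) the crossing between the two topologies is not established.
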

\begin{proof}
	For the rest of the proof we fix $b_1,b_2\in B(G)$ with $b_1\leq_{B(G)} b_2$.
	We first prove $|\ICG|\cong |\Isoc_G|$. Recall that $\lozenge$ preserves closed immersions, consequently:
	\[b_1\preceq_{\Isoc_G} b_2 \implies b_1\preceq_{\ICG} b_2\,.\]
	
	Now, suppose that $b_1\preceqdot_{\ICG} b_2$ (which we may assume by the last part of \Cref{alltopologieshavequotientopology}).
	We claim that there is a perfect rank $1$ valuation ring $V$ and a map $\Spec V\to \ICG$ such that the induced maps on $\Spec k_V$ (the residue field) and $\Spec K_V$ (the fraction field) factor through $\ICG_{b_1}$ and $\ICG_{b_2}$, respectively.	
	We do this in steps. 

	\textit{Step 1:}
	We first find a map $f\colon \Spec R\to \ICG$ with the property that for all $x\in \Spec R$ the induced map $\Spec k_x\to \ICG$ factors through either $\ICG_{b_1}$ or $\ICG_{b_2}$ and with the property that $\overline{f^{-1}(\ICG_{b_2})}\cap f^{-1}(\ICG_{b_1})\neq \emptyset$. 
	We do this as follows.
By definition, the underlying topological space of $\ICG$ has the quotient topology along all maps $\Spec R\to \ICG$ as we range over $\Spec R\in \PSch$.
In particular, whenever $b_1\preceq_{\ICG} b_2$ there must be a map $g:\Spec R'\to \ICG$, and a point $y\in \Spec R'$ such that $g(y)=b_1$ and such that every open in $\Spec R'$ containing $y$ meets $g^{-1}(\ICG_{b_2})$. We may replace $\Spec R'$ with its intersection with $\ICG_{\leq b_2}$ to ensure that every $x\in \Spec R'$ factors through $\ICG_{\leq b_2}$. 
If additionally $b_1\preceqdot_\ICG b_2$, we may remove the closed subsets of the form $\ICG_{\leq b'}$ for all $b'\prec_\ICG b_2$ and $b'\neq b_1$.
This gives rise to an open subset $U\subseteq \Spec R'$ containing $y$ and with the property that for all $x\in U$ the induced map $\Spec k_x$ factors through either $\ICG_{b_2}$ or $\ICG_{b_1}$. Letting $\Spec R\subseteq U$ be any affine open subset containing $y$ fits the bill.

\textit{Step 2:}
	We may replace $\Spec R$ by a v-cover, so we may assume that $R=\prod_{i\in I}\! V_i$ is a product comb.
	Since the inclusion $\ICG_{\leq b_1}\to \ICG$ is a perfectly finitely presented closed immersion and $R$ is a B\'ezout ring (see \Cref{product-combs-bezout}), there is $r\in R$ such that $\Spec R/(r)^{\on{perf}}\subseteq \Spec R$ is equal to $f^{-1}(\ICG_{b_1})$.
	We may write $R=R_1\times R_2$, where $R_1=\prod_{\{i\in I\mid r_i=0\}}\!V_i$ and $R_2=\prod_{\{i\in I\mid r_i\neq 0\}}\!V_i$, and replace $R$ by $R_2$.
	Let $K_{V_i}$ denote the fraction field of $V_i$.
	Now, $\Spec \prod_{i\in I}\! K_{V_i}\subseteq \Spec R$ is a pro-open subset lying in $f^{-1}(\ICG_{b_2})$. 
	Let $\beta I$ denote the Stone--\v{C}ech compactification of $I$.
	It is a standard fact that $\pi_0(\Spec R)\simeq \beta I$.
	Since $f^{-1}(\ICG_{b_1})$ is non-empty, there is a connected component in $x\in \beta I$ with associated valuation ring $V_x$ such that that the image of $r$ in $V_x$, which we denote by $r_x$, is not identically $0$, but is also not a unit.
	The largest prime ideal contained in $\langle r_x\rangle$ and the smallest prime ideal containing $\langle r_x\rangle$ define a rank $1$ valuation ring with the desired properties. This finishes our construction of the map $\Spec V\to \ICG$ which we had claimed exists.

	The map $\Spec V\to \ICG$ induces a map $\Spd(V,V)\to\ICG^\diamond\to \Isoc_G$ such that the corresponding maps on $\Spd(k_V,k_V)$ and $\Spd(K_V,K_V)$ factor through $\Isoc_G^{b_1}$ and $\Isoc_G^{b_2}$, respectively.
	This implies that $\Spd(K_V,V)\to \Isoc_G$ factors through $\Isoc_G^{b_2}$, but $\Spd(K_V,V)\subseteq \Spd(V,V)$ is dense. 
	This proves: 
	\[b_1\preceq_{\ICG} b_2 \implies  b_1\preceq_{\Isoc_G} b_2\,.\]
	
	In the same fashion, the map $\Spec V\to \ICG$ induces a map $\Spd(V,V)\to \ICG^\diamond \to \Bun_G$ that if restricted to $\Spd(k_V,k_V)$ and $\Spd(K_V,K_V)$, factors through $\Bun_G^{b_1}$ and $\Bun_G^{b_2}$, respectively.
	Let $\pi\in V$ be a pseudo-uniformizer, let $\hat{V}_\pi$ be the $\pi$-adic completion of $V$ and let $K=\hat{V}_{\pi}[\frac{1}{\pi}]$.
	We note that $\Spa(K,\hat{V}_\pi)$ is a perfectoid field and that $\Spd(\hat{V}_\pi,\hat{V}_\pi)$ has two points, one corresponding to $\Spd(K,\hat{V}_\pi)$ and one corresponding to $\Spd(k_V,k_V)$.
	By \Cref{theorem-evaluating-opensubs}, the map $\Spd(\hat{V}_\pi,\hat{V}_\pi)\to \Bun_G$ corresponds to a $\otimes$-exact functor from $\Rep_G$ to the category of $\varphi$-equivariant objects in $\on{Vect}(Y_{(0,\infty]}^K)$.
	Using \cite[Proposition 2.1.3]{PR21} and \cite[Theorem II.2.14]{FS24}, we conclude that the map $\Spd(\hat{V}_\pi,\hat{V}_\pi)\to \Bun_G$ factors through $\Bun_G^{b_1}$ as $\Spd(k_V,k_V)\to \Bun_G$ does. 
	Moreover, $\Spd(\hat{V}_\pi,\hat{V}_\pi)\subseteq \Spd(V,V)$ is an open subsheaf whose v-sheaf-theoretic closure is $\Spd(V,V)$.
	This allows us to conclude:
	\[b_1\preceq_{\ICG} b_2 \implies  b_1\preceq_{\Bun^{\op}_G} b_2\,.\]

	Finally, suppose that $b_1\preceqdot_{\Bun^{\op}_G} b_2$ (which we again may assume by the last part of \Cref{alltopologieshavequotientopology}).
	Arguing as in the \textit{Step 1} of the above argument, we may use our assumptions to find a map $\Spa(R,R^+)\to \Bun_G$ with the property that for all $x\in \Spa(R,R^+)$, the induced map $\Spa (k(x),k(x)^+)\to \Bun_G$ factors through either $\Bun_G^{b_1}$ or $\Bun_G^{b_2}$ and with the property that $\overline{f^{-1}(\Bun_G^{b_1})}\cap f^{-1}(\Bun_G^{b_2})\neq \emptyset$. 
	Replacing $\Spa(R,R^+)$ by a v-cover, we may assume that it is a product of points with $R^+=\prod_{i\in I}\!C^+_i$.
	By shrinking $\Spa(R,R^+)$ and ignoring some factors if necessary, we may assume that the principal components of $\Spa(R,R^+)$ all factor through $\Bun_G^{b_1}$ without changing the condition that $\overline{f^{-1}(\Bun_G^{b_1})}\cap f^{-1}(\Bun_G^{b_2})\neq \emptyset$.
	This forces at least one non-principal component to factor through $\Bun_G^{b_2}$. 
	Moreover, we may assume $C^+_i=O_{C_i}$ for all $i$ so that $R^+=R^\circ$.
	By \Cref{meromorphic-comparison-theorem}, we may assume that our map $\Spa(R,R^+)\to \Bun_G$ is induced from a map $\Spec R^+\to \ICG$.
	Let $k_i$ denote the residue field of $O_{C_i}$.
	By assumption, the map $\Spa(C_i,O_{C_i})\to \Bun_G$ factors through $\Bun_G^{b_1}$. 
	In particular, $\Spec k_i\to \ICG$ factors through $\ICG_{b_1}$,
	which implies that $\Spec \prod_{i\in I} k_i\to \ICG$ also factors through $\ICG_{b_1}$.
	Indeed, it certainly factors through $\ICG_{\leq b_1}$, and the locus where it factors through $\ICG_{b'}$ with $b'<b_1$ is finitely presented and contains no principal component of $\Spec \prod_{i\in I} k_i$, which implies that it is empty.   
	We see that the closed point of every connected component of $\Spec R^+$ factors through $\ICG_{b_1}$. 
	Furthermore, there is at least one point $x\in \Spec R^+$ mapping to $\ICG_{b_2}$. 
	The connected component containing $x$ defines a valuation ring $V_x$ and a map $\Spec V_x\to \ICG$ such that the closed point factors through $\ICG_{b_1}$ and at least one point of $\Spec V_x$ factors through $\ICG_{b_2}$. 
	This allows us to conclude that
	\[b_1\preceq_{\Bun^\op_G} b_2 \implies b_1\preceq_{\ICG} b_2\,. \qedhere\] 
\end{proof}

Recall that the groupoid of maps $X\to \ICG$ and the groupoid of maps $X^\diamond\to \Bun_G$ are equivalent. 
The following statement explains the relation between the stratifications that such data induces in $X$.

\begin{proposition}
	\label{stratifying-BunG}
	Let $X = \Spec A$ and let $X\to \ICG$ be a map. Let $Z_b\subseteq X$ denote the closed subscheme that factors through $\ICG_{\leq b}$ and let $U_b\subseteq X^\diamond$ denote the open subsheaf that factors through $\Bun_G^{\leq b}$.
	Then $U_b=\widehat{X}_{\!/Z_b}$ with notation as in \cite[Definition 4.18]{Gle22} (i.e.\ $U_b$ is the formal neighborhood around $Z_b$).
\end{proposition}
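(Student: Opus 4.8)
The statement to prove is that for $X = \Spec A$ (one should assume $A$ perfect, and likely that $X$ is a comb or at least qcqs so that $\ICG_{\leq b}$ is cut out by finitely presented data) with a map $X \to \ICG$, the open subsheaf $U_b \subseteq X^\diamond$ over which the associated map $X^\diamond \to \Bun_G$ factors through $\Bun_G^{\leq b}$ coincides with the formal neighborhood $\widehat{X}_{/Z_b}$ of the closed subscheme $Z_b$ where $X \to \ICG$ factors through $\ICG_{\leq b}$. My plan is to compare the two open subsheaves of $X^\diamond$ pointwise on rank $1$ geometric points, using the description of the formal neighborhood from \cite[Definition 4.18]{Gle22}: for a map $\Spa(C,O_C) \to X^\diamond$, equivalently a ring map $A \to O_C$, the point lies in $\widehat{X}_{/Z_b}$ if and only if the composite $\Spec O_C \to X$ sends the \emph{closed} point $\Spec k_C$ into $Z_b$, i.e.\ the induced map $\Spec k_C \to \ICG$ factors through $\ICG_{\leq b}$. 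On the other hand, by the topological comparison \Cref{topologicalcomparisonmainthext} (or rather its specialization half, $\preceq_\ICG$ vs.\ $\preceq_{\Bun_G^\op}$) combined with semi-continuity, the point of $X^\diamond$ lies in $U_b$ iff the Newton polygon of $\calF|_{\Spa(C,O_C)}$ is $\leq b$.

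\textbf{Key steps, in order.} First I would reduce to $X$ a product comb by a v-cover argument: both $U_b$ and $\widehat{X}_{/Z_b}$ are formed by v-sheafification-compatible operations, and a subsheaf of $X^\diamond$ is determined by its pullback along a v-cover, so it suffices to check equality after pulling back to a product comb $\Spec \prod_{i} V_i$ (using \Cref{rem:scheme_v_covers}). Over such an $X$, since $X^\diamond$ is a product of points in an appropriate sense, both subsheaves are determined by their rank $1$ geometric points $\Spa(C,O_C) \to X^\diamond$, i.e.\ by ring maps $A \to O_C$; this is where I would localize the whole argument. Second, for such a point, I would identify the $G$-bundle $\calF$ on $X_{\FF, \Spa(C,O_C)}$ with the one obtained from the BKF-$\calG$-shtuka over $\bbW(O_C)$ given by restricting the map $\Spec O_C \to \ICG$; this uses \Cref{meromorphic-comparison-theorem} (the identification $\ICG^\dagger \cong \Bun_G^\mer$, evaluated on geometric points via \Cref{concrete-shtukas-atpoints}) together with the fact that passing from $\Bun_G^\mer$ to $\Bun_G$ via $\sigma$ is just restricting to $Y_{[r,\infty)}$. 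Third — the crux — I would show that the Newton polygon at the generic point $\Spa(C,O_C) \mapsto \Bun_G$ of this $\varphi$-module over $Y_{(0,\infty]}^{O_C}$ equals the Newton polygon of the isocrystal over the residue field $k_C = O_C/C^{\circ\circ}$; this is exactly Fargues' classification of $\varphi$-modules (the statement that a $\varphi$-module over $Y_{(0,\infty]}$ for a geometric point is determined by, and has the same polygon as, its reduction at $\infty$, which over $k_C$ is the classical isocrystal) — precisely \cite[Proposition 2.1.3]{PR21} together with \cite[Theorem II.2.14]{FS24}, as already invoked in the proof of \Cref{topologicalcomparisonmainthext}. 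Fourth, I would conclude: the point lies in $U_b$ iff this polygon is $\leq b$ iff $\Spec k_C \to \ICG$ factors through $\ICG_{\leq b}$ iff (by the description of the formal neighborhood) the point lies in $\widehat{X}_{/Z_b}$. Finally I would check both sides are genuinely open subsheaves (for $U_b$ this is \Cref{structure-of-isoc} applied after the comparison, or semi-continuity directly; for $\widehat{X}_{/Z_b}$ it is built into \cite[Definition 4.18]{Gle22}) and that agreeing on rank $1$ points over a product comb forces equality — an open subsheaf of $X^\diamond$ is determined by the set of topological points it contains, and these are all specializations of rank $1$ points.

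\textbf{Expected main obstacle.} The routine bookkeeping about v-covers and reducing to rank $1$ points is harmless; the real content is Step 3, the claim that the Newton polygon of the $\varphi$-bundle on the Fargues--Fontaine curve attached to $\Spa(C,O_C)$ is computed by the isocrystal over the residue field $k_C$ rather than, say, over $C$ itself. This is where the distinction between $C^+ = O_C$ and a general $C^+$, and the role of the ring of integral elements, becomes essential — it is exactly the phenomenon that makes the \emph{formal} neighborhood (sensitive to the closed point of $\Spec O_C$) the right object, rather than the naive preimage. I expect this to follow cleanly from the cited results of Pappas--Rapoport and Fargues--Scholze (as in the proof of \Cref{topologicalcomparisonmainthext}), but articulating it carefully — in particular checking that the formation of the BKF-shtuka and its reduction mod $C^{\circ\circ}$ is compatible with the map to $\ICG$ — will require care, and is the step I would write out in full detail.
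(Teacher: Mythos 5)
Your proposal is correct and follows essentially the same route as the paper's proof: reduce to rank $1$ geometric points (the paper justifies this directly via partial properness of both subsheaves, making your preliminary reduction to product combs unnecessary), observe that $\Spa(C,O_C)\to X^\diamond$ extends to $\Spd O_C$, and use the classification of $\varphi$-modules over $Y_{(0,\infty]}$ for a geometric point to see that the Newton polygon is computed by the isocrystal over the residue field $k_C$, which translates the condition into membership of the specialization in $|Z_b|$. The paper phrases your "Step 3" via \Cref{theorem-evaluating-opensubs} and \cite[Theorems 13.2.1, 13.4.1]{SW20} rather than via \Cref{meromorphic-comparison-theorem} and \cite{PR21}, but the content is the same.
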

\begin{proof}
	By definition $U_b$ is an open subsheaf and since $Z_b\to X$ is a finitely presented closed immersion, $\widehat{X}_{\!/Z_b}$ is also an open subsheaf by \cite[Proposition 4.22]{Gle22} .
	It suffices to show that $U_b$ and $\widehat{X}_{\!/Z_b}$ have the same geometric points and since both of these subsheaves are partially proper over $X^\diamond$, it even suffices to show that they have the same rank $1$ geometric points.
	Every map of the form $\Spa(C,O_C)\to X^\diamond$ factors uniquely through a map $\Spd O_C\to X^\diamond$. 
	We let $k=O_C/C^{\circ \circ}$ and note that $|\Spd O_C|$ consists of two points, where one corresponds to $\Spa(C,O_C)$ while the other corresponds to $\Spd k$.
	Let $b_k\in B(G)$ be the unique isomorphism class of the induced map $\Spd k\to X^\diamond\to \Bun_G$ 
	We claim that $\Spd O_C\to \Bun_G$ factors through $\Bun_G^{b_k}$.
	Indeed, if $S=\Spa(C,O_C)$, by \Cref{theorem-evaluating-opensubs} the map $\Spd O_C\to \Bun_G$ corresponds to a $\otimes$-exact functor from $\Rep_G$ to the category of $\varphi$-equivariant objects in $\calV(Y_{(0,\infty],S})$.
	The claim then follows from \cite[Theorem 13.2.1, Theorem 13.4.1]{SW20}.

	In particular, a rank $1$ geometric point $\Spa(C,O_C)\to X^\diamond$ lies over $U_b$ if and only if its induced $\Spd k\to X^\diamond$ point lies over $\Bun_G^{b_k}$ for $b_k\in B(G)_{\leq b}$. 
	This is the same as saying that its image under the specialization map 
	\[\on{sp}\colon |X^\diamond|\to |X|\]
	lies on $|Z_b|\subseteq |X|$. 
This in turn is the definition of $\widehat{X}_{\!/Z_b}$.
\end{proof}

\begin{corollary}
	Let $X= \Spec A$ and let $X\to \ICG$ be a map. 
	Let $U\subseteq B(G)$ the intersection of a finite closed subset with an open subset. 
	Let $Z_U\subseteq X$ denote the locally closed subscheme that factors through $U\subseteq |\ICG|$.
	Let $U_b\subseteq X^\diamond$ denote the locally closed subsheaf that factors through $\Bun_G^{b\in U}$.    
	Then $U_b$ is the smallest subsheaf of $X^\diamond$ containing $\widehat{X^\diamond}_{\!\!/Z_b}$ and stable under vertical specialization.
\end{corollary}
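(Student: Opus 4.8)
The plan is to reduce to \Cref{stratifying-BunG} and then run a bookkeeping argument with vertical specializations. Write $U = C\cap V$ with $C\subseteq B(G)$ a finite closed subset and $V\subseteq B(G)$ open, and recall $B(G)\cong|\ICG|\cong|\Bun_G|^{\op}$ from \Cref{topologicalcomparisonmainthext}. As $C$ is finite and downward closed, $C=\bigcup_{i=1}^{n}B(G)_{\leq b_i}$. Write $V'=B(G)\setminus V$; since $X$ is quasi-compact only finitely many Newton strata meet it, so the preimage of $V'\subseteq|\ICG|$ in $X$ is a finitely presented closed subscheme $Z_{V'}=\bigcup_{j=1}^{m}Z_{c_j}$, where $Z_b\subseteq X$ is the closed subscheme factoring through $\ICG_{\leq b}$ as in \Cref{stratifying-BunG}. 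Set $Z_C=\bigcup_i Z_{b_i}$, so that the locally closed $Z_U$ factoring through $U$ equals $Z_C\cap(X\setminus Z_{V'})$. First I would check, by unwinding \cite[Definition 4.18]{Gle22} and \cite[Proposition 4.22]{Gle22} (using that $\widehat{X^\diamond}_{/(-)}$ sends finite unions of closed subschemes to unions and is compatible with restriction to open subschemes), that $\widehat{X^\diamond}_{/Z_U}$ is the open subsheaf $\widehat{X^\diamond}_{/Z_C}\cap(X\setminus Z_{V'})^\diamond$ of $X^\diamond$. Dually, by the reversal of topologies $\Bun_G^{b\in C}$ is open and $\Bun_G^{b\in V}$ is closed, so $U_b$ is locally closed; applying \Cref{stratifying-BunG} to the $b_i$ and to the $c_j$, the locus in $X^\diamond$ factoring through $\Bun_G^{b\in C}$ is $\widehat{X^\diamond}_{/Z_C}$ and the one factoring through $\Bun_G^{b\in V'}$ is $\widehat{X^\diamond}_{/Z_{V'}}$, whence $U_b=\widehat{X^\diamond}_{/Z_C}\setminus\widehat{X^\diamond}_{/Z_{V'}}$.

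Next I would verify the two easy halves: that $U_b\supseteq\widehat{X^\diamond}_{/Z_U}$ and that $U_b$ is stable under vertical specialization. For a geometric point $\overline{x}=\Spa(K,K^+)\to X^\diamond$ with rank-$1$ generization $\overline{x}'=\Spa(K,O_K)$, the attached $G$-bundle is insensitive to the ring of integral elements (cf. the proof of \Cref{exactness-bungmer}), so it agrees with the one attached to $\overline{x}'$, and — exactly as in the proof of \Cref{stratifying-BunG}, i.e. via $\Spd O_K\to\Bun_G$ factoring through a single Newton stratum together with \cite[Theorem 13.2.1, Theorem 13.4.1]{SW20} — it has constant Newton polygon equal to the Newton polygon of the isocrystal obtained at the residue field of $O_K$, that is at $\on{sp}(\overline{x}')$. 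Moreover, for a closed subscheme $Z$, membership of a geometric point in $\widehat{X^\diamond}_{/Z}$ amounts to topological nilpotence of finitely many defining functions in the rank-$1$ valuation; hence it depends only on the rank-$1$ generization, it forces $\on{sp}$ into $|Z|$, and it is preserved under vertical generization and specialization. Since $\widehat{X^\diamond}_{/Z_U}=\widehat{X^\diamond}_{/Z_C}\cap(X\setminus Z_{V'})^\diamond$ is open, any of its geometric points $\overline{x}$ has $\overline{x}'$ in it as well, and then $\on{sp}(\overline{x}')\in|Z_C|\setminus|Z_{V'}|=|Z_U|$ (the membership in $|Z_C|$ from the closed case, the non-membership in $|Z_{V'}|$ because a defining function of $Z_{V'}$ is a unit in $O_K$); therefore the Newton polygon at $\overline{x}'$, equal to that at $\overline{x}$, lies in $U$ and $\overline{x}\in U_b$. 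Finally $U_b=\widehat{X^\diamond}_{/Z_C}\setminus\widehat{X^\diamond}_{/Z_{V'}}$ is stable under vertical specialization because each of $\widehat{X^\diamond}_{/Z_C}$ and $\widehat{X^\diamond}_{/Z_{V'}}$ is.

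For minimality, let $\calF\subseteq X^\diamond$ be any subsheaf containing $\widehat{X^\diamond}_{/Z_U}$ and stable under vertical specialization; I would show $U_b\subseteq\calF$ pointwise. Given a geometric point $\overline{x}\to U_b$ with rank-$1$ generization $\overline{x}'=\Spa(K,O_K)$, the rank-$1$ detection above yields $\overline{x}'\in\widehat{X^\diamond}_{/Z_C}$ and $\overline{x}'\notin\widehat{X^\diamond}_{/Z_{V'}}$; the latter means a defining function of $Z_{V'}$ is not topologically nilpotent in $O_K$, i.e. is a unit there, so $\overline{x}'$ factors through the open subsheaf $(X\setminus Z_{V'})^\diamond$. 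Hence $\overline{x}'\in\widehat{X^\diamond}_{/Z_C}\cap(X\setminus Z_{V'})^\diamond=\widehat{X^\diamond}_{/Z_U}\subseteq\calF$, and since $\overline{x}$ is a vertical specialization of $\overline{x}'$ and $\calF$ is stable under such, $\overline{x}\in\calF$. As $\overline{x}$ was arbitrary, $U_b\subseteq\calF$, which together with the previous step identifies $U_b$ as the smallest subsheaf of $X^\diamond$ containing $\widehat{X^\diamond}_{/Z_U}$ and stable under vertical specialization.

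The hard part is the passage, isolated above, between the formal neighbourhood $\widehat{X^\diamond}_{/Z_U}$ of the \emph{locally closed} $Z_U$ — whose higher-rank points obey an honest unit condition on the ideal of $Z_{V'}$ — and the strictly larger locally closed locus $U_b=\widehat{X^\diamond}_{/Z_C}\setminus\widehat{X^\diamond}_{/Z_{V'}}$, where that condition is relaxed to ``not topologically nilpotent''; the content of the corollary is precisely that saturating the former under vertical specialization produces the latter. Once the rank-$1$ detection statements for the open subsheaves $\widehat{X^\diamond}_{/Z}$ and $(X\setminus Z)^\diamond$, and the compatibilities of formal neighbourhoods with finite unions of closed subschemes and with open restriction, are nailed down from the local theory of \cite{Gle22}, the remaining steps are formal and rely only on the rank-$1$ reduction for $\Bun_G$ used in the proof of \Cref{stratifying-BunG}.
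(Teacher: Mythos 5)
Your proposal is correct and takes essentially the same route as the paper's (very terse) proof: agreement of $U_b$ and $\widehat{X^\diamond}_{\!/Z_U}$ on rank $1$ points via the argument of \Cref{stratifying-BunG}, combined with the observation that $U_b$ is stable under vertical specialization while the formal neighbourhood of a merely locally closed subscheme is only stable under vertical generization. Your explicit identification $U_b=\widehat{X^\diamond}_{\!/Z_C}\setminus\widehat{X^\diamond}_{\!/Z_{V'}}$ is a helpful way of making both facts transparent; the only caveat, which the paper shares, is that ``smallest subsheaf'' is being interpreted pointwise on geometric points.
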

\begin{proof}
	The same argument as in \Cref{stratifying-BunG} will show that $\widehat{X^\diamond}_{\!\!/Z_b}$ and $U_b$ agree on rank $1$ points.  
	Since we allow $U$ to be more general, it is no longer true that $\widehat{X^\diamond}_{\!\!/Z_b}$ is partially proper over $X^\diamond$, while it is still true that $U_b$ is. 
	The description given above takes this into account.
\end{proof}

\appendix

\section{Sheaves of categories}
\label{Appendix-sect}
Throughout the body of the text, we used the sheafification construction in $\CatexE$. 
This appendix has two purposes: 1) to justify why the $\infty$-category $\CatexE$ is presentable and compactly generated so that sheafification is well defined and 2) to collect some general categorical statements pertaining separated presheaves that were used in the above.
We start with the second purpose. 
For the rest of the appendix we let $\calT\in \{\Perf\!, \PSch\}$.
More generally, we can work with a site $\calT$ that has all finite limits and such that every covering sieve $\calU\subseteq \calT_{/X}$ contains a covering sieve generated by a single element $U\to X$. 
Clearly both sites, $\Perf$ and $\PSch$, satisfy the hypothesis. 

Recall that given a complete $\infty$-category $\calC$ and a presheaf $\calF\in \calP(\calT,\calC)$ we say that $\calF$ is a sheaf if for every object $X\in \calT$ and every covering sieve $\calU\subseteq \calT_{\!/X}$ the natural map
\[\calF(X)\to \!\varprojlim_{U\in \calU}\!\calF(U)\]
is an equivalence.
The following statement says that in our categories $\{\Perf\!,\PSch\}$, it suffices to verify that descent holds for specific covering sieves.

\begin{lemma}
	\label{sheaf-case}
	A presheaf $\calF\in \calP(\calT,\calC)$ is a sheaf if and only if for every surjective map $[U\to X]\in \calT$ the map 
	\[\calF(X)\to \on{Desc.}(\calF,U/X)\]
	is an equivalence.
\end{lemma}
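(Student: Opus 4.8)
The plan is to reduce the general sheaf condition to the two-term descent condition $\calF(X) \simeq \on{Desc.}(\calF, U/X)$ by exploiting the fact, recorded in the body of the text, that all the topologies under consideration are finitary, and that both $\Perf$ and $\PSch$ admit finite disjoint unions. The forward direction is immediate: a single map $[U \to X] \in \calT$ which is a cover generates a covering sieve, and the limit over that sieve (i.e.\ over the full subcategory of $\calT_{/X}$ of maps that factor through $U$) computes precisely $\on{Desc.}(\calF, U/X)$, the totalization of the \v Cech nerve; so if $\calF$ is a sheaf, the displayed map is an equivalence.

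For the converse, suppose the two-term condition holds for every cover $[U \to X] \in \calT$, and let $\calU \subseteq \calT_{/X}$ be an arbitrary covering sieve. Since the topology is finitary, there is a finite family $\{Y_i \to X\}_{i=1}^n$ in $\calU$ such that the sieve they generate is covering and refines $\calU$. Setting $U := \coprod_{i=1}^n Y_i$ — which lies in $\calT$ because $\calT$ has finite disjoint unions — we get a single map $[U \to X] \in \calT$ which is a cover, and the sieve it generates is cofinal in (indeed equal to) the sieve generated by the $Y_i$. First I would argue that because $\calU$ and the sieve generated by $U$ are both covering sieves of $X$ with the latter refining the former, the restriction maps induce an equivalence $\varprojlim_{V \in \calU} \calF(V) \simeq \varprojlim_{V \in \langle U \rangle} \calF(V)$; this is the standard fact that the value of the limit over a covering sieve depends only on the sieve up to refinement, which holds once one knows $\calF$ satisfies descent along the refining sieve — and that is exactly what the hypothesis gives, since $\varprojlim_{V \in \langle U\rangle}\calF(V) \simeq \on{Desc.}(\calF, U/X)$. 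Concretely, one checks that the cofinal sub-\v Cech-nerve computes the same totalization, so $\calF(X) \to \varprojlim_{\calU} \calF$ factors as $\calF(X) \xrightarrow{\simeq} \on{Desc.}(\calF,U/X) \xrightarrow{\simeq} \varprojlim_{\calU}\calF$, whence the composite is an equivalence.

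The main subtlety — which is why this is stated as a lemma rather than left to the reader — is the comparison $\varprojlim_{\calU}\calF \simeq \on{Desc.}(\calF, U/X)$ for the \emph{refined} sieve: one must know that passing from the huge sieve $\calU$ to the (smaller, but still covering) sieve generated by a single map does not change the limit. The clean way to see this is to observe that $\on{Desc.}(\calF, U/X)$ is the limit of $\calF$ over the \v Cech nerve $U^{\bullet/X}$, that $\Delta^{\on{op}} \to \calT_{/X}$ sending $[m] \mapsto U^{\times_X (m+1)}$ is cofinal into the sieve $\langle U\rangle$ generated by $U$ (a standard fact about sieves generated by a single morphism in a category with finite products over $X$), and that $\langle U \rangle \hookrightarrow \calU$ is cofinal because it is again a covering sieve — the last point uses that for sheaves one only needs \v Cech descent along a cofinal covering subsieve, which one establishes by an argument with hypercovers or, more elementarily here, by noting both sieves refine each other up to the finitary reduction. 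I expect assembling this cofinality chain carefully to be the only real work; everything else is formal.
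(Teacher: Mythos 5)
The paper does not actually prove this lemma: its ``proof'' consists of the remark that the statement is classical together with a citation of \cite[Lemmas A.4.6, A.4.8]{heyer20246}. Your outline is exactly the standard argument that those references formalize --- cofinality of the \v Cech nerve in the sieve generated by a single map, plus reduction of an arbitrary covering sieve to a generated one via the finitary hypothesis --- so you have the intended approach. Two details in your sketch are off, however. First, the parenthetical claim that the sieve generated by $U=\coprod_{i}Y_i$ is ``equal to'' the sieve generated by the family $\{Y_i\}$ is false: an object of $\langle U\rangle$ need only factor through $U$, not through any single $Y_i$, so $\langle U\rangle$ strictly contains $\langle\{Y_i\}\rangle$ and in particular need not be a subsieve of $\calU$. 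Passing between the two requires knowing that $\calF$ carries finite disjoint unions to products; this is \emph{not} a formal consequence of \v Cech descent for single covering maps (a presheaf of the form $\calG\times A$ with $\calG$ a sheaf and $A$ a fixed nontrivial object satisfies single-map descent but not the product condition), so it must either be built into which sieves count as covering --- which is how the paper's ``generated by one map'' convention should be read --- or argued separately. Second, the refinement equivalence $\varprojlim_{\calU}\calF\simeq\varprojlim_{\calU'}\calF$ for a covering subsieve $\calU'\subseteq\calU$ does not follow from ``descent along the refining sieve'' over $X$ alone: the standard argument pulls the generating cover $U\to X$ back to each object $V\in\calU$, uses the hypothesis to identify $\calF(V)$ with the limit over the pulled-back sieve on $V$, and then concludes by a Kan-extension/cofinality argument. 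You correctly locate where the work lies, but these two points are precisely what the cited lemmas are there to handle.
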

Here $\on{Desc.}(\calF,U/X)$ denotes the descent category defined in terms of the \v{C}ech resolution $U_\bullet\to X$, where $U_i$ denotes the $i$-fold product $U_i\coloneqq U\times_X \dots \times_X U$.
\begin{proof}
	This is classical. For a reference, see \cite[Lemma A.4.8, A.4.6]{heyer20246}.
\end{proof}

\begin{lemma}\label{lm:ess_surjectivity_local}
Let $\calF_1,\calF_2 \colon \calT \to {\rm Cat}_1$ be two sheaves of categories on $\calT$ and suppose that $f \colon \calF_1 \to \calF_2$ is a morphism of sheaves. If $f$ is fully-faithful and locally on $\calT$ essentially surjective, then it is essentially surjective.
% Suppose that $f_X \colon \calF_1(X) \to \calF_2(X)$ is fully faithful for each $X \in \calC$. If for any $X \in \calC$ there exists some covering $(U_i \to X)_i$ in $\calC$, such that $f_{U_i}$ is essentially surjective. Then $f_X$ is essentially surjective for each $X\in \calC$.
\end{lemma}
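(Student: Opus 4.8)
The statement is a standard fact about sheaves of ($1$-)categories, and the proof is a routine glueing argument using the sheaf condition for $\calF_2$. The plan is as follows. Fix $X \in \calT$ and an object $y \in \calF_2(X)$; I want to produce $x \in \calF_1(X)$ with $f(x) \cong y$. By hypothesis there is a cover $[U \to X] \in \calT$ (recall $\calT$ has finite disjoint unions, so a single map suffices) such that $y|_U$ lies in the essential image of $f$, say $f(x_U) \cong y|_U$ for some $x_U \in \calF_1(U)$; fix such an isomorphism $\beta \colon f(x_U) \xrightarrow{\sim} y|_U$ in $\calF_2(U)$.

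First I would produce the descent datum on $x_U$. Write $U_\bullet \to X$ for the \v Cech nerve, with projections $p_0, p_1 \colon U_1 = U \times_X U \to U$. Pulling $\beta$ back along $p_0$ and $p_1$ and composing with the canonical identifications of $(y|_U)|_{U_1}$ along the two maps, we get an isomorphism
\[
\varphi \colon p_1^* f(x_U) \;=\; f(p_1^* x_U) \xrightarrow{\ p_1^*\beta\ } y|_{U_1} \xrightarrow{\ (p_0^*\beta)^{-1}\ } f(p_0^* x_U) \;=\; p_0^* f(x_U)
\]
in $\calF_2(U_1)$; by full faithfulness of $f$ on $U_1$ there is a unique isomorphism $\psi \colon p_1^* x_U \xrightarrow{\sim} p_0^* x_U$ in $\calF_1(U_1)$ with $f(\psi) = \varphi$. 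The cocycle condition for $\psi$ over $U_2$ follows from the cocycle condition for $\varphi$ (which holds by construction, being built from a genuinely globally-defined object $y$) together with full faithfulness of $f$ over $U_2$: both sides map to the same morphism under the injective map $\on{Hom}_{\calF_1(U_2)} \hookrightarrow \on{Hom}_{\calF_2(U_2)}$. Hence $(x_U, \psi)$ is an object of $\on{Desc.}(\calF_1, U/X)$, and since $\calF_1$ is a sheaf, \Cref{sheaf-case} gives $x \in \calF_1(X)$ restricting to $(x_U,\psi)$.

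It remains to check $f(x) \cong y$. Applying $f$, the object $f(x) \in \calF_2(X)$ restricts to the descent datum $(f(x_U), f(\psi)) = (f(x_U), \varphi)$ over $U$. On the other hand $\beta \colon f(x_U) \xrightarrow{\sim} y|_U$ is, by the very definition of $\varphi$, an isomorphism of descent data from $(f(x_U),\varphi)$ to the canonical descent datum of $y$. Since $\calF_2$ is a sheaf, the groupoid $\calF_2(X)$ is equivalent to $\on{Desc.}(\calF_2, U/X)$, so $\beta$ descends to an isomorphism $f(x) \xrightarrow{\sim} y$ in $\calF_2(X)$. This proves essential surjectivity. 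There is no real obstacle here; the only mild subtlety is bookkeeping the canonical isomorphisms in the \v Cech nerve so that the cocycle identity for $\varphi$ is manifest — which it is, because $\varphi$ is assembled from restrictions of the single global object $y$.
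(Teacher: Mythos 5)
Your proof is correct and follows essentially the same route as the paper's: pick local preimages, use full faithfulness to transport the descent datum of $y$ to a descent datum on the local preimages, glue via the sheaf property of $\calF_1$, and descend $\beta$ via the sheaf property of $\calF_2$. The paper's version is just a terser statement of the same gluing argument.
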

\begin{proof}
Let $X \in \calT$ and let $(X_i \to X)_i$ be a covering in $\calT$, such that $f_{X_i}$ is essentially surjective. For any $B \in \calF_2(X)$, let $A_i \in F_1(X_i)$ be a preimage under $f_{X_i}$ of $B|_{X_i}$. By the sheaf property of $\calF_1$  and the fully-faithfulness of $f_{X_i}$ and $f_{X_i \times_X X_j}$ it follows that the $A_i$ glue to a unique object of $F(X)$ mapping to $B$.
\end{proof}

\begin{definition}
	\label{separated-presheaves}
	Let $\calC$ be a presentable category. 
	Let $\calF \in  \calP(\calT,\calC)$ be a presheaf and let $\tilde \calF$ denote its sheafification. 
	We call $\calF$ \emph{separated} if the natural map $\calF \to \calF \times_{\tilde\calF} \calF$ in $\calP(\calT,\calC)$ is an equivalence.
\end{definition}

Recall the $+$-construction (or $\dagger$-construction) involved in the explicit construction of sheafification \cite[6.2.2.9-6.2.2.13]{lurie2009htt}. 
The functor $\calF\mapsto \calF^+$ comes with a natural transformation $\eta_{(-)}\colon\on{id}\to (-)^+$ which satisfies the property that for all $\calG\in \calS(\calT,\calC)$ and all $\calF\in \calP(\calT,\calC)$ the maps
\[\on{Hom}_{\calP(\calT,\calC)}(\calF^+,\calG)\to \on{Hom}_{\calP(\calT,\calC)}(\calF,\calG)\]
are equivalences \cite[Lemma 6.2.2.14]{lurie2009htt}.
In particular, if $\calF^+$ is a sheaf the map $\eta_\calF\colon\calF\to \calF^+$ exhibits $\calF^+$ as the sheafification of $\calF$. 
We use the following criterion to determine when a presheaf is separated.

\begin{proposition}
\label{pass-to-anima}
	Let $\calC$ be a presentable and compactly generated category and let $\calF\in \calP(\calT,\calC)$.
	Given a covering map $[f\colon U\to X]\in \calT$, we let $\calD_{U/X}\coloneqq\on{Desc.}(\calF,U/X)$.
	The following are equivalent.
	\begin{enumerate}
		\item $\calF$ is a separated presheaf.
		\item For all $f$ as above, the diagonal $\Delta_{\calF,f}\colon \calF(X)\to \calF(X)\times_{\calD_{U/X}}\calF(X)$ is an equivalence.
	\end{enumerate}
	Moreover, if any of these statements hold, then $\calF^+$ is a sheaf. 
\end{proposition}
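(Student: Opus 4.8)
The statement packages three claims: that (1)$\Leftrightarrow$(2), and that either implies $\calF^+ \in \calS(\calT,\calC)$. The plan is to first reduce everything to the case $\calC = \Ani$ and then to a statement about presheaves of sets, using the hypothesis that $\calC$ is presentable and compactly generated. Concretely, by \Cref{sheaf-case} sheafification and descent can be tested on the specific \v{C}ech covering sieves $U/X$, and by compact generation a morphism in $\calP(\calT,\calC)$ is an equivalence if and only if it becomes an equivalence after mapping out of every compact generator $c \in \calC$; this replaces $\calF$ by the $\Ani$-valued presheaf $\Map_{\calC}(c,\calF(-))$ and commutes with the descent-category construction (since $\on{Desc.}(-,U/X)$ is a finite limit, hence preserved by $\Map_{\calC}(c,-)$, which is limit-preserving). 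One subtlety to check: sheafification of $\calF$ also gets computed ``after $\Map_{\calC}(c,-)$'' — this holds because the $+$-construction is a filtered colimit of finite limits and $\Map_{\calC}(c,-)$ preserves these as $c$ is compact. So from now on we may assume $\calC = \Ani$.

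For $\Ani$-valued presheaves, the key input is the standard description of the $+$-construction and separatedness. The plan is as follows. First, recall Lurie's result \cite[Prop.\ 6.2.2.12]{lurie2009htt} (via \cite[6.2.2.9--6.2.2.14]{lurie2009htt}): for any presheaf $\calF$, the presheaf $\calF^{++}$ is a sheaf, $\calF^+$ is always separated, and $\calF$ is separated if and only if $\eta_\calF\colon \calF \to \calF^+$ is a monomorphism (i.e.\ $(-1)$-truncated), if and only if $\calF^+$ is already a sheaf. This last equivalence is exactly what we want for the ``moreover'' clause once we know $\calF$ is separated. Second, I would translate Definition~\ref{separated-presheaves} into the language of $\eta_\calF$ being a monomorphism: the map $\calF \to \calF \times_{\calF^+} \calF$ is an equivalence precisely when $\eta_\calF$ is $(-1)$-truncated, which is a formal fact about any morphism in an $\infty$-topos of presheaves. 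Third, for the equivalence (1)$\Leftrightarrow$(2): using the explicit formula $\calF^+(X) = \colim_{\calU} \lim_{U \in \calU} \calF(U)$ over covering sieves, one checks that the fiber of $\eta_\calF$ over a point is controlled by the maps $\calF(X) \to \calD_{U/X} = \on{Desc.}(\calF,U/X)$; more precisely, $\eta_\calF$ is a monomorphism if and only if for every cover $U \to X$ the map $\calF(X) \to \calD_{U/X}$ has $(-1)$-truncated diagonal, and the diagonal of $\calF(X) \to \calD_{U/X}$ is exactly $\Delta_{\calF,f}\colon \calF(X) \to \calF(X) \times_{\calD_{U/X}} \calF(X)$ being an equivalence. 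Here I use \Cref{sheaf-case} again to restrict to \v{C}ech-type covers, so that the colimit over all covering sieves can be rewritten cofinally in terms of the single-map covers $U \to X$ and their refinements.

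The main obstacle I anticipate is bookkeeping the interaction between the colimit-over-sieves in the $+$-construction and the restriction to \v{C}ech covers: one must verify that for the Grothendieck topologies in play on $\Perf$ and $\PSch$ (which are finitary, as recorded in the excerpt — every cover is refined by a single map $Y \to X$), the poset of covering sieves of $X$ is filtered and cofinally generated by the \v{C}ech nerves of single covering maps, so that $\colim_{\calU} \lim_{U\in\calU}\calF(U)$ can be computed as $\colim_{[U\to X]} \on{Desc.}(\calF, U/X)$ along a filtered diagram indexed by covers. Granting this, the fiber-sequence argument identifying the diagonal of $\calF(X)\to\calD_{U/X}$ with $\Delta_{\calF,f}$ is a routine diagram chase in $\Ani$, and the final implication ``$\calF$ separated $\Rightarrow$ $\calF^+$ a sheaf'' is then immediate from Lurie's characterization. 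I would close by noting that all the truncation and monomorphism arguments made in $\Ani$ descend back to $\calC$ by the compact-generation reduction from the first paragraph, since $(-1)$-truncatedness and being an equivalence are both detected by mapping out of a set of compact generators.
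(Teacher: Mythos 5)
Your overall strategy is the paper's: reduce to $\calC=\Ani$ by mapping out of compact generators (using that $\on{Desc.}(-,U/X)$ is a finite limit and that the $+$-construction is a filtered colimit of finite limits, both preserved by $\on{Hom}_\calC(A,-)$ for $A$ compact), then prove $(1)\Rightarrow(2)$ by a left-cancellation argument for $(-1)$-truncated maps through $\on{Desc.}(\widetilde{\calF},U/X)\simeq\widetilde{\calF}(X)$, and $(2)\Rightarrow(1)$ by commuting the filtered colimit in the explicit formula for $h_A^+$ with finite limits. These parts are fine, modulo one slip of terminology: condition (2) says that $\calF(X)\to\calD_{U/X}$ \emph{is} $(-1)$-truncated (its diagonal is an equivalence); saying it ``has $(-1)$-truncated diagonal'' is the strictly weaker condition of being $0$-truncated.

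There is, however, one genuine gap: the appeal to ``Lurie's result'' that for any presheaf $\calF^+$ is separated and $\calF^{++}$ is a sheaf. That is the classical statement for \emph{set-valued} presheaves; for presheaves of anima it is false in general, and this is precisely why $\infty$-categorical sheafification requires a transfinite iteration of the plus construction (\cite[6.2.2]{lurie2009htt} does not assert either claim). What is true, and what both the ``moreover'' clause and your implication $(2)\Rightarrow(1)$ actually rest on, is the conditional statement: \emph{if} for every cover the map $\calF(X)\to\calD_{U/X}$ is a monomorphism, \emph{then} a single application of $(-)^+$ already yields a sheaf. This is not formal and must be supplied; the paper outsources it to \cite[Proposition 3.4.22]{anel20}. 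Without that input your argument is circular at the point where you identify $\calF^+$ with the sheafification $\widetilde{\calF}$ (needed both to conclude that $\eta_\calF$ being a monomorphism means $\calF$ is separated in the sense of \Cref{separated-presheaves}, and to get the ``moreover'' clause). Replace the blanket citation by this conditional lemma (or reproduce its proof: $\calF^+$ embeds into $\widetilde{\calF}$ as the subpresheaf of locally-$\calF$ sections, and one checks this subpresheaf satisfies descent) and the rest of your plan goes through.
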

\begin{proof}
	We start by showing that $(2)$ already implies that $\calF^+$ is a sheaf.
	Given a compact generator $A\in \calC^\omega$, we let $h_A\in \calP(\calT,\on{Ani})$ denote the representable presheaf $h_A(X)\coloneqq\on{Hom}_\calC(A,\calF(X))$.
	We observe that since $A\in \calC^\omega$, we have functorial equivalences $(h_A)^+\simeq \on{Hom}_\calC(A,\calF^+)$.
	By the hypothesis that $\calC$ is compactly generated, the family of functors $\{\on{Hom}(A,-)\colon \calC\to \on{Ani}\}_{A\in \calC^\omega}$ is conservative. 
	In particular, we may test the sheaf condition on $\calF^+$ by showing that $h_A^+$ is a sheaf for all $A\in \calC^\omega$. 
	The hypothesis show that for every map $[U\to X]\in \calT$ the diagonal $h_A(X)\to h_A(X)\times_{\on{Desc.}(h_A,U/X)} h_A(X)$ is an equivalence.  
	This suffices to conclude that $h_A^+$ is a sheaf (see \cite[Proposition 3.4.22]{anel20}), and consequently $\calF^+$ also is.

	Let us show that the two statements are equivalent.
	We observe that since $\calC$ is compactly generated, the sheafification of $h_A$ agrees with $\on{Hom}_\calC(A,\tilde{\calF})$. 
	Since both assertions, the first being $\calF$ being separated, and the second being the condition that $\Delta_{\calF,f}\colon \calF(X)\to \calF(X)\times_{\calD_{U/X}}\calF(X)$ is an equivalence, can be verified on the representable presheaves $\{h_A\}_{A\in \calC^\omega}$, we may and do reduce to the case $\calC=\on{Ani}$ and $\calF=h_A$.
	In one direction, arguing as above, if the diagonal $h_A(X)\to h_A(X)\times_{\on{Desc.}(h_A,U/X)} h_A(X)$ is an equivalence, then the sheafification of $h_A$ is $h^+_A$, one can use the concrete expression of $h_A^+(X)$ and that in $\on{Ani}$ filtered colimits commute with finite limits to show that $h_A\to h_A\times_{\tilde{h}_A} h_A$ is also an equivalence.

	For the converse, fix a cover $[U\to X]\in \calT$ and consider the diagram
	\begin{center}
		\begin{tikzcd}
			\on{Desc.}(\calF,U/X) \ar[r, "s^{\prime}"]  & \on{Desc.}(\widetilde{\calF}, U/X) \\
			\calF(X) \ar[u, "\alpha"] \ar[r, "s"'] & \widetilde{\calF}(X)\,. \ar[u, "\widetilde{\alpha}"']
		\end{tikzcd}
	\end{center} 
	We note that for a map $f \colon c \to d$ in an $\infty$-category, the diagonal $\Delta_f \colon c \to c \times_d c$ is an equivalence if and only if $f$ is $(-1)$-truncated if and only if $f$ is a monomorphism. 
	In general, $n$-truncated morphisms have the left cancellation property, and we know that $s$ and $s^{\prime}$ are monomorphisms by assumption and that $\widetilde{\alpha}$ is even an equivalence.
	This implies that $\alpha$ is also $(-1)$-truncated.

\end{proof}

\begin{proposition}
	\label{separated-presheaves-finitelimits}
Suppose that $\calC$ is presentable and compactly generated category. Then the subcategory of $\calP(\calT,\calC)$ spanned by separated presheaves $\calF$ is stable under finite limits.
\end{proposition}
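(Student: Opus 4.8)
The statement to prove is that separated presheaves form a subcategory of $\calP(\calT,\calC)$ which is stable under finite limits, where $\calC$ is presentable and compactly generated. The plan is to use the characterization of separatedness from \Cref{pass-to-anima}, namely that $\calF$ is separated if and only if for every covering map $[U\to X]\in \calT$ the diagonal $\Delta_{\calF,f}\colon \calF(X)\to \calF(X)\times_{\calD_{U/X}}\calF(X)$ is an equivalence, where $\calD_{U/X}=\on{Desc.}(\calF,U/X)$. Equivalently, in the language of the proof of that proposition, $\calF$ is separated precisely when the map $\calF(X)\to \calD_{U/X}$ is $(-1)$-truncated (a monomorphism) for every cover $[U\to X]$.

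First I would reduce, as in the proof of \Cref{pass-to-anima}, to the case $\calC=\on{Ani}$: since $\calC$ is compactly generated, the family of functors $\{\on{Hom}_\calC(A,-)\}_{A\in \calC^\omega}$ is conservative and preserves all limits, so a cone in $\calP(\calT,\calC)$ is a limit cone (and a map is a monomorphism) if and only if this holds after applying each $\on{Hom}_\calC(A,-)$ levelwise; moreover $\on{Hom}_\calC(A,-)$ commutes with the formation of $\on{Desc.}(-,U/X)$ since the latter is a finite limit. Thus it suffices to treat presheaves of anima. Now let $J$ be a finite category, let $\calF=\lim_{j\in J}\calF_j$ be a finite limit of separated presheaves in $\calP(\calT,\on{Ani})$, and fix a covering map $[U\to X]\in \calT$. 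Since limits in presheaf categories are computed pointwise and $\on{Desc.}(-,U/X)$ is itself a (finite) limit, limits commute and we get $\calD_{U/X}=\lim_{j\in J}(\calD_j)_{U/X}$ as well as $\calF(X)=\lim_{j\in J}\calF_j(X)$, compatibly with the canonical maps $\alpha_j\colon \calF_j(X)\to (\calD_j)_{U/X}$. Hence the map $\alpha\colon \calF(X)\to \calD_{U/X}$ is the limit over $j\in J$ of the maps $\alpha_j$.

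The key point is then purely formal: in any $\infty$-category, the class of $(-1)$-truncated morphisms (monomorphisms) is stable under limits of arrows indexed by an arbitrary (in particular finite) diagram. Concretely, if $\alpha\colon c\to d$ is a limit of a diagram of monomorphisms $\alpha_j\colon c_j\to d_j$, then for every object $t$ the map $\on{Map}(t,c)\to \on{Map}(t,d)$ is a limit of the monomorphisms of anima $\on{Map}(t,c_j)\to \on{Map}(t,d_j)$, and monomorphisms of anima (equivalently, $(-1)$-truncated maps, equivalently inclusions of a subset of connected components) are stable under arbitrary limits; hence $\alpha$ is again a monomorphism. Applying this with $\alpha_j$ each a monomorphism by the separatedness of $\calF_j$ (via \Cref{pass-to-anima}), we conclude that $\alpha\colon \calF(X)\to \calD_{U/X}$ is a monomorphism, i.e.\ the diagonal $\Delta_{\calF,f}$ is an equivalence. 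Since $[U\to X]$ was arbitrary, \Cref{pass-to-anima} gives that $\calF$ is separated.

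I do not expect a serious obstacle here; the statement is essentially a repackaging of two facts already in play in the excerpt — the truncatedness criterion for separatedness established inside the proof of \Cref{pass-to-anima}, and the stability of $n$-truncated morphisms under limits. The only mild care needed is bookkeeping: making sure that $\on{Desc.}(-,U/X)$, being a finite limit, commutes past the finite limit defining $\calF$ and past the conservative family $\on{Hom}_\calC(A,-)$, which is immediate since all the relevant operations are (finite, hence arbitrary-)limit-preserving. One could alternatively phrase the whole argument without passing to anima by invoking directly that monomorphisms in the presentable $\infty$-category $\calP(\calT,\calC)$ are closed under limits of arrows, but the reduction to $\on{Ani}$ keeps the argument parallel to the proof of \Cref{pass-to-anima} and makes the truncatedness claim transparent.
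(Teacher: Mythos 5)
Your argument is correct, but it takes a different route from the paper's. The paper also reduces to $\calC=\on{Ani}$ via compact generators, but then finishes in one line by invoking the \emph{definition} of separatedness (\Cref{separated-presheaves}): since sheafification of anima-valued presheaves is exact (a left-exact localization), it commutes with finite limits, so the unit map $\lim_j\calF_j\to\widetilde{\lim_j\calF_j}\simeq\lim_j\widetilde{\calF_j}$ is a limit of monomorphisms and hence a monomorphism. You instead route the argument through the \emph{characterization} of separatedness in condition (2) of \Cref{pass-to-anima} — that $\calF(X)\to\on{Desc.}(\calF,U/X)$ is $(-1)$-truncated for every cover — and then use that limits commute with limits and that monomorphisms are stable under limits of arrows. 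The trade-off: the paper's proof is shorter but leans on the nontrivial fact that sheafification is left exact; yours avoids that input entirely and only uses the equivalence $(1)\Leftrightarrow(2)$ already established in \Cref{pass-to-anima} together with elementary stability of $(-1)$-truncated maps under limits, at the cost of a little more bookkeeping. One small inaccuracy: $\on{Desc.}(-,U/X)$ is a totalization over the \v{C}ech nerve, i.e.\ a limit over $\Delta$, which is not a finite limit; this is harmless for your argument, since you only need that it is \emph{some} limit (so that it commutes with the finite limit defining $\calF$ and with the limit-preserving functors $\on{Hom}_\calC(A,-)$), but the parenthetical ``(finite)'' should be dropped.
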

\begin{proof}
	Arguing as in the proof of \Cref{pass-to-anima} we may reduce to the case $\calC=\on{Ani}$. This case follows from the fact that sheafification is exact. 
\end{proof}

\begin{lemma}
	\label{check-on-categories}
	Suppose that $\calF\in \calP(\calT,\CatexE)$. The following statements are equivalent.
	\begin{enumerate}
		\item $\calF$ is separated as an object in $\calP(\calT,\on{Cat}_1)$.
		\item $\calF$ is separated as an object in $\calP(\calT,\CatE)$.
		\item $\calF$ is separated as an object in $\calP(\calT,\CatexE)$.
	\end{enumerate}
\end{lemma}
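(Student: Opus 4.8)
The plan is to reduce, in each of the three cases, to the criterion of \Cref{pass-to-anima}, and then to check that the resulting condition does not depend on how much structure is remembered. Write $V\colon\CatexE\to\CatE$ and $W\colon\CatE\to\on{Cat}_1$ for the forgetful functors, $U=W\circ V$, and use the same letters for post-composition with them, so that $\calF$, $V\calF$ and $U\calF$ are presheaves valued in $\CatexE$, $\CatE$ and $\on{Cat}_1$ respectively. Since all three target categories are presentable and compactly generated (for $\CatexE$ and $\CatE$ this is established earlier in this appendix, and $\on{Cat}_1$ is standard), \Cref{pass-to-anima} applies to each: for $\calD\in\{\CatexE,\CatE,\on{Cat}_1\}$ the corresponding presheaf is separated if and only if for every covering map $[f\colon U\to X]\in\calT$ the diagonal
\[
\Delta^{\calD}_f\colon \calF(X)\longrightarrow \calF(X)\times_{\on{Desc.}_{\calD}(\calF,U/X)}\calF(X)
\]
is an equivalence in $\calD$, where both the fibre product and the descent category are formed in $\calD$.

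First I would record that $V$ and $W$, hence $U$, preserve small limits: a limit of exact categories is the limit of the underlying additive categories equipped with the exact structure in which a sequence is exact exactly when its image in each factor is, and a limit of rigid symmetric monoidal $E$-linear categories is the limit of underlying categories with tensor product, $E$-linear structure and duals all computed factorwise (a tuple being dualizable with dual the tuple of duals, by checking the triangle identities factorwise). Consequently $\on{Desc.}_{\calD}(\calF,U/X)$, being a limit of the \v{C}ech cosimplicial object, and the fibre product $\calF(X)\times_{(-)}\calF(X)$ have underlying plain category independent of $\calD$, and the three diagonals $\Delta^{\CatexE}_f$, $\Delta^{\CatE}_f$, $\Delta^{\on{Cat}_1}_f$ all have one and the same underlying functor.

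Then I would observe that for a diagonal morphism $g\colon c\to c\times_d c$ in $\calD\in\{\CatexE,\CatE\}$, $g$ is an equivalence in $\calD$ if and only if its underlying functor is an equivalence of categories. The forward direction is formal, since any functor preserves equivalences and $U$ preserves the relevant limits; for the converse, if the underlying functor of $g$ is an equivalence, choose a quasi-inverse $g^{-1}$ and let $p_1\colon c\times_d c\to c$ be the first projection; then $g^{-1}$ is isomorphic to $p_1$, since $p_1\circ g\simeq\id_c$ and $g\circ g^{-1}\simeq\id_{c\times_d c}$, so $g^{-1}$ may be taken to be $p_1$, which is a morphism in $\calD$, whence $g$ is an equivalence in $\calD$. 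Applying this to $g=\Delta^{\CatexE}_f$ and to $g=\Delta^{\CatE}_f$ shows that each is an equivalence in its category if and only if the common underlying functor is an equivalence in $\on{Cat}_1$, i.e. if and only if $\Delta^{\on{Cat}_1}_f$ is an equivalence. Therefore the three instances of condition $(2)$ of \Cref{pass-to-anima} are literally the same condition on $\calF$, and the three notions of separatedness coincide. The only input that is not completely formal is the description of limits in $\CatexE$ and $\CatE$ used in the second paragraph — that these are computed on underlying categories with the obvious exact and monoidal structures — which is exactly the content of the categorical preliminaries of this appendix; that is the step where I would be most careful.
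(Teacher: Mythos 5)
Your proposal is correct and follows essentially the same route as the paper's proof: reduce via \Cref{pass-to-anima} to the question of whether the diagonal is an equivalence, note that the forgetful functors preserve the relevant limits so the underlying functor of the diagonal is the same in all three settings, and observe that any quasi-inverse of the diagonal is necessarily the first projection $\pi_1$, which is already a morphism in $\CatexE$ (resp. $\CatE$). The paper states this more tersely, but the key idea — that the inverse is automatically $E$-linear and exact because it is $\pi_1$ — is identical.
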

\begin{proof}
	We can reduce the claim to showing that for a map $[A\to B]\in \CatexE$, the diagonal $\Delta_f\colon A\to A\times_B A$ is an equivalence if and only if $\Delta_{\calF(f)}\colon \calF(A)\to \calF(A)\times_{\calF(B)} \calF(A)$ is an equivalence.
	Here, we denote by $\calF\colon \CatexE\to \on{Cat}_1$ (resp.\ $\calF\colon \CatexE\to \CatE$) the forgetful functor.

	As the forgetful functor commutes with limits, we have $\Delta_{\calF(f)}\simeq \calF(\Delta_f)$. 
	Moreover, the inverse of $\calF(\Delta_f)$ is necessarily given by $\calF(\pi_1)$, where $\pi_1\colon A\times_B A\to A$ is the projection onto the first factor. 
	This shows that $\calF(\Delta_f)^{-1}$ is automatically $E$-linear and exact as we wanted to show.
\end{proof}

%\begin{corollary}
%Let $\calF\in \calP(\calT,\CatexE)$ be a separated presheaf when treated as an object in $\calP(\calT,\CatE)$. Then $\calF$ is also a separated presheaf as an object in $\calP(\calT,\CatexE)$.
%\end{corollary}
%\begin{proof}
%	Since $\calF$ is separated as an object in $\calP(\calT,\CatE)$ it is separated as an object in $\calP(\calT,\on{Cat}_1)$ and we may conclude from \Cref{check-on-categories}.
%\end{proof}

The previous considerations allow us to give a complicated, but concrete description of the categories that one obtains from applying sheafification to a separated presheaf with values in $\CatexE$.

\begin{lemma}
	\label{Describing-separated-presheaves}
	Suppose that $\calF\in \calP(\calT,\CatexE)$ is a separated presheaf.
	Let $\widetilde{\calF}\in \calS(\calT,\CatexE)$ be its sheafification.
	For all $S\in \calT$, the category $\widetilde{\calF}(X)\in \CatexE$ can be described as follows:
	Objects $V\in \widetilde{\calF}(S)$ are tuples $(S',V',\alpha)$, where $[S'\to S]\in \calT$ is a v-cover and $V'\in \on{Desc.}(\calF,S'\!/S)$.
	If we have a map $S''\xrightarrow{f} S'\to S$ with $S''\to S$ a v-cover, then $(S',V',\alpha)$ is isomorphic to $(S'',f^*V',f^*\alpha)$. 
	Given two objects, $(S',V'_1,\alpha_1)$ and $(S',V'_2,\alpha_2)$ the set of morphisms between them agrees with the set of morphisms in $\on{Desc.}(\calF,S'\!/S)$.
	A sequence $\Sigma\coloneqq[(S',V'_1,\alpha_1)\to (S',V'_2,\alpha_2)\to (S',V'_3,\alpha_3)]$ is exact if and only if there is
	$S''\xrightarrow{f} S' \to S$ with $S''\to S$ a v-cover such that the sequence $f^*\Sigma\coloneqq[f^*V'_1\to f^*V'_2\to f^*V'_3]$ is exact in $\calF(S^{\prime \prime})$.
\end{lemma}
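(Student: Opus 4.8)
The plan is to prove \Cref{Describing-separated-presheaves} by unwinding the explicit $+$-construction for sheafification and exploiting separatedness to control the combinatorics. First I would recall that, since $\calF$ is separated, \Cref{pass-to-anima} guarantees that $\calF^+$ is already a sheaf, so $\widetilde\calF \simeq \calF^+$ and the natural map $\eta_\calF \colon \calF \to \calF^+$ exhibits the sheafification. Then I would apply \Cref{check-on-categories} to reduce the entire problem to the level of the underlying presheaf of $1$-categories (the $E$-linear and exact structure on morphism sets and on the notion of exact sequence is then transported automatically, as in the proof of \Cref{check-on-categories}). So the real task is a concrete description of the $+$-construction for a separated presheaf valued in $\on{Cat}_1$, together with an identification of the transported exact structure.

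Next, I would spell out the $+$-construction. For $S \in \calT$, by \cite[6.2.2.9-6.2.2.13]{lurie2009htt} one has $\calF^+(S) = \colim_{\calU} \on{Desc.}(\calF, \calU)$, the colimit running over covering sieves $\calU$ of $S$, ordered by refinement; by \Cref{sheaf-case} and the fact that $\calT$ has finite disjoint unions every covering sieve is refined by one generated by a single map $S' \to S$, so the indexing poset can be taken to be that of single v-covers $S'\to S$ up to refinement. This is exactly the statement that objects of $\widetilde\calF(S)$ are represented by triples $(S', V', \alpha)$ with $V' \in \on{Desc.}(\calF, S'/S)$, two triples being identified whenever they agree after pulling back along a common further v-cover $S'' \to S' \to S$, $S'' \to S$ still a v-cover. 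For the Hom-sets: because $\calF$ is separated, \Cref{pass-to-anima} says each mapping-anima presheaf $h_A$ has $h_A^+$ a sheaf and $h_A \to h_A^+$ a monomorphism; applied to morphism presheaves (which are sets, being presheaves into $\Sets$ as $\calF$ is a $1$-category) this shows $\on{Hom}_{\calF(S')}(V_1',V_2') \to \on{Hom}_{\widetilde\calF(S)}((S',V_1',\alpha_1),(S',V_2',\alpha_2))$ is already a bijection when source and target are represented over the \emph{same} cover $S'$ — no further refinement of Hom-sets is needed. One small point to check here is that any two objects of $\widetilde\calF(S)$ can be represented over a common cover $S'$, which follows by taking a fibre product of the two covers.

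Finally I would identify the exact structure. By \Cref{check-on-categories} the exact structure on $\widetilde\calF(S)$ is the one making $\eta_\calF$ (and all transition maps) exact, which concretely means: $\Sigma = [(S',V_1',\alpha_1) \to (S',V_2',\alpha_2) \to (S',V_3',\alpha_3)]$ is declared exact in $\widetilde\calF(S)$ iff it becomes exact after passing to some further v-cover. One direction is immediate; for the other I would argue that if $\Sigma$ is exact in $\widetilde\calF(S)$, then by definition of the sheafified exact structure there is a v-cover $S'' \to S$ (refining $S'$) over which $\Sigma$ pulls back to an exact sequence in $\on{Desc.}(\calF, S''/S)$, and exactness in a descent category of $\calF$ is, by the construction of limits in $\CatexE$ (finite limits of exact categories computed componentwise on the $n$-fold products $S''_\bullet$), equivalent to exactness of the pulled-back sequence in $\calF(S'')$ itself; refining once more if necessary to a single honest affinoid cover one gets exactness in $\calF(S'')$, which is the claimed statement. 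The main obstacle I anticipate is bookkeeping: making sure that the poset of covers is genuinely filtered (so that the colimit computing $\calF^+$ is a filtered colimit and hence commutes with the finite limits defining $\on{Desc.}$, objects, morphisms, and exact sequences), and that when one represents two objects or a whole complex over a common cover the various identifications $\alpha$ are compatible. These are the standard subtleties of descent along the $+$-construction and are handled, as in \Cref{pass-to-anima}, by reducing to mapping anima $h_A$ for $A \in (\CatexE)^\omega$ where filtered colimits commute with finite limits; I do not expect any genuinely new difficulty beyond careful tracking of indices.
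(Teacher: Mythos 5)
Your proposal is correct, and it assembles exactly the ingredients the paper intends: the paper in fact states \Cref{Describing-separated-presheaves} without proof, as a direct consequence of ``the previous considerations'' (\Cref{pass-to-anima}, \Cref{check-on-categories}, \Cref{sheaf-case} and the $+$-construction), and your write-up fleshes out that implicit argument faithfully. The only imprecision is notational: where you write that $\on{Hom}_{\calF(S')}(V_1',V_2')\to \on{Hom}_{\widetilde\calF(S)}(\cdot,\cdot)$ is a bijection, the source should be $\on{Hom}_{\on{Desc.}(\calF,S'/S)}(V_1',V_2')$ (the equalizer of the two pullbacks to $\calF(S'\times_S S')$, not all morphisms in $\calF(S')$); the point, as in the proof of \Cref{general-separated-nonsense}, is that separatedness makes the Hom-presheaves sheaves, so these descent Hom-sets are insensitive to further refinement and the filtered colimit of Hom-sets is constant.
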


Let $G$ be a reductive group over $E$ and let $\on{Rep}_G\in \CatexE$ denote the category of finite dimensinal algebraic representations of $G$ with its usual $\otimes$-structure and its usual exact structure.

\begin{lemma}
	\label{funct-RepG}
	Let $\calF\in \calP(\calT,\CatexE)$ be a separated presheaf with sheafification $\widetilde{\calF}$. 
	Then the sheafification of ${\rm Fun}_E^{\otimes,{\rm ex}}(\Rep_G, \calF)$ is ${\rm Fun}_E^{\otimes,{\rm ex}}({\rm Rep}_G, \widetilde{\calF})$.
\end{lemma}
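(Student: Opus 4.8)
The plan is to reduce the statement to the fact, already established in the main text, that sheafification of $\calF$ itself behaves well, together with the observation that $\on{Fun}_E^{\otimes,\on{ex}}(\Rep_G,-)$ is a right adjoint-flavoured construction that commutes with the relevant limits. First I would record two structural facts about $\Rep_G$: it is a small rigid symmetric monoidal $E$-linear exact category, and it is \emph{generated under (finite) colimits / subquotients by a single object}, namely a faithful representation $V_0\hookrightarrow \GL(V_0)$ from which every object of $\Rep_G$ is obtained by the usual Tannakian recipe (subquotients of direct sums of tensor powers of $V_0\oplus V_0^\vee$). This is what makes $\on{Fun}_E^{\otimes,\on{ex}}(\Rep_G,\calD)(X)$ a \emph{limit} (an equalizer-type diagram) of copies of $\calD(X)$ indexed by the combinatorics of $\Rep_G$: a $\otimes$-exact functor is the same as its value on $V_0$ subject to compatibility conditions, and all of these conditions are expressed by finite limits in $\calD(X)$.

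Next I would use this to see that $\on{Fun}_E^{\otimes,\on{ex}}(\Rep_G,-)$, viewed as an endofunctor of $\calP(\calT,\CatexE)$ obtained by postcomposition, preserves two things: (i) it preserves v-sheaves — this is exactly the argument already used in the proof of \Cref{DMisvstack}(1) and \Cref{theorem-Isoc-is-what-it-is}(1), since $\on{Fun}_E^{\otimes,\on{ex}}(\Rep_G,-)$ commutes with $2$-limits in $\CatexE$ and descent data are computed as such limits; and (ii) it preserves \emph{separated} presheaves, which follows from \Cref{separated-presheaves-finitelimits} once one knows $\on{Fun}_E^{\otimes,\on{ex}}(\Rep_G,-)$ is built from finite limits, applied levelwise. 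Given (i) and (ii), ${\rm Fun}_E^{\otimes,\on{ex}}(\Rep_G,\widetilde{\calF})$ is a v-sheaf, and the canonical map ${\rm Fun}_E^{\otimes,\on{ex}}(\Rep_G,\calF)\to {\rm Fun}_E^{\otimes,\on{ex}}(\Rep_G,\widetilde{\calF})$ identifies its source as a separated subpresheaf of its target.

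It then remains to check that this map becomes an isomorphism after sheafification, equivalently (since both presheaves are separated, so the $+$-construction is one application of \Cref{pass-to-anima}) that it is an isomorphism on a basis of the v-topology — products of points in the $\Perf$ case, product combs in the $\PSch$ case — or more robustly, that it induces an equivalence on sections after passing to a v-cover on which $\widetilde{\calF}$ is computed by a descent diagram from $\calF$. Concretely, using the explicit description of $\widetilde{\calF}(X)$ from \Cref{Describing-separated-presheaves} as tuples $(X',V',\alpha)$ with $X'\to X$ a v-cover and $V'\in\on{Desc.}(\calF,X'/X)$, a $\otimes$-exact functor $\Rep_G\to\widetilde{\calF}(X)$ is, after refining the v-cover once (so that the images of the finitely many generating objects and morphism-data all descend from $\calF(X')$), the same as a compatible system of $\otimes$-exact functors $\Rep_G\to\calF(X')$, i.e.\ a point of $\on{Desc.}({\rm Fun}_E^{\otimes,\on{ex}}(\Rep_G,\calF),X'/X)$; and since ${\rm Fun}_E^{\otimes,\on{ex}}(\Rep_G,-)$ commutes with the finite limits defining $\on{Desc.}$, this is precisely a point of ${\rm Fun}_E^{\otimes,\on{ex}}(\Rep_G,\calF)^+(X)$. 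The main obstacle I anticipate is the bookkeeping in this last step: one must make sure that \emph{all} of the Tannakian data for a $\otimes$-exact functor out of $\Rep_G$ — not just the value on one object but the tensor-structure isomorphisms, the unit, duality, and exactness witnesses — involve only finitely many objects of $\Rep_G$ and finitely many finite-limit conditions, so that a \emph{single} refinement of the v-cover suffices to descend an entire functor rather than needing a cofinal system of refinements; this is where the compact generation of $\CatexE$ (the point of the appendix) and the fact that $\Rep_G$ is essentially small and finitely $\otimes$-generated are both used. Once that is in place, the comparison map is an isomorphism on the basis, hence after sheafification, completing the proof.
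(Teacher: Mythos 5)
Your proposal is correct and follows essentially the same route as the paper: finite $\otimes$-generation of $\Rep_G$, a single refinement of the v-cover (via \Cref{Describing-separated-presheaves}) so that the generators descend to $\on{Desc.}(\calF,S'/S)$, automatic descent of all morphisms by fully-faithfulness, commutation of $\on{Fun}$ with the limits defining descent categories, and identification of the colimit over covers with the sheafification of the separated presheaf. The "bookkeeping" worry you flag at the end is exactly the point the paper's proof addresses by taking $S'=S_1\times_S\cdots\times_S S_n$ for the finitely many tensor generators and invoking fully-faithfulness of $\on{Desc.}(\calF,S'/S)\to\on{Desc.}(\calF,S''/S)$ for the morphism data.
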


\begin{proof}
	The category $\Rep_G$ is generated under tensor products by finitely many objects, say $\{\calO_i\}^n_{i=1}$. 
	We claim that for any $S \in {\rm Perf}^{\rm aff}$ and $F \in {\rm Fun}^{\otimes,{\rm ex}}({\rm Rep}_G, \widetilde{\calF})(S)$, there exists a v-cover $S' \to S$ such that $F$ factors through a (exact monoidal) functor ${\rm Rep}_G \to {\rm Desc.}(\calF,S'\!/S)$. 
	Indeed, according to \Cref{Describing-separated-presheaves} each of the tensor generators $\calO_i$ maps to an object of the form $(S_i,V_i,\alpha_i)$ and we can take $S'=S_1\times_S\dots \times S_n$. 
	Furthermore, since for every other $S''\to S'\to S$ the map 
	\[{\rm Desc.}(\calF,S'\!/S)\to {\rm Desc.}(\calF,S''\!/S)\]
	is fully-faithful, all morphisms between the tensor powers of the $\calO_i$ must lie in ${\rm Desc.}(\calF,S'\!/S)$. 

	With other words, we get the first equality in
	\begin{align*}
		{\rm Fun}^{\otimes,{\rm ex}}({\rm Rep}_G, \widetilde{\calF})(S) &= \colim_{S'\to S} {\rm Fun}^{\otimes,{\rm ex}}({\rm Rep}_G, {\rm Desc.}(\calF,S'\!/S)) \\
		&= \colim_{S'\to S} {\rm Desc.}({\rm Fun}^{\otimes,{\rm ex}}({\rm Rep}_G, \calF), S'\!/S)\,,
	\end{align*}
	where the second equality holds as ${\rm Desc.}(-,S'\!/S)$ is a limit and ${\rm Fun}$ commutes with limits. The last expression is the value of the sheafification of ${\rm Fun}^{\otimes,{\rm ex}}({\rm Rep}_G, \calF)$ on $S$, so we are done.
\end{proof}

\begin{remark}
	One can be much more general in the formulation of \Cref{funct-RepG}. 
	Indeed, given appropriate cut-off cardinals $\kappa<\lambda$, the categories $\Perf_\lambda$ and $\PSch_\lambda$ have enough $\kappa$-cofiltered limits. 
	Consequently, ${\rm Fun}^{\otimes,{\rm ex}}(\Rep_G,-)$ commutes with sheafification for general $\calF$ as long as $\Rep_G$ is $\kappa$-compact. 
	There is always a $\kappa$ for which this holds.
	To avoid discussing further technicalities, we have settled with the formulation and proof given above. 
\end{remark}

\subsection{The category $\CatexE$}
We verify that $\CatexE$ is presentable and compactly generated in several steps.
We will also define $\CatexE$ in steps, and at each step we will show that the relevant category is presentable and compactly generated.
Throughout, we will use the following statement.
\begin{theorem}[{\cite[Theorem 1.1]{ragimov2022inftycategoricalreflectiontheoremapplications}}]
	\label{reflection-thm}
	Let $\calC$ be a presentable $\infty$-category and let $\calD\subseteq \calC$ be a full subcategory which is closed under limits and $\kappa$-filtered colimits for some regular cardinal $\kappa$. Then, $\calD$ is presentable.
\end{theorem}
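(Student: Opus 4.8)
\textbf{Proof plan for Theorem \ref{reflection-thm}.}

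The plan is to deduce this from the standard theory of accessible and presentable categories, together with the theory of accessible localizations/reflective subcategories. Since $\calD \subseteq \calC$ is closed under $\kappa$-filtered colimits and under all (small) limits, the first observation is that $\calD$ is closed under all limits computed in $\calC$, so in particular $\calD$ is a complete $\infty$-category and the inclusion $i \colon \calD \hookrightarrow \calC$ preserves limits. To promote this to presentability, I would verify the two defining conditions of presentability separately: that $\calD$ is accessible, and that $\calD$ admits all small colimits.

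For accessibility, the key point is that $\calD$, being closed in $\calC$ under $\kappa$-filtered colimits, is itself the intersection of an accessible subcategory with $\calC$; more precisely, one chooses a regular cardinal $\tau \gg \kappa$ such that $\calC$ is $\tau$-compactly generated (possible since $\calC$ is presentable) and such that the full subcategory $\calD$ is closed under $\tau$-filtered colimits — this automatically holds once $\tau \geq \kappa$. Then by the theory of accessible subcategories (see \cite[\S5.4]{lurie2009htt}), a full subcategory of a presentable category that is closed under $\tau$-filtered colimits and under limits is accessible: concretely, $i$ is an accessible functor which preserves limits, so by the adjoint functor theorem for presentable categories \cite[Corollary 5.5.2.9]{lurie2009htt} it admits a left adjoint $L \colon \calC \to \calD$. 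This exhibits $\calD$ as a reflective (accessible) localization of $\calC$. By \cite[Proposition 5.5.4.15]{lurie2009htt}, any accessible reflective localization of a presentable category is again presentable, and colimits in $\calD$ are computed by applying $L$ to the colimit in $\calC$; this furnishes the remaining colimits. Assembling these facts gives that $\calD$ is presentable.

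The main obstacle — or rather the point requiring the most care — is verifying that the inclusion $i$ is an accessible functor, i.e.\ that $\calD$ is an accessible $\infty$-category in the first place, since that is what licenses the use of the adjoint functor theorem. This is precisely the content of \cite[Theorem 1.1]{ragimov2022inftycategoricalreflectiontheoremapplications}, whose proof handles exactly the bookkeeping of choosing the cardinal $\tau$ so that the $\kappa$-compact objects of $\calC$ generate and so that $\calD \cap \calC^\tau$ is an essentially small generating subcategory of $\calD$ closed under the relevant colimits. I would simply cite this result rather than reprove it; the statement we need is a verbatim instance of it, with no modification required. Everything else (completeness of $\calD$, limit-preservation of $i$, the passage from accessible-plus-reflective to presentable) is formal and follows from the cited results in \cite{lurie2009htt}.

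Finally, for the applications in this appendix one also wants $\calD$ to be \emph{compactly generated} when $\calC$ is; this does not follow from Theorem \ref{reflection-thm} as stated, so in the subsequent use (for $\CatexE$ and its variants) I would instead exhibit an explicit set of compact generators at each stage of the construction — the finitely-presented rigid symmetric monoidal $O_E$-linear exact categories — and check directly that compact objects are preserved, rather than trying to extract compact generation abstractly from the reflection theorem.
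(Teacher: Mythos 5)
The paper offers no proof of this statement: it is imported verbatim from Ragimov--Schlank, exactly as you propose, and you correctly identify that the only non-formal ingredient (accessibility of $\calD$, which licenses the adjoint functor theorem) is precisely the content of the cited result rather than something one can bootstrap from HTT alone. Your closing remark about compact generation is also consistent with the paper, which extracts it separately via a conservative limit- and filtered-colimit-preserving functor to a compactly generated category rather than from the reflection theorem itself.
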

We can combine this with the following useful statement. 

\begin{proposition}
	\label{bound-on-compacts}	
	Let $\calC$ be a compactly generated presentable $\infty$-category, let $\calD$ be a presentable $\infty$-category, let $\calF\colon \calD\to \calC$ be a conservative functor which commutes with limits and filtered colimits. 
	Then, $\calD$ is compactly generated.
\end{proposition}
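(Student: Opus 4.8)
The goal is to show that a presentable $\infty$-category $\calD$ equipped with a conservative functor $\calF \colon \calD \to \calC$ to a compactly generated presentable $\calC$, where $\calF$ preserves all limits and filtered colimits, is itself compactly generated. The plan is to produce a set of compact generators of $\calD$ by using the adjoint functor theorem together with the compact generators of $\calC$. First I would invoke the adjoint functor theorem: since $\calF$ is a functor between presentable $\infty$-categories that preserves all small limits (and is accessible, as it preserves filtered colimits), it admits a left adjoint $\calG \colon \calC \to \calD$. This is the key structural input; everything else is a formal consequence.

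\textbf{Key steps.} Let $\{c_i\}_{i \in I}$ be a set of compact generators of $\calC$. I claim $\{\calG(c_i)\}_{i \in I}$ is a set of compact generators of $\calD$. Compactness: for $d \in \calD$, we have $\on{Hom}_\calD(\calG(c_i), d) \simeq \on{Hom}_\calC(c_i, \calF(d))$ by adjunction; since $c_i$ is compact in $\calC$ and $\calF$ preserves filtered colimits, the composite functor $d \mapsto \on{Hom}_\calC(c_i, \calF(d))$ preserves filtered colimits, so $\calG(c_i)$ is compact. Generation: suppose $f \colon d \to d'$ in $\calD$ induces an equivalence $\on{Hom}_\calD(\calG(c_i), d) \xrightarrow{\sim} \on{Hom}_\calD(\calG(c_i), d')$ for all $i$; by adjunction this says $\on{Hom}_\calC(c_i, \calF(d)) \xrightarrow{\sim} \on{Hom}_\calC(c_i, \calF(d'))$ for all $i$, hence $\calF(f)$ is an equivalence since the $c_i$ generate $\calC$, hence $f$ is an equivalence since $\calF$ is conservative. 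A presentable $\infty$-category admitting a set of compact generators is compactly generated (e.g.\ \cite[\S 5.5.7]{lurie2009htt}), so $\calD$ is compactly generated.

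\textbf{Main obstacle.} There is no serious obstacle here — the argument is essentially a one-paragraph application of the adjoint functor theorem and the definition of compact generation. The only point requiring a modicum of care is checking that $\calF$ is accessible so that the adjoint functor theorem applies in the presentable setting: this follows because $\calF$ preserves filtered colimits (hence $\kappa$-filtered colimits for all $\kappa$) and $\calD$ is presentable, so $\calF$ is accessible, and a limit-preserving accessible functor between presentable $\infty$-categories has a left adjoint. One should also note that in the intended applications $\calF$ is typically a forgetful functor of the form $\CatexE \to \CatE \to \on{Cat}_1$ or similar, for which conservativity and preservation of limits and filtered colimits are transparent; the present lemma then bootstraps compact generation of the enriched categories from that of $\on{Cat}_1$ (or $\on{Ani}$), which is classical.
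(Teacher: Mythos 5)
Your proof is correct and follows essentially the same route as the paper: the adjoint functor theorem produces the left adjoint, adjunction plus preservation of filtered colimits gives compactness of the $\calG(c_i)$, and conservativity of $\calF$ gives generation. The only difference is that your final step — that a presentable $\infty$-category with a set of compact objects whose corepresentable functors are jointly conservative is compactly generated — is exactly what the paper proves explicitly via the fully faithful colimit-preserving functor $\on{Ind}(\calD_0)\to \calD$ and the conservativity of its right adjoint, rather than being a literal statement one can lift from \cite[\S 5.5.7]{lurie2009htt}.
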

\begin{proof}
	Using the adjoint functor theorem \cite[Corollary 5.5.2.9]{lurie2009htt} and the presentability assumption, we conclude that $\calF\colon \calD\to \calC$ admits a left adjoint $L\colon \calC\to \calD$. 
	Let $\{K_i\}_{i\in I}\subseteq \calC$ be a family of compact generators for $\calC$.
	We claim that $\{L(K_i)\}_{i\in I}$ is a family of compact generators for $\calD$.
	Indeed, they are compact since $\calF$ preserves filtered colimits.
	To show they generate $\calD$, consider the full subcategory $\calD_0\subseteq \calD$ generated under finite colimits by objects of the form $L(K_i)$.
	By \cite[Corollary 5.3.4.15]{lurie2009htt}, $\calD_0$ consists of compact objects.
	We obtain a fully-faithful functor $\iota\colon \!\on{Ind} \calD_0\to \calD$ that commutes with filtered colimits, and hence commutes with all colimits by \cite[Proposition 5.5.1.9]{lurie2009htt}. 
	By the adjoint functor theorem \cite[Corollary 5.5.2.9]{lurie2009htt}, we get a right adjoint $G\colon \calD\to \on{Ind} \calD_0$. 
	To show that $\iota$ is an equivalence, it suffices to show that $G$ is conservative. 
	We observe that $L$ factors along $\iota$ and consequently $\calF$ factors along $G$. 
	Since $\calF$ was assumed to be conservative, $G$ is also conservative.
\end{proof}

\begin{corollary}
	\label{useful-cor}
	Let $\calC$ be a compactly generated presentable $\infty$-category and let $\calD\subseteq \calC$ be a full subcategory which is closed under limits and filtered colimits. Then, $\calD$ is presentable and compactly generated.
\end{corollary}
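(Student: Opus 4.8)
The plan is to deduce this immediately from the two results just established, so the proof is a short concatenation. First I would apply \Cref{reflection-thm}: by hypothesis $\calD \subseteq \calC$ is a full subcategory closed under limits and under filtered colimits, hence in particular closed under $\kappa$-filtered colimits for the regular cardinal $\kappa = \omega$, so the theorem gives that $\calD$ is presentable. This already disposes of the first half of the statement.

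It then remains to exhibit a family of compact generators. Here I would feed the inclusion functor $\iota \colon \calD \hookrightarrow \calC$ into \Cref{bound-on-compacts}. The functor $\iota$ is fully faithful, hence conservative; and the closure hypotheses say precisely that limits and filtered colimits of diagrams in $\calD$ may be computed in the ambient category $\calC$, so $\iota$ preserves limits and filtered colimits. Since $\calC$ is assumed presentable and compactly generated, all the hypotheses of \Cref{bound-on-compacts} are met, and its conclusion is exactly that $\calD$ is compactly generated. Combining the two steps proves the corollary.

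I do not expect any genuine obstacle here; the only point worth spelling out is the reading of ``closed under limits and filtered colimits'' as ``$\iota$ creates, equivalently preserves, these (co)limits''. This single observation is what simultaneously supplies the $\omega$-accessibility input of \Cref{reflection-thm} and the continuity-plus-accessibility input of \Cref{bound-on-compacts}. Everything more substantial — the adjoint functor theorem producing a left adjoint $L$ to $\iota$, and the $\on{Ind}$-completion argument identifying $\calD$ with $\on{Ind}(\calD_0)$ for $\calD_0$ the full subcategory generated under finite colimits by the $L(K_i)$ — is already internal to the cited statements, so there is nothing further to prove.
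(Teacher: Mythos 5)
Your proof is correct and follows exactly the route the paper takes: \Cref{reflection-thm} gives presentability, and then \Cref{bound-on-compacts} applied to the (conservative, limit- and filtered-colimit-preserving) inclusion $\calD\hookrightarrow\calC$ gives compact generation. You merely spell out the verification of the hypotheses that the paper leaves implicit.
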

\begin{proof}
	By \Cref{reflection-thm} above, $\calD$ is presentable.
	The claim now follows from \Cref{bound-on-compacts}. 
\end{proof}

\subsubsection{Additive categories}
\begin{definition}
	An $\infty$-category $\calC$ is \textit{semiadditive} if it is pointed, admits finite products and coproducts, and for every finite collection $\{C_s\in \calC\}_{s\in S}$ the norm map 
	\[\coprod_{s\in S} C_s\to \prod_{t\in S} C_t\]
	described in \cite[Example 6.1.6.11]{lurie2017ha} is an equivalence.
An $\infty$-category $\calC$ is additive if it is semiadditive and its homotopy category is additive in the classical sense.
\end{definition}

Since being semiadditive (or additive) is a property of an $\infty$-category, we could consider the full subcategory of $\on{Cat}_\infty$ spanned by these objects. 
Nevertheless, this $\infty$-category will contain too many functors. 
Indeed, one is only interested in those functors that respect the semiadditive structure.
As it turns out it suffices to look at functors that preserve finite coproducts.

\begin{definition}
We denote by $\mathrm{SemiAdd}$ the $\infty$-category of semiadditive $\infty$-categories (which is defined through e.g. \cite[Corollary 5.4]{Ambidexterity}) and by $\mathrm{Add}$ its full subcategory of additive categories.
\end{definition}

\begin{proposition}
	The $\infty$-categories $\mathrm{Add}$ and $\mathrm{SemiAdd}$ are compactly generated.
\end{proposition}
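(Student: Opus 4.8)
The plan is to establish presentability and compact generation of both $\mathrm{Add}$ and $\mathrm{SemiAdd}$ by realizing each as a full subcategory of a known compactly generated $\infty$-category that is closed under limits and filtered colimits, and then invoking \Cref{useful-cor}. First I would recall that $\mathrm{SemiAdd}$ can be described (following the construction referenced via \cite[Corollary 5.4]{Ambidexterity}) as the full subcategory of $\mathrm{Cat}_\infty^{\mathrm{Rex}}$ — the $\infty$-category of $\infty$-categories admitting finite coproducts together with finite-coproduct-preserving functors — spanned by those $\calC$ which additionally admit finite products and for which the canonical norm maps $\coprod_{s\in S} C_s \to \prod_{t\in S} C_t$ are equivalences. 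The category $\mathrm{Cat}_\infty^{\mathrm{Rex}}$ is itself presentable and compactly generated, as is standard; indeed it is a full subcategory of $\mathrm{Cat}_\infty$ closed under filtered colimits, or one can cite \cite[\S 5.3.6]{lurie2009htt}.

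Next I would verify the two closure properties needed to apply \Cref{useful-cor}. For closure under limits: a limit (in $\mathrm{Cat}_\infty^{\mathrm{Rex}}$, equivalently computed underlying in $\mathrm{Cat}_\infty$) of semiadditive categories is semiadditive, because the existence of finite products, the existence of finite coproducts, and the property that the norm map is an equivalence can each be checked componentwise on a limit diagram — finite (co)limits in the limit category are computed termwise, and the norm map is natural, so it is an equivalence in the limit exactly when it is in each term. For closure under filtered colimits: here one uses that in a filtered colimit of $\infty$-categories, every finite diagram and its (co)limit already lives at some finite stage, and equivalences are detected at a finite stage; hence finite products, finite coproducts, and invertibility of the norm map all persist to the colimit. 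This shows $\mathrm{SemiAdd}\subseteq \mathrm{Cat}_\infty^{\mathrm{Rex}}$ is closed under limits and filtered colimits, so \Cref{useful-cor} gives that $\mathrm{SemiAdd}$ is presentable and compactly generated.

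For $\mathrm{Add}$, the same strategy applies with one extra ingredient: $\mathrm{Add}\subseteq \mathrm{SemiAdd}$ is the full subcategory of those $\calC$ whose homotopy category $\mathrm{h}\calC$ is additive, i.e.\ for which the commutative monoids $\pi_0\mathrm{Map}_\calC(X,Y)$ are in fact abelian groups. This is again a property stable under limits (the mapping monoids of a limit category are limits of the mapping monoids, and a limit of abelian groups is an abelian group) and under filtered colimits (mapping spaces in a filtered colimit of categories are computed as filtered colimits, and a filtered colimit of abelian groups is an abelian group). So once more $\mathrm{Add}$ is a full subcategory of the compactly generated $\mathrm{Cat}_\infty^{\mathrm{Rex}}$ closed under limits and filtered colimits, and \Cref{useful-cor} applies.

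The main obstacle I anticipate is purely bookkeeping: making precise the model for $\mathrm{SemiAdd}$ and $\mathrm{Add}$ (what the $1$-morphisms are and that they form a full subcategory of $\mathrm{Cat}_\infty^{\mathrm{Rex}}$ rather than merely a non-full subcategory), and checking carefully that filtered colimits in $\mathrm{Cat}_\infty^{\mathrm{Rex}}$ agree with the underlying filtered colimits in $\mathrm{Cat}_\infty$ so that the "finite diagrams appear at finite stages" argument is legitimate. None of this is conceptually hard, but it is the step where one must be careful not to wave hands; everything else is a direct application of \Cref{useful-cor} exactly as in the proof of \Cref{bound-on-compacts}.
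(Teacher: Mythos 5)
Your strategy is essentially the paper's: reduce $\mathrm{Add}$ to $\mathrm{SemiAdd}$ by closure under limits and filtered colimits, identify $\mathrm{SemiAdd}$ inside the category of $\infty$-categories with finite coproducts (the paper's $\on{Cat}(K_0)$, your $\mathrm{Cat}_\infty^{\mathrm{Rex}}$), check the same two closure properties, and conclude via \Cref{useful-cor}. The one slip is your justification that $\mathrm{Cat}_\infty^{\mathrm{Rex}}$ is itself compactly generated: it is \emph{not} a full subcategory of $\mathrm{Cat}_\infty$ (morphisms are required to preserve finite coproducts), so \Cref{useful-cor} does not apply to that inclusion. This is exactly why the paper instead cites \cite[Lemma 4.8.4.2]{lurie2017ha} for presentability of $\on{Cat}(K_0)$ and then invokes \Cref{bound-on-compacts} --- the non-full variant, which only needs the forgetful functor $\on{Cat}(K_0)\to \mathrm{Cat}_\infty$ to be conservative and to commute with limits (it is a right adjoint by \cite[Corollary 5.3.6.10]{lurie2009htt}) and with filtered colimits (the ``finite diagrams appear at a finite stage'' argument you sketch). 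With that substitution your argument goes through verbatim.
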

\begin{proof}
Since $\mathrm{Add}$ is stable under limits and filtered colimits in $\mathrm{SemiAdd}$, we are reduced to showing that the $\infty$-category of semiadditive $\infty$-categories $\mathrm{SemiAdd}$ together with additive functors is compactly generated by \Cref{useful-cor}. 
By \cite[Corollary 5.4]{Ambidexterity}, we are reduced to showing that $\on{Cat}(K_0)$ (the category of categories with finite coproducts and functors preserving them) is compactly generated. 
By \cite[Lemma 4.8.4.2]{lurie2017ha}, $\on{Cat}(K_0)$ is presentable. 
We can apply \Cref{bound-on-compacts} to the forgetful functor $\on{Cat}(K_0)\to \on{Cat}_\infty$ to conclude that $\on{Cat}(K_0)$ is compactly generated.  
Indeed, by \cite[Corollary 5.3.6.10]{lurie2009htt} the forgetful functor admits a left adjoint.
Further, given a filtered diagram $I\to \on{Cat}(K_0)$, we wish to show that $\calC=\varinjlim_{i\in I}\!\!\calC_i \in \on{Cat}_\infty$ stays in $\on{Cat}(K_0)$ and that for every $i\in I$ the projection map $\calC_i\to \calC$ is functor in $\on{Cat}(K_0)$. 
This can be done as in the proof of \cite[Proposition 5.5.7.11]{lurie2009htt}.
More precisely, given a finite collection $\{o_s\}_{s\in S}\subseteq \calC$, we may lift it to a finite collection $\{o_{s,i}\}_{s\in S}$ for some $i\in I$.
The image of the coproduct of the $\{o_{s,i}\}_{s\in S}$ is a coproduct in $\calC$.
\end{proof}

Within the category of additive $\infty$-categories, we have the subcategory $\on{Add}_{\leq 1}\subseteq \on{Add}$ of classical additive categories.
It can be realized as the full subcategory of $1$-truncated objects. 
Since being $1$-truncated is stable under limits and filtered colimits, it follows that $\on{Add}_{\leq 1}$ is compactly generated and that the inclusion $\on{Add}_{\leq 1}\subseteq \on{Add}$ has a left adjoint.

\subsubsection{Symmetric monoidal additive categories.}
As justified in \cite[Proposition 5.6]{Ambidexterity} and \cite[Proposition 4.8.2.7]{lurie2017ha}, there is a symmetric monoidal structure $\on{SemiAdd}^\otimes\to N(\on{Fin}_\ast)$ on the $\infty$-category $\mathrm{SemiAdd}$.
This product captures the following phenomena: 
Given categories $\calC_1,\calC_2,\calD\in \on{SemiAdd}$, then $\on{Fun}(\calC_1\otimes \calC_2, \calD)$ captures functors $G\in \on{Fun}(\calC_1\times \calC_2,\calD)$ that are additive in both variables (i.e.\ $G(c_1\oplus c'_1,c_2)\simeq G(c_1,c_2)\oplus G(c'_1,c_2)$ and $G(c_1,c_2\oplus c_2)\simeq G(c_1,c_2)\oplus G(c_1,c'_2)$).
One can verify that both $\on{Add}_{\leq 1}$ and $\on{Add}$ are preserved under $\otimes$. 
In particular, we obtain symmetric monoidal structures $\on{Add}^\otimes_{\leq 1}$ and $\on{Add}^\otimes$ on $\on{Add}_{\leq 1}$ and $\on{Add}$, respectively.

We know from \cite[Lemma 4.8.4.2]{lurie2017ha} that $\otimes\colon \on{Add}_{\leq 1}\times \on{Add}_{\leq 1}\to \on{Add}_{\leq 1}$ preserves colimits in each variable. 
It follows from \cite[Corollary 3.2.3.5]{lurie2017ha} that the categories of commutative algebra objects $\on{CAlg}(\on{Add}^\otimes)$ and $\on{CAlg}(\on{Add}^\otimes_{\leq 1})$ are again presentable.  
Moreover, by \cite[Lemma B.2.4]{heyer20246}, the forgetful functor $\on{CAlg}(\on{Add}^\otimes_{\leq 1})\to \on{Add}_{\leq 1}$ is conservative and commutes with limits and filtered colimits. 
Using \Cref{bound-on-compacts}, we conclude that $\on{CAlg}(\on{Add}^\otimes_{\leq 1})$ is compactly generated.

%It follows from \cite[Corollary 5.3.1.17]{lurie2017ha} applied to $k=\infty$ (see \cite[Remark 5.2.5.25]{lurie2017ha}) that the category of commutative algebra objects $\on{CAlg}(\on{Add}^\otimes)$ and $\on{CAlg}(\on{Add}^\otimes_{\leq 1})$ are again presentable and compactly generated.  
Objects in $\on{CAlg}(\on{Add}^\otimes_{\leq 1})$ capture the data of an additive category $\calC$ together with a symmetric monoidal operation $\otimes\colon \calC\times \calC \to \calC$ that is additive on both variables, and a $\otimes$-unit $\mathbb{1}\in \calC$ satisfying the usual further compatibilities. 
Functors in $\on{CAlg}(\on{Add}^\otimes_{\leq 1})$ capture symmetric monoidal additive functors between such categories. 
We will now isolate in $\on{CAlg}(\on{Add}^\otimes_{\leq 1})$ those categories that are rigid.
\begin{definition}
	Given $(\calC,\otimes, \mathbb{1})$ a symmetric monoidal $\infty$-category, we say that an object $o\in \calC$ is \textit{dualizable} if there exists a dual $o^\vee$ together with an evaluation map $\on{ev}_o\colon o\times o^\vee\to \mathbb{1}$, a coevaluation map $\on{coev}_o\colon \mathbb{1}\to o\otimes o^\vee$ and commutative diagrams
	\begin{center}
	\begin{tikzcd}
		o \arrow{r}{{\on{coev}}_o\otimes {\on{id}}}  \arrow{rd}{{\on{id}}}  & o\otimes o^\vee \otimes o \arrow{d}{{\on{id}}\otimes {\on{ev}_o}} & 
		o^\vee \arrow{r}{{\on{id}}\otimes{\on{coev}}_o }  \arrow{rd}{{\on{id}}}  & o^\vee \otimes o \otimes o^\vee \arrow{d}{{\on{ev}_o}\otimes {\on{id}}} \\
										    &  o &
	  &  o^\vee\,.
	\end{tikzcd}
	\end{center}
	We say that $(\calC,\otimes, \mathbb{1})$ is \textit{rigid} if every object of $\calC$ is dualizable.
\end{definition}

One can easily verify that symmetric monoidal functors preserve dualizable objects, in particular we can form the full subcategory of $\Cat\subseteq \on{CAlg}(\on{Add}^\otimes_{\leq 1})$ spanned by those symmetric monoidal categories that are rigid.
It follows from \Cref{useful-cor} that $\Cat$ is presentable and compactly generated. 

Given a ring $R$, for example $R\in \{E, O_E\}$, we let $\on{Proj}(R)\in \Cat$ denote the rigid symmetric monoidal additive category of finite projective $R$-modules.
We let $\on{Cat}^\otimes_{1,R}$ denote the coslice category of $\Cat$ under $\on{Proj}(R)$.
Since the forgetful functor $\on{Cat}^\otimes_{1,R}\to \on{CAlg}(\on{Add}^\otimes_{\leq 1})$ commutes with limits and filtered colimits, applying \Cref{reflection-thm} and \Cref{bound-on-compacts}, we get that $\on{Cat}^\otimes_{1,R}$ is also compactly generated.  
Given a map of rings $R\to R'$ and a category $\calC\in \on{Cat}^\otimes_{1,R}$, we let $\calC\otimes_R R'$ denote the short hand for $\calC\otimes_{\on{Proj}(R)} \on{Proj}(R')$.

\subsubsection{Exact categories.}
In the above, we have justified that $\CatOE$ and $\CatE$ are presentable and compactly generated. 
In what follows, we will justify that $\CatexOE$ and $\CatexE$ are also presentable and compactly generated. 

Recall that Quillen's definition of an exact category can be rephrased in $\infty$-categorical language in terms of Waldhausen and coWaldhausen categories, cf.\ \cite[Example 3.3]{Barwick_Thm_Heart}.
\begin{definition}{\cite[Definition 3.1]{Barwick_Thm_Heart}}
An exact $\infty$-category consists of a triple $(\calC,\calC_{\dagger},\calC^{\dagger})$ such that:
\begin{enumerate}
\item The underlying $\infty$-category $\calC$ is additive.
\item The pair $(\calC,\calC_{\dagger})$ is a Waldhausen $\infty$-category.
\item The pair $(\calC,\calC^{\dagger})$ is a coWaldhausen $\infty$-category.
\item A square in $\calC$ is an ambigressive pullback if and only if it is an ambigressive pushout.
\end{enumerate}
Here we use the following terminology:
\begin{itemize}
	\item $\calC_{\dagger}\subseteq \calC$ is a wide subcategory and a morphism of $\calC_{\dagger}$ is called ingressive or a cofibration.
	\item $\calC^{\dagger}\subseteq \calC$ is a wide subcategory and a morphism of $\calC^{\dagger}$ is called egressive or a fibration.
\item A pullback square
\begin{center}
	\begin{tikzcd}
		X \ar[r] \ar[d] & Y \ar[d, twoheadrightarrow] \\
		X^{\prime} \ar[r, rightarrowtail] & Y^{\prime} 
	\end{tikzcd}
\end{center}
is said to be ambigressive if $X^{\prime} \rightarrowtail Y^{\prime}$ is ingressive and $Y \twoheadrightarrow Y^{\prime}$ is egressive. 
Dually, a pushout square 
\begin{center}
	\begin{tikzcd}
		X \ar[r, rightarrowtail] \ar[d, twoheadrightarrow] & Y \ar[d] \\
		X^{\prime} \ar[r] & Y^{\prime} 
	\end{tikzcd}
\end{center}
is said to be ambigressive if $X \rightarrowtail Y$ is ingressive and $X \twoheadrightarrow X^{\prime}$ is egressive.
\item An exact sequence is fiber/cofiber sequence
	\begin{center}
		\begin{tikzcd}
			X^{\prime} \ar[r, rightarrowtail] \ar[d, twoheadrightarrow] & X \ar[d, twoheadrightarrow] \\
			0 \ar[r, rightarrowtail] & X^{\prime \prime}\,.
		\end{tikzcd}
	\end{center}
\end{itemize}
\end{definition}

\begin{proposition}
	The $\infty$-category $\mathrm{Exact}$ of exact $\infty$-categories is compactly generated, and the forgetful functor $\on{Exact}\to \on{Add}$ commutes with limits and filtered colimits.
\end{proposition}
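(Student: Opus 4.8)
The plan is to realize $\mathrm{Exact}$ as a full subcategory, closed under limits and filtered colimits, of a compactly generated presentable $\infty$-category built out of Waldhausen and coWaldhausen structures, and then to conclude by \Cref{useful-cor} together with \Cref{bound-on-compacts}. The starting point is that the $\infty$-category $\mathrm{Wald}_\infty$ of Waldhausen $\infty$-categories is presentable \cite{Barwick_Thm_Heart}, and that the forgetful functor $\mathrm{Wald}_\infty\to\on{Cat}_\infty$ is conservative and preserves limits and filtered colimits. The last point is the crucial technical input; one checks it directly from the axioms, using that the Waldhausen structure only constrains pushouts along cofibrations, and that such pushout squares, as well as the property of a morphism being a cofibration, are detected at a finite stage of any filtered colimit, while all of this is manifestly stable under products and hence under arbitrary limits. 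Applying \Cref{bound-on-compacts} to this functor and to the compact generation of $\on{Cat}_\infty$ shows that $\mathrm{Wald}_\infty$ is compactly generated. Via the object-wise opposite construction $(\calC,\calC^\dagger)\mapsto(\calC^{\op},(\calC^\dagger)^{\op})$ the $\infty$-category $\mathrm{coWald}_\infty$ of coWaldhausen $\infty$-categories is equivalent to $\mathrm{Wald}_\infty$; since $(-)^{\op}\colon\on{Cat}_\infty\to\on{Cat}_\infty$ is an equivalence, it follows that $\mathrm{coWald}_\infty$ is likewise presentable and compactly generated, and that the forgetful functor $\mathrm{coWald}_\infty\to\on{Cat}_\infty$ is conservative and preserves limits and filtered colimits.

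Next I would form the $\infty$-category
\[
\scrQ\coloneqq\mathrm{Wald}_\infty\times_{\on{Cat}_\infty}\mathrm{coWald}_\infty,
\]
whose objects are triples $(\calC,\calC_\dagger,\calC^\dagger)$ with $(\calC,\calC_\dagger)$ Waldhausen and $(\calC,\calC^\dagger)$ coWaldhausen on a common underlying $\calC$, and whose morphisms are the functors preserving both ingressives and egressives. Since the two forgetful functors $\mathrm{Wald}_\infty\to\on{Cat}_\infty$ and $\mathrm{coWald}_\infty\to\on{Cat}_\infty$ are accessible and limit-preserving, hence right adjoints, the pullback $\scrQ$ is presentable and is computed inside $\on{Cat}_\infty$; since both functors also preserve filtered colimits, so does the induced projection $\scrQ\to\on{Cat}_\infty$, which is moreover conservative. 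A second application of \Cref{bound-on-compacts} then shows that $\scrQ$ is compactly generated.

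Finally, $\mathrm{Exact}$ is the full subcategory of $\scrQ$ spanned by those triples $(\calC,\calC_\dagger,\calC^\dagger)$ for which $\calC$ is additive and for which a square is an ambigressive pullback if and only if it is an ambigressive pushout. Both defining conditions are stable under limits and filtered colimits inside $\scrQ$: stability of additivity follows from the analysis of $\mathrm{SemiAdd}$ and $\mathrm{Add}$ carried out above, while stability of the ambigressiveness axiom follows once more from the fact that finite (co)limit diagrams and their (co)limits, together with the classes of (co)fibrations, are visible at finite stages of a filtered colimit and are preserved by products. Hence \Cref{useful-cor} applies and shows that $\mathrm{Exact}$ is presentable and compactly generated. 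The forgetful functor $\mathrm{Exact}\to\mathrm{Add}$ factors as the composite $\mathrm{Exact}\hookrightarrow\scrQ\to\on{Cat}_\infty$ corestricted to $\mathrm{Add}$; since $\mathrm{Exact}\subseteq\scrQ$ and $\mathrm{Add}\subseteq\on{Cat}_\infty$ are each closed under limits and filtered colimits and $\scrQ\to\on{Cat}_\infty$ preserves them, the forgetful functor preserves limits and filtered colimits, as required.

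I expect the main obstacle to be precisely the verification that the Waldhausen, coWaldhausen and ambigressiveness axioms survive the formation of limits and, above all, of filtered colimits in $\scrQ$ --- equivalently, that the exact structure one writes down on a filtered colimit of exact $\infty$-categories really is an exact structure and coincides with the expected one. This reduces to the standard, if slightly delicate, fact that in a filtered colimit of $\infty$-categories taken along functors preserving the relevant finite (co)limits, pullback and pushout squares and the property of being a (co)fibration can be tested at a finite stage; once Barwick's preservation properties for $\mathrm{Wald}_\infty\to\on{Cat}_\infty$ and for its object-wise opposite are granted, the remaining bookkeeping for the additivity and ambigressiveness conditions is routine.
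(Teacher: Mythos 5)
Your proposal is correct and follows essentially the same route as the paper: embed $\mathrm{Exact}$ fully faithfully into $\mathrm{Wald}_\infty\times_{\on{Cat}_\infty}\mathrm{coWald}_\infty$, establish that this fiber product is presentable and compactly generated (the paper cites Barwick's Propositions 4.4, 4.5, 4.8 and Lurie's result on fiber products in $\mathrm{Pr}^R$ where you rederive some of this via \Cref{bound-on-compacts}), and then check closure under limits and filtered colimits so that \Cref{useful-cor} applies. The step you flag as the main obstacle — lifting an ambigressive pushout/pullback square to a finite stage of a filtered colimit using the mapping-space formula and the existence of pushouts along ingressives — is exactly the verification the paper carries out in detail, so your sketch matches its argument.
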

\begin{proof}
By definition, we have a fully faithful embedding $\mathrm{Exact} \hookrightarrow \mathrm{Wald}_{\infty} \times_{\mathrm{Cat}_{\infty}} \mathrm{coWald}_{\infty}$.
The categories $\mathrm{Wald}_{\infty}$ and $\mathrm{coWald}_{\infty}$ are compactly generated by \cite[Proposition 4.8]{Barwick_Higher_K}. 
Moreover, by \cite[Propositions 4.4, 4.5]{Barwick_Higher_K} and the adjoint functor theorem, it follows that the maps $\mathrm{Wald}_{\infty} \to \mathrm{Cat}_{\infty}$ and $\mathrm{CoWald}_{\infty} \to \mathrm{Cat}_{\infty}$ are maps in $\mathrm{Pr}^R$.
By \cite[Proposition 5.5.7.6]{lurie2009htt}, it follows that $\mathrm{Wald}_{\infty} \times_{\mathrm{Cat}_{\infty}} \mathrm{coWald}_{\infty}$ is also compactly generated.
Hence by \Cref{useful-cor}, it suffices to show the above embedding preserves limits and filtered colimits.

Let $p \colon I \to \mathrm{Exact}$ be a diagram with the compositions $\pi_1 \circ p \colon I \to \mathrm{Wald}_{\infty}$ and $\pi_2 \circ p \colon I \to \mathrm{coWald}_{\infty}$, where $\pi_1 (\mathrm{resp.}\ \pi_2) \colon \mathrm{Exact} \to \mathrm{Wald}_{\infty} (\mathrm{resp.}\ \mathrm{coWald}_{\infty})$ are the forgetful functors.
Let $(\calC, \calC_{\dagger})=\varprojlim \pi_1 \circ p$ and $(\calC, \calC^{\dagger})=\varprojlim \pi_2 \circ p$. 
We want to show that $(\calC, \calC_{\dagger}, \calC^{\dagger})$ is an exact category. 
By \cite[Proposition 4.5]{Barwick_Higher_K}, it follows that $(\calC,\calC_{\dagger})=(\underset{i \in I}{\varprojlim} \ \calC_i, \underset{i \in I}{\varprojlim} \ \calC_{\dagger, i})$ and dually $(\calC,\calC^{\dagger})=(\underset{i \in I}{\varprojlim} \ \calC_i, \underset{i \in I}{\varprojlim} \ \calC^{\dagger}_i)$.
Since additive categories are stable under limits, it suffices to show that a square is an ambigressive pullback if and only if it is an ambigressive pushout.
We know this is true for the categories $(\calC_i,\calC_{\dagger, i}, \calC^{\dagger}_i)$.
We have an isomorphism of morphism $\infty$-categories 
$$\mathrm{Mor}_{\underset{i \in I}{\varprojlim} \ \calC_i}(\underset{i \in I}{\varprojlim} \ x_i,\underset{i \in I}{\varprojlim} \ y_i) \simeq \underset{i \in I}{\varprojlim} \ \mathrm{Mor}_{\calC_i}(x_i,y_i)\,.$$ 
But then, a square is a pullback/pushout square if and only if its projections to the categories $\calC_i$ are pullback/pushout squares. 
To show this, we use \cite[Proposition 4.4.2.6]{lurie2009htt} to reduce to the case of products and fiber products. 
For products, this follows from \cite[Corollary 5.1.2.3]{lurie2009htt} and for fiber products this follows from \cite[Proposition 5.4.5.5]{lurie2009htt}.
Similarly, for a filtered $\infty$-category $I$ with a functor $p^{\prime} \colon I \to \mathrm{Exact}$ with compositions $\pi_1 \circ p^{\prime} \colon I \to \mathrm{Wald}_{\infty}$ and $\pi_2 \circ p^{\prime} \colon I \to \mathrm{coWald}_{\infty}$, we let $(\calC,\calC_{\dagger}) = \varinjlim \pi_1 \circ p^{\prime}$ and $(\calC, \calC^{\dagger})=\varinjlim \pi_2 \circ p^{\prime}$. 
We want to show that $(\calC,\calC_{\dagger},\calC^{\dagger})$ is an exact category.
Again using \cite[Proposition 4.5]{Barwick_Higher_K}, it follows that $(\calC,\calC_{\dagger})=(\underset{i \in I}{\varinjlim} \ \calC_i, \underset{i \in I}{\varinjlim} \ \calC_{\dagger, i})$ and dually $(\calC,\calC^{\dagger})=(\underset{i \in I}{\varinjlim} \ \calC_i, \underset{i \in I}{\varinjlim} \ \calC^{\dagger}_i)$.
Now let $x_{i_0} \in \calC_{i_0}$ and $y_{i_1} \in \calC_{i_1}$ with their images $x,y \in \calC$. 
By \cite{rozenblyum2012filtered}, there is an isomorphism of morphism $\infty$-categories
\begin{equation}
	\label{formula}
	\mathrm{Mor}_{\calC}(x,y) \simeq \underset{j \in I_{(i_0,i_1)/}}{\varinjlim} \mathrm{Mor}_{\calC_j}(x_j,y_j)\,.
\end{equation}
We consider an ambigressive pushout 
\begin{center}
	\begin{tikzcd}
		X \ar[r, rightarrowtail] \ar[d, twoheadrightarrow] & Y \ar[d] \\
		X^{\prime} \ar[r] & Y^{\prime} 
	\end{tikzcd}
\end{center}
in $\calC$. 
We can pick a finite level $i_0 \in I$, and lifts $[x_{i_0}\to x'_{i_0}], [x_{i_0}\to y_{i_0}]\in \calC_{i_0}$ of the maps $[X\to Y], [X\to X']$ such that $[x_{i_0}\to y_{i_0}]$ is ingressive and $x_{i_0}\to x'_{i_0}$ is egressive. 
By \cite[Definition 2.7]{Barwick_Higher_K}, pushouts of ingressive morphisms exist.
We let $y'_{i_0}$ denote the pushout.
Since pushout diagrams of ingressive morphisms remain ingressive under any functor in $\mathrm{Wald}_{\infty}$, we can show that the image of $y'_{i_0}$ under $\calC_{i_0}\to \calC$ is equivalent to $Y'$ by using \Cref{formula}.
Since $\calC_{i_0}$ is exact, we have an ambigressive pushout diagram which is also an ambigressive pullback diagram
\begin{center}
	\begin{tikzcd}
		x_{i_0} \ar[r, rightarrowtail] \ar[d, twoheadrightarrow] & y_{i_0} \ar[d, twoheadrightarrow] \\
		x_{i_0}^{\prime} \ar[r, rightarrowtail] & y_{i_0}^{\prime} \,.
	\end{tikzcd}
\end{center}
Using \Cref{formula} and that functors in $\mathrm{coWald}_\infty$ preserve egressive pullbacks, one can show that the image of the square above in $\calC$ remains an egressive pullback.
This argument shows that if a square in $\calC$ is an ambigressive pushout, then it is an ambigressive pullback.
One can use the dual argument to show the converse.
\end{proof}

We can finally define $\CatexE$ by considering the Cartesian square in $\widehat{\on{Cat}}_\infty$
\begin{center}
\begin{tikzcd}
	\CatexE \arrow{r} \arrow{d}  & \on{Exact} \arrow{d}{\calF_2} \\
	\CatE \arrow{r}{\calF_1} & \on{Add}\,.
\end{tikzcd}
\end{center}

\begin{proposition}
	The category $\CatexE$ is presentable and compactly generated.
\end{proposition}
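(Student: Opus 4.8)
The plan is to exploit that $\CatexE$ was \emph{defined} as the pullback $\CatE \times_{\on{Add}} \on{Exact}$ along the two forgetful functors $\calF_1 \colon \CatE \to \on{Add}$ and $\calF_2 \colon \on{Exact} \to \on{Add}$, and to argue exactly as in the proof that $\on{Exact}$ is compactly generated. The first step is to check that both $\calF_1$ and $\calF_2$ are morphisms in $\mathrm{Pr}^R$, i.e.\ that they are accessible and preserve all small limits (accessibility being automatic, since all the categories involved are presentable). For $\calF_2$ this is precisely the content of the proposition established just above. For $\calF_1$ it follows by composing the relevant forgetful functors: $\on{Cat}^{\otimes}_{1,E} \to \on{CAlg}(\on{Add}^{\otimes}_{\leq 1})$ preserves limits and filtered colimits (as recorded in the construction of $\CatE$), $\on{CAlg}(\on{Add}^{\otimes}_{\leq 1}) \to \on{Add}_{\leq 1}$ does as well by \cite[Lemma B.2.4]{heyer20246}, and $\on{Add}_{\leq 1} \hookrightarrow \on{Add}$ is closed under limits and filtered colimits.

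Once both legs are known to lie in $\mathrm{Pr}^R$, presentability of $\CatexE$ is immediate: the inclusion $\mathrm{Pr}^R \hookrightarrow \widehat{\on{Cat}}_\infty$ creates small limits, so the defining square computes $\CatexE$ in $\mathrm{Pr}^R$. For compact generation I would give two routes and spell out the second in detail. The quick route: since $\on{Add}$, $\CatE$ and $\on{Exact}$ are all compactly generated and $\calF_1,\calF_2$ are $\mathrm{Pr}^R$-morphisms, \cite[Proposition 5.5.7.6]{lurie2009htt} applies verbatim as it did for $\mathrm{Wald}_\infty \times_{\mathrm{Cat}_\infty} \mathrm{coWald}_\infty$ above, giving that $\CatE \times_{\on{Add}} \on{Exact}$ is compactly generated. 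The self-contained route: consider the canonical functor $\Phi \colon \CatexE \to \CatE \times \on{Exact}$. Because the defining square is a genuine pullback of $\infty$-categories, $\Phi$ is conservative; and because $\calF_1$ and $\calF_2$ preserve small limits and filtered colimits, both small limits and filtered colimits in the pullback are computed componentwise, so $\Phi$ preserves them. The target $\CatE \times \on{Exact}$ is a finite product of compactly generated presentable $\infty$-categories, hence compactly generated, and applying \Cref{bound-on-compacts} to $\Phi$ (with $\calD = \CatexE$, which is presentable by the previous step, and $\calC = \CatE \times \on{Exact}$) yields that $\CatexE$ is compactly generated.

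The argument is entirely formal. The only points demanding any care are the verification that $\calF_1$ preserves small limits and filtered colimits — a routine chain of already-established facts about forgetful functors out of coslice categories and categories of commutative algebra objects — and the observation that limits and filtered colimits in the pullback are componentwise, which is exactly what is needed to feed the hypotheses of \Cref{bound-on-compacts} (or to invoke \cite[Proposition 5.5.7.6]{lurie2009htt}). I do not expect any genuine obstacle here; the proof mirrors the one for $\on{Exact}$ step for step.
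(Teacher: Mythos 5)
Your proposal is correct and its main line — checking that $\calF_1$ and $\calF_2$ lie in $\mathrm{Pr}^R$, computing the pullback there, and invoking \cite[Proposition 5.5.7.6]{lurie2009htt} for compact generation — is exactly the paper's proof, just with the routine verification for $\calF_1$ spelled out. The alternative self-contained route via \Cref{bound-on-compacts} applied to $\CatexE \to \CatE \times \on{Exact}$ is also valid but adds nothing beyond what the cited Lurie result already gives.
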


\begin{proof}
	Both forgetful functors $\calF_1$ and $\calF_2$ are maps in ${\rm Pr}^R$ as in \cite[Definition 5.5.3.1]{lurie2009htt}, since they commute with small limits and filtered colimits.
	By \cite[Theorem 5.5.3.18]{lurie2009htt}, ${\rm Pr}^R$ admits small limits and they can be computed in $\widehat{{\rm Cat}}_{\infty}$.
	Moreover, by \cite[Proposition 5.5.7.6]{lurie2009htt} for $\kappa = \omega$, fiber products in ${\rm Pr}^R$ preserve the property of being compactly generated. 
\end{proof}

	\bibliography{biblio.bib}

\begin{thebibliography}{BZCHN24}

\bibitem[ABM24]{anschuetz20246functorformalismsolidquasicoherent}
Johannes Anschütz, Arthur-César~Le Bras, and Lucas Mann.
\newblock {A 6-functor formalism for solid quasi-coherent sheaves on the
  Fargues-Fontaine curve}.
\newblock {\em preprint}, 2024.
\newblock \href{https://arxiv.org/abs/2412.20968}{arXiv:2412.20968}.

\bibitem[ABV92]{ABV92}
Jeffrey Adams, Dan Barbasch, and David~A. Vogan, Jr.
\newblock {\em The {L}anglands classification and irreducible characters for
  real reductive groups}, volume 104 of {\em Progress in Mathematics}.
\newblock Birkh\"{a}user Boston, Inc., Boston, MA, 1992.

\bibitem[AG15]{GA15}
D.~Arinkin and D.~Gaitsgory.
\newblock Singular support of coherent sheaves and the geometric {L}anglands
  conjecture.
\newblock {\em Selecta Math. (N.S.)}, 21(1):1--199, 2015.

\bibitem[AGLR22]{anschütz2022padictheorylocalmodels}
Johannes Ansch\"utz, Ian Gleason, Jo\~ao Louren{\c{c}}o, and Timo Richarz.
\newblock On the $p$-adic theory of local models.
\newblock {\em preprint}, 2022.
\newblock \href{https://arxiv.org/abs/2201.01234}{arXiv:2201.01234}.

\bibitem[ALB25]{AB21}
Johannes Ansch{\"u}tz and Arthur-C{\'e}sar Le~Bras.
\newblock A {Fourier} transform for {Banach}-{Colmez} spaces.
\newblock {\em J. Eur. Math. Soc. (JEMS)}, 27(9):3651--3712, 2025.

\bibitem[And21]{And21}
Grigory Andreychev.
\newblock Pseudocoherent and perfect complexes and vector bundles on analytic
  adic spaces.
\newblock {\em preprint}, 2021.
\newblock \href{https://arxiv.org/abs/2105.12591}{arXiv:2105.12591}.

\bibitem[Ans19]{Ans19}
Johannes Ansch{\"u}tz.
\newblock Reductive group schemes over the {Fargues}-{Fontaine} curve.
\newblock {\em Math. Ann.}, 374(3-4):1219--1260, 2019.

\bibitem[Ans22]{Ans18}
Johannes Ansch{\"u}tz.
\newblock Extending torsors on the punctured {S}pec
  \({A}_{\operatorname{inf}}\).
\newblock {\em J. reine angew. Math. (Crelle)}, 2022(783):227--268, 2022.

\bibitem[Ans23]{Anschuetz_absFF}
J.~Ansch\"utz.
\newblock {$G$-bundles on the absolute Fargues--Fontaine curve}.
\newblock {\em Acta Arith.}, 207:351--363, 2023.

\bibitem[AS20]{anel20}
Mathieu Anel and Chaitanya~Leena Subramaniam.
\newblock Small object arguments, plus-construction, and left-exact
  localizations.
\newblock {\em preprint}, 2020.
\newblock \href{https://arxiv.org/abs/2004.00731}{arXiv:2004.00731}.

\bibitem[Bar15]{Barwick_Thm_Heart}
Clark Barwick.
\newblock On exact {{\(\infty\)}}-categories and the theorem of the heart.
\newblock {\em Compos. Math.}, 151(11):2160--2186, 2015.

\bibitem[Bar16]{Barwick_Higher_K}
Clark Barwick.
\newblock On the algebraic {{\(K\)}}-theory of higher categories.
\newblock {\em J. Topol.}, 9(1):245--347, 2016.

\bibitem[Bez16]{Bez16}
Roman Bezrukavnikov.
\newblock On two geometric realizations of an affine {H}ecke algebra.
\newblock {\em Publ. Math. Inst. Hautes \'Etudes Sci.}, 123(1):1--67, 2016.

\bibitem[BGR84]{BGR84}
S.~Bosch, U.~G\"{u}ntzer, and R.~Remmert.
\newblock {\em Non-{A}rchimedean analysis}, volume 261 of {\em Grundlehren der
  mathematischen Wissenschaften [Fundamental Principles of Mathematical
  Sciences]}.
\newblock Springer-Verlag, Berlin, 1984.
\newblock A systematic approach to rigid analytic geometry.

\bibitem[BM21]{BhattMathew_21}
Bhargav Bhatt and Akhil Mathew.
\newblock The arc-topology.
\newblock {\em Duke Math. J.}, 170(9):1899--1988, 2021.

\bibitem[BMS18]{BMS18}
Bhargav Bhatt, Matthew Morrow, and Peter Scholze.
\newblock Integral {{\(p\)}}-adic {Hodge} theory.
\newblock {\em Publ. Math., Inst. Hautes {\'E}tud. Sci.}, 128:219--397, 2018.

\bibitem[BS15]{BS15}
Bhargav Bhatt and Peter Scholze.
\newblock The pro-{\'e}tale topology for schemes.
\newblock {\em Ast{\'e}risque}, 369:99--201, 2015.

\bibitem[BS17]{bhatt_scholze_projectivity_of_the_witt_vector_affine_grassmannian}
Bhargav Bhatt and Peter Scholze.
\newblock Projectivity of the {W}itt vector affine {G}rassmannian.
\newblock {\em Invent. Math.}, 209(2):329--423, 2017.

\bibitem[BZCHN24]{benzvi2022coherent}
David Ben-Zvi, Harrison Chen, David Helm, and David Nadler.
\newblock Coherent {Springer} theory and the categorical {Deligne}-{Langlands}
  correspondence.
\newblock {\em Invent. Math.}, 235(2):255--344, 2024.

\bibitem[CI23]{CI23}
Charlotte Chan and Alexander~B. Ivanov.
\newblock On loop {D}eligne-{L}usztig varieties of {C}oxeter-type for inner
  forms of {${\rm GL}_n$}.
\newblock {\em Camb. J. Math.}, 11(2):441--505, 2023.

\bibitem[CS17]{CS17}
Ana Caraiani and Peter Scholze.
\newblock On the generic part of the cohomology of compact unitary {S}himura
  varieties.
\newblock {\em Ann. of Math. (2)}, pages 649--766, 2017.

\bibitem[DHKM25]{DHKM20}
Jean-Fran{\c{c}}ois Dat, David Helm, Robert Kurinczuk, and Gilbert Moss.
\newblock Moduli of {L}anglands parameters.
\newblock {\em J. Eur. Math. Soc.}, 27(5):1827--1927, 2025.

\bibitem[Far18]{Fargues_courbe}
Laurent Fargues.
\newblock La courbe.
\newblock {\em Proc. Int. Cong. Math. - 2018 Rio de Janeiro}, 1, 2018.

\bibitem[Far20]{fargues_g_torseurs_en_theorie_de_hodge_p_adique}
Laurent Fargues.
\newblock {$G$-torseurs en th\'eorie de Hodge $p$-adique}.
\newblock {\em Comp. Math.}, 156(10):2076--2110, 2020.

\bibitem[FS24]{FS24}
Laurent Fargues and Peter Scholze.
\newblock {Geometrization of the local Langlands correspondence}.
\newblock {\em preprint}, 2024.
\newblock \href{https://arxiv.org/abs/2102.13459}{arXiv:2102.13459}.

\bibitem[GL24]{GL22}
Ian Gleason and Jo{\~a}o Louren{\c{c}}o.
\newblock Tubular neighborhoods of local models.
\newblock {\em Duke Math. J.}, 173(4):723--743, 2024.

\bibitem[Gle21]{Gle22v2}
Ian Gleason.
\newblock Specialization maps for {S}cholze's category of diamonds, v2.
\newblock {\em preprint}, 2021.
\newblock \href{https://arxiv.org/abs/2012.05483v2}{arXiv:2012.05483v2}.

\bibitem[Gle22]{Gle21}
Ian Gleason.
\newblock {On the geometric connected components of moduli spaces of p-adic
  shtukas and local Shimura varieties}.
\newblock {\em preprint}, 2022.
\newblock \href{https://arxiv.org/abs/2107.03579}{arXiv:2107.03579}.

\bibitem[Gle24]{Gle22}
Ian Gleason.
\newblock {Specialization maps for Scholze’s category of diamonds}.
\newblock {\em Mathematische Annalen}, pages 1--69, 2024.

\bibitem[Gü24]{Gut23}
Anton Güthge.
\newblock Perfect-prismatic {F}-crystals and p-adic {S}htukas in families.
\newblock {\em preprint}, 2024.
\newblock \href{https://arxiv.org/abs/2307.01108}{arXiv:2307.01108}.

\bibitem[Har20]{Ambidexterity}
Yonatan Harpaz.
\newblock Ambidexterity and the universality of finite spans.
\newblock {\em Proc. Lond. Math. Soc. (3)}, 121(5):1121--1170, 2020.

\bibitem[He16]{He_hecke_padic}
Xuhua He.
\newblock Hecke algebras and $p$-adic groups.
\newblock In {\em Current developments in mathematics 2015}, pages 73--135.
  Int. Press, 2016.

\bibitem[Hel23]{hellmann2021derived}
Eugen Hellmann.
\newblock {On the derived category of the Iwahori-Hecke algebra}.
\newblock {\em Comp. Math.}, 159(5):1042--1110, 2023.

\bibitem[Hen00]{Henniart_LLC_GLn}
Guy Henniart.
\newblock {Une preuve simple des conjectures de Langlands pour $GL(n)$ sur un
  corps $p$-adique}.
\newblock {\em Invent. Math.}, 139(2):439--455, 2000.

\bibitem[HK25]{HamacherKim_22}
Paul Hamacher and Wansu Kim.
\newblock Point counting on igusa varieties for function fields.
\newblock {\em Adv. Math.}, 480:110517, 2025.

\bibitem[HM24]{heyer20246}
Claudius Heyer and Lucas Mann.
\newblock 6-functor formalisms and smooth representations.
\newblock {\em preprint}, 2024.
\newblock \href{https://arxiv.org/abs/2410.13038}{arXiv:2410.13038}.

\bibitem[HS23]{HS21}
David Hansen and Peter Scholze.
\newblock Relative perversity.
\newblock {\em Commun. Am. Math. Soc.}, 3:631--668, 2023.

\bibitem[HT01]{HT01}
Michael Harris and Richard Taylor.
\newblock {\em The geometry and cohomology of some simple {S}himura
  varieties.(AM-151), Volume 151}.
\newblock Princeton University press, 2001.

\bibitem[HV11]{HartlViehmann}
Urs Hartl and Eva Viehmann.
\newblock The {Newton} stratification on deformations of local
  {{\(G\)}}-shtukas.
\newblock {\em J. Reine Angew. Math.}, 656:87--129, 2011.

\bibitem[Iva23]{Ivanov_arc_descent}
Alexander~B. Ivanov.
\newblock {Arc-descent for the perfect loop functor and $p$-adic
  Deligne--Lusztig spaces}.
\newblock {\em J. reine angew. Math. (Crelle)}, 2023(794):1--54, 2023.

\bibitem[Kal16]{Kal16}
Tasho Kaletha.
\newblock The local {L}anglands conjectures for non-quasi-split groups.
\newblock In {\em Families of automorphic forms and the trace formula}, Simons
  Symp., pages 217--257. Springer, [Cham], 2016.

\bibitem[Ked05]{Kedlaya_slope_revisited}
Kiran~S. Kedlaya.
\newblock Slope filtrations revisited.
\newblock {\em Doc. Math.}, 10:447--525, 2005.
\newblock errata, ibid. {\bf 12} (2007), 361-362.

\bibitem[Ked20]{Kedlaya_16_ringAinf}
Kiran~S. Kedlaya.
\newblock {Some ring-theoretic properties of ${A}_{\rm inf}$}.
\newblock In {\em $p$-adic Hodge theory}, Simons Symposia, pages 129--141.
  Springer, 2020.

\bibitem[KL15]{kedlaya_liu_relative_p_adic_hodge_theory_foundations}
Kiran~S. Kedlaya and Ruochuan Liu.
\newblock {\em Relative {{\(p\)}}-adic {Hodge} theory: foundations}, volume 371
  of {\em Ast{\'e}risque}.
\newblock Paris: Soci{\'e}t{\'e} Math{\'e}matique de France (SMF), 2015.

\bibitem[Kot85]{Kottwitz}
Robert~E. Kottwitz.
\newblock Isocrystals with additional structure.
\newblock {\em Compositio Math.}, 56(2):201--220, 1985.

\bibitem[Kot97]{KottwitzII}
Robert~E. Kottwitz.
\newblock Isocrystals with additional structure. {II}.
\newblock {\em Compositio Math.}, 109(3):255--339, 1997.

\bibitem[LRS93]{MR1228127}
G.~Laumon, M.~Rapoport, and U.~Stuhler.
\newblock {${D}$}-elliptic sheaves and the {L}anglands correspondence.
\newblock {\em Invent. Math.}, 113(2):217--338, 1993.

\bibitem[Lur09]{lurie2009htt}
Jacob Lurie.
\newblock {\em Higher Topos Theory}, volume 170 of {\em Annals of Mathematics
  Studies}.
\newblock Princeton University Press, Princeton, NJ, 2009.

\bibitem[Lur17]{lurie2017ha}
Jacob Lurie.
\newblock Higher algebra, 2017.
\newblock Available at \url{https://www.math.ias.edu/~lurie}.

\bibitem[Lus95]{Lus95}
George Lusztig.
\newblock Classification of unipotent representations of simple {$p$}-adic
  groups.
\newblock {\em Internat. Math. Res. Notices}, (11):517--589, 1995.

\bibitem[MW23]{MW23}
Lucas Mann and Annette Werner.
\newblock Local systems on diamonds and {{\(p\)}}-adic vector bundles.
\newblock {\em Int. Math. Res. Not.}, 2023(15):12785--12850, 2023.

\bibitem[PR24]{PR21}
Georgios Pappas and Michael Rapoport.
\newblock {$ p $-adic shtukas and the theory of global and local Shimura
  varieties}.
\newblock {\em Cambridge Journal of Mathematics}, 12(1):1--164, 2024.

\bibitem[Roz12]{rozenblyum2012filtered}
Nick Rozenblyum.
\newblock Filtered colimits of $\infty$-categories.
\newblock 2012.

\bibitem[RR96]{RR_96}
Micheael Rapoport and Melanie Richartz.
\newblock {On the classification and specialization of $F$-isocrystals with
  additional structure}.
\newblock {\em Comp. Math.}, 103(2):153--181, 1996.

\bibitem[RS22]{ragimov2022inftycategoricalreflectiontheoremapplications}
Shaul Ragimov and Tomer~M. Schlank.
\newblock The $\infty$-categorical reflection theorem and applications.
\newblock {\em preprint}, 2022.
\newblock \href{https://arxiv.org/abs/2207.09244}{arXiv:2207.09244}.

\bibitem[Sch22]{Sch17}
Peter Scholze.
\newblock {Étale cohomology of diamonds}.
\newblock {\em preprint}, 2022.
\newblock \href{https://arxiv.org/abs/1709.07343}{arXiv:1709.07343}.

\bibitem[SW20]{SW20}
Peter Scholze and Jared Weinstein.
\newblock {\em Berkeley Lectures on $p$-adic Geometry}, volume 389 of {\em
  AMS-207}.
\newblock Princeton University Press, 2020.

\bibitem[SZ18]{SZ18}
Allan~J. Silberger and Ernst-Wilhelm Zink.
\newblock Langlands classification for {$L$}-parameters.
\newblock {\em J. Algebra}, 511:299--357, 2018.

\bibitem[Vie20]{Viehmann_20}
Eva Viehmann.
\newblock On the geometry of the {N}ewton stratification.
\newblock In Thomas~J. Hanies and Michael Harris, editors, {\em Stabilization
  of the trace formula, Shimura varieties, and arithmetic applications, Volume
  II: Shimura varieties and Galois representations}, pages 192--208. Cambridge
  Univ. Press, 2020.

\bibitem[Vie24]{Viehamann_Newton_BdR}
Eva Viehmann.
\newblock On {Newton} strata in the {{\(B_{\mathrm{dR}}^+\)}}-{Grassmannian}.
\newblock {\em Duke Math. J.}, 173(1):177--225, 2024.

\bibitem[Vog93]{Vog93}
David~A. Vogan, Jr.
\newblock The local {L}anglands conjecture.
\newblock In {\em Representation theory of groups and algebras}, volume 145 of
  {\em Contemp. Math.}, pages 305--379. Amer. Math. Soc., Providence, RI, 1993.

\bibitem[XZ17]{XiaoZhu}
Liang Xiao and Xinwen Zhu.
\newblock {Cycles on Shimura varieties via geometric Satake}.
\newblock {\em preprint}, 2017.
\newblock \href{https://arxiv.org/abs/1707.05700}{arXiv:1707.05700}.

\bibitem[Zha23]{Zha23}
Mingjia Zhang.
\newblock A {PEL}-type {I}gusa stack and the $p$-adic geometry of {S}himura
  varieties.
\newblock {\em preprint}, 2023.
\newblock \href{https://arxiv.org/abs/2309.05152}{arXiv:2309.05152}.

\bibitem[Zhu17]{zhu_affine_grassmannians_and_the_geometric_satake_in_mixed_characteristic}
Xinwen Zhu.
\newblock Affine {G}rassmannians and the geometric {S}atake in mixed
  characteristic.
\newblock {\em Ann. of Math. (2)}, 185(2):403--492, 2017.

\bibitem[Zhu18]{zhu2018geometric}
Xinwen Zhu.
\newblock Geometric {S}atake, categorical traces, and arithmetic of {S}himura
  varieties.
\newblock {\em preprint}, 2018.
\newblock \href{https://arxiv.org/abs/1810.07375}{arXiv:1810.07375}.

\bibitem[Zhu20]{Zhu20}
Xinwen Zhu.
\newblock Coherent sheaves on the stack of {L}anglands parameters.
\newblock {\em preprint}, 2020.
\newblock \href{https://arxiv.org/abs/2008.02998}{arXiv:2008.02998}.

\bibitem[Zhu25]{zhu2025tame}
Xinwen Zhu.
\newblock {Tame categorical local Langlands correspondence}.
\newblock {\em preprint}, 2025.
\newblock \href{https://arxiv.org/abs/2504.07482}{arXiv:2504.07482}.

\end{thebibliography}
	\bibliographystyle{alpha}
\end{document}